 \DeclareMathOperator{\gal}{Gal}
\theoremstyle{plain}
\newtheorem{theorem}{Theorem}[section]
\newtheorem{conjecture}[theorem]{Conjecture}
\newtheorem{proposition}[theorem]{Proposition}
\newtheorem{corollary}[theorem]{Corollary}
\newtheorem{lemma}[theorem]{Lemma}
\newtheorem{example}[theorem]{Example}
\theoremstyle{definition}
\newtheorem{definition}[theorem]{Definition}
\newtheorem{remark}[theorem]{Remark}
\newcommand{\ds}{\displaystyle}
\newcommand{\lra}{\longrightarrow}
\newcommand{\ra}{\rightarrow}
\newcommand{\tensor}{{\otimes}}
\newcommand{\Hom}{\mbox{Hom}}
\newcommand{\uHom}{\mbox{\underline{Hom}}}
\newcommand{\Spec}{\mbox{Spec}}
\newcommand{\Spf}{\mbox{Spf}}
\newcommand{\Ker}{\mbox{Ker}}
\newcommand{\Fil}{\mbox{\rm Fil}}
\newcommand{\Coker}{\mbox{Coker}}
\newcommand{\cont}{{\rm cont}}
\newcommand{\tT}{{\widetilde T}}
\newcommand{\GL}{{\rm \mathbf{GL}}}
\newcommand{\Z}{{\mathbb Z}}
\newcommand{\Q}{{\mathbb Q}}
\newcommand{\C}{{\mathbb C}}
\newcommand{\R}{{\mathbb R}}
\newcommand{\F}{{\mathbb F}}
\newcommand{\N}{{\mathbb N}}
\newcommand{\V}{{\mathbb V}}
\newcommand{\A}{{\underline{A}}}
\newcommand{\Kbar}{{\overline{K}}}
\newcommand{\Xbar}{{\overline{X}}}
\newcommand{\Bbar}{{\overline{B}}}
\newcommand{\Ubar}{{\overline{V}}}
\newcommand{\kbar}{{\overline{k}}}
\newcommand{\OKbar}{{\cO_\Kbar}}
\newcommand{\Mun}{M_0}
\newcommand{\OMun}{\cO_{M_0}}
\newcommand{\bareta}{{\overline{\eta}}}
\newcommand{\Rbar}{{\overline{R}}}
\newcommand{\hR}{\widehat{{\overline{R}}}}
\newcommand{\hatcO}{\widehat{{\overline{\cO}}}}
\newcommand{\bDcrisgeo}{\bD_{\rm cris}^{\rm geo}}
\newcommand{\bDcrisM}{\bD_{\rm cris,M}}
\newcommand{\bVcrisarQ}{\bV_{\rm cris}^{\rm ar}}
\newcommand{\bDcrisar}{\bD_{\rm cris}^{\rm ar}}
\newcommand{\X}{{\underline{X}}}
\newcommand{\cL}{{\mathbb L}}
\newcommand{\cH}{{\cal H}}
\newcommand{\cI}{{\cal I}}
\newcommand{\cJ}{{\cal J}}
\newcommand{\cU}{{\cal U}}
\newcommand{\cP}{{\cal P}}
\newcommand{\cF}{{\cal F}}
\newcommand{\cG}{{\cal G}}
\newcommand{\cE}{{\cal E}}
\newcommand{\cV}{{\cal V}}
\newcommand{\bD}{{\mathbb D}}
\newcommand{\bM}{{\mathbb M}}
\newcommand{\bV}{{\mathbb V}}
\newcommand{\bH}{{\mathbb H}}
\newcommand{\bA}{{\mathbb A}}
\newcommand{\bB}{{\mathbb B}}
\newcommand{\bK}{{\mathbb K}}
\newcommand{\cO}{{\cal O}}
\newcommand{\cA}{{\cal A}}
\newcommand{\Sh}{{\mbox{Sh}}}
\newcommand{\Mod}{{\mbox{Mod}}}
\newcommand{\Isoc}{{\mbox{Isoc}}}
\newcommand{\rig}{{\mbox{rig}}}
\newcommand{\fet}{{\mbox{fet}}}
\newcommand{\Rep}{{\mbox{Rep}}}
\newcommand{\II}{{\mathbb I}}
\newcommand{\bL}{{\mathbb L}}
\newcommand{\WW}{{\mathbb W}}
\newcommand{\cR}{\underline{\cal{R}}}
\newcommand{\crR}{{\cal{R}}}
\newcommand{\cW}{{\cal W}}
\newcommand{\fX}{{\mathfrak{X}}}
\newcommand{\fU}{{\mathfrak{U}}}
\newcommand{\fUM}{{\mathfrak{U}^\bullet_M}}
\newcommand{\fXKbar}{{\mathfrak{X}_\Kbar}}
\newcommand{\fp}{{\mathfrak{p}}}
\newcommand{\cM}{{\cal M}}
\newcommand{\cN}{{\cal N}}
\begin{document}

\title{Comparison Isomorphisms for Smooth Formal Schemes}
\author{Fabrizio Andreatta \\ Adrian Iovita\\} \maketitle

\tableofcontents \pagebreak
\section{Introduction}
\label{sec:intro}

Let  $p>0$ denote a prime integer and $K$ a complete discrete
valuation field of characteristic~$0$ and perfect residue field
$k$ of characteristic~$p$. This article proposes a new point of
view on the problem of comparison isomorphisms for algebraic
varieties over~$K$ which allows the extension of the results to
smooth $p$-adic formal schemes over the ring of integers of~$K$.

\noindent More precisely, we suppose throughout the introduction that $K$ is absolutely unramified i.~e., that the prime $p$ is a uniformizer of $K$, denote by
$\Kbar$ a fixed algebraic closure of it with ring of integers $\cO_{\Kbar}$ and by $A_{\rm cris}$ and $B_{\rm cris}$ the period rings defined by J.-M.~Fontaine in
\cite{Fontaineperiodes} (see section \S \ref{sec:Notation} for a review of the construction). We set $G_K:=\gal(\Kbar/K)$.

The ``comparison isomorphism problem" was first alluded to by Grothendieck as the existence of 
a {\it mysterious functor} associating, for a given algebraic variety
over $K$, the \'etale cohomology groups of the variety over $\Kbar$ to the 
respective de Rham cohomology groups. This was precisely formulated  by J.-M. Fontaine in
\cite{fontaine} as {\it the crystalline comparison conjecture}. Let $X$ be a smooth 
proper scheme over $\cO_K$. If $R$ is an $\cO_K$-algebra we denote by $X_R$ the
base change $X_R=X\times_{\Spec(\cO_K)}\Spec(R)$. In particular we'll denote by $\Xbar$ the 
special fiber $X_k$ of $X$.

\begin{conjecture}[\cite{fontaine}]
\label{conj:cris} In the notations above for every $i\ge 0$  there
is a canonical and functorial isomorphism commuting with all the
additional structures (namely filtrations, $G_K$--actions and
Frobenii)
$$
{\rm H}^i(X_{\Kbar}^{\rm et}, \Q_p)\otimes_{\Q_p}B_{\rm cris}\cong
{\rm H}^i_{\rm cris}(\Xbar/\cO_K)\otimes_{\cO_K}B_{\rm cris}.
$$
\end{conjecture}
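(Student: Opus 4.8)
The plan is to deduce Conjecture~\ref{conj:cris} from a \emph{relative} comparison statement proved on a Grothendieck site that interpolates between the \'etale site of the generic fibre and the Zariski (equivalently crystalline) site of the special fibre. First I would localize: cover $X$ by ``small'' affine opens $U=\Spec(R)$ admitting an \'etale map $R_0=\cO_K[T_1^{\pm1},\dots,T_d^{\pm1}]\to R$ to a torus, so that the normalizations of $\overline R=R\otimes_{\cO_K}\cO_{\Kbar}$ inside finite extensions of $\mathrm{Frac}(R\otimes_{\cO_K}K)$ fall within the scope of Faltings' almost purity theorem. On each such $U$ one forms the Faltings site $\fU$, whose objects are pairs $(V,W)$ with $V\to U$ \'etale and $W\to V_K$ finite \'etale; these glue to a site over $X$ equipped with two projections $\nu$ and $u$, to the \'etale site of $X_{\Kbar}$ and to the Zariski site of $\Xbar$ respectively.

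The technical core is the construction on $\fU$ of the \emph{Fontaine sheaves} $\bA_{\rm cris}$ and $\bB_{\rm cris}$, together with their ``geometric'' and ``arithmetic'' de Rham enrichments $\cO\!\bA_{\rm cris}^{\rm geo}$ and $\cO\!\bA_{\rm cris}^{\rm ar}$ (the sheaves underlying the notation $\bDcrisgeo$, $\bVcrisar$), each equipped with an integrable connection and a Frobenius. The pivotal statement is a \emph{Poincar\'e lemma}: the de Rham complex $\cO\!\bB_{\rm cris}^{\rm ar}\otimes_{\cO_X}\Omega^\bullet_{X/\cO_K}$ is a resolution of the constant sheaf $\bB_{\rm cris}$ on $\fU$, compatibly with the $G_K$-action and with Frobenius. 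I would prove this by an explicit computation over the torus, where $\cO\!\bB_{\rm cris}^{\rm ar}$ is, up to completion, a polynomial algebra over $\bB_{\rm cris}$ on ``logarithms'' of $T_1,\dots,T_d$, and then transport it along $R_0\to R$ by (almost) \'etale base change.

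With these tools the two sides of the conjecture appear as cohomology on $\fU$. Pushing forward along $\nu$ and invoking almost purity, which forces the higher direct images of $\bA_{\rm cris}$ and of its de Rham enrichments to agree, up to bounded $p$-torsion, with those of the point, identifies $R\Gamma(\fU,\bB_{\rm cris})$ with $R\Gamma_{\rm et}(X_{\Kbar},\Q_p)\otimes_{\Q_p}B_{\rm cris}$. Pushing forward instead along $u$ and using the Poincar\'e lemma together with the crystalline interpretation of the de Rham complex of $X/\cO_K$ --- legitimate because $K$ is absolutely unramified --- identifies the same object with $R\Gamma_{\rm cris}(\Xbar/\cO_K)\otimes_{\cO_K}B_{\rm cris}$. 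Gluing over the cover by a spectral sequence argument and passing to cohomology in degree $i$ produces the isomorphism; canonicity and functoriality are automatic, since every construction is made on the site before any cohomology is taken.

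It remains to match the additional structures. The $G_K$-action is transparent, as it acts on $\fU$ and on all the period sheaves. The filtration on the left is the one induced by the natural filtration of $\cO\!\bB_{\rm cris}^{\rm ar}$; under the Poincar\'e lemma it becomes the convolution of the trivial filtration on $B_{\rm cris}$ with the Hodge filtration on $\Omega^\bullet_{X/\cO_K}$, and a local computation matches it with Fontaine's. The hard part is Frobenius: one must endow $\cO\!\bA_{\rm cris}^{\rm ar}$ with a horizontal $\varphi$ that reduces to the crystalline Frobenius of $\Xbar$ and to $\varphi$ on $B_{\rm cris}$, and that agrees with the \'etale Frobenius via the known crystalline comparison for the torus. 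This pins down the PD-structure and a compatible lift $T_i\mapsto T_i^p$ of the coordinates, and verifying that these Frobenii actually coincide --- precisely where both the finiteness of $R\Gamma_{\rm cris}(\Xbar/\cO_K)$ and almost purity are needed --- is the principal obstacle; the rest is the standard formalism of the Faltings topos.
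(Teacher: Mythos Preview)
Your overall architecture --- Faltings' site, period sheaves on it, and a Poincar\'e lemma making the de Rham complex of $\bB_{\rm cris}$ a resolution of $\bB_{\rm cris}^\nabla$ --- is exactly the paper's, and your treatment of the crystalline side (push forward along what the paper calls $v_{X,M}$, use acyclicity of $\bB_{\rm cris}$ for $v_\ast$, land in de Rham/crystalline cohomology of $\Xbar$) matches the paper's Theorem~\ref{thm:formal_comp}. Your handling of Frobenius via local lifts $T_i\mapsto T_i^p$ and a \v{C}ech-type gluing is also what the paper does in \S\ref{sec:cohocryssheaves}.

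The substantive divergence is on the \'etale side. You assert that pushing $\bB_{\rm cris}$ (you should mean $\bB_{\rm cris}^\nabla$) forward to $X_{\Kbar}^{\rm et}$ and invoking almost purity yields $R\Gamma(\fX,\bB_{\rm cris}^\nabla)\simeq R\Gamma_{\rm et}(X_{\Kbar},\Q_p)\otimes B_{\rm cris}$ directly. This is the step the paper deliberately \emph{avoids}: in Faltings' original approach that isomorphism is obtained only after establishing Poincar\'e duality for the Faltings-site cohomology compatibly with both \'etale and crystalline Poincar\'e duality, and almost purity alone does not deliver it. The paper's alternative (see \S\ref{sec:propercase}) is to first prove the crystalline comparison ${\rm H}^i(\fX_{\Kbar},\bL\otimes\bB_{\rm cris}^\nabla)\cong {\rm H}^i_{\rm cris}(\Xbar,\bDcrisar(\bL))\otimes_{\Mun} B_{\rm cris}$, then feed the \emph{fundamental exact sequence} $0\to\Q_p\to\Fil^0\bB_{\rm cris}^\nabla\xrightarrow{1-\varphi}\bB_{\rm cris}^\nabla\to 0$ into a long exact sequence, and finally invoke the Colmez--Fontaine weak admissibility criterion together with de Rham Poincar\'e duality (for $\bL$ and $\bL^\vee$ simultaneously) to force ${\rm H}^i(X_{\Kbar}^{\rm et},\bL)\otimes B_{\rm cris}\cong {\rm H}^i(\fX_{\Kbar},\bL\otimes\bB_{\rm cris}^\nabla)$. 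So your proposal is closer to Faltings' route than to the paper's; it is not wrong, but the sentence ``almost purity \dots\ forces the higher direct images \dots\ to agree with those of the point'' hides precisely the hard step the paper's method was designed to circumvent.
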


The first case of the conjecture was proved by Fontaine himself in
\cite{fontaine} for  abelian schemes. There followed other proofs
of the conjecture for abelian varieties and curves over $K$ with
good or semi-stable reduction in preprints of Fontaine-Messing (unpublished),
\cite{colmez}, \cite{cristante_candilera}.

\noindent The first general result was proved by Bloch and Kato in
\cite{bloch_kato} for proper and smooth schemes with ordinary
reduction. The next breakthrough was due to Fontaine and Messing
in \cite{fontainemessing}. They noticed that the ring $A_{\rm
cris}$ has a geometric interpretation as global sections of a
certain sheaf on the crystalline site of $\Xbar$ and thus the syntomic cohomology
on $X$ calculates the
right hand side of the isomorphism in the conjecture
\ref{conj:cris}. The fact that the syntomic cohomology can be
related to the left hand side (i.~e., to \'etale cohomology of
$X_{\Kbar}$) is  more complicated and the conjecture was
proved in \cite{fontainemessing} under the assumption that
 $1+2\mbox{dim}(X_K)<p.$

\noindent G.~Faltings fully proved the conjecture in
\cite{faltingscrystalline}, in fact he proved more: one can drop
the assumption that $K$ is absolutely unramified and he allowed
certain non-trivial coefficients, more precisely
$\Q_p$--adic local systems $\bL$ on $X_K$ for which there exist
``associated $F$-isocrystals" $\cE$ on $X_k$ (see
\cite{faltingscrystalline}). Faltings's strategy was to define a new
cohomology theory associated to $X$ and to prove that it calculated
both the left hand side (via the theory of almost \'etale
extensions) and the right hand side of conjecture \ref{conj:cris}.
Let us be more precise: we  suppose that $X$ is geometrically
connected and denote by $\eta=\Spec(\bK)$ a geometric generic
point of $X_\Kbar$. Let $X_\bullet\lra X$ be an \'etale
hyper-covering of $X$ by ``small affine schemes over $\cO_K$". If
$X_i=\Spec(R_i)$ ($i$ is a multi-index) let $\Rbar_i$ be the
maximal normal extension of $R_i$ in $\bK$ such that
$\Rbar_i\bigl[p^{-1}\bigr]$ is the union of finite and \'etale
extensions of~$R_i\bigl[p^{-1}\bigr]$. By $B_{\rm
cris}^\nabla(\Rbar_i)$ we denote the relative Fontaine ring $B_{\rm
cris}$ constructed using the pair $(R_i, \Rbar_i)$
(in \cite{faltingscrystalline} this ring is
denoted $B_{\rm cris}(R_i)$). If $\Delta_i=
\pi_1(X_{i,\Kbar},\eta)$ consider the double complex
$K^{\bullet\bullet}(\bL):=C^\bullet(\Delta_\bullet,
\bL_\eta\otimes B_{\rm cris}^\nabla(\Rbar_\bullet))$, where
$C^\bullet(\Delta, - )$ is the standard chain complex
computing continuous group cohomology of $\Delta$. The new
cohomology with coefficients in $\bL$ defined by Faltings is the cohomology of the
total complex  $K^{\bullet\bullet}(\bL)$ (or rather the limit of
such over all hyper-coverings.)

\noindent Faltings' cohomology theory seemes easier to handle than
the syntomic cohomology but there are two inconveniences related
to it. One is conceptual: the association $U=\Spec(R)\lra B_{\rm
cris}^\nabla(\Rbar)$ is not a sheaf and no geometric
interpretation of the above defined cohomology theory is given in
terms of sheaf cohomology. The second inconvenience is that in
order to prove the isomorphism of the new cohomology theory of
$\bL$ with the crystalline cohomology of the associated
$F$-isocrystal tensored with $B_{\rm cris}$ one had to prove that
this new cohomology theory satisfied Poincar\'e duality compatible
with the known Poincar\'e dualities on the two sides of
\ref{conj:cris}. This complicates the proof of the crystalline
comparison conjecture and limits the applications of these ideas
to proper schemes, or complements of a normal crossing divisor in
a proper scheme.

\noindent To finish our history of ``comparison isomorphisms", K.~Kato in \cite{kato} 
adapted the proof in \cite{fontainemessing} to schemes over $K$ with
semi-stable reduction by the systematic use of log structures (using  results in 
\cite{hyodo_kato}) and proved the ``semi-stable conjecture" for trivial
coefficients and under the assumption that $1+2\mbox{dim}(X_K)$ is less than $p$. 
Finally T.~Tsuji was able to circumvent the technical difficulties related to
syntomic and log syntomic cohomology and proved both the crystalline and semi-stable 
conjectures for trivial coefficients in \cite{tsuji}. Next, Faltings extended
his results to open varieties over $K$ with semi-stable reduction in 
\cite{faltingsAsterisque} and W.~Niziol re-proved in \cite{niziol} the crystalline and
semi-stable conjectures for trivial coefficients using a new idea namely a comparison 
isomorphism in $K$-theory. Recently Go Yamashita gave a new proof of the comparison
isomorphism for open varieties over $K$ with semi-stable reduction and trivial 
coefficients using syntomic methods (see \cite{yamashita}).

\bigskip
\noindent The new point of view introduced in the present article is the systematic use 
of a topos which we call  ``Faltings's topos" associated to $X$ and of
certain new (``ind-continuous'') sheaves of rings $\bB_{\rm cris}^\nabla$ and 
$\bB_{\rm cris}$ in it. Faltings's topos is the category of sheaves on a certain site
which we denote $\fX$ (for a more precise definition see the sections \ref{sec:description} 
and \ref{sec:Groth}). Despite the suggestion of the notations, the
sections of $\bB_{\rm cris}^\nabla$ are not the rings $B_{\rm cris}^\nabla(\Rbar)$ 
used by Faltings (for the precise relationship between the two see the next
section.) If $\bL$ is a $\Q_p$-local system on $X_K$ then $\bL$ may be viewed as a 
sheaf on $\fX$  and we have:

a) ${\rm H}^i(\fX, \bL\otimes_{\Q_p} \bB_{\rm cris}^\nabla)$ is
Faltings' cohomology  associated  to $\bL$ as above. Thus the
present theory gives a geometric interpretation of Faltings's
construction.

b) In this setting one may attach to $\bL$ in a geometric way a
sheaf on $X$ with a connection and thus define crystalline local
systems on $X_K^{\rm et}$ and their associated $F$-isocrystals.

c) We also provide a  new idea of the proof of the comparison
isomorphism. The main reason the comparison isomorphism in
the algebraic setting over $K$ fails to follow the classical
pattern over the complex numbers is because the de Rham complex of
sheaves of $X$
$$
\cO_X\stackrel{d_X}{\lra}\Omega^1_{X/\cO_K}\stackrel{d_X}{\lra}
\Omega^2_{X/\cO_K}\stackrel{d_X}{\lra}\cdots
$$
is not exact (i.e. there is no algebraic Poincar\'e lemma).
Let us just mention that even if we replace $X$ (or $X_K$) by the
rigid analytic variety $X^{\rm rig}$ associated to $X_K$ its de
Rham complex is still not exact. But if we now pass to the finer
topology $\fX$ and remark that the sheaf $\bB_{\rm cris}$ is a
sheaf of $\cO_X$-modules with a connection $\nabla$ such that
$\bB_{\rm cris}^\nabla$ is its sheaf of horizontal sections we
have an exact sequence of sheaves on $\fX$:
$$
0\lra \bB_{\rm cris}^\nabla\lra \bB_{\rm
cris}\stackrel{\nabla}{\lra} \bB_{\rm
cris}\otimes_{\cO_X}\Omega^1_{X/\cO_K}\stackrel{\nabla}{\lra}\bB_{\rm
cris}\otimes_{\cO_X}\Omega^2_{X/\cO_K}\stackrel{\nabla}{\lra}\cdots
$$
In other words working on the site $\fX$ and after tensoring with
the sheaf of periods $\bB_{\rm cris}$ the de Rham complex of $X$
becomes exact and a resolution of the sheaf $\bB_{\rm
cris}^\nabla$. Now an acyclicity property of this resolution (for
a more precise formulation see the next section)  permits the
calculation of the cohomology on $\fX$ of the sheaf  $\bL\otimes
\bB_{\rm cris}^\nabla$ as the hyper-cohomology of the de Rham
complex on $X$ of the associated $F$-isocrystal without any use of
Poincar\'e duality. Therefore these results extend to $p$-adic
formal schemes.  In the next sub-section we present a more precise
description of the article.

\subsection{Description of the paper}
\label{sec:description}

Let $X$ denote either a smooth scheme over $\cO_K$ or a smooth
$p$-adic formal scheme over $\cO_K$ with special fiber denoted
$\Xbar$. If $X$ is a scheme we denote its generic fiber by $X_K$
and if $X$ is a formal scheme we denote its rigid analytic generic
fiber $X^{\rm rig}$, also by $X_K$.

Let us define (in the algebraic setting) the category $E_{X_\Kbar}$ whose objects are pairs $(\cU,\cW)$ where $\cU\lra X$ is an \'etale morphism, $\cW\lra
\cU_\Kbar$ is a finite \'etale morphism and the morphisms between two pairs are pairs of morphisms satisfying natural compatibilities. G.~Faltings defined in
\cite{faltingsAsterisque} a certain topology on $E_{X_\Kbar}$ but as recently noticed by A.~Abbes a fundamental error occurred in that construction namely to put is
as directly as possible, the topos defined in \cite{faltingsAsterisque} is not the category of sheaves on the topology he defines there (see section \ref{sec:Groth}
for a counterexample.) A salvage was suggested in a letter of P.~Deligne to L.~Illusie  in 1995 raising a series of questions related to section 3.4 in
\cite{illusie}. He pointed out that the correct definition of Faltings's topos should 
be as a certain ``topos flech\'e". The general theory of these topoi was
further developed by O.~Gabber, L.~Illusie, F.~Orgogozo and was implemented in the case 
of interest to us by  A. Abbes. We chose not to follow this  direction and
fix the problem in an equivalent way here (see also \cite{erratum}) by defining a 
different pre-topology, which we call ${\rm PT}_{X_\Kbar}$ on the category
$E_{X_\Kbar}$. See section \ref{sec:Groth} for the precise statements and the definition 
in the formal context.

There is a fundamental operation on sheaves and continuous sheaves
of abelian groups on ${\rm PT}_{X_\Kbar}$ called {\bf
localization} and defined as follows. Suppose that $\cF$ is a
continuous sheaf on ${\rm PT}_{X_\Kbar}$ (let us recall that such
an object is a projective system $\{\cF_n\}_n$ of $p$-power
torsion sheaves on $\fX$) and let $\cU=\Spec(R_\cU)$ be an affine
connected object of $X^{\rm et}$ (by which we mean the \'etale
site on $X$). We fix a geometric generic point
$\eta=\Spec(\bK)$ and denote by
$$
\cF(\Rbar_\cU):=\lim_{\leftarrow,n}\cF_n(\Rbar_\cU)\mbox{ with }
\cF_n(\Rbar_\cU):= \lim_{\rightarrow,S} \cF_n(\cU,\Spec(S)),
$$
where in the inductive limit
 $S$ runs over all $R_\cU\otimes_{\cO_K}K$-sub algebras of $\bK$ which are finite and
\'etale. Let us remark that $\cF(\Rbar_\cU)$ is a continuous
representation of the fundamental group $\pi_1^{\rm
alg}(\cU_K,\eta)$.

We have natural functors $v\colon X^{\rm et}\lra \fX,
u\colon\fX\lra X_\Kbar^{\rm et}$ defined (in the algebraic
setting) as follows: $v(\cU)=(\cU,\cU_\Kbar)$ and respectively
$u(\cU,\cW)=\cW$. The functors $v^\ast$ and $u_\ast$ allow us to
view sheaves on $X^{\rm et}$ and respectively on $X_\Kbar^{\rm
et}$ as sheaves on $\fX$. The functor $v_\ast$ is left exact and its right derived
functors are central to our theory. We have the following result
proved in \cite{andreatta_iovita} describing these
functors in terms of localizations.

\begin{theorem}[\cite{andreatta_iovita}, theorem 6.12]
\label{thm:coh_loc} Let $\cF$ be a continuous sheaf on $\fX$
satisfying certain conditions (Assumptions 6.10 in
\cite{andreatta_iovita}). Then for all $i\ge 0$, $R^iv_\ast\cF$
are the sheaves associated to the pre-sheaves on $X^{\rm et}$
$$
\cU={\rm Spec}(R_\cU)\lra {\rm H}^i_{\rm cont}(\pi_1^{\rm
alg}(\cU_\Kbar, \ast), \cF(\Rbar_\cU)).
$$
\end{theorem}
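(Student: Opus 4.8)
The plan is to reduce the assertion to a purely local statement on $X^{\rm et}$ and then to a comparison between cohomology on a ``Faltings slice'' and continuous group cohomology of a fundamental group. Recall that $R^iv_\ast\cF$ is, by definition, the sheaf on $X^{\rm et}$ associated with the presheaf $\cU\mapsto H^i\bigl(\fX_{/v(\cU)},\cF\bigr)$, where $\fX_{/v(\cU)}$ denotes the localization of $\fX$ at $v(\cU)$ and, by abuse, $\cF$ its own restriction there. Hence it suffices to construct, functorially in an affine connected $\cU=\Spec(R_\cU)$ in $X^{\rm et}$, an isomorphism
$$
H^i\bigl(\fX_{/v(\cU)},\,\cF\bigr)\;\cong\;H^i_{\cont}\bigl(\pi_1^{\rm alg}(\cU_\Kbar,\eta),\,\cF(\Rbar_\cU)\bigr)
$$
and then to pass to associated sheaves. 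The first step is the observation that, since a morphism $(\cV,\cW)\to(\cU,\cU_\Kbar)$ in $E_{X_\Kbar}$ is nothing but an $X$-morphism $\cV\to\cU$ (necessarily \'etale), the localized site $\fX_{/v(\cU)}$ is canonically the site of Faltings type attached to $\cU$ itself --- pretopology included, the compatibility of ${\rm PT}_{X_\Kbar}$ with localization being part of its definition. We may therefore replace $X$ by $\cU$ and are reduced to computing $H^i$ of that site, which we denote $\fU$, with coefficients in $\cF$.

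Set $\Delta:=\pi_1^{\rm alg}(\cU_\Kbar,\eta)$ and let $B\Delta$ be its classifying topos, i.e.\ the topos of sheaves for the \'etale topology on the category of finite \'etale covers of $\cU_\Kbar$. The functor $\cW\mapsto(\cU,\cW)$ from that category to $\fU$ is left exact and continuous, hence induces a morphism of topoi $g$ with target $B\Delta$. Chasing definitions, $g_\ast$ sends a $p$-power torsion sheaf $\cF_n$ to the continuous $\Delta$-module with underlying group its stalk at $\eta$, namely $\cF_n(\Rbar_\cU):=\lim_{\rightarrow,S}\cF_n(\cU,\Spec(S))$, the colimit running over the finite \'etale $R_\cU\otimes_{\cO_K}K$-subalgebras $S$ of $\bK$; passing to the inverse limit over $n$, the $\Delta$-module underlying $g_\ast\cF=\{g_\ast\cF_n\}_n$ is precisely the localization $\cF(\Rbar_\cU)=\lim_{\leftarrow,n}\cF_n(\Rbar_\cU)$ of the statement.

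The heart of the argument is the vanishing $R^qg_\ast\cF=0$ for all $q>0$, which is exactly what Assumptions 6.10 of \cite{andreatta_iovita} furnish: they express the acyclicity of $\cF$ in the \'etale direction, equivalently that \v{C}ech cohomology computes cohomology on $\fU$. Applied to the pro-covering $\{(\cU,\cW)\to(\cU,\cU_\Kbar)\}_\cW$ of the terminal object by the tower of finite \'etale covers of $\cU_\Kbar$, this identifies the corresponding \v{C}ech complex, in the colimit over $\cW$, with the homogeneous bar complex $C^\bullet\bigl(\Delta,\cF(\Rbar_\cU)\bigr)$ computing continuous group cohomology. Granting the vanishing, the Cartan--Leray spectral sequence for $g$ collapses and yields
$$
H^i(\fU,\cF)\;\cong\;H^i\bigl(B\Delta,\,g_\ast\cF\bigr)\;\cong\;H^i_{\cont}\bigl(\Delta,\,\cF(\Rbar_\cU)\bigr),
$$
the second isomorphism being the standard identification of the cohomology of the classifying topos of a profinite group with continuous group cohomology; here the finiteness built into Assumptions 6.10 allows the interchange of $H^i$ with the inverse limit over the torsion levels, the relevant $\varprojlim^1$-terms vanishing. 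All of this is functorial in $\cU$, so taking associated sheaves over $X^{\rm et}$ gives the asserted description of $R^iv_\ast\cF$. The argument is insensitive to whether $X$ is a smooth scheme or a smooth $p$-adic formal scheme over $\cO_K$, the pretopology ${\rm PT}_{X_\Kbar}$ being available in either setting.

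The main obstacle is the acyclicity $R^{q}g_\ast\cF=0$ for $q>0$: one has to exploit the precise form of Assumptions 6.10 in order to decouple the \'etale and the arithmetic directions of $\fX$, and it is exactly here that working with the corrected pretopology ${\rm PT}_{X_\Kbar}$ --- in place of the topology originally envisaged by Faltings --- is indispensable, since only for ${\rm PT}_{X_\Kbar}$ do the localization identification $\fX_{/v(\cU)}\simeq\fU$ and the morphism $g$ to $B\Delta$ behave as needed. A secondary, purely technical, point is the commutation of cohomology with $\lim_{\leftarrow,n}$ in passing from the sheaves $\cF_n$ to $\cF$, which rests on the finiteness of the cohomology of the modules $\cF_n(\Rbar_\cU)$.
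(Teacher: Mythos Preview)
This theorem is not proved in the present paper; it is quoted from \cite[Thm.~6.12]{andreatta_iovita}, and the paper offers no self-contained argument to compare against. What the paper does contain is the closely related material of \S\ref{sec:cohocryssheaves}: Lemma~\ref{lemma:howtocomputeRivast} constructs the comparison map by factoring ${\rm H}^0(\fU_M,\,-\,)$ as localization followed by $\cG_{\cU_M}$-invariants and using the resulting Grothendieck spectral sequence, and the proof of Proposition~\ref{prop:whenRvconisgaloiscoho} sketches how the cited result is established, via \emph{two} spectral sequences (\cite[Formula~(19)]{andreatta_iovita} and \cite[Prop.~6.15]{andreatta_iovita}) whose degeneration is controlled separately by \cite[Lemma~6.13 and Prop.~6.15]{andreatta_iovita}.

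Your outline is in the right spirit but misdiagnoses where the difficulty lies. You claim that Assumptions~6.10 ``express the acyclicity of $\cF$ in the \'etale direction'' and that the heart of the matter is the vanishing $R^qg_\ast\cF=0$, while the $\varprojlim$ commutation is ``secondary, purely technical''. The paper says the opposite: immediately after Lemma~\ref{lemma:howtocomputeRivast} it notes that for a \emph{single} sheaf (not a continuous one) the comparison map is already an isomorphism, with no hypotheses needed (this is \cite[Lemma~3.5]{erratum}). The entire purpose of Assumptions~6.10 is to handle the passage to inverse systems $\{\cF_n\}_n$, and this is where the two spectral sequences and their controlled degeneration enter. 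A single Cartan--Leray collapse does not capture this. Secondarily, your morphism $g$ built from $\cW\mapsto(\cU,\cW)$ ignores the variation of the first coordinate in objects $(\cV,\cW)$ of $\fU$; the actual identification $T^n(\cF)\cong R^nv_{M,\ast}(\cF)$ in the proof of Lemma~\ref{lemma:howtocomputeRivast} proceeds instead via the extension-by-zero functor $j_{\cU,!}$ and a universal $\delta$-functor argument, which is what absorbs the \'etale direction upon sheafification.
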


This theorem reduces via the Leray spectral sequence the
calculation of the sheaf cohomology groups ${\rm H}^i(\fX, \cF)$
to local calculations of fundamental group cohomology with values
in localizations and sheaf cohomology on $X^{\rm et}$.

We now pass to the description  of the sheaves $\bB_{\rm
cris}^\nabla$ and  $\bB_{\rm cris}$ announced at the beginning of
this introduction. We  call these ``Fontaine sheaves" and prove
that they enjoy the following properties.\smallskip

\noindent a) Both are ind-continuous sheaves of $B_{\rm
cris}$-algebras on $\fX$ such that for a ``small" affine
$\cU=\Spec(R_\cU)$ the localizations $\bB_{\rm
cris}^\nabla(\Rbar_\cU)$ and $\bB_{\rm cris}(\Rbar_\cU)$ are
respectively isomorphic to the rings $B_{\rm
cris}^\nabla(\Rbar_\cU)$ and $B_{\rm cris}(\Rbar_\cU)$ defined in
\cite{faltingscrystalline} and \cite{brinon}.\smallskip

\noindent b) $\bB_{\rm cris}^\nabla$ is endowed with a filtration
${\rm Fil}^\bullet (\bB_{\rm cris}^\nabla)$ by sub-sheaves and a
Frobenius endomorphism.\smallskip

\noindent c) $\bB_{\rm cris}$ is a sheaf of
$\cO_X\otimes_{\cO_K}B_{\rm cris}$-algebras, is endowed with a
filtration ${\rm Fil}^\bullet(\bB_{\rm cris})$ by sub-sheaves, a
Frobenius endomorphism and a quasi-nilpotent and integrable
connection $\nabla$ such that

i) $\nabla$ satisfies the Griffith transversality property.

ii) $\bB_{\rm cris}^\nabla$ is exactly the sub-sheaf of $\bB_{\rm
cris}$ of horizontal sections for $\nabla$.\smallskip

\noindent d) For $i\ge 1$ we have  $R^iv_\ast\bB_{\rm
cris}=0$.
\smallskip

We  remark that  property d) above is a
deep result stating that the sheaf $\bB_{\rm cris}$ is acyclic for
the functor $v_\ast$. It is a consequence of theorem
\ref{thm:coh_loc} and the results in
\cite{andreatta_brinonacyclicity}. As we now have  Fontaine
sheaves on $\fX$ we can start developing a Fontaine theory with
sheaves. To start let $\bL$ denote a locally constant
$\Q_p$-sheaf on $X_K^{\rm et}$ which, let us recall, we view via
base change and $u_\ast$ as a sheaf on ${\rm PT}_{X_\Kbar}$. We
define $\bD_{\rm cris}^{\rm geo}(\bL):= v_\ast(\bL\otimes\bB_{\rm
cris}).$ It is a sheaf of $\cO_{X_K}\otimes_{K}B_{\rm
cris}$-modules on $X_K^{\rm et}$ endowed with a filtration,
Frobenius  endomorphism, quasi-nilpotent and integrable connection
and a continuous $G_K$-action. Here  $\cO_{X_K}$ denotes the
sheaf $\cO_X [p^{-1}]$ on $X$. We set $\bD_{\rm cris}^{\rm
ar}(\bL) :=\bigl(\bD_{\rm cris}^{\rm geo}(\bL)\bigr)^{G_K}$.

\begin{definition}
\label{def:crys} We say that $\bL$ is a {\bf crystalline sheaf} on
$X_K^{\rm et}$ if

$\bullet$ $\bD_{\rm cris}^{\rm ar}(\bL)$ is a coherent sheaf of
$\cO_{X_K}$-modules on $X_K^{\rm et}$.

$\bullet$ The natural morphism $\bD_{\rm cris}^{\rm
ar}(\bL)\otimes_{\cO_X} \bB_{\rm cris}\lra \bL\otimes_{\Q_p}
\bB_{\rm cris}$ is an isomorphism.
\end{definition}

The definition \ref{def:crys} is the sheaf theoretic analogue of the usual definition 
of crystalline representations in $p$-adic Hodge theory. We prove that it
coincides with the notions of ``locally crystalline representations" in the relative 
setting due to \cite{brinon} and with that of ``associated sheaves'' due to
Faltings. If $X$ is a formal scheme as at the beginning of this section and 
$\bL$ is a crystalline sheaf on $X_K^{\rm et}$ then $\bD_{\rm cris}^{\rm ar}(\bL)$ is a
filtered convergent $F$-isocrystal on $X_K^{\rm et}$ in the sense of \cite{berthelot} 
and $\bD_{\rm cris}^{\rm geo}(\bL)\cong \bD_{\rm cris}^{\rm ar}(\bL)\tensor_K
B_{\rm cris}$. We remark that in the recent preprint \cite{tsuji1} T.~Tsuji developed 
systematically a theory of crystalline \'etale local systems on schemes $X_K$
in the case where $X$ is the complement of a divisor with normal crossings in a proper 
formal scheme over $\cO_K$ with semi-stable special fiber and such that the
horizontal divisor has normal crossings also with the special fiber. The paper uses 
different methods and does not contain comparison isomorphisms. If $X$ is a
smooth formal scheme (and the horizontal divisor is trivial) our notion of a crystalline 
\'etale local system on $X_K$ coincides with the one in \cite{tsuji1}.

We can now
list the main result of this paper.

\begin{theorem}
\label{thm:formal_comp} Suppose that $X$ is a smooth $p$-adic formal scheme over $\cO_K$ and let $\bL$ be a crystalline sheaf on $X_K^{\rm et}$. For every $i\ge 0$
we have a natural isomorphism of $\delta$--functors with values in $B_{\rm cris}$-modules respecting the filtrations, Frobenii and the $G_K$-actions
$$
{\rm H}^i(\fX, \bL\otimes\bB_{\rm cris}^\nabla)\cong {\rm
H}^i_{\rm cris}(\Xbar, \bD_{\rm cris}^{\rm geo}(\bL)).
$$
\end{theorem}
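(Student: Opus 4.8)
The plan is to compute both sides of the desired isomorphism through the same intermediate object: the hypercohomology on $\fX$ of the de Rham complex of $\bB_{\rm cris}$ with coefficients in $\bL$. First I would form the complex
$$
\bL\otimes\bB_{\rm cris}\stackrel{\nabla}{\lra}\bL\otimes\bB_{\rm cris}\otimes_{\cO_X}\Omega^1_{X/\cO_K}\stackrel{\nabla}{\lra}\cdots
$$
on $\fX$. By property c)(ii) in the introduction, $\bB_{\rm cris}^\nabla$ is the kernel of $\nabla$ on $\bB_{\rm cris}$, and the Poincar\'e-lemma-type exact sequence displayed in the introduction shows that this complex is a resolution of $\bL\otimes\bB_{\rm cris}^\nabla$ on $\fX$ (one tensors the exact sequence for $\bB_{\rm cris}$ with the locally constant sheaf $\bL$, which is exact since $\bL$ is flat). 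Hence
$$
{\rm H}^i(\fX,\bL\otimes\bB_{\rm cris}^\nabla)\cong\bH^i\bigl(\fX,\ \bL\otimes\bB_{\rm cris}\otimes_{\cO_X}\Omega^\bullet_{X/\cO_K}\bigr),
$$
and this is the left-hand side rewritten in de Rham form. The filtration, Frobenius and $G_K$-action on the left are transported to this hypercohomology via the corresponding structures on $\bB_{\rm cris}$ and on the de Rham complex.

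**Descending to $X_K^{\rm et}$.** Next I would push this down along $v_\ast$ using the Leray spectral sequence for $v\colon X^{\rm et}\lra\fX$. The key input is property d): $R^iv_\ast\bB_{\rm cris}=0$ for $i\ge 1$, and more generally $R^iv_\ast(\bL\otimes\bB_{\rm cris})=0$ for $i\ge 1$ because $\bL$ is locally constant on $X_K^{\rm et}$ and the acyclicity is local on $X^{\rm et}$ (one trivializes $\bL$ on small affines and applies d) term by term; the Assumptions 6.10 of \cite{andreatta_iovita} needed for Theorem \ref{thm:coh_loc} must be checked for $\bL\otimes\bB_{\rm cris}$, which is where the crystalline hypothesis and the explicit description of localizations $\bB_{\rm cris}(\Rbar_\cU)$ enter). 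Consequently each term of the de Rham complex is $v_\ast$-acyclic, the $v_\ast$ of it is the complex $\bD_{\rm cris}^{\rm geo}(\bL)\otimes_{\cO_{X_K}}\Omega^\bullet_{X_K/K}$ on $X_K^{\rm et}$ (using $\bD_{\rm cris}^{\rm geo}(\bL)=v_\ast(\bL\otimes\bB_{\rm cris})$ and compatibility of $v_\ast$ with $\otimes_{\cO_X}\Omega^j$), and the spectral sequence collapses to give
$$
{\rm H}^i(\fX,\bL\otimes\bB_{\rm cris}^\nabla)\cong\bH^i\bigl(X_K^{\rm et},\ \bD_{\rm cris}^{\rm geo}(\bL)\otimes_{\cO_{X_K}}\Omega^\bullet_{X_K/K}\bigr).
$$

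**Identifying with crystalline cohomology.** Finally I would identify the right-hand side of this last formula with ${\rm H}^i_{\rm cris}(\Xbar,\bD_{\rm cris}^{\rm geo}(\bL))$. For $X$ a smooth formal scheme this is the comparison, for a convergent $F$-isocrystal, between its rigid/crystalline cohomology on $\Xbar$ and the de Rham cohomology on the rigid generic fiber $X_K=X^{\rm rig}$ of the corresponding module with integrable connection — i.e.\ the Berthelot theory of \cite{berthelot} together with the fact, asserted in the discussion after Definition \ref{def:crys}, that $\bD_{\rm cris}^{\rm ar}(\bL)$ is a filtered convergent $F$-isocrystal and $\bD_{\rm cris}^{\rm geo}(\bL)\cong\bD_{\rm cris}^{\rm ar}(\bL)\otimes_K B_{\rm cris}$; tensoring that isocrystal comparison over $K$ with $B_{\rm cris}$ gives the statement with the geometric coefficients. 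One then checks that all three structures match: the Frobenius on crystalline cohomology corresponds to the one on $\bB_{\rm cris}$ twisted by the $F$-isocrystal structure; the $G_K$-action is inherited from $B_{\rm cris}$ and is the one on the left; and the filtration is the Hodge filtration on de Rham cohomology convolved with ${\rm Fil}^\bullet B_{\rm cris}$, matching ${\rm Fil}^\bullet(\bB_{\rm cris}^\nabla)$ via Griffiths transversality (property c)(i)). Functoriality in $i$ as $\delta$-functors is automatic from the spectral-sequence construction.

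**Main obstacle.** I expect the crux to be the vanishing $R^iv_\ast(\bL\otimes\bB_{\rm cris})=0$ for $i\ge 1$ — that is, verifying that $\bL\otimes\bB_{\rm cris}$ satisfies Assumptions 6.10 of \cite{andreatta_iovita} so that Theorem \ref{thm:coh_loc} applies, and then that the resulting fundamental-group cohomology $\,{\rm H}^i_{\rm cont}(\pi_1^{\rm alg}(\cU_\Kbar,\ast),(\bL\otimes\bB_{\rm cris})(\Rbar_\cU))$ vanishes for $i\ge 1$ on small affines, which rests on the acyclicity results of \cite{andreatta_brinonacyclicity} applied to $B_{\rm cris}(\Rbar_\cU)$ and crucially uses that $\bL$ is \emph{crystalline} (so its localization is, after tensoring with $\bB_{\rm cris}$, built out of a module with connection that trivializes $\bL$). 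The bookkeeping of the three additional structures through every step is lengthy but essentially formal once this acyclicity and the Poincar\'e lemma are in place.
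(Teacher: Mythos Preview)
Your outline is the paper's argument: resolve $\bL\otimes\bB_{\rm cris}^\nabla$ by the de Rham complex of $\bL\otimes\bB_{\rm cris}$ via the Poincar\'e lemma, push down along $v_\ast$ using acyclicity, and identify the result with the de Rham cohomology of $\bD_{\rm cris}^{\rm geo}(\bL)$.

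One refinement on the acyclicity step. Your phrase ``trivialize $\bL$ on small affines and apply d) term by term'' does not work as stated: $\bL$ is locally constant on $X_K^{\rm et}$, but over a small affine $\cU$ of $X$ its localization $V_\cU(\bL)$ still carries a nontrivial $\cG_\cU$-action, so you cannot reduce to the constant case. The paper's mechanism (which you essentially anticipate in your final paragraph) is to invoke the crystalline isomorphism $\alpha_{\rm cris,\bL}\colon\bD_{\rm cris}^{\rm ar}(\bL)\otimes_{\cO_X}\bB_{\rm cris}\stackrel{\sim}{\to}\bL\otimes\bB_{\rm cris}$ \emph{before} pushing down, so that each term of the de Rham complex becomes a coherent $\cO_{X_K}$-module tensored with $\Fil^r\bB_{\rm cris}$; the vanishing $R^jv_{\Kbar,\ast}=0$ is proved for sheaves of that shape (Corollary~\ref{cor:crysisacyclic}, which is the filtered refinement of property d)), not directly for $\bL\otimes\bB_{\rm cris}$.

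The Frobenius compatibility is less formal than you suggest. Since Frobenius on $\bB_{\rm cris}$ is only defined locally, depending on a choice of parameters on each small affine, matching it with the crystalline Frobenius requires an auxiliary \v{C}ech-type double complex over products $\cU_J$ of small affines, with DP-envelope sheaves $\bA_{{\rm cris},\cU_J}$, together with a crystalline Poincar\'e lemma to show this computes both sides; this is the one part of the proof that is genuinely not bookkeeping.
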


\bigskip
\noindent The theorem \ref{thm:formal_comp} has two main
applications, one which is the comparison isomorphism for smooth
proper schemes over $\cO_K$ (theorem \ref{thm:alg_comp} below) and
the other which is an application to modular forms,
more precisely an overconvergent Eichler-Shimura isomorphism (see
\cite{andreatta_iovita_stevens}).

\begin{theorem}
\label{thm:alg_comp} Suppose that $X$ is a smooth proper scheme
over $\cO_K$ and $\bL$ is a crystalline sheaf on $X_K^{\rm et}$.
For every $i\ge 0$ we have a canonical isomorphism of
$\delta$--functors with values in $B_{\rm cris}$-modules, which
respects the filtrations, the Frobenii and the $G_K$-actions
$$
{\rm H}^i(X_\Kbar^{\rm et}, \bL)\otimes_{\Q_p} B_{\rm cris}\cong
{\rm H}^i_{\rm cris}(\Xbar, \bD_{\rm cris}^{\rm ar}(\bL))\otimes_K
B_{\rm cris}.
$$
\end{theorem}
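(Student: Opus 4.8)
The goal is to deduce Theorem \ref{thm:alg_comp} (the comparison isomorphism for smooth proper schemes) from Theorem \ref{thm:formal_comp} (the analogous statement for smooth $p$-adic formal schemes). The bridge is $p$-adic completion: given a smooth proper scheme $X$ over $\cO_K$, let $\fX$ denote its $p$-adic formal completion, which is a smooth proper $p$-adic formal scheme over $\cO_K$. The crystalline sheaf $\bL$ on $X_K^{\rm et}$ restricts/extends to a crystalline sheaf on the rigid generic fiber $X_K = \fX^{\rm rig}$ (these étale sites agree by properness — this is an invariance statement one has to invoke), so Theorem \ref{thm:formal_comp} applies and gives, compatibly with all structures,
$$
{\rm H}^i(\fX, \bL\otimes\bB_{\rm cris}^\nabla)\cong {\rm H}^i_{\rm cris}(\Xbar, \bD_{\rm cris}^{\rm geo}(\bL)).
$$
So the real content is to identify each side of this isomorphism with the corresponding side of Theorem \ref{thm:alg_comp}.

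\textbf{Step 1: The right-hand side.} I would first show that ${\rm H}^i_{\rm cris}(\Xbar, \bD_{\rm cris}^{\rm geo}(\bL))$ computes ${\rm H}^i_{\rm cris}(\Xbar, \bD_{\rm cris}^{\rm ar}(\bL))\otimes_K B_{\rm cris}$. As noted in the excerpt, when $X$ is a smooth (proper) formal scheme and $\bL$ is crystalline, $\bD_{\rm cris}^{\rm ar}(\bL)$ is a filtered convergent $F$-isocrystal on $\Xbar$ and there is a canonical isomorphism $\bD_{\rm cris}^{\rm geo}(\bL)\cong \bD_{\rm cris}^{\rm ar}(\bL)\otimes_K B_{\rm cris}$ of sheaves-with-connection. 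Since $B_{\rm cris}$ is flat over $K$ and $\Xbar$ is proper (so crystalline cohomology with coefficients in a convergent isocrystal is finite-dimensional over $K$), crystalline cohomology commutes with the base change $-\otimes_K B_{\rm cris}$; this gives the identification on the right, and one must track that it is compatible with filtrations (via the Hodge filtration on de Rham cohomology of the filtered isocrystal), Frobenius, and the $G_K$-action (which acts through $B_{\rm cris}$ on that side). The properness of $X$ enters crucially here to guarantee finiteness and hence flat base change.

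\textbf{Step 2: The left-hand side.} This is where the hard work lies. I need ${\rm H}^i(\fX, \bL\otimes\bB_{\rm cris}^\nabla)\cong {\rm H}^i(X_\Kbar^{\rm et}, \bL)\otimes_{\Q_p} B_{\rm cris}$. The natural map comes from the structure morphism of sheaves $\bL \to \bL\otimes\bB_{\rm cris}^\nabla$ on $\fX$ together with $u_\ast$ and the period element $t$, giving a map ${\rm H}^i(X_\Kbar^{\rm et},\bL)\otimes B_{\rm cris} \to {\rm H}^i(\fX, \bL\otimes\bB_{\rm cris}^\nabla)$. To prove it is an isomorphism I would use Theorem \ref{thm:coh_loc}: via the Leray spectral sequence for $v\colon \fX \to X^{\rm et}$ (or rather for the composite through $X_\Kbar^{\rm et}$), the computation of ${\rm H}^i(\fX, \bL\otimes\bB_{\rm cris}^\nabla)$ reduces to fundamental-group cohomology with values in the localizations $\bL_\eta\otimes B_{\rm cris}^\nabla(\Rbar_\cU)$ on small affines. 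The key local input is the comparison between Galois cohomology of $\pi_1^{\rm alg}(\cU_\Kbar,\eta)$ with coefficients in $\bL_\eta \otimes B_{\rm cris}^\nabla(\Rbar_\cU)$ and with coefficients in $\bL_\eta$, tensored up with $B_{\rm cris}$ — this is the "almost étale" / Faltings-type local computation, which one expects to follow from the known structure of these relative Fontaine rings (the acyclicity-type results, e.g. from \cite{andreatta_brinonacyclicity}, and the fact that $B_{\rm cris}^\nabla(\Rbar_\cU)$ is a "period ring" for the local Galois representation $\bL_\eta$). Patching these local isomorphisms via the spectral sequence and using properness of $\Xbar$ to control the $X^{\rm et}$-cohomology then yields the global statement.

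\textbf{The main obstacle.} The crux is Step 2, specifically proving the local comparison on small affines that feeds into the Leray spectral sequence — i.e., that, after tensoring with $B_{\rm cris}$, continuous $\pi_1$-cohomology of $\bL_\eta\otimes B_{\rm cris}^\nabla(\Rbar_\cU)$ agrees with that of $\bL_\eta$. This is the incarnation, in this sheaf-theoretic language, of the difficult "relative almost étale" input, and it is where the crystalline hypothesis on $\bL$ is genuinely used. A secondary difficulty is bookkeeping: one must check that \emph{all} three identifications (formal comparison of Theorem \ref{thm:formal_comp}, Step 1, Step 2) are compatible with filtrations, Frobenii and $G_K$-actions, and assemble into an isomorphism of $\delta$-functors — in particular that the spectral sequences degenerate or are compatible enough that the claimed natural isomorphism is canonical and functorial in $\bL$ and $X$. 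I would also need to verify that $\bL$, a priori a local system on $X_K^{\rm et}$ with $X$ algebraic, really does restrict to a crystalline sheaf on $\fX^{\rm rig}$ in the sense of Definition \ref{def:crys}, using the GAGA-type comparison of étale sites for proper $X$ and the compatibility of the Fontaine sheaves and the functors $\bD_{\rm cris}^{\rm geo}$, $\bD_{\rm cris}^{\rm ar}$ with passage to the formal completion.
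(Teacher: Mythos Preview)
Your Step~1 matches the paper's Proposition~\ref{prop:filterediso}. Step~2 is where you and the paper diverge sharply. You propose to prove ${\rm H}^i(\fX_\Kbar, \bL\otimes\bB_{\rm cris}^\nabla)\cong {\rm H}^i(X_\Kbar^{\rm et}, \bL)\otimes_{\Q_p} B_{\rm cris}$ directly via the Leray spectral sequence and a local almost-\'etale computation on small affines --- essentially Faltings' original method, which you correctly flag as the ``main obstacle'' but do not carry out. The paper explicitly designs its argument to \emph{avoid} this step: as the introduction says, this isomorphism was ``one of the central and deep results'' of \cite{faltingsAsterisque}, but here it becomes an \emph{elementary consequence} of Theorem~\ref{thm:formal_comp} and the Colmez--Fontaine admissibility criterion.

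The paper's route is as follows. Tensor the fundamental exact sequence $0 \to \Q_p \to \Fil^0 \bB_{\rm cris,\Kbar}^\nabla \stackrel{1-\varphi}{\lra} \bB_{\rm cris,\Kbar}^\nabla \to 0$ with $\bL$ and take the long exact cohomology sequence on $\fX_\Kbar$ (Corollary~\ref{cor:diagram}). Via Theorem~\ref{thm:formal_comp} and Step~1, the $\bB_{\rm cris}^\nabla$-terms become $D_i\otimes B_{\rm cris}$ and $\Fil^0(D_i\otimes B_{\rm cris})$ where $D_i = {\rm H}^i_{\rm cris}(\Xbar,\bDcrisar(\bL))$, while the $\Q_p$-terms give $V_i = {\rm H}^i(X_\Kbar^{\rm et},\bL)\otimes\Q_p$. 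The long exact sequence thus relates $V_i$ to $V_{\rm cris}(D_i)=\Ker\bigl(1-\varphi\colon\Fil^0(D_i\otimes B_{\rm cris})\to D_i\otimes B_{\rm cris}\bigr)$. The Colmez--Fontaine criterion (Proposition~\ref{prop:admis}), together with Poincar\'e duality for \'etale and de Rham cohomology (applied to $\bL$ and $\bL^\vee$ simultaneously), allows a descending induction from $i=2d$ showing each $D_i$ is admissible with $V_{\rm cris}(D_i)\cong V_i$; hence $V_i$ is crystalline with $D_{\rm cris}(V_i)\cong D_i$, which is exactly the theorem.

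What each approach buys: yours would work in principle but leaves the genuinely hard analytic input (the local comparison for $B_{\rm cris}^\nabla(\Rbar_\cU)$) unproven. The paper trades that input for the already-established Colmez--Fontaine theorem and classical Poincar\'e duality, turning the hardest step into a formal diagram-chase and dimension count. Note in particular that the Poincar\'e duality used is only the classical one for \'etale and de Rham cohomology --- the paper's point in the introduction is that no duality for the \emph{new} cohomology on $\fX$ is needed, which is what makes the method extend to non-proper formal schemes.
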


The theorem \ref{thm:alg_comp} is a consequence of theorem
\ref{thm:formal_comp} and of the following two results:\smallskip

$\bullet$ If $X$ is a smooth, proper formal scheme over $\cO_K$
then we have an isomorphism of filtered, Frobenius modules: ${\rm
H}^i_{\rm cris}\bigl(\Xbar, \bD_{\rm cris}^{\rm
geo}(\cL)\bigr)\cong {\rm H}^i_{\rm cris}(\Xbar, \bD_{\rm
cris}^{\rm ar}(\cL))\otimes_{K}B_{\rm cris}$.\smallskip

\noindent and\smallskip

$\bullet$ The natural morphism of sheaves on $\fX$,
 $\bL\lra \bL\otimes \bB_{\rm cris}^\nabla$
induces for every $i\ge 0$ canonical isomorphisms as
$B_{\rm cris}$-modules respecting all the structure ${\rm
H}^i(X_{\Kbar}^{\rm et}, \cL)\otimes_{\Q_p}B_{\rm cris}\cong {\rm
H}^i\bigl(\fX, \bL\otimes_{\Z_p}\bB_{\rm
cris}^\nabla\bigr)$.\smallskip

This last isomorphism which is also proved in
\cite{faltingsAsterisque} as being one of the central and deep
results of that paper is in our theory  an
elementary consequence of theorem \ref{thm:formal_comp} and of the
criterion for ``admisibility'' of filtered, Frobenius modules
in \cite{colmez_fontaine}.

Let us remark that in lemma 3.14 we prove that $\bL$ is a
crystalline sheaf on $X_K^{\rm et}$ if and only if
$\bL$ and $\bD_{\rm cris}^{\rm ar}(\bL)$ are associated in
Faltings's sense. This and the fact that
$H^i\bigl(\fX, \bL\otimes\bB_{\rm cris}^\nabla\bigr)$ is naturally
isomorphic to the $i$-th cohomology group defined by
Faltings shows that the comparison isomorphism of theorem
\ref{thm:alg_comp} is the same as the one defined by Faltings, and
hence it is the same as all the other period maps defined in the literature.

\bigskip
\noindent Finally, in a future work we are planning to show
how to produce examples of crystalline sheaves and how to explicitly
calculate their $\bD_{\rm cris}^{\rm ar}$. More precisely let $X$
and $Y$ denote smooth $p$-adic formal schemes over $\cO_K$  and
suppose that $f\colon X\lra Y$ is a smooth proper morphism which
is algebrizable Zariski locally on $Y$. We believe that we
would be able to prove:

\begin{theorem}
\label{thm:relative_coh} Let us suppose that $\bM$ is a
crystalline sheaf on $X_K^{\rm et}$ and for every $i\ge 0$ let us
denote by $\bL_i:={\rm R}^if_{\rm et,\ast} \bM$. Then $\bL_i$ is a
crystalline sheaf on $Y_K^{\rm et}$ and we have an isomorphism
$\bD_{\rm cris,Y}^{\rm ar}(\bL_i)\cong \R^i f_{\rm cris,\ast}
(\bD_{\rm cris,X}^{\rm ar}(\bM)\otimes \Omega^\bullet_{X/Y})$ of
$\delta$--functors with values in the category of filtered
convergent $F$-isocrystals on $Y_k$.
\end{theorem}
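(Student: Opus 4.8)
The plan is to reduce everything to the absolute comparison isomorphism of Theorem \ref{thm:formal_comp} applied fibrewise over $Y$, using the base-change properties of the Fontaine sheaves and of the functor $v_\ast$. First I would set up the relative version of Faltings' site: working Zariski-locally on $Y$ (where $f$ is algebrizable) one may form the site $\fX$ attached to $X$ and the site $\fY$ attached to $Y$, together with a morphism of topoi $\fX\to\fY$ lifting $f$, and the two projections to the respective \'etale sites. The key compatibility to isolate is that the Fontaine sheaves are compatible with this morphism: the sheaf $\bB_{\rm cris}^\nabla$ on $\fX$ is (up to the relative differentials) the pullback of the corresponding sheaf on $\fY$, because its localizations $\bB_{\rm cris}^\nabla(\Rbar_\cU)$ are built functorially from the pair $(R_\cU,\Rbar_\cU)$. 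Then $\bM\otimes\bB_{\rm cris}^\nabla$ on $\fX$ pushes forward along $f$, and by the projection formula together with part d) of the properties of the Fontaine sheaves (acyclicity of $\bB_{\rm cris}$ for $v_\ast$) one gets that ${\rm R}^if_\ast(\bM\otimes\bB_{\rm cris}^\nabla)$ computes, on the one hand, $\bL_i\otimes\bB_{\rm cris}^\nabla$ (via Theorem \ref{thm:formal_comp} in the form of the elementary comparison recalled after Theorem \ref{thm:alg_comp}, $H^i(X_\Kbar^{\rm et},\bM)\otimes B_{\rm cris}\cong H^i(\fX,\bM\otimes\bB_{\rm cris}^\nabla)$, applied to the fibres), and on the other hand the relative crystalline cohomology of $\bD_{\rm cris,X}^{\rm ar}(\bM)$.

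Next I would establish crystallinity of $\bL_i$. Using Theorem \ref{thm:coh_loc}, $\bD_{\rm cris,Y}^{\rm geo}(\bL_i)=v_\ast(\bL_i\otimes\bB_{\rm cris})$ is the sheaf associated to the presheaf $\cU\mapsto H^0_{\rm cont}(\pi_1^{\rm alg}(\cU_\Kbar,\ast),(\bL_i)(\Rbar_\cU)\otimes\bB_{\rm cris}(\Rbar_\cU))$; one identifies $(\bL_i)(\Rbar_\cU)$ with $H^i$ of the geometric fibre over $\cU$ and invokes the proper-smooth comparison of Theorem \ref{thm:alg_comp} fibrewise to see that this is the relative crystalline cohomology of $\bD_{\rm cris,X}^{\rm ar}(\bM)$ base-changed, which is a coherent $\cO_{Y_K}$-module (by finiteness of relative crystalline cohomology of an $F$-isocrystal under a smooth proper morphism, after Berthelot--Ogus / Shiho). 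The isomorphism criterion in Definition \ref{def:crys} then follows from the fibrewise comparison isomorphism together with flat base change for $\bB_{\rm cris}$. This simultaneously yields the identification $\bD_{\rm cris,Y}^{\rm ar}(\bL_i)\cong\R^if_{\rm cris,\ast}(\bD_{\rm cris,X}^{\rm ar}(\bM)\otimes\Omega^\bullet_{X/Y})$, once one checks that relative crystalline cohomology of a filtered convergent $F$-isocrystal along a smooth proper morphism is again such an isocrystal (again Berthelot's theory, together with Griffiths transversality for the Gauss--Manin connection).

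The $\delta$-functoriality and compatibility with filtrations, Frobenii and $G_K$-actions should then be a formal consequence: the Leray spectral sequence for $\fX\to\fY\to Y_K^{\rm et}$ carries all these structures because the Fontaine sheaves do, and the degeneration/comparison statements in each degree assemble into a morphism of $\delta$-functors, which is an isomorphism because it is so after the faithfully flat base change $-\otimes\bB_{\rm cris}$ by Theorem \ref{thm:formal_comp}. I expect the main obstacle to be the base-change statement for the Fontaine sheaves and for $R^iv_\ast$: one needs a relative acyclicity result refining part d) above, i.e. that formation of $\bB_{\rm cris}$ and of $\R v_\ast$ commutes with the smooth proper morphism $f$, which is where the acyclicity results of \cite{andreatta_brinonacyclicity} have to be applied in families rather than fibrewise, and where the algebrizability hypothesis on $f$ is genuinely used to transport the proper-smooth results of Theorem \ref{thm:alg_comp} into the formal relative setting. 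A secondary technical point is the comparison of the Hodge and crystalline filtrations under $\R f_{\rm cris,\ast}$, for which the explicit de Rham complex $\bD_{\rm cris,X}^{\rm ar}(\bM)\otimes\Omega^\bullet_{X/Y}$ with its stupid filtration is the right model, exactly as on the left-hand side the $\bB_{\rm cris}$-resolution of $\bB_{\rm cris}^\nabla$ provides the filtration.
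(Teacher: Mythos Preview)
The paper does not contain a proof of this theorem. Look again at the sentence immediately preceding the statement: ``Finally, in a future work we are planning to show how to produce examples of crystalline sheaves and how to explicitly calculate their $\bD_{\rm cris}^{\rm ar}$ \ldots\ We believe that we would be able to prove:'' --- Theorem~\ref{thm:relative_coh} is announced as a result the authors expect to establish elsewhere, not something proved in this paper. There is therefore no proof in the paper to compare your proposal against.

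That said, your outline is a plausible sketch of the strategy one would expect, and it is in the spirit of the paper's methods: push the absolute comparison of Theorem~\ref{thm:formalcomparison} through a relative Leray spectral sequence for $\fX\to\fY$, use the acyclicity of $\bB_{\rm cris}$ for $v_\ast$ to identify the pushforward with relative de Rham/crystalline cohomology of $\bDcrisarX(\bM)$, and invoke finiteness and base-change for relative crystalline cohomology of filtered $F$-isocrystals under smooth proper maps. You have also correctly identified the two genuine obstacles: (i) a \emph{relative} acyclicity statement for $\bB_{\rm cris}$ along $f$, not just for $v_\ast$ over a point, and (ii) the proper base-change/finiteness for $\R f_{\rm cris,\ast}$ of a filtered convergent $F$-isocrystal in the formal setting, where the local algebrizability hypothesis is meant to let one import the algebraic results. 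Neither of these is available in the paper; they are exactly the content that would have to be supplied in the promised future work. So your proposal is not wrong, but it is a programme rather than a proof, and the paper offers no benchmark to measure it against.
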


\bigskip

The reader will remark throughout the paper the presence of an
auxiliary field $M$ which is an extension of $K$ contained in
$\Kbar$ and which indexes all the objects appearing: $\fX_M$,
$\bA_{\rm cris,M}^\nabla$, $\bA_{\rm cris,M}$ etc. If $M$ is a
finite extension of~$K$, this allows us to prove the theorems above
also for the base change of~$X$ to the ring of integers of~$M$.
Equivalently the above results are valid without assuming that
$K$ is absolutely unramified but under the hypothesis that~$X$
(and the morphism $f\colon X\to Y$ in \ref{thm:relative_coh}) is
defined over~$\WW(k)$. Since the notations become more complicated
and possibly obscure some of the simple ideas present in the
proofs we have chosen to sketch these ideas in the introduction
in the simplified assumption that $K$ is unramified.

We would also like to point out that the methods presented here
seem suitable for pursuing further inquiries into this problem.
Namely we have already worked out the comparison theorems for
schemes and formal schemes over the ring of integers of a finite
extension $K$ of $\Q_p$ (hence removing the ``unramified-ness''
assumption present in this paper) with semi-stable special fibers
and hope to be able to report on these results soon. Moreover we
think that for smooth schemes over $\cO_K$ one may replace
the locally constant $\Q_p$-sheaf $\bL$ on $X_K^{\rm et}$ by a
constructible $\Q_p$-sheaf and obtain interesting comparison
isomorphisms. We also believe that we should be able to derive
{\it integral} comparison isomorphisms which would work better
than the existing ones.

\bigskip

\noindent
{\bf Acknowledgements} We thank Ahmed Abbes for
pointing out the error in \cite{faltingsAsterisque} (see the
section \ref{sec:Groth}) and for
many helpful discussions and email exchanges on this
subject and 
Luc Illusie for kindly providing us with copies of his correspondence with
Deligne on Faltings' topology.
We are grateful to the two referees for the careful reading
of the paper and for pointing out some mistakes and suggesting ways to remedy them.
Some of their remarks have been incorporated in the article (e.g. see remark
\ref{remark:normal}.)
Finally, part of the work on this article was done while both authors were guests of
L'Institut Henri Poincar\'e, Paris during the Galois semester 2010.
We thank this institution for its hospitality.

\subsection{Notations}\label{sec:Notation} Let $p>0$ be a prime integer,
$\cO_K$ a complete discrete valuation ring with fraction field $K$
and perfect residue field $k$. Fix an algebraic closure $K \subset \Kbar$,
 let $\kbar$ denote its residue field and  $\OKbar$ the normalization
of~$\cO_K$ in~$\Kbar$. Write $G_K$ for the Galois group of~$\Kbar$
over~$K$. Fix a field extension $K\subset M \subset \Kbar$. We
write $\Mun \subseteq M$ for the maximal absolutely unramified
subfield of~$M$ and $\OMun$ for its ring of integers.\smallskip

The following notations will be used throughout the paper (some of
the objects denoted here will be defined in this very section  and
the rest in the next sections): \bigskip

\noindent $\bullet$ Rings:\smallskip

\noindent $W_n:=\WW_n(\cO_{\Kbar}/p\cO_\Kbar)$, $A_{\rm
inf}^+:=A_{\rm inf}^+(\OKbar)$, $A_{\rm inf}:=A_{\rm
inf}(\OKbar)$, $A_{\rm cris,n}:=A_{\rm cris,n}(\cO_\Kbar)$,
$A'_{\rm cris,n}:=A'_{\rm cris,n}(\cO_\Kbar)$, $A_{\rm
cris}:=A_{\rm cris}(\cO_\Kbar)= A'_{\rm cris}(\cO_\Kbar)$, $B_{\rm
cris}:=B_{\rm cris}(\OKbar)$.\bigskip

\noindent $\bullet$ Sheaves on $\fX_M$:
\smallskip

\noindent $\WW_{n,M}:=\WW_n(\cO_{\fX_M}/p\cO_{\fX_M})$,
$\WW_n:=\WW_{n,\Kbar}$, $\bA_{\rm inf,M}^+:=\{\WW_{n,M}\}_n$,
$\bA_{\rm inf}:=\{\WW_n\}_n$.

\bigskip
\noindent We recall a few facts regarding the properties and (one
of) the constructions of $A_{\rm cris}$ needed in the sequel. For
details we refer to \cite[\S1\&\S2]{Fontaineperiodes}. Choose a
compatible sequence of roots $(p^{1/p^{n-1}})_{n\ge 1}$ in
$\OKbar$ (compatible means that $(p^{1/p^{n}})^p=p^{1/p^{n-1}}$,
for all $n\ge 1$). For every $n\in\N$ we have a ring homomorphism
$\theta_n\colon W_n:=\WW_n(\OKbar/p\OKbar) \lra \OKbar/p^n\OKbar$
given by $(s_0,\ldots,s_{n-1}) \mapsto
\sum_{i=0}^{n-1}p^i\tilde{s}_i^{p^{n-1-i}}$ where $\tilde{s}_i\in
\OKbar/p^n\OKbar$ is a lift of~$s_i$ for every $i$.
Write~$\varphi$ for Frobenius on $W_n$. Denote by $\ds
\widetilde{p}_n:=\bigl[p^{1/p^{n-1}}\bigr] \in W_n$ the
Teichm\"uller lift of $p^{1/p^{n-1}}\in \OKbar/ p\OKbar$. Let
$\xi_n:=\widetilde{p}_n-p\in W_n$, then $\xi_n$ generates
$\Ker(\theta_n)$. Denote by $A_{\rm cris,n}$ the
$\WW(k)$--DP--envelope of $W_n$ with respect to the ideal
$\Ker(\theta_n)$ (where $\WW(k)$--DP--envelope means that the
divided powers are compatible with the standard divided powers
on~$p\WW_n(k)$). Note that $A_{\rm cris,n}$ is naturally endowed
with an action of $G_K$. Denote by $\Ker(\theta_n)^{\rm DP}$ the
PD--ideal on $A_{\rm cris,n}$. Note
that~$\varphi(\xi_n)=\varphi(\widetilde{p}_n-p)=(\widetilde{p}_n^p-p^p)+
(p^p-p)$. Since~$\widetilde{p}_n^p-p^p\in \Ker(\theta_n)$ and~$p$
admits divided powers in $A_{\rm cris,n}$, also~$\varphi(\xi_n)$
does. Thus Frobenius on $W_n$ extends to an operator called
Frobenius and denoted by $\varphi$, on $A_{\rm cris,n}$.

Let $\ds \cR(\OKbar):= \lim_{\leftarrow } \OKbar/p\OKbar$ where
the inverse limit is taken with respect to Frobenius. Put $\ds
A^+_{\rm inf}:=\WW\bigl(\cR(\OKbar)\bigr)\cong \lim_{\infty
\leftarrow n} W_n$ where the latter inverse limit is taken with
respect to the map $u_{n+1}\colon W_{n+1}\lra W_n$ defined by the
natural projection composed with Frobenius. Remark that the maps
$\theta_n$ are compatible i.~e., $\theta_n=\theta_{n+1}\circ
u_{n+1}$, that the sequence $\xi:=\{\xi_n\}_n$ is compatible i.e.,
$u_{n+1}(\xi_{n+1})=\xi_n$ for all $n\ge 0$, and that
$\Ker(\theta)$ is generated by~$\xi$. Denote by $\ds A_{\rm
cris}:=\lim_{\infty \leftarrow n} A_{\rm cris,n}$. It is the
$p$--adic completion of the $\WW(k)$--DP--envelope of $A^+_{\rm
inf}$ with respect to the ideal $\Ker(\theta)$. We then have
$$A_{\rm
cris}=A_{\rm inf}^+\bigl\{\langle \xi \rangle\bigr\}= A_{\rm
inf}^+ \bigl\{\delta_0,\delta_1,\ldots\bigr\}/(p\delta_0-\xi^p,p
\delta_{m+1}-\delta_m^p\bigr)_{m\geq 0}$$where $\delta_i=
\gamma^{i+1}(\xi)$ and $\gamma$ is the application on the kernel
of~$\theta$ on~$A_{\rm cris}$ given by~$z\mapsto (p-1)!z^{[p]}$;
cf.~\cite[Prop.~6.1.2]{brinon}. Note
that~$\WW_n\bigl(\cR(\OKbar)\bigr)\cong A_{\rm inf}^+/p^n A_{\rm
inf}^+$ since $A_{\rm inf}^+=\WW\bigl(\cR(\OKbar)\bigr)$ and
$\cR(\OKbar)=\ds \lim_\leftarrow \OKbar/p\OKbar$ is a perfect ring
by construction.

\begin{lemma} The kernel of the ring homomorphism $q_n\colon
\WW_n\bigl(\cR(\OKbar)\bigr)\lra \WW_n(\OKbar/p\OKbar)$ induced by $\overline{q}_n$ is the ideal generated by $\{[\widetilde{p}]^{p^n}, V([\widetilde{p}]^{p^n}),
V^2([\widetilde{p}]^{p^n}),\ldots,V^{n-1}([\widetilde{p}]^{p^n})\}$. In particular $$A_{\rm cris}/p^n A_{\rm cris} \cong W_n\bigl[\delta_0,
\delta_1,\ldots\bigr]/(p\delta_0-\xi_{n+1}^p,p \delta_{m+1}-\delta_m^p\bigr)_{m\geq 0}$$via the map which sends $\delta_i\mapsto \delta_i$ and induces
on~$\WW_n\bigl(\cR(\OKbar)\bigr)$ the morphism $q_n\colon \WW_n\bigl(\cR(\OKbar)\bigr)\to W_n $ associated to~$\overline{q}_n$.
\end{lemma}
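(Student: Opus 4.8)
The plan is to prove the two linked assertions in turn: first compute $\Ker(q_n)$, then read off the presentation of $A_{\rm cris}/p^nA_{\rm cris}$. For the kernel, I would use that $\cR(\OKbar)=\lim_{\leftarrow}\OKbar/p\OKbar$ is a perfect valuation ring (canonically the ring of integers of the tilt of $\widehat{\Kbar}$) and that $q_n=\WW_n(\overline q_n)$. By functoriality of truncated Witt vectors, $\Ker(q_n)$ is the Witt ideal attached to $I:=\Ker(\overline q_n)$, i.e. the set of Witt vectors all of whose components lie in $I$; writing such a vector as $\sum_{j=0}^{n-1}V^j[c_j]$ and using the projection formula $V^j(x)\,y=V^j\!\bigl(x\,\varphi^j(y)\bigr)$ together with the invertibility of $\varphi$ on the perfect ring $\cR(\OKbar)$, one gets that this ideal equals $\bigl([a],V[a],\ldots,V^{n-1}[a]\bigr)$ whenever $I=(a)$ is principal. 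It then remains to identify $I$: since $\cR(\OKbar)$ is a valuation ring $I$ is principal, and comparing $v^{\flat}$-valuations (using that $\overline q_n$ is the projection with $q_n([\widetilde{p}]-p)=\xi_{n+1}$) shows $I=(\widetilde{p}^{\,p^n})$; with $[\widetilde{p}^{\,p^n}]=[\widetilde{p}]^{p^n}$ this yields the asserted generators.

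For the presentation, I would start from $A_{\rm cris}=A_{\rm inf}^+\{\delta_0,\delta_1,\ldots\}/\bigl(p\delta_0-\xi^p,\ p\delta_{m+1}-\delta_m^p\bigr)_{m\ge0}$ with $\xi=[\widetilde{p}]-p$, reduce modulo $p^n$, and use $A_{\rm inf}^+/p^nA_{\rm inf}^+\cong\WW_n(\cR(\OKbar))$ to get
$$A_{\rm cris}/p^nA_{\rm cris}\cong\WW_n\bigl(\cR(\OKbar)\bigr)[\delta_0,\delta_1,\ldots]\big/\bigl(p\delta_0-\overline\xi^{\,p},\ p\delta_{m+1}-\delta_m^p\bigr),$$
where $\overline\xi$ is the image of $\xi$. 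As $q_n(\overline\xi)=\xi_{n+1}$, the map induced by $q_n$ on coefficients together with $\delta_i\mapsto\delta_i$ carries these relations to the defining relations of $W_n[\delta_0,\ldots]/(p\delta_0-\xi_{n+1}^p,\ p\delta_{m+1}-\delta_m^p)$, hence gives a surjection $\Psi$ onto the latter whose kernel is generated by the image of $\Ker(q_n)$ in $A_{\rm cris}/p^nA_{\rm cris}$; so it suffices to show each generator $V^j\bigl([\widetilde{p}]^{p^n}\bigr)$, $0\le j\le n-1$, dies there. This I would settle by a direct computation: in characteristic $p$ one has $p^j[z]=V^j[z^{p^j}]$ (since $p=V\varphi$ and $\varphi[z]=[z^p]$), so $V^j\bigl([\widetilde{p}]^{p^n}\bigr)=V^j\bigl[(\widetilde{p}^{\,p^{n-j}})^{p^j}\bigr]=p^j[\widetilde{p}]^{p^{n-j}}$; and $[\widetilde{p}]^{p^{n-j}}=(\xi+p)^{p^{n-j}}$ lies in $p^{n-j}A_{\rm cris}$, because by the binomial theorem the term $\xi^{p^{n-j}}=(p^{n-j})!\,\xi^{[p^{n-j}]}$ is divisible by $p^{n-j}$ (as $v_p((p^{n-j})!)=1+p+\cdots+p^{n-j-1}\ge n-j$) and each remaining term $\binom{p^{n-j}}{l}p^{l}\xi^{p^{n-j}-l}$ with $l\ge1$ has coefficient of $p$-valuation $\ge (n-j)-v_p(l)+l\ge n-j$. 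Hence $V^j\bigl([\widetilde{p}]^{p^n}\bigr)\in p^{j}\cdot p^{n-j}A_{\rm cris}=p^nA_{\rm cris}$, and $\Psi$ is an isomorphism.

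I expect the first part — pinning down the map $\overline q_n$ and computing its kernel — to be the delicate point, since it requires carefully matching the inverse-limit-of-Witt-vectors description of $A_{\rm inf}^+$ with the Teichm\"uller and valuation structure of $\cR(\OKbar)$ (and in particular keeping track of the index shift responsible for the exponent $p^n$ rather than $p^{n-1}$); granting that, everything else is routine Witt-vector and divided-power bookkeeping.
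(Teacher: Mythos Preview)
Your proposal is correct and follows essentially the same path as the paper: identify $\Ker(\overline q_n)=(\widetilde p^{\,p^n})$, lift to the generators $V^j[\widetilde p]^{p^n}$ of $\Ker(q_n)$, and then verify via $V^j[\widetilde p]^{p^n}=p^j[\widetilde p]^{p^{n-j}}\in p^nA_{\rm cris}$ that these die in $A_{\rm cris}/p^nA_{\rm cris}$. The only cosmetic difference is that the paper lifts the kernel by induction on $n$ using the exact sequence $0\to\WW_{n-1}(\cR(\OKbar))\stackrel{V}{\to}\WW_n(\cR(\OKbar))\to\WW_1(\cR(\OKbar))\to 0$ rather than by your direct projection-formula argument.
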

\begin{proof} We prove the first claim.
We have~$\widetilde{p}^{p^n}=(\xi+p)^{p^n}\equiv \xi^{p^n}\mbox{
mod}~p^nA_{\rm inf}^+$ and $\xi^{p^n}=p^{p^{n-1}}
\delta_0^{p^{n-1}}=0$ mod~$p^n A_{\rm inf}^+$. The kernel of the
projection $\overline{q}_n\colon \cR(\OKbar)=\ds \lim_\leftarrow
\OKbar/p\OKbar\to \OKbar/p\OKbar$ on the $n+1$--th factor of the
limit is generated by~$\widetilde{p}^{p^n}$. This proves the lemma
for $n=1$. The general case follows  by induction on $n$ using the
exact sequence
$$0\lra \WW_{n-1}\bigl(\cR(\OKbar)\bigr)\stackrel{V}{\lra}\WW_n\bigl(\cR(\OKbar)\bigr)\lra
\WW_1\bigl(\cR(\OKbar)\bigr)\lra 0.
$$

Now let us recall that $[\widetilde{p}]^{p^n}=0$ in $A_{\rm cris}/p^nA_{\rm cris}$ and similarly, for every $0\le i\le n-1$ we have $V^i([\widetilde{p}]^{p^n})=
p^i[\widetilde{p}]^{p^{n-i}}=0$ in $A_{\rm cris}/p^nA_{\rm cris}$. Then the second claim follows.
\end{proof}

In particular $A_{\rm cris}/p^n A_{\rm cris}$ is the $\WW(k)$--DP envelope of $W_n$ with respect to $\xi_{n+1}W_n=\Ker\bigl(\theta_n\circ \varphi\bigr)$.  We then
get a surjective map of DP algebras
$$q_n\colon A_{\rm cris}/p^n
A_{\rm cris} \lra  A_{\rm cris, n} $$sending
$\xi_{n+1}^{[i]}\mapsto \xi_n^{[i]}$ and inducing Frobenius
on~$W_n$. We also have a map
$$u_n\colon A_{\rm cris, n+1}\lra  A_{\rm cris}/p^n
A_{\rm cris}$$sending $\xi_{n+1}^{[i]}\mapsto \xi_{n+1}^{[i]}$ and
inducing the natural projection $W_{n+1}\to W_n$.\smallskip

We  introduce the following ideal~$\II\subset A_{\rm cris}$.
Let~$\{\zeta_n\}_{n\in\N}$ be a compatible system of primitive
$p^n$--th roots of unity: $\zeta_2\neq 1$
and~$\zeta_{n+1}^p=\zeta_n$. It defines an
element~$\varepsilon=(1,\zeta_2,\zeta_3,\ldots)\in \crR(\OKbar)$.
Let~$[\varepsilon]\in A_{\rm inf}^+$ be its Teichm\"uller lift.
Let~$\II$ be the ideal generated
by~$\{\varphi^{-n}([\varepsilon])-1\}_{n\in\N}$ and the
Teichm\"uller lifts~$[x]$ of elements $x=(x_0,x_1,\ldots)\in
\crR(\OKbar)$ such that~$x_0$ lies in the maximal ideal
of~$\OKbar/p\OKbar$. It is proven in~\cite[Lem.~6.3.1]{brinon}
that~$\II^2=\II$ mod~$p^n A_{\rm cris}$.
Since~$\theta\bigl([\varepsilon]-1\bigr)=0$, the
element~$[\varepsilon]-1$ admits divided powers. In~$A_{\rm cris}$
we have the following important element
$$t:=\log\bigl([\varepsilon]\bigr)=\sum_{n=1}^\infty (n-1)!
\bigl([\varepsilon]-1\bigr)^{[n]}.$$We have $\varphi(t)=p t$ and for~$\sigma\in G_K$, $\sigma(t)=\chi(\sigma) t$ where $\chi\colon g_K \to \Z_p ^\ast$ is the
cyclotomic character defined by $\sigma(\zeta_n)=\zeta_n^{\chi(\sigma)}$ for every~$n\in\N$. Put~$B_{\rm cris}:=A_{\rm cris}[1/t]$.  Since~$t$ lies
in~$\Ker(\theta)$, it admits divided powers in~$A_{\rm cris}$ so that~$t^p=p !t^{[p]}$ and~$p$ is invertible in~$B_{\rm cris}$. Then $B_{\rm cris}$ is a
$\WW(\kbar)$--algebra, endowed with an action of~$G_K$, a Frobenius operator~$\varphi$ and a separated and exhaustive filtration $\Fil^r B_{\rm cris}:=\lim_{n\in\N}
\Fil^{r+n}A_{\rm cris}\cdot t^{-n}$ for every~$r\in\Z$.

\section{Fontaine sheaves}
\label{sec:fontain}

Let $X$ denote a smooth scheme over $\cO_K$ or a smooth $p$--adic formal
scheme topologically of finite type, over $\cO_K$. In this
section we introduce several sites describing their underlying
categories and giving pre-topology structures i.~e., for each object, we
describe the covering families. The topologies underlying the sites
will be the topologies generated by the given pre-topologies. See
\cite[\S II.1]{SGAIV} for details.

\subsection{Faltings' topos; the algebraic setting}
\label{sec:Groth}

Let us first treat the case when $X$ is a scheme of finite type over $\cO_K$. We denote by $X^{\rm et}$ the small \'etale site on $X$ and by $X_M^{\rm fet}$ the
finite \'etale site of $X_M$. Then $\Sh(X^{\rm et})$ and $\Sh(X_M^{\rm fet})$ will denote the categories  of sheaves of abelian groups on these sites, respectively.

\bigskip

\begin{definition}
\label{def:opensets} Let $E_{X_M}$ be the category defined as follows
\smallskip

i) the objects consist of pairs $\bigl(g\colon U\lra X, f\colon
W\lra U_M\bigr)$ such that $g$ is an \'etale morphism of finite
type and $f$ is a finite \'etale morphism. We will usually denote
by $(U,W)$ this object to shorten notations; \smallskip

ii) a morphism $(U',W')\lra (U,W)$ in $E_{X_M}$ consists of a pair
$(\alpha,\beta)$, where $\alpha\colon U'\lra U$ is a morphism over
$X$ and $\beta\colon W'\lra W$ is a morphism commuting with
$\alpha\otimes_{\cO_K}{\rm Id}_M$.\smallskip
\end{definition}

Let us remark that the pair $(X,X_M)$ is a final object of
$E_{X_M}$. Moreover, finite projective limits are representable in
$E_{X_M}$ and, in particular, fibre products exist: the fibre
product of the objects $(U',W')$ and $(U'',W'')$ over $(U,W)$ is
$\bigl(U'\times_U U'',W'\times_W W''\bigr)$. See \cite{erratum}.

\smallskip Faltings defined in \cite[p. 214]{faltingsAsterisque} a
pre-topology on $E_{X_M}$ by defining a family of morphisms
$\{(U_i,W_i)\lra (U,W)\}_{i\in I}$ to be a covering family if
$\{U_i\lra U\}_{i\in I}$ is a covering in $X^{\rm et}$ and
$\{W_i\lra W\}_{i\in I}$ is a covering family in $X_M^{\rm fet}$.
He then defined the presheaf $\cO_{\fX}$ on $E_{X_M}$ by
$$\cO_\fX(U,W):=\mbox{ the normalization of }\Gamma(U,\cO_U)\mbox{
in } \Gamma(W, \cO_W)$$ and stated that this was a sheaf. However,
this is not true in general due to point b) of the following
example. Moreover point c) below shows that even if one sheafified
the presheaf $\cO_\fX$ on Faltings' site the theory of
``localizations'' of sheaves, as developed later in this paper,
would not work. It should be noticed, though, that even if the
definition of the topology is not correct, the topos of sheaves
described by Faltings coincides with the one defined in this paper.

\begin{example}
\label{ex:faltings} Assume that $M=\Kbar$. Let $p>2$ be a prime
and let us denote by $\displaystyle A:=\Z_p\bigl[X,
\frac{1}{X^2+p}\bigr]$ and $\displaystyle
B:=\Z_p\bigl[X,Y,\frac{1}{X^2+p}\bigr]/(Y^2-X^2-p)$. For $i=1,2$
we define $\displaystyle B_i:=B\bigl[\frac{1}{Y+(-1)^iX}\bigr]$
and let $f_i$ denote the composition of the natural $\Z_p$-algebra
morphisms $A\lra B\lra B_i$. We denote $U:=\Spec(A)$,
$V:=\Spec(B)$, $U_i:=\Spec(B_i)$ and $W=W_i:=\Spec(B_\Kbar)$. Fix
$i\in \{1,2\}$, then we have

a) The pairs $(U,W)$ and $(U_i,W_i)$ are objects of $E_{U_\Kbar}$
and if we denote by $F_i:(U,W)\lra (U_i,W_i)$ the morphism induced
by  the pair $(f_i, Id)$, then this morphism is a coverings in
Faltings' sense.

b) For the covering above the presheaf $\cO_\fX$ does not satisfy the sheaf property.

c) Let us denote by $\cF$ the sheaf associated to $\cO_\fX$ on the topology defined by
Faltings and by $\cG$ the sheaf
$\cF/p\cF$. Then the natural map: $\cO_\fX(U,W)/p\cO_\fX(U,W)\lra \cG(U,W)$
is the zero map.
\end{example}

\begin{proof} a) Let us observe that
$$
\Bbar:=B/pB\cong \F_p[X, 1/X]/\bigl((X+Y)(X-Y)\bigr)\cong \F_p[X, 1/X]\times \F_p[X, 1/X],
$$
and we let $\Ubar:=\Spec(\Bbar)\cong \Ubar_1\coprod \Ubar_2$ where
we have denoted by $\Ubar_i\cong \Spec(\F_p[X,1/X])$ for $i=1,2$ the components of $\Ubar$.

Then let us remark that $U_i\cong V-\Ubar_i$. As $V\lra U$ is \'etale and surjective it follows that
the morphisms induced by $f_i$, $U_i\lra U$ are \'etale and surjective. Moreover
the natural $\Z_p$-algebra morphisms
$B\lra B_i$ for $i=1,2$ induce isomorphisms as $\Kbar$-algebras $B_\Kbar\cong B_{1,\Kbar}\cong B_{2,\Kbar}$.
Now a) follows.

b) We fix $i\in \{1,2\}$ as in the statement and we have the following commutative diagram
$$
\begin{array}{cccccccccc}
0&\lra&\cO_\fX(U,W)&\stackrel{h_i}{\lra}&\cO_\fX(U_i,W_i)&\stackrel{g_i}{\lra}&\cO_\fX(U_i\times_UU_i, W_i\times_WW_i)\\
&&\downarrow&&\downarrow&&\downarrow\\
0&\lra&B_\Kbar&\lra&B_{i,\Kbar}&\stackrel{\gamma}{\lra}&B_{i,\Kbar}\otimes_{B_\Kbar}B_{i,\Kbar}
\end{array}
$$
The vertical arrows in the diagram are inclusions therefore they are injective. Moreover $\gamma$ is defined by: $\gamma(b):=b\otimes 1 - 1\otimes b=0$ for all
$b\in B_{i,\Kbar}$ in view of the remarks above, therefore $\gamma=0$ which implies that $g_i=0$. If $\cO_\fX$ were  a sheaf then the top sequence would be exact,
i.e. $h_i$ would be an isomorphism. Thus all elements of $B_i\subset \cO_\fX(U_i,W_i)$ would be integral over $A$. In particular as $B_i$ is a finitely generated
$A$-algebra, $B_i$  would be finite over $A$. Since $U=\Spec(A)$ is connected the degree of $B_i$ as an $A$-module would be constant. But $A/pA\lra B_i/pB_i$ is an
isomorphism while $B_{i,\Kbar}$ is a free $A_\Kbar$-module of rank $2$. Therefore $\cO_\fX$ is not a sheaf.

c) As $\cF$ is a sheaf, for each $i=1,2$  we have a commutative diagram with the bottom row exact

$$
\begin{array}{cccccccccc}
0&\lra&\cO_\fX(U,W)&\stackrel{h_i}{\lra}&\cO_\fX(U_i,W_i)&\stackrel{g_i}{\lra}&\cO_\fX(U_i\times_UU_i, W_i\times_WW_i)\\
&&\downarrow&&\downarrow&&\downarrow\\
0&\lra&\cF(U,W)&\stackrel{u_i}{\lra}&\cF(U_i,W_i)&\stackrel{v_i}{\lra}&\cF(U_i\times_UU_i, W_i\times_WW_i)
\end{array}
$$

The arguments at b) above show that the map $g_i=0$ therefore the
image of the natural map $\cO_\fX(U_i,W_i)\lra \cF(U_i,W_i)$ is
contained in the image of $u_i$. More precisely the natural map:
$\varphi: \cO_\fX(U,W)\lra \cF(U,W)$ has the property that
$\varphi=w_i\circ h_i$ for $i=1,2$, where
$w_i:\cO_\fX(U_i,W_i)\lra \cF(U,W)$ is the map defined by the
above diagram.

We remark that $\cO_\fX(U,W)=B$ as $B$ is integral over $A$ and being smooth it is normal, similarly
$\cO_\fX(U_i,W_i)=B_i$, for $i=1,2$. Moreover we have $h_i=f_i$ for $i=1,2.$
As $\cF$ is the sheaf associated to the presheaf $\cO_\fX$ the natural map
$\cO_\fX(U,W)/p\cO_\fX(U,W)\lra \cG(U,W)$ is  the composition
$\cO_\fX(U,W)/p\cO_\fX(U,W)\lra \cF(U,W)/p\cF(U,W)\lra \cG(U,W)$. But the map
$$\overline{\varphi}:
\cO_\fX(U,W)/p\cO_\fX(U,W)\cong \Bbar\cong \Bbar_1\times\Bbar_2 \lra \cF(U,W)/p\cF(U,W)$$
induced by $\varphi$ has the property that it factors through
$\overline{f}_i$, for $i=1,2$ and $\overline{f}_i:\Bbar\lra \Bbar_i$
is the natural projection on the $i-th$ factor. We deduce that $\overline{\varphi}=0$.
\end{proof}

\bigskip
\noindent {\it Faltings' site} ${\rm PT}_{X_M}$. Let $X$ be a
scheme of finite type over $\cO_K$ and let $M$ be an algebraic
extension of $K$. We denote by  $E_{X_M}$ the category defined in
definition \ref{def:opensets}.

\begin{definition}
\label{def:coverings} Let$\{(U_i,W_i)\lra
(U,W)\}_{i\in I}$ be a family of morphisms in $E_{X_M}$.
We say that it is of type $\alpha$ respectively $\beta$ if:\smallskip

\noindent $\alpha$) $\{U_i\lra U\}_{i\in I}$ is a covering in
$X^{\rm et}$ and $W_i\cong W\times_UU_i$ for every $i\in I$. Here
the morphism $W\lra U$ used in the fibre product is the
composition $W\lra U_M\lra U$.\smallskip

or

\noindent $\beta$) $U_i\cong U$ for all $i\in I$ and $\{W_i\lra
W\}_{i\in I}$ is a covering in $X_M^{\rm fet}$.
\end{definition}
We endow $E_{X_M}$ with the topology ${\rm T}_{X_M}$ {\bf
generated} by the families of type $\alpha$ and $\beta$ described in definition
\ref{def:coverings} and denote by $\fX_M$ the associated site. We
call ${\rm T}_{X_M}$ Faltings' topology and $\fX_M$ Faltings'
site associated to $(X,M)$. Note that ${\rm T}_{X_M}$ can
be described differently as follows.

\begin{definition}
\label{def:strict} A family $\{(U_{ij},W_{ij})\lra (U,W)\}_{i\in
I,j\in J}$ of morphisms in $E_{X_M}$ is called a {\bf strict
covering family} if\smallskip

\noindent a) For each $i\in I$ there exists an \'etale morphism
$U_i\lra X$ such that we have isomorphisms $U_i\cong U_{ij}$ over
$X$ for every $j\in J$. \smallskip

\noindent b) $\{U_i\lra U\}_{i\in I}$ is a covering in $X^{\rm
et}$.\smallskip

\noindent c) For every $i\in I$ the family $\{W_{ij}\lra
W\times_UU_i\}_{j\in J}$ is a covering in $X_M^{\rm
fet}$.\smallskip

To simplify notations we will henceforth denote a strict covering
family $\{(U_{ij},W_{ij})\lra (U,W)\}_{i\in I,j\in J}$ by
$\{(U_i,W_{ij})\lra (U,W)\}_{i\in I,j\in J}$.
\end{definition}

\begin{remark}
\label{rm:strict} The families of type $\alpha$ and $\beta$ in definition \ref{def:coverings} are examples of strict coverings. Conversely a strict covering family
$\{(U_i,W_{ij})\lra (U,W)\}_{i\in I,j\in J}$ can be obtained as a composite of the covering $\{(U_i,W\times_U,U_i) \lra (U,W)\}_{i\in I,j\in J}$, which is of type
$\alpha$) and for every $i\in I$ the covering $\{(U_i, W_{ij})\lra (U_i,W\times_UU_i)\}_{j\in J}$, which is of type $\beta$). In particular the topology generated
by the strict coverings coincides with ${\rm T}_{X_M}$.
\end{remark}

\begin{remark}
\label{rm:solvedfaltings} The morphisms $(U_i,W_i)\lra (U,W)$, for
$i=1,2$ in example \ref{ex:faltings} are {\bf not} coverings in
the sense of \ref{def:coverings}. In fact, it follows from
\ref{prop:sheaf} and \ref{ex:faltings} Faltings' topology
associated to $(X,\Kbar)$ is coarser than the one originally
introduced by Faltings.
\end{remark}

\begin{remark}
\label{rm:error} The category $E_{X_M}$ with the strict covering
families do not form a pre-topology. Indeed, since finite
projective limits exist in $E_{X_M}$ the strict covering families satisfy PT0,
PT1 and PT3 of \cite[Def II 1.3]{SGAIV} but contrary to what was
written in \cite{andreatta_iovita} and as was pointed out to us by
A.~Abbes, they do not satisfy PT2. However, one may define
tautologically the generated pre-topology ${\rm PT}_{X_M}$ by
considering as covering families the composite of finitely many
strict coverings (or of finitely many  families of type
$\alpha$) and $\beta$) of definition \ref{def:coverings}). The
associated topology is ${\rm T}_{X_M}$.
\end{remark}

\begin{remark}
\label{rm:salvage} It follows from \cite[Cor. II 2.3]{SGAIV} or by
a direct check using the definitions that a pre-sheaf on $E_{X_M}$
is a sheaf if and only if it satisfies the usual exactness
property for the strict covering families.
\end{remark}

\noindent The next lemma and \cite[Remark II 3.3]{SGAIV} show that
it is enough to use strict covering families in order to sheafify
a presheaf on $E_{X_M}$, as done in \cite{andreatta_iovita}.

\begin{lemma}
\label{lemma:cofinal} Let $(U,W)$ be an object of $E_{X_M}$. Then
the strict covering families are cofinal in the collection of all
covering families of $(U,W)$ in ${\rm PT}_{X_M}$.
\end{lemma}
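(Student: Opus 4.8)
The plan is to show that every covering family in ${\rm PT}_{X_M}$ can be refined by a strict covering family, from which cofinality follows directly. Recall from Remark~\ref{rm:error} that a covering family in ${\rm PT}_{X_M}$ is, by definition, a finite composite of families each of which is of type $\alpha$) or type $\beta$) in the sense of Definition~\ref{def:coverings}. So the essential point is to understand how strict covering families interact with composition: I claim that the composite of a strict covering family with (strict covering families refining each of) a family of type $\alpha$) or $\beta$) can again be refined by a single strict covering family. Granting this, an easy induction on the number of factors in the composite shows that any ${\rm PT}_{X_M}$-covering of $(U,W)$ is refined by a strict covering of $(U,W)$, which is exactly the asserted cofinality.

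For the induction step I would argue as follows. Suppose $\{(U_i,W_{ij})\lra (U,W)\}_{i,j}$ is a strict covering (with underlying \'etale cover $\{U_i\to U\}$ and, for each $i$, a finite \'etale cover $\{W_{ij}\to W\times_U U_i\}_j$), and that for each pair $(i,j)$ we are given a further family of type $\alpha$) or $\beta$) over $(U_i,W_{ij})$.

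\emph{Type $\alpha$) on top:} such a family is $\{(U_{ijk},\,W_{ij}\times_{U_i}U_{ijk})\to(U_i,W_{ij})\}_k$ with $\{U_{ijk}\to U_i\}_k$ an \'etale cover. Then $\{U_{ijk}\to U\}_{i,j,k}$ is an \'etale cover of $U$ (composite of \'etale covers), it does not depend on $j$ after we reindex — more precisely for fixed $i,k$ we may take a common \'etale refinement $U_{ik}\to U_i$ of the $U_{ijk}$ over all $j$ — and the second components are the appropriate fibre products $W_{ij}\times_{U_i}U_{ik}$, which are finite \'etale over $W\times_U U_{ik}$. Checking that the composite morphisms $(U_{ik},W_{ij}\times_{U_i}U_{ik})\to(U,W)$ are the ones coming from composing the two given families (using that fibre products exist in $E_{X_M}$, as noted after Definition~\ref{def:opensets}, and that $\alpha$)-type families are stable under base change) shows we again have a strict covering.

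\emph{Type $\beta$) on top:} here $U_{ijk}\cong U_i$ and $\{W_{ijk}\to W_{ij}\}_k$ is a finite \'etale cover; composing with $\{W_{ij}\to W\times_U U_i\}_j$ gives, for each $i$, a finite \'etale cover $\{W_{ijk}\to W\times_U U_i\}_{j,k}$ (composite of finite \'etale covers is finite \'etale), and keeping the same \'etale cover $\{U_i\to U\}$ we again obtain a strict covering family refining the composite. A mixed composite ($\beta$) followed by $\alpha$), or more factors) is handled by iterating these two moves, or equivalently one observes via Remark~\ref{rm:strict} that any composite of type-$\alpha$) and type-$\beta$) families is, step by step, absorbed into a strict covering.

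The step I expect to be the main obstacle is the bookkeeping in the type-$\alpha$)-on-top case: one must verify carefully that the new second components really are finite \'etale over the relevant $W\times_U(-)$ and that the resulting family, after the reindexing $U_{ij}\rightsquigarrow U_i$ forced by condition a) of Definition~\ref{def:strict}, still refines the original composite covering in $E_{X_M}$ (i.e. the structure morphisms factor correctly). This is where the compatibility "$\beta\colon W'\to W$ commutes with $\alpha\otimes_{\cO_K}{\rm Id}_M$" from Definition~\ref{def:opensets}~ii) and the stability of type-$\alpha$) families under base change along $X^{\rm et}$-morphisms are used; once those are in hand, everything else is formal manipulation of \'etale and finite \'etale covers together with the existence of finite projective limits in $E_{X_M}$.
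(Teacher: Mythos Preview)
Your argument is essentially correct: induction on the length of the composite, showing that a strict covering composed (after pullback) with a type~$\alpha$) or type~$\beta$) layer is again refined by a strict covering. One point you gloss over is finiteness: in the type~$\alpha$)-on-top step you need a common \'etale refinement of the covers $\{U_{ijk}\to U_i\}_k$ as $j$ varies, and this only makes sense once you pass to a finite sub-cover in the $j$-index (possible since $W\times_U U_i$ is quasi-compact). Your reindexing ``for fixed $i,k$ take $U_{ik}$'' is also slightly off notationally --- the new \'etale cover of $U_i$ should be indexed by a fresh index $\ell$, not by $k$ --- but the underlying idea (take the fibre product over $U_i$ of one member from each of the finitely many covers) is right. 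You should also note that type~$\alpha$) and type~$\beta$) families are stable under base change in $E_{X_M}$, which is what allows you to transport the $n$-th layer from the original $(n-1)$-step composite to its strict refinement.

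As for comparison: the paper does not prove this lemma here at all; it simply writes ``See \cite{erratum}.'' So there is nothing in the present paper to compare your argument against. Your direct inductive proof is the natural one and is presumably close to what appears in the cited erratum.
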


\begin{proof} See \cite{erratum}.
\end{proof}

\begin{definition}
\label{def:OX} We define  the pre-sheaf of $\cO_M$-algebras on
$E_{X_M}$, denoted $\cO_{\fX_M}$, by
$$
\cO_{\fX_M}(U,W):=\mbox{ the normalization of
}\Gamma\bigl(U,\cO_U\bigr)\mbox{ in } \Gamma\bigl(W, \cO_W\bigr).
$$We also define the sub pre-sheaf of $\cO_{\Mun}$-algebras $\cO_{\fX_M}^{\rm un}$ of
$\cO_{\fX_M}$ whose sections over $(U,W)\in E_{X_M}$ consist of elements $x\in \cO_{\fX_M}(U,W)$ for which there exist a finite unramified extension $K\subset L$, a
finite \'etale morphism $U' \to U\otimes_{\cO_K} \cO_L$ and a  morphism $W\to U'_K\otimes_L M$ over $U_M$ such that $x$, viewed in $\Gamma\bigl(W, \cO_W\bigr)$,
lies in the image of $\Gamma\bigl(U',\cO_{U'}\bigr)$.
\end{definition}

We have

\begin{proposition}
\label{prop:sheaf} The pre-sheaves $\cO_{\fX_M}$ and\/
$\cO_{\fX_M}^{\rm un}$ are sheaves.
\end{proposition}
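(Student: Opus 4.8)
The plan is to reduce the sheaf axiom to the case of strict covering families (legitimate by Remark \ref{rm:salvage}), and then to treat separately the two elementary types of covers, $\alpha)$ and $\beta)$, from which the general strict cover is built (Remark \ref{rm:strict}). So first I would fix an object $(U,W)\in E_{X_M}$ and a strict covering family $\{(U_i,W_{ij})\lra (U,W)\}_{i\in I,j\in J}$, and by the composition structure of strict covers it suffices to check the exactness property $0\lra \cO_{\fX_M}(U,W)\lra \prod \cO_{\fX_M}(U_i,W_{ij})\rightrightarrows \prod \cO_{\fX_M}(\cdot)$ in the two basic cases: (i) covers of type $\beta)$, where $U_i\cong U$ for all $i$ and $\{W_i\lra W\}$ is a finite \'etale cover of $X_M$; and (ii) covers of type $\alpha)$, where $\{U_i\lra U\}$ is an \'etale cover of $X$ and $W_i=W\times_U U_i$. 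The same argument, restricted to sections coming from a finite unramified base extension as in Definition \ref{def:OX}, will handle $\cO^{\rm un}_{\fX_M}$.

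For type $\beta)$: here $U=\Spec(R_U)$ is fixed, so I set $A:=\Gamma(U,\cO_U)$, and $W=\Spec(S)$ with $S$ a finite \'etale $A[1/p]$-algebra; the normalization of $A$ in $S$ is a fixed ring $\oR$, and $\cO_{\fX_M}(U_i,W_i)$ is the normalization of $A$ in $\Gamma(W_i,\cO_{W_i})$. Since $\{W_i\lra W\}$ is a finite \'etale cover, $\{\Spec(\Gamma(W_i,\cO_{W_i}))\lra W\}$ is an fpqc (indeed finite faithfully flat, after refining to an affine cover) cover of $W$, and the sections of $\cO_{W}$ satisfy the sheaf condition for it by faithfully flat descent. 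One then intersects with the integral closure of $A$ inside $\Gamma(W,\cO_W)$: an element of $\Gamma(W_i,\cO_{W_i})$ that is integral over $A$ and whose images in the double overlaps agree, descends to an element of $\Gamma(W,\cO_W)$ which is still integral over $A$, hence lies in $\cO_{\fX_M}(U,W)$. The key point is that integrality over $A$ is preserved under faithfully flat descent of the ambient ring, which is standard.

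For type $\alpha)$: here the base change is on the $U$-side. Write $U=\Spec(A)$, $U_i=\Spec(A_i)$ with $\{A\lra A_i\}$ an \'etale cover, and $W_i=W\times_U U_i$, so $\Gamma(W_i,\cO_{W_i})=\Gamma(W,\cO_W)\otimes_A A_i$ (using that $W\lra U$ is affine, or passing to an affine refinement). One wants: the normalization of $A$ in $\Gamma(W,\cO_W)$ equals the equalizer of $\prod_i\bigl(\text{normalization of }A_i\text{ in }\Gamma(W,\cO_W)\otimes_A A_i\bigr)$. Since $A\lra \prod A_i$ is faithfully flat, $\Gamma(W,\cO_W)$ itself satisfies the descent sheaf condition for the $U_i$; the content is to show that the formation of "integral closure of $A$ inside $\Gamma(W,\cO_W)$" commutes with the \'etale (hence flat) base change $A\lra A_i$ — i.e. $\bigl(\text{int. cl. of }A\text{ in }\Gamma(W,\cO_W)\bigr)\otimes_A A_i$ is the integral closure of $A_i$ in $\Gamma(W,\cO_W)\otimes_A A_i$ — and then descent of the tilde-sheaf does the rest. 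I expect this step to be the main obstacle: integral closure does not commute with arbitrary flat base change in general, and one needs the hypothesis that $A\lra A_i$ is \'etale (so in particular unramified, and the fibres are reduced/normal-preserving). The standard fix is to reduce to the local and then to the complete local case where $A\lra A_i$ becomes a finite product of étale local maps, invoke excellence (the rings involved are of finite type over $\cO_K$, hence excellent, so normalization behaves well), and use that \'etale maps preserve normality and reduce\-ness of fibres; Serre's criterion $(R_1)+(S_2)$ for normality, which descends and ascends along \'etale maps, is the clean tool here. Once that commutation is in hand, the equalizer identity follows by combining faithfully flat descent for $\Gamma(W,\cO_W)$ with the intersection-with-$A$ argument as in type $\beta)$. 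For the subpresheaf $\cO^{\rm un}_{\fX_M}$ one runs the identical argument keeping track of the auxiliary finite unramified extension $L/K$ and the $U'$ witnessing membership; compatibility of these data with the covers is automatic because \'etale and finite \'etale morphisms compose and base-change, and I would relegate that bookkeeping to the companion erratum \cite{erratum}.
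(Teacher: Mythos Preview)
Your approach is correct in outline but genuinely different from the paper's. You split a strict cover into its type-$\alpha$ and type-$\beta$ constituents and treat each separately; for type $\alpha$ your key input is the commutative-algebra fact that formation of integral closure in an overring commutes with an \'etale base change $A\to A_i$ (which is indeed standard, and does not really need excellence --- it holds for any smooth base change). The paper, by contrast, never decomposes the cover: it works directly with a strict covering $\{(U_\alpha,W_{\alpha,i})\to(U,W)\}$, embeds everything into the honest sheaf $\Gamma(-,\cO_{-})$ on the $W$-side to get separatedness and an element $x\in\Gamma(W,\cO_W)$, and then proves integrality of $x$ over $\Gamma(U,\cO_U)$ by gluing the \emph{characteristic polynomials} $P_\alpha(X)$ of the restrictions $x_\alpha$ (using Remark \ref{remark:normal} to know $P_\alpha\in A_\alpha[X]$ rather than $A_\alpha[1/p][X]$). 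Your route is conceptually clean once you grant the base-change lemma; the paper's route is more self-contained and sidesteps that lemma entirely at the cost of the small auxiliary Remark \ref{remark:normal}.

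Two points where your writeup is thin. First, the reduction ``it suffices to check for types $\alpha$ and $\beta$'' is not literally Remark \ref{rm:strict}: that remark only says a strict cover is a composite, whereas you need that the sheaf axiom for composites follows from the sheaf axiom for the pieces. This requires that $\beta$-covers are stable under base change in $E_{X_M}$ (they are, trivially) so that the usual two-step gluing argument goes through; you should say so. Second, your treatment of $\cO_{\fX_M}^{\rm un}$ is too breezy. Separatedness is free since it is a subpresheaf of the sheaf $\cO_{\fX_M}$, but the gluing step is not just bookkeeping: given locally-defined witnesses $(L_\alpha,U'_\alpha)$ one must produce a single global witness $(L,U')$, and the paper does this by an explicit gluing of auxiliary schemes $W'_\alpha$ and $Z'_\alpha$ over the overlaps. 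This is not in \cite{erratum}; it is carried out in the paper itself and deserves at least a sentence indicating how the local witnesses are made compatible (e.g.\ by passing to a common finite unramified $L$ and taking normalizations of the subalgebras generated by the $x_\alpha$).
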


\begin{proof} We first prove that $\cO_{\fX_M}$ is a sheaf.
Let $\{(U_\alpha, W_{\alpha,i})\lra (U,W)\}_{\alpha,i}$ be a
strict covering family. We set
$U_{\alpha\beta}:=U_\alpha\times_UU_\beta$ and $W_{\alpha\beta
ij}:=W_{\alpha,i}\times_WW_{\beta,j}$. We have the following
commutative diagram
$$
\begin{array}{cccccccccc}
0&\lra&\cO_{\fX_M}(U,W)&\stackrel{f}{\lra}&
\prod_{i,\alpha}\cO_{\fX_M}(U_\alpha,W_{\alpha,i})&\stackrel{g}{\lra}&
\prod_{(\alpha,i),(\beta,j)}\cO_{\fX_M}(U_{\alpha\beta}, W_{\alpha\beta ij})\\
&&\downarrow&&\downarrow&&\downarrow\\
0&\lra&\Gamma(W,
\cO_W)&\lra&\prod_{\alpha,i}\Gamma(W_{\alpha,i},\cO_{W_{\alpha,i}})&\lra&
\prod_{(\alpha,i),(\beta,j)} \Gamma(W_{\alpha\beta ij},
\cO_{\alpha\beta ij})
\end{array}
$$
Since the $\{U_\alpha\lra U\}_\alpha$ is a covering in $X^{\rm et}$ and for every $\alpha$, $\{W_{\alpha,i}\lra W\times_UU_\alpha\}_i$ is a covering in
$\left(W\times_U U_{\alpha,M}\right)^{\rm fet}$ it follows that $\{W_{\alpha,i}\lra W\}_{\alpha,i}$ is a covering in $X_M^{\rm et}$. In particular the bottom row of
the above diagram is exact. Moreover the vertical maps are all inclusions therefore $f$ is injective, i.e. $\cO_{\fX_M}$ is a separable pre-sheaf. Let $x\in
\Ker(g)$. Then $\displaystyle x\in \Gamma(W,\cO_W)\cap\prod_{\alpha,i} \cO_{\fX_M}(U_\alpha, W_{\alpha,i})$. We are left to prove that $x$ is integral over
$\Gamma(U, \cO_U)$. Without loss of generality we may assume that $W$ is connected and that $U_\alpha=\Spec(A_\alpha)$ is affine for every $\alpha$. Note that there
exists a finite extension $K\subset L$ in $M$ and a finite and \'etale morphism $W'\to U_L$ so that its base change via $L \to M$ is $W \to U_M$ and $x\in
\Gamma(W',\cO_{W'})$. Let us denote by $x_\alpha$ the image of $x$ in $\Gamma\bigl(W'\times_UU_\alpha, \cO_{W'\times_UU_\alpha}\bigr)$. Because the family
$\{W_{\alpha,i}\lra W\times_UU_\alpha\}_i$ is a covering family in $\left(W\times_U U_{\alpha,M}\right)^{\rm fet}$ and the image $x_{\alpha,i}$ of $x_\alpha$ in
$\Gamma(W_{\alpha,i}, \cO_{W_{\alpha,i}})$ is in fact in $\cO_{\fX_M}(U_\alpha, W_{\alpha,i})$, hence integral over $A_\alpha$, it follows that $x_\alpha$ is
integral over $A_\alpha$. Let $P_\alpha(X)\in A_\alpha[X]$ be the (monic) characteristic polynomial of $x_\alpha$ over $A_\alpha$ with respect to the finite and
\'etale extension $W'\times_UU_\alpha \lra U_{\alpha,K}$ (see remark \ref{remark:normal} below.) Then
$P_\alpha(X)\vert_{U_{\alpha\beta}}=P_\beta(X)\vert_{U_{\alpha\beta}}$ for all $\alpha$ and $\beta$ and, therefore, there is a monic polynomial $P(X)\in
\Gamma(U,\cO_U)$ such that $P(X)\vert_{U_\alpha}=P_\alpha(X)$. As $P(x)\vert_{U_\alpha}=P_\alpha(x_\alpha)=0$ for every $\alpha$ it follows that $P(x)=0$, i.e. that
$x$ is integral over $\Gamma(U,\cO_U)$.

Since $\cO_{\fX_M}^{\rm un}\subset \cO_{\fX_M}$ by construction, it follows that $\cO_{\fX_M}^{\rm un}$ is a separated pre-sheaf. Using the previous notations, it
suffices to show that given $W$ connected and $U_\alpha$'s affine and  given $x\in \prod_{i,\alpha}\cO_{\fX_M}^{\rm un}(U_\alpha,W_{\alpha,i})$, whose image in
$\prod_{i,\alpha}\cO_{\fX_M}(U_\alpha,W_{\alpha,i})$ lies in $\Ker(g)$, then $x\in \cO_{\fX_M}^{\rm un}(U,W)$. As before for every $\alpha$ let $x_\alpha$ be the
image of $x$ in $\Gamma(W_\alpha,\cO_{W_\alpha})$. By replacing the $U_\alpha$'s by a finite subcover, we may assume that we have only finitely many $\alpha$'s. By
definition there is a finite unramified extension $K \subset L'$ such that each $x_\alpha$ is defined over a finite and \'etale cover of $U_\alpha\otimes_{\cO_K}
\cO_{L'}$. Possibly after enlarging $L$, we may assume that $L'\subset L$. Let $W_\alpha'$ (resp.~$Z_\alpha'$) be the spectrum of the normalization of the
sub-algebra $A_\alpha\otimes_{\cO_K} L [x_\alpha]$ (resp.~$A_\alpha\otimes_{\cO_K} \cO_{L'}[x_\alpha]$) in $\Gamma\bigl(W\times_U U_\alpha,\cO_{W\times_U
U_\alpha}\bigr)$. By construction $Z_\alpha'$ is finite and \'etale over $U \otimes_{\cO_K} \cO_L$ and we have morphisms $W_\alpha'\to Z_{\alpha,K}'\otimes_{L'} L$
over $U_{\alpha,L}$. Moreover, $x_\alpha\in\Gamma\bigl(W_\alpha', \cO_{W_\alpha'}\bigr)$ is in the image of $\Gamma\bigl(Z_\alpha', \cO_{Z_\alpha'}\bigr)$. Note
that $W_\alpha' \times_{U_\alpha} U_{\alpha\beta}\cong W_\beta' \times_{U_\beta} U_{\alpha\beta}$ so that the various $W_\alpha'$ glue to a finite and \'etale
morphism $W' \to U_L$ and there is a morphism $W\to W'$ as schemes over $U_L$ such that $x\in \Gamma\bigl(W',\cO_{W'}\bigr)$. Moreover, also the various $Z_\alpha'$
glue to a scheme $Z'$ finite and \'etale over $U \otimes_{\cO_K} \cO_{L'}$ and we have a morphism $W' \to  Z' \otimes_{\cO_{L'}} L$. Then $x$ is in the image of
$\Gamma\bigl(Z',\cO_{Z'}\bigr)$ and we conclude that $x\in \cO_{\fX_M}^{\rm un}(U,W)$ as claimed.
\end{proof}

\bigskip
\noindent
The following argument was offered by the referee of the paper.

\begin{remark}
\label{remark:normal}
Let $A$ be a noetherian normal domain and let $B$ be the integral closure of
$A$ in a finite \'etale extension of $A[1/p]$. Let $x\in B[1/p]$
be an element and let $Q(X)\in A[1/p][X]$ be the characteristic polynomial of
$x$ (it exists as $B[1/p]$ is a finitely generated projective $A[1/p]$-module.)
Then $x\in B$ if and only if $Q(X)\in A[X]$.
\end{remark}

\begin{proof}
The sufficiency is clear and to prove necessity, as $A$ is a noetherian normal
domain, it is enough to prove that $Q(X)\in A_{\mathfrak p}$, for all ${\mathfrak p}$
prime ideal of height $1$ of $A$ which contains $p$. Hence we may assume
that $A$ is a DVR and in this case $B$ is a free $A$-module of finite rank
and $Q(X)$ is the characteristic polynomial of the matrix associated to
the endomorphism $B\lra B; b\rightarrow xb$ with respect to a
basis of $B$ over $A$. Therefore $Q(X)\in A[X]$.
\end{proof}

\subsection{Faltings' topos; the formal setting}
\label{sec:formal_Groth}

Let now  $X$ denote a formal scheme. Denote by
$X^{\rm et}$ the \'etale site on~$X$ and by~$\Sh(X^{\rm et})$ the
category of sheaves of abelian groups on~$X^{\rm et}$. Of
particular importance will be the so called {\it small affine
opens}  of $X^{\rm et}$. These are objects~$\cU$ such that
$\cU=\Spf(R_\cU)$ is affine and connected and there are parameters
$T_1,T_2,\ldots,T_d\in R_\cU^\times$ such that the map
$R_0:=\cO_K\{T_1^{\pm 1},\ldots,T_d^{\pm 1}\}\subset R_\cU$ is
formally \'etale.
\bigskip

\noindent {\it The site $X_{M, {\rm et}}$.} For every finite
extension $K\subset L$ in $M$ let $X_{L,{\rm et}}$ be the site of
\'etale and quasi--compact morphisms $\cW \to X_L$  of $L$--rigid
analytic spaces. Here $X_K $ denotes the $K$--rigid analytic space
associated to~$X$ and $X_L$ is its base change to $L$. We refer to
\cite[\S3.1\&3.2]{deJong_vdPut} for generalities about \'etale
morphisms of rigid analytic spaces. Given extensions $L \subset
L'$ of $K$ contained in $M$ the base change from $L$ to $L'$
provides a morphism of sites $X_{L,{\rm et}} \to X_{L',{\rm et}}$.
We then get a fibred site $X_{\ast,{\rm et}}$ over the category of
finite extensions of $K$ contained in $M$ in the sense of \cite[\S
VI.7.2.1]{SGAIV}. We let $X_{M,{\rm et}}$ be the site defined by
the projective limit of the fibred site $X_{\ast,{\rm et}}$; see
\cite[Def. VI.8.2.5]{SGAIV}.

We can give the following explicit description. The objects in
$X_{M,{\rm et} }$ consist of pairs $(\cW,L)$ where~$L$ is a finite
extension of\/~$K$ contained in~$M$ and~$\cW \to X\tensor_K L$ is
an \'etale and quasi--compact map  of $L$--rigid analytic spaces.
Given~$(\cW,L)$ and~$(\cW',L')$ define $\Hom_{X_{M,{\rm
et}}}\bigl((\cW',L'), (\cW,L)\bigr)$ as the direct limit $\ds
\lim_\rightarrow \Hom_{L''}\bigl(\cW'\otimes_{L'} L'',
\cW\otimes_L L''\bigr)$ over all finite extensions~$L''\subset M$,
containing both~$L$ and~$L'$, of the morphism $\cW'\otimes_{L'}
L''\to \cW\otimes_L L''$ as rigid analytic spaces over~$X\otimes_L
L''$. The coverings of a pair~$(\cW,L)$ in~$X_{M,{\rm et}}$ are
finite families of pairs~$\{(\cW_\alpha,L_\alpha)\}_\alpha$
over~$(\cW,L)$ such that~$L\subset L_\alpha$ for every~$\alpha$
and there exists a finite extension of $K$, contained in $M$ and
containing $L_\alpha$ for every $\alpha$ such that the induced map
$\amalg_\alpha \cW_\alpha\otimes_{L_\alpha} L' \to \cW\otimes_{L}
L'$ is surjective.

\bigskip

\noindent {\it The site $\cU_{M,{\rm fet}}$.} Let $\cU\to X$ be an
\'etale map topologically of finite type of $p$-adic formal
schemes. Define~$\cU^{\ast,{\rm fet}}$ to be the following site
fibred over the category of finite extensions $K\subset L$
contained in $M$. For very such $L$ write~$\cU^{L,{\rm fet}}$ to
be the category of finite \'etale covers $\cW \to \cU_L$ as
$L$--rigid analytic spaces. Given extensions $L\to L'$ we consider
the base change map $\cU^{L,\fet}\to \cU^{L',{\rm fet}}$. Define
$\cU_{\rm M,fet}$ as the projective limit site. For notational
purposes we write $(\cW,L)$, or simply $\cW$, for an object of
$\cU_{M,{\rm fet}}$. In the first notation we implicitly assume
that $\cW\in \cU^{L,{\rm fet}}$. We refer to \cite[\S
4.1]{andreatta_iovita} for an explicit description. Note that the
fiber product of two pairs over a given one exists in~$\cU_{M,{\rm
fet}}$ and, if those are defined in $\cU_{L,{\rm fet}} $ for some
$L$, it coincides with the image of the fibre product
in~$\cU_{L,{\rm fet}}$.

\noindent Let~$\cU_2\to\cU_1$ be a map of formal schemes over~$X$.
Assume that they are \'etale over~$X$. We then have a functor $
\cU_{1,\ast,{\rm fet}} \to \cU_{2,\ast,{\rm fet}}$ of sites fibred
over the category of finite extensions $K\subset L$ contained in
$M$. We let $$\rho_{\cU_1,\cU_2}\colon \cU_{1,M,{\rm fet}} \to
\cU_{2,M,{\rm fet}}$$ be the induced morphisms of projective
limits. It is given on objects by $$(\cW,L) \mapsto
\left(\cW\times_{\cU_{1,K}} \cU_{2,K},L\right).$$

\bigskip
\noindent {\it The category $E_{X_M}$ and Faltings' topology ${\rm
T}_{X_M}$.} Define $E_{X_M}$ to be the category of
pairs~$(\cU,\cW)$ where $\cU\to X$ is an \'etale map of formal
schemes and $\cW$ is an object of~$\cU_{\rm M,fet}$. A morphism of
pairs~$(\cU',\cW')\to (\cU,\cW)$ is defined to be a morphism
$\cU'\to \cU$ as formal schemes over~$X$ and a map $\cW' \to
\cW\times_{\cU_K} \cU^{\rm '}_K$ in~$\cU'_{\rm M,fet}$.

We define strict covering families exactly as in definition
\ref{def:strict} and Faltings' topology ${\rm T}_{X_M}$ to be the
topology generated by the strict covering families. We call the
associated site the locally Galois site  attached to the data $(X,
M)$ and denote it by $\fX_M$.

We define the pre-sheaves $\cO_{\fX_M}$ and $\cO_{\fX_M}^{\rm un}$
on $E_{X_M}$ as in definition \ref{def:OX}. The analogue of
proposition \ref{prop:sheaf} holds in our formal context i.e.,
$\cO_{\fX_M}$ and $\cO_{\fX_M}^{\rm un}$ are sheaves.

\subsection{Continuous functors.
Localization functors}\label{sec:localization} We define:

\begin{enumerate}

\item[I.a] if $X$ is a scheme over $\cO_K$, we have $u_{X,M}\colon
\fX_M \lra X_{M, {\rm et}}  $ with $u_{X,M}(U,W):=W$;

\item[I.b] if $X$ is a $p$--adic formal scheme  over~$\cO_K$, let
$u_{X,M}\colon \fX_M \lra X_{M, {\rm et}} $ be
$u_{X,M}\bigl(\cU,(\cW,L)\bigr):=(\cW,L)$;

\item[II.a] if $X$ is a scheme of finite type over $\cO_K$, let
$v_{X,M}\colon X_{{\rm et}}\lra \fX_M$ be given by
$v_{X,M}(U):=\bigl(U,U_M\bigr)$;

\item[II.b] if $X$ is a formal scheme locally topologically of
finite type over~$\cO_K$, we have $v_{X,M}\colon X_{{\rm et}}\lra
\fX_M$ given by $v_{X,M}(\cU):=\bigl(\cU,\cU_K\bigr)$;

\noindent Let~$K\subset M_1 \subset M_2\subset \Kbar$ be field
extensions. Define

\item[III] $\beta_{M_1,M_2}\colon \fX_{M_1}\to  \fX_{M_2}$ by
$\beta_{M_1,M_2}(U,W)=\bigl(U,W\otimes_{M_1} M_1\bigr)$
(resp.~$\beta_{M_1,M_2}(\cU,\cW)$ equal to $\bigl(\cU,\cW\bigr)$
viewed in  $\fX_{M_2}$) in the algebraic (resp.~formal) setting.

\end{enumerate}

\noindent It is clear that the above functors send covering families to covering families and commute with fiber products.  In particular they define continuous
functors of sites by~\cite[Prop.~III.1.6]{SGAIV}. They also send final objects to final objects so that they induce morphisms of the associated topoi of sheaves.
\smallskip

\noindent Following \cite{erratum} we define a geometric point of
$\fX$ to be a pair $(x,y)$ where  $x$ is a geometric point of $X$
and  $y$ is a geometric point of $X_K$ specializing to $x$ i.e., a
geometric point of the henselization of $X$ at $x$. In loc.~cit.,
we define the stalk $\cF_{(x,y)}$ of a sheaf $\cF$ on $\fX$ to be
the direct limit $\lim \cF(U,W)$ over all pairs
$\bigl((U,x'),(W,y')\bigr)$ where $x'$ is a point of $U$ mapping
to $x$ and $y'$ is a point of $W$ specializing to $x'$ and mapping
to $y$. We proved in loc.~cit.~that there are enough geometric
point in $\fX$ i.e., that a sequence of sheaves is exact if an
only if the induced sequence on stalks is exact for all geometric
points.

\begin{lemma}\label{lemma:vastOX} Both in the algebraic and in the
formal setting we have an isomorphism of sheaves
$v_{X,M}^\ast\bigl(\cO_X\bigr) \cong \cO_{\fX_M}^{\rm un}$ on
$\fX_M$.
\end{lemma}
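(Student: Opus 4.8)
The plan is to realise $v_{X,M}^\ast(\cO_X)$ as the sheafification of an explicit presheaf, produce a natural comparison morphism to $\cO_{\fX_M}^{\rm un}$, and then check it is an isomorphism stalkwise, using that $\fX_M$ has enough geometric points. First I would observe that $v_{X,M}\colon X_{\rm et}\to \fX_M$ admits a left adjoint $\pi$, namely $(U,W)\mapsto U$ (resp.\ $(\cU,\cW)\mapsto \cU$ in the formal setting): by Definition~\ref{def:opensets} (resp.\ its formal analogue) a morphism $(U,W)\to v_{X,M}(U')=(U',U'_M)$ is the same datum as an $X$-morphism $U\to U'$, the second component being forced to be the composite $W\to U_M\to U'_M$. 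Since $\pi$ is left adjoint to $v_{X,M}$, precomposition with $\pi$ is left adjoint to precomposition with $v_{X,M}$ on presheaves; hence the presheaf pull-back $v_{X,M,p}(\cO_X)$ is just $(U,W)\mapsto \Gamma(U,\cO_U)=\cO_X(\pi(U,W))$, with transition maps given by pull-back in the first variable, and $v_{X,M}^\ast(\cO_X)$ is the sheaf associated to it.

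Next, for every $(U,W)\in E_{X_M}$ the image of $\Gamma(U,\cO_U)$ in $\Gamma(W,\cO_W)$ lies in $\cO_{\fX_M}(U,W)$, and in fact in $\cO_{\fX_M}^{\rm un}(U,W)$ (take $L=K$ and $U'=U$ in Definition~\ref{def:OX}); this is functorial in $(U,W)$, so it defines a morphism of presheaves $v_{X,M,p}(\cO_X)\to \cO_{\fX_M}^{\rm un}$. As $\cO_{\fX_M}^{\rm un}$ is a sheaf (Proposition~\ref{prop:sheaf}), this induces a morphism of sheaves $\theta\colon v_{X,M}^\ast(\cO_X)\to \cO_{\fX_M}^{\rm un}$ on $\fX_M$, adjoint to the inclusion $\cO_X\hookrightarrow v_{X,M,\ast}(\cO_{\fX_M}^{\rm un})$; it remains to show $\theta$ is an isomorphism.

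Since $\fX_M$ has enough geometric points, it is enough to check this on the stalk at every geometric point $(x,y)$. On the one hand, because stalks are insensitive to sheafification and $v_{X,M}$ carries $(x,y)$ to the geometric point $x$ of $X$, the stalk of $v_{X,M}^\ast(\cO_X)$ at $(x,y)$ is the stalk of $\cO_X$ on $X_{\rm et}$ at $x$, i.e.\ $\lim_{\rightarrow}\Gamma(U,\cO_U)$ over the \'etale neighbourhoods $(U,x')$ of $x$; this colimit also computes the stalk of the presheaf $v_{X,M,p}(\cO_X)$, since each such $(U,x')$ extends to a pair lying over $(x,y)$ (e.g.\ with $W=U_M$). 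On the other hand, a germ of $\cO_{\fX_M}^{\rm un}$ at $(x,y)$ is represented by some $\xi\in\cO_{\fX_M}^{\rm un}(U,W)$ which, by Definition~\ref{def:OX}, is the image of an element $\xi'\in\Gamma(U',\cO_{U'})$ for some finite unramified extension $K\subset L$, some finite \'etale $U'\to U\otimes_{\cO_K}\cO_L$, and a morphism $W\to U'_K\otimes_L M$ over $U_M$. The composite $U'\to U\otimes_{\cO_K}\cO_L\to U\to X$ is \'etale, and the induced map $W\to U'_M$ is finite \'etale by the cancellation properties of \'etale and of finite morphisms, so $(U',W)\in E_{X_M}$ maps to $(U,W)$; moreover the point $y'\in W$ over $y$, together with its specialisation to $x$, singles out a point $x''$ of $U'$ over $x$, so that $(U',x'')$ is an \'etale neighbourhood of $x$ refining $(U,x')$ over which $\xi$ is exactly the image of $\xi'$. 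Hence $\theta$ is surjective on the stalk at $(x,y)$. For injectivity: $X$, and hence every $U\in X_{\rm et}$, is flat over $\cO_K$, so after shrinking $U$ and $W$ to be connected (which is cofinal) we have $\Gamma(U,\cO_U)\hookrightarrow\Gamma(U_M,\cO_{U_M})\hookrightarrow\Gamma(W,\cO_W)$, the last map being faithfully flat; thus a section of $v_{X,M,p}(\cO_X)$ that becomes $0$ in $\cO_{\fX_M}^{\rm un}$ after some restriction already vanishes in $\Gamma(-,\cO_{-})$, hence in the stalk. Therefore $\theta$ is an isomorphism on all stalks, and so an isomorphism.

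The formal case runs verbatim, reading ``$\cU_K$'' for ``$U_M$'' and replacing schemes over $\cO_L$ by formal schemes and their rigid analytic generic fibres throughout; all the ingredients used (the left adjoint $\pi$ of $v_{X,M}$, flatness over $\cO_K$, faithful flatness of finite \'etale covers, enough geometric points) hold unchanged. The one step that is not purely formal is the surjectivity argument in the stalk computation: one has to extract from the defining data of a germ of $\cO_{\fX_M}^{\rm un}$ a genuine \'etale neighbourhood of $x$ in $X$ carrying a compatible point over $(W,y')$, and this is precisely where one uses that an unramified base change of an \'etale $X$-scheme is again \'etale over $X$ and that the specialisation data match; the remaining points (the flatness inputs for injectivity, the identification of the left-hand stalk) are routine.
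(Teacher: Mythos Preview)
Your argument follows essentially the same route as the paper's: both identify $v_{X,M}^\ast(\cO_X)$ with the sheafification of the presheaf $(U,W)\mapsto\Gamma(U,\cO_U)$ (you via the left adjoint $\pi$, the paper via the observation that $(U,U_M)$ is initial among objects under $(U,W)$, which is the same thing), construct the comparison map to $\cO_{\fX_M}^{\rm un}$, and prove surjectivity by using the data in Definition~\ref{def:OX} to refine $(U,W)$ to $(U',W)$ with $U'\to X$ \'etale; the paper does this at the level of sections, you at the level of stalks, but the content is identical.

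The one point to tighten is your injectivity step: the displayed inclusion $\Gamma(U_M,\cO_{U_M})\hookrightarrow\Gamma(W,\cO_W)$ can fail, since even with $U$ and $W$ connected the scheme $U_M$ may be disconnected (e.g.\ when $U_K$ is not geometrically connected) and $W$ then hits only one component, so the map is not faithfully flat. What is true, and is exactly what the paper uses, is that for $U$ connected and $W\neq\emptyset$ the composite $\Gamma(U,\cO_U)\to\Gamma(W,\cO_W)$ is injective: $U$ is integral (smooth and connected over $\cO_K$) and $W\to U$ is flat, so a nonzero section of $\cO_U$ remains nonzero in $\cO_{W,w}$ for any $w\in W$. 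With this fix your stalk argument goes through unchanged.
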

\begin{proof} Let $Q$ be the pre-sheaf on $\fX$ defined by
$Q(U,W):=\Gamma(U,\cO_U)$ if $W\ne \phi$ and $Q(U,\phi)=0$. It is a separated pre-sheaf. Note that if $W\ne \phi$, $(U,U_K)$ is the initial object in the category
of all pairs $(U',W')$ admitting a morphism $(U,W)\to (U',W')$ in $\fX$. Thus, $v_{X,M}^\ast\bigl(\cO_X\bigr)$ is the sheaf on $\fX$ associated to the pre-sheaf
$Q$.  Note also that we have a natural map $Q \to \cO_{\fX_M}^{\rm un}$. Let $a\in \cO_{\fX_M}^{\rm un}(U,W)$ and view it in $\Gamma(W,\cO_W)$.  By definition there
exists a finite extension $K\subset L$ in $M$ and a finite and \'etale morphism $U' \lra U\otimes_{\cO_K} \cO_L$ so that we have a  map $W \to U'_K\otimes_L M$ over
$U_M$ and $a$ is in the image of $\Gamma(U',\cO_{U'})$ in $\Gamma(W, \cO_W)$. Note that $U'$ is a direct factor of $U'\otimes_{\cO_K} \cO_L$ so that $a$ is in the
image of $Q(U',W)$ in $\Gamma(W, \cO_W)$ as wanted. This proves that the natural morphism $Q\lra \cO_{\fX_M}^{\rm un}$ is surjective. To prove injectivity let
$(U,W)$ be such that $U$ is connected and $W\ne \phi$.  Since the composition
$$
Q(U,W)\lra \cO_{\fX_M}^{\rm un}(U,W)\subset \Gamma(W, \cO_W)
$$
is injective we deduce that the first map is injective. It follows that
the induced morphism from the sheaf associated to
$Q$ to $\cO_{\fX_M}^{\rm un}$ is injective.
\end{proof}

\bigskip
{\it The localization functors.} For this section we suppose that
$X$ is either a smooth scheme or a smooth formal scheme  over
$\cO_K$.  Let $\cU$ be a connected affine open in the  \'etale
site of $X$ with underlying algebra $R_\cU$.  Write
$R_{\cU}\otimes_{\cO_K} M:=\prod_{i=1}^n R_{\cU,i}$ with
$\Spec\bigl(R_{\cU,i}\bigr)$ connected. Fix a geometric generic
point $\bareta_i=\Spec(\C_{\cU,i})$ of
$\Spec\bigl(R_{\cU,i}\bigr)$ and denote by $\Rbar_{\cU,i}$ the
union of all finite normal $R_{\cU}$ sub-algebras of $\C_{\cU,i}$,
which are finite and \'etale over $R_{\cU,i}$ after inverting $p$.
We let $\cG_{\cU_M,i}$ be the Galois group of $R_{\cU,i}\subset
\Rbar_{\cU,i}\otimes_{\cO_K} K$. Eventually, let
$\Rbar_\cU:=\prod_{i=1}^n \Rbar_{\cU,i}$ and let
$$\cG_{\cU_M}:=\prod_{i=1}^n \cG_{\cU_M,i}.$$Let
$\Rep\bigl(\cG_{\cU_M}\bigr)$
(resp.~$\Rep\bigl(\cG_{\cU_M}\bigr)^\N$) be the category of
discrete abelian groups (resp.~the category of inverse systems of
finite abelian groups indexed by~$\N$) with continuous action of
$\cG_{\cU_M}$. We have natural functors, which we'll call
localization functors
$$\Sh\bigl(\fX_M\bigr)\longrightarrow \Rep\bigl(\cG_{\cU_M}
\bigr)\qquad \hbox{{\rm and}}\qquad
\Sh\bigl(\fX_M\bigr)^\N\longrightarrow
\Rep\bigl(\cG_{\cU_M}\bigr)^\N$$defined as follows (we only define
the functor in the case $X$ is a scheme over $\cO_K$ as above and
leave it to the reader to fill in the details for the other
cases): if $\cG\in \Sh\bigl(\fX_M\bigr)$ is a sheaf of abelian
groups, its localization is $\ds
\cG\bigl(\Rbar_\cU\bigr):=\oplus_{i=1}^n \ds
\cG\bigl(\Rbar_{\cU,i}\bigr)$ where
$\cG\bigl(\Rbar_{\cU,i}\bigr):=\ds
\lim_{\to}\cG\bigl(\cU,\Spec(S)\bigr)$, for $S$ running over all
$R_{\cU,i}$ sub-algebras of $\Rbar_{\cU,i}\tensor_{\cO_K} K$ which
are finite and \'etale. It is a set with the discrete topology
and it is endowed with a continuous action of $\cG_{\cU_M}$. The
objects~$(\cU,\cW)$ of~$\fX_M$, with~$\cW=\Spec(S)$ and $R_{\cU,i}
\to  S \subset \Rbar_{\cU,i}\tensor_{\cO_K} K$, correspond to
finite index sub-groups $G_\cW\subset \cG_{\cU_M,i}$. For any such,
we can recover $\cG(\cU,\cW)$ from the localization of~$\cG$ by
the formula $\cG(\cU,\cW)=\cG\bigl(\Rbar_{\cU,i}\bigr)^{G_\cW}$.
This allows to recover $\cG(\cU,\cW)$ for every $\cW\to \cU_M$
finite and \'etale (see~\cite[Lemma 4.5.3]{andreatta_iovita}).

\begin{lemma}\label{lemma:betaastG}
Let $\cG$ be a sheaf on~$\fX_{M_1}$. Let $M_1\subset M_2$ be a Galois field extension. Then
\begin{enumerate}

\item[i.] the sheaf $\beta_{M_1,M_2}^\ast (\cG)$ coincides with
the pre-sheaf $\beta_{M_1,M_2}^{-1} (\cG)$;

\item[ii.] for every object~$(\cU,\cW)$ of $\fX_{M_1}$ the group
$\beta_{M_1,M_2,\ast}\left(\beta_{M_1,M_2}^\ast
(\cG)\right)(\cU,\cW)$ is endowed with an action of ${\rm
Gal}(M_2/M_1)$ and $\cG(\cU,\cW)= \left(\beta_{M_1,M_2,\ast}\left(
\beta_{M_1,M_2}^\ast (\cG)\right)(\cU,\cW)\right)^{{\rm
Gal}(M_2/M_1)}$.

\item[iii.] take $\cU=\Spf(R_\cU)$ to be a connected affine open in the \'etale site of $X$. Then $$\beta_{M_1,M_2,\ast}\left(\beta_{M_1,M_2}^\ast
(\cG)\right)(\Rbar_\cU)\cong \cG\bigl(\Rbar_\cU\bigr)^{[M_2:M_1]}.$$

\end{enumerate}
\end{lemma}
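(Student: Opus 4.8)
We must prove the three assertions about the functor $\beta=\beta_{M_1,M_2}\colon \fX_{M_1}\to\fX_{M_2}$ attached to a Galois extension $M_1\subset M_2$. The key structural input is that $\beta$ is a \emph{continuous} functor of sites that preserves finite projective limits and final objects, so it induces a morphism of topoi; moreover, on the level of underlying categories it is essentially the base-change-to-$M_2$ operation, which on objects $(U,W)$ with $W$ finite \'etale over $U_{M_1}$ is given by $W\mapsto W\otimes_{M_1}M_1$ in the algebraic case (literally the identity on the scheme $W$, only reinterpreted over $M_2$) and by $(\cU,\cW)\mapsto(\cU,\cW)$ in the formal case. The heart of the matter is that $\beta$ is "almost" the localization of sites along a pro-\'etale cover: the fiber of $\fX_{M_2}\to\fX_{M_1}$ over an object is a torsor under $\gal(M_2/M_1)$.

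\textbf{Step i.} To show $\beta^\ast(\cG)=\beta^{-1}(\cG)$, i.e.\ that the inverse-image presheaf is already a sheaf and no sheafification is needed, I would check that $\beta$ sends covering families of $\fX_{M_2}$ \emph{cocontinuously} in the relevant sense, or more directly, use the explicit description of objects and coverings of $\fX_{M_2}$: every object of $\fX_{M_2}$ is, by definition of the projective-limit site $X_{M_2,\mathrm{et}}$ and $\cU_{M_2,\mathrm{fet}}$, obtained from an object already defined over some finite $M_1\subset L\subset M_2$; since $M_2/M_1$ is Galois (hence every such $L$ is a union of finite subextensions, each contained in a finite Galois one), the objects $\beta(U,W)$ for $(U,W)\in\fX_{M_1}$ together with the $\gal(M_2/M_1)$-action on them are cofinal. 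Concretely, the category $\fX_{M_2}$ is the "$2$-colimit over finite $L\subset M_2$" of the $\fX_L$, and for the presheaf $\beta^{-1}\cG$ the sheaf axiom over a strict covering reduces, via this cofinality and via Lemma \ref{lemma:cofinal}, to the sheaf axiom for $\cG$ over a strict covering of $\fX_{M_1}$ after passing to a finite level; I would spell this out using the site description given just before Lemma \ref{lemma:vastOX} ("$\fX_{M}$ is the projective limit of the fibered site over finite extensions"). The point is that no genuinely new coverings are introduced by enlarging $M_1$ to $M_2$ beyond those coming from finite subextensions, and those are built into the fibered-site colimit.

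\textbf{Steps ii and iii.} Granting (i), the functoriality of the construction gives, for a finite Galois $L/M_1$ sitting inside $M_2$, an honest action of $\gal(L/M_1)$ on $\beta^{-1}\cG$ by transport of structure through the $X$-automorphisms $W\otimes_{M_1}L\to W\otimes_{M_1}L$; passing to the limit over finite $L$, one gets an action of $\gal(M_2/M_1)$ on $\beta_\ast\beta^\ast\cG$. To identify the invariants with $\cG$, evaluate on a fixed object $(\cU,\cW)$ of $\fX_{M_1}$: the group $\beta_\ast\beta^\ast\cG(\cU,\cW)$ is, by definition of $\beta_\ast$ and the explicit $\Hom$-description in the projective-limit site, the colimit over finite $L\subset M_2$ of $\cG$ evaluated on the disjoint decomposition of $\cW\otimes_{M_1}L$ into its connected pieces over $\cU_L$ — equivalently, by the localization formalism (the formula $\cG(\cU,\cW)=\cG(\Rbar_{\cU,i})^{G_\cW}$ recalled in \S\ref{sec:localization}), it is $\cG(\Rbar_\cU)^{H}$ where $H\subset\cG_{\cU_{M_1}}$ is the open subgroup cut out by $\cW$ together with the decomposition group of $L$; taking $\gal(M_2/M_1)$-invariants restores exactly the subgroup $G_\cW$, yielding $\cG(\cU,\cW)$. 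For (iii) one specializes to $(\cU,\cU_{M_1})$ and a small affine $\cU=\Spf(R_\cU)$ and observes that $\beta$ replaces the profinite group $\cG_{\cU_{M_1}}$ by the relative one over $M_2$, so that $\beta^\ast\cG(\Rbar_\cU)=\cG(\Rbar_\cU)\otimes_{\Z}\Z[\gal(M_2/M_1)]$ as a $\gal(M_2/M_1)$-set (each connected component of $\Spec R_{\cU,i}\otimes_{M_1}M_2$ is a $\gal(M_2/M_1)$-translate of a fixed one), whence the index $[M_2:M_1]$; when $M_2/M_1$ is infinite this is understood in the colimit sense over finite subextensions, matching the conventions for $\Rep(\cG_{\cU_{M_2}})^{\N}$ in \S\ref{sec:localization}.

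\textbf{Main obstacle.} The delicate point is (i): verifying that $\beta^{-1}\cG$ is \emph{already} a sheaf. This is where the Galois hypothesis on $M_2/M_1$ is essential and where one must be careful about the difference between the naive category $E_{X_{M_2}}$ and the genuine site $\fX_{M_2}$ — exactly the subtlety that occupied \S\ref{sec:Groth}. I would handle it by reducing to strict coverings via Lemma \ref{lemma:cofinal} and Remark \ref{rm:salvage}, then to coverings of type $\alpha$ and $\beta$ separately; type $\alpha$ is insensitive to the base field, and for type $\beta$ the claim becomes the statement that finite \'etale descent over $\cW\otimes_{M_1}M_2$ for the presheaf $\beta^{-1}\cG$ follows from finite \'etale descent for $\cG$ over $\cW$ after going up to a finite Galois $L$, which holds because every finite \'etale cover of $\cW\otimes_{M_1}M_2$ descends to some finite level. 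Everything else is bookkeeping with the localization dictionary.
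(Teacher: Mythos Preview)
Your outline is in the right spirit, but for part (i) you are working much harder than necessary and, more importantly, you never actually compute $\beta^{-1}\cG(\cU,\cW)$ --- which is what makes everything else go through. The paper's argument is a one-line observation: given $(\cU,\cW)\in\fX_{M_2}$ with $\cW$ finite \'etale over $\cU_L$ for some finite $K\subset L\subset M_2$, the composite $\cW\to\cU_L\to\cU_{L'}$ with $L':=M_1\cap L$ is again finite \'etale (we are in characteristic~$0$), so $(\cU,\cW)$ \emph{is already} an object of $\fX_{M_1}$. The identity map then furnishes an initial object of the comma category indexing the colimit $\beta^{-1}\cG(\cU,\cW)=\varinjlim_{(\cU,\cW)\to\beta(\cU',\cW')}\cG(\cU',\cW')$, whence $\beta^{-1}\cG(\cU,\cW)=\cG(\cU,\cW)$ on the nose. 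The sheaf property is then immediate: any strict covering in $\fX_{M_2}$ is literally a strict covering in $\fX_{M_1}$ by the same mechanism, and $\cG$ is a sheaf there. Your reduction to types $\alpha$ and $\beta$ and the descent-of-finite-\'etale-covers argument is not wrong, but it only becomes a proof once you have this identification of $\beta^{-1}\cG$ in hand; as written, the phrase ``the sheaf axiom \ldots\ reduces \ldots\ to the sheaf axiom for $\cG$'' is precisely the statement to be proved.

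For (ii) you invoke the localization formula $\cG(\cU,\cW)=\cG(\Rbar_{\cU,i})^{G_\cW}$, but that formula is only available for affine connected $\cU$, while (ii) is asserted for arbitrary $(\cU,\cW)$. The paper instead computes $\beta_\ast\beta^\ast\cG(\cU,\cW)$ directly as the colimit $\varinjlim_{L'}\cG(\cU,\cW_{L'})$ over finite $L\subset L'\subset M_2$ (with $\cW_{L'}$ regarded over $\cU_{L'\cap M_1}$), reads off the $\gal(M_2/M_1)$-action, and identifies the invariants by inspection. Your approach would work if restricted to small affines and then globalized, but the paper's route avoids this detour. Part (iii) is handled the same way in both: it is (ii) plus $\Rbar_\cU[p^{-1}]\otimes_{M_1}M_2\cong\Rbar_\cU[p^{-1}]^{[M_2:M_1]}$. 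A couple of minor slips: ``$W\otimes_{M_1}M_1$'' should read $W\otimes_{M_1}M_2$, and the claim that ``the fiber of $\fX_{M_2}\to\fX_{M_1}$ over an object is a torsor under $\gal(M_2/M_1)$'' is not the right picture --- $\beta$ goes the other way and is essentially surjective with no torsor structure on fibers.
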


\begin{proof} We prove the statements in the formal case, leaving  the
algebraic case to the reader.

(i) Given an object~$(\cU,\cW)$ of $\fX_{M_2}$, the group $\beta_{M_1,M_2}^{-1} (\cG)(\cU,\cW) $ is $\ds \lim_{\cW'} \cG(\cU,\cW')$ where the direct limit is taken
over all objects $(\cU,\cW')\in \fX_{M_1}$ and all morphisms from $(\cU,\cW)$ to~$(\cU,\cW')$ in~$\fX_{M_2}$. Note that $\cW$ is finite and \'etale over $\cU_L$ for
some finite extension $K\subset L$ contained in $M_2$. In particular it is a finite and \'etale over $\cU_{L'}$ for $L':=M_1\cap L$. Thus the direct limit admits as
final object the group $\cG(\cU,\cW)$ with $(\cU,\cW)$ viewed as an object of $\fX_{M_1}$. The first claim follows; see also~\cite[Pf.
Prop.~4.4.2(4)]{andreatta_iovita}.

(ii) Take an object~$(\cU,\cW)$  of $\fX_{M_1}$. Assume that $\cW$ is finite and \'etale over $\cU_L$ with $L\subset M_1$. Then $\beta_{M_1,M_2,\ast}\left(
\beta_{M_1,M_2}^\ast (\cG)\right)(\cU,\cW)$ coincides with the direct limit $\cG(\cU,\cW_{L'})$ over all finite extensions $L \subset L'\subset M_2$ where
$\cW_{L'}$ is considered as an \'etale covering of $\cU_{L''}$ for $L'':=L'\cap M_1$.  The Galois group ${\rm Gal}(M_2/M_1)$ acts on this set and  the invariants
under ${\rm Gal}(L'M_1/M_1)$ are exactly $\cG(\cU,\cW)$. The claim follows.

(iii) It follows from (ii) and the fact
that~$\Rbar_\cU[p^{-1}]\tensor_{M_1}
M_2=\Rbar_\cU[p^{-1}]^{[M_2:M_1]}$.
\end{proof}

\subsection{The sheaf $\bA_{\rm inf,M}^+$.} \label{sec:sheafAinf}

Let us recall the following definitions from \S5 of
\cite{andreatta_iovita}.  Denote by $\widehat{\cO}_{\fX_M}$ the inverse
system of sheaves of $\cO_M$-algebras
$\left\{\cO_{\fX_M}/p^n\cO_{\fX_M}\right\}_n\in \Sh(\fX_M)^\N$.

For every~$s\in\N$ define
$\WW_{s,M}:=\WW_s\bigl(\cO_{\fX_M}/p\cO_{\fX_M}\bigr)$; it is the
sheaf~$\bigl(\cO_{\fX_M}/p\cO_{\fX_M}\big)^s$ with ring operations
defined by Witt polynomials and the transition maps in the inverse
system defined by Frobenius. Let $\bA_{\rm inf,M}^+$
in\/~$\Sh(\fX_M)^\N$ be the inverse system of sheaves of
$\WW(k)$-algebras $\left\{\WW_{n,M}\right\}_n$ where the
transition maps are defined as the composite of the natural
projection $\WW_{n+1,M}\to \WW_{n,M}$ and Frobenius on
$\WW_{n,M}$. Note that~$\bA_{\rm inf,M}^+$ is endowed with a
Frobenius operator, denoted by~$\varphi$, and is a sheaf of
$\OMun$--algebras.

If $M_1\subset M_2$ is a field extension, it follows from \ref{lemma:betaastG} that we have a natural isomorphism $\beta_{M_1,M_2}^{\ast}\bigl(\cO_{\fX_{M_1}}\bigr)
\cong\cO_{\fX_{M_2}}$. In particular we have a natural map
$$
\beta_{M_1,M_2}^{\ast}\left(\WW_{s,M_1}\right)\lra \WW_{s,M_2}$$which is an isomorphism since $\beta_{M_1,M_2}^\ast$ is exact. In particular we have  natural
isomorphisms of inverse systems of sheaves $\beta_{M_1,M_2}^\ast\left(\widehat{\cO}_{\fX_{M_1}}\right)\cong\widehat{\cO}_{\fX_{M_2}}$ and
$\beta_{M_1,M_2}^\ast\left(\bA_{\rm inf,M_1}^+\right)\cong\bA_{\rm inf,M_2}^+$.

\begin{proposition}
\label{prop:localization} Let $\cU$ be a small affine object of the site $X^{\rm et}$; see \S\ref{sec:formal_Groth} for the definition. Then the natural maps

\begin{enumerate}
\item[a)] $\ds \hR_\cU \longrightarrow \lim_{\infty \leftarrow
n}\left(\cO_{\fX_M}/p^n\cO_{\fX_M}\right)
(\Rbar_\cU)=:\widehat{\cO}_{\fX_M}(\Rbar_\cU)$,

\item[b)] $\ds A_{\rm inf}^+(\Rbar_\cU)\longrightarrow
\lim_{\infty \leftarrow n} \WW_{n,M}(\Rbar_\cU)=:\bA_{\rm
inf,M}^+(\Rbar_\cU)$.

\end{enumerate}
\noindent are isomorphisms.
\end{proposition}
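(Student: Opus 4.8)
The two statements (a) and (b) are essentially the same, since $\bA_{\rm inf,M}^+ = \{\WW_{n,M}\}_n$ with $\WW_{n,M} = \WW_n(\cO_{\fX_M}/p\cO_{\fX_M})$ and $A_{\rm inf}^+ = \WW(\cR(\OKbar)) \cong \lim_{\leftarrow n}\WW_n(\OKbar/p\OKbar)$, so (b) follows from (a) by applying the Witt-vector functor $\WW_n(-)$ (which commutes with the relevant limits, being componentwise) and then taking the inverse limit over $n$. So the heart of the matter is (a): computing the localization of the pro-sheaf $\widehat{\cO}_{\fX_M} = \{\cO_{\fX_M}/p^n\cO_{\fX_M}\}_n$ at a small affine $\cU = \Spf(R_\cU)$ and identifying it with $\hR_\cU = \widehat{\overline{R}_\cU}$, the $p$-adic completion of $\overline{R}_\cU = \prod_i \overline{R}_{\cU,i}$.

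\textbf{First step: reduce to a single connected component and a single level $n$.} Writing $R_\cU\otimes_{\cO_K}M = \prod_{i=1}^s R_{\cU,i}$ with each factor connected, the localization decomposes as a direct sum over $i$, so I may work with a fixed $i$ and drop it from the notation. By Lemma~\ref{lemma:vastOX} we have $\cO_{\fX_M}^{\rm un} \cong v_{X,M}^\ast(\cO_X)$, but here we need $\cO_{\fX_M}$ itself, not its unramified subsheaf. The localization $\cO_{\fX_M}(\Rbar_\cU) = \lim_{\to,S}\cO_{\fX_M}(\cU,\Spec S)$ is, by the very definition of $\cO_{\fX_M}$ as a normalization, the direct limit over finite étale $R_{\cU}[1/p]$-subalgebras $S$ of the normalization of $R_\cU$ in $S$; this direct limit is exactly $\overline{R}_\cU$ by the definition of $\overline{R}_\cU$ in \S\ref{sec:localization}. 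Thus $\cO_{\fX_M}(\Rbar_\cU) = \overline{R}_\cU$ on the nose, and similarly — using that $\cO_{\fX_M}/p^n$ is the sheafification of the presheaf quotient — one gets $\left(\cO_{\fX_M}/p^n\cO_{\fX_M}\right)(\Rbar_\cU) = \overline{R}_\cU/p^n\overline{R}_\cU$, provided the localization functor is exact on the relevant sheaves. This exactness (that $\cG \mapsto \cG(\Rbar_\cU)$ sends the short exact sequence $0\to \cO_{\fX_M}\xrightarrow{p^n}\cO_{\fX_M}\to\cO_{\fX_M}/p^n\to 0$ to a short exact sequence) is where the "small affine" hypothesis enters, and it is the technical core. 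The point is that $\Spec(\overline{R}_\cU[1/p])\to \Spec(R_\cU[1/p])$ is (an inverse limit of) finite étale covers, so $H^1$ of the relevant Galois group with $\cO_{\fX_M}$-coefficients localizes to something that vanishes after inverting $p$ in the appropriate "almost" sense — and I would invoke the computation of localizations in \cite[\S4--5]{andreatta_iovita}, specifically the analogue of Lemma~4.5.3 there, to get that $\left(\cO_{\fX_M}/p^n\right)(\Rbar_\cU) = \overline{R}_\cU/p^n$.

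\textbf{Second step: pass to the limit.} Given the level-$n$ identification, take $\lim_{\leftarrow n}$ to obtain $\widehat{\cO}_{\fX_M}(\Rbar_\cU) := \lim_{\leftarrow n}(\overline{R}_\cU/p^n\overline{R}_\cU) = \hR_\cU$, which is the definition of $\hR_\cU$; the transition maps match by naturality. For (b), apply the same reasoning componentwise through the Witt coordinates: $\WW_{n,M}(\Rbar_\cU) = \WW_n(\overline{R}_\cU/p\overline{R}_\cU)$, and $\lim_{\leftarrow n}\WW_n(\overline{R}_\cU/p\overline{R}_\cU) = \WW(\cR(\overline{R}_\cU)) = A_{\rm inf}^+(\Rbar_\cU)$ by the same description of $A_{\rm inf}^+$ recalled in \S\ref{sec:Notation} (applied to $\overline{R}_\cU$ in place of $\OKbar$), using that $\cR(\overline{R}_\cU)$ is perfect so $\WW_n(\cR(\overline{R}_\cU)) \cong A_{\rm inf}^+(\Rbar_\cU)/p^n$, and the transition maps (projection followed by Frobenius) match those in the definition of $\bA_{\rm inf,M}^+$.

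\textbf{The main obstacle.} The real work is the exactness of the localization functor on $\cO_{\fX_M}$ and the resulting identification $\left(\cO_{\fX_M}/p^n\cO_{\fX_M}\right)(\Rbar_\cU) = \overline{R}_\cU/p^n\overline{R}_\cU$ — equivalently, that sheafifying $\cO_{\fX_M}\bmod p^n$ does not change the value on the pro-object $\Rbar_\cU$. A priori $(\cO_{\fX_M}/p^n)(\cU,\cW)$ is the sheafification, which could be strictly larger than $\cO_{\fX_M}(\cU,\cW)/p^n$ for a fixed finite cover $\cW$; the claim is that in the colimit over all $\cW$ (i.e. after localizing) the discrepancy disappears. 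This is precisely the kind of almost-vanishing statement proved in \cite{andreatta_iovita} via Faltings' almost purity / almost étale techniques, and I expect the cleanest route is to cite the relevant proposition there rather than reprove it. The "small affine" hypothesis — existence of an étale map from a torus $\cO_K\{T_1^{\pm1},\ldots,T_d^{\pm1}\}$ — is exactly what makes those almost-vanishing inputs available.
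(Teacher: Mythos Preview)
Your central claim---that $(\cO_{\fX_M}/p^n\cO_{\fX_M})(\Rbar_\cU) = \Rbar_\cU/p^n\Rbar_\cU$ at each finite level $n$---is false, and this is not a matter of finding the right citation in \cite{andreatta_iovita}. Sheafifying the presheaf quotient $\cO_{\fX_M}/p^n$ allows refinements by \emph{\'etale covers} $\cU'\to\cU$ (type $\alpha$ in the topology ${\rm T}_{X_M}$), not just by finite \'etale covers of the generic fibre. Concretely, an element of $(\cO_{\fX_M}/p^n)(\Rbar_\cU)$ arises from a class in $T/p^nT$, where $R_{\cU'}\to T$ is normal, finite \'etale after inverting $p$, for some \'etale cover $\cU'\to\cU$; the descent datum forces such a class to be $G_{S,T}$-invariant (where $G_{S,T}={\rm Gal}(T[1/p]/S'[1/p])$ with $S'=S\otimes_{R_\cU}R_{\cU'}$), but \emph{not} to come from $S'/p^nS'$. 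The obstruction sits in ${\rm H}^1(G_{S,T},T)$, which by Faltings' almost \'etale theory is annihilated by the maximal ideal of $\OKbar$---so it is almost zero, but not zero. Your appeal to ``exactness of localization'' confuses exactness of filtered colimits (true for abelian groups) with exactness of section functors $\cF\mapsto\cF(\cU,\cW)$ on sheaves (only left exact).

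The paper's route is therefore necessarily more delicate: one shows (i) the map $\Rbar_\cU/p^n\Rbar_\cU \hookrightarrow (\cO_{\fX_M}/p^n)(\Rbar_\cU)$ is injective (via separatedness of the presheaf quotient, using normality), (ii) its cokernel is almost zero, and crucially (iii) the transition map $(\cO_{\fX_M}/p^{n+1})(\Rbar_\cU)\to(\cO_{\fX_M}/p^n)(\Rbar_\cU)$ and Frobenius on $(\cO_{\fX_M}/p)(\Rbar_\cU)$ both factor through $\Rbar_\cU/p^n\Rbar_\cU$ (resp.\ $\Rbar_\cU/p\Rbar_\cU$). Point (iii) is what makes the inverse limit collapse to $\hR_\cU$ despite the level-wise failure; it comes from the observation that multiplication by $p$ kills ${\rm H}^1(G_{S,T},T)$, so the connecting map in the snake lemma for $0\to T\xrightarrow{p^{n+1}}T\to T/p^{n+1}\to 0$ versus $0\to T\xrightarrow{p^n}T\to T/p^n\to 0$ vanishes on $G_{S,T}$-invariants. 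For (b) the same Frobenius factorization is what makes the transition maps of $\{\WW_{n,M}(\Rbar_\cU)\}_n$ (which are projection \emph{composed with Frobenius}) land in $\WW_n(\Rbar_\cU/p\Rbar_\cU)$, yielding $A_{\rm inf}^+(\Rbar_\cU)$ in the limit. Your proposed reduction of (b) to (a) via ``apply $\WW_n$ componentwise'' only works once you have this factorization; it does not follow from the level-wise isomorphism you claimed.
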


We'll only prove the result in the formal case and leave it
to the reader to repeat the arguments in the algebraic case. First
of all we prove

\begin{lemma}\label{lemma:barOmodpnisseparated}
For every~$n\in\N$ the pre-sheaf $\cO_{\fX_M}/p^n \cO_{\fX_M}$ is
separated i.~e., if $(\cU',\cW')\to (\cU,\cW)$ is a covering, the
natural map $$\cO_{\fX_M}(\cU,\cW)/p^n \cO_{\fX_M}(\cU,\cW) \lra
\cO_{\fX_M}(\cU',\cW')/p^n \cO_{\fX_M}(\cU',\cW')$$ is
injective.\end{lemma}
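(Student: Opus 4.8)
The statement is that $\cO_{\fX_M}/p^n\cO_{\fX_M}$ is a separated presheaf, i.e.\ that for a covering $(\cU',\cW')\to(\cU,\cW)$ — or rather, in view of Lemma~\ref{lemma:cofinal}, for a strict covering family — the map on sections modulo $p^n$ is injective. I would reduce immediately to checking injectivity for strict covering families $\{(\cU_\alpha,\cW_{\alpha,i})\to(\cU,\cW)\}_{\alpha,i}$, since these are cofinal among all coverings. Thus, writing $x\in\cO_{\fX_M}(\cU,\cW)$ with image in $p^n\cO_{\fX_M}(\cU_\alpha,\cW_{\alpha,i})$ for all $\alpha,i$, I must show $x\in p^n\cO_{\fX_M}(\cU,\cW)$.

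\textbf{Key steps.} The heart of the matter is a commutative-algebra statement: if $A=\Gamma(\cU,\cO_\cU)$ is a normal domain (WLOG $\cU=\Spf(R_\cU)$ small affine, hence connected, so we may take $\cU$ connected and $\cW$ connected too), $B=\cO_{\fX_M}(\cU,\cW)$ is its normalization in a finite \'etale extension $\Gamma(\cW,\cO_\cW)$ of $A[1/p]$, and similarly $B_\alpha=\cO_{\fX_M}(\cU_\alpha,\cW_{\alpha,i})$, then: an element $x\in B$ that becomes divisible by $p^n$ in each $B_\alpha$ is already divisible by $p^n$ in $B$. First I would reduce as in the proof of Proposition~\ref{prop:sheaf}: pick a finite extension $K\subset L$ in $M$ and a finite \'etale $\cW'\to\cU_L$ whose base change to $M$ is $\cW$, with $x\in\Gamma(\cW',\cO_{\cW'})$; shrinking to a finite subcover we may assume finitely many $\alpha$. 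Now the statement is Zariski-local and the obstruction "$x/p^n\in B$" can be tested after localizing $A$ at its height-one primes containing $p$ (using $B$ normal, $A$ normal noetherian, $B[1/p]$ finite projective over $A[1/p]$ — this is exactly the mechanism of Remark~\ref{remark:normal}). So we may assume $A$ is a DVR with uniformizer $p$ (after replacing $L$ appropriately the residue characteristic-$p$ place is still given by $p$); then $B$ is a free $A$-module of finite rank. Finally the \'etale covering $\{\cU_\alpha\to\cU\}$ gives a faithfully flat map $A\to\prod_\alpha A_\alpha$ on the local rings, and $B\otimes_A A_\alpha\cong B_\alpha$ (normalization commutes with \'etale base change, and the $\cW_{\alpha,i}$ exhaust the relevant covers); so $x\in p^nB$ can be checked after the faithfully flat base change $A\to\prod_\alpha A_\alpha$, where it holds by hypothesis. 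Hence $x\in p^n B=p^n\cO_{\fX_M}(\cU,\cW)$.

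\textbf{Main obstacle.} The delicate point is the identification $\cO_{\fX_M}(\cU_\alpha,\cW_{\alpha,i})\cong B\otimes_A A_\alpha$: one needs that forming the normalization in a finite \'etale extension of the generic fiber is compatible with the \'etale base change $\cU_\alpha\to\cU$, and that the $\cW_{\alpha,i}$ arising in a strict covering give, after the reductions above, exactly the base change $\cW\times_{\cU_K}\cU_{\alpha,K}$ up to a further finite \'etale refinement that does not affect divisibility. This is where the normality hypothesis on $\cU$ (built into smoothness over $\cO_K$) and the noetherian hypothesis are used; it is essentially the same bookkeeping as in the proof of Proposition~\ref{prop:sheaf}, with the extra input of Remark~\ref{remark:normal} to descend divisibility from the DVR case. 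Once that compatibility is in place, the descent of "$x\in p^nB$" along a faithfully flat ring map is formal.
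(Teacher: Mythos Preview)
Your approach is correct and shares the same core idea as the paper --- reduce divisibility by $p^n$ to a statement at height-one primes over $p$, where everything is a DVR and the claim is immediate --- but the paper's execution is more direct and sidesteps the very obstacle you flag. Instead of identifying $\cO_{\fX_M}(\cU_\alpha,\cW_{\alpha,i})$ with a base change $B\otimes_A A_\alpha$ and then invoking faithfully flat descent, the paper simply writes $\cO_{\fX_M}(\cU,\cW)=\bigcup_i S_i$ and $\cO_{\fX_M}(\cU',\cW')=\bigcup_j S_j'$ as unions of finite normal extensions (\'etale after inverting $p$), chooses for each $S_i$ some $S_{j_i}'$ with $S_i\subset S_{j_i}'$ and $\Spec(S_{j_i}')\to\Spec(S_i)$ surjective on primes containing $p$, and then argues: for $x\in S_i\cap p^nS_{j_i}'$ and any height-one prime $\cP\subset S_i$ over $p$, pick a height-one prime $\cP'\subset S_{j_i}'$ over $\cP$; the extension of DVRs $S_{i,\cP}\hookrightarrow S_{j_i,\cP'}'$ shows $x\in p^nS_{i,\cP}$, and normality plus noetherianity of $S_i$ gives $x\in p^nS_i$.

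The gain of the paper's framing is that it never needs ``normalization commutes with \'etale base change'' nor the bookkeeping of strict covering families: the only structural input is the existence of a ring map $S_i\to S_{j_i}'$ hitting every prime over $p$, which follows from the covering property. Your route works but requires justifying the base-change identification (which, as you note, is standard for excellent rings but adds friction), whereas the paper's argument applies verbatim to any map between normal noetherian rings that is surjective on $\Spec$ over $p$.
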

\begin{proof}
The lemma is a direct consequence of \cite{artin} Miscellany (1.8), (iv) or in this particular case one may reason as follows. We write $\cO_{\fX_M}(\cU,\cW)=\cup_i
S_i$ (resp.~$\cO_{\fX_M}(\cU',\cW')=\cup_j S_j'$) as the union of normal and finite $R_\cU$--algebras (resp.~$R_{\cU'}$--algebras), \'etale after inverting~$p$ such
that for every~$i$ there exists~$j_i$ so that $S_i$ is contained in~$S_{j_i}'$ and the map $\Spec(S_{j_i}') \to \Spec(S_i)$ is surjective on prime ideals
containing~$p$. Let~$x\in S_i\cap p^n S_{j_i}'$. Let~$\cP\subset S_i$ be a prime ideal over~$p$ and let~$\cP'\subset S_{j_i}'$ be a height one prime ideal over it.
Then $x\in S_{i,\cP}\cap p^n S_{j_i,\cP'}'$. Hence $x\in p^n S_{i,\cP}$. Thus $x$ lies in the intersection of all height one prime ideals of~$S_i$ so that~$x\in
S_i$. We conclude that the map $S_i/p^n S_i \to S_{j_i}'/p^nS_{j_i}'$ is injective. The claim follows.
\end{proof}

\noindent The lemma implies that we have an injective map
$$\Rbar_\cU/p^n\Rbar_\cU= \cO_{\fX_M}(\Rbar_\cU)
/p^n\cO_{\fX_M}(\Rbar_\cU) \lra
\bigl(\cO_{\fX_M}/p^n\cO_{\fX_M}\bigr)(\Rbar_\cU).$$The
proposition follows then from the following

\begin{lemma}\label{lemma:localizationofbarOmodpn} 1) The cokernel
of $\Rbar_\cU/p^n\Rbar_\cU \lra
\bigl(\cO_{\fX_M}/p^n\cO_{\fX_M}\bigr)(\Rbar_\cU)$ is annihilated
by the maximal ideal of~$\OKbar$.\smallskip

2) The image of the map
$\bigl(\cO_{\fX_M}/p^{n+1}\cO_{\fX_M}\bigr)(\Rbar_\cU)\to
\bigl(\cO_{\fX_M}/p^n\cO_{\fX_M}\bigr)(\Rbar_\cU)$ factors via
$$\Rbar_\cU/p^n\Rbar_\cU \subset
\bigl(\cO_{\fX_M}/p^n\cO_{\fX_M}\bigr)(\Rbar_\cU).$$\smallskip

3) The image of Frobenius on
$\bigl(\cO_{\fX_M}/p\cO_{\fX_M}\bigr)(\Rbar_\cU)$ factors via
$\Rbar_\cU/p\Rbar_\cU \subset
\bigl(\cO_{\fX_M}/p\cO_{\fX_M}\bigr)(\Rbar_\cU)$.

\end{lemma}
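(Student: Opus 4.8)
The plan is to deduce all three statements from one fact. For $\cU$ a small affine in $X^{\rm et}$ and $\Rbar:=\Rbar_{\cU,i}$ one of the connected components (with Galois group $\cG_{\cU_M,i}$), set ${\rm H}^1_{\rm loc}(\cO_{\fX_M}):=\lim_{\to,\cW}{\rm H}^1\bigl((\cU,\cW),\cO_{\fX_M}\bigr)$, the colimit of the cohomology of the object $(\cU,\cW)$ as $\cW$ runs through the finite \'etale covers in the tower defining $\Rbar$; the fact to be proved is that ${\rm H}^1_{\rm loc}(\cO_{\fX_M})$ is annihilated by the maximal ideal $\mathfrak m$ of $\OKbar$. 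Everything in the lemma splits as a product over the components, so I fix one. Recall that $\cO_{\fX_M}$ is a sheaf (Proposition~\ref{prop:sheaf}), is $p$-torsion free, satisfies $\cO_{\fX_M}(\Rbar)=\Rbar$, and, its presheaf quotient being separated by Lemma~\ref{lemma:barOmodpnisseparated}, that $\cO_{\fX_M}/p^n\cO_{\fX_M}$ is the sheaf cokernel of multiplication by $p^n$ on $\cO_{\fX_M}$. First I would apply the localization functor $\cF\mapsto\cF(\Rbar)$, which is the composite of evaluation at the objects $(\cU,\cW)$ with the exact filtered colimit over $\cW$, hence is left exact with $q$-th derived functor $\lim_{\to,\cW}{\rm H}^q\bigl((\cU,\cW),-\bigr)$, to $0\to\cO_{\fX_M}\xrightarrow{p^n}\cO_{\fX_M}\to\cO_{\fX_M}/p^n\cO_{\fX_M}\to0$. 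The resulting long exact sequence reads $0\to\Rbar/p^n\Rbar\to(\cO_{\fX_M}/p^n\cO_{\fX_M})(\Rbar)\xrightarrow{\ \delta_n\ }{\rm H}^1_{\rm loc}(\cO_{\fX_M})$, so the cokernel in part~1) is ${\rm Im}(\delta_n)\subseteq {\rm H}^1_{\rm loc}(\cO_{\fX_M})$ and part~1) follows from the fact stated above. Along the way I record the \v{C}ech description of $\delta_n$: since iterated type-$\alpha$/type-$\beta$ (strict) coverings are cofinal (Lemma~\ref{lemma:cofinal}) and the type-$\beta$ ones disappear in the colimit over $\cW$, a section of $\cO_{\fX_M}/p^n\cO_{\fX_M}$ over $\Rbar$ is represented, over some \'etale covering $\cU'\to\cU$, by an integral \v{C}ech $0$-cochain $(\tilde x_a)$ in $\cO_{\fX_M}$ with $\tilde x_a-\tilde x_b\in p^n\cO_{\fX_M}$, and $\delta_n$ of it is the class of the cocycle $p^{-n}(\tilde x_a-\tilde x_b)$.

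The substantial point, and the one I expect to be the main obstacle, is the fact $\mathfrak m\cdot{\rm H}^1_{\rm loc}(\cO_{\fX_M})=0$; it is a form of Faltings' almost purity theorem. Via the spectral sequence of the projection of the localized site onto $\cU^{\rm et}$ (the local content of Theorem~\ref{thm:coh_loc} applied to $\cO_{\fX_M}$), ${\rm H}^1_{\rm loc}(\cO_{\fX_M})$ is assembled from ${\rm H}^1$ on the affine formal scheme $\cU$ of a filtered colimit of coherent $\cO_\cU$-algebras, which vanishes, and from the continuous cohomology ${\rm H}^1_{\rm cont}\bigl(\cG_{\cU_M,i},\Rbar\bigr)$. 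The latter is the filtered colimit, over finite Galois $\cW$, of the finite-level cohomology groups of the Galois group of $\cW$ acting on $\cO_{\fX_M}(\cU,\cW)$; each of these is killed by $\mathfrak m$ because, $\cU$ being small, the normalization $\cO_{\fX_M}(\cU,\cW)$ of $R_\cU$ in the finite \'etale $R_{\cU,i}$-algebra $\cO_{\fX_M}(\cU,\cW)[1/p]$ is an almost \'etale extension of $R_\cU$ (Faltings' almost purity theorem, \cite{faltingscrystalline}; see also \cite{andreatta_iovita}), and an almost \'etale extension has $\mathfrak m$-torsion higher Galois cohomology. Passing to the colimit gives $\mathfrak m\cdot{\rm H}^1_{\rm loc}(\cO_{\fX_M})=0$.

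Granting this, parts~2) and~3) are formal, using the \v{C}ech formula for $\delta_\bullet$ and the fact that $p\in\mathfrak m$. For part~2): a section $x\in(\cO_{\fX_M}/p^{n+1}\cO_{\fX_M})(\Rbar)$ admits an integral \v{C}ech lift $(\tilde x_a)$ with $\tilde x_a-\tilde x_b\in p^{n+1}\cO_{\fX_M}$, so the obstruction to $\red(x)$ lying in $\Rbar/p^n\Rbar$ is $\delta_n\bigl(\red(x)\bigr)=\bigl[p^{-n}(\tilde x_a-\tilde x_b)\bigr]=p\cdot\bigl[p^{-(n+1)}(\tilde x_a-\tilde x_b)\bigr]$, which lies in $p\cdot{\rm H}^1_{\rm loc}(\cO_{\fX_M})=0$; hence $\red(x)\in\Rbar/p^n\Rbar$. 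For part~3): given $x\in(\cO_{\fX_M}/p\cO_{\fX_M})(\Rbar)$ with integral \v{C}ech lift $(\tilde x_a)$, write $\tilde x_a-\tilde x_b=pc_{ab}$; in the expansion $\tilde x_a^p-\tilde x_b^p=(\tilde x_b+pc_{ab})^p-\tilde x_b^p=\sum_{k=1}^{p}\binom{p}{k}\tilde x_b^{p-k}(pc_{ab})^k$ every term lies in $p^2\cO_{\fX_M}$ (for $1\le k\le p-1$ one has $v_p\bigl(\binom{p}{k}p^k\bigr)=k+1\ge2$, and $v_p(p^p)=p\ge2$), so $\tilde x_a^p-\tilde x_b^p\in p^2\cO_{\fX_M}$ and the obstruction to $\varphi(x)=x^p$ lying in $\Rbar/p\Rbar$ is $\delta_1(x^p)=\bigl[p^{-1}(\tilde x_a^p-\tilde x_b^p)\bigr]=p\cdot\bigl[p^{-2}(\tilde x_a^p-\tilde x_b^p)\bigr]\in p\cdot{\rm H}^1_{\rm loc}(\cO_{\fX_M})=0$; hence $x^p\in\Rbar/p\Rbar$. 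The only non-formal ingredient is thus the almost-purity vanishing of the middle paragraph.
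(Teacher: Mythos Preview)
Your framework is clean and the deductions of parts 2) and 3) from part 1) via the boundary map $\delta_n$ are correct and more conceptual than the paper's hands-on argument. The issue is the middle paragraph, where you compute $H^1_{\rm loc}(\cO_{\fX_M})$.

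The claim that ``type-$\beta$ coverings disappear in the colimit over $\cW$'' is only true for type-$\beta$ coverings of $(\cU,\cW)$ itself. In a strict covering $\{(\cU_i,\cW_{ij})\to(\cU,\cW)\}$ the $\cW_{ij}$ are finite \'etale over $\cW\times_\cU\cU_i$, and going up the $\cU$-tower in $\cW$ does \emph{not} split these: a finite \'etale cover of $\cU_{i,K}$ need not extend to one of $\cU_K$. So your \v{C}ech description of a section over $\Rbar$ by a cochain for a pure type-$\alpha$ cover is not justified, and the identification of $H^1_{\rm loc}(\cO_{\fX_M})$ with $H^1_{\rm cont}(\cG_{\cU_M,i},\Rbar)$ (up to a vanishing $\cU^{\rm et}$-term) is not correct. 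The ``spectral sequence of the projection of the localized site onto $\cU^{\rm et}$'' that you invoke exists, but its $R^1$ is not the Galois cohomology of $\cG_{\cU_M,i}$ on $\Rbar$; rather it encodes, over each \'etale $\cU'\to\cU$, the Galois cohomology of extensions of $\Rbar_\cU\otimes_{R_\cU}R_{\cU'}$ that are \emph{not} pulled back from $\cU$.

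This is exactly what the paper tracks. It writes $(\cO_{\fX_M}/p^n\cO_{\fX_M})(\Rbar_\cU)$ as $\varinjlim_{S,\cU',T}\Ker_{S,T,n}$, where $S$ runs through the $\cU$-tower, $\cU'\to\cU$ through affine \'etale covers, and $T$ through normal extensions of $S':=S\otimes_{R_\cU}R_{\cU'}$ finite \'etale and Galois after inverting $p$; then it shows $\Coker(S/p^nS\to\Ker_{S,T,n})\hookrightarrow H^1(G_{S,T},T)$ with $G_{S,T}=\Gal(T[1/p]/S'[1/p])$, and applies almost \'etaleness of $R_{\cU'}\to T$ (using that $\cU'$ is again small) to kill this by $\mathfrak m$. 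The Galois groups live over $\cU'$, not over $\cU$. Your argument can be repaired by making this correction---the conclusion $\mathfrak m\cdot H^1_{\rm loc}(\cO_{\fX_M})=0$ is true---but the repair amounts to reproducing the paper's computation inside your derived-functor language.
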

\begin{proof}
It follows from~\ref{lemma:barOmodpnisseparated} that the value of the sheaf $\cO_{\fX_M}/p^n \cO_{\fX_M}$ on $(\cU,\cW)$ is given by the direct limit, over all
coverings $(\cU',\cW')$ of~$(\cU,\cW)$ with~$\cU'$ affine, of the elements $b$ in $ \cO_{\fX_M}(\cU',\cW')/p^n\cO_{\fX_M}(\cU',\cW')$ such that the image of~$b$
in~$\cO_{\fX_M}(\cU'',\cW'')/p^n \cO_{\fX_M}(\cU'',\cW'')$ is~$0$, where~$(\cU'',\cW'') $ is the fiber product of~$(\cU',\cW')$ with itself over~$(\cU,\cW)$. Hence
$$\bigl(\cO_{\fX_M}/p^n\cO_{\fX_M}\bigr)\bigl(\Rbar_\cU\bigr) =\lim_{S,T}\Ker_{S,T,n}$$where the notation is as follows. The direct limit is taken over all normal
$R_{\cU,\infty}$ sub-algebras~$S$ of~$\Rbar_\cU$, finite and \'etale after inverting~$p$ over~$R_{\cU,\infty}[1/p]$, all affine covers~$\cU'\to \cU$ and all normal
extensions~$R_{\cU',\infty} \tensor_{R_\cU} S \to T$, finite, \'etale and Galois after inverting~$p$. Eventually, we put $\cU'':=\Spf(R_{\cU''})$ to be the fiber
product of~$\cU'$ with itself over~$\cU$ i.~e., $R_{\cU''}:=\widehat{R_{\cU'}\otimes_{R_\cU}R_{\cU'}}$. We let $\Ker_{S,T,n}:=\Ker\left(T/p^n T \rightrightarrows
\widetilde{T\otimes_S T}/p^n \widetilde{T\otimes_S T} \right)$, where~$\widetilde{T\otimes_S T}$ is the normalization of $T\otimes_S T$. Write $\widetilde{T}_S$ for
the normalization of
 $T\otimes_{(R_{\cU',\infty}
\otimes_{R_{\cU,\infty}} S)} T$.   We have a natural morphism
$\widetilde{T\otimes_S T} \lra \widetilde{T}_S$ of
$R_{\cU',\infty}$--algebras. Let
$$\Ker'_{S,T,n}:= \Ker\left(T/p^n T \rightrightarrows
\widetilde{T}_S/p^n \widetilde{T}_S \right).$$Then $$\bigl(\cO_{\fX_M}/p^n\cO_{\fX_M}\bigr)\bigl(\Rbar_\cU\bigr) \subset \lim_{S,T}\Ker'_{S,T,n}.$$

\smallskip

{\it Study of $\Ker'_{S,T,n}$.} For every~$S$ and~$T$ as above, write~$G_{S,T}$ for the Galois group of $T\tensor_{\cO_K} K$ over
$S\otimes_{R_{\cU,\infty}}R_{\cU',\infty}\otimes_{\cO_K} K$. Then $\widetilde{T}_S$  is simply the product $\prod_{g\in G_{S,T}} T$.   Hence we have
$$\Ker'_{S,T,n}=\Ker\left(T/p^nT
\rightrightarrows \prod_{g\in G_{S,T}} \frac{T}{p^nT}\right)=(T/p^nT)^{G_{S,T}},$$where the
two maps in the display are $a\mapsto (a,\cdots,a)$ and $a\mapsto
\bigl(g(a)\bigr)_{g\in G_{S,T}}$.
\smallskip

{\it Study of $\Coker(S/p^nS\lra \Ker_{S,T,n})$.} For the rest of
this proof we make the following notations: if $B$ is a normal
$R_{\cU,\infty}$-algebra we denote by
$B':=B\otimes_{R_{\cU,\infty}}R_{\cU',\infty}=B\otimes_{R_\cU}R_{\cU'}$,
also
$B'':=B'\otimes_{R_{\cU',\infty}}R_{\cU'',\infty}=B'\otimes_{R_{\cU'}}R_{\cU''}$
(the second equalities above follow from \cite[Lemma
6.19]{andreatta_iovita}). Note that $B'$ and $B''$ are normal.
Indeed, $B=\cup B_i$
is the union of finite and normal $R_{\cU'}$--algebras $B_i$.
Since~$R_{\cU'}$ is an excellent ring by \cite{Valabrega}, it
follows from \cite[\S7.8.3(ii)]{EGAIV} that each $B_i$ is
excellent. Thanks to \cite[\S7.8.3(v)]{EGAIV} we conclude that
$B_i\tensor_{R_{\cU}} R_{\cU'}$ is normal since it is the
$p$-adic completion of an \'etale $B_i$-algebra of finite type.
Thus, $B':=B\tensor_{R_{\cU}} R_{\cU'}$ is normal as well.
Similarly one shows that $B''$ is normal.

We
then get a commutative diagram
$$
\begin{array}{cccccccccc}
0 & \to & S/p^nS & \lra & S'/p^nS' & \rightrightarrows &
S''/p^nS''\cr & & \big\downarrow & & \big\downarrow{\alpha} & &
\big\downarrow{\beta}\cr 0 & \to & \Ker_{S,T,n} & \lra & T/p^nT &
\rightrightarrows &\widetilde{T\otimes_ST}/p^n\widetilde{T\otimes_ST}\cr
&&\big\downarrow &&||&&\big\downarrow\cr
0&\to & \Ker'_{S,T,n}& \lra & T/p^nT &\rightrightarrows &\widetilde{T}_S/p^n\widetilde{T}_S.\cr
\end{array}
$$The top row is exact by \'etale descent and the middle and bottom  rows are
exact by construction. Since~$S'\subset T$ and $S''\subset T''$
are finite extensions of normal rings, the maps~$\alpha$ and
$\beta$ are injective.
Let us remark that the image of $S'/p^nS'$ in $\widetilde{T}_S/p^n\widetilde{T}_S=
\widetilde{T\otimes_{S'}T}/p^n\widetilde{T\otimes_{S'}T}$ is $0$, therefore
the image of $\alpha$ factors via $\Ker'_{S,T,n}=(T/p^nT)^{G_{S,T}}$.

Define $Z$ as $\Coker\bigl(S'/p^n S'\to
(T/p^nT)^{G_{S,T}}\bigr)\subset \Coker(\alpha)$ and $Y$ as
$\Coker(S/p^nS\lra \Ker_{S,T,n})$. Since $\Ker_{S,T,n}$ is
$G_{S,T}$-invariant, the image of\/ $Y$  in $\Coker(\alpha)$ is
contained in~$Z$. Since~$\alpha$ and $\beta$ are injective, the
map $Y\to Z$ is injective. Consider the exact sequence
$$ 0\lra S'/p^n S'=T^{G_{S,T}}/p^n
T^{G_{S,T}} \lra \bigl(T/p^nT\bigr)^{G_{S,T}} \lra {\rm H}^1\bigl(G_{S,T},T\bigr).$$Then $Y \subset Z\subset {\rm H}^1\bigl(G_{S,T},T\bigr)$. Since
$R_{\cU',\infty}\to T$ is almost \'etale, the group ${\rm H}^1\bigl(G_{S,T},T\bigr)$ is annihilated by any element of the maximal ideal of~$\OKbar$;
see~\cite[Thm.~I.2.4(ii)]{faltingsJAMS}. This implies the first claim of lemma \ref{lemma:localizationofbarOmodpn}.

\smallskip

{\it Study of the projection $\Ker'_{S,T,n+1}\to \Ker'_{S,T,n}$.}
It is induced by the natural projection $T/p^{n+1} T\to T/p^n T$.
Consider the  commutative diagram

$$
 \begin{array}{ccrcrcccc}
0 & \lra & T & \stackrel{p^{n+1}}{\lra} & T & \lra & T/p^{n+1} T &
\lra 0 \cr & & p \big\downarrow & & \Vert & & \big\downarrow  \cr
0 & \lra & T & \stackrel{p^n}{\lra} & T & \lra & T/p^n T & \lra 0
.\cr
\end{array}
$$Taking $G_{S,T}$--invariants we get the following commutative
diagram:

$$ \begin{array}{ccccccc} 0 & \lra & S'/p^{n+1} S' & \lra &
\bigl(T/p^{n+1}T\bigr)^{G_{S,T}}& \lra & {\rm
H}^1\bigl(G_{S,T},T\bigr) \cr & & \big\downarrow & &
\big\downarrow & & \big\downarrow \cdot p \cr 0 & \lra & S'/p^n S'
& \lra & \bigl(T/p^nT\bigr)^{G_{S,T}}& \lra & {\rm
H}^1\bigl(G_{S,T},T\bigr).\cr
 \end{array}
$$Since multiplication by $p$ annihilates ${\rm
H}^1\bigl(G_{S,T},T\bigr)$, we conclude that the projection $\Ker'_{S,T,n+1}\to \Ker'_{S,T,n}$ factors via $S/p^n S$. Hence the image of
$\bigl(\cO_{\fX_M}/p^{n+1}\cO_{\fX_M}\bigr)(\Rbar_\cU)\to \bigl(\cO_{\fX_M}/p^n\cO_{\fX_M}\bigr)(\Rbar_\cU)$ factors via $\Rbar_\cU/p^n\Rbar_\cU \subset
\bigl(\cO_{\fX_M}/p^n\cO_{\fX_M}\bigr)(\Rbar_\cU)$. This proves the second claim of the lemma.
\smallskip

Consider the map of sets $T/p T\to T/p^2 T$ sending an element $a$ to the $p$--th power $\widetilde{a}^p$ of a lift $\widetilde{a}$ of $a$ in $T/p^2 T $. It is well
defined since it does not depend on the choice of the lift~$\widetilde{a}$. It induces a map $\rho\colon \bigl(T/pT\bigr)^{G_{S,T}} \to \bigl(T/p^2T\bigr)^{G_{S,T}}
$. Frobenius on $\bigl(T/pT\bigr)^{G_{S,T}}$ factors as the composite of $\rho$ and the projection $\bigl(T/p^2T\bigr)^{G_{S,T}}\to \bigl(T/pT\bigr)^{G_{S,T}}$. It
follows from the above discussion that Frobenius $\Ker'_{S,T,1}\to \Ker'_{S,T,1}$ factors via $S/p S$. Hence the image of Frobenius on
$\bigl(\cO_{\fX_M}/p\cO_{\fX_M}\bigr)(\Rbar_\cU)$ factors via $\Rbar_\cU/p\Rbar_\cU \subset \bigl(\cO_{\fX_M}/p\cO_{\fX_M}\bigr)(\Rbar_\cU)$. This proves the last
claim of the lemma and the proposition \ref{prop:localization}.
\end{proof}

\
\bigskip
\noindent {\it The map $\theta_M$.} We define a morphism $\theta_M\colon \bA_{\rm inf,M}^+\lra \widehat{\cO}_{\fX_M}$  of objects of~$\Sh(\fX_M)^\N$ as follows. We
work in the formal setting. Fix a non-negative integer $n$. Let $\bigl(\cU,\cW\bigr)$ be an object of $\fX_M$. Let $S=\cO_{\fX_M}(\cU,\cW)$ and  consider the
diagram of sets and maps
$$
\begin{array}{ccccccccc}
(S/p^nS)^n&\stackrel{a_n}{\lra}&S/p^nS\\
\downarrow b_n\\
(S/pS)^n
\end{array}
$$
where $\ds
a_n(s_0,s_1,\ldots,s_{n-1}):=\sum_{i=0}^{n-1}p^is_i^{p^{n-1-i}}$
and $b_n$ is the natural projection. Remark that there is a unique
map of sets, $c_n\colon (S/pS)^n\lra S/p^nS$ which makes the
diagram commutative, i.e. such that $c_n\circ b_n=a_n$. Moreover,
$c_n$ induces a ring homomorphism
$c_{n,(\cU,\cW)}\colon\WW_n(S/pS)\lra S/p^nS$ functorial in
$(\cU,\cW)$ i.e. a morphism of presheaves
$\WW_{n,M}\stackrel{c_n}{\lra} \cO_{\fX_M}/p^n\cO_{\fX_M}$. Denote
by $\theta_{M,n}$ the induced morphism on the associated sheaves.

\begin{lemma}
\label{lemma:theta} a) The following diagram of sheaves and
morphisms commutes for varying~$n\in\N$:
$$
\begin{array}{ccccccccc}
\WW_{n+1,M}&\stackrel{\theta_{M,n+1}}{\lra}&
\cO_{\fX_M}/p^{n+1}\cO_{\fX_M}\\
\downarrow u_n&&\downarrow v_n\\
\WW_{n,M}&\stackrel{\theta_{M,n}}{\lra}&\cO_{\fX_M}/p^n\cO_{\fX_M}
\end{array}
$$where $u_n$ is the composition of the natural projection and
Frobenius and $v_n$ is the natural projection. \smallskip

b) For $\cU=\Spf(R_\cU)$ connected open affine in $X^{\rm
et}$, the localization of
$$\theta_M=\{\theta_{M,n}\} \colon\bA_{\rm inf,M}^+ \lra
\hatcO_{\fX_M}$$is the map $\theta_\cU\colon A_{\rm
inf,M}^+(\Rbar_\cU)\lra \hR_\cU$  of \cite[Prop.~5.1.1]{brinon}.
\end{lemma}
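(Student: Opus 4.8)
The plan is to treat part~a) as a compatibility that can be checked one level down, on presheaves, and part~b) as an unwinding of the explicit construction of $\theta_M$ measured against Brinon's $\theta_\cU$, via the localization computation of Proposition~\ref{prop:localization}. Both parts ultimately rest on the elementary congruence: if $a\equiv b$ modulo $p$ then $a^{p^k}\equiv b^{p^k}$ modulo $p^{k+1}$ --- which is precisely what makes Fontaine's maps $\theta_n$ compatible in $\S$\ref{sec:Notation}. I shall argue in the formal setting, the algebraic case being identical.

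For part~a): since sheafification is a functor, it suffices to check that the corresponding square of presheaves commutes, i.e. on sections over an object $(\cU,\cW)$ of $\fX_M$ with $S:=\cO_{\fX_M}(\cU,\cW)$; there $\theta_{M,m}$ is induced by the \emph{set} map $c_m\colon (S/pS)^m\to S/p^m S$, $c_m(t_0,\ldots,t_{m-1})=\sum_{i=0}^{m-1}p^i\widetilde{t_i}^{\,p^{m-1-i}}$, so the Witt and ring structures play no role in the verification. First I would record that on Witt vectors over the characteristic~$p$ ring $S/pS$ the natural projection and the Witt-vector Frobenius commute, the latter being the componentwise $p$-th power, so $u_n(s_0,\ldots,s_n)=(s_0^p,\ldots,s_{n-1}^p)$. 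Then a direct computation on an arbitrary $(s_0,\ldots,s_n)$, choosing lifts $\widehat{s_i}\in S/p^{n+1}S$: the composite $v_n\circ\theta_{M,n+1}$ produces $\sum_{i=0}^n p^i\widehat{s_i}^{\,p^{n-i}}$ read modulo~$p^n$, whose $i=n$ term vanishes, while $\theta_{M,n}\circ u_n$ produces $\sum_{i=0}^{n-1}p^i(\widehat{s_i}^{\,p})^{p^{n-1-i}}$ modulo~$p^n$; these agree since $\widehat{s_i}^{\,p}$ is a legitimate mod-$p^n$ lift of $s_i^p$. This is just the sheaf-level shadow of $\theta_n=\theta_{n+1}\circ u_{n+1}$, and I foresee no difficulty here beyond tracking the reductions.

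For part~b): granting a), the system $\theta_M=\{\theta_{M,n}\}$ is a morphism in $\Sh(\fX_M)^\N$, so localizing at $\Rbar_\cU$ and passing to the limit yields a map $\bA_{\rm inf,M}^+(\Rbar_\cU)\to\widehat{\cO}_{\fX_M}(\Rbar_\cU)$, which Proposition~\ref{prop:localization} converts into a map $A_{\rm inf}^+(\Rbar_\cU)\to\hR_\cU$; the task is to recognise it as Brinon's $\theta_\cU$. I would proceed level by level. The underlying sheaf of sets of $\WW_{n,M}$ is $\bigl(\cO_{\fX_M}/p\cO_{\fX_M}\bigr)^n$, so $\WW_{n,M}(\Rbar_\cU)=\WW_n\bigl((\cO_{\fX_M}/p\cO_{\fX_M})(\Rbar_\cU)\bigr)$ and it contains $\WW_n(\Rbar_\cU/p\Rbar_\cU)$ (using the injectivity provided by Lemma~\ref{lemma:barOmodpnisseparated}). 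Because $\theta_{M,n}$ is the sheafification of the presheaf map $c_n$, its localization agrees on the image of the presheaf sections --- hence on $\WW_n(\Rbar_\cU/p\Rbar_\cU)$ --- with the filtered colimit over finite \'etale $S$ of the maps $c_{n,(\cU,\Spec S)}$; that is, it is given on $\WW_n(\Rbar_\cU/p\Rbar_\cU)$ by the componentwise formula above and lands in $\Rbar_\cU/p^n\Rbar_\cU\subset(\cO_{\fX_M}/p^n\cO_{\fX_M})(\Rbar_\cU)$. But this is exactly the relative incarnation, for the pair $(R_\cU,\Rbar_\cU)$, of Fontaine's $\theta_n$, i.e. the $n$-th component of $\theta_\cU$ of \cite[Prop.~5.1.1]{brinon}. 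Passing to the inverse limit over $n$ --- where, as in the proof of Proposition~\ref{prop:localization} via Lemma~\ref{lemma:localizationofbarOmodpn}(3), the transition maps factor through Frobenius and therefore have image inside the subsystems $\{\WW_n(\Rbar_\cU/p\Rbar_\cU)\}_n$ and $\{\Rbar_\cU/p^n\Rbar_\cU\}_n$ --- the larger sheaf sections drop out and one reads off $\theta_M(\Rbar_\cU)=\theta_\cU$.

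The one genuinely delicate point, and what I expect to be the main obstacle, is precisely this last step: at a fixed finite level the sheaf localization $(\cO_{\fX_M}/p^n\cO_{\fX_M})(\Rbar_\cU)$ is in general strictly larger than the naive quotient $\Rbar_\cU/p^n\Rbar_\cU$, so $\theta_{M,n}(\Rbar_\cU)$ cannot be identified with $\theta_{\cU,n}$ on the nose; the identification only emerges after taking the limit, and it is the Frobenius-contractivity encoded in Lemma~\ref{lemma:localizationofbarOmodpn} (and packaged in Proposition~\ref{prop:localization}) that legitimizes this. Everything else is routine manipulation of Witt coordinates.
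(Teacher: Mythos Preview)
Your proposal is correct and follows essentially the same approach as the paper's own proof. For part~a) you reduce to presheaves and check the commutativity of the $c_n$'s on sections, exactly as the paper does (the paper simply states ``this is a simple calculation which we leave to the reader'' where you carry it out); for part~b) you invoke Proposition~\ref{prop:localization} and identify the localized map with the colimit of the $c_{n,(\cU,\cW)}$'s, which is the paper's one-line argument. Your discussion of the finite-level discrepancy between $\Rbar_\cU/p^n\Rbar_\cU$ and $(\cO_{\fX_M}/p^n\cO_{\fX_M})(\Rbar_\cU)$, and its resolution via Lemma~\ref{lemma:localizationofbarOmodpn} in the inverse limit, makes explicit what the paper absorbs into the citation of Proposition~\ref{prop:localization}; this is a valid clarification rather than a different route.
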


\begin{proof} a) It is enough to prove that the diagram commutes at the level of
pre-sheaves, so let as before $(\cU,\cW)$ be an object of $\fX_M$
and let $S=\cO_{\fX_M}(\cU,\cW)$. We are reduced to checking that
the following diagram of sets and maps commutes
$$
\begin{array}{ccccccccc}
(S/pS)^{n+1}&\stackrel{c_{n+1}}{\lra}&
S/p^{n+1}S\\
\downarrow u_n&&\downarrow v_n\\
(S/pS)^n&\stackrel{c_n}{\lra}&S/p^nS
\end{array}
$$
where $u_n(s_0,s_1,\ldots,s_n)=(s_0^p,s_1^p,\ldots,s_{n-1}^p)$.
This is a simple calculation which we leave to the reader.

b) Using the proposition \ref{prop:localization}, the map
$(\theta_{n,M})_\cU$ is induced by $\ds
\lim_{\rightarrow,\cW}c_{n,(\cU,\cW)}$ and therefore coincides
with the map defined in  \cite[Prop.~5.1.1]{brinon}.
\end{proof}

Fix an object $(\cU,\cW)$ of $\fX_M$. Write
$S=\cO_{\fX_M}(\cU,\cW)$.

\begin{lemma}\label{lemma:krcn} Assume that $p^{1/p^{n-1}}\in S$.
Then $\xi_n:=[p^{1/p^{n-1}}]-p$ (see section \ref{sec:Notation}) is a well defined element of\/~$\WW_n(S/pS)$ and it generates the kernel
of\/~$c_n\colon\WW_n(S/pS)\lra S/p^nS$.
\end{lemma}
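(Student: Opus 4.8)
The plan is to reduce the statement to the classical computation of $A_{\rm cris,n}$ recalled in Section~\ref{sec:Notation}, by identifying $\WW_n(S/pS)$ and the map $c_n$ with the objects appearing there. Recall that $c_n\colon \WW_n(S/pS)\to S/p^nS$ is, by construction, the ring homomorphism attached via Witt vectors to the set-theoretic map $a_n(s_0,\dots,s_{n-1})=\sum_{i=0}^{n-1}p^i s_i^{p^{n-1-i}}$; it is the reduction mod $p^n$ of the Witt-vector version of $\theta$. So this is essentially the ``$\theta_n$ mod $p^n$'' map, and one wants to say that its kernel is generated by $\xi_n=[p^{1/p^{n-1}}]-p$ exactly as in the computation of $\Ker(\theta_n)$ recalled after the statement of Lemma~1.1 (the $\WW(k)$--DP--envelope discussion).

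First I would check that $\xi_n=[p^{1/p^{n-1}}]-p$ genuinely lies in the kernel: this is immediate since $c_n([p^{1/p^{n-1}}])=(p^{1/p^{n-1}})^{p^{n-1}}=p$ in $S/p^nS$ (the Teichm\"uller representative $[s]$ has $c_n([s])=\widetilde{s}^{p^{n-1}}$, by the formula for $a_n$ with only the zeroth coordinate nonzero). Next I would establish that $\WW_n(S/pS)/\xi_n\WW_n(S/pS)\xrightarrow{\ c_n\ } S/p^nS$ is an isomorphism. The cleanest route is induction on $n$, using the Witt-vector filtration: the exact sequence
$$
0\lra S/pS \xrightarrow{\ V^{n-1}\ } \WW_n(S/pS)\lra \WW_{n-1}(S/pS)\lra 0,
$$
together with the compatible surjections $\WW_n(S/pS)\to S/p^nS$, reduces the claim to the case $n=1$ (where $c_1\colon S/pS\to S/pS$ is the identity and $\xi_1=[p]-p=0$, so the statement is vacuous) plus an analysis of how $\xi_n$ interacts with the filtration step. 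Concretely, $\xi_n \equiv [p^{1/p^{n-1}}] \pmod{V\WW_n(S/pS)+p}$ is a unit-times-a-nonzerodivisor modulo the first layer, and $V^{n-1}(S/pS)\cap \xi_n\WW_n(S/pS)$ maps onto $p^{n-1}(S/p^nS)=\Ker(S/p^nS\to S/p^{n-1}S)$; a diagram chase then yields the isomorphism. Alternatively, one can deduce it directly from the $n$-truncated version of the identity $A_{\rm cris,n}=$ the $\WW(k)$--DP--envelope of $W_n$ at $\Ker(\theta_n)=\xi_nW_n$: the map $c_n$ is, after base-changing the construction of $\theta_n$ from $\OKbar$ to $S$ (i.e.\ replacing $\OKbar/p\OKbar$ by $S/pS$ everywhere — legitimate since $S$ is an $\OKbar$-algebra), precisely the map whose kernel was computed to be generated by $\xi_n$ in \cite{Fontaineperiodes} and in the discussion preceding Lemma~1.1.

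The step I expect to be the main obstacle is the bookkeeping needed to run the induction cleanly over a general ring $S$: one must be careful that $\WW_n$ does not commute with the relevant quotients on the nose, and that the ideal $\xi_n\WW_n(S/pS)$ meets each graded piece $V^j(S/pS)$ in the expected submodule. I would handle this by reducing mod $\xi_n$ one layer at a time, exploiting that in $\WW_n(S/pS)$ one has $p\cdot(s_0,s_1,\dots)=(0, s_0^p,\dots)$-type relations so that $p$ and $V$ differ by Frobenius on the relevant associated graded pieces, and that $c_n(\xi_n^{[i]})$ behaves as in Fontaine's computation — this is the only place a genuine calculation is unavoidable, and it is the same calculation as the one recalled in the excerpt, just over $S/pS$ instead of $\OKbar/p\OKbar$. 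Everything else (well-definedness of $\xi_n$, that $[p^{1/p^{n-1}}]$ makes sense once $p^{1/p^{n-1}}\in S$) is formal.
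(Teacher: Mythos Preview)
Your proposal misses the substantive step. The heart of the paper's proof is Claim~1: given $\alpha=(\alpha_0,\ldots,\alpha_{n-1})\in\Ker(c_n)$, one has $\tilde{\alpha}_0^{p^{n-1}}\in pS$, and one must conclude that $\alpha_0=p^{1/p^{n-1}}\beta_0$ in $S/pS$ for some $\beta_0$. This is \emph{not} formal Witt-vector bookkeeping; it uses that $S=\cO_{\fX_M}(\cU,\cW)$ is a union of noetherian normal $R_\cU$-algebras. One passes to a finite normal subextension $S'\subset S$ containing $\tilde{\alpha}_0$ and $p^{1/p^{n-1}}$, checks $\tilde{\alpha}_0/p^{1/p^{n-1}}\in S'_{\fp}$ at every height-one prime $\fp$ (where $S'_{\fp}$ is a dvr), and uses normality to conclude $\tilde{\alpha}_0/p^{1/p^{n-1}}\in S'$. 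Without this, the lemma is simply false for a general ring containing $p^{1/p^{n-1}}$.

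Your specific assertion that $\xi_n\equiv[p^{1/p^{n-1}}]$ is ``a unit-times-a-nonzerodivisor modulo the first layer'' is incorrect: the image of $[p^{1/p^{n-1}}]$ in $\WW_n(S/pS)/V\WW_n(S/pS)\cong S/pS$ is $p^{1/p^{n-1}}$, which is nilpotent (its $p^{n-1}$-st power is $p\equiv 0$). So the diagram chase you outline cannot proceed as stated. Your alternative route---base-changing Fontaine's computation for $\OKbar$---also fails: $\WW_n(-/p)$ does not commute with tensor products, $S$ is not assumed to be an $\OKbar$-algebra (only to contain $p^{1/p^{n-1}}$), and in any case Fontaine's argument itself relies on the valuation-ring structure of $\cR(\OKbar)$, which is exactly the analogue of the normality input you are missing. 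Once Claim~1 is in hand, the induction does proceed via Verschiebung as you suggest: one shows $c_n(V(\gamma))=0$ forces $c_{n-1}(\gamma)=0$ (using that $S$ is $p$-torsion-free, again a consequence of normality), applies the inductive hypothesis to write $\gamma=\xi_{n-1}\delta$, and checks the Witt-vector identity $V(\xi_{n-1}\delta)=\xi_n V(\delta)$.
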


\begin{proof}
Let us first remark that
$c_n(\xi_n)=(p^{1/p^{n-1}})^{p^{n-1}}-p=0$, therefore $\xi_n\in
\Ker(c_n)$.

\noindent We show that if $x\in \Ker(c_n)$ then $x\in
\xi_n\WW_n(S/pS)$. We'll prove this statement by induction on $n$.
For $n=1$, $c_1={\rm Id}$ and $\xi_1=0\in \WW_1(S/pS)=S/pS$. Let
now $n>1$ and suppose that our statement is true for $n-1.$ Let
$\alpha\in \Ker(c_n)$. \smallskip

{\it Claim 1} There are $\beta\in \WW_n(S/pS)$ and $\gamma\in
\WW_{n-1}(S/pS)$ such that $\alpha=\xi_n\beta+\V(\gamma)$, where
$\V\colon\WW_{n-1}(S/pS)\lra \WW_n(S/pS)$ is Vershiebung, i.e.
$\V(s_0,s_1,\ldots,s_{n-2})=(0,s_0,s_1,\ldots,s_{n-2})$, for
$(s_0,s_1,\ldots,s_{n-2})\in \WW_{n-1}(S/pS)$.

To prove this claim let us write $\alpha=(\alpha_0,\alpha_1,\ldots,\alpha_{n-1})$ and so $\ds 0=c_n(\alpha)=\sum_{i=0}^{n-1} p^i\widetilde{\alpha}_i^{p^{n-1-i}}$,
where if $x\in S/pS$, then $\widetilde{x}$ denotes any lift of $x$ to $S/p^nS$. Therefore $ \widetilde{\alpha}_0^{p^{n-1}}=p c$ for some~$c\in S$. Let~$R_\cU\subset
S' (\subset S)$ be a finite and normal extension containing both~$\widetilde{\alpha}_0$ and~$p^{1/p^{n-1}}$. For every height one prime ideal~$\fp$ of~$S'$ we
have~$\widetilde{\alpha}_0^{p^{n-1}}/p=c$ which lies in~$S'_\fp$ so that~$\widetilde{\alpha}_0/ p^{1/p^{n-1}}\in S'_\fp$ since the latter is a dvr. We conclude that
$\widetilde{\alpha}_0/ p^{1/p^{n-1}}$ lies in the intersection of the localizations of~$S'$ at every height one prime ideal so that, since~$S'$ is noetherian, we
must have $\widetilde{\alpha}_0/ p^{1/p^{n-1}}\in S'$. Denote by $\beta_0$ the image of $\widetilde{\alpha}_0/p^{1/p^{n-1}}$ in $S/pS$. Then
$\alpha_0=p^{1/p^{n-1}}\beta_0$ in $S/pS$. Let $\beta:=(\beta_0,0,\ldots,0)\in \WW_n(S/pS)$ and let us compute: $\alpha-\xi_n\cdot\beta=\alpha-\widetilde{p}_n\cdot
\beta+p\beta= (\alpha_0,\alpha_1,\ldots,\alpha_{n-1})-(\alpha_0,0,\ldots,0)+(0,\beta_0^p,0,\ldots,0) \in \V\bigl(\WW_{n-1}(S/pS)\bigr)$.

\bigskip
\noindent Let $\gamma\in \WW_{n-1}(S/pS)$ be such that
$\alpha-\xi_n\beta=\V(\gamma)$. Then
$c_n(\V(\gamma))=c_n(\alpha-\xi_n\beta)=0$ and
$c_n(\V(\gamma))=w_n(c_{n-1}(\gamma))$, where $w_n$ is the
isomorphism $w_n\colon S/p^{n-1}S\cong pS/p^nS$. Therefore
$c_{n-1}(\gamma)=0$ and by the inductive hypothesis there is
$\delta\in \WW_{n-1}(S/pS)$ such that $\gamma=\xi_{n-1}\delta$.
The lemma now follows from\smallskip

{\it Claim 2} $\V(\xi_{n-1}\delta)=\xi_n\V(\delta)$. This is a simple calculation with Witt vectors. Write $\delta=(\delta_0,\delta_1,\ldots,\delta_{n-2})$, then
$$
\xi_{n-1}\delta=(p^{1/p^{n-2}},0,\ldots,0)\cdot
(\delta_0,\delta_1,\ldots,\delta_{n-2})- p\delta=
$$
$$
=(p^{1/p^{n-2}}\delta_0,p^{1/p^{n-3}}\delta_1,\ldots,p^{1/p}\delta_{n-2})-p\delta.
$$
Therefore
$$\V(\xi_{n-1}\delta)=
(0,p^{1/p^{n-2}}\delta_0,\ldots,p^{1/p}\delta_{n-2})-\V(p\delta).
$$

On the other hand,
$$\xi_n\V(\delta)=\xi_n\cdot (0,\delta_0,\delta_1,\ldots,\delta_{n-2})
=(0,p^{1/p^{n-2}}\delta_0,\ldots,p^{1/p}\delta_{n-2})-p(\V\delta),
$$
which proves the second claim and the lemma because $\V$ is an
additive map.
\end{proof}

Note that if $(\cU,\cW)$ is an object of $\fXKbar$ then $\OKbar\subset S$ and  $\ds \widetilde{p}_n \in \WW_n(S/pS)$. In particular  $\xi$ is naturally a section of
the sheaf $\bA_{\rm inf}^+(\cO_\fX)$ over $(\cU,\cW)$. We deduce from \ref{lemma:krcn}:

\begin{corollary}\label{corollary:krthetan} We have
$\Ker\left(\theta_\Kbar\colon \bA_{\rm inf}\lra
\hatcO_\fXKbar\right)=\xi\cdot \bA_{\rm inf}$ as sheaves in
$\Sh(\fXKbar)^\N$.
\end{corollary}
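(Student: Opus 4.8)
The plan is to reduce the statement, one level of the inverse system at a time, to the local computation already carried out in Lemma~\ref{lemma:krcn}, and then to pass from values on objects to the assertion on sheaves by checking stalks. Since $\theta_\Kbar=\{\theta_{\Kbar,n}\}_n$ and $\xi=\{\xi_n\}_n$, since the $\xi_n$ are compatible under the transition maps (recall $u_{n+1}(\xi_{n+1})=\xi_n$) and the $\theta_{\Kbar,n}$ commute with the transition maps by Lemma~\ref{lemma:theta}(a), it suffices to prove for each fixed $n$ that
$$
\Ker\bigl(\theta_{\Kbar,n}\colon \WW_n\lra \cO_{\fXKbar}/p^n\cO_{\fXKbar}\bigr)=\xi_n\cdot\WW_n
$$
as sheaves on $\fXKbar$. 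One inclusion is the easy half: $\theta_{\Kbar,n}$ is a morphism of sheaves of rings, and at the level of presheaves $c_n(\xi_n)=(p^{1/p^{n-1}})^{p^{n-1}}-p=0$ (the opening computation in the proof of Lemma~\ref{lemma:krcn}), so $\theta_{\Kbar,n}(\xi_n)=0$ as a global section and therefore $\xi_n\WW_n\subseteq\Ker(\theta_{\Kbar,n})$.

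For the reverse inclusion I would avoid working with sections on objects, because $\WW_n$ and $\cO_{\fXKbar}/p^n\cO_{\fXKbar}$ are sheafifications and their sections (indeed even their localizations along small affines, which by Lemma~\ref{lemma:localizationofbarOmodpn} differ from the naive quotients only by a module killed by the maximal ideal of $\OKbar$) need not literally be $\WW_n(S/pS)$ and $S/p^nS$. Instead I would use that $\fXKbar$ has enough geometric points and check the equality of subsheaves stalkwise. At a geometric point $(x,y)$, set $A:=(\cO_{\fXKbar})_{(x,y)}=\lim_{\rightarrow} S$, the filtered colimit of the rings $S=\cO_{\fXKbar}(\cU,\cW)$ over the system of pairs defining $(x,y)$. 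Since sheafification does not change stalks, filtered colimits are exact, and $\WW_n(-)$ commutes with filtered colimits of rings, the stalk of $\WW_n$ at $(x,y)$ is $\WW_n(A/pA)$, that of $\cO_{\fXKbar}/p^n\cO_{\fXKbar}$ is $A/p^nA$, and under these identifications the stalk of $\theta_{\Kbar,n}$ is the map $c_n\colon\WW_n(A/pA)\to A/p^nA$ of \S\ref{sec:Notation}, exhibited as the colimit of the maps $c_{n,(\cU,\cW)}$.

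Now comes the point where $M=\Kbar$ is used: each such $S$ contains $\OKbar$, hence $p^{1/p^{n-1}}\in S$, so Lemma~\ref{lemma:krcn} applies to every $(\cU,\cW)$ occurring in the colimit and gives $\Ker(c_{n,(\cU,\cW)})=\xi_n\WW_n(S/pS)$. Passing to the filtered colimit — again using exactness of filtered colimits together with $\lim_{\rightarrow}\xi_n\WW_n(S/pS)=\xi_n\WW_n(A/pA)$ — yields $\Ker\bigl(c_n\colon\WW_n(A/pA)\to A/p^nA\bigr)=\xi_n\WW_n(A/pA)$, i.e. the stalk of $\Ker(\theta_{\Kbar,n})$ at $(x,y)$ coincides with the stalk of $\xi_n\WW_n$. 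As this holds at every geometric point, $\Ker(\theta_{\Kbar,n})=\xi_n\WW_n$, and assembling these identities over $n$ gives $\Ker(\theta_\Kbar)=\xi\cdot\bA_{\rm inf}$ in $\Sh(\fXKbar)^\N$. The only genuinely delicate part is the middle step: recognizing that the right move is to pass to stalks rather than to sections or localizations, and confirming that Lemma~\ref{lemma:krcn} is really available for every ring $S$ in the relevant colimit; the arithmetic core is entirely contained in that lemma.
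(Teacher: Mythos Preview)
Your proof is correct and follows the same route as the paper: both deduce the corollary directly from Lemma~\ref{lemma:krcn} together with the observation that for $M=\Kbar$ every ring $S=\cO_{\fXKbar}(\cU,\cW)$ contains $\OKbar$ and hence $p^{1/p^{n-1}}$. The paper simply states this as an immediate consequence, whereas you have carefully spelled out the passage from the presheaf-level identity $\Ker(c_{n,(\cU,\cW)})=\xi_n\WW_n(S/pS)$ to the sheaf-level equality via stalks at geometric points; this extra care is warranted (since $\WW_n$ and $\cO_{\fXKbar}/p^n\cO_{\fXKbar}$ are sheafifications) but does not represent a different idea.
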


Let $\cU$ be a small affine as in \S\ref{sec:formal_Groth}. Write
$q^\prime:=\frac{[\varepsilon]-1}{[\varepsilon]^{\frac{1}{p}}-1}=1+[\varepsilon]^{\frac{1}{p}}+\cdots + [\varepsilon]^{\frac{p-1}{p}}\in A_{\rm inf}^+ $. Then
\begin{lemma}\label{lemma:uglylemma} (1) For every positive integer $r$
the Frobenius morphism  $\varphi$ induces an
isomorphism $\WW_n(\Rbar_\cU/p\Rbar_\cU)/
\varphi^{-r-1}\big(q^\prime\big) \WW_n(\Rbar_\cU/p\Rbar_\cU) \cong
\WW_n(\Rbar_\cU/p\Rbar_\cU)/ \varphi^{-r}\big(q^\prime\big)
\WW_n(\Rbar_\cU/p\Rbar_\cU) $;\smallskip

(2) for every $n\in\N$ the $W_n$--module
$\WW_n(\Rbar_\cU/p\Rbar_\cU)$ is flat;\smallskip

(3) assume that we are in the formal case. The sequence $0\lra
\Z/p^n \Z \lra \WW_n(\Rbar_\cU/p\Rbar_\cU)
\stackrel{\varphi-1}{\lra} \WW_n(\Rbar_\cU/p\Rbar_\cU)\lra 0$ is
exact where $\Z/p^n \Z$ is the constant group over
$\Spec\bigl(R_{\cU}\otimes_{\cO_K} M\bigr)$.

\end{lemma}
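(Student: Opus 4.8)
The plan is to reduce all three statements to ring-theoretic properties of $\Rbar_\cU/p\Rbar_\cU$, using that, on each connected component, $\Rbar_{\cU,i}$ is a normal domain which contains $\OKbar$, contains a $p$-th root of each of its nonzero elements, and is closed under those finite normal extensions which become finite \'etale after inverting $p$. Two facts will recur. First, $\Rbar_\cU/p\Rbar_\cU$ is \emph{semiperfect}: given $b$, lift it to $\tilde b\in\Rbar_\cU$, and if $\tilde b\neq 0$ the polynomial $X^p-\tilde b$ is separable over $\Rbar_\cU[1/p]$, so $\Rbar_\cU[1/p][X]/(X^p-\tilde b)$ is finite \'etale and a root lies in $\Rbar_\cU$; consequently Frobenius $\varphi$ is surjective on $\WW_n(\Rbar_\cU/p\Rbar_\cU)$ for every $n$. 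Second, $\hR_\cU$ is integral perfectoid, so $\cR(\hR_\cU)$ is a perfect ring which is faithfully flat over $\cR(\OKbar)$, and by Proposition \ref{prop:localization}(b) together with the initial lemma of \S\ref{sec:Notation} applied with $\hR_\cU$ in place of $\OKbar$ one obtains the presentation
\[
\WW_n(\Rbar_\cU/p\Rbar_\cU)\;\cong\;\WW_n\bigl(\cR(\hR_\cU)\bigr)\big/\bigl([\widetilde{p}]^{p^n},V[\widetilde{p}]^{p^n},\dots,V^{n-1}[\widetilde{p}]^{p^n}\bigr)\;=\;\WW_n\bigl(\cR(\hR_\cU)\bigr)\otimes_{\WW_n(\cR(\OKbar))} W_n,
\]
the last identity because the same elements generate $\Ker\bigl(\WW_n(\cR(\OKbar))\to W_n\bigr)$.

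For part (3) I would induct on $n$ via the $\varphi$-equivariant exact sequences $0\to\WW_{n-1}(\Rbar_\cU/p)\xrightarrow{V}\WW_n(\Rbar_\cU/p)\to\Rbar_\cU/p\to 0$ (equivariant since $\varphi V=V\varphi$), applying the snake lemma to $\varphi-1$. The base case $n=1$ is the Artin--Schreier sequence $0\to\F_p\to\Rbar_\cU/p\xrightarrow{x\mapsto x^p-x}\Rbar_\cU/p\to 0$: surjectivity of $x\mapsto x^p-x$ follows from the closedness of $\Rbar_\cU$ under finite \'etale-after-$p$ extensions (the polynomial $X^p-X-\tilde a$ is separable), and $\Ker(x\mapsto x^p-x)=\F_p$ on each $\Rbar_{\cU,i}/p$ once one knows $\Spec(\Rbar_{\cU,i}/p\Rbar_{\cU,i})$ is connected (from perfectoidness of $\hR_{\cU,i}$, using $\hR_{\cU,i}/p=\Rbar_{\cU,i}/p$), so that $\prod_{c\in\F_p}(x-c)=0$ with pairwise comaximal factors forces $x$ into one residue class. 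The snake lemma, with the inductively known vanishing of $\Coker(\varphi-1\mid\WW_{n-1})$, then yields $0\to\Ker(\varphi-1\mid\WW_{n-1})\to\Ker(\varphi-1\mid\WW_n)\to\F_p\to 0$, so $\Ker(\varphi-1\mid\WW_n)$ has order $p^n$ on each component; since it contains the copy of $\WW_n(\F_p)=\Z/p^n\Z$ coming from $\F_p\subset\Rbar_{\cU,i}/p$, equality holds, and over $\Spec(R_\cU\otimes_{\cO_K}M)$ this assembles to the constant group $\Z/p^n\Z$.

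For part (1), $\varphi$ is a ring endomorphism of $\WW_n(\Rbar_\cU/p)$ carrying $\varphi^{-r-1}(q^\prime)$ to $\varphi^{-r}(q^\prime)$, hence descends to a ring map $\bar\varphi$ between the two quotients. Surjectivity of $\varphi$ on $\WW_n(\Rbar_\cU/p)$ gives $\varphi\bigl(\varphi^{-r-1}(q^\prime)\WW_n(\Rbar_\cU/p)\bigr)=\varphi^{-r}(q^\prime)\WW_n(\Rbar_\cU/p)$, so $\bar\varphi$ is surjective, and a short diagram chase shows $\bar\varphi$ is injective if and only if $\Ker\varphi\subseteq\varphi^{-r-1}(q^\prime)\WW_n(\Rbar_\cU/p)$. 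I would check this by descending to $n=1$, where $\Ker(\mathrm{Frob})=\bigl(p^{1/p}\Rbar_\cU+p\Rbar_\cU\bigr)/p\Rbar_\cU$ (by normality of $\Rbar_\cU$ and $p^{1/p}\in\Rbar_\cU$), and then comparing valuations in $\cR(\OKbar)$ using $\varphi^{-r-1}(q^\prime)\equiv(\varepsilon^{1/p^{r+2}}-1)^{p-1}\pmod p$: its valuation is at most that of $p^{1/p}$, so $p^{1/p}$ lies in the ideal it generates; the passage back up from $n=1$ is routine Frobenius-twist bookkeeping with the $V$-filtration.

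Part (2) is the main obstacle, because $\WW_n(-)$ does not preserve flatness in general and the $V$-filtration d\'evissage fails here: its graded pieces carry the $W_n$-action through $W_n\twoheadrightarrow\OKbar/p$ and are not $W_n$-flat, neither $\OKbar/p$ nor $\Rbar_\cU/p$ being reduced. Instead I would invoke the presentation $\WW_n(\Rbar_\cU/p)\cong\WW_n\bigl(\cR(\hR_\cU)\bigr)\otimes_{\WW_n(\cR(\OKbar))}W_n$ above and reduce to flatness of $\WW_n\bigl(\cR(\hR_\cU)\bigr)$ over $\WW_n\bigl(\cR(\OKbar)\bigr)$. This last flatness does follow from the local flatness criterion along $(p)$: since $\cR(\hR_\cU)$ and $\cR(\OKbar)$ are perfect, hence reduced, $p$ is a nonzerodivisor on $\WW\bigl(\cR(\hR_\cU)\bigr)$ and on $\WW\bigl(\cR(\OKbar)\bigr)$, so $\mathrm{Tor}_1^{\WW(\cR(\OKbar))}\bigl(\WW(\cR(\hR_\cU)),\cR(\OKbar)\bigr)=0$ and flatness of $\WW\bigl(\cR(\hR_\cU)\bigr)$ over $\WW\bigl(\cR(\OKbar)\bigr)$ reduces to flatness of $\cR(\hR_\cU)$ over $\cR(\OKbar)$; reducing modulo $p^n$ then gives part (2). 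The genuinely delicate step is the presentation itself --- that $\Ker\bigl(\WW_n(\cR(\hR_\cU))\to\WW_n(\Rbar_\cU/p)\bigr)$ is generated by exactly $[\widetilde{p}]^{p^n},\dots,V^{n-1}[\widetilde{p}]^{p^n}$ --- which rests on $\hR_\cU$ being honestly (not merely almost) integral perfectoid; if only the almost version is at hand, one obtains all three assertions up to a module killed by $\mathfrak{m}_{\OKbar}$, which is enough for the subsequent use of the lemma.
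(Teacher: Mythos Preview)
Your approach to (3) is essentially the paper's: induction on $n$ via the Verschiebung sequence, Artin--Schreier at $n=1$, with connectedness of $\Spec(\Rbar_{\cU,i}/p)$ (the paper gets this from Hensel's lemma in the formal case rather than perfectoidness, but the content is the same). For (1) your reduction to $n=1$ and the valuation estimate there are fine, but the phrase ``routine Frobenius-twist bookkeeping with the $V$-filtration'' hides a genuine wrinkle: the identity $a\,V(b)=V(\varphi(a)b)$ means that pushing ideals generated by $\varphi^{-r-1}(q')$ through $V$ shifts the index $r$, so the induction on $n$ and the index $r$ are entangled. The paper handles this with an interlocking double induction on statements $A_{r,n}$ (the isomorphism) and $B_{r,n}$ (injectivity of a certain graded piece), proving $B_{r,n}$ by descending induction on $r$ starting from $r\ge n$ where the element has valuation $\ge 1$, and then $A_{r,n+1}$ from $A_{r,n}$, $A_{r,1}$, and $B_{r,n+1}$. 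Your sketch does not account for this interaction.

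For (2) your route is genuinely different from the paper's, which argues directly: it tests injectivity of $J\otimes_{W_n}\WW_n(\Rbar_\cU/p)\to\WW_n(\Rbar_\cU/p)$ for each finitely generated ideal $J\subset W_n$, handling $n=1$ using that ideals of $\OKbar/p$ are of the form $p^\delta\OKbar/p$ (so the map becomes $\Rbar_\cU/p^{1-\delta}\xrightarrow{\cdot p^\delta}\Rbar_\cU/p$, injective by normality), and then inducting on $n$ by splitting $J\subset W_{N+1}$ along the projection $W_{N+1}\to W_N$. Your tilting approach is elegant in principle, but the step ``local flatness criterion along $(p)$'' is a gap: that criterion requires Noetherian hypotheses (or $I$-adic ideal-separatedness conditions) that $A_{\rm inf}=\WW(\cR(\OKbar))$ does not satisfy, so the reduction of flatness of $\WW(\cR(\hR_\cU))$ over $A_{\rm inf}$ to flatness of the tilts is not justified as written. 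One can instead prove flatness of $\WW_n(\cR(\hR_\cU))$ over $\WW_n(\cR(\OKbar))$ directly by induction on $n$ using the $p$-filtration (graded pieces $\cR(\OKbar)$, a valuation ring, and $\cR(\hR_\cU)$, torsion-free hence flat over it) --- but that is structurally the same inductive argument the paper runs on $W_n$ and $\WW_n(\Rbar_\cU/p)$ themselves, so the detour through the perfection buys no real simplification. Your flagging of the presentation $\WW_n(\Rbar_\cU/p)\cong\WW_n(\cR(\hR_\cU))\otimes_{\WW_n(\cR(\OKbar))}W_n$ as the delicate point is well-taken; it amounts to the (honest, not almost) integral perfectoid property of $\hR_\cU$, which is true but is an input of comparable depth to what you are trying to prove.
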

\begin{proof} (1) We proceed by induction on $n$. The kernel of the reduction
$\WW_{n+1}\big(\Rbar_\cU/p\Rbar_\cU\big) \to
\WW_n\big(\Rbar_\cU/p\Rbar_\cU\big) $ is $\V^n
\WW_{n+1}\big(\Rbar_\cU/p\Rbar_\cU\big)$, where $\V$ is the
Vershiebung. The latter  is isomorphic to $\Rbar_\cU/p\Rbar_\cU$
as an abelian group with structure of
$\WW_{n+1}\bigl(\Rbar_\cU/p\Rbar_\cU\bigr)$--module via the map
$\WW_{n+1}\bigl(\Rbar_\cU/p\Rbar_\cU\bigr)\to\Rbar_\cU/p\Rbar_\cU$,
$(a_0,\ldots,a_n)\mapsto a_0^{p^n}$.

Let $A_{r,n}$ be the assertion that $\varphi$ induces an
isomorphism $$\WW_n(\Rbar_\cU/\Rbar_\cU)/
\varphi^{-r-1}\big(q^\prime\big) \WW_n(\Rbar_\cU/\Rbar_\cU)
\lra \WW_n(\Rbar_\cU/\Rbar_\cU)/
\varphi^{-r}\big(q^\prime\big) \WW_n(\Rbar_\cU/\Rbar_\cU).$$
Let $B_{r,n}$ be the following assertion: the sheaf
$$\V^n
\WW_{n+1}\big(\Rbar_\cU/p\Rbar_\cU\big)/
\varphi^{-r}\big(q^\prime\big) \V^n
\WW_{n+1}\big(\Rbar_\cU/p\Rbar_\cU\big),$$
which is isomorphic to
$\Rbar_\cU/\bigl((1+\zeta_2+\cdots
\zeta_2^{p-1})^{\frac{1}{p^{r-n}}},p\bigr) \Rbar_\cU$, injects in
$$\WW_{n+1}\big(\Rbar_\cU/p\Rbar_\cU\big)/
\varphi^{-r}\big(q^\prime\big)
\WW_{n+1}\big(\Rbar_\cU/p\Rbar_\cU\big).$$

We prove by induction on
$r$ and $n$ that the claims $A_{r,n}$ and $B_{r,n}$ hold.  For
$n=1$ and any $r$ the fact that the map $\varphi$ is injective
follows from the fact that $\Rbar_\cU$ is normal, cf.~proof of
\ref{lemma:barOmodpnisseparated}. It is surjective by \cite[Prop.
2.0.1]{brinon}. Thus $A_{r,1}$ holds. Since $1+\zeta_2+\cdots
\zeta_2^{p-1}$ has $p$-adic valuation $1$ assertion $B_{r,n}$
holds for every $n\in\N$ and every $r \geq n$. It then follows by
descending induction on $r< n$ and from $A_{r,1}$ that $B_{r+1,n}$
implies $B_{r,n}$. Thus $B_{r,n}$ holds for every $r$ and $n$.
Proceeding by induction on $n$ one then proves that $A_{r,n}$ and
$B_{r,n+1}$ together with $A_{r,1}$ imply $A_{r,n+1}$.\smallskip

(2) It suffices to prove that  the map $J \tensor_{W_n}
\WW_n\big(\Rbar_\cU/p\Rbar_\cU\big) \to
\WW_n\big(\Rbar_\cU/p\Rbar_\cU\big) $ is injective for every
finitely generated ideal $ J \subset W_n$. We proceed by induction
on~$n\in\N$. Let~$n=1$. Since $\OKbar$ is the union of
discrete valuation rings and $J$ is finitely generated we have
$J=p^{\delta} \OKbar/p\OKbar$ for some $0\leq \delta\leq 1$ i.~e.,
$J\cong \OKbar/p^{1-\delta} \OKbar$ and the inclusions $J\subset
\OKbar/p\OKbar$ is given by multiplication by~$\cdot
p^\delta\colon \OKbar/p^{1-\delta} \OKbar \to \OKbar/p\OKbar$.
Then $J\tensor_{\OKbar} \Rbar_\cU/p\Rbar_\cU= \Rbar_\cU/
p^{1-\delta} \Rbar_\cU$ and the map $J\tensor_{\OKbar}
\Rbar_\cU/p\Rbar_\cU\to \Rbar_\cU/p\Rbar_\cU$ is the map
$\Rbar_\cU/ p^{1-\delta} \Rbar_\cU \to \Rbar_\cU/p\Rbar_\cU$ given
by $s\mapsto p^\delta s$. Since~$\Rbar_\cU$ is normal, this map is
injective as well.

Assume that the claim holds for ideals of $W_n$ for~$n\leq N$.
Let~$J\subset W_{N+1}$ be an ideal. Let $\pi_N\colon W_{N+1}\to
W_N$ be the natural projection. Its kernel is $\V^N W_{N+1}$ which
is isomorphic to $\OKbar/p\OKbar$. Let~$J_N$ be the image of~$J$
via~$\pi_N$ and put $J':=J\cap \Ker(\pi_N)$ which we view as an
ideal of $\OKbar/p\OKbar$ via the identification above. Since
Frobenius is surjective on $\Rbar_\cU/p\Rbar_\cU$, and hence on
$\WW_{N+1}\big(\Rbar_\cU/p\Rbar_\cU\big)$, and multiplication by
$p$ is $\V\circ \varphi$, we have $\V
\WW_{N+1}\big(\Rbar_\cU/p\Rbar_\cU\big)=p
\WW_{N+1}\big(\Rbar_\cU/p\Rbar_\cU\big)$. Then
$J'\otimes_{W_{N+1}} \WW_{N+1}\big(\Rbar_\cU/p\Rbar_\cU\big) \cong
J'\otimes_{\OKbar/p\OKbar} \Rbar_\cU/p \Rbar_\cU$ which is
isomorphic to $J'\otimes_{\OKbar/p\OKbar}
\Rbar_\cU/\bigl(\varphi^{-N+1}\big(q^\prime\big),p\bigr)
\Rbar_\cU $ since multiplication by
$\varphi^{-N+1}\big(q^\prime\big)$ is multiplication by
$p\equiv 0$ on $J'$. Moreover the map from
$J'\otimes_{\OKbar/p\OKbar}
\Rbar_\cU/\bigl(\varphi^{-N+1}\big(q^\prime\big), p\bigr)
\Rbar_\cU $   to $\WW_{N+1}\big(\Rbar_\cU/p\Rbar_\cU\big)$ factors
via $\Rbar_\cU/p \Rbar_\cU \cong \V^N
\WW_{N+1}\big(\Rbar_\cU/p\Rbar_\cU\big)$ and via these
identifications it is the map $a\otimes b \mapsto a b^{p^{N-1}}$.
This is proven to be injective as in the proof of the $n=1$ case.

We have an exact sequence $0 \to J' \to J \to J_N \to 0$ of
$W_{N+1}$--modules. The induced map $J'\otimes_{W_{N+1}}
\WW_{N+1}\big(\Rbar_\cU/p\Rbar_\cU\big) \to \V^N
\WW_{N+1}\big(\Rbar_\cU/p\Rbar_\cU\big)$ has been proven to be
injective. Note that $J_N\otimes_{W_{N+1}}
\WW_{N+1}\big(\Rbar_\cU/p\Rbar_\cU\big)\cong J_N\otimes_{W_N}
\WW_N\big(\Rbar_\cU/p\Rbar_\cU\big)$ since the kernel $\V^N
\WW_{N+1}\big(\Rbar_\cU/p\Rbar_\cU\big)=\V^N \varphi^N
\WW_{N+1}\big(\Rbar_\cU/p\Rbar_\cU\big)= p^N
\WW_{N+1}\big(\Rbar_\cU/p\Rbar_\cU\big)$ and $p^N\equiv 0\in W_N$.
Furthermore, the  map $J_N\otimes_{W_{N}}
\WW_N\big(\Rbar_\cU/p\Rbar_\cU\big) \to
\WW_N\big(\Rbar_\cU/p\Rbar_\cU\big)$ is injective by inductive
hypothesis. Consider the following commutative diagram
$$\begin{array}{ccccccc}
 J'\otimes \WW_{N+1}\big(\Rbar_\cU/p\Rbar_\cU\big) &
\longrightarrow & J\otimes \WW_{N+1}\big(\Rbar_\cU/p\Rbar_\cU\big)
& \longrightarrow & J_N\otimes
\WW_{N+1}\big(\Rbar_\cU/p\Rbar_\cU\big)\cr \big\downarrow & &
\big\downarrow & & \big\downarrow\cr 0\longrightarrow  \V^N
\WW_{N+1}\big(\Rbar_\cU/p\Rbar_\cU\big) & \longrightarrow &
\WW_{N+1}\big(\Rbar_\cU/p\Rbar_\cU\big) & \longrightarrow &
\WW_N\big(\Rbar_\cU/p\Rbar_\cU\big),\cr
\end{array}$$where in the first row the tensor products are over
the ring $W_{N+1}$. The rows are exact and we have proven that the
left and right vertical maps are injective. Therefore the map
$J\otimes_{W_{N+1}} \WW_{N+1}\big(\Rbar_\cU/p\Rbar_\cU\big) \to
\WW_{N+1}\big(\Rbar_\cU/p\Rbar_\cU\big)$ is injective as
well.\smallskip

(3) Proceeding by induction on $n$ it suffices to prove the claim for $n=1$ i.~e., that the sequence $0 \lra \F_p \lra \Rbar_\cU/p\Rbar_\cU
\stackrel{\varphi-1}{\lra} \Rbar_\cU/p\Rbar_\cU \lra 0$ is exact.  Since $R_\cU$ is $p$-adically complete every finite extensions of $R_\cU$ is also $p$-adically
complete and by Hensel's lemma the connected components of the associated spectrum are in bijection with the connected components of its reduction modulo $p$. In
particular the scheme $\Spec\bigl(\Rbar_\cU/p\Rbar_\cU\bigr)$ has as many connected components as $\Spec\bigl(\Rbar_\cU\otimes_{\cO_K} {\cO_M}\bigr)$ has. By
construction these coincide with the connected components of $\Spec\bigl(R_\cU\otimes_{\cO_K} M\bigr)$. The exactness on the left and in the middle follow from
Artin--Schreier theory. The cokernel of $\varphi-1$ on~$\Rbar_\cU/p\Rbar_\cU$ is contained in ${\rm H}^1\bigl(\Rbar_\cU/p\Rbar_\cU,\F_p\bigr)$ by Artin--Schreier
theory. Let $\overline{Z}\to \Spec\bigl(\Rbar_\cU/p\Rbar_\cU\bigr)$ be an $\F_p$--torsor. It can be lifted to an $\F_p$--torsor $Z \to \Spec(S)$ over a finite and
normal extension $R_\cU\subset S$, \'etale after inverting~$p$. In particular $Z=\Spec(T)$ is affine with~$T$ normal so that $Z(\Rbar_\cU)$ admits a section by
definition of~$\Rbar_\cU$. Then $\overline{Z}$ admits a section as well and thus it is the trivial torsor. Therefore ${\rm H}^1(\Rbar_\cU/p\Rbar_\cU,\F_p)=0$ and
the  claim follows.

\end{proof}

\begin{corollary}\label{cor:WnXisflatoverWnVbar}
For every $n$ the sheaf $\WW_{n,\Kbar}$ is a sheaf of flat $W_n$-modules. Furthermore, $\varphi$ induces an isomorphism $\WW_{n,\Kbar}/
\bigl([\varepsilon]^{\frac{1}{p^{r+1}}}-1\bigr) \WW_{n,\Kbar} \lra \WW_{n,\Kbar}/ \bigl([\varepsilon]^{\frac{1}{p^r}}-1\bigr) \WW_{n,\Kbar} $ for every $r\in\N$. In
the formal case the sequence $0\lra \Z_p \lra \WW_{n,\Kbar} \stackrel{\varphi-1}{\lra} \WW_{n,\Kbar}\lra 0$ is exact.
\end{corollary}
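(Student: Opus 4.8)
The plan is to reduce all three assertions, via stalks, to the corresponding statements of Lemma~\ref{lemma:uglylemma}. First I would note that each of the three is an exactness statement for a short complex of sheaves of $W_n$--modules on $\fXKbar$: flatness of $\WW_{n,\Kbar}$ over the constant sheaf $W_n$ says that $-\otimes_{W_n}\WW_{n,\Kbar}$ carries injections to injections, the Frobenius statement says that the kernel and the cokernel of $\varphi$ on the displayed quotient sheaves vanish, and the last statement is directly the exactness of a three--term complex. Since $\fXKbar$ has enough points --- a sequence of sheaves on $\fXKbar$ is exact if and only if it is exact on every stalk $\cF_{(x,y)}$, as recalled in \S\ref{sec:localization} --- it is enough to prove these statements after passing to stalks.

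Next I would identify the stalks. Fix a geometric point $(x,y)$ of $\fXKbar$. As the stalk functor is exact and commutes with finite products and with the polynomial operations defining the Witt vector ring structure, one has $\bigl(\WW_{n,\Kbar}\bigr)_{(x,y)}=\WW_n\bigl(R/pR\bigr)$ with $R:=\bigl(\cO_{\fXKbar}\bigr)_{(x,y)}$. Because $X$ is smooth over $\cO_K$, the small affine opens of $X^{\rm et}$ (\S\ref{sec:formal_Groth}) are cofinal among the \'etale neighbourhoods of $x$; and $y$ picks out, for each small affine $\cU=\Spf(R_\cU)$ through $x$, a connected component of $\Spec(R_\cU\otimes_{\cO_K}\Kbar)$ together with a compatible geometric generic point of it, hence one of the factors $\Rbar_{\cU,i}$ of the localization $\Rbar_\cU$. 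Unwinding the definitions of $\cO_{\fXKbar}$ and of the $\Rbar_\cU$, I expect a natural identification $R=\lim_{\rightarrow,\cU}\Rbar_{\cU,i}$ (filtered colimit over the small affines $\cU\ni x$), whence $R/pR=\lim_{\rightarrow}\Rbar_{\cU,i}/p\Rbar_{\cU,i}$ and $\WW_n(R/pR)=\lim_{\rightarrow}\WW_n(\Rbar_{\cU,i}/p\Rbar_{\cU,i})$, the element $[\varepsilon]^{1/p^r}-1$ being the common image of the corresponding element of $W_n$.

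Finally I would push the conclusions of Lemma~\ref{lemma:uglylemma} through this filtered colimit, applying the lemma to each $\Rbar_\cU$ (its assertions holding factorwise). By part~(2) each $\WW_n(\Rbar_{\cU,i}/p\Rbar_{\cU,i})$ is a flat $W_n$--module, and a filtered colimit of flat modules is flat; this gives the first assertion. For the second, $\varphi$ induces on $\WW_n(R/pR)/([\varepsilon]^{1/p^{r+1}}-1)\lra \WW_n(R/pR)/([\varepsilon]^{1/p^{r}}-1)$ the filtered colimit of the maps supplied, at each finite stage, by the assertions $A_{r,n}$ in the proof of part~(1), after rewriting the quotients there in terms of the ideals $([\varepsilon]^{1/p^s}-1)$ via the factorisation $[\varepsilon]^{1/p^r}-1=([\varepsilon]^{1/p^{r+1}}-1)\cdot\varphi^{-r}(q^\prime)$ together with the flatness just established; since a filtered colimit of isomorphisms is an isomorphism, we are done. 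For the third, in the formal case part~(3) gives for each $\cU$ a short exact sequence $0\lra \Z/p^n\Z\lra \WW_n(\Rbar_{\cU,i}/p\Rbar_{\cU,i})\stackrel{\varphi-1}{\lra} \WW_n(\Rbar_{\cU,i}/p\Rbar_{\cU,i})\lra 0$; as $\cU$ shrinks the kernel term stabilises to $\Z/p^n\Z$, so taking the (exact) filtered colimit yields $0\lra \Z/p^n\Z\lra \WW_n(R/pR)\stackrel{\varphi-1}{\lra} \WW_n(R/pR)\lra 0$ exact, which is exactly the asserted sequence of sheaves read on stalks (the constant sheaf $\Z/p^n\Z$ having stalk $\Z/p^n\Z$).

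The main obstacle will be the stalk identification $R=\lim_{\rightarrow}\Rbar_{\cU,i}$ of the second step: one must check, using the smoothness of $X$, that \'etale neighbourhoods of $x$ may be taken small affine, and that the geometric point $y$ provides a coherent choice of connected components and geometric generic points so that the $\Rbar_{\cU,i}$ genuinely assemble, along the transition maps, into the stalk of $\cO_{\fXKbar}$ at $(x,y)$ (if $y$ is not generic one works instead with the normalizations of the $R_\cU$ in the maximal extensions unramified outside $p$ attached to $y$, to which the proof of Lemma~\ref{lemma:uglylemma} applies verbatim, since that proof only uses normality, $p$--torsion--freeness, surjectivity of Frobenius mod $p$, and the $p$--adic completeness of the finite stages). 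A minor point is the passage between the ideals $(\varphi^{-r}(q^\prime))$ used in Lemma~\ref{lemma:uglylemma} and the ideals $([\varepsilon]^{1/p^r}-1)$ of the present statement; everything else is formal once Lemma~\ref{lemma:uglylemma} and the exactness of filtered colimits are invoked.
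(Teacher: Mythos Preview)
Your proposal is correct and takes essentially the same approach as the paper: reduce to Lemma~\ref{lemma:uglylemma} by passing from the sheaf to a local computation on $\WW_n(\Rbar_\cU/p\Rbar_\cU)$. The paper does this more tersely via the localizations $\cF(\Rbar_\cU)$ at small affines rather than via stalks (thereby avoiding your filtered-colimit step), and it glosses over the passage from the ideals $(\varphi^{-r}(q'))$ of the lemma to the ideals $([\varepsilon]^{1/p^r}-1)$ of the corollary that you correctly flag.
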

\begin{proof} It suffices to show the claims for the pre-sheaf $\WW_n\bigl(\cO_{\fXKbar}/p\cO_{\fXKbar}\bigr)$
and further after passing to its localizations at every small
affine $\cU\subset X$. The claims follow from
lemma \ref{lemma:uglylemma}.
\end{proof}

\subsection{The sheaf $\bA_{\rm cris,M}^\nabla$.} \label{sec:sheafAcrisnabla}

We start with a formal definition. A $\WW(k)$--{\it divided power}
($\WW(k)$--DP) sheaf of algebras in $\Sh(\fX_M)$ or
$\Sh(\fX_M)^\N$ is a triple $(\cF, \cI,\gamma)$ consisting
of\enspace (1) a sheaf of $\WW(k)$--algebras $\cF\in \Sh(\fX_M)$
(resp.~an inverse system of sheaves of $\WW(k)$--algebras
$\{\cF_n\}\in \Sh(\fX_M)^\N$),\enspace (2) a sheaf of ideals
$\cI\subset \cF$ (resp.~an inverse system of sheaves of ideals
$\{\cI_n\subset \cF_n\}$), \enspace (3) maps $\gamma_i\colon \cI
\to \cI$ for~$i\in\N$ such that for every object $(\cU,\cW)$ the
triple $\bigl(\cF(\cU,\cW),\cI(\cU,\cW),\gamma_{(\cU,\cW)}\bigr)$
(resp.~for every~$n$ the triple
$\bigl(\cF_n(\cU,\cW),\cI_n(\cU,\cW),\gamma_{(\cU,\cW)}\bigr)$) is
a DP algebra compatible with the standard DP structure on the
ideal $p\WW(k)$ in the sense of \cite[Ch.~3]{berthelot_ogus}.
Given a sheaf of $\WW(k)$--algebras~$\cG$ and an ideal~$\cJ\subset
\cG$ (resp.~an inverse system of sheaves of
$\WW(k)$--algebras~$\cG$ and ideals $\cJ\subset \cG$) the
$\WW(k)$--divided power envelope of~$\cG$ with respect to~$\cJ$ is
a $\WW(k)$--DP sheaf of algebras $(\cF, \cI,\gamma)$ and a
morphism $\cG \to \cF$ of sheaves (or inverse systems of sheaves)
of $\WW(k)$--algebras, such that~$\cJ$ maps to~$\cI$, which is
universal for morphisms as sheaves (or inverse systems of sheaves)
of $\WW(k)$--algebras from~$\cG$ to $\WW(k)$--DP sheaves of
algebras $\cF'$ such that $\cJ$ maps to the sheaf of ideals of
$\cF'$ on which the divided power structure is defined. \smallskip

We'd like to consider the $\WW(k)$--DP envelope of the sheaf
$\bA_{\rm inf,M}^+\in \Sh(\fX_M)$ with respect to the sheaf of
ideals $\Ker(\theta_M)$. One could use the general machinery of
\cite[Thm. I.2.4.1]{berthelot1} to guarantee that it exists but we
prefer to provide a different more explicit description.
We start with:

\begin{lemma}\label{lemma:restrictenvelope} Let $\cG$ be a sheaf
of $\WW(k)$--algebras and  let $\cJ\subset \cG$ be an ideal. Assume that  the $\WW(k)$--divided power envelope $(\cF, \cI,\gamma)$ of $\cG$ with respect to $\cI$
exists. Then for every $\cU\in X^{\rm et}$ the restriction of $(\cF, \cI,\gamma)$ to $\fU_M$ is the $\WW(k)$--divided power (DP) envelope of $\cG\vert_{\fU_M}$ with
respect to $\cI\vert_{\fU_M}$.
\end{lemma}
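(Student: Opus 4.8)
The statement is that divided power envelopes are local on the étale site of $X$: restricting the $\WW(k)$-DP envelope $(\cF,\cI,\gamma)$ of $(\cG,\cJ)$ along $j_{\cU}\colon \fU_M \to \fX_M$ gives the $\WW(k)$-DP envelope of $(\cG|_{\fU_M},\cJ|_{\fU_M})$. The natural strategy is to verify the universal property directly, using the fact that the étale localization functor $j_{\cU}^\ast$ (equivalently $j_{\cU}^{-1}$, since $j_{\cU}$ is an open immersion of sites, so $j_{\cU}^{-1}$ is already exact and sends sheaves of algebras to sheaves of algebras) is both exact and preserves the relevant algebraic structures.

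First I would record the formal setup: the restriction functor $j_{\cU}^\ast \colon \Sh(\fX_M)\to \Sh(\fU_M)$ (and likewise on $\N$-indexed inverse systems) is exact, symmetric monoidal for $\tensor$ with respect to the constant sheaf $\WW(k)$, and it commutes with the formation of kernels and images; hence it sends a $\WW(k)$-algebra sheaf to one, an ideal to an ideal, and the divided power maps $\gamma_i\colon \cI\to\cI$ restrict to maps $j_{\cU}^\ast\cI \to j_{\cU}^\ast\cI$ which, evaluated on any object $(\cV,\cW)$ of $\fU_M$ (which is simply an object of $\fX_M$ lying over $\cU$), give exactly the original DP structure on $\cF(\cV,\cW)$. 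Therefore $\bigl(j_{\cU}^\ast\cF, j_{\cU}^\ast\cI, j_{\cU}^\ast\gamma\bigr)$ is again a $\WW(k)$-DP sheaf of algebras on $\fU_M$, compatible with the standard DP structure on $p\WW(k)$, and the restriction of the canonical map $\cG\to\cF$ is a morphism $\cG|_{\fU_M}\to j_{\cU}^\ast\cF$ of $\WW(k)$-algebra sheaves carrying $\cJ|_{\fU_M}$ into $j_{\cU}^\ast\cI$.

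Next comes the universal property. Let $(\cF',\cI',\gamma')$ be any $\WW(k)$-DP sheaf of algebras on $\fU_M$ and let $h\colon \cG|_{\fU_M}\to \cF'$ be a $\WW(k)$-algebra morphism sending $\cJ|_{\fU_M}$ into the DP-ideal. I would push this forward along $j_{\cU,\ast}$: since $j_{\cU,\ast}$ is right adjoint to the exact functor $j_{\cU}^\ast$ and is lax monoidal, $j_{\cU,\ast}\cF'$ is a sheaf of $\WW(k)$-algebras on $\fX_M$ (using that $j_{\cU,\ast}$ of the constant sheaf $\WW(k)$ receives a canonical map from $\WW(k)$; one composes with this) and $j_{\cU,\ast}\cI'$ is an ideal which inherits a DP structure $j_{\cU,\ast}\gamma'$ — here one uses that the adjunction unit $\cF' \to j_{\cU}^\ast j_{\cU,\ast}\cF'$ is an \emph{isomorphism} because $j_{\cU}$ is an open immersion, so no information is lost, and the DP identities can be checked sectionwise on objects over $\cU$, which exhaust the objects of $\fU_M$. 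Composing the adjunction unit $\cG \to j_{\cU,\ast}(\cG|_{\fU_M})$ with $j_{\cU,\ast}(h)$ gives a $\WW(k)$-algebra map $\cG \to j_{\cU,\ast}\cF'$ taking $\cJ$ into the DP-ideal; the universal property of $(\cF,\cI,\gamma)$ over $\fX_M$ yields a unique DP-morphism $\cF\to j_{\cU,\ast}\cF'$, and applying $j_{\cU}^\ast$ and using the adjunction isomorphism $j_{\cU}^\ast j_{\cU,\ast}\cF'\cong \cF'$ produces the desired DP-morphism $j_{\cU}^\ast\cF\to\cF'$ extending $h$. Uniqueness follows by running the same adjunction argument backwards: a DP-morphism $j_{\cU}^\ast\cF\to\cF'$ extending $h$ corresponds under adjunction to a DP-morphism $\cF \to j_{\cU,\ast}\cF'$ extending $\cG\to j_{\cU,\ast}\cF'$, which is unique by the universal property over $\fX_M$.

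The main obstacle, and the point requiring genuine care rather than abstract nonsense, is checking that $j_{\cU,\ast}$ genuinely transports the divided-power \emph{structure} (not just the ideal) in a way compatible with the standard DP structure on $p\WW(k)$, and that the DP-identities — such as $\gamma_i(x)\gamma_j(x)=\binom{i+j}{i}\gamma_{i+j}(x)$ and the composition and additivity identities — survive the pushforward. The clean way to do this is to observe that an object of $\fU_M$ is literally an object $(\cV,\cW)$ of $\fX_M$ with $\cV\to X$ factoring through $\cU$, and that for such objects $\bigl(j_{\cU,\ast}\cF'\bigr)(\cV,\cW)\cong \cF'(\cV,\cW)$, with the DP maps matching on the nose; since the DP axioms are equational identities that hold on each such section set, they hold for the sheaf. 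One should also note the $\N$-indexed (ind/pro-system) variant is handled levelwise by the same argument, and that the whole discussion is insensitive to whether $X$ is a scheme or a formal scheme, since only the formalism of the site $\fX_M$ and the open immersion $\fU_M\hookrightarrow\fX_M$ is used. With these observations in place the proof is essentially the adjunction argument sketched above.
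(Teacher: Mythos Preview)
Your proposal is correct and follows essentially the same adjunction argument as the paper: extend a DP-algebra test object $\cF'$ on $\fU_M$ to $\fX_M$ via the right adjoint to restriction, invoke the universal property of $\cF$ there, and restrict back (the paper denotes this right adjoint $j_!$ rather than $j_{\cU,\ast}$, but the mechanism is identical). One small slip: the isomorphism $j_{\cU}^\ast j_{\cU,\ast}\cF'\cong\cF'$ you use is the \emph{counit} of the adjunction $(j_{\cU}^\ast,j_{\cU,\ast})$, not the unit, and the canonical map goes $j_{\cU}^\ast j_{\cU,\ast}\cF'\to\cF'$ rather than the direction you wrote; this does not affect the argument.
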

\begin{proof} Let $j\colon \fX_M \to \fU_M$ be the continuous morphism of
sites sending $(\cV,\cW)\mapsto (\cV,\cW)\times_{(\fX,\fX_K)} (\cU,\cU_K)$. Let $j_!\colon \Sh(\fU_M)\to \Sh(\fX_M)$ be the functor of extension by zero. It is the
right adjoint of the functor $j^\ast\colon \Sh(\fX_M)\to \Sh(\fU_M)$ which is the restriction functor from $\fX_M$ to the subcategory $\fU_M\subset \fX_M$; see
\cite{erratum}. Let~$f\colon \cG\vert_{\fU_M}\to\cF'$ be a morphism of sheaves such that $(\cF',\cI',\gamma')$ is a $\WW(k)$--DP sheaf of algebras on $\fU_M$ and
$f(\cJ)\subset \cI'$. Then $\bigl(j_!(\cF'),j_!(\cI'),j_!(\gamma')\bigr)$ is  a $\WW(k)$--DP sheaf of algebras and by adjointness of $j_!$ we get a morphism
$j_!(f)\colon \cG \to j_!(\cF')$. The latter extends uniquely to a morphism $\cF \to j_!(\cF')$ of $\WW(k)$--DP sheaves of algebras by the universal property of
$\cF$. Restricting to $\fU_M$ we get a morphism $\cF\vert_\fUM \to \cF'$ of $\WW(k)$--DP sheaves of algebras extending $f$. Using the adjointness of $j_!$ and the
universal property of $\cF$ one proves that such a morphism is unique. The claim follows.
\end{proof}

By lemma \ref{lemma:betaastG} for every object $(\cU,\cW)\in \fX_M$ we
have a natural identification $\WW_{n,M}(\cU,\cW)\cong
\beta_{M,\Kbar,\ast}\left(\WW_n(\cU,\cW)\right)^{{\rm Gal
}(\Kbar/M)}$. We used the fact that
$\beta_{M,\Kbar}^{-1}\bigl(\WW_{n,M}\bigr)
(\cU,\cW)=\WW_n(\cU,\cW)$. Define $\bA_{\rm cris,n,M}^\nabla$ to
be the sheaf on $\fX_M$ associated to the pre-sheaf given by
$$(\cU,\cW)\mapsto \left(A_{\rm cris, n}\otimes_{W_n}
\left(\WW_n(\cU,\cW)\right)\right)^{{\rm Gal}(\Kbar/M)}.$$Let
$\theta_{M,n}\colon\bA_{\rm cris,n,M}^\nabla \lra
\cO_{\fX_M}/p^n\cO_{\fX_M}$ be the map of sheaves induced by the
map of pre-sheaves $$\left(A_{\rm cris, n}\otimes_{W_n}
\Bigl(\WW_n(\cU,\cW))\Bigr)\right)^{{\rm Gal}(\Kbar/M)}\to
\bigl(\cO_{\fX_M}/p^n\cO_{\fX_M}\bigr)(\cU,\cW),
$$given by $\theta_n\otimes
\theta_{n,M}.$ Here using again lemma \ref{lemma:betaastG}  we have
identified
$$\left(\beta_{M,\Kbar,\ast}\bigl(\cO_\fXKbar/p^n\cO_\fXKbar\bigr)
(\cU,\cW)\right)^{{\rm Gal}(\Kbar/M)}$$ with $\bigl(\cO_{\fX_M}/p^n\cO_{\fX_M}\bigr)(\cU,\cW)$. We get from lemma \ref{lemma:krcn}
that~$\Ker(\theta_{M,n})(\cU,\cW)$ coincides with the ${\rm Gal }(\Kbar/M)$--invariants of  the ideal of $A_{\rm cris, n}\otimes_{W_n}
\WW_n\bigl(\cO_{\fX_M}(\cU,\cW)/p\cO_\fXKbar(\cU,\cW)\bigr)$ generated by $\Ker(\theta_n)^{\rm DP}$. Such an ideal has $\WW(k)$--DP structure thanks to corollary
\ref{cor:WnXisflatoverWnVbar}. In particular the sheaf $\Ker(\theta_{n,M})$ is endowed with $\WW(k)$--DP structure as well. Using the identification
$\WW_{n,M}(\cU,\cW)\cong \left(\WW_n(\cU,\cW)\right)^{{\rm Gal }(\Kbar/M)}$ we also have a natural map
$$h_{M,n}\colon\WW_{n,M}\lra \bA_{\rm
cris,n,M}^\nabla.$$Since $\cO_{\fX_M}$ is a sheaf of
$\cO_M$--algebras, $\WW_{n,M}$ is a sheaf of $\OMun$--algebras.
Consider the map $r_{n+1}\colon \WW_{n+1}\to \WW_n$ defined by the
natural projection composed with Frobenius. Now we tensor with
$A_{\rm cris, n+1}$ over $W_n $. Since $\xi_n$ is the image of
$\xi_{n+1}$ via~$r_{n+1}$, taking ${\rm
Gal}(\Kbar/M)$--invariants, we get a natural map $r_{M,n+1}\colon
\bA_{\rm cris,n+1,M}^\nabla\to \bA_{\rm cris,n,M}^\nabla$. Denote
by $\bA_{\rm cris,M}^\nabla$ the sheaf in $\Sh(\fXKbar)^\N$
defined by the family $\{\bA_{\rm cris,n,M}^\nabla\}_n$ with the
transition maps $\{r_{M,n+1}\}_n$.

\begin{proposition}\label{lemma:Acrisnnabla}
1) The sheaf of rings $\bA_{\rm cris,n,M}^\nabla$, with the sheaf
of ideals~$\Ker\bigl(\theta_{n,M}\bigr)$ and the natural map
$h_{n,M}\colon\WW_{n,M}\lra \bA_{\rm cris,n,M}^\nabla$, is the
$\WW(k)$--DP envelope of $\WW_{n,M}$ with respect to
$\Ker\bigl(\theta_{n,M}\bigr)$.\smallskip

2) The system of sheaves of rings $\bA_{\rm cris,M}^\nabla$, with
the sheaf of ideals~$\{\Ker(\theta_{n,M})\}_n$ and the natural map
$h_M=\{h_{n,M}\}_n\colon \bA_{\rm inf,M}^+ \lra \bA_{\rm
cris,M}^\nabla$ is the $\WW(k)$--DP envelope of $\bA_{\rm
inf,M}^+$ with respect to $\{\Ker(\theta_{n,M})\}_n$.\smallskip

3) The Frobenius map~$\varphi\colon\WW_{n,M}\to \WW_{n,M} $
defines a map $\varphi_n\colon \bA_{\rm cris,n,M}^\nabla\to
\bA_{\rm cris,n,M}^\nabla$.\smallskip

4) For varying~$n$ the maps $\{\varphi_n\}_n$ define a morphism
$\varphi\colon  \bA_{\rm cris,M}^\nabla\to \bA_{\rm
cris,M}^\nabla$ in~$\Sh(\fX_M)^\N$;\smallskip

5) If $M_1\subset M_2$ is a Galois field extension we have a
natural isomorphism $\beta_{M_1,M_2}^\ast\left(\bA_{\rm
cris,n,M_1}^\nabla\right)\cong \bA_{\rm cris,n,M_2}^\nabla $ of
$\WW(k)$--DP sheaves of algebras, compatible with Frobenius and
the natural structures of $\WW_{n,M_2}$--sheaves of
modules;\smallskip

6) We have a natural isomorphism
$\beta_{M_1,M_2}^\ast\left(\bA_{\rm cris,M_1}^\nabla\right)\cong
\bA_{\rm cris,M_2}^\nabla $ of $\WW(k)$--DP sheaves of algebras,
compatible with Frobenius and the natural structures of $\bA_{\rm
inf,M_2}^+$--sheaves of modules.\smallskip

\end{proposition}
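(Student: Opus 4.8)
The plan is to reduce every assertion to the corresponding algebraic facts about $A_{\rm cris,n}$ over $W_n$ recalled in \S\ref{sec:Notation}, to the kernel computation of Lemma~\ref{lemma:krcn}, and to the flatness statements of Lemma~\ref{lemma:uglylemma}(2) and Corollary~\ref{cor:WnXisflatoverWnVbar}. Two reductions are available throughout. First, by Lemma~\ref{lemma:restrictenvelope} the formation of the $\WW(k)$--DP envelope is local on $X^{\rm et}$, so it suffices to identify $\bA_{\rm cris,n,M}^\nabla\vert_{\fU_M}$ for $\cU$ ranging over small affine opens of $X^{\rm et}$, which form a basis. Second, on such a $\cU$ one may pass through the localization functor $\cG\mapsto\cG(\Rbar_\cU)$, which is exact and commutes with filtered colimits, with tensor products and with sheafification. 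I would first treat the case $M=\Kbar$, where the ${\rm Gal}(\Kbar/M)$--invariants in the definition of $\bA_{\rm cris,n,M}^\nabla$ disappear, and deal with general $M$ at the end.

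For part~1) with $M=\Kbar$: by construction $\bA_{\rm cris,n,\Kbar}^\nabla$ is the sheaf associated to $(\cU,\cW)\mapsto A_{\rm cris,n}\otimes_{W_n}\WW_n\bigl(\cO_{\fXKbar}(\cU,\cW)/p\bigr)$, so its localization at a small affine $\cU$ is $A_{\rm cris,n}\otimes_{W_n}\WW_n(\Rbar_\cU/p\Rbar_\cU)$, while that of $\WW_{n,\Kbar}$ is $\WW_n(\Rbar_\cU/p\Rbar_\cU)$, which is flat over $W_n$ by Lemma~\ref{lemma:uglylemma}(2). Since $A_{\rm cris,n}$ is the $\WW(k)$--DP envelope of $W_n$ with respect to $\xi_n W_n=\Ker(\theta_n)$, flat base change for divided--power envelopes (\cite[Ch.~3]{berthelot_ogus}) identifies $A_{\rm cris,n}\otimes_{W_n}\WW_n(\Rbar_\cU/p\Rbar_\cU)$ with the $\WW(k)$--DP envelope of $\WW_n(\Rbar_\cU/p\Rbar_\cU)$ with respect to $\xi_n\WW_n(\Rbar_\cU/p\Rbar_\cU)$, which by Lemma~\ref{lemma:krcn} is exactly the localization of $\Ker(\theta_{n,\Kbar})$. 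Combining this with the compatibility of the universal property with restriction to $\fU_\Kbar$ (Lemma~\ref{lemma:restrictenvelope}), with localization and with sheafification gives part~1) for $M=\Kbar$. Part~2) for $M=\Kbar$ then follows because the transition maps $r_{n+1}$ send $\xi_{n+1}$ to $\xi_n$, hence $\Ker(\theta_{n+1,\Kbar})$ into $\Ker(\theta_{n,\Kbar})$ by Lemma~\ref{lemma:theta}(a), so the $\WW(k)$--DP envelope of the inverse system $\bA_{\rm inf,\Kbar}^+$ is computed level by level.

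For parts~3) and~4): as in \S\ref{sec:Notation} one checks that $\varphi(\xi_n)$ admits divided powers in $\bA_{\rm cris,n,M}^\nabla$ (write $\varphi(\xi_n)=(\widetilde p_n^{\,p}-p^p)+(p^p-p)$ with $\widetilde p_n^{\,p}-p^p\in\Ker(\theta_{n,M})$ and $p$ having divided powers), so the composite $\WW_{n,M}\stackrel{\varphi}{\lra}\WW_{n,M}\to\bA_{\rm cris,n,M}^\nabla$ carries $\Ker(\theta_{n,M})$ into the PD ideal, and the universal property of part~1) produces $\varphi_n$; functoriality of Frobenius and the fact that $r_{M,n+1}$ already incorporates a Frobenius twist give compatibility with the transition maps, hence the morphism $\varphi$ in $\Sh(\fX_M)^\N$ of part~4). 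For parts~5) and~6): the functor $\beta_{M_1,M_2}^\ast$ is exact, commutes with sheafification, with $\WW_n(-)$, with $-\otimes_{W_n}A_{\rm cris,n}$ and with the ${\rm Gal}(\Kbar/-)$--invariants in the definition (Lemma~\ref{lemma:betaastG} together with the isomorphism $\beta_{M_1,M_2}^\ast(\WW_{s,M_1})\cong\WW_{s,M_2}$ of \S\ref{sec:sheafAinf}); applying it to the presheaf defining $\bA_{\rm cris,n,M_1}^\nabla$ therefore returns the presheaf defining $\bA_{\rm cris,n,M_2}^\nabla$, compatibly with Frobenius and the module structures, and passing to inverse systems yields~6). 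Finally, part~1) for general $M$ follows either by repeating the localization argument over $R_\cU\otimes_{\cO_K}M=\prod_i R_{\cU,i}$, or by Galois descent along the Galois extension $\Kbar/M$ from the case $M=\Kbar$, via part~5) with $M_1=M$, $M_2=\Kbar$.

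The main obstacle is the interface between the sheaf--theoretic divided--power envelope and these reductions: one must know that forming the $\WW(k)$--DP envelope commutes with restriction to $\fU_M$ (supplied by Lemma~\ref{lemma:restrictenvelope}) and, crucially, with the localization functor $\cG\mapsto\cG(\Rbar_\cU)$, and that on localizations one genuinely lands in the flat base--change situation for DP envelopes — equivalently, that the ideal $\Ker(\theta_{n,M})\subset\bA_{\rm cris,n,M}^\nabla$ really carries a $\WW(k)$--PD structure. This is precisely where the flatness of $\WW_n(\Rbar_\cU/p\Rbar_\cU)$ over $W_n$ (Lemma~\ref{lemma:uglylemma}(2) and Corollary~\ref{cor:WnXisflatoverWnVbar}) enters; granting it, the remaining assertions are formal consequences of the universal property.
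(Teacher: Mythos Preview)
Your approach is correct but takes a more roundabout route than the paper's. The paper verifies the universal property of $\bA_{\rm cris,n,M}^\nabla$ directly at the presheaf level, in one stroke and for all $M$ at once: given a test $\WW(k)$--DP sheaf $\cG$ on $\fX_M$ and $f\colon\WW_{n,M}\to\cG$, it passes via Lemma~\ref{lemma:betaastG}(ii) to $\beta_{M,\Kbar}^{-1}(f)$, and then for each $(\cU,\cW)$ extends $\WW_n(\cU,\cW)\to\beta_{M,\Kbar}^{-1}(\cG)(\cU,\cW)$ to $A_{\rm cris,n}\otimes_{W_n}\WW_n(\cU,\cW)$ simply by invoking the universal property of $A_{\rm cris,n}$ as the DP envelope of $W_n$ (since $A_{\rm cris,n}$ is generated over $W_n$ by divided powers of $\xi_n$, and $f(\xi_n)$ lands in the PD ideal). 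Taking ${\rm Gal}(\Kbar/M)$--invariants and sheafifying finishes; no restriction to small affines, no localization, and no flat base-change theorem for DP envelopes is invoked (flatness, via Corollary~\ref{cor:WnXisflatoverWnVbar}, appears only in the paragraph before the proposition, to ensure $\Ker(\theta_{n,M})$ carries a PD structure). Your route instead reduces first to $M=\Kbar$, then to $\fU_\Kbar$, then to the $\cG_{\cU,\Kbar}$--module $\WW_n(\Rbar_\cU/p\Rbar_\cU)$, and finally to flat base change --- each step is defensible, but the passage from ``DP envelope on each localization'' back to ``DP envelope as a sheaf on $\fU_\Kbar$'' (your declared obstacle) requires identifying $\Sh(\fU_\Kbar)$ with continuous $\cG_{\cU,\Kbar}$--sets, and your use of Lemma~\ref{lemma:restrictenvelope} is in the wrong direction (it assumes global existence and deduces local, not conversely). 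What you gain is a clean conceptual picture (everything is flat base change of the classical $A_{\rm cris,n}$); what the paper gains is brevity and total avoidance of the sheaf-versus-localization interface. For~(5) the paper likewise checks the isomorphism on stalks rather than via localizations, which is more immediate since $\beta_{M_1,M_2}^\ast$ preserves stalks.
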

\begin{proof} (1) Let $\cG$ be a $\WW(k)$--DP sheaf of  algebras
and let $f\colon \WW_{n,M} \to \cG$ be a morphism sending $\Ker\bigl(\theta_{n,M}\bigr)$ to the DP ideal of $\cG$. Due to~\ref{lemma:betaastG}, for every
$(\cU,\cW)\in \fX_M$, we have $\cG(\cU,\cW)=\left(\beta_{M,\Kbar}^{-1}(\cG)(\cU,\cW)\right)^{{\rm Gal}(\Kbar/M)}$. Since $\beta_{M,\Kbar}^{-1}\bigl(\WW_{n,M}\bigr)
(\cU,\cW)$ is equal to $\WW_n(\cU,\cW)$, the map $\beta_{M,\Kbar}^{-1}(f)$ extends uniquely to a map $A_{\rm cris, n}\otimes_{W_n} \WW_n(\cU,\cW) \to
\beta_{M,\Kbar}^{-1}(\cG)(\cU,\cW)$. Taking ${\rm Gal}(\Kbar/M)$--invariants and using the definition of $\bA_{\rm cris,n,M}^\nabla$, we get a unique map $\bA_{\rm
cris,n,M}^\nabla\to \cG$ as $\WW(k)$--DP sheaves of  algebras extending $f$. This proves the universal property of $\bA_{\rm cris,n,M}^\nabla$. Claim (2) follows
from claim (1). Recall that Frobenius on $W_n$ extends to an operator $\varphi$ on~$A_{\rm cris, n}$. Claims~(3) and~(4) follow. (5) The existence of a natural map
$\beta_{M_1,M_2}^\ast\left(\bA_{\rm cris,n,M_1}^\nabla\right)\to \bA_{\rm cris,n,M_2}^\nabla $, compatible with Frobenius and the structure of
$\WW_{n,M_2}$--modules follows from the definition of $\bA_{\rm cris,n}^\nabla$. To check that it is an isomorphism it suffices to prove it for the stalks. The
stalk of $\beta_{M_1,M_2}^{-1}\bigl(\WW_{n,M_1}\bigr)$ at a point~$x\in X$ is $\WW_n\bigl(\cO_{\fX_{M_1},x}/p\cO_{\fX_{M_1},x}\bigr)$; see \cite[Prop.
4.4]{andreatta_iovita}. The latter is a $W_n$--algebra.  It then follows from~\ref{lemma:krcn} that the stalk $\bA_{\rm cris,n,M_1,x}^\nabla$ is $A_{\rm cris,
n}\otimes_{W_n} \WW_n\left(\cO_{\fX_{M_1},x}/p\cO_{\fX_{M_2},x}\right)$. Since $\cO_{\fX_{M_1},x}\cong \cO_{\fX_{M_2},x}$, the claim follows. Claim~(6) follows
from~(5)
\end{proof}

The next step is to study the localization $\bA_{\rm
cris,M}^\nabla$ over small affines. In analogy with the classical
case of~$A_{\rm cris}$ recalled in \S \ref{sec:Notation} we provide
a second essentially equivalent definition of $\bA_{\rm
cris,M}^\nabla$ via the system $\bA_{\rm cris,M}^\nabla/p^n
\bA_{\rm cris,M}^\nabla$ for varying $n\in\N$. Let $\bA_{\rm
cris,n,M}^{'\nabla}$ be the sheaf on $\fX_M$ associated to the
pre-sheaf given by

$$(\cU,\cW)\mapsto \bigl(\bigl(A_{\rm cris}/p^n A_{\rm cris}\bigr)\otimes_{W_n}
\WW_n(\cU,\cW)\bigr)^{{\rm Gal}(\Kbar/M)}.$$

Let $\theta_{M,n}'\colon\bA_{\rm cris,n,M}^{'\nabla} \lra
\cO_{\fX_M}/p^n\cO_{\fX_M}$ be the  map $\theta_{M,n}\circ
\bigl(q_n \tensor \varphi\bigr)$ (we refer to \S \ref{sec:Notation}
for the map $q_n\colon A_{\rm cris}/p^n A_{\rm cris} \to A_{\rm
cris, n}$). Denote by $\bA_{\rm cris,M}^{'\nabla}$ the sheaf in
$\Sh(\fX_M)^\N$ defined by the family $\{\bA_{\rm
cris,n,M}^{'\nabla}\}$ with the transition maps $r_{M,n+1}'\colon
\bA_{\rm cris,n+1,M}^{'\nabla}\to \bA_{\rm cris,n,M}^{'\nabla}$
induced by $r_{n+1}\colon \WW_{n+1,M}\to \WW_{n,M}$. For
every~$n\in\N$ define the map of sheaves
$$q_{M,n}\colon \bA_{\rm cris,n,M}^{'\nabla} \lra\bA_{\rm
cris,n,M}^\nabla$$associated to the map of pre-sheaves inducing
$q_n\colon A_{\rm cris}/p^n A_{\rm cris} \to A_{\rm cris, n}$ and
Frobenius  on $\WW_{n,M}(\cU,\cW)$. Consider the map of sheaves
$$u_{n,M}\colon \bA_{\rm cris,n+1,M}^\nabla\lra \bA_{\rm
cris,n,M}^{'\nabla} $$associated to the map of pre-sheaves which
induces $u_n\colon A_{\rm cris, n+1}\to A_{\rm cris}/p^n A_{\rm
cris}  $ (see \ref{sec:Notation}) and the natural projection $
\WW_{n+1,M}(\cU,\cW)\to \WW_{n,M}(\cU,\cW)$.

\begin{proposition}\label{lemma:Acrisnnabla'}
1) The sheaf of rings $\bA_{\rm cris,n,M}^{'\nabla}$ with the
sheaf of ideals~$\Ker(\theta_{n,M}')$ and the natural map
$h_{n,M}'\colon\WW_{n,M}\lra \bA_{\rm cris,n,M}^{'\nabla}$ is the
$\WW(k)$--DP envelope of $\WW_{n,M}$ with respect to
$\Ker\bigl(\theta_{n,M}\circ \varphi\bigr)$.\smallskip

2) The system of sheaves of rings $\bA_{\rm cris,M}^{'\nabla}$,
with the sheaf of ideals~$\{\Ker(\theta_{n,M}')\}_n$ and the
natural map $h_M'=\{h_{n,M}'\}_n\colon \bA_{\rm inf,M}^+ \lra
\bA_{\rm cris,M}^{'\nabla}$ is the $\WW(k)$--DP envelope of
$\bA_{\rm inf,M}^+$ with respect to $\{\Ker(\theta_{n,M}\circ
\varphi)\}_n$. \smallskip

3) Frobenius on~$\WW_{n,M}$ defines  maps $\varphi_n'\colon
\bA_{\rm cris,n,M}^{'\nabla}\to \bA_{\rm cris,n,M}^{'\nabla}$
which are compatible for varying~$n$ and give a morphism
$\varphi:=\{\varphi_n'\}\colon \bA_{\rm cris,M}^{'\nabla}\to
\bA_{\rm cris,M}^{'\nabla}$.

4)  For every~$n\in\N$ we have  $q_{M,n} \circ u_{M,n}= r_{M,n+1}$
and~$u_{M,n} \circ q_{M,n+1}= r_{M,n+1}'$. Furthermore, the
following diagrams commute
$$\begin{array}{ccccc} \bA_{\rm
cris,n+1,M}^\nabla & \stackrel{u_{n,M}}{\lra} & \bA_{\rm
cris,n,M}^{'\nabla} & \stackrel{q_{n,M}}{\lra} & \bA_{\rm
cris,n,M}^\nabla\cr \big\downarrow \varphi_{n+1} & &
\big\downarrow \varphi_n'  & & \big\downarrow \varphi_n\cr
\bA_{\rm cris,n+1,M}^\nabla & \stackrel{u_{n,M}}{\lra} & \bA_{\rm
cris,n,M}^{'\nabla} & \stackrel{q_{n,M}}{\lra} & \bA_{\rm
cris,n,M}^\nabla\cr
\end{array}
$$and

$$\begin{array}{ccccc}
\WW_{n+1,M} & \lra & \WW_{n,M} & \stackrel{\varphi}{\lra} &
\WW_{n,M} \cr \big\downarrow h_{n+1,M} & & \big\downarrow h_{n,M}'
& & \big\downarrow h_{n,M}\cr \bA_{\rm cris,n+1,M}^\nabla &
\stackrel{u_{n,M}}{\lra} & \bA_{\rm cris,n,M}^{'\nabla} &
\stackrel{q_{n,M}}{\lra} & \bA_{\rm cris,n,M}^\nabla\cr
\big\downarrow \theta_{n+1,M} & & \big\downarrow \theta_{n,M}'  &
& \big\downarrow \theta_{n,M}\cr \cO_{\fX_M}/p^{n+1}\cO_{\fX_M} &
\lra & \cO_{\fX_M}/p^n\cO_{\fX_M} & = &
\cO_{\fX_M}/p^n\cO_{\fX_M}.\cr
\end{array}$$

5) If $M_1\subset M_2$ is a Galois field extension we have a
natural isomorphism $\beta_{M_1,M_2}^\ast\left(\bA_{\rm
cris,n,M_1}^{'\nabla}\right)\cong \bA_{\rm cris,n,M_2}^{'\nabla} $
of $\WW(k)$--DP sheaves of algebras, compatible with Frobenius and
the natural structures of $\WW_{n,M_2}$--sheaves of
modules;\smallskip

6) We have a natural isomorphism
$\beta_{M_1,M_2}^\ast\left(\bA_{\rm
cris,M_1}^{'\nabla}\right)\cong \bA_{\rm cris,M_2}^{'\nabla} $ of
$\WW(k)$--DP sheaves of algebras, compatible with Frobenius and
the natural structures of $\bA_{\rm inf,M_2}^+$--sheaves of
modules.\smallskip

\end{proposition}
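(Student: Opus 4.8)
The plan is to follow the proof of Proposition~\ref{lemma:Acrisnnabla} almost verbatim, replacing throughout $A_{\rm cris,n}$ by $A_{\rm cris}/p^n A_{\rm cris}$, the ideal $\Ker(\theta_n)=\xi_n W_n$ by $\Ker(\theta_n\circ\varphi)=\xi_{n+1}W_n$, and the description of $A_{\rm cris,n}$ as the $\WW(k)$--DP envelope of $W_n$ with respect to $\Ker(\theta_n)$ by the Lemma of \S\ref{sec:Notation}, which identifies $A_{\rm cris}/p^n A_{\rm cris}$ with the $\WW(k)$--DP envelope of $W_n$ with respect to $\xi_{n+1}W_n=\Ker(\theta_n\circ\varphi)$. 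Concretely, for~(1) I would first record, as in the paragraph preceding Proposition~\ref{lemma:Acrisnnabla}, that for $(\cU,\cW)\in\fX_M$ with $S=\cO_{\fX_M}(\cU,\cW)$ the ideal $\Ker(\theta_{n,M}')(\cU,\cW)$ is the ${\rm Gal}(\Kbar/M)$--invariant part of the ideal of $(A_{\rm cris}/p^n A_{\rm cris})\otimes_{W_n}\WW_n(S/pS)$ generated by the divided powers of $\xi_{n+1}$; Lemma~\ref{lemma:krcn} is used again to see that $\xi_{n+1}$ generates $\Ker(c_n\circ\varphi)$ over a small affine, and Corollary~\ref{cor:WnXisflatoverWnVbar} supplies the flatness over $W_n$ guaranteeing that this ideal carries a $\WW(k)$--compatible divided power structure. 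Then, given a $\WW(k)$--DP sheaf of algebras $\cG$ and a map $f\colon\WW_{n,M}\to\cG$ carrying $\Ker(\theta_{n,M}\circ\varphi)$ into its DP ideal, I would use Lemma~\ref{lemma:betaastG} to pass to $\beta_{M,\Kbar}^{-1}$, so that $\beta_{M,\Kbar}^{-1}(f)$ becomes a map of $\WW(k)$--DP algebras out of $\WW_n(\cU,\cW)$; by the universal property of $A_{\rm cris}/p^n A_{\rm cris}$ from \S\ref{sec:Notation} it extends uniquely over $(A_{\rm cris}/p^n A_{\rm cris})\otimes_{W_n}\WW_n(\cU,\cW)$, and taking ${\rm Gal}(\Kbar/M)$--invariants and sheafifying yields the required unique morphism $\bA_{\rm cris,n,M}^{'\nabla}\to\cG$, establishing the universal property.

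Parts~(2)--(4) are then bookkeeping. I would deduce~(2) formally from~(1), checking the universal property of a $\WW(k)$--DP envelope of an inverse system level by level and matching transition maps via~(4). For~(3) I would observe that Frobenius on $A_{\rm cris}$ descends modulo $p^n$ and, tensored over $W_n$ through $\varphi$ with Frobenius on $\WW_{n,M}$, defines $\varphi_n'$, the compatibility with $r_{M,n+1}'$ being immediate on pre-sheaves. For~(4) I would verify each of $q_{M,n}\circ u_{M,n}=r_{M,n+1}$ and $u_{M,n}\circ q_{M,n+1}=r_{M,n+1}'$ and the two commuting squares at the level of pre-sheaves, where each decomposes as a tensor product over $W_n$ of the corresponding identity among $q_n,u_n,r_{n+1},\varphi$ on the $A_{\rm cris}$--rings recorded in \S\ref{sec:Notation} with the evident identity among projections and Frobenius on Witt vectors, and then sheafify; the squares involving $\theta$ and $h$ use in addition $\theta_{n,M}'=\theta_{n,M}\circ(q_n\otimes\varphi)$ and Lemma~\ref{lemma:theta}.

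For~(5)--(6): a natural map $\beta_{M_1,M_2}^\ast(\bA_{\rm cris,n,M_1}^{'\nabla})\to\bA_{\rm cris,n,M_2}^{'\nabla}$, compatible with Frobenius and the $\WW_{n,M_2}$--module structures, comes directly from the pre-sheaf description. To see it is an isomorphism I would check on stalks, $\fX$ having enough geometric points: by \cite[Prop.~4.4]{andreatta_iovita} the stalk of $\beta_{M_1,M_2}^{-1}(\WW_{n,M_1})$ at a geometric point $x$ of $X$ is $\WW_n(\cO_{\fX_{M_1},x}/p\cO_{\fX_{M_1},x})$, a $W_n$--algebra, and Lemma~\ref{lemma:krcn} together with Corollary~\ref{cor:WnXisflatoverWnVbar} identifies the stalk of $\bA_{\rm cris,n,M_i}^{'\nabla}$ with $(A_{\rm cris}/p^n A_{\rm cris})\otimes_{W_n}\WW_n(\cO_{\fX_{M_i},x}/p\cO_{\fX_{M_i},x})$; since $\cO_{\fX_{M_1},x}\cong\cO_{\fX_{M_2},x}$ the map is an isomorphism, which is~(5), and~(6) follows by passing to the inverse system just as~(2) followed from~(1).

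The step I expect to be the main obstacle is the one genuinely new point inside~(1): making precise, via the Lemma of \S\ref{sec:Notation} and Lemma~\ref{lemma:krcn}, that $\xi_{n+1}$ generates $\Ker(c_n\circ\varphi)$ and that flat base change along $W_n\to\WW_n(S/pS)$ carries the $\WW(k)$--DP envelope $A_{\rm cris}/p^n A_{\rm cris}$ to a $\WW(k)$--DP envelope with DP ideal generated by the divided powers of $\xi_{n+1}$ and of $p$. Once this is in place, everything else runs in exact parallel with Proposition~\ref{lemma:Acrisnnabla} and the compatibilities already collected in \S\ref{sec:Notation}.
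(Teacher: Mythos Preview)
Your proposal is correct and is exactly the argument the paper has in mind: the paper gives no proof of this proposition, leaving it implicit as a direct analogue of the proof of Proposition~\ref{lemma:Acrisnnabla} with $A_{\rm cris,n}$ replaced by $A_{\rm cris}/p^n A_{\rm cris}$ and $\Ker(\theta_n)$ by $\Ker(\theta_n\circ\varphi)$, which is precisely what you outline. One minor comment: in part~(1) you do not actually need to verify that $\xi_{n+1}$ generates $\Ker(c_n\circ\varphi)$ at the level of $\WW_n(S/pS)$ for general $S$; it suffices, exactly as in the proof of Proposition~\ref{lemma:Acrisnnabla}(1), to use the universal property of $A_{\rm cris}/p^n A_{\rm cris}$ as the $\WW(k)$--DP envelope of $W_n$ with respect to $\Ker(\theta_n\circ\varphi)$ (established in \S\ref{sec:Notation}) together with flatness from Corollary~\ref{cor:WnXisflatoverWnVbar}, and then descend via ${\rm Gal}(\Kbar/M)$--invariants.
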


\noindent Write $q_M:=\{q_{n,M}\}_n\colon \bA_{\rm
cris,M}^{'\nabla} \to \bA_{\rm cris,M}^\nabla$
and~$u_M:=\{u_{n,M}\}_n\colon \bA_{\rm cris,M}^\nabla \to \bA_{\rm
cris,M}^{'\nabla}$.

\begin{lemma}\label{lemma:exactsequenceAcrisnnabla'}

a) For every $n$ we have an exact sequence $$0 \lra \bA_{\rm
cris,n,M}^{'\nabla} \stackrel{a}{\lra} \bA_{\rm
cris,n+1,M}^{'\nabla} \stackrel{b}{\lra} \bA_{\rm
cris,1,M}^{'\nabla}\lra 0,$$where $b= r_{2,M}'\circ \cdots \circ
r_{n,M}' \circ r_{n+1,M}'$ and $a$ is the map of sheaves
associated to the Vershiebung $ \V\colon \WW_{n,M}\lra
\WW_{n+1,M}$.\smallskip

b) We have $\bA_{\rm
cris,1,\Kbar}^{'\nabla}=\cO_\fXKbar/p\cO_\fXKbar
\bigl[\delta_0,\delta_1,\ldots\bigr]/(\delta_m^p\bigr)_{m\geq
0}$.\smallskip

c)  We have $\bA_{\rm cris,n,\Kbar}^{'\nabla}(\cU,\cW)=\bigl(A_{\rm cris}/p^n A_{\rm cris}\bigr)\otimes_{W_n} \bigl(\WW_{n,\Kbar}(\cU,\cW)\bigr)$ for every
$(\cU,\cW)\in \fXKbar$. In particular the sequence in~(a) is exact also as sequence of presheaves for $M=\Kbar$.
\end{lemma}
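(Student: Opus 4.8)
The plan is to prove (b) first — it is essentially self-contained and also yields the $n=1$ case of (c) — then to produce the exact sequence of (a) at the level of presheaves for $M=\Kbar$ (which is the remaining content of (c)), and finally to deduce the sheaf statements and descend (a) to general $M$.

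For (b), I reduce the presentation of $A_{\rm cris}/p^nA_{\rm cris}$ recalled in \S\ref{sec:Notation} modulo $p$: in $W_1=\cO_\Kbar/p\cO_\Kbar$ one has $p=0$ and $\xi_2^p=[p^{1/p}]^p=\overline{p}=0$, so the defining relations become $\delta_m^p=0$ and $A_{\rm cris}/pA_{\rm cris}\cong W_1[\delta_0,\delta_1,\ldots]/(\delta_m^p)_{m\ge0}$. Consequently the presheaf $(\cU,\cW)\mapsto(A_{\rm cris}/pA_{\rm cris})\otimes_{W_1}\WW_{1,\Kbar}(\cU,\cW)$ whose sheafification is $\bA_{\rm cris,1,\Kbar}^{'\nabla}$ is, objectwise, the free $\WW_{1,\Kbar}$-module on the countable set of monomials $\prod_m\delta_m^{e_m}$ with $0\le e_m<p$ and finite support. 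Since every covering family in $\fXKbar$ may be refined to a finite one (strict covers are cofinal by \ref{lemma:cofinal} and may be taken finite), an objectwise countable direct sum of sheaves is again a sheaf; hence this presheaf is already a sheaf, which gives (b) and the case $n=1$ of (c).

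For the presheaf exact sequence I fix $(\cU,\cW)\in\fXKbar$ and tensor the exact sequence of $W_{n+1}$-modules $0\to pA_{\rm cris}/p^{n+1}A_{\rm cris}\to A_{\rm cris}/p^{n+1}A_{\rm cris}\to A_{\rm cris}/pA_{\rm cris}\to0$ over $W_{n+1}$ with $\WW_{n+1,\Kbar}(\cU,\cW)$, which is flat over $W_{n+1}$ by \ref{cor:WnXisflatoverWnVbar} (for the localizations this is \ref{lemma:uglylemma}(2)); exactness is preserved and the middle term is the value at $(\cU,\cW)$ of the presheaf defining $\bA_{\rm cris,n+1,\Kbar}^{'\nabla}$. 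Since Frobenius is surjective on $\cO_\Kbar/p\cO_\Kbar$, hence on $\WW_{n+1,\Kbar}(\cU,\cW)$, one has $p^j\WW_{n+1,\Kbar}(\cU,\cW)=\V^j\WW_{n+1,\Kbar}(\cU,\cW)$ and therefore $\WW_{n+1,\Kbar}(\cU,\cW)/p^j\cong\WW_{j,\Kbar}(\cU,\cW)$ for $j\le n+1$. Using this, that $A_{\rm cris}/pA_{\rm cris}$ is a $W_1=W_{n+1}/pW_{n+1}$-module, and that $pA_{\rm cris}/p^{n+1}A_{\rm cris}\cong A_{\rm cris}/p^nA_{\rm cris}$ as $W_n=W_{n+1}/p^nW_{n+1}$-modules (multiplication by $p$, compatibly with the structure maps of the $W_\bullet$), the outer terms become the values at $(\cU,\cW)$ of the presheaves defining $\bA_{\rm cris,1,\Kbar}^{'\nabla}$ and $\bA_{\rm cris,n,\Kbar}^{'\nabla}$. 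Tracing the maps through these identifications — keeping careful track of the Frobenius twists built into the Verschiebung and into the transition maps — shows that the left map is the one induced by $\V\colon\WW_{n,\Kbar}\to\WW_{n+1,\Kbar}$ and that the right map composes to $r_{2,\Kbar}'\circ\cdots\circ r_{n+1,\Kbar}'$. I expect this identification (and pinning down the $W_\bullet$-module structures exactly) to be the main obstacle.

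Given the presheaf exact sequence, (c) follows by induction on $n$: if the presheaves defining $\bA_{\rm cris,n,\Kbar}^{'\nabla}$ and $\bA_{\rm cris,1,\Kbar}^{'\nabla}$ are sheaves, then the presheaf defining $\bA_{\rm cris,n+1,\Kbar}^{'\nabla}$ fits into a short exact sequence of presheaves whose outer terms are sheaves and is therefore itself a sheaf (apply the snake lemma to the three sheaf-condition complexes), so it equals $\bA_{\rm cris,n+1,\Kbar}^{'\nabla}$; the presheaf sequence is then the sheaf sequence (a) for $M=\Kbar$. For general $M$ I use that $\bA_{\rm cris,n,M}^{'\nabla}$ has stalk $(A_{\rm cris}/p^nA_{\rm cris})\otimes_{W_n}\WW_n(\cO_{\fX_M,x}/p\cO_{\fX_M,x})$ at each geometric point $(x,y)$ (as in the proof of \ref{lemma:Acrisnnabla}(5)), that $\cO_{\fX_M,x}\cong\cO_{\fX_\Kbar,x}$, and that these stalks are flat over $W_n$; the argument above then applies verbatim to stalks, so the sequence $0\to\bA_{\rm cris,n,M}^{'\nabla}\to\bA_{\rm cris,n+1,M}^{'\nabla}\to\bA_{\rm cris,1,M}^{'\nabla}\to0$ is exact on all stalks, hence exact, as $\fX_M$ has enough geometric points. (Equivalently, one applies the exact and faithful functor $\beta_{M,\Kbar}^\ast$, which by \ref{lemma:Acrisnnabla'}(5) carries this sequence to the one over $\Kbar$.) Finally, the maps $a$ and $b$ are the asserted ones because they are defined naturally — via $\V$ and the transition maps $r'$ — compatibly with passage to stalks.
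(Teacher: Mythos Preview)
Your strategy---prove (b) directly, establish the presheaf exact sequence for $M=\Kbar$ by a flatness argument, deduce (c) by induction, then reduce general $M$ to $\Kbar$ via stalks or $\beta_{M,\Kbar}^\ast$---matches the paper's overall architecture. Parts (b), (c), and the descent to general $M$ are essentially identical to the paper.

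The one genuinely different step is the \emph{left exactness} in (a). The paper tensors the sequence $0\to\V\WW_n\to\WW_{n+1}\to\WW_1\to0$ of sheaves with $A_{\rm cris}/p^{n+1}A_{\rm cris}$ over $W_{n+1}$; right exactness is automatic, but injectivity on the left is proved by an explicit snake-lemma computation on stalks with the auxiliary rings $B=\WW_n(\cO_{\fXKbar,x}/p)[\delta_\bullet]/(\ldots)$, $C$, $D$. You instead tensor the sequence $0\to pA_{\rm cris}/p^{n+1}\to A_{\rm cris}/p^{n+1}\to A_{\rm cris}/p\to0$ of $W_{n+1}$-modules with $\WW_{n+1,\Kbar}$ and invoke flatness of the latter over $W_{n+1}$ (Corollary~\ref{cor:WnXisflatoverWnVbar}) to get the full short exact sequence at once. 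This is cleaner and avoids the hands-on computation; the price is that you must still unwind the Frobenius twist in identifying the outer terms with $\bA_{\rm cris,n,\Kbar}^{'\nabla}$ and $\bA_{\rm cris,1,\Kbar}^{'\nabla}$---the $W_{n+1}$-structure on $pA_{\rm cris}/p^{n+1}\cong A_{\rm cris}/p^n$ factors through $W_n$ via $r_{n+1}$ (projection \emph{composed with Frobenius}), not via the naive quotient $W_{n+1}/p^nW_{n+1}$, and likewise for the right-hand term. You correctly flag this as the main obstacle; it is exactly the point where the paper invokes the isomorphism $\varphi^n\colon\cO_\fXKbar/p^{1/p^n}\cO_\fXKbar\to\cO_\fXKbar/p\cO_\fXKbar$. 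Once that bookkeeping is done, your approach and the paper's converge.
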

\begin{proof}

a) Certainly $b\circ a=0$. To check exactness we study the stalks. Since for any sheaf~$\cG$ on~$\fX_M$ we have $\beta_{M,\Kbar}^\ast(\cG)_x=\cG_x$ by \cite[Prop.
4.4]{andreatta_iovita} and since $\beta_{M,\Kbar}^\ast\left(\bA_{\rm cris,n,M}^{'\nabla}\right)\cong \bA_{\rm cris,n,\Kbar}^{'\nabla} $ by proposition
\ref{lemma:Acrisnnabla'} it suffices to prove the claim for~$M=\Kbar$. The kernel $H$ of the natural projection $s\colon \WW_{n+1}\to
\WW_1=\cO_\fXKbar/p\cO_\fXKbar$ is identified with $\WW_n$ via Vershiebung. It is a $W_{n+1}$--module via the projection $W_{n+1} \to W_n$ composed with Frobenius
on~$W_n$  and the  structure of $W_n$--module of $\WW_n$. Hence $$\bigl(A_{\rm cris}/p^{n+1} A_{\rm cris}\bigr)\otimes H\cong \bigl(A_{\rm cris}/p^{n+1} A_{\rm
cris}\bigr)\otimes \WW_n,$$where $\otimes$ stands for $\otimes_{W_{n+1}}$. Since~$s(\xi_{n+2}^p)\equiv p^{\frac{1}{p^n}}$ we have
$$\bigl(A_{\rm cris}/p^{n+1} A_{\rm
cris}\bigr)\otimes_{W_{n+1}} \WW_1\cong
\cO_\fXKbar/p^{\frac{1}{p^n}}\cO_\fXKbar
\bigl[\delta_0,\delta_1,\ldots\bigr]/(\delta_m^p\bigr)_{m\geq 0}
.$$Recall from \ref{cor:WnXisflatoverWnVbar} that Frobenius to the
$n$--th power~$\varphi^n$ gives an isomorphism
$\cO_\fXKbar/p^{1/p^n}\cO_\fXKbar \ra \cO_\fXKbar/p\cO_\fXKbar$.
Hence
$$\bigl(A_{\rm cris}/p^{n+1} A_{\rm cris}\bigr)\otimes_{W_{n+1}}
\bigl(\cO_\fXKbar/p\cO_\fXKbar\bigr) \cong
\cO_\fXKbar/p\cO_\fXKbar
\bigl[\delta_0,\delta_1,\ldots\bigr]/(\delta_m^p\bigr)_{m\geq
0}.$$The composite of~$s$ with~$\varphi^n$ is $r_2\circ \cdots
\circ r_n \circ r_{n+1}\colon \WW_{n+1}\lra
\cO_\fXKbar/p\cO_\fXKbar$. This proves the exactness of the
sequence displayed in Claim a) with the exception of the exactness
on the left. We prove the left exactness on stalks. Let $x$ be a
point of $X$. Note that $\xi=\bigl[\widetilde{p}\bigr]-p$. Since
the ideal generated by~$p$ admits $\WW(k)$--DP in $A_{\rm cris}$,
the $\WW(k)$--DP envelope of~$\xi$ in $A_{\rm cris}$ coincides
with the $\WW(k)$--DP envelope of $\bigl[\widetilde{p}\bigr]$ in
$A_{\rm cris}$ i.~e.,
$$A_{\rm cris}/p^n A_{\rm cris} \cong
W_n \langle \widetilde{p}_{n+1} \rangle=
W_n\bigl[\delta_0,\delta_1,\ldots\bigr]/(p\delta_0-\widetilde{p}_{n+1}^p,p
\delta_{m+1}-\delta_m^p\bigr)_{m\geq 0}.$$Define
$B:=\WW_n\bigl(\cO_{\fXKbar,x}/p\cO_{\fXKbar,x}\bigr)
\bigl[\delta_0,\delta_1,\ldots\bigr]/(p
\delta_{m+1}-\delta_m^p\bigr)_{m\geq 0}$. Similarly, denote by
$$C:=\WW_{n+1}\bigl(\cO_{\fXKbar,x}/p\cO_{\fXKbar,x}\bigr)
\bigl[\delta_0,\delta_1,\ldots\bigr]/(p
\delta_{m+1}-\delta_m^p\bigr)_{m\geq 0}$$ and write
$D:=\cO_{\fXKbar,x}/p\cO_{\fXKbar,x}
\bigl[\delta_0,\delta_1,\ldots\bigr]/(\delta_m^p\bigr)_{m\geq 0}$.
Note that $B/ (p\delta_0-\widetilde{p}_{n+1}^p) B$ is the stalk
$\bA_{\rm cris,n,\Kbar,x}^{'\nabla}$ of $\bA_{\rm
cris,n,\Kbar}^{'\nabla} $ at $x$, $C/
(p\delta_0-\widetilde{p}_{n+2}^p) C$ is  $\bA_{\rm
cris,\Kbar,n+1,x}^{'\nabla}$ and $D/\widetilde{p}_{n+2}^p
D=\bA_{\rm cris,1,\Kbar,x}^{'\nabla}$. We have the following
commutative diagram:
$$\begin{array}{cclclclcc}
0 & \lra & B & \stackrel{a_x}{\lra} & C & \stackrel{s_x}{\lra} & D
& \lra & 0\cr & & \big\downarrow(p\delta_0-\widetilde{p}_{n+1}^p)
& & \big\downarrow(p\delta_0-\widetilde{p}_{n+2}^p) & &
\big\downarrow -\widetilde{p}_{n+2}^p\cr 0 & \lra & B &
\stackrel{a_x}{\lra} & C & \stackrel{s_x}{\lra} & D & \lra & 0.\cr
\end{array}$$Here, $a_x$ sends $\delta_i\mapsto \delta_i$ and induces Vershiebung
$\WW_n\bigl(\cO_{\fXKbar,x}/p\cO_{\fXKbar,x}\bigr)\to \WW_{n+1}\bigl(\cO_{\fXKbar,x}/p\cO_{\fXKbar,x}\bigr)$. Since~$B$ (resp.~$C$) is a free
$\WW_n\bigl(\cO_{\fXKbar,x}/p\cO_{\fXKbar,x}\bigr)$--module (resp.~$\WW_{n+1}\bigl(\cO_{\fXKbar,x}/p\cO_{\fXKbar,x}\bigr)$--module) with basis  given by the
monomials in the $\delta_i$'s and Vershiebung is injective the map~$a_x$ is injective. The map~$s_x$ is the natural projection. Since also~$D$ is a free
$\cO_{\fXKbar,x}/p\cO_{\fXKbar,x}$--module with basis given by the monomials in the $\delta_i$'s the rows in the displayed diagram are exact. The sequence of
cokernels $B/ (p\delta_0-\widetilde{p}_{n+1}^p) B\to C/ (p\delta_0-\widetilde{p}_{n+2}^p) C$ is the map on stalks associated to~$a$. Note that
$\widetilde{p}_{n+1}=\widetilde{p}_{n+2}^p=p^{1/p^n}$ in~$\OKbar/p\OKbar$ and the kernel of multiplication by $p^{1/p^n}$ on~$D$ is $p p^{-1/p^n}
D=p^{\frac{p^n-1}{p^n}} D=\widetilde{p}_{n+1}^{p^n-1} D$. Choose $y \in D$ and let~$x\in C$ be the lift defined taking the Teichm\"uller lifts of the coefficients
of $x$ with respect to the $\cO_{\fXKbar,x}/p\cO_{\fXKbar,x}$--basis of $D$ given by the monomials in the $\delta_i$'s.  In particular $\widetilde{p}_{n+1}^{p^n}
y=\widetilde{p} y=0$. Put $z:=\sum_{i=0}^{p^n-1} p^i \delta_0^i \widetilde{p}_{n+1}^{p^n-i-1} y$. Then
$$(p\delta_0-\widetilde{p}_{n+1}) z= \sum_{i=0}^{p^n-1}  \delta_0^{i+1}
p^{i+1} \widetilde{p}_{n+1}^{p^n-i-1} y-\sum_{i=0}^{p^n-1}
\delta_0^i p^i \widetilde{p}_{n+1}^{p^n-i} y=\delta_0^{p^n}
p^{p^n} y -\widetilde{p}_{n+1}^{p^n} y=0$$and
$s_x(z)=p^{\frac{p^n-1}{p^n}} y$. This proves that the kernel of
multiplication by $p\delta_0-\widetilde{p}_{n+2}^p$ on $C$
surjects onto the kernel of multiplication
by~$\widetilde{p}_{n+2}^p$ on~$D$. The claimed left exactness
follows from this using the snake lemma in the displayed diagram.

b)  follows using that $A_{\rm cris}/p A_{\rm cris}\cong
\OKbar/p\OKbar
\bigl[\delta_0,\delta_1,\ldots\bigr]/(\delta_m^p\bigr)_{m\geq 0}$;
see \ref{sec:Notation}.

c) We prove the claim by induction on~$n$. For~$n=1$ it follows
from~(b) since~$\bA_{\rm cris,1,\Kbar}^{'\nabla}$ is a direct sum
of copies of~$\cO_\fXKbar/p\cO_\fXKbar$. Suppose the claim proved
for~$n$. Write $\A'_n$ for the presheaf $(\cU,\cW)\mapsto
\bigl(A_{\rm cris}/p^n A_{\rm cris}\bigr)\otimes_{W_n}
\Bigl(\WW_n(\cU,\cW)\Bigr)$. Consider the following commutative
diagram:

$$
\begin{array}{ccccccccc}  & & \A'_n &
\stackrel{1\tensor a}{\lra} & \A'_{n+1} & \stackrel{1\tensor
b}{\lra} & \A'_1 & \lra & 0 \cr & & \big\downarrow & &
\big\downarrow & & \big\downarrow \cr 0 & \lra & \bA_{\rm
cris,n,\Kbar}^{'\nabla}(\cU,\cW) & \stackrel{a}{\lra} & \bA_{\rm
cris,n+1,\Kbar}^{'\nabla}(\cU,\cW) & \stackrel{b}{\lra}& \bA_{\rm
cris,1,\Kbar}^{'\nabla}(\cU,\cW) \cr
\end{array}
$$The bottom row is exact due to~(a). The top one is exact as well (see the proof of~(a)).
This fact together with the inductive hypothesis and a diagram chase imply the
claim.
\end{proof}

To conclude the comparison between $\bA_{\rm cris,n,M}^{'\nabla}$
and $\bA_{\rm cris,n,M}^\nabla$ we prove the following:

\begin{lemma}\label{lemma:compareAcrisnablaAcrisnabla'} For
positive integers $m >  n$ the map $u_{n,M}\circ r_{n+2,M}
\circ\cdots\circ r_{m,M}\colon \bA_{\rm cris,m,M}^\nabla\lra
\bA_{\rm cris,n,M}^{'\nabla}$ induces an isomorphism $\bA_{\rm
cris,m,M}^\nabla/p^n \bA_{\rm cris,m,M}^\nabla \lra \bA_{\rm
cris,n,M}^{'\nabla}$.
\end{lemma}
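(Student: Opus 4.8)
The plan is to reduce the statement to the stalks and then to a computation with the rings $A_{\rm cris}$, $A_{\rm cris,m}$ and $W_m$, $W_n$.

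First I would reduce to the case $M=\Kbar$. By Propositions \ref{lemma:Acrisnnabla}(6) and \ref{lemma:Acrisnnabla'}(6) the sheaves $\bA_{\rm cris,m,M}^\nabla$ and $\bA_{\rm cris,n,M}^{'\nabla}$ together with the maps $u_{n,M}$ and $r_{j,M}$ are obtained from the corresponding objects over $\Kbar$ by applying $\beta_{M,\Kbar}^\ast$; since $\beta_{M,\Kbar}^\ast$ is exact it commutes with reduction modulo $p^n$, and it preserves stalks. As there are enough geometric points in $\fX$, it thus suffices to check on stalks for $M=\Kbar$. Fix a geometric point and write $S$ for the stalk of $\cO_{\fXKbar}/p\cO_{\fXKbar}$ at it; recall (cf.\ the proof of Lemma \ref{lemma:barOmodpnisseparated}) that Frobenius is surjective on $S$ because the rings $\Rbar_\cU$ are normal. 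By the stalk computation in the proof of Proposition \ref{lemma:Acrisnnabla}(5) and by Lemma \ref{lemma:exactsequenceAcrisnnabla'}(c) the stalk of $\bA_{\rm cris,m,\Kbar}^\nabla$ is $A_{\rm cris,m}\otimes_{W_m}\WW_m(S)$ and that of $\bA_{\rm cris,n,\Kbar}^{'\nabla}$ is $\bigl(A_{\rm cris}/p^nA_{\rm cris}\bigr)\otimes_{W_n}\WW_n(S)$, the tensor products being flat base changes by Corollary \ref{cor:WnXisflatoverWnVbar}; unravelling the definitions, the map of the lemma is on stalks induced by the transition maps $A_{\rm cris,m}\to A_{\rm cris,n+1}$ of the inverse system $\{A_{\rm cris,s}\}_s$, followed by $u_n\colon A_{\rm cris,n+1}\to A_{\rm cris}/p^nA_{\rm cris}$, tensored with the corresponding maps $\WW_m(S)\to\WW_{n+1}(S)\to\WW_n(S)$ of the inverse system $\bA_{\rm inf,\Kbar}^+$.

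Next I would reduce the source modulo $p^n$. Since multiplication by $p$ on Witt vectors of a characteristic $p$ ring is $V\circ\varphi$ and $\varphi$ is surjective on $\cO_{\Kbar}/p\cO_{\Kbar}$ and on $S$, the projections give isomorphisms $W_m/p^nW_m\cong W_n$ and $\WW_m(S)/p^n\WW_m(S)\cong\WW_n(S)$; combined with the flatness of $\WW_m(S)$ over $W_m$ this identifies $\bigl(A_{\rm cris,m}\otimes_{W_m}\WW_m(S)\bigr)/p^n$ with $\bigl(A_{\rm cris,m}/p^nA_{\rm cris,m}\bigr)\otimes_{W_n}\WW_n(S)$, where $A_{\rm cris,m}/p^nA_{\rm cris,m}$ is the $\WW(k)$--divided power envelope of $W_n$ with respect to $[p^{1/p^{m-1}}]-p$ and $A_{\rm cris}/p^nA_{\rm cris}$ that of $W_n$ with respect to $[p^{1/p^{n}}]-p$. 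The lemma is then equivalent to the assertion that the induced map
$$\bigl(A_{\rm cris,m}/p^nA_{\rm cris,m}\bigr)\otimes_{W_n}\WW_n(S)\lra\bigl(A_{\rm cris}/p^nA_{\rm cris}\bigr)\otimes_{W_n}\WW_n(S)$$
is an isomorphism. On the level of $W_n$--coefficients this map is only $\varphi^{m-n-1}$--semilinear, and is not itself bijective when $m>n+1$ since Frobenius is not injective on $W_n$; the point is that the divided power envelope of $W_n$ with respect to $[p^{1/p^{m-1}}]-p$ degenerates — the relations $p\,\delta_{j+1}=\delta_j^{\,p}$ together with the bounded $p$--torsion force $[p^{1/p^{m-1}}]-p$ to become divisible by a suitable root of $p$ — in exactly the way that, after the base change to $\WW_n(S)$, the Frobenius twist is absorbed by the isomorphism $\WW_{n,\Kbar}/\bigl([\varepsilon]^{1/p^{r+1}}-1\bigr)\WW_{n,\Kbar}\cong\WW_{n,\Kbar}/\bigl([\varepsilon]^{1/p^{r}}-1\bigr)\WW_{n,\Kbar}$ of Corollary \ref{cor:WnXisflatoverWnVbar} (equivalently Lemma \ref{lemma:uglylemma}(1)). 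One checks this on the explicit presentations of the two divided power envelopes; alternatively one argues by induction on $m$, the case $m=n+1$ being elementary (there is then no Frobenius twist) and the step from $m$ to $m+1$ amounting to the statement that $r_{m+1,M}$ induces an isomorphism $\bA_{\rm cris,m+1,M}^\nabla/p^n\cong\bA_{\rm cris,m,M}^\nabla/p^n$ for $m\ge n+1$, which is of the same nature but carries only a single Frobenius twist.

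The step I expect to be the main obstacle is this last one. The Frobenius twist carried by the transition maps $r_{j,M}$ means the comparison genuinely fails at the level of $A_{\rm cris,m}/p^nA_{\rm cris,m}$ alone (the map there is not even injective), so one is forced to work with the base change to $\WW_n(S)$ and to exploit the surjectivity of Frobenius on $\cO_{\fXKbar}/p\cO_{\fXKbar}$, i.e.\ the normality of the $\Rbar_\cU$, through Corollary \ref{cor:WnXisflatoverWnVbar}; the delicate part is keeping track of precisely how the divided power envelopes collapse modulo $p^n$ so that the two sides match exactly under the twisted map.
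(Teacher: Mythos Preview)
Your setup --- reducing to $M=\Kbar$, passing to stalks, identifying $\WW_m(S)/p^n\cong\WW_n(S)$ and the source modulo $p^n$ as $(A_{\rm cris,m}/p^nA_{\rm cris,m})\otimes_{W_n}\WW_n(S)$ --- matches the paper. The gap is in the core step.

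You assert that the map $A_{\rm cris,m}/p^nA_{\rm cris,m}\to A_{\rm cris}/p^nA_{\rm cris}$ ``is not even injective'' and that one must repair this after base change to $\WW_n(S)$ via the Frobenius isomorphism $\WW_{n,\Kbar}/([\varepsilon]^{1/p^{r+1}}-1)\cong\WW_{n,\Kbar}/([\varepsilon]^{1/p^r}-1)$ of Corollary~\ref{cor:WnXisflatoverWnVbar}. Both points are wrong. The $[\varepsilon]$ result concerns roots of unity, not roots of~$p$, and plays no role here. More importantly, the map $A_{\rm cris,m}/p^n\to A_{\rm cris}/p^n$ \emph{is} already a bijection of rings, despite being only $\varphi^{m-n-1}$--semilinear over~$W_n$. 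The missing computation is this: since $\widetilde{p}_m=\xi_m+p$ admits divided powers in~$A_{\rm cris,m}$, one has for $0\le s\le n-1$
\[
\V^s(\widetilde{p}_m)^{p^n}=\bigl(p^s\widetilde{p}_m\bigr)^{p^{n-s}}=p^{sp^{n-s}}\,(p^{n-s})!\,\widetilde{p}_m^{[p^{n-s}]}\equiv 0\pmod{p^n},
\]
since $sp^{n-s}+(n-s)\ge n$. But these elements generate $\Ker(\varphi^{m-n-1})$ on~$W_n$ (because $\widetilde{p}_m^{p^n}$ generates $\Ker(\varphi^{m-n-1})$ on~$\OKbar/p\OKbar$, using normality), so the structure map $W_n\to A_{\rm cris,m}/p^n$ already kills this kernel and factors through $\varphi^{m-n-1}\colon W_n\twoheadrightarrow W_n$. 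Under the refactored $W_n$--structure, $A_{\rm cris,m}/p^n$ is the $\WW(k)$--DP envelope of~$W_n$ with respect to $\varphi^{m-n-1}(\xi_m)=\xi_{n+1}$, i.e.\ it is exactly $A_{\rm cris}/p^nA_{\rm cris}$. The identical argument with the sheaf~$\WW_n$ in place of~$W_n$ (same generators for the kernel, same DP identity) gives the lemma; nothing further about~$\WW_n(S)$ is needed beyond surjectivity of Frobenius.
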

\begin{proof} As in the proof of lemma \ref{lemma:exactsequenceAcrisnnabla'}(b) it
suffices to prove the lemma for $M=\Kbar$. We can write multiplication by~$p^n$ on~$\WW_m$ as the composite of $\V^n\circ \varphi^n$ with $\V=$Vershiebung and
$\varphi=$Frobenius. Since~$\varphi$ is surjective on~$\cO_\fXKbar/p\cO_\fXKbar$ by~\cite[Lemma 4.3(v)]{andreatta_iovita} we deduce that~$\WW_m/p^n \WW_m\cong
\WW_n$ where the map is the natural projection. Via this identification the map $u_n \circ r_{n+1} \circ\cdots\circ r_m\colon \WW_n \lra \WW_n$ is~$\varphi^{m-n-1}$
and, hence, sends~$\xi_m\mapsto \xi_{n+1}$. Note that~$\widetilde{p}_m=\xi_m+p$. Since~$p$ and~$\xi_m$ admit DP in~$A_{\rm cris, m}(\OKbar)$ also~$\widetilde{p}_m$
admits DP. We compute $\V^s\bigl(\widetilde{p}_m\bigr)^{p^n}=\left(\varphi^s \bigl(V^s(\widetilde{p}_m)\bigr)\right)^{p^{n-s}}=\bigl(p^s
\widetilde{p}_m\bigr)^{p^{n-s}}= p^{s p^{n-s}} \widetilde{p}_m^{p^{n-s}}=p^{s p^{n-s}} p^{n-s}! \widetilde{p}_m^{[p^{n-s}]} $. This is~$0$ in~$A_{\rm cris, m}/p^n
A_{\rm cris, m}$ since~$s p^{n-s}+n-s\geq n$. The element~$\widetilde{p}_m^{p^n}$ generates the kernel of~$\varphi^{m-n-1}$ on~$\cO_\fX/p\cO_\fX$; see the proof
of~\ref{lemma:exactsequenceAcrisnnabla'}. Hence $\bigl(\V^s\bigl(\widetilde{p}_m\bigr)^{p^n} \bigr)_{0\leq s\leq n}$ generates the kernel of~$\varphi^{m-n-1}$
on~$\WW_n$. Similarly $W_m/p^n W_m)\cong W_n$ and $\bigl(\V^s\bigl(\widetilde{p}_m\bigr)^{p^n} \bigr)_{0\leq s\leq n}$ is the kernel of~$\varphi^{m-n-1}$ on~$W_n$.
Hence $A_{\rm cris, m}/p^n A_{\rm cris, m}$ is the DP envelope of~$W_n$ with respect to~$\varphi^{m-n-1}(\xi_m)=\xi_{n+1}$ i.~e. it coincides with $A_{\rm cris}/p^n
A_{\rm cris}$. We conclude that $p^n \bA_{\rm cris,m,\Kbar}^\nabla$ contains the kernel of the map $u_{n,\Kbar}\circ r_{n+1,\Kbar} \circ\cdots\circ
r_{m,\Kbar}\colon \bA_{\rm cris,m,\Kbar}^\nabla\lra \bA_{\rm cris,n,\Kbar}^{'\nabla}$. It is also clearly contained in this kernel. The claim follows.
\end{proof}

Let $\cU=\Spf(R_\cU)$ be an object in $X^{\rm et}$. Assume it is
small in the sense of \S\ref{sec:formal_Groth}. As in
\S\ref{sec:localization} fix a geometric generic point and define
$\Rbar_\cU$ as in loc.~cit. Following \cite[\S6]{brinon} define
$A_{\rm cris}^\nabla(\Rbar_\cU)$ as the $p$--adic completion of
the $\WW(k)$--DP envelope of $\WW\bigl(\crR(\Rbar_\cU)\bigr)$ with
respect to the kernel of the map~$\vartheta$ defined as follows.
For every~$n$ let~$\vartheta_n$ be the composite of the projection
$ \WW\bigl(\crR(\Rbar_\cU)\bigr)\to
\WW_n\bigl(\crR(\Rbar_\cU)\bigr)$, of the map $
\WW_n\bigl(\crR(\Rbar_\cU)\bigr) \to
\WW_n\bigl(\Rbar_\cU/p\Rbar_\cU\bigr)$ associated to the projection
$\ds \crR(\Rbar_\cU)=\lim_\leftarrow \Rbar_\cU/p\Rbar_\cU\to
\Rbar_\cU/p\Rbar_\cU$ on the $n$--th component and
of~$\theta_n\colon \WW_n\bigl(\Rbar_\cU/p\Rbar_\cU\bigr)\to
\Rbar_\cU/p^n \Rbar_\cU$.
Let~$\vartheta\colon\WW\bigl(\crR(\Rbar_\cU)\bigr)\to \hR_\cU$ be
the map $\ds x\mapsto \lim_{\infty \leftarrow n}\vartheta_n(x)$.
It is proven in loc.~cit.~that $\Ker(\vartheta)$ is a principal
ideal generated by~$\xi$ and that Frobenius
on~$\WW\bigl(\crR(\Rbar_\cU)\bigr)$  induces Frobenius~$\varphi$
on~$A_{\rm cris}^\nabla(\Rbar_\cU)$. For every~$n$ let~$g_n$ be
the composite of the projection $
\WW\bigl(\crR(\Rbar_\cU)\bigr)\to
\WW_n\bigl(\crR(\Rbar_\cU)\bigr)$ and of the map $v_n\colon
\WW_n\bigl(\crR(\Rbar_\cU)\bigr) \to
\WW_n\bigl(\Rbar_\cU/\Rbar_\cU\bigr)$ associated to the projection
$\ds \crR(\Rbar_\cU)=\lim_\leftarrow \Rbar_\cU/p\Rbar_\cU\to
\Rbar_\cU/p\Rbar_\cU$ on the $n+1$--th component. We get that
$$A_{\rm cris}^\nabla(\Rbar_\cU)/p^n A_{\rm
cris}^\nabla(\Rbar_\cU)\cong
\WW_n\bigl(\crR(\Rbar_\cU)\bigr)[\delta_0,\delta_1,\ldots]/(p\delta_0-\xi^p,p
\delta_{i+1}-\delta_i^p)_{i\geq 0}.$$Since~$g_n(\xi)=\xi_{n+1}$,
we have a map $g_n\colon  A_{\rm cris}^\nabla(\Rbar_\cU)/p^n
A_{\rm cris}^\nabla(\Rbar_\cU) \lra \bA_{\rm cris,
n,\Kbar}^{'\nabla}(\Rbar_\cU)$. Note that $\bA_{\rm cris,
n,M}^{'\nabla}(\Rbar_\cU)=\beta_{M,\Kbar}^\ast\left(\bA_{\rm cris,
n,M}^{'\nabla}\right)(\Rbar_\cU)$  by \ref{lemma:betaastG} and the
latter coincides with $\bA_{\rm cris,
n,\Kbar}^{'\nabla}(\Rbar_\cU)$ thanks to proposition \ref{lemma:Acrisnnabla'}.
We then get a map $g_{M,n}\colon  A_{\rm
cris}^\nabla(\Rbar_\cU)/p^n A_{\rm cris}^\nabla(\Rbar_\cU) \lra
\bA_{\rm cris,n, M}^{'\nabla}(\Rbar_\cU)$.

\begin{proposition}
\label{prop:acrisnabla} 1) The map $\bA_{\rm
cris,M}^{'\nabla}(\Rbar_\cU) \to \bA_{\rm
cris,M}^\nabla(\Rbar_\cU) $ defined by~$q_M$ is an
isomorphism.\smallskip

2) For every $n\in \N$ the map $g_{n,M}\colon  A_{\rm
cris}^\nabla(\Rbar_\cU)/p^n A_{\rm cris}^\nabla(\Rbar_\cU) \lra
\bA_{\rm cris,n,M}^{'\nabla}(\Rbar_\cU)$ is injective, commutes
with the Frobenius maps and its cokernel is annihilated by any
element of~$\II$.\smallskip

3) The induced map $ A_{\rm cris}^\nabla(\Rbar_\cU) \to \bA_{\rm
cris,M}^\nabla(\Rbar_\cU)$ is an isomorphism and commutes with
Frobenius.
\end{proposition}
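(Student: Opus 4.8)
\emph{Part (1).} This is formal, and follows from Proposition~\ref{lemma:Acrisnnabla'}(4). Apply the localization functor $\cG\mapsto\cG(\Rbar_\cU)$ (exact, and by definition computing $\bA_{\rm cris,M}^{'\nabla}(\Rbar_\cU)$ and $\bA_{\rm cris,M}^{\nabla}(\Rbar_\cU)$ as inverse limits of the $\bA_{\rm cris,n,M}^{'\nabla}(\Rbar_\cU)$ and $\bA_{\rm cris,n,M}^{\nabla}(\Rbar_\cU)$) to the maps $q_{M,n}$ and $u_{M,n}$. Using $q_{M,n}\circ u_{M,n}=r_{M,n+1}$ and $u_{M,n}\circ q_{M,n+1}=r_{M,n+1}'$, one checks directly that $q_M(\Rbar_\cU)=\lim_n q_{M,n}(\Rbar_\cU)$ and $\lim_n u_{M,n}(\Rbar_\cU)$ are mutually inverse (for a compatible family $(b_n)_n$ on the $\nabla$--side one has $q_{M,n}(u_{M,n}(b_{n+1}))=r_{M,n+1}(b_{n+1})=b_n$, and symmetrically). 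Alternatively one may quote Lemma~\ref{lemma:compareAcrisnablaAcrisnabla'}, which gives $\bA_{\rm cris,m,M}^{\nabla}/p^n\cong\bA_{\rm cris,n,M}^{'\nabla}$, and pass to the limit after localizing.

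\emph{Part (2).} Since $\bA_{\rm cris,n,M}^{'\nabla}(\Rbar_\cU)=\bA_{\rm cris,n,\Kbar}^{'\nabla}(\Rbar_\cU)$ by Lemma~\ref{lemma:betaastG} and Proposition~\ref{lemma:Acrisnnabla'}(5), and $A_{\rm cris}^\nabla(\Rbar_\cU)$ and $g_{n,M}$ are unchanged under this identification, I may assume $M=\Kbar$. The plan is to write both sides as base changes of $A_{\rm cris}/p^nA_{\rm cris}$. On the target, $\bA_{\rm cris,n,\Kbar}^{'\nabla}(\Rbar_\cU)=(A_{\rm cris}/p^nA_{\rm cris})\otimes_{W_n}\WW_{n,\Kbar}(\Rbar_\cU)$ by Lemma~\ref{lemma:exactsequenceAcrisnnabla'}(c), with $\WW_{n,\Kbar}(\Rbar_\cU)=\WW_n\bigl((\cO_{\fX_\Kbar}/p\cO_{\fX_\Kbar})(\Rbar_\cU)\bigr)$. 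On the source, repeating the argument of \S\ref{sec:Notation} that identifies $A_{\rm cris}/p^nA_{\rm cris}$ with the $\WW(k)$--DP envelope of $W_n$ along $\xi_{n+1}W_n$ — which uses only the Verschiebung exact sequence for $\WW_n$ and the normality of the ring — with $\OKbar$ replaced by $\Rbar_\cU$, together with the definition of $A_{\rm cris}^\nabla(\Rbar_\cU)$ as a $p$--adic completion of a DP envelope and the flatness of $\WW_n(\Rbar_\cU/p\Rbar_\cU)$ over $W_n$ (Lemma~\ref{lemma:uglylemma}(2)) for the base change of divided--power envelopes, gives
$$A_{\rm cris}^\nabla(\Rbar_\cU)/p^nA_{\rm cris}^\nabla(\Rbar_\cU)\cong(A_{\rm cris}/p^nA_{\rm cris})\otimes_{W_n}\WW_n(\Rbar_\cU/p\Rbar_\cU),$$
under which $\vartheta$ reduces to $\theta_n$ and $g_n$ becomes $\mathrm{id}\otimes\iota_n$, where $\iota_n\colon\WW_n(\Rbar_\cU/p\Rbar_\cU)\hookrightarrow\WW_{n,\Kbar}(\Rbar_\cU)$ is induced by the inclusion $\Rbar_\cU/p\Rbar_\cU\hookrightarrow(\cO_{\fX_\Kbar}/p\cO_{\fX_\Kbar})(\Rbar_\cU)$ of Lemma~\ref{lemma:barOmodpnisseparated}. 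Frobenius compatibility is then automatic, since $\iota_n$ is a morphism of $\F_p$--algebras, hence $\varphi$--equivariant, and Frobenius on the factor $A_{\rm cris}/p^nA_{\rm cris}$ is the same on both sides. Now $\iota_n$ is injective, with cokernel $C_n$ annihilated by $\mathfrak m_{\OKbar}$ by Lemma~\ref{lemma:localizationofbarOmodpn}(1); using the flatness of $\WW_{n,\Kbar}$ and $\WW_n(\Rbar_\cU/p\Rbar_\cU)$ over $W_n$ (Corollary~\ref{cor:WnXisflatoverWnVbar}, Lemma~\ref{lemma:uglylemma}(2)) one transfers the short exact sequence $0\to\WW_n(\Rbar_\cU/p\Rbar_\cU)\to\WW_{n,\Kbar}(\Rbar_\cU)\to C_n\to0$ along $\otimes_{W_n}(A_{\rm cris}/p^nA_{\rm cris})$ to conclude that $g_n$ is injective and that $\Coker(g_n)$ is a quotient of $(A_{\rm cris}/p^nA_{\rm cris})\otimes_{W_n}C_n$, hence annihilated by $\mathfrak m_{\OKbar}$, hence by $\II$ (a standard property of $A_{\rm cris}/p^nA_{\rm cris}$--modules; cf.\ \S\ref{sec:Notation} and \cite[\S6]{brinon}).

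\emph{Part (3).} By (1) it suffices to prove that the map $A_{\rm cris}^\nabla(\Rbar_\cU)\to\bA_{\rm cris,M}^{'\nabla}(\Rbar_\cU)=\lim_n\bA_{\rm cris,n,M}^{'\nabla}(\Rbar_\cU)$ induced by the $g_{n,M}$ is an isomorphism; it is $\varphi$--equivariant by (2), and one then composes with the isomorphism $q_M(\Rbar_\cU)$. Since $A_{\rm cris}^\nabla(\Rbar_\cU)$ is $p$--adically complete and the transition maps of $\{A_{\rm cris}^\nabla(\Rbar_\cU)/p^n\}_n$ are surjective, it is enough to show $\lim_n C_n=0$ and ${\lim_n}^1 C_n=0$ for the cokernels $C_n$ of (2). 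This holds because the transition maps $C_{n+1}\to C_n$ vanish: $r_{M,n+1}'$ is induced by $\WW_{n+1,\Kbar}\to\WW_{n,\Kbar}$ (projection followed by Frobenius), whose image on sections over $\Rbar_\cU$ lies in $\WW_n(\Rbar_\cU/p\Rbar_\cU)$ by Lemma~\ref{lemma:localizationofbarOmodpn}(2),(3); therefore the image of $r_{M,n+1}'$ on $\bA_{\rm cris,n+1,\Kbar}^{'\nabla}(\Rbar_\cU)$ lies in $(A_{\rm cris}/p^nA_{\rm cris})\otimes_{W_n}\WW_n(\Rbar_\cU/p\Rbar_\cU)=\mathrm{im}(g_n)$. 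Passing to the inverse limit in the exact sequences $0\to A_{\rm cris}^\nabla(\Rbar_\cU)/p^n\to\bA_{\rm cris,n,M}^{'\nabla}(\Rbar_\cU)\to C_n\to0$ gives the claim.

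\emph{Main obstacle.} The heart of the matter is the identification in (2): writing $A_{\rm cris}^\nabla(\Rbar_\cU)/p^n$ and $\bA_{\rm cris,n,\Kbar}^{'\nabla}(\Rbar_\cU)$ as base changes of $A_{\rm cris}/p^nA_{\rm cris}$ along $\WW_n(\Rbar_\cU/p\Rbar_\cU)$, respectively the localization $\WW_{n,\Kbar}(\Rbar_\cU)$, and tracking how the discrepancy between these two base rings — which differ only ``up to $\II$'', this being the content of Lemmas~\ref{lemma:barOmodpnisseparated}--\ref{lemma:localizationofbarOmodpn} — propagates through the divided--power envelope. The flatness statements of Corollary~\ref{cor:WnXisflatoverWnVbar} and Lemma~\ref{lemma:uglylemma} are precisely what is needed to carry this out, after which (1) and (3) are bookkeeping with inverse limits.
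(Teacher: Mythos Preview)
Your treatment of (1) and (3) is essentially the paper's: for (1) you use that $q_M$ and $u_M$ are mutually inverse on the inverse limit (the paper phrases this as ``$r_M$ defines an inverse''), and for (3) you use that the transition maps kill the cokernels, which is exactly the factorization statement proved in the paper's Lemma~\ref{lemma:acrisnabla}. Your justification of that factorization via Lemma~\ref{lemma:localizationofbarOmodpn}(3), applied coordinatewise to the Frobenius in the transition $r'_{n+1}$, is correct.

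The gap is in (2), in the injectivity step. You tensor the exact sequence
\[
0\to \WW_n(\Rbar_\cU/p\Rbar_\cU)\to \WW_{n,\Kbar}(\Rbar_\cU)\to C_n\to 0
\]
with $A_{\rm cris}/p^nA_{\rm cris}$ over $W_n$ and claim that flatness of the first two terms over $W_n$ preserves left exactness. That is not how flatness works: for $0\to A\to B\to C\to 0$ with $A,B$ flat over $W_n$, the map $M\otimes_{W_n}A\to M\otimes_{W_n}B$ has kernel $\mathrm{Tor}_1^{W_n}(M,C)$, which need not vanish. You would need $A_{\rm cris}/p^nA_{\rm cris}$ to be flat over $W_n$, and you neither prove this nor cite a result establishing it; the relation $p\delta_0=\xi_{n+1}^p$ in the presentation makes flatness over $W_n$ for $n>1$ non-obvious. (Your claim that $C_n$ is annihilated by $\mathfrak m_{\OKbar}$ also requires an argument, since Witt-vector multiplication is not coordinatewise; one has to filter by powers of Verschiebung and use $\II^2=\II$.)

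The paper bypasses this by proving injectivity in Lemma~\ref{lemma:acrisnabla} via induction on $n$. For $n=1$ both sides are free modules---with the same basis of monomials $\prod \delta_i^{e_i}$ with $e_i<p$---over $\Rbar_\cU/p\Rbar_\cU$ and $(\cO_{\fX_\Kbar}/p\cO_{\fX_\Kbar})(\Rbar_\cU)$ respectively, so injectivity reduces to Lemma~\ref{lemma:barOmodpnisseparated}. The inductive step compares the multiplication-by-$p$ exact sequence on $A_{\rm cris}^\nabla(\Rbar_\cU)/p^\bullet$ (using $p$-torsion-freeness of $A_{\rm cris}^\nabla(\Rbar_\cU)$) with the Verschiebung exact sequence of Lemma~\ref{lemma:exactsequenceAcrisnnabla'}(a) on the localizations $\bA_{\rm cris,\bullet,\Kbar}^{'\nabla}(\Rbar_\cU)$; this requires no flatness of $A_{\rm cris}/p^n$ over $W_n$. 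Your overall identification of source and target as base changes of $A_{\rm cris}/p^nA_{\rm cris}$ is correct and matches the first paragraph of the paper's proof of Lemma~\ref{lemma:acrisnabla}; it is only the passage from there to injectivity that needs the inductive argument rather than the flatness shortcut you propose.
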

\begin{proof} (1) It follows
from proposition \ref{lemma:Acrisnnabla'}(3) that~$r_M$ defines an inverse.
Claims (2) and~(3) follow  from the next lemma.\end{proof}

\begin{lemma}\label{lemma:acrisnabla}
For every~$n\in\N$ the map $A_{\rm cris}^\nabla(\Rbar_\cU)/p^n
A_{\rm cris}^\nabla(\Rbar_\cU)\to \bA_{\rm
cris,\Kbar,n}^{'\nabla}(\Rbar_\cU)$ is injective, its cokernel is
annihilated by any element of~$\II$  and the transition map
$\bA_{\rm cris,\Kbar,n+1}^{'\nabla}(\Rbar_\cU)\to \bA_{\rm
cris,\Kbar,n}^{'\nabla}(\Rbar_\cU)$ factors via $A_{\rm
cris}^\nabla(\Rbar_\cU)/p^n A_{\rm cris}^\nabla(\Rbar_\cU)$.
\end{lemma}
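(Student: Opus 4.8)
The plan is to make both sides of the asserted map completely explicit and to recognise that map --- call it $g_n$, the case $M=\Kbar$ of the map $g_{n,M}$ constructed above --- as a base change of a single map on Witt rings. By the presentation recalled just before the statement,
$$A_{\rm cris}^\nabla(\Rbar_\cU)/p^nA_{\rm cris}^\nabla(\Rbar_\cU)\cong \WW_n\bigl(\crR(\Rbar_\cU)\bigr)[\delta_0,\delta_1,\ldots]/(p\delta_0-\xi^p,p\delta_{i+1}-\delta_i^p)_{i\ge 0},$$
while by Lemma \ref{lemma:exactsequenceAcrisnnabla'}(c) (localizing at $\Rbar_\cU$) together with the description $A_{\rm cris}/p^nA_{\rm cris}\cong W_n[\delta_0,\delta_1,\ldots]/(p\delta_0-\xi_{n+1}^p,p\delta_{i+1}-\delta_i^p)$ of \S\ref{sec:Notation},
$$\bA_{\rm cris,\Kbar,n}^{'\nabla}(\Rbar_\cU)\cong \WW_{n,\Kbar}(\Rbar_\cU)[\delta_0,\delta_1,\ldots]/(p\delta_0-\xi_{n+1}^p,p\delta_{i+1}-\delta_i^p)_{i\ge 0},$$
where $\WW_{n,\Kbar}(\Rbar_\cU)=\WW_n\bigl((\cO_\fXKbar/p\cO_\fXKbar)(\Rbar_\cU)\bigr)$. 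Under these presentations $g_n$ fixes the $\delta_i$ and on the coefficient Witt rings is the composite $\iota\circ\pi$, where $\pi\colon\WW_n(\crR(\Rbar_\cU))\to\WW_n(\Rbar_\cU/p\Rbar_\cU)$ is the Witt functor applied to the projection $\crR(\Rbar_\cU)=\lim_\leftarrow\Rbar_\cU/p\Rbar_\cU\to\Rbar_\cU/p\Rbar_\cU$ onto the $(n{+}1)$-st factor (this is the map called $v_n$ above) and $\iota\colon\WW_n(\Rbar_\cU/p\Rbar_\cU)\to\WW_{n,\Kbar}(\Rbar_\cU)$ is induced by the natural map $\Rbar_\cU/p\Rbar_\cU\to(\cO_\fXKbar/p\cO_\fXKbar)(\Rbar_\cU)$. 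Since $\pi(\xi)=\xi_{n+1}$ these maps are compatible with the divided power relations, so $g_n$ is genuinely a base change; compatibility with Frobenius is immediate from the constructions, as in Lemmas \ref{lemma:theta} and \ref{lemma:Acrisnnabla'}.

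The heart of the matter is to show that the base change $\overline\pi$ of $\pi$ along the divided power polynomial algebra is an \emph{isomorphism}; I expect this to be the main obstacle. Since Frobenius is surjective on $\Rbar_\cU/p\Rbar_\cU$ (cf. the proof of Lemma \ref{lemma:uglylemma}), the projection $\crR(\Rbar_\cU)\to\Rbar_\cU/p\Rbar_\cU$ is surjective, hence so is $\pi$; and by the relative analogue of the Lemma in \S\ref{sec:Notation} (see \cite[\S6]{brinon}) the kernel of $\pi$ is generated by $[\widetilde p]^{p^n}$ and its Verschiebungs $V^i([\widetilde p]^{p^n})=p^i[\widetilde p]^{p^{n-i}}$, $0\le i\le n-1$. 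Exactly as in the computation of \S\ref{sec:Notation}, $[\widetilde p]^{p^{n-i}}=(\xi+p)^{p^{n-i}}$ is congruent to $\xi^{p^{n-i}}$ modulo $p^{n-i}\WW(\crR(\Rbar_\cU))$ and $p^i\xi^{p^{n-i}}=p^{i+p^{n-i-1}}\delta_0^{p^{n-i-1}}$ lies in $p^nA_{\rm cris}^\nabla(\Rbar_\cU)$, so each of these generators maps to $0$ in $A_{\rm cris}^\nabla(\Rbar_\cU)/p^nA_{\rm cris}^\nabla(\Rbar_\cU)$. Hence the ideal generated there by $\Ker(\pi)$ vanishes and $\overline\pi$ is an isomorphism onto $(A_{\rm cris}/p^nA_{\rm cris})\otimes_{W_n}\WW_n(\Rbar_\cU/p\Rbar_\cU)$.

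It then remains to analyse the base change $\overline\iota$ of $\iota$, so that $g_n=\overline\iota\circ\overline\pi$. By Lemma \ref{lemma:barOmodpnisseparated} the map $\Rbar_\cU/p\Rbar_\cU\to(\cO_\fXKbar/p\cO_\fXKbar)(\Rbar_\cU)$ is injective, hence so is $\iota$, and by Lemma \ref{lemma:localizationofbarOmodpn}(1) its cokernel is annihilated by the maximal ideal of $\OKbar$. Using the flatness of $A_{\rm cris}/p^nA_{\rm cris}$ over $W_n$ (cf. \cite{brinon} and Corollary \ref{cor:WnXisflatoverWnVbar}), $\overline\iota$ remains injective --- which gives the first assertion --- and $\Coker(\overline\iota)\cong(A_{\rm cris}/p^nA_{\rm cris})\otimes_{W_n}\Coker(\iota)$ is again annihilated by the maximal ideal of $\OKbar$. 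To upgrade this to annihilation by $\II$, recall that $\II$ is generated by the elements $\varphi^{-m}([\varepsilon])-1$ and by the Teichm\"uller lifts $[x]$ of $x\in\crR(\OKbar)$ with $x_0$ in the maximal ideal of $\OKbar/p\OKbar$. Under $A_{\rm cris}\to A_{\rm cris}/p^nA_{\rm cris}\to\bA_{\rm cris,\Kbar,n}^{'\nabla}(\Rbar_\cU)$ such an $[x]$ maps to the Teichm\"uller lift of its $(n{+}1)$-st component, still in the maximal ideal of $\OKbar/p\OKbar$, and $\varphi^{-m}([\varepsilon])-1$ maps to $[\zeta]-1$ for a suitable $p$-power root of unity $\zeta$. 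Since multiplication by a Teichm\"uller lift $[c]\in W_n$ acts on $\WW_n$ componentwise by $(y_i)\mapsto(c^{p^i}y_i)$ and all the relevant exponents $c$ (the component of $x$, and $\zeta^{p^i}-1$) lie in the maximal ideal of $\OKbar/p\OKbar$, these operations carry $\WW_{n,\Kbar}(\Rbar_\cU)$ into $\WW_n(\Rbar_\cU/p\Rbar_\cU)$ and hence annihilate $\Coker(g_n)$; as they form a generating set of $\II$, all of $\II$ annihilates $\Coker(g_n)$.

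Finally, for the last assertion the transition map $\bA_{\rm cris,\Kbar,n+1}^{'\nabla}(\Rbar_\cU)\to\bA_{\rm cris,\Kbar,n}^{'\nabla}(\Rbar_\cU)$ is, in the presentations above, the base change along the natural projection $A_{\rm cris}/p^{n+1}A_{\rm cris}\to A_{\rm cris}/p^nA_{\rm cris}$ of the map $\WW_{n+1,\Kbar}(\Rbar_\cU)\to\WW_{n,\Kbar}(\Rbar_\cU)$ given by the natural projection followed by Frobenius. By Lemma \ref{lemma:localizationofbarOmodpn}(3) the image of Frobenius on $(\cO_\fXKbar/p\cO_\fXKbar)(\Rbar_\cU)$ lies in $\Rbar_\cU/p\Rbar_\cU$, and since every Witt vector is a sum of Verschiebungs of Teichm\"uller lifts and $\varphi$ commutes with $V$, the image of $\WW_{n+1,\Kbar}(\Rbar_\cU)\to\WW_{n,\Kbar}(\Rbar_\cU)$ lies in $\WW_n(\Rbar_\cU/p\Rbar_\cU)$. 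Hence the image of the transition map lies in $(A_{\rm cris}/p^nA_{\rm cris})\otimes_{W_n}\WW_n(\Rbar_\cU/p\Rbar_\cU)$, which by the second paragraph is precisely the image of the injective map $g_n$; this exhibits the transition map as factoring through $A_{\rm cris}^\nabla(\Rbar_\cU)/p^nA_{\rm cris}^\nabla(\Rbar_\cU)$. Apart from the isomorphism statement for $\overline\pi$ (the relative form of the Lemma in \S\ref{sec:Notation}) and the bookkeeping passing from the maximal ideal of $\OKbar$ to $\II$, the remaining base change and flatness verifications are routine given Corollary \ref{cor:WnXisflatoverWnVbar}.
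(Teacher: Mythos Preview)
Your strategy --- reduce $g_n$ to a base change of the single Witt-vector map $\iota\colon\WW_n(A)\to\WW_n(B)$ with $A=\Rbar_\cU/p\Rbar_\cU$ and $B=(\cO_\fXKbar/p\cO_\fXKbar)(\Rbar_\cU)$ --- is sound, and your first step (that $\overline\pi$ is an isomorphism, i.e., $A_{\rm cris}^\nabla(\Rbar_\cU)/p^n\cong (A_{\rm cris}/p^nA_{\rm cris})\otimes_{W_n}\WW_n(A)$) is exactly the content of the first paragraph of the paper's proof. But two steps after that do not go through as written.

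\textbf{Flatness of $A_{\rm cris}/p^nA_{\rm cris}$ over $W_n$.} You invoke this to pass from the injectivity of $\iota$ to the injectivity of $\overline\iota=(A_{\rm cris}/p^n)\otimes_{W_n}\iota$. Neither Corollary~\ref{cor:WnXisflatoverWnVbar} (which is about $\WW_{n,\Kbar}$, not about $A_{\rm cris}/p^n$) nor the cited Brinon reference gives this; and since $\xi_{n+1}$ is a zero-divisor in $W_n$, flatness of its divided-power envelope is not automatic. The paper sidesteps this entirely: it proves injectivity by induction on $n$, comparing the exact sequence $0\to A_{\rm cris}^\nabla/p^n\xrightarrow{p}A_{\rm cris}^\nabla/p^{n+1}\to A_{\rm cris}^\nabla/p\to 0$ (which uses only that $A_{\rm cris}^\nabla(\Rbar_\cU)$ is $p$-torsion free) with the localization of the exact sequence of Lemma~\ref{lemma:exactsequenceAcrisnnabla'}(a). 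That inductive device is what you are missing; without it, your injectivity claim is unjustified.

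\textbf{The action of $[\zeta]-1$.} Your assertion that multiplication by $[\zeta]-1$ acts on $\WW_n$ componentwise as $(y_i)\mapsto((\zeta^{p^i}-1)y_i)$ is false: $[\zeta]-1$ is not a Teichm\"uller lift, so only the $[\zeta]$ part acts componentwise; subtraction in Witt vectors does not. The conclusion you want is nevertheless true, but for a different reason. From Lemma~\ref{lemma:localizationofbarOmodpn}(3) one has $B^p\subset A$, hence $p\cdot\WW_n(B)=VF(\WW_n(B))\subset\WW_n(A)$, and modulo $p$ one has $[\zeta]-1\equiv[\zeta-1]$ in $W_n$; since $\zeta-1$ lies in the maximal ideal, $[\zeta-1]\cdot\WW_n(B)\subset\WW_n(A)$ by your componentwise argument, and combining gives $([\zeta]-1)\cdot\WW_n(B)\subset\WW_n(A)$. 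The paper handles annihilation by~$\II$ differently: it reduces to $n=1$ (where the generators of $\II$ visibly land in the maximal ideal of $\OKbar/p\OKbar$) and then bootstraps via the same induction and the idempotence $\II^2=\II$ modulo $p^n$.

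Your treatment of the factoring of the transition map is correct and is essentially the paper's argument.
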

\begin{proof} Since~$\Rbar_\cU$ is a normal ring and Frobenius is surjective
on~$\Rbar_\cU/p\Rbar_\cU$ by~\cite[Prop.~2.0.1]{brinon}, the kernel of the projection~$\ds \crR(\Rbar_\cU)=\lim_\leftarrow \Rbar_\cU/p\Rbar_\cU\to
\Rbar_\cU/p\Rbar_\cU$ on the $n+1$--factor is~$\widetilde{p}^{p^n}$. Since~$\xi=\widetilde{p}-p$ and both~$p$ and~$\xi$ have DP in~$A_{\rm cris}^\nabla(\Rbar_\cU)$,
also~$\widetilde{p}$ admits DP. As in the proof of lemma \ref{lemma:compareAcrisnablaAcrisnabla'} it follows that~$\V^s\bigl([\widetilde{p}]\bigr)^{p^n}\equiv 0$
in~$A_{\rm cris}^\nabla(\Rbar_\cU)/p^n A_{\rm cris}^\nabla(\Rbar_\cU)$. These elements generate the kernel of~$v_n$.  Hence $$A_{\rm cris}^\nabla(\Rbar_\cU)/p^n
A_{\rm cris}^\nabla(\Rbar_\cU)\cong \WW_n\bigl(\Rbar_\cU/p\Rbar_\cU\bigr)[\delta_0,\delta_1,\ldots]/(p\delta_0-\xi^p,p \delta_{i+1}-\delta_i^p)_{i\geq 0}$$where the
isomorphism induces the map $g_n\colon \WW_n\bigl(\crR(\Rbar_\cU)\bigr)\to \WW_n\bigl(\Rbar_\cU/p\Rbar_\cU\bigr)$.

We prove the lemma by induction on $n$. It follows from what we
have just seen \ref{lemma:exactsequenceAcrisnnabla'} and
\ref{lemma:localizationofbarOmodpn} that the map $A_{\rm
cris}^{\nabla}(\Rbar_\cU)/pA_{\rm cris}^\nabla(\Rbar_\cU)\to
\bA_{\rm cris,1,\Kbar}^{'\nabla}(\Rbar_\cU)$ is injective and that
its cokernel is annihilated by any element of~$\II$.

Since $A_{\rm cris}^\nabla(\Rbar_\cU)$ has no $p$--torsion
by~\cite[Prop.~6.1.4]{brinon} we have an exact sequence $$0 \lra
A_{\rm cris}^\nabla(\Rbar_\cU)/p^n A_{\rm cris}^\nabla(\Rbar_\cU)
\stackrel{p}{\lra} A_{\rm cris}^\nabla(\Rbar_\cU)/p^{n+1}A_{\rm
cris}^\nabla(\Rbar_\cU) \lra A_{\rm cris}^\nabla(\Rbar_\cU)/p
A_{\rm cris}^\nabla(\Rbar_\cU)\lra 0.$$One checks that it is
compatible with the exact sequence obtained by taking the
localizations of the sequence
in~\ref{lemma:exactsequenceAcrisnnabla'}(b).  Due
to~\ref{lemma:exactsequenceAcrisnnabla'}(b) the map $\bA_{\rm
cris,n+1,\Kbar}^{'\nabla}(\Rbar_\cU) \to \bA_{\rm
cris,1,\Kbar}^{'\nabla}(\Rbar_\cU)$ is the map
$$\frac{A_{\rm cris}}{p^n A_{\rm
cris}}\otimes_{W_n} \Bigl(\WW_n(\Rbar_\cU)\Bigr)\to \frac{A_{\rm cris}}{p A_{\rm cris}}\otimes_{\OKbar/p\OKbar} \Bigl(\cO_\fX/p\cO_\fX(\Rbar_\cU)\Bigr)$$which by
construction induces the map $\WW_n(\Rbar_\cU) \to (\cO_\fXKbar/p\cO_\fXKbar)(\Rbar_\cU)$ given by the natural projection and Frobenius to the $n-1$--th power. In
particular since~$n\geq 2$ this map factors via $\Rbar_\cU/p \Rbar_\cU$ by~\ref{lemma:localizationofbarOmodpn}. The inductive step follows from this using the
inductive hypothesis.
\end{proof}

\subsection{The sheaf $\bA_{\rm cris,M}$.} \label{sec:sheafAcris}

In this section we assume that $\cO_K=\WW(k)$ is absolutely unramified. Write ${\OMun}$ for the ring of integers of the maximal unramified extension $\Mun$ of $K$
in $M$. Recall that in lemma \ref{lemma:vastOX} we have described $v_{X,M}^\ast(\cO_X)$ as the subsheaf $\cO_{\fX_M}^{\rm un}$ of $\cO_{\fX_M}$ introduced in
\ref{def:OX}. It is a sheaf of $\cO_{\Mun}$-algebras. For every $n\ge 1$ let us define the sheaf
$\WW_{X,n,M}:=\WW_n(\cO_{\fX_M}/p\cO_{\fX_M})\otimes_{\OMun}\cO_{\fX_M}^{\rm un}$ of $\OMun$--algebras and the morphism of sheaves of $\cO_{\fX_M}^{\rm
un}$--algebras $\theta_{X,n,M}\colon\WW_{X,n,M}\lra \cO_{\fX_M}/p^n\cO_{\fX_M}$ associated to the following map of pre-sheaves. Let $(\cU,\cW)$ be an object of
$\fX_M$ such that $\cU=\Spf(R_\cU)$ is affine, let $R_\cU^{\rm un}:=\cO_{\fX_M}^{\rm un}(\cU,\cW)$) and let $S=\cO_{\fX_M}(\cU,\cW)$. Then $S$ contains $R_\cU^{\rm
un}$ and we define
$$
\theta_{n,(\cU,\cW)}\colon\WW_n(S/pS)\otimes_{\OMun}R_\cU^{\rm
un}\lra S/p^nS\mbox{ by } (x\otimes r)\rightarrow c_n(x)r.
$$
Let $\cI_{X,n,M}$ denote the sheaf of ideals
$\Ker(\theta_{X,n,M})$. Due to \cite[Thm. I.2.4.1]{berthelot1} one
knows that the $\WW(k)$--DP envelope $\bA_{\rm cris,n,M}$
of\/~$\WW_{X,n,M}$ with respect to~$\cI_{X,n,M}$ exists. The main
point of this section is an explicit description of $\bA_{\rm
cris,n,M}$ in theorem \ref{thm:gluing} which will be used in the sequel.

Let $\cU=\Spf(R_\cU)$ denote a small affine open as in
\ref{sec:Groth} with parameters $T_1,T_2,\ldots,T_d\in
R_\cU^\times$. For every $n\ge 0$ define
$R_{\cU,n}:=R_\cU[\zeta_n,T_1^{1/p^n},\ldots,T_d^{1/p^n}]$, where
$R_{\cU,0}=R_\cU$, $\zeta_n$ is a primitive $p^n$-th root of $1$
such that $\zeta_{n+1}^p=\zeta_n$ and $T_i^{1/p^n}$ is a fixed
$p^n$-th root of $T_i$ in $\Rbar_\cU$ such that
$\bigl(T_i^{1/p^{n+1}}\bigr)^p=T_i^{1/p^n}$. We consider the
category $\fU_{n,M}$ consisting of objects $(\cV,\cW)$ and a
morphism to $\bigl(\cU,\Spf\bigl(R_{\cU,n}\bigr)\tensor_{\cO_K}
M\bigr)$. The morphisms are the morphisms as objects over
$\bigl(\cU,\Spf\bigl(R_{\cU,n}\bigr)\tensor_{\cO_K} M\bigr)$. The
covering families of an object $(\cV,\cW)$ are the covering
families as an object of $\fX_M$. There is a morphism of sites
$\iota\colon \fX_M \lra \fU_{n,M} $ sending $(\cV,W)$ to
$(\cV,W)\times_{(X,X_K)}
\bigl(\cU,\Spf\bigl(R_{\cU,n}\bigr)\tensor_{\cO_K} M\bigr)$. Given
a sheaf on $\fX_M $ we write $\cF\vert_{\fU_{n,M}}$ for
$\iota^\ast(\cF)$.

Let~$(\cV,\cW)\in \fU_{n,M}$ with~$\cV=\Spf(R_\cV)$ affine and
put~$S:=\cO_{\fX_M}(\cV,\cW)$. Note that $T_i^{1/p^n}\in
R_{\cU,n}\subset S$ for all $1\le i\le d$.  Denote by $$\ds\tT_i:=
\bigl([T_i],[T_i^{1/p}],\cdots,[T_i^{1/p^n}],\cdots\bigr)\in
\lim_{\infty\leftarrow n}
\WW_n\bigl(R_{\cU,n}/pR_{\cU,n}\bigr),$$where the inverse limit is
taken with respect to
$\WW_{n+1}\bigl(R_{\cU,n+1}/pR_{\cU,n+1}\bigr)\to
\WW_n\bigl(R_{\cU,n}/pR_{\cU,n}\bigr)$ the natural projection
$\WW_{n+1}\bigl(R_{\cU,n+1}/pR_{\cU,n+1}\bigr)\to
\WW_n\bigl(R_{\cU,n+1}/pR_{\cU,n+1}\bigr)$ and the map induced by
Frobenius $R_{\cU,n+1}/pR_{\cU,n+1}\to R_{\cU,n}/pR_{\cU,n} $. The
image of~$\tT_i$ in~$ \WW_n\bigl(R_{\cU,n}/pR_{\cU,n}\bigr)$ is
the Teichm\"uller lift $(T_i^{1/p^n},0,\ldots,0)$ of
$T_i^{1/p^n}$. Write
$$X_i:=1\otimes T_i- \tT_i\otimes 1 \in
\WW_n\bigl(R_{\cU,n}/pR_{\cU,n}\bigr) \otimes_{\cO_K} R_\cU$$
for~$i=1,\ldots,d$. They are naturally elements
of~$\WW_n\bigl(S/pS\bigr) \otimes_{\cO_K} R_\cV$.

\begin{lemma}
\label{lemma:kerthetaonfUn} The kernel of the map
$\theta_{n,(\cV,\cW)}\colon \WW_n\bigl(S/pS\bigr) \otimes_{\OMun}
R_\cV^{\rm un} \to S/p^n S $ is the ideal generated by
$\bigl(\xi_n,X_1,\ldots,X_d\bigr)$.
\end{lemma}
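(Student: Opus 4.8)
The plan is to prove the two inclusions separately. The inclusion $\supseteq$ is a direct computation: since $(\cV,\cW)$ lies over $\bigl(\cU,\Spf(R_{\cU,n})\tensor_{\cO_K}M\bigr)$, the ring $S$ contains $p^{1/p^{n-1}}$ and the $T_i^{1/p^{n-1}}$, so that $\xi_n=[p^{1/p^{n-1}}]-p$ and the $X_i$ are honest elements of $\WW_n(S/pS)\tensor_{\OMun}R_\cV^{\rm un}$. The defining formula for $c_n$ gives $c_n\bigl([p^{1/p^{n-1}}]\bigr)=(p^{1/p^{n-1}})^{p^{n-1}}=p$ in $S/p^nS$ and, applied to the Teichm\"uller class $\tT_i$ together with the compatibility of the system $\{T_i^{1/p^m}\}_m$, gives $c_n(\tT_i)=T_i$; hence $\theta_{n,(\cV,\cW)}(\xi_n)=0$ and $\theta_{n,(\cV,\cW)}(X_i)=T_i-T_i=0$, so $\bigl(\xi_n,X_1,\ldots,X_d\bigr)$ is contained in the kernel.

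For the reverse inclusion, I would first factor out $\xi_n$. By Lemma~\ref{lemma:krcn}, $c_n\colon\WW_n(S/pS)\to S/p^nS$ is surjective with kernel $\xi_n\WW_n(S/pS)$; tensoring the resulting right exact sequence over $\OMun$ with $R_\cV^{\rm un}$ shows that the ideal generated by $\xi_n$ in $A:=\WW_n(S/pS)\tensor_{\OMun}R_\cV^{\rm un}$ lies in $\ker\theta_{n,(\cV,\cW)}$ and that $\theta_{n,(\cV,\cW)}$ descends, on $A/\xi_n A\cong (S/p^nS)\tensor_{\OMun}R_\cV^{\rm un}$, to the multiplication map $\mu\colon (S/p^nS)\tensor_{\OMun}R_\cV^{\rm un}\to S/p^nS$, $s\tensor r\mapsto s\bar r$. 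So it remains to show that $\ker\mu$ is generated by the images $\bar X_i=1\tensor T_i-t_i\tensor 1$ of the $X_i$, where $t_i\in S/p^nS$ denotes the image of $T_i$.

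Here the smallness of $\cU$ enters. Since $R_\cU$ is formally \'etale over the torus $R_0=\cO_K\{T_1^{\pm1},\ldots,T_d^{\pm1}\}$, $\cV\to\cU$ is \'etale, and $R_\cV^{\rm un}$ is ind-\'etale over $R_\cV\tensor_{\cO_K}\OMun$, the ring $R_\cV^{\rm un}$ is formally \'etale over $\mathbf R:=R_0\tensor_{\cO_K}\OMun=\OMun\{T_1^{\pm1},\ldots,T_d^{\pm1}\}$. Modding out by the $\bar X_i$ identifies the $T_i$ coming from $R_\cV^{\rm un}$ with the $t_i$ coming from $S/p^nS$, i.e.
$$\bigl((S/p^nS)\tensor_{\OMun}R_\cV^{\rm un}\bigr)\big/\bigl(\bar X_1,\ldots,\bar X_d\bigr)\ \cong\ (S/p^nS)\tensor_{\mathbf R}R_\cV^{\rm un},$$
and $\mu$ becomes the multiplication $(S/p^nS)\tensor_{\mathbf R}R_\cV^{\rm un}\to S/p^nS$. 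Because the structure map $\mathbf R\to S/p^nS$ factors through the inclusion $R_\cV^{\rm un}\subseteq S$, and $R_\cV^{\rm un}=v_{X,M}^\ast(\cO_X)(\cV,\cW)$ is precisely the part of $\cO_{\fX_M}(\cV,\cW)$ that is \'etale over $\mathbf R$, this last map is an isomorphism: one verifies this after reduction modulo $p$, using the $p$-adic completeness (hence Henselianity along $(p)$) of $S$ to rigidify the \'etale $S/p^nS$-algebra $(S/p^nS)\tensor_{\mathbf R}R_\cV^{\rm un}$ against its tautological section. This gives that $\theta_{n,(\cV,\cW)}$ induces an isomorphism $A/\bigl(\xi_n,X_1,\ldots,X_d\bigr)\xrightarrow{\sim}S/p^nS$, which is exactly the assertion.

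The step I expect to be the main obstacle is this last isomorphism $(S/p^nS)\tensor_{\mathbf R}R_\cV^{\rm un}\xrightarrow{\sim}S/p^nS$: it is where the smallness hypothesis on $\cU$ (providing the formally \'etale torus coordinates $T_i$) and the identification of $\cO_{\fX_M}^{\rm un}$ with $v_{X,M}^\ast(\cO_X)$ are genuinely used, the remainder being a routine manipulation of the relative Fontaine map $c_n$ and of Lemma~\ref{lemma:krcn}.
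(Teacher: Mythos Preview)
Your approach is essentially the same as the paper's. Both proofs proceed in two steps: first, Lemma~\ref{lemma:krcn} accounts for the generator $\xi_n$ and reduces the question to the multiplication map $\mu\colon(S/p^nS)\otimes_{\OMun}R_\cV^{\rm un}/p^n\to S/p^nS$; second, one shows $\ker\mu=(X_1,\ldots,X_d)$ using that $R_\cV^{\rm un}/p^n$ is \'etale over the torus $\mathbf R/p^n=(R_0\otimes_{\cO_K}\OMun)/p^n$. The paper phrases step two via diagonal ideals: the kernel of multiplication $R_0/p^n\otimes R_0/p^n\to R_0/p^n$ is $(T_i\otimes1-1\otimes T_i)$, \'etaleness of $R_0/p^n\to R_\cV^{\rm un}/p^n$ propagates this to the diagonal ideal of $R_\cV^{\rm un}/p^n$, and then one base-changes along $R_\cV^{\rm un}/p^n\to S/p^nS$. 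Your reformulation---mod out by the $\bar X_i$ to get $(S/p^nS)\otimes_{\mathbf R}R_\cV^{\rm un}$ and show this maps isomorphically to $S/p^nS$---is exactly the same statement viewed from the other side. Your appeal to ``Henselianity of $S$ along $(p)$ to rigidify against the tautological section'' is a bit indirect compared with the paper's diagonal-ideal argument (an \'etale algebra with a section splits off the base as a direct factor, and Henselianity by itself does not obviously kill the complementary factor), but the underlying idea is the same rather than a genuinely different route.
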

\begin{proof}
The kernel of the ring homomorphism $R_0/p^n R_0 \otimes R_0/p^n R_0 \to R_0/p^nR_0 $ defined by $x\tensor y\to xy$ is the ideal $I=(T_1\otimes 1-1\otimes
T_1,\ldots,T_d\otimes 1-1\otimes T_d)$. Since $R_0/p^n R_0\to R_\cV/p^n R_\cV\to R_\cV^{\rm un}/p^n R_\cV^{\rm un}$ are \'etale morphisms, then $I$ generates the
kernel of $R_\cV^{\rm un}/p^n R_\cV^{\rm un} \otimes R_\cV^{\rm un}/p^n R_\cV^{\rm un} \to R_\cV^{\rm un}/p^n R_\cV^{\rm un} $. Base changing via $R_\cV^{\rm
un}/p^n R_\cV^{\rm un}\to S/p^n S$ we conclude that $I$ generates the kernel of $S/p^n S \otimes R_\cV^{\rm un}/p^n R_\cV^{\rm un} \to S/p^n S $. By lemma
\ref{lemma:krcn} the kernel of $c_n\colon \WW_n\bigl(S/pS\bigr) \to S/p^n S$ is generated by~$\xi_n$. The conclusion follows.
\end{proof}

For every $(\cV,\cW)\in \fU_{n,M}$ let $\bA_{\rm cris,n,M}^\nabla(\cV,\cW)
\langle X_1,\ldots,X_d\rangle$ be the DP envelope
of the polynomial algebra $\bA_{\rm cris,M,
n}^\nabla(\cV,\cW)[X_1,\ldots,X_d]$ with respect to the ideal
$(X_1,\ldots,X_d)$. As explained in~\cite[\S6]{brinon}, we have
$$\bA_{\rm cris, n,M}^\nabla(\cV,\cW) \langle
X_1,\ldots,X_d\rangle=\bA_{\rm cris,n,M}^\nabla(\cV,\cW)[X_{i,0},X_{i,1},\ldots]_{1\le i\le d}/(pX_{i,m+1}- X_{i,m}^p)_{1\le i\le d,m\ge 0}$$where~$ X_{i,j}=
\gamma^{j+1}(X_i)$ and $\gamma\colon z\mapsto (p-1)!z^{[p]}$. In particular it is a free $\bA_{\rm cris,n,M}^\nabla(\cV,\cW)$--module with bases given by the
monomials in the variables $X_{i,0},X_{i,1},\ldots$ for~$1\le i\le d$ in which each variable $X_{i,j}$ appears with degree $\leq p-1$. We conclude that $(\cV,\cW)
\to \bA_{\rm cris,n,M}^\nabla(\cV,\cW)\langle X_1,\ldots,X_d\rangle$ is a sheaf and that the ideal generated by $\Ker(\theta_{n,M}) (\subset \bA_{\rm
cris,n,M}^\nabla)$ and $(X_{1,j},\cdots,X_{d,j})_{j\in\N}$ has $\WW(k)$--DP structure. Write $\bA_{\rm cris,\fU_{n,M}}$ for the sheaf $\bA_{\rm
cris,n,M}^\nabla\vert_{\fU_{n,M}}\langle X_1,\ldots,X_d\rangle$. Let $\theta_{\fU_{n,M}}\colon \bA_{\rm cris,\fU_{n,M}}\lra \cO_{\fX_M}/p^n\cO_{\fX_M}
\vert_{\fU_{n,M}}$ be the map sending $X_{i,j}\mapsto 0$ and coinciding with~$\theta_{n,M}\vert_{\fU_{n,M}}$ on $\bA_{\rm cris,n,M}^\nabla\vert_{\fU_{n,M}}$. For
every~$(\cV,\cW)\in\fU_{n,M}$ with $\cV:=\Spf(R_\cV)$ affine, define the map
$$
R_0=\WW(k)\{T_1^{\pm 1},\ldots,T_d^{\pm 1}\}\lra \bA_{\rm
cris,n,M} ^\nabla(\cV,\cW) \langle X_1,\ldots,X_d\rangle
$$
of $\WW(k)$--algebras setting $T_i\rightarrow \tT_i\otimes
1+1\otimes X_i$ for every $1\le i\le d$. As $T_i$ is a unit in
$R_\cU$, then $\tT_i$ is a unit in $\WW_n(S/pS)$
where~$S:=\cO_{\fX_M}(\cV,\cW)$. Since $X_i^p=p X_i^{[p]}$ is
nilpotent in $\bA_{\rm cris, n,M}^\nabla(\cV,\cW) \langle
X_1,\ldots,X_d\rangle$, also $\tT_i\otimes 1+1\otimes X_i$ is a
unit and hence the displayed ring homomorphism is well defined.
Let us now look at the following diagram of $\WW(k)$--algebras
$$
\begin{array}{cccccccc}
\bA_{\rm cris,n,M}^\nabla(\cV,\cW) \langle X_1,\ldots,X_d\rangle
&\stackrel{\theta_{\fU_{n,M}}}{\lra} &\bigl(\cO_{\fX_M}/p^n\cO_{\fX_M}\bigr)(\cV,\cW)\\
\uparrow & &\uparrow\\
R_0& \lra  & R_\cV^{\rm un}.
\end{array}
$$The diagram is
commutative since $\theta_{\fU_{n,M}}(\tT_i\otimes 1+1\otimes
X_i)=\theta_{\fU_{n,M}}(\tT_i)=
\bigl(T_i^{1/p^n}\bigr)^{p^n}=T_i$. The kernel of
$\theta_{\fU_{n,M}}(\cV,\cW)$ is the DP ideal generated by
$\Ker(\theta_{n,M})(\cU,\cW)$ and by
$\bigl(X_{1,j},\ldots,X_{d,j}\bigr)_{j\in\N}$ so that it is
nilpotent. As $R_\cU$ is \'etale over $R_0$ and $R_\cV^{\rm un}$  \'etale
over $R_\cU$ there exists a unique homomorphism
\begin{equation}\label{display:Rcvalgebrastructure}
R_\cV^{\rm un}\lra \bA_{\rm cris, n,M}^\nabla(\cV,\cW) \langle
X_1,\ldots,X_d\rangle
\end{equation}
\noindent of $R_0$--algebras making both triangles in the above
diagram commutative. We thus get a map $
\WW_{n,M}(\cV,\cW)\tensor_{\OMun} \cO_{\fX_M}^{\rm un}(\cV,\cW)
\to \bA_{\rm cris, n,M}^\nabla(\cV,\cW) \langle
X_1,\ldots,X_d\rangle$. Passing to the associated sheaves we get a
map of sheaves
$$h_{\fU_{n,M}}\colon\WW_{X,n,M}\vert_{\fU_{n,M}}\to \bA_{\rm
cris,\fU_{n,M}}.$$It follows from~\ref{lemma:kerthetaonfUn} that
the image of~$\cI_{X,n,M}\vert_{\fU_{n,M}}$ is contained in the
ideal generated by $\Ker(\theta_{n,M})$ and
$(X_{1,j},\cdots,X_{d,j})_{j\in\N}$ which has $\WW(k)$--DP
structure.

\begin{theorem} \label{thm:gluing} (1) The $\WW(k)$--DP envelope $\bA_{\rm
cris,n,M}$ of\/~$\WW_{X,n,M}$ with respect to~$\cI_{X,n,M}$
exists.\smallskip

(2) For every small affine $\cU$ of $X^{\rm et}$  the sheaf $\bA_{\rm cris,\fU_{n,M}}$ with the ideal $\Ker\bigl( \theta_{\fU_{n,M}}\bigr) $ endowed with its
$\WW(k)$--DP structure, is the $\WW(k)$--DP envelope of $\WW_{X,n,M}\vert_{\fU_{n,M}}$ with respect to $\cI_{X,n,M}\vert_{\fU_{n,M}}$. In particular the restriction
of $\bA_{\rm cris,n,M}$ to~$\fU_{n,M}$  is $\bA_{\rm cris,\fU_{n,M}}$.

(3) Let $M_1\subset M_2$ be a Galois extension. The morphism
$\beta_{M_1,M_2}^\ast\left(\WW_{X,n,M_1} \otimes_{\cO_{M_1}^{\rm
un}}\cO_{\fX_{M_1}}^{\rm un}\right) \lra
\WW_{X,n,M_2}\otimes_{\cO_{M_2}^{\rm un}} \cO_{\fX_{M_2}}^{\rm un}
$ induces an isomorphism $\beta_{M_1,M_2}^\ast\bigl(\bA_{\rm
cris,n,M_1}\bigr) \cong \bA_{\rm cris,n,M_2}$ of $\WW(k)$--DP
sheaves of algebras.\smallskip

\end{theorem}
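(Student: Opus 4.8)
I would establish the three assertions in the order (2), (1), (3). Part (2) is the heart of the matter: the explicit sheaf $\bA_{\rm cris,\fU_{n,M}}$ constructed above, together with the map $h_{\fU_{n,M}}$, satisfies the universal property of the $\WW(k)$--DP envelope of $\WW_{X,n,M}\vert_{\fU_{n,M}}$ with respect to $\cI_{X,n,M}\vert_{\fU_{n,M}}$. Granting (2), for (1) I would cover $X^{\rm et}$ by small affines $\cU$: on the fibre-product overlaps the uniqueness of divided power envelopes provides canonical gluing data satisfying the cocycle condition, so the various $\bA_{\rm cris,\fU_{n,M}}$ glue to a sheaf on $\fX_M$, and since the universal property is local this sheaf is the desired $\WW(k)$--DP envelope (alternatively one may simply invoke \cite[Thm.~I.2.4.1]{berthelot1}). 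The last sentence of (2), that the restriction of $\bA_{\rm cris,n,M}$ to $\fU_{n,M}$ is $\bA_{\rm cris,\fU_{n,M}}$, is then immediate from Lemma \ref{lemma:restrictenvelope}.

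\textbf{Proof of (2).} There is a canonical morphism from the $\WW(k)$--DP envelope $\cD$ of $\WW_{X,n,M}\vert_{\fU_{n,M}}$ with respect to $\cI_{X,n,M}\vert_{\fU_{n,M}}$ (which exists, e.g.\ by \cite[Thm.~I.2.4.1]{berthelot1} or by a stalkwise construction) to $\bA_{\rm cris,\fU_{n,M}}$, coming from $h_{\fU_{n,M}}$ and the universal property of $\cD$; it suffices to check it is an isomorphism on stalks. Since forming $\WW(k)$--DP envelopes commutes with filtered colimits and sheafification does not change stalks, and using the stalk computation in Proposition \ref{lemma:Acrisnnabla}(5), at a geometric point $x$ this reduces to the ring-theoretic statement that the $\WW(k)$--DP envelope of $\WW_n(\cO_{\fX_M,x}/p)\otimes_{\OMun}\cO_{\fX_M,x}^{\rm un}$ with respect to the ideal generated --- by Lemma \ref{lemma:kerthetaonfUn} --- by $\xi_n$ and by $X_i=1\otimes T_i-\tT_i\otimes 1$ ($1\le i\le d$) is $\bigl(A_{\rm cris,n}\otimes_{W_n}\WW_n(\cO_{\fX_M,x}/p)\bigr)\langle X_1,\ldots,X_d\rangle$. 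I would prove this in two steps. First, resolve $\xi_n$: since $\WW_n(\cO_{\fX_M,x}/p)$ is flat over $W_n$ by Corollary \ref{cor:WnXisflatoverWnVbar}, $\cO_{\fX_M,x}^{\rm un}$ is flat over $\OMun$, and $A_{\rm cris,n}$ is by definition the $\WW(k)$--DP envelope of $W_n$ with respect to $\xi_n W_n$, flat base change of divided power envelopes together with Lemma \ref{lemma:krcn} identifies the envelope with respect to $(\xi_n)$ with $\bigl(A_{\rm cris,n}\otimes_{W_n}\WW_n(\cO_{\fX_M,x}/p)\bigr)\otimes_{\OMun}\cO_{\fX_M,x}^{\rm un}$. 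Second, resolve the $X_i$: by transitivity of divided power envelopes one is reduced to adjoining divided powers of $X_1,\ldots,X_d$, and since $\cO_{\fX_M,x}^{\rm un}$ is formally \'etale over $R_0=\WW(k)\{T_1^{\pm 1},\ldots,T_d^{\pm 1}\}$ with the $\tT_i$ invertible, the unique lift through the nilpotent divided power ideal used in \eqref{display:Rcvalgebrastructure} absorbs the \'etale algebra $\cO_{\fX_M,x}^{\rm un}$ into the divided power polynomial algebra $\langle X_1,\ldots,X_d\rangle$. This last identification is precisely Brinon's local computation of $A_{\rm cris}$ with coordinates \cite[\S6]{brinon}, on which the construction of $\bA_{\rm cris,\fU_{n,M}}$ is modelled.

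\textbf{Proof of (3).} From the compatibility $v_{X,M_2}=\beta_{M_1,M_2}\circ v_{X,M_1}$ of the structural functors and Lemma \ref{lemma:vastOX} one gets $\beta_{M_1,M_2}^\ast(\cO_{\fX_{M_1}}^{\rm un})\cong\cO_{\fX_{M_2}}^{\rm un}$; combined with the exactness of $\beta_{M_1,M_2}^\ast$ (in fact $\beta_{M_1,M_2}^{-1}$ already yields sheaves, Lemma \ref{lemma:betaastG}(i)) and the isomorphism $\beta_{M_1,M_2}^\ast(\WW_{n,M_1})\cong\WW_{n,M_2}$, this gives natural isomorphisms $\beta_{M_1,M_2}^\ast(\WW_{X,n,M_1})\cong\WW_{X,n,M_2}$ and $\beta_{M_1,M_2}^\ast(\cI_{X,n,M_1})\cong\cI_{X,n,M_2}$. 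Since $\beta_{M_1,M_2}^\ast$ is exact and preserves stalks it commutes with the formation of $\WW(k)$--DP envelopes (which is local and compatible with filtered colimits), whence $\beta_{M_1,M_2}^\ast(\bA_{\rm cris,n,M_1})\cong\bA_{\rm cris,n,M_2}$ as $\WW(k)$--DP sheaves of algebras; alternatively one can deduce this from part (2) together with Proposition \ref{lemma:Acrisnnabla}(5) applied to $\bA_{\rm cris,n,M}^\nabla$ on each $\fU_{n,M}$.

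\textbf{Main obstacle.} The delicate point is the second step of the proof of (2): showing that adjoining the \'etale $R_0$--algebra $\cO_{\fX_M}^{\rm un}$ --- which is not a quotient of a polynomial algebra over $\WW(k)$ --- together with divided powers of the $X_i$ produces nothing beyond the divided power polynomial algebra. This is where the smallness of $\cU$, i.e.\ the existence of the coordinates $T_i\in R_\cU^\times$ and of their compatible $p$-power roots defining $\tT_i$, enters in an essential way, via the infinitesimal lifting property of \'etale morphisms against the nilpotent ideal $\Ker(\theta_{\fU_{n,M}})$ as in \eqref{display:Rcvalgebrastructure}. The remaining ingredients --- transitivity and flat base change for divided power envelopes, the universal property of $\bA_{\rm cris,n,M}^\nabla$ from Proposition \ref{lemma:Acrisnnabla}, and the stalk descriptions --- I expect to be routine.
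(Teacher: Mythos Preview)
Your proposal is correct, and your identification of the essential point---the role of the \'etale lifting of $\cO_{\fX_M}^{\rm un}$ against the nilpotent DP ideal---is spot on. However, your route to (2) is more elaborate than the paper's, which sidesteps the stalkwise ring-theoretic reduction entirely.

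The paper verifies the universal property of $\bA_{\rm cris,\fU_{n,M}}$ directly at the sheaf level, in two lines. Given a $\WW(k)$--DP sheaf $\cG$ and a morphism $f\colon \WW_{X,n,M}\vert_{\fU_{n,M}}\to\cG$ sending $\cI_{X,n,M}\vert_{\fU_{n,M}}$ into the DP ideal, one first restricts $f$ to $\WW_{n,M}\vert_{\fU_{n,M}}$ and invokes Proposition~\ref{lemma:Acrisnnabla}(1) together with Lemma~\ref{lemma:restrictenvelope} to get a unique extension $\bA_{\rm cris,n,M}^\nabla\vert_{\fU_{n,M}}\to\cG$; then, since each $X_i$ lies in $\cI_{X,n,M}$ and hence $f(X_i)$ admits divided powers in $\cG$, this extends uniquely to $\bA_{\rm cris,n,M}^\nabla\vert_{\fU_{n,M}}\langle X_1,\ldots,X_d\rangle\to\cG$. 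That the resulting map agrees with $f$ on the $\cO_{\fX_M}^{\rm un}$ factor is forced by the uniqueness in the \'etale lifting \eqref{display:Rcvalgebrastructure}---the same mechanism you isolate as the ``main obstacle,'' but used here only once and at the sheaf level. Your approach instead unpacks this into flat base change of DP envelopes over $W_n$ (via Corollary~\ref{cor:WnXisflatoverWnVbar}) plus transitivity, which works but imports more machinery. For (1) the paper simply quotes \cite[Thm.~I.2.4.1]{berthelot1}; your gluing argument is a valid alternative. For (3) the paper proceeds as in your second alternative: construct the map via the universal property, then check on each $\fU_{n,M_1}$ that it sends $X_i\mapsto X_i$ and restricts to the known isomorphism on $\bA_{\rm cris}^\nabla$ from Proposition~\ref{lemma:Acrisnnabla}(5).
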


\begin{proof} (1) The existence is a formal consequence of
\cite[Thm. I.2.4.1]{berthelot1}.

(2)  Consider  a $\WW(k)$--DP sheaf of algebras $\cG$ and a
morphism $f\colon \WW_{X,n,M}\vert_{\fU_{n,M}} \to \cG$ sending
$\cI_{X,n,M}\vert_{\fU_{n,M}}$ to the DP ideal of $\cG$. Due to
\ref{lemma:Acrisnnabla} and \ref{lemma:restrictenvelope} the
induced map $\WW_{n,M}\vert_{\fU_{n,M}} \to \cG$ extends uniquely
to a morphism $\bA_{\rm cris,n,M}^\nabla\vert_ {\fU_{n,M}}\to \cG
$ of $\WW(k)$--DP sheaves  of algebras.  Since the
section~$X_i=1\otimes T_i-\tT_i\otimes 1$ of~$\WW_{X,n,M}$ lies in
$\Ker\bigl( \theta_{\fU_{n,M}}\bigr) $ we can uniquely extend
such map to a morphism $\bA_{\rm cris,n,M}
^\nabla\vert_{\fU_{n,M}} \langle X_1,\ldots,X_d\rangle\to \cG$.
This proves the first part of the claim. The second part follows
from the first and lemma \ref{lemma:restrictenvelope}.

(3) Via the morphism $\beta_{M_1,M_2}^\ast\left(\WW_{n,M_1}
\otimes_{\cO_{M_1}^{\rm un}}\cO_{\fX_{M_1}}^{\rm un}\right) \lra
\WW_{n,M_2}\otimes_{\cO_{M_2}^{\rm un}} \cO_{\fX_{M_2}}^{\rm un}
$ one checks that
$\beta_{M_1,M_2}^\ast\bigl(\theta_{X,n,M_1}\bigr)$ and
$\theta_{X,n,M_2}$ are compatible so that
$\beta_{M_1,M_2}^\ast(\cI_{X,n,M_1})$ is sent to $\cI_{X,n,M_2}$.
Due to the universal property of $\bA_{\rm cris,n,M_2} $ we
get a map $\beta_{M_1,M_2}^\ast\bigl(\bA_{\rm
cris,n,M_1}\bigr)\lra \bA_{\rm cris,n,M_2}$. By construction it
induces the isomorphism $\beta_{M_1,M_2}^\ast\bigl(\bA_{\rm
cris,n,M_1}^\nabla\bigr) \lra \bA_{\rm cris,n,M_2}^\nabla $ of
proposition \ref{lemma:Acrisnnabla}. It follows from (2) that the restriction
to $\fU_{n,M_1}$ sends $X_i\mapsto X_i$ for every small affine
$\cU$  and in particular it is an isomorphism. This implies the
claim.

\end{proof}

Let $\cU=\Spf(R_\cU)$ denote a small affine open in $X^{\rm et}$. Choose~$T_1,\ldots T_d\in R_\cU^\times$ parameters and let $F_\cU\colon R_\cU \to R_\cU$ be the
unique map inducing Frobenius on~$\cO_K$ and sending $T_i\mapsto T_i^p$. Denote by~$F_\cU\colon \cO_\cU \to \cO_\cU$ the induced map of sheaves on~$\cU^{\rm et}$.
Taking $v_{\cU,M}^\ast$ it provides  a morphism $F_\cU$ on $\cO_{\fX_M}^{\rm un}\vert_{\fU_M}$. Let
$$\varphi_{\fU_M,n}\colon \WW_{X,n,M}\vert_{\fU_M}\cong
\WW_{\cU,M,n}\lra  \WW_{\cU,M,n} \cong \WW_{X,n,M}\vert_{\fU_M}$$
be the map of sheaves associated to the map of pre-sheaves
$\varphi_{\cU,n} \tensor F_\cU \colon
\WW_{n,\cU,M}\otimes_{\OMun}\cO_{\fU_M}^{\rm un} \to
\WW_{n,\cU,M}\otimes_{\OMun}\cO_{\fU_M}^{\rm un}$.

\begin{corollary}\label{cor:extensionofFrobenius}
1) The morphism~$\varphi_{\fU_M,n}$ on $\WW_{X,n,M}\vert_{\fU_M}$
extends uniquely to an operator~$\varphi_{\fU_M,n}$ on~$\bA_{\rm
cris,n,M}\vert_{\fU_M}$, called {\it Frobenius}, compatible with
Frobenius on  $\bA_{\rm cris,n,M}^\nabla\vert_{\fU_M}$ defined in
proposition \ref{lemma:Acrisnnabla}.\smallskip

2) Via the identification given in theorem \ref{thm:gluing} the
restriction~$\varphi_{\fU_{n,M}}$ of~$\varphi_{\fU_M,n}$ to
$\fU_{n,M}$ is uniquely determined by requiring that it induces
Frobenius on $\bA_{\rm cris,n,M}^\nabla\vert_{\fU_{n,M}}$ and
sends $X_i\mapsto 1\otimes T_i^p- \tT_i^p\otimes 1= X_i
\bigl(\sum_{h=1}^{p-1} T_i^h \tT_i^{p-h}\bigr)$
for~$i=1,\ldots,d$.\smallskip

3) The  isomorphism $\beta_{M_1,M_2}^\ast\bigl(\bA_{\rm cris,
n,M_1}\vert_{\fU_{M_1}}\bigr)\cong \bA_{\rm cris,
n,M_2}\vert_{\fU_{M_2}}$ of $\WW(k)$--DP sheaves of algebras is
compatible with Frobenius.

\end{corollary}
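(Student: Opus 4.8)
The plan is to produce Frobenius on $\bA_{\rm cris,n,M}\vert_{\fU_M}$ from the universal property of the $\WW(k)$--DP envelope, exactly as Frobenius was built on $A_{\rm cris,n}$ in \S\ref{sec:Notation}, and then to read off its explicit shape on $\fU_{n,M}$ using theorem \ref{thm:gluing}(2). For part (1): by lemma \ref{lemma:restrictenvelope}, $\bA_{\rm cris,n,M}\vert_{\fU_M}$ is the $\WW(k)$--DP envelope of $\WW_{X,n,M}\vert_{\fU_M}$ with respect to $\cI_{X,n,M}\vert_{\fU_M}$; write $h$ for its structural map and $\bar\cI$ for its DP ideal. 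I would form the $\WW(k)$--algebra map $h\circ\varphi_{\fU_M,n}\colon \WW_{X,n,M}\vert_{\fU_M}\to \bA_{\rm cris,n,M}\vert_{\fU_M}$ and, in order to extend it to a $\WW(k)$--DP endomorphism of the envelope, verify that it carries $\cI_{X,n,M}\vert_{\fU_M}$ into an ideal carrying a $\WW(k)$--DP structure compatible with $\bar\cI$; for this I take the ideal $\bar\cI+p\,\bA_{\rm cris,n,M}\vert_{\fU_M}$, whose divided powers are patched together from $\bar\cI$ and the standard ones on $p\WW(k)$. Being a statement about sheaves, this is local, and by lemma \ref{lemma:kerthetaonfUn} it reduces to the generators $\xi_n$ and $X_1,\dots,X_d$: one has $\varphi_{\fU_M,n}(\xi_n)=(\widetilde{p}_n^p-p^p)+(p^p-p)$, whose first summand is a section of $\Ker(\theta_{n,M})$ and whose second lies in $p\WW(k)$, so its $h$--image lies in $\bar\cI+p\,\bA_{\rm cris,n,M}\vert_{\fU_M}$; and $\varphi_{\fU_M,n}(X_i)=1\otimes T_i^p-\widetilde{T}_i^p\otimes 1$ is divisible by $X_i$, hence has $h$--image in $\bar\cI$. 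The universal property then yields $\varphi_{\fU_M,n}$ on $\bA_{\rm cris,n,M}\vert_{\fU_M}$ together with its uniqueness; and since the canonical map $\bA_{\rm cris,n,M}^\nabla\to\bA_{\rm cris,n,M}$ --- which exists because $\Ker(\theta_{n,M})$ goes into $\cI_{X,n,M}$ --- also arises from a universal property, uniqueness forces it to intertwine the two Frobenii, giving the asserted compatibility.

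For part (2), theorem \ref{thm:gluing}(2) identifies $\bA_{\rm cris,n,M}\vert_{\fU_{n,M}}$ with the divided power polynomial algebra $\bA_{\rm cris,n,M}^\nabla\vert_{\fU_{n,M}}\langle X_1,\dots,X_d\rangle$, so any $\WW(k)$--DP endomorphism restricting to Frobenius on $\bA_{\rm cris,n,M}^\nabla\vert_{\fU_{n,M}}$ is pinned down by the images of $X_1,\dots,X_d$; thus I only need to compute $\varphi_{\fU_M,n}(X_i)$. From $\varphi_{\fU_M,n}=\varphi_{\cU,n}\otimes F_\cU$ on $\WW_{X,n,M}$, the fact that Witt Frobenius sends the Teichm\"uller lift $\widetilde{T}_i$ to $\widetilde{T}_i^p$, and $F_\cU(T_i)=T_i^p$, this gives $\varphi_{\fU_{n,M}}(X_i)=1\otimes T_i^p-\widetilde{T}_i^p\otimes 1$, which factors through $X_i=1\otimes T_i-\widetilde{T}_i\otimes 1$ and equals the element $X_i\bigl(\sum_{h=1}^{p-1}T_i^h\widetilde{T}_i^{p-h}\bigr)$ displayed in the statement; uniqueness is then immediate from the previous sentence.

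For part (3), the isomorphism $\beta_{M_1,M_2}^\ast\bigl(\bA_{\rm cris,n,M_1}\bigr)\cong \bA_{\rm cris,n,M_2}$ of theorem \ref{thm:gluing}(3) is, by construction, the unique $\WW(k)$--DP extension of the isomorphism $\beta_{M_1,M_2}^\ast\bigl(\WW_{X,n,M_1}\otimes_{\cO_{M_1}^{\rm un}}\cO_{\fX_{M_1}}^{\rm un}\bigr)\cong\WW_{X,n,M_2}\otimes_{\cO_{M_2}^{\rm un}}\cO_{\fX_{M_2}}^{\rm un}$, and this last isomorphism intertwines $\varphi_{\fU_{M_1},n}$ with $\varphi_{\fU_{M_2},n}$ because Witt Frobenius and the map $F_\cU$ on $\cO_{\fX_M}^{\rm un}$ both commute with $\beta_{M_1,M_2}^\ast$; hence $\beta_{M_1,M_2}^\ast(\varphi_{\fU_{M_1},n})$ and $\varphi_{\fU_{M_2},n}$ are two $\WW(k)$--DP extensions of the same morphism, so they coincide. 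Equivalently, I would restrict to $\fU_{n,M}$: the isomorphism sends $X_i\mapsto X_i$, Frobenius on $\bA_{\rm cris,n,M}^\nabla$ is $\beta^\ast$--compatible by proposition \ref{lemma:Acrisnnabla}, and the formula for $\varphi(X_i)$ from (2), involving only $T_i$ and $\widetilde{T}_i$, is manifestly preserved.

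The main obstacle, and really the only non-formal point, is the ideal--containment verification in part (1): unlike for the classical $A_{\rm cris,n}$ one cannot work with $\bar\cI$ alone, because $\varphi(\xi_n)=\widetilde{p}_n^p-p$ has $\theta_{n,M}$--image $p^p-p$, which is nonzero in $\cO_{\fX_M}/p^n\cO_{\fX_M}$ for $n\ge 2$, so $\varphi(\xi_n)\notin\bar\cI$; the remedy is to enlarge $\bar\cI$ by $p\,\bA_{\rm cris,n,M}\vert_{\fU_M}$, which is legitimate precisely because of the $\WW(k)$--DP compatibility built into the definition of the envelope. Once the construction is cast through this universal property, globality over $\fU_M$ and all of the stated compatibilities come essentially for free.
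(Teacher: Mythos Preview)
Your proposal is correct and follows essentially the same approach as the paper: extend Frobenius via the universal property of the $\WW(k)$--DP envelope after checking that the images of the generators $\xi_n,X_1,\dots,X_d$ of $\cI_{X,n,M}$ admit divided powers, compute $\varphi(X_i)$ explicitly, and handle (3) by restricting to $\fU_{n,M}$ where the isomorphism sends $X_i\mapsto X_i$. The only organizational difference is that the paper proves (2) first and then deduces (1) from it, whereas you prove (1) directly with a local verification that amounts to (2); relatedly, for $\varphi(\xi_n)$ you argue directly with the enlarged ideal $\bar\cI+p\,\bA_{\rm cris,n,M}$, while the paper simply invokes the already-constructed Frobenius on $\bA_{\rm cris,n,M}^\nabla$ from proposition~\ref{lemma:Acrisnnabla}. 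Both routes are equivalent, and your identification of the $\varphi(\xi_n)$ issue as the only non-formal point is apt.
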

\begin{proof} The fact that Frobenius on $\WW_{X,n,M}\vert_{\fU_M}$ extends to
$\bA_{\rm cris,n,M}^\nabla\vert_{\fU_M}$ follows from
proposition \ref{lemma:Acrisnnabla}.

(2)   For~$i=1,\ldots,d$ we compute that
$\varphi_{\fU_{n,M}}(X_i)= 1\otimes T_i^p- \tT_i^p\otimes 1= X_i
\bigl(\sum_{h=1}^{p-1} T_i^h \tT_i^{p-h}\bigr)$ so that
$\varphi_{\fU_{n,M}}(X_i)$ admits divided powers. This implies
that~$\varphi_{\fU_{n,M}}\bigl(\cI_{\cU,n,M}\bigr)$ admits divided
powers so that by the universal property of~$\bA_{\rm
cris,n,M}\vert_{\fU_{n,M}}$ (see \ref{thm:gluing}) the morphism
$\varphi_{\fU_{n,M}}$ extends to~$\bA_{\rm
cris,n,M}\vert_{\fU_{n,M}}$.

(1) Since~$\varphi_{\fU_{n,M}}\bigl(\cI_{\cU,n,M}\bigr)$ admits
divided powers in ~$\bA_{\rm cris,n,M}\vert_{\fU_{n,M}}$ by (2)
then also~$\varphi_{\fU_M,n}\bigl(\cI_{\cU,n,M}\bigr)$ admits
divided powers in~$\bA_{\rm cris,n,M}\vert_{\fU_M}$. By the
universal property of~$\bA_{\rm cris,n,M}\vert_{\fU_M}$, which
follows from \ref{thm:gluing} and \ref{lemma:restrictenvelope},
the morphism $\varphi_{\fU_{n,M}}$ extends to~$\bA_{\rm
cris,n,M}\vert_{\fU_M,n}$.

(3) It suffices to prove the claim after restricting to
$\fU_{n,M_2}$. In this case it follows from (1) and
theorem \ref{thm:gluing}.
\end{proof}

Let~$r_{X,n+1,M}\colon \WW_{n+1,M}\otimes_{\OMun}\cO_{\fX_M}^{\rm
un} \to \WW_{n,M}\otimes_{\OMun}\cO_{\fX_M}^{\rm un}$ be the
morphism which is the identity on~$\cO_{\fX_M}^{\rm un}$ and is
reduction composed with Frobenius on $\WW_{n+1,M}\to \WW_{n,M}$.
Then we have an inclusion
$r_{X,n+1,M}\bigl(\Ker(\theta_{X,n+1})\bigr)\subset
\Ker(\theta_{X,n}) $. Hence $r_{X,n+1,M}$ defines a map $\bA_{\rm
cris,n+1,M}\to \bA_{\rm cris,n,M}$. Let $\bA_{\rm cris,M}$ denote
the sheaf in $\Sh(\fX_M)^\N$ defined by the family $\{\bA_{\rm
cris,n,M}\}_n$ with the transition functions~$r_{X,n,M}$. It is
the $\WW(k)$--DP envelope of
$\{\WW_{n,M}\otimes_{\OMun}\cO_{\fX_M}^{\rm un}\}_n$ with respect
to the ideals $\{\Ker\bigl(\theta_{X,n}\bigr)\}_n$. For every
small affine~$\cU$ and parameters $T_1,\ldots,T_d$ denote
by~$\varphi_\cU\colon \bA_{\rm cris,M}\to \bA_{\rm cris,M}$ the
map in~$\Sh(\fUM)^\N$ defined by~$\{\varphi_{\cU,n}\}_n$.

\begin{lemma}\label{lemma:Acrismodpn} For every~$m'> m>n$ the
maps $r_{X,m',M}\circ \ldots \circ r_{X,m+1,M}\colon \bA_{\rm
cris,m',M}\to \bA_{\rm cris, m,M}$ induce an isomorphism $\bA_{\rm
cris,m',M}/p^n \bA_{\rm cris,m',M}\lra \bA_{\rm cris,m,M}/p^n
\bA_{\rm cris,m,M}$.

With the notations of\/~\ref{thm:gluing}, for every small $\cU$ of
$X^{\rm et}$ the restriction of this sheaf to~$\fU_{n,M}$ is
isomorphic to $\bA_{\rm
cris,n,M}^{'\nabla}\vert_{\fU_{M,n}}\langle X_1,\ldots,X_d\rangle
$.
\end{lemma}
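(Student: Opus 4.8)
The plan is to reduce both assertions to the explicit local description of $\bA_{\rm cris,\bullet,M}$ provided by Theorem~\ref{thm:gluing} and then to quote the analogous statement for the $\nabla$-sheaves, i.e. Lemma~\ref{lemma:compareAcrisnablaAcrisnabla'}. Since a morphism of sheaves on $\fX_M$ is an isomorphism precisely when it is so after restriction to each site $\fU_{k,M}$ attached to a small affine $\cU$ in a cover of $X$ (these sites, for such $\cU$ and varying $k$, forming a generating family), it is enough to argue after such a restriction. For indices $k$ at least as large as the one under consideration, Theorem~\ref{thm:gluing}(2) identifies $\bA_{\rm cris,k,M}\vert_{\fU_{k,M}}$ with the divided power polynomial sheaf $\bA_{\rm cris,k,M}^\nabla\vert_{\fU_{k,M}}\langle X_1,\ldots,X_d\rangle$ in the variables $X_i=1\otimes T_i-\tT_i\otimes 1$. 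Also, Theorem~\ref{thm:gluing}(3) and Corollary~\ref{cor:extensionofFrobenius}(3) give compatibility with any base change $M_1\subset M_2$, so one may, if convenient, reduce to $M=\Kbar$, where by Lemma~\ref{lemma:exactsequenceAcrisnnabla'}(c) the presheaves involved already agree with their sheafifications over small affines.

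First I would record the elementary point that, because $\bA_{\rm cris,m',M}^\nabla\langle X_1,\ldots,X_d\rangle$ is \emph{free} as a module over $\bA_{\rm cris,m',M}^\nabla$, with basis the monomials in the divided variables $X_{i,j}=\gamma^{j+1}(X_i)$ in which each $X_{i,j}$ occurs to degree $\le p-1$ (as recalled just before Theorem~\ref{thm:gluing}), reduction modulo $p^n$ commutes with adjoining the variables:
$$\bigl(\bA_{\rm cris,m',M}^\nabla\langle X_1,\ldots,X_d\rangle\bigr)\big/p^n\;\cong\;\bigl(\bA_{\rm cris,m',M}^\nabla/p^n\bigr)\langle X_1,\ldots,X_d\rangle.$$
Feeding this into Theorem~\ref{thm:gluing}(2) for the index $m'$, together with Lemma~\ref{lemma:compareAcrisnablaAcrisnabla'} applied with $m'>n$ (which furnishes an isomorphism $\bA_{\rm cris,m',M}^\nabla/p^n\bA_{\rm cris,m',M}^\nabla\xrightarrow{\ \sim\ }\bA_{\rm cris,n,M}^{'\nabla}$ via $u_{n,M}\circ r_{n+2,M}\circ\cdots\circ r_{m',M}$), one obtains
$$\bA_{\rm cris,m',M}/p^n\bA_{\rm cris,m',M}\big\vert_{\fU_{m',M}}\;\cong\;\bA_{\rm cris,n,M}^{'\nabla}\big\vert_{\fU_{m',M}}\langle X_1,\ldots,X_d\rangle.$$
This is already the second assertion, once one observes that the right–hand sheaf is defined on $\fU_{n,M}$ itself (the length-$n$ Witt vectors $\tT_i$ entering $X_i$ only require $T_i^{1/p^{n-1}}\in R_{\cU,n}$) and that the displayed identifications are compatible as $m'$ varies, hence descend to the asserted description of the restriction to $\fU_{n,M}$.

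For the isomorphism $\bA_{\rm cris,m',M}/p^n\bA_{\rm cris,m',M}\to\bA_{\rm cris,m,M}/p^n\bA_{\rm cris,m,M}$ induced by $r_{X,m',M}\circ\cdots\circ r_{X,m+1,M}$, I would apply the previous step to both $m'$ and $m$ (both $>n$) and check that, under the resulting identifications of each side with $\bA_{\rm cris,n,M}^{'\nabla}\langle X_1,\ldots,X_d\rangle$, this composite transition map becomes the identity. On the $\bA^\nabla$-factor this is exactly what is built into the proof of Lemma~\ref{lemma:compareAcrisnablaAcrisnabla'} (the maps $u_{n,M}$ and $r_{\bullet,M}$ are arranged so that the relevant composites agree); on the divided variables one notes that $r_{X,\bullet,M}$ is the identity on the $\cO_{\fX_M}^{\rm un}$-factor and that the compatible system $\tT_i$ is, by construction of the transition maps of $\bA_{\rm inf,M}^+$, carried to $\tT_i$, so $X_i\mapsto X_i$ and the induced map on $\langle X_1,\ldots,X_d\rangle$ is the identity. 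The isomorphism follows.

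The step I expect to require the most care is not any single deep input but the bookkeeping with the tower of sites $\fU_{\bullet,M}$: verifying that the local identifications, which are produced most naturally on the finer sites $\fU_{m',M}$, glue to the description on $\fU_{n,M}$ claimed in the statement, and that the commutation of reduction mod $p^n$ with the divided power polynomial construction holds at the level of \emph{sheaves} (this is where the freeness of $\langle X_1,\ldots,X_d\rangle$ over the base, rather than of each local section, is used).
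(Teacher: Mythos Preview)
Your proposal is correct and follows essentially the same route as the paper: reduce to the restriction over $\fU_{n,M}$ for small affines $\cU$, invoke the local description of Theorem~\ref{thm:gluing}(2) to identify $\bA_{\rm cris,\bullet,M}$ with $\bA_{\rm cris,\bullet,M}^\nabla\langle X_1,\ldots,X_d\rangle$, and then apply Lemma~\ref{lemma:compareAcrisnablaAcrisnabla'} on the $\nabla$-factor together with the observation that the transition maps fix the $X_i$. The paper's proof is a two-line pointer to exactly these ingredients; your write-up simply unpacks the freeness argument and the site bookkeeping that the paper leaves implicit.
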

\begin{proof}
It suffices to prove the two claims restricting to~$\fU_{n,M}$ for
every small object~$\cU$. They follow from theorem \ref{thm:gluing}
and lemma \ref{lemma:compareAcrisnablaAcrisnabla'}.
\end{proof}

Denote by~$\bA_{\rm cris,n,M}'$ the sheaf $\bA_{\rm cris,m,M}/p^n \bA_{\rm cris,m,M}$ for $m>n$ introduced in~\ref{lemma:Acrismodpn}. It is the $\WW(k)$--DP
envelope of\/~$\WW_{X,n,M}$ with respect to the kernel of the map~$\theta_{X,n}'\colon \WW_{n,M}\otimes_{\OMun}\cO_{\fX_M}^{\rm un}\to \cO_{\fX_M}/p^n\cO_{\fX_M}$
induced by~$\theta_{n,M}'$ on~$\WW_{n,M}$ and the natural projection $\cO_{\fX_M}^{\rm un}\to \cO_{\fX_M}/p^n\cO_{\fX_M}$.  For every small affine~$\cU$ and
parameters $T_1,\ldots,T_d$ denote by~$\varphi_{\cU,n}'\colon \bA_{\rm cris,\cU,n,M}'\to \bA_{\rm cris,\cU,n,M}$ the map defined by $\varphi_n \tensor F_\cU$ on
$\WW_{n,M}\vert_{\fU_{n,M}}\otimes_{\OMun} \cO_{\fX_M}^{\rm un} $. Let~$\bA_{\rm cris,M}':=\left\{\bA_{\rm cris, n,M}' \right\}_n$ be the associated system of
sheaves, where the transition maps $r_{X,n+1,M}'\colon \bA_{\rm cris, n+1,M}'\to \bA_{\rm cris,n,M}'$ a induced by the transition maps~$\{r_{X,m,M}\}_m$. By
construction we have a natural morphism $q_{X,n}\colon \bA_{\rm cris,n,M}'\to \bA_{\rm cris,n,M} $ for every~$n\in\N$ and hence a map $
q_X:=\left\{q_{X,n}\right\}_n\colon \bA_{\rm cris,M}'\to \bA_{\rm cris,M}$. For every small affine~$\cU$ and parameters $T_1,\ldots,T_d$ denote
by~$\varphi_{\cU}':=\left\{\varphi_{\cU,n}'\right\}_n$.

Following \cite{brinon} define~$ A_{\rm cris,M}(\Rbar_\cU)$ as the
$p$--adic completion of the $\WW(k)$--DP envelope of
$\WW\bigl(\crR(\Rbar_\cU)\bigr)\otimes_{\cO_K} R_\cU$ with respect
to the kernel of the map
$\WW\bigl(\crR(\Rbar_\cU)\bigr)\otimes_{\cO_K} R_\cU \to \hR_\cU$
given by $x \otimes y \mapsto \vartheta(x) y$. Furthermore, it is
proved that the operator $\varphi\tensor F_\cU$ on
$\WW\bigl(\crR(\Rbar_\cU)\bigr)\otimes_{\cO_K} R_\cU$ defines an
operator~$\varphi$ on~$A_{\rm cris,M}(\Rbar_\cU)$. It is shown in
\cite [Prop.~6.1.8]{brinon}  that $A_{\rm cris,M}(\Rbar_\cU)$ is
the $p$--adic completion of the algebra $A_{\rm
cris}^\nabla(\Rbar_\cU)\langle X_1,\ldots,X_d\rangle$. Hence
$$A_{\rm cris}(\Rbar_\cU)/p^n A_{\rm cris}(\Rbar_\cU)\cong
\left(A_{\rm cris}^\nabla(\Rbar_\cU)/p^nA_{\rm
cris}^\nabla(\Rbar_\cU)\right)\langle X_1,\ldots,X_d\rangle.$$This
and~\ref{lemma:Acrismodpn} provide a natural map $g_{\cU,n}\colon
A_{\rm cris}(\Rbar_\cU)/p^n A_{\rm cris}(\Rbar_\cU) \to \bA_{\rm
cris,n,M}'(\Rbar_\cU)$ and hence  a map
$$\ds g_\cU:=\lim_n g_{\cU,n}\colon A_{\rm cris}(\Rbar_\cU)\lra
\bA_{\rm cris,M}'(\Rbar_\cU).$$

\begin{proposition}
\label{prop:crislocalization} 1) The map $\bA_{\rm
cris,M}'(\Rbar_\cU)\to \bA_{\rm cris,M}(\Rbar_\cU) $ induced
by~$q_X$ is an isomorphism.\smallskip

2) The map $g_\cU$  is an isomorphism and commutes with the two
Frobenii.
\end{proposition}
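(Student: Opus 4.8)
The strategy is to reduce both statements to the corresponding facts about the sheaves $\bA_{\rm cris,M}^\nabla$, $\bA_{\rm cris,M}^{'\nabla}$ and the ring $A_{\rm cris}^\nabla(\Rbar_\cU)$ already established in Proposition \ref{prop:acrisnabla} and Lemma \ref{lemma:acrisnabla}, by exploiting that, locally on a small affine, everything in sight is a free divided-power polynomial algebra in the variables $X_1,\ldots,X_d$ over its $\nabla$-counterpart.

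For (1) the argument should be formal. By Lemma \ref{lemma:Acrismodpn} the sheaf $\bA_{\rm cris,n,M}'$ is canonically $\bA_{\rm cris,m,M}/p^n\bA_{\rm cris,m,M}$ for any $m>n$; taking $m=n+1$ this produces natural reduction maps $u_{X,n}\colon \bA_{\rm cris,n+1,M}\to \bA_{\rm cris,n,M}'$, and one checks, exactly as for the maps $q_n,u_n$ recalled in \S\ref{sec:Notation} and for their sheaf analogues in Proposition \ref{lemma:Acrisnnabla'}(4), that $q_{X,n}\circ u_{X,n}=r_{X,n+1,M}$ and $u_{X,n}\circ q_{X,n+1}=r_{X,n+1,M}'$. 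Applying the localization functor at $\Rbar_\cU$ and passing to $\lim_n$, the system $\{u_{X,n}(\Rbar_\cU)\}_n$ then yields a two-sided inverse of $q_X$.

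For (2) the first point would be to identify the two sides modulo $p^n$. By Theorem \ref{thm:gluing}(2) and Lemma \ref{lemma:Acrismodpn}, restricted to the localized site $\fU_{n,M}$ the sheaf $\bA_{\rm cris,n,M}'$ is $\bA_{\rm cris,n,M}^{'\nabla}\vert_{\fU_{n,M}}\langle X_1,\ldots,X_d\rangle$, which is free over $\bA_{\rm cris,n,M}^{'\nabla}\vert_{\fU_{n,M}}$ on the divided-power monomials; since $\Rbar_\cU$ already contains the chosen roots $\zeta_n$ and $T_i^{1/p^n}$, it is cofinal among the covers computing the localization over $\fU_{n,M}$, so the localization functor commutes with the formation of $-\langle X_1,\ldots,X_d\rangle$ and $\bA_{\rm cris,n,M}'(\Rbar_\cU)\cong \bA_{\rm cris,n,M}^{'\nabla}(\Rbar_\cU)\langle X_1,\ldots,X_d\rangle$. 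On the other side $A_{\rm cris}(\Rbar_\cU)/p^n\cong\bigl(A_{\rm cris}^\nabla(\Rbar_\cU)/p^n\bigr)\langle X_1,\ldots,X_d\rangle$ by \cite[Prop.~6.1.8]{brinon}, and under these identifications $g_{\cU,n}$ becomes the map $g_{n,M}$ of Proposition \ref{prop:acrisnabla}(2) extended by the identity on the $X_i$. Hence $g_{\cU,n}$ is injective with cokernel killed by $\II$, and, extending the last assertion of Lemma \ref{lemma:acrisnabla} in the same way (immediate, as the transition maps act only on the coefficient part), the transition $\bA_{\rm cris,n+1,M}'(\Rbar_\cU)\to\bA_{\rm cris,n,M}'(\Rbar_\cU)$ factors through $A_{\rm cris}(\Rbar_\cU)/p^n$. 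Thus the pro-systems $\{A_{\rm cris}(\Rbar_\cU)/p^n\}_n$ and $\{\bA_{\rm cris,n,M}'(\Rbar_\cU)\}_n$ are pro-isomorphic via $g_{\cU,\bullet}$, so $g_\cU=\lim_n g_{\cU,n}$ is an isomorphism. Finally, Frobenius compatibility would be checked factor by factor: on $\bA_{\rm cris,n,M}^{'\nabla}$ it is part of Proposition \ref{prop:acrisnabla}(2), while both Frobenii send $X_i=1\otimes T_i-\tT_i\otimes 1$ to $1\otimes T_i^p-\tT_i^p\otimes 1$ by Corollary \ref{cor:extensionofFrobenius}(2).

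The step I expect to be the main obstacle is precisely this identification of $\bA_{\rm cris,n,M}'(\Rbar_\cU)$ with the divided-power polynomial algebra over $\bA_{\rm cris,n,M}^{'\nabla}(\Rbar_\cU)$: it requires care about which site the localization is taken on, passing to $\fU_{n,M}$ via Theorem \ref{thm:gluing}(2), the cofinality of the covers refining $\Rbar_\cU$, and matching the variables $X_i$ on both sides with those used by Brinon. Once that bookkeeping is in place, (2) follows at once from the $\nabla$-statements already proved.
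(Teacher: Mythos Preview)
Your proposal is correct and follows essentially the same route as the paper. The paper's proof of (1) cites Lemma~\ref{lemma:Acrismodpn} and Proposition~\ref{prop:acrisnabla}, and for (2) defers to Lemma~\ref{lemma:localizeAcris} (whose proof in turn is just Lemma~\ref{lemma:Acrismodpn} plus Lemma~\ref{lemma:acrisnabla}); you have simply unpacked these citations, writing out explicitly how the divided-power polynomial structure in the variables $X_1,\ldots,X_d$ reduces everything to the already-established $\nabla$ case, which is exactly the content behind the paper's terse references.
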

\begin{proof} The first claim follows from~\ref{lemma:Acrismodpn}
and~\ref{prop:acrisnabla}. The second statement follows from the
next lemma.\end{proof}

\begin{lemma}\label{lemma:localizeAcris}
For every~$n\in\N$ the map $g_{\cU,n}$ is injective, its cokernel
is annihilated by any element of~$\II$, it commutes with Frobenii
and the transition map $\bA_{\rm cris,n+1,M}'(\Rbar_\cU)\to
\bA_{\rm cris,n,M}'(\Rbar_\cU)$ factors via $A_{\rm
cris}(\Rbar_\cU)/p^n A_{\rm cris}(\Rbar_\cU)$.
\end{lemma}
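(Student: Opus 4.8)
The plan is to mimic closely the proof of Lemma \ref{lemma:acrisnabla}, using the fact that everything in sight is obtained from the $\nabla$-version by adjoining the divided-power polynomial variables $X_1,\ldots,X_d$. Concretely, by Proposition \ref{prop:localization} and Theorem \ref{thm:gluing}(2) together with Lemma \ref{lemma:Acrismodpn}, one has on the small affine $\cU$ the identification $\bA_{\rm cris,n,M}'(\Rbar_\cU)\cong \bA_{\rm cris,n,M}^{'\nabla}(\Rbar_\cU)\langle X_1,\ldots,X_d\rangle$, while on the other side $A_{\rm cris}(\Rbar_\cU)/p^n A_{\rm cris}(\Rbar_\cU)\cong \bigl(A_{\rm cris}^\nabla(\Rbar_\cU)/p^n A_{\rm cris}^\nabla(\Rbar_\cU)\bigr)\langle X_1,\ldots,X_d\rangle$ by \cite[Prop.~6.1.8]{brinon}. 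Both divided-power polynomial rings are \emph{free} modules over their respective base rings with the same explicit monomial basis (the monomials in the $X_{i,j}$ with each $X_{i,j}$ of degree $\le p-1$), and the map $g_{\cU,n}$ is, under these identifications, simply $X_{i,j}\mapsto X_{i,j}$ together with the map $A_{\rm cris}^\nabla(\Rbar_\cU)/p^n\to \bA_{\rm cris,n,M}^{'\nabla}(\Rbar_\cU)$ of Lemma \ref{lemma:acrisnabla} on the base. Hence $g_{\cU,n}$ is a base change of that map along the free-module functor $(-)\langle X_1,\ldots,X_d\rangle$.

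First I would check injectivity: since $g_{\cU,n}$ is obtained from the base map by tensoring with a free module (with a fixed basis compatible on both sides), injectivity of $g_{\cU,n}$ follows immediately from injectivity of $A_{\rm cris}^\nabla(\Rbar_\cU)/p^n\hookrightarrow \bA_{\rm cris,n,M}^{'\nabla}(\Rbar_\cU)$, which is Lemma \ref{lemma:acrisnabla}. Next, for the cokernel: writing $Q$ for the cokernel of the base map, one has $\Coker(g_{\cU,n})\cong Q\langle X_1,\ldots,X_d\rangle$ as a module over $\bA_{\rm cris,n,M}^{'\nabla}(\Rbar_\cU)$, i.e. a direct sum of copies of $Q$ indexed by the monomial basis; since $Q$ is annihilated by every element of $\II$ by Lemma \ref{lemma:acrisnabla}, so is the whole cokernel. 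For compatibility with Frobenius, I would invoke Corollary \ref{cor:extensionofFrobenius}(2): Frobenius on the $X_i$ is given by the explicit formula $X_i\mapsto X_i\bigl(\sum_{h=1}^{p-1}T_i^h\tT_i^{p-h}\bigr)$ on both sides (this is the same formula used in \cite{brinon} to define Frobenius on $A_{\rm cris}(\Rbar_\cU)$), and on the base it is compatible by Lemma \ref{lemma:acrisnabla}; hence $g_{\cU,n}$ commutes with the two Frobenii.

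Finally, for the assertion about the transition map $\bA_{\rm cris,n+1,M}'(\Rbar_\cU)\to \bA_{\rm cris,n,M}'(\Rbar_\cU)$ factoring through $A_{\rm cris}(\Rbar_\cU)/p^n A_{\rm cris}(\Rbar_\cU)$: the transition map is, under the identifications above, the map induced on $(-)\langle X_1,\ldots,X_d\rangle$ by the transition map $\bA_{\rm cris,n+1,M}^{'\nabla}(\Rbar_\cU)\to \bA_{\rm cris,n,M}^{'\nabla}(\Rbar_\cU)$, which by the last clause of Lemma \ref{lemma:acrisnabla} factors through $A_{\rm cris}^\nabla(\Rbar_\cU)/p^n A_{\rm cris}^\nabla(\Rbar_\cU)$; applying the (exact, basis-preserving) functor $(-)\langle X_1,\ldots,X_d\rangle$ to that factorization gives the desired factorization through $A_{\rm cris}(\Rbar_\cU)/p^n A_{\rm cris}(\Rbar_\cU)$. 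The only genuine point requiring care—and the place I expect to spend the most effort—is verifying that the divided-power-polynomial constructions really do commute with all the maps in play on the nose (i.e. that forming $\langle X_1,\ldots,X_d\rangle$ is functorial and exact in the base with the stated monomial basis, and that $g_{\cU,n}$ is literally the base change of the $\nabla$-level map rather than merely compatible up to some identification); once that bookkeeping is set up, every one of the four assertions reduces formally to the corresponding assertion in Lemma \ref{lemma:acrisnabla}.
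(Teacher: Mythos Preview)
Your proposal is correct and is precisely the argument the paper has in mind: the paper's proof is the single line ``It follows from lemma \ref{lemma:Acrismodpn} and lemma \ref{lemma:acrisnabla},'' and what you have written is exactly the unpacking of that sentence via the free $\langle X_1,\ldots,X_d\rangle$-module structure. The bookkeeping you flag at the end is indeed routine once Lemma \ref{lemma:Acrismodpn} and \cite[Prop.~6.1.8]{brinon} are in place.
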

\begin{proof}
It follows from lemma \ref{lemma:Acrismodpn} and lemma \ref{lemma:acrisnabla}.
\end{proof}

\subsection{Further properties of $\bA_{\rm cris,M}^\nabla$ and $\bA_{\rm cris,M}$.}
\label{sec:propAcris}

Let us recall that we write~$\bA_{\rm cris,M}^{\nabla}$ both for
the system of sheaves~$\{\bA_{\rm cris,n,M}^\nabla\}_n$ and
for~$\{\bA_{\rm cris,M,n}^{'\nabla}\}_n$. Similarly, we
write~$\bA_{\rm cris,M}$ both for~$\{\bA_{\rm cris,n,M}\}_n$ and
for~$\{\bA_{\rm cris,n,M}'\}_n$.  We specify which system is used
when needed. Whenever~$\bA_{\rm cris,M}$ appears we implicitly
assume, as in the previous section, that $\cO_K=\WW(k)$. Consider
the filtration~$\{\Fil^r\left(\bA_{\rm
cris,M}^{\nabla}\right)\}_{r\in\N}$ defined
by~$\{\Ker(\theta_{n,M})^{[r]}\}_n$
(resp.~$\{\Ker(\theta_{n,M}')^{[r]}\}_n$). Analogously define the
filtration~$\{\Fil^r\left(\bA_{\rm cris,M}\right)\}_{r\in\N}$
given by the subsheaves $\{\Ker(\theta_{X,M,n})^{[r]}\}_n$
(resp.~$\{\Ker(\theta_{X,M,n}')^{[r]}\}_n$).

Let~$\cU$ be a small affine  of $X^{\rm et}$ and choose
parameters~$T_1,\ldots,T_d$ in~$R_\cU^\times$. Then $\bA_{\rm
cris,M}\vert_{\fU_{n,M}}=\bA_{\rm
cris,M}^\nabla\vert_{\fU_{n,M}}\left\{\langle
X_1,\ldots,X_d\rangle\right\} $ by~\ref{thm:gluing} and
$\Fil^r\left(\bA_{\rm cris,M}\right)\vert_{\fU_{n,M}}$ is $\sum
\Fil^{s_0}\left(\bA_{\rm cris,M}^{\nabla}\right) X_1^{[s_1]}
\cdots X_d^{[s_d]}$ over all~$s_0,\ldots,s_d\in\N$ such
that~$s_0+\cdots+s_d \geq r$.

We remark that the element~$t$ is an element of $\Fil^1\left(A_{\rm cris}\right)$ and, hence, of $\Fil^1\left(\bA_{\rm cris,M}\right)(\Rbar_\cU)$ as well.
Write~$B_{\rm cris}^\nabla(\Rbar_\cU)=A_{\rm cris}^\nabla(\Rbar_\cU)\left[\frac{1}{t}\right]$ and~$B_{\rm cris}(\Rbar_\cU)=A_{\rm
cris}(\Rbar_\cU)\left[\frac{1}{t}\right]$. Note that since~$t$ lies in~$\Ker(\theta)$, it admits divided powers in~$A_{\rm cris}(\Rbar_\cU)$ so that~$t^p=p
!t^{[p]}$ and~$p$ is invertible in~$B_{\rm cris}(\Rbar_\cU)$. In particular the definition given here agrees with the one given in~\cite[Def.~6.1.11]{brinon}.
In~\cite[\S6.2.1]{brinon} decreasing filtrations~$\{\Fil^r B_{\rm cris}^\nabla(\Rbar_\cU)\}_{r\in \Z}$ on~$B_{\rm cris}^\nabla(\Rbar_\cU)$ and~$\{\Fil^r B_{\rm
cris}(\Rbar_\cU)\}_{r\in \Z}$ on~$B_{\rm cris}(\Rbar_\cU)$ are defined. Then

\begin{proposition}\label{prop:filtAcrisnabla} The filtrations~$\{\Fil^r\left(\bA_{\rm
cris,M}^{\nabla}\right)\}_{r\in\N}$ and~$\{\Fil^r\left(\bA_{\rm
cris,M}\right)\}_{r\in\N}$ are decreasing, separated and
exhaustive.\smallskip

Let~$\cU$ be a small affine. Via the identifications $\bA_{\rm
cris,M}^{\nabla}(\Rbar_\cU) \cong A_{\rm
cris}^{\nabla}(\Rbar_\cU)$ and $\bA_{\rm cris,M}(\Rbar_\cU) \cong
A_{\rm cris}(\Rbar_\cU)$ given in~\ref{prop:acrisnabla}
(resp.~\ref{prop:crislocalization}), we have for every~$r\in \Z$
the identifications $\Fil^a A_{\rm cris}^\nabla(\Rbar_\cU)=\Fil^a
\bA_{\rm cris,M}^{\nabla}(\Rbar_\cU)$ and $\Fil^a A_{\rm
cris}(\Rbar_\cU)=\Fil^a \bA_{\rm cris,M}(\Rbar_\cU)$. In
particular,
$$\Fil^r B_{\rm cris}^\nabla(\Rbar_\cU)=\sum_{a+b\geq
r} t^b \Fil^a \bA_{\rm
cris,M}^{\nabla}(\Rbar_\cU)\bigl[p^{-1}\bigr] \mbox{ and } \Fil^r B_{\rm
cris}(\Rbar_\cU)=\sum_{a+b\geq r} t^b \Fil^a \bA_{\rm
cris,M}(\Rbar_\cU)\bigl[p^{-1}\bigr].$$
\end{proposition}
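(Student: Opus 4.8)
That both filtrations are decreasing is immediate from the definitions, since $\Ker(\theta_{n,M})^{[r+1]}$ (resp. $\Ker(\theta_{X,M,n})^{[r+1]}$) is a divided power sub-ideal of $\Ker(\theta_{n,M})^{[r]}$ (resp. of $\Ker(\theta_{X,M,n})^{[r]}$); exhaustivity is trivial for an $\N$-indexed decreasing filtration because $\Fil^0$ is the whole sheaf. So the real content of the first assertion is separatedness, and the heart of the matter is the second assertion, the compatibility of the two filtrations under the localization isomorphisms of \ref{prop:acrisnabla} and \ref{prop:crislocalization}. The plan is to reduce everything to localizations: a (system of) sheaf(s) $\cG$ on $\fX_M$ is determined by its localizations $\cG(\Rbar_\cU)$ over small affine $\cU\in X^{\rm et}$ together with the $\cG_{\cU_M}$-actions, via $\cG(\cU,\cW)=\cG(\Rbar_{\cU,i})^{G_\cW}$, and the localization functor, being a filtered colimit followed by a finite direct sum, is exact and commutes with the formation of divided power powers of ideals; hence $\Fil^a\bA_{\rm cris,M}^\nabla(\Rbar_\cU)$ is the $a$-th divided power of $\Ker(\theta_{n,M})(\Rbar_\cU)$ inside $\bA_{\rm cris,M}^\nabla(\Rbar_\cU)$, and both ``separated'' and the identification of filtrations may be checked after localizing. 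By \ref{lemma:betaastG} we may moreover assume $M=\Kbar$.

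For the second assertion I would argue as follows. The isomorphism $A_{\rm cris}^\nabla(\Rbar_\cU)\xrightarrow{\sim}\bA_{\rm cris,M}^\nabla(\Rbar_\cU)$ of \ref{prop:acrisnabla} is the inverse limit over $n$ of the maps $g_{n,M}\colon A_{\rm cris}^\nabla(\Rbar_\cU)/p^n\to\bA_{\rm cris,n,M}^{'\nabla}(\Rbar_\cU)$ (after the identification $q_M$). Each $g_{n,M}$ is a morphism of $\WW(k)$-divided power algebras --- it is induced by $\xi\mapsto\xi$, $\delta_i\mapsto\delta_i$ on the $A_{\rm cris}$-factor and by $g_n$ on the Witt vector factor --- and is compatible with the structure maps $\vartheta$ and $\theta_{n,M}'$ to $\Rbar_\cU/p^n\Rbar_\cU$. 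Since $\theta_{n,M}'\circ g_{n,M}$ is the structure map of $A_{\rm cris}^\nabla(\Rbar_\cU)/p^n$ and $g_{n,M}$ is injective (\ref{lemma:acrisnabla}), one gets $g_{n,M}^{-1}\bigl(\Ker\theta_{n,M}'(\Rbar_\cU)\bigr)=\Ker\vartheta\cdot\bigl(A_{\rm cris}^\nabla(\Rbar_\cU)/p^n\bigr)$, i.e. $g_{n,M}$ is strictly compatible with the divided power ideals. Passing to the inverse limit over $n$ (these systems are Mittag--Leffler, with surjective transition maps), the resulting ring isomorphism carries $\Ker\vartheta=(\xi)$ isomorphically onto the divided power ideal $\varprojlim_n\Ker\theta_{n,M}'(\Rbar_\cU)$ of $\bA_{\rm cris,M}^\nabla(\Rbar_\cU)$, compatibly with the $\gamma_i$; hence it is an isomorphism of $\WW(k)$-divided power algebras and therefore matches $a$-th divided powers, that is $\Fil^a A_{\rm cris}^\nabla(\Rbar_\cU)=\Fil^a\bA_{\rm cris,M}^\nabla(\Rbar_\cU)$ for all $a\ge 0$, both sides being the $p$-adic closure of the $a$-th divided power of the respective divided power ideal (by definition on the $\bA$-side, by \cite[\S6.2.1]{brinon} on the other). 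The non-$\nabla$ case is entirely analogous: over a small affine with parameters one has $\bA_{\rm cris,M}\vert_{\fU_{n,M}}=\bA_{\rm cris,M}^\nabla\vert_{\fU_{n,M}}\bigl\{\langle X_1,\ldots,X_d\rangle\bigr\}$ with the product filtration by \ref{thm:gluing}, while $A_{\rm cris}(\Rbar_\cU)$ is the $p$-adic completion of $A_{\rm cris}^\nabla(\Rbar_\cU)\langle X_1,\ldots,X_d\rangle$ with the corresponding filtration by \cite[Prop.~6.1.8]{brinon}, and the isomorphism of \ref{prop:crislocalization} matches $X_i$ with $X_i$; hence $\Fil^a A_{\rm cris}(\Rbar_\cU)=\Fil^a\bA_{\rm cris,M}(\Rbar_\cU)$.

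Granting this, separatedness follows: by the reduction above it suffices that $\bigcap_r\Fil^r\bA_{\rm cris,M}^\nabla(\Rbar_\cU)=0$ and $\bigcap_r\Fil^r\bA_{\rm cris,M}(\Rbar_\cU)=0$, which under the identification just proved become $\bigcap_r\Fil^r A_{\rm cris}^\nabla(\Rbar_\cU)=0$ and $\bigcap_r\Fil^r A_{\rm cris}(\Rbar_\cU)=0$, both established in \cite[\S6]{brinon} (these rings are $p$-torsion free and their graded pieces are understood). Finally the formula for $B_{\rm cris}^\nabla(\Rbar_\cU)$ and $B_{\rm cris}(\Rbar_\cU)$ is a formal consequence: by definition --- the analogue of the formula recalled in \S\ref{sec:Notation} for $B_{\rm cris}$ --- one has $\Fil^r B_{\rm cris}^\nabla(\Rbar_\cU)=\sum_m t^{-m}\Fil^{r+m}A_{\rm cris}^\nabla(\Rbar_\cU)$ inside $B_{\rm cris}^\nabla(\Rbar_\cU)=A_{\rm cris}^\nabla(\Rbar_\cU)[t^{-1}]$, and similarly for $B_{\rm cris}(\Rbar_\cU)$; substituting $\Fil^{r+m}A_{\rm cris}^\nabla(\Rbar_\cU)=\Fil^{r+m}\bA_{\rm cris,M}^\nabla(\Rbar_\cU)$, using that $t\in\Fil^1$ (so $t^b\Fil^a\subseteq\Fil^{a+b}$ for $b\ge 0$, whence the terms with $a+b\ge r$ and $b\ge 0$ already lie in $\Fil^r A_{\rm cris}^\nabla(\Rbar_\cU)$) and that $p$ becomes invertible after inverting $t$ (so $\Fil^a\bA_{\rm cris,M}^\nabla(\Rbar_\cU)[p^{-1}]$ makes sense inside $B_{\rm cris}^\nabla(\Rbar_\cU)$), one rewrites this as $\sum_{a+b\ge r}t^b\Fil^a\bA_{\rm cris,M}^\nabla(\Rbar_\cU)[p^{-1}]$; the non-$\nabla$ case is identical.

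The main obstacle, as I see it, is the second paragraph: showing that the finite-level maps $g_{n,M}$ --- which by \ref{lemma:acrisnabla} are only injective with cokernel annihilated by $\II$, not isomorphisms --- patch in the inverse limit to a genuine isomorphism of divided power algebras carrying $\Fil^a$ onto $\Fil^a$. Concretely one must check that the cokernels at each filtration level are still annihilated by $\II$, so that, since $\varprojlim_n$ of the full cokernels vanishes (being squeezed between $0$ and $\varprojlim_n^1$ of a Mittag--Leffler system, using that the limit of the full rings is an isomorphism by \ref{prop:acrisnabla}), the filtration-level cokernels vanish in the limit too; and one must check the strictness $g_{n,M}^{-1}(\Ker\theta_{n,M}')=\Ker\vartheta$ at each level, for which the essential input is the explicit presentation of these rings as free modules over $\WW_n\bigl(\crR(\Rbar_\cU)\bigr)$ resp. $\WW_n(\Rbar_\cU/p\Rbar_\cU)$ on the divided power monomial basis in the $\delta_i$ (and, in the non-$\nabla$ case, the $X_{i,j}$), the key flatness being $\WW_n(\Rbar_\cU/p\Rbar_\cU)$ flat over $W_n$ (\ref{cor:WnXisflatoverWnVbar}). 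The reductions to localizations and the bookkeeping for $B_{\rm cris}$ are routine.
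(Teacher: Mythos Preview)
Your proof is correct and follows broadly the same strategy as the paper, but with a different emphasis in one place. The paper dismisses ``decreasing, separated, exhaustive'' as clear, then for the identification of filtrations works through the embedding into $B_{\rm dR}$: in Brinon the filtration on $A_{\rm cris}^{\nabla}(\Rbar_\cU)$ and $A_{\rm cris}(\Rbar_\cU)$ is \emph{defined} as the pull-back from $B_{\rm dR}^{\nabla+}$ and $B_{\rm dR}^{+}$, and the paper invokes \cite[Pf.~Prop.~6.2.1]{brinon} together with the power-series description $B_{\rm dR}^{+}=B_{\rm dR}^{\nabla+}[\![X_1,\ldots,X_d]\!]$ and \cite[Prop.~5.2.5]{brinon} to see that this pulled-back filtration agrees with the divided-power filtration and hence with your $\Fil^\bullet\bA_{\rm cris,M}$. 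You instead argue intrinsically that the localization isomorphism is an isomorphism of DP algebras and hence matches DP filtrations; this is correct and arguably cleaner for the $\nabla$-case, and you cite the right input (\cite[\S6.2.1]{brinon}) there. The only imprecision is in the non-$\nabla$ case: \cite[Prop.~6.1.8]{brinon} gives the ring description $A_{\rm cris}(\Rbar_\cU)\cong A_{\rm cris}^\nabla(\Rbar_\cU)\{\langle X_1,\ldots,X_d\rangle\}$ but not the filtration; to know that Brinon's ($B_{\rm dR}$-induced) filtration on $A_{\rm cris}(\Rbar_\cU)$ is the product DP filtration you need, as the paper does, the $B_{\rm dR}$ product formula \cite[Prop.~5.2.5]{brinon}. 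Once that is cited, your argument goes through. Your derivation of the $B_{\rm cris}$ formulas is equivalent to the paper's (the paper phrases it as ``multiplication by $t$ shifts the filtration by $-1$'' via \cite[Prop.~5.2.1,~5.2.5]{brinon}).
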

\begin{proof} The first claim is clear. The filtrations on~$B_{\rm
cris}^{\nabla}(\Rbar_\cU)$ and~$B_{\rm cris}(\Rbar_\cU)$ are defined in loc.~cit.~as the pull--back of the natural filtrations on~$B_{\rm dR}(\Rbar_\cU)^{\nabla +}$
and~$B_{\rm dR}(\Rbar_\cU)^+$ via the inclusions~$B_{\rm cris}^{\nabla}(\Rbar_\cU)\subset B_{\rm dR}(\Rbar_\cU)^{\nabla}$ and~$ B_{\rm cris}(\Rbar_\cU)\subset
B_{\rm dR}(\Rbar_\cU)$. In particular this induces a filtration on~$A_{\rm cris}^{\nabla}(\Rbar_\cU)$ by restriction. It is proved in~\cite[Pf.~Prop.~6.2.1]{brinon}
that it coincides with the filtration induced by $\Fil^\bullet \bA_{\rm cris,M}^{\nabla}$ via the identification $\bA_{\rm cris,M}^{\nabla}(\Rbar_\cU)=A_{\rm
cris}^{\nabla}(\Rbar_\cU)$. By loc.~cit.~$A_{\rm cris}^{\nabla}(\Rbar_\cU)$ maps to the ring $B_{\rm dR}(\Rbar_\cU)^+=B_{\rm
dR}(\Rbar_\cU)^{\nabla+}[\![X_1,\ldots,X_d]\!]$ and due to~\cite[Prop.~5.2.5]{brinon}  we have~$\Fil^r\bigl(B_{\rm dR}(\Rbar_\cU)^+\bigr)=\sum_{s_0+\ldots s_d=r}
\Fil^{s_0}\bigl(B_{\rm dR}(\Rbar_\cU)^{\nabla+}\bigr) X_1^{s_1}\cdots X_d^{s_d}$. Since~$A_{\rm cris}(\Rbar_\cU)=A_{\rm cris}^{\nabla}(\Rbar_\cU)\left\{\langle
X_1,\ldots,X_d\rangle\right\}$, also the filtration on $A_{\rm cris}(\Rbar_\cU)$ induced from~$B_{\rm cris}(\Rbar_\cU)$ coincides with the filtration associated to
$\Fil^\bullet \bA_{\rm cris,M}$ via the identification $\bA_{\rm cris,M}(\Rbar_\cU)=A_{\rm cris}(\Rbar_\cU)$. Since multiplication by~$t$ induces a shift by~$-1$
on~$\Fil^\bullet B_{\rm cris}^\nabla(\Rbar_\cU)$ and on~$\Fil^\bullet B_{\rm cris}(\Rbar_\cU)$ by~\cite[Prop.~5.2.1]{brinon} and~\cite[Prop.~5.2.5]{brinon} and
since multiplication by $p$ on $B_{\rm dR}(\Rbar_\cU)$ is an isomorphism and preserves the filtration, the claim follows.
\end{proof}

For every~$i\in\N$ let~$\Omega^i_{X/\cO_K}\in \Sh(X^{\rm et})$ be the sheaf of continuous K\"ahler differentials on the  \'etale  site of~$X$ relative to~$\cO_K$.
Then $v_{X,M}^\ast\bigl(\Omega^i_{X/\cO_K}\bigr)$ is a locally free sheaf of $v_{X,M}^\ast(\cO_X)\cong \cO_{\fX_M}^{\rm un}$-modules over $\fX_M$. The de Rham
complex on~$X$ defines a de Rham complex $v_{X,M}^\ast\bigl(\Omega^\bullet_{X/\cO_K}\bigr)$ on~$\fX_M$. For every~$n$ we get a complex $\WW_{X,M,n}
\otimes_{\cO_{\fX_M}^{\rm un}} v_{X,M}^\ast\bigl(\Omega^\bullet_{X/\cO_K}\bigr)$ with $\WW_{n,M}$--linear maps~$\nabla^{i+1}\colon \WW_{X,n,M}
\otimes_{\cO_{\fX_M}^{\rm un}} v_{X,M}^\ast\bigl(\Omega^i_{X/\cO_K}\bigr) \to \WW_{X,n,M} \otimes_{\cO_{\fX_M}^{\rm un}}
v_{X,M}^\ast\bigl(\Omega^{i+1}_{X/\cO_K}\bigr)$.\smallskip

{\it Convention:} In order to simplify the notation, for every
sheaf of $\cO_{\fX_M}^{\rm un}$-modules ${\cal E}$ and any sheaf
of $\cO_X$-modules ${\cal M}$ we write $\cE\otimes_{\cO_X} \cM$
for $\cE\otimes_{\cO_{\fX_M}^{\rm un}}
v_{X,M}^\ast\bigl(\cM\bigr)$.\smallskip

Let $d$ be the relative dimension of\/ $X$ over~$\cO_K$. Then we have

\begin{proposition}\label{prop:deRhamcomplex}
The complex $\WW_{X,n,M} \otimes_{\cO_X} \Omega^\bullet_{X/\cO_K}$
extends uniquely to a complex $$\bA_{\rm cris,M}
\stackrel{\nabla^1}{\lra} \bA_{\rm cris,M} \otimes_{\cO_X}
\Omega^1_{X/\cO_K} \stackrel{\nabla^2}{\lra} \bA_{\rm cris,M}
\otimes_{\cO_X} \Omega^2_{X/\cO_K} \lra \cdots
\stackrel{\nabla^d}{\lra} \bA_{\rm cris,M} \otimes_{\cO_X}
\Omega^d_{X/\cO_K} \lra 0 $$with the following property: for
every~$(\cU,\cW)\in \fX_M$, for~$m$, $n$ and $i\in\N$ and for
$x\in \Ker\bigl(\theta_{X,n}\bigr)(\cU,\cW)\in \bA_{\rm
cris,n,M}(\cU,\cW)$ and~$\omega\in \Omega^i_{\cU/\cO_K}$ we have
$\nabla^{i+1}(x^{[m]}\tensor \omega)=x^{[m-1]} \tensor
\nabla^{i+1}(x \omega)$.  Furthermore

\begin{enumerate}

\item[i.] the sequence above is exact;

\item[ii.] the natural inclusion $\bA_{\rm cris,M}^\nabla\subset
\bA_{\rm cris,M}$  identifies $\Ker(\nabla^1)$ with~$\bA_{\rm
cris,M}^\nabla$;

\item[iii.] {\rm (Griffith's transversality)} we have
$\nabla\left(\Fil^r\bigl(\bA_{\rm cris,M}\bigr)\right)\subset
\Fil^{r-1}\left(\bA_{\rm cris,M}\right)\otimes_{\cO_X}
\Omega^1_{X/\cO_K}$ for every~$r$;

\item[iv.] for every $r\in \N$ the sequence $0\lra \Fil^r \bA_{\rm
cris,M}^\nabla \lra \Fil^r \bA_{\rm cris,M}
\stackrel{\nabla^1}{\lra}  \Fil^{r-1} \bA_{\rm cris,M}
\otimes_{\cO_X} \Omega^1_{X/\cO_K} \stackrel{\nabla^2}{\lra}
\Fil^{r-2} \bA_{\rm cris,M} \otimes_{\cO_X} \Omega^2_{X/\cO_K}
\stackrel{\nabla^3}{\lra} \cdots$, with the convention that
$\Fil^s \bA_{\rm cris,M}=\bA_{\rm cris,M} $ for $s<0$, is exact;

\item[v.] the connection $\nabla\colon \bA_{\rm cris,M}\lra
\bA_{\rm cris,M}\otimes_{\cO_X} \Omega^1_{X/\cO_K}$ is
quasi--nilpotent;

\item[vi.] Let~$\cU$ be a  small affine, choose parameters
$T_1,\ldots,T_d\in R_\cU^\times$ and let $F_\cU$ be the induced
lift of absolute Frobenius to~$R_\cU$. Then
Frobenius~$\varphi_\cU$ on~$\bA_{\rm cris,M}\vert_\cU$ is
horizontal with respect to~$\nabla_\cU$ i.~e., $\nabla\vert_\cU
\circ \varphi_\cU=\bigl(\varphi_\cU\tensor dF_\cU\bigr)\circ
\nabla\vert_\cU$.

\end{enumerate}

\end{proposition}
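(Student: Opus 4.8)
The plan is to construct the complex and verify every assertion locally, on the sites $\fU_{n,M}$ attached to small affines $\cU=\Spf(R_\cU)$, and then to glue by appealing to the uniqueness that is part of the statement. I will work with the system $\{\bA_{\rm cris,n,M}'\}_n$, so that by Lemma~\ref{lemma:Acrismodpn} the restriction of $\bA_{\rm cris,M}$ to~$\fU_{n,M}$ is $\bA_{\rm cris,n,M}^{'\nabla}\vert_{\fU_{M,n}}\langle X_1,\ldots,X_d\rangle$, a \emph{free} $\bA_{\rm cris,n,M}^{'\nabla}\vert_{\fU_{M,n}}$-module on the divided-power monomials in the $X_{i,j}=\gamma^{j+1}(X_i)$, where $X_i=1\otimes T_i-\tT_i\otimes 1$ for parameters $T_1,\ldots,T_d\in R_\cU^\times$. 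On this local model I will define $\nabla^1$ by declaring $\bA_{\rm cris,n,M}^{'\nabla}\vert_{\fU_{M,n}}$ horizontal, setting $\nabla^1(X_i)=1\otimes dT_i$, and extending by the Leibniz rule together with $\nabla^1(x^{[m]})=x^{[m-1]}\nabla^1(x)$; well-definedness will follow because the only relations among the $X_{i,j}$ are $pX_{i,j+1}=X_{i,j}^p$, and the map extends $\WW_{X,n,M}\otimes_{\cO_X}\Omega^1$ since $\nabla(\tT_i\otimes 1)=0$ and $\nabla(1\otimes T_i)=1\otimes dT_i$. The displayed divided-power compatibility then holds on the nose, and since that identity, the Leibniz rule, and the requirement that the complex extend $\WW_{X,n,M}\otimes_{\cO_X}\Omega^\bullet$ pin $\nabla$ down, the locally defined connections will agree on overlaps $\fU_{n,M}\cap\fU'_{n,M}$ and glue to the global complex asserted in the proposition.

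For parts (i) and (ii) I would observe that, after restriction to~$\fU_{n,M}$, the de Rham complex is the tensor product over $\bA_{\rm cris,n,M}^{'\nabla}\vert_{\fU_{M,n}}$ of the $d$ one-variable complexes $\bA_{\rm cris,n,M}^{'\nabla}\langle X_i\rangle\stackrel{\nabla}{\to}\bA_{\rm cris,n,M}^{'\nabla}\langle X_i\rangle\,dT_i$, $X_i^{[m]}\mapsto X_i^{[m-1]}\,dT_i$. Each of these has cohomology $\bA_{\rm cris,n,M}^{'\nabla}$ in degree~$0$ and vanishing cohomology in degree~$1$ (the divided-power Poincar\'e lemma), and freeness of $\bA_{\rm cris,n,M}^{'\nabla}\langle X_1,\ldots,X_d\rangle$ over $\bA_{\rm cris,n,M}^{'\nabla}$ lets one pass to the $d$-fold tensor product without $\mathrm{Tor}$ terms; reading off~$H^0$ identifies $\Ker(\nabla^1)$ with $\bA_{\rm cris,M}^\nabla$, since the latter restricts to $\bA_{\rm cris,n,M}^{'\nabla}\vert_{\fU_{M,n}}$ on~$\fU_{n,M}$. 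Because exactness in $\Sh(\fX_M)^\N$ may be tested after restriction along the covering $\{\fU_{n,M}\}$ (indeed on stalks), this will suffice.

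For (iii) and (iv) I would use Proposition~\ref{prop:filtAcrisnabla} and the remark preceding it, which give $\Fil^r(\bA_{\rm cris,M})\vert_{\fU_{n,M}}=\sum_{s_0+\cdots+s_d\ge r}\Fil^{s_0}(\bA_{\rm cris,M}^\nabla)\,X_1^{[s_1]}\cdots X_d^{[s_d]}$. Applying $\nabla$ to a term $a\,X_1^{[s_1]}\cdots X_d^{[s_d]}$ with $a\in\Fil^{s_0}(\bA_{\rm cris,M}^\nabla)$ produces $(\nabla a)\prod_jX_j^{[s_j]}+\sum_ia\,X_i^{[s_i-1]}\,dT_i\prod_{j\ne i}X_j^{[s_j]}$; the first term vanishes since $\bA_{\rm cris,M}^\nabla$ is horizontal, and the remaining terms sit in filtration level $\ge r-1$, which gives Griffith transversality~(iii). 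For~(iv) I would rerun the Poincar\'e-lemma computation one filtration level at a time: the filtered one-variable complex $\Fil^s\bA_{\rm cris,M}^\nabla\langle X_i\rangle\to\Fil^{s-1}\bA_{\rm cris,M}^\nabla\langle X_i\rangle\,dT_i$ remains exact in positive degrees with $H^0=\Fil^s\bA_{\rm cris,M}^\nabla$, and tensoring over the $d$ coordinates, compatibly with the explicit description of $\Fil^\bullet$ on divided-power monomials, yields the asserted exactness.

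For (v), quasi-nilpotence would be read off the local model, where $\nabla_{\partial/\partial T_i}$ kills $\bA_{\rm cris,n,M}^{'\nabla}$ and sends $X_i^{[m]}\mapsto X_i^{[m-1]}$, so any divided-power monomial is annihilated by a high enough power of the $\nabla_{\partial/\partial T_i}$, whence $\nabla$ is quasi-nilpotent in the sense of~\cite[Ch.~3]{berthelot_ogus}. For~(vi), on a small affine~$\cU$ with the lift $F_\cU$ of Frobenius, $\varphi_\cU$ preserves the horizontal subsheaf $\bA_{\rm cris,M}^\nabla\vert_\cU$, on which both sides of $\nabla_\cU\circ\varphi_\cU=(\varphi_\cU\otimes dF_\cU)\circ\nabla_\cU$ vanish; on the generator $X_i$, Corollary~\ref{cor:extensionofFrobenius} gives $\varphi_\cU(X_i)=1\otimes T_i^p-\tT_i^p\otimes 1$, so $\nabla_\cU(\varphi_\cU X_i)=1\otimes d(T_i^p)=(\varphi_\cU\otimes dF_\cU)(1\otimes dT_i)=(\varphi_\cU\otimes dF_\cU)(\nabla_\cU X_i)$, and the Leibniz rule and divided-power compatibility propagate the identity to all of $\bA_{\rm cris,M}\vert_\cU$. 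The step I expect to be the main obstacle is (i) together with (iv): setting up the divided-power Poincar\'e lemma and its filtered refinement for the one-variable complexes, and making sure the passage from the local models on the $\fU_{n,M}$ back to sheaves on $\fX_M$ is legitimate — in particular that the tensor-product factorization of the de Rham complex is compatible, term by term, with $\Fil^\bullet\bA_{\rm cris,M}$.
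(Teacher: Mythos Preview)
Your proposal is correct and follows essentially the same route as the paper: reduce to the local model $\bA_{\rm cris,M}\vert_{\fU_{n,M}}\cong \bA_{\rm cris,M}^{\nabla}\vert_{\fU_{n,M}}\langle X_1,\ldots,X_d\rangle$, set $\nabla(X_i)=1\otimes dT_i$ with the divided-power rule, invoke uniqueness to glue, and deduce (i)--(iv) from the PD Poincar\'e lemma (the paper cites \cite[Pf.~Thm.~6.12]{berthelot_ogus} rather than spelling out the tensor-of-one-variable-complexes argument), while (vi) is checked on the generators $X_i$ exactly as you do. The only slight difference is in (v): the paper observes directly that $\nabla(\partial/\partial T_i)^p\equiv 0$ modulo $p\,\bA_{\rm cris,M}$, which is a cleaner and more uniform statement of quasi-nilpotence than the per-monomial annihilation you describe.
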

\begin{proof} The uniqueness is clear. We have to prove that the
formula defining~$\nabla^i$ is well defined. By uniqueness it suffices to show it after passing to the subcategory~$\fU_{n,M}$ where~$\cU$ is a small affine  of
$X^{\rm et}$. Write $\bA_{\rm cris,M}\vert_{\fU_{n,M}}\cong \bA_{\rm cris,n,M}^\nabla\vert_{\fU_{n,M}} \langle X_1,\ldots,X_d\rangle$ as in~\ref{thm:gluing}. Then
$\cO_{\fX_M}\otimes_{\cO_X} \Omega^1_{X/\cO_K}$ restricted to $\fU_M$ is a free $\cO_{\fU_M}$--module with basis~$dT_1,\ldots,dT_d$ and the element $X_i\in
\WW_{X,n}(\fU_{n,M})$ satisfies~$\nabla(X_i)=1 \tensor d T_i$. In particular the complex above extends uniquely to a complex $\bA_{\rm cris,M}\vert_{\fU_{n,M}}
\otimes_{\cO_{\cU}} \Omega^\bullet_{\cU/\cO_K}$ characterized by the property that~$\nabla\bigl(X_i^{[m]}\bigr)=X_i^{[m-1]}\tensor d T_i$.

Claims~(i)--(vi) can be also checked after passing
to~$\fU_{n,M}$. Claims~(ii) and~(iii) follow from the formulae
given above. Claims~(i) and~(iv) follow from the formulae and
Poincar\'e's lemma for the PD polynomial algebras (\cite[Proof of
Thm.~6.12]{berthelot_ogus}). Claim~(v) follows remarking that
$\nabla(\partial/\partial T_i)^p \equiv 0$ modulo~$p \bA_{\rm
cris,M}$.

Note that $F_\cU(T_i)=T_i^p$ so that $\varphi_\cU(X_i)=1\tensor T_i^p-\tT_i^p\tensor 1$. Hence $\varphi_\cU(X_i^{[m]})=\bigl(1\tensor T_i^p-\tT_i^p\tensor
1\bigr)^{[m]}$. We compute $\nabla\bigl(\varphi_\cU(X_i^{[m]})\bigr)=\bigl(1\tensor T_i^p-\tT_i^p\tensor 1\bigr)^{[m-1]}\tensor \nabla(1\tensor T_i^p-\tT_i^p\tensor
1)=\bigl(1\tensor T_i^p-\tT_i^p\tensor 1\bigr)^{[m-1]}\tensor d T_i^p=(\varphi_\cU\tensor d F_\cU) \nabla(X_i^{[m]} )$. This proves~(vi).
\end{proof}

We conclude this section with a variant of the constructions above
considering Tate twists. For every integer $r$ define  the inverse
systems of sheaves of $\bA_{\rm cris,n,M}^{'\nabla}$-modules
(resp.~$\bA_{\rm cris,n,M}'$-modules):
$$\bA_{\rm cris, M}^\nabla(r):=\left\{u_{X,M,\ast}\bigl(\Z_p/p^n
\Z_p(r) \bigr) \tensor_{\Z_p} \bA_{\rm
cris,n,M}^{'\nabla}\right\}_n$$and
$$\bA_{\rm cris,
M}(r):=\left\{u_{X,M,\ast}\bigl(\Z_p/p^n \Z_p(r) \bigr)
\tensor_{\Z_p} \bA_{\rm cris, n, M}'\right\}_n$$For $i\geq 1$
define
$$\nabla^{i-1}(r)\colon \bA_{\rm
cris,M}(r)\tensor_{\cO_X}\Omega^i_{X/\cO_K}  \lra \bA_{\rm
cris,M}(r)\tensor_{\cO_X}\Omega^{i+1}_{X/\cO_K}$$to be induced by
the system of morphisms on $u_{X,M,\ast}\bigl(\Z_p/p^n
\Z_p(r)\bigr) \tensor_{\Z_p} \bA_{\rm cris,n,M}'
\tensor_{\cO_X}\Omega^i_{X/\cO_K} \to u_{X,M,\ast}\bigl(\Z_p/p^n
\Z_p(r)\bigr)\tensor_{\Z_p} \bA_{\rm
cris,n,M}'\tensor_{\cO_X}\Omega^{i+1}_{X/\cO_K}$ given by
$1\otimes \nabla^{i-1}$. Put $\nabla(r):=\nabla^1(r)$.

We define an exhaustive, separated decreasing filtration on $\bA_{\rm
cris, M}^\nabla(r)$ (resp.~on $\bA_{\rm cris,M}(r)$) by inverse
systems of sub-sheaves by setting
$$\Fil^i \bA_{\rm
cris,M}^\nabla(r):=\left\{u_{X,M}^\ast\bigl(\Z_p/p^n
\Z_p(r)\bigr)\tensor_{\Z_p} \Fil^{i-r}\bA_{\rm cris,n,
M}^{'\nabla}(\cU,\cW) \right\}_n$$and $$\Fil^i \bA_{\rm
cris,M}(r):=\left\{u_{X,M}^\ast\bigl(\Z_p/p^n
\Z_p(r)\bigr)\tensor_{\Z_p} \Fil^{i-r}\bA_{\rm cris,n,
M}'(\cU,\cW) \ \right\}_n$$for every~$i\geq r$ and setting it to
be $\bA_{\rm cris,M}^\nabla(r)$ (resp.~$\bA_{\rm cris,M}(r)$) for
$i\leq r$.

Recall that $p^{-1}=(p-1)! \frac{t^{[p]}}{t^p}\in A_{\rm cris}
\cdot t^{-p}$. Thus, $p^{-r}$ lies in $A_{\rm cris} t^{-pr}$ and,
since it is invariant under $G_M$, it is a well defined element of
$\bA_{\rm cris,n,M}^{'\nabla}(pr)$ for every $n\in\N$. Define
Frobenius $\varphi\colon \bA_{\rm cris,M}^{'\nabla}(r) \to
\bA_{\rm cris,M}^{'\nabla}(pr)$ to be the system of morphisms
$$u_{X,M}^\ast\bigl(\Z_p/p^n \Z_p(r)\bigr)\tensor_{\Z_p} \bA_{\rm cris,n, M}^{'\nabla}
\lra u_{X,M}^\ast\bigl(\Z_p/p^n
\Z_p(r)\bigr)\tensor_{\Z_p}\bA_{\rm cris,n, M}^{'\nabla},$$given
by $a\otimes b \mapsto a\otimes p^{-r}\varphi(b)$. Assume
that~$\cU$ is a small affine, choose parameters $T_1,\ldots,T_d\in
R_\cU^\times$ and let $F_\cU$ be the induced lift of absolute
Frobenius to~$R_\cU$. Then using the same formula we get a
Frobenius $\varphi_\cU\colon \bA_{\rm cris,M}'(r)\vert_\cU \to
\bA_{\rm cris,M}'(pr)\vert_\cU$ which is horizontal with respect
to the connection $\nabla(r)_\cU$.

\begin{lemma}\label{lemma:FilAcris(r)} The filtrations~$\{\Fil^i\left(\bA_{\rm
cris,M}^{\nabla}(r)\right)\}_{r\in\N}$ and~$\{\Fil^i\left(\bA_{\rm
cris,M}(r)\right)\}_{i\in\N}$ are decreasing, separated and
exhaustive.

Let~$\cU$ be a small affine  of $X^{\rm et}$. Then
$$\Fil^i\left(\bA_{\rm cris,M}^{\nabla}(r)\right)(\Rbar_\cU)\cong
\Fil^{i-r} A_{\rm cris}^\nabla(\Rbar_\cU)\cdot t^r \mbox{ and }
\Fil^i\left(\bA_{\rm cris,M}(r)\right)(\Rbar_\cU)\cong \Fil^{i-r}
A_{\rm cris}(\Rbar_\cU)\cdot t^r.$$

The analogue of claims (i)--(vi) of \ref{prop:deRhamcomplex} hold
for the connection $\nabla(r)$ and the system of sheaves $\bA_{\rm
cris,M}^{\nabla}(r)$ and $\bA_{\rm cris,M}(r)$.
\end{lemma}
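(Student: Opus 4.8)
The plan is to deduce all three assertions from the corresponding untwisted statements, namely Propositions \ref{prop:filtAcrisnabla}, \ref{prop:acrisnabla}, \ref{prop:crislocalization} and \ref{prop:deRhamcomplex}, using the single observation that for each $n$ the sheaf $u_{X,M,\ast}\bigl(\Z_p/p^n\Z_p(r)\bigr)$ on $\fX_M$ is locally free of rank one over the constant sheaf $\Z/p^n\Z$. Consequently tensoring a $p^n$--torsion sheaf with it over $\Z_p$ is an exact operation, and locally on $\fX_M$ it is isomorphic to the identity functor. With this the three properties of the filtration are immediate: $\Fil^i\bigl(\bA_{\rm cris,M}^\nabla(r)\bigr)$ is decreasing because it is obtained by tensoring $\Fil^{i-r}\bA_{\rm cris,n,M}^{'\nabla}$ (which is decreasing in $i$) with this fixed twist; it is exhaustive because by definition it equals the whole sheaf as soon as $i\le r$; and it is separated because $\bigcap_j\Fil^j\bA_{\rm cris,M}^\nabla=0$ by Proposition \ref{prop:filtAcrisnabla} and tensoring with a locally free rank one sheaf preserves this vanishing, as may be checked locally where the twist disappears. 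The same argument applies verbatim to $\bA_{\rm cris,M}(r)$.

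For the localization over a small affine $\cU$, I would first note that the localization at $\Rbar_\cU$ of $u_{X,M,\ast}\bigl(\Z_p/p^n\Z_p(r)\bigr)$ is $\Z_p/p^n\Z_p(r)$ with the action of $\cG_{\cU_M}$ through the $r$--th power of the cyclotomic character, since this sheaf becomes constant over a finite \'etale cover of $\cU_M$; passing to the limit over $n$ and using Proposition \ref{prop:acrisnabla}(1),(3) (resp.\ Proposition \ref{prop:crislocalization}(1),(2)) one obtains $\bA_{\rm cris,M}^\nabla(r)(\Rbar_\cU)\cong \Z_p(r)\tensor_{\Z_p}A_{\rm cris}^\nabla(\Rbar_\cU)$ (resp.\ with $A_{\rm cris}(\Rbar_\cU)$ in place of $A_{\rm cris}^\nabla(\Rbar_\cU)$). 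The element $t$ of \S\ref{sec:Notation}, which satisfies $\sigma(t)=\chi(\sigma)t$, induces a $G_M$--equivariant map $\Z_p(r)\tensor_{\Z_p}A_{\rm cris}^\nabla(\Rbar_\cU)\to B_{\rm cris}^\nabla(\Rbar_\cU)$ sending $\zeta^{\tensor r}\tensor a\mapsto at^r$, with image $A_{\rm cris}^\nabla(\Rbar_\cU)\cdot t^r$; it is injective because $t$ is a nonzerodivisor on the $p$--torsion free ring $A_{\rm cris}^\nabla(\Rbar_\cU)$ (\cite{brinon}). Transporting the filtration through this isomorphism and invoking the identification $\Fil^a A_{\rm cris}^\nabla(\Rbar_\cU)=\Fil^a\bA_{\rm cris,M}^\nabla(\Rbar_\cU)$ of Proposition \ref{prop:filtAcrisnabla}, together with the convention that $\Fil^a A_{\rm cris}^\nabla(\Rbar_\cU)$ is the whole ring for $a\le 0$, gives the claimed $\Fil^i\bigl(\bA_{\rm cris,M}^\nabla(r)\bigr)(\Rbar_\cU)\cong\Fil^{i-r}A_{\rm cris}^\nabla(\Rbar_\cU)\cdot t^r$, and similarly for $\bA_{\rm cris,M}(r)$.

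Finally, claims (i)--(vi) reduce to their analogues in Proposition \ref{prop:deRhamcomplex}. Since $\nabla(r)$ is by construction $1\tensor\nabla$ on $u_{X,M,\ast}(\Z_p/p^n\Z_p(r))\tensor_{\Z_p}\bA_{\rm cris,n,M}'\tensor_{\cO_X}\Omega^\bullet_{X/\cO_K}$ and tensoring with $u_{X,M,\ast}(\Z_p/p^n\Z_p(r))$ is exact, the exact sequences (i) and (iv) follow by tensoring the corresponding sequences of Proposition \ref{prop:deRhamcomplex} and re--indexing the filtration by $r$; the identification of $\Ker(\nabla(r)^1)$ in (ii) and Griffith transversality (iii) follow by applying the same functor to the untwisted assertions; quasi--nilpotence (v) holds since $\nabla(r)$ has the same underlying divided--power derivations $\partial/\partial T_i$ as $\nabla$, whose $p$--th powers already vanish modulo $p$; and horizontality of $\varphi_\cU$ in (vi) follows from the untwisted (vi) once one notes that the scalar $p^{-r}\in A_{\rm cris}\cdot t^{-pr}$ occurring in the definition of $\varphi_\cU$ on the twist is horizontal for $\nabla$, being a section of $\bA_{\rm cris,M}^\nabla$ after inverting $t$. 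The only step that is not pure formalism is the localization computation in the second paragraph --- identifying the localized Tate--twist sheaf and checking the injectivity of multiplication by $t^r$ --- while the remainder is routine bookkeeping with an invertible twist.
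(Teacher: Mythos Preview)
Your argument is correct and follows exactly the approach of the paper, which proves the lemma in two sentences by referring to Proposition~\ref{prop:filtAcrisnabla} for the first two statements and to Proposition~\ref{prop:deRhamcomplex} together with the definition for the last. Your write-up simply unpacks those references in detail---in particular making explicit the role of the rank-one local freeness of $u_{X,M,\ast}(\Z_p/p^n\Z_p(r))$ and the $t^r$-identification of the Tate twist on localizations---so there is no substantive difference.
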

\begin{proof} The first and second statements follow
from proposition \ref{prop:filtAcrisnabla}. The last statement follows from
the definition and proposition \ref{prop:deRhamcomplex}.
\end{proof}

\subsection{The ind-sheaves $\bB_{\rm cris}^\nabla$ and $\bB_{\rm
cris}$.}\label{def:bBcris}

{\it Generalities on inductive systems.}  Let $\mathcal{A}$ be an
abelian category. We denote by ${\rm Ind}\bigl(\mathcal{A}\bigr)$,
called the category of inductive systems of objects of
$\mathcal{A}$, the category whose objects are
$\bigl(A_i,\gamma_i\bigr)_{i\in\Z}$ with $A_i$ object of
$\mathcal{A}$ and  $\gamma_i\colon A_i \to A_{i+1}$ morphism in
$\mathcal{A}$ for every $i\in\Z$. Given an integer $N\in\Z$ a
morphism $f\colon \underline{A}:=\bigl(A_i,\gamma_i\bigr)_{i\in\Z}
\lra \underline{B}:=\bigl(B_j,\delta_j\bigr)_{j\in\Z}$ of degree
$N$ is a system of morphisms $f_i\colon A_i \to B_{i+N}$ for
$i\in\Z$ such that $\delta_{i+N} \circ f_i=f_{i+1} \circ
\gamma_i$. Since $\mathcal{A}$ is an additive category the set of
morphisms of degree $N$ form an abelian group with the zero map,
the sum of two functions and the inverse of a function defined
componentwise. Given a morphism $f=(f_i)_{i\in \Z}\colon
\underline{A} \lra \underline{B}$ of degree $N$ we get a morphism
of degree $N+1$ given by $\bigl(\delta_{i+N} \circ
f_i\bigr)_{i\in\Z}$. This defines a group homomorphism from the
morphisms $\Hom^N\bigl(\underline{A},\underline{B}\bigr)  $ of
degree $N$ to the morphisms
$\Hom^{N+1}\bigl(\underline{A},\underline{B}\bigr) $ of degree
$N+1$. We define the group of morphisms $f\colon
\bigl(A_i,\gamma_i\bigr)_{i\in\Z} \lra
\bigl(B_j,\delta_j\bigr)_{j\in\Z}$ in  ${\rm
Ind}\bigl(\mathcal{A}\bigr)$ to be the inductive limit
$\lim_{N\in\Z} \Hom^N\bigl(\underline{A},\underline{B}\bigr)$ with
respect to the transition maps just defined.

Given any such morphism $f$ we let $\Ker(f)$ be the inductive system
$\bigl(\Ker(f_i)\bigr)_{i\in\Z}$ with transition morphisms defined
by the $\gamma_i$'s. We let $\Coker(f)$ be the inductive system
$\bigl(\Coker(f_{i-N}\bigr)_{i\in\Z}$ with transition morphisms
induced by the $\delta_i$'s.  One verifies that with these
definitions  the category ${\rm Ind}\bigl(\mathcal{A}\bigr)$ is
an abelian category. Note that we have a natural functor
$$\mathcal{A}\lra {\rm Ind}\bigl(\mathcal{A}\bigr)$$sending $A$ to
the inductive system $(A,{\rm Id})_{i\in\Z}$ which is exact and
fully faithful.

Assume furthermore that $\mathcal{A}$ is a tensor category. Given
objects $\underline{A}:=\bigl(A_i,\gamma_i\bigr)_{i\in\Z}$ and $
\underline{B}:=\bigl(B_j,\delta_j\bigr)_{j\in\Z}$ in ${\rm
Ind}(\mathcal{A})$ we define $\underline{A}\tensor \underline{B}$
to be the inductive system $\bigl(A_i\tensor B_i,\gamma_i\tensor
\delta_i\bigr)_{i\in\Z}$. In this way ${\rm
Ind}\bigl(\mathcal{A}\bigr)$ is endowed with the structure of a
tensor category so that the functor $\mathcal{A}\lra {\rm
Ind}\bigl(\mathcal{A}\bigr)$  is a morphism of tensor categories.
By abuse of notation given an object $B\in \mathcal{A}$ we write
$\underline{A}\tensor B$ for the inductive system
$\underline{A}\tensor \underline{B}$ with
$\underline{B}=\bigl(B,{\rm Id}\bigr)$.

Let $ \mathcal{B}$ be an abelian category in which direct limits
of inductive systems indexed by $\Z$ exist. Consider the induced
functor
$$\displaystyle{\lim_{\to}}\colon {\rm Ind}\bigl(\mathcal{B}\bigr)
\lra \mathcal{B}.$$Suppose we are given $\delta$-functors
$T^n\colon \mathcal{B} \to \mathcal{A}$ with ${n\in\N}$. Define
$$\displaystyle{\lim_{\to}} T^n\colon{\rm
Ind}\bigl(\mathcal{A}\bigr)\lra \mathcal{B}$$as the composite of
the functor ${\rm Ind}\bigl(\mathcal{A}\bigr) \to {\rm
Ind}\bigl(\mathcal{B}\bigr) $, given by $(A_i)_{in\Z}\mapsto
\bigl(T^n(A_i)\bigr)_{i\in\Z}$ and of the functor $
\displaystyle{\lim_{\to}}$.

\begin{lemma} If $\displaystyle{\lim_{\to}}$ is left exact then
the functors $\displaystyle{\lim_{\to}} T^n$, for varying
$n\in\N$, define a $\delta$-functor. If $(T^n)_{n\in\N}$ is
universal then also $\bigl(\displaystyle{\lim_{\to}}
T^n\bigr)_{n\in\N}$ is universal.
\end{lemma}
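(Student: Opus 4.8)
\emph{Approach.} Write $c\colon\mathcal{B}\to{\rm Ind}(\mathcal{B})$ (and likewise for $\mathcal{A}$) for the constant embedding $B\mapsto(B,{\rm Id})_i$, which is exact and fully faithful as recalled above, and note that $\lim_{\to}T^n$ is the composite of the termwise functor ${\rm Ind}(T^n)\colon{\rm Ind}(\mathcal{A})\to{\rm Ind}(\mathcal{B})$, $(A_i)_i\mapsto(T^n(A_i))_i$, with $\lim_{\to}\colon{\rm Ind}(\mathcal{B})\to\mathcal{B}$. The first thing I would record is that $\lim_{\to}$ is \emph{exact}: it is right exact because it is a left adjoint --- the bijection $\Hom_{\mathcal{B}}(\lim_{\to}\underline{C},B)\cong\Hom_{{\rm Ind}(\mathcal{B})}(\underline{C},c(B))$ is immediate from the definition of morphisms in ${\rm Ind}(\mathcal{B})$, since a degree $N$ morphism into a constant system is just a compatible family and the transition maps in $N$ are identities --- and it is left exact by hypothesis.

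\emph{The $\delta$-functor statement.} The crux is a normal form: every short exact sequence $0\to\underline{A}'\to\underline{A}\to\underline{A}''\to0$ in ${\rm Ind}(\mathcal{A})$ is isomorphic, in the category of short exact sequences, to the image under $\pi$ of a termwise short exact sequence of $\Z$-indexed diagrams in $\mathcal{A}$, where $\pi\colon{\rm Fun}(\Z,\mathcal{A})\to{\rm Ind}(\mathcal{A})$ is the identity-on-objects functor sending a natural transformation to its class; $\pi$ is exact since for degree $0$ maps kernels and cokernels on both sides are computed termwise. To prove the normal form I would first observe, straight from the formulae for $\Ker$ and $\Coker$ given above, that for any diagram $\underline{A}$ the shift map $\underline{A}\to\Sigma\underline{A}$ given by the transition morphisms has kernel and cokernel with \emph{vanishing} transitions, hence is an isomorphism in ${\rm Ind}(\mathcal{A})$; using this one replaces the three terms by suitable shifts so that both maps are honest degree $0$ maps of diagrams with zero composite, and then replaces $\underline{A}''$ by the termwise image and $\underline{A}'$ by the termwise kernel. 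Now applying $T^n$ termwise to a termwise exact sequence and invoking that $(T^n)$ is a $\delta$-functor yields connecting morphisms and a long exact sequence in ${\rm Fun}(\Z,\mathcal{B})$, hence in ${\rm Ind}(\mathcal{B})$ after applying $\pi$; applying the exact functor $\lim_{\to}$ gives the long exact sequence in $\mathcal{B}$ and the sought connecting morphisms for $(\lim_{\to}T^n)$. Additivity of each $\lim_{\to}T^n$ is clear, and well-definedness (independence of the termwise model) and naturality in morphisms of short exact sequences I would check by lifting, after a shift, any morphism of such models to a morphism of $\Z$-diagrams --- again using the description of $\Hom$ in ${\rm Ind}$ --- together with the fact that $\lim_{\to}$ identifies a diagram with its shift.

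\emph{Universality and the main obstacle.} Here I would use the standard criterion that a cohomological $\delta$-functor whose terms in positive degrees are effaceable is universal, and show $\lim_{\to}T^n$ is effaceable for $n\ge1$. Given $\underline{A}=(A_i,\gamma_i)$ and a \emph{functorial} effacement $e\colon{\rm Id}_{\mathcal{A}}\hookrightarrow E$ with $T^n(e_A)=0$ for all $n\ge1$ and all $A$, set $\underline{M}:=(E(A_i),E(\gamma_i))$; then $e_{A_\bullet}\colon\underline{A}\hookrightarrow\underline{M}$ is a morphism in ${\rm Ind}(\mathcal{A})$ by naturality of $e$, it is a monomorphism because it is so termwise, and $\lim_{\to}T^n(e_{A_\bullet})=\lim_{\to,i}T^n(e_{A_i})=0$. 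Hence $(\lim_{\to}T^n)$ is universal; in particular, for any $\delta$-functor $(S^n)$ on ${\rm Ind}(\mathcal{A})$ and any morphism $\phi^0\colon\lim_{\to}T^0\to S^0$ there is a unique extension to a morphism of $\delta$-functors, automatically compatible with the connecting morphisms constructed above because on index-wise constant short exact sequences the latter reduce to those of $(T^n)$ and, by the normal form, every short exact sequence comes from a termwise one. The delicate point --- the one I would spend the most care on --- is precisely this effaceability input: the bare notion of ``universal'' does not by itself supply effaceability, and even granting effaceability of each $T^n$ one must assemble the individual effacements of the $A_i$ into a single $\Z$-diagram; this is why the argument is routed through a functorial effacement $E$, which is available (e.g.\ $E(A)=I^{\Hom(A,I)}$ for an injective cogenerator $I$) because the categories occurring in the paper are Grothendieck categories with enough injectives --- which is typically also how universality of the $T^n$ there is established.
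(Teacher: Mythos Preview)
Your proof is correct and considerably more detailed than the paper's, which consists of three sentences: it observes that $\lim_{\to}$ is always right exact by the universal property of direct limits, so the hypothesis makes it exact, and then simply says ``the claim follows.'' The paper treats both assertions as routine once exactness of $\lim_{\to}$ is in hand, without spelling out the normal form for short exact sequences in ${\rm Ind}(\mathcal{A})$ or the mechanism for universality.

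Your route differs in that you actually unpack what ``the claim follows'' means. For the $\delta$-functor part, your reduction to termwise short exact sequences via shifts is the honest content behind the paper's assertion and is carried out correctly. For universality, you go through effaceability with a \emph{functorial} effacement; this is slightly stronger than the bare hypothesis ``$(T^n)$ is universal,'' as you yourself flag, but it is satisfied in the Grothendieck categories (sheaves on a site, continuous sheaves) to which the lemma is applied in the paper, so the restriction is harmless in context. An alternative that stays closer to the literal statement would be to argue directly from the universal property: restrict a target $\delta$-functor $(U^n)$ on ${\rm Ind}(\mathcal{A})$ along the constant embedding $c$, use universality of $(T^n)$ to get $T^n\to U^n\circ c$, and then pass to the colimit using the canonical maps $c(A_i)\to\underline{A}$; but this requires the same kind of bookkeeping with shifts that you already do, and your effaceability argument is cleaner. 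Either way, you have supplied the details the paper omits.
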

\begin{proof}
Due to the universal property of direct limits the functor
$\displaystyle{\lim_{\to}}$ is always right exact. Thus the
assumption is equivalent to the requirement that it is exact. The
claim follows.
\end{proof}

\bigskip

\begin{remark}\label{remark:enlargemorphism} One can relax the
definition of a morphism in ${\rm Ind}\bigl(\mathcal{A}\bigr)$.
Consider a non-decreasing function $\alpha\colon \Z\to \Z$. Given
objects $\underline{A}:=\bigl(A_i,\gamma_i\bigr)_{i\in\Z}$ and $
\underline{B}:=\bigl(B_j,\delta_j\bigr)_{j\in\Z}$, we define a
morphism $f\colon \underline{A}\to \underline{B}$ of type $\alpha$
to be a collection of morphisms $f_i\colon A_i\to B_{\alpha(i)}$
such that $f_{i+1}\circ \gamma_i= \prod_{\alpha(i) \leq j <
\alpha(i+1)} \delta_j \circ f_i $. We denote by
$\Hom^\alpha\bigl(\underline{A},\underline{B}\bigr)  $ the group
of homomorphisms of type $\alpha$. We say that two morphisms $f$
and $g$ of type $\alpha$ (resp.~$\beta$) are equivalent if there
exists $N\in\N$ such that $f_i$  composed with $B_{\alpha(i)}\to
B_{\max(\alpha(i),\beta(i))+N}$ and $g_i$ composed with
$B_{\beta(i)}\to B_{\max(\alpha(i),\beta(i))+N}$ coincide. One
checks that this defines an equivalence relation. We define a
morphism $\underline{A}\to \underline{B} $ to be a class of
morphisms with respect to this equivalence relation. The morphisms
in the more restrictive sense given before inject into this new
class of morphisms. Since this complicates the notation we will
work mainly with the previous more restrictive notion.
\end{remark}

Recall that given an integer~$r$ the sheaf $\bA_{\rm
cris,n,M}^\nabla(r)$ is characterized by the property that for
every small affine open $\cU\in X^{\rm et}$ its localization
$\bA_{\rm cris,n,M}^\nabla(r)(\Rbar_\cU)$ is the group $\bA_{\rm
cris,n,M}^\nabla(\Rbar_\cU)$ with action of $\cG_{\cU,M}$ twisted
by the $r$-th power of the cyclotomic character. Let
$t:=\log[\epsilon]\in A_{\rm cris}$ (see section
\ref{sec:Notation}). Fix integers $r\geq s$. For every $\cU$ as
above we have  a map $\bA_{\rm cris,n,M}^\nabla(s)(\Rbar_\cU)
\lra \bA_{\rm cris,n,M}^\nabla(r)(\Rbar_\cU)$ of $\bA_{\rm
cris,n,M}^\nabla(\Rbar_\cU)$-module sending $1 \mapsto 1\tensor
t^{[r-s]}$. Since they are equivariant with respect to the action
of $\cG_{\cU,M}$, these maps for varying $\cU$  arise from a
unique morphism $$j_{r,s}\colon  \bA_{\rm cris,n,M}^\nabla(s)\to
\bA_{\rm cris,n,M}^\nabla(r).$$ They are compatible for varying
$n\in\N$ and define a morphism of continuous sheaves
$j_{r,s}\colon \bA_{\rm cris,M}^\nabla(s)\to \bA_{\rm
cris,M}^\nabla(r)$. Define $\iota_{r,s}:=(r-s)! j_{r,s}$. It
follows from the construction that $j_{r,s}$, and hence
$\iota_{r,s}$, sends $\Fil^n \bA_{\rm cris,M}^\nabla(s)$ to
$\Fil^n \bA_{\rm cris,M}^\nabla(r)$ for every $n\in\Z$.
\smallskip

Define $\bB_{\rm cris,M}^\nabla$ in ${\rm Ind}\left(\Sh(\fX_M)^\N
\right)$ to be the inductive system of continuous  sheaves having
$\bA_{\rm cris,M}^\nabla(-r)$ in degree $r$ with transition maps
given by $\iota_{r-1,r}$.  Analogously for every $n\in\Z$ let
$\Fil^n \bB_{\rm cris,M}^\nabla$ in ${\rm Ind}\left(\Sh(\fX_M)^\N
\right)$ be the inductive system of continuous  sheaves having
$\Fil^n \bA_{\rm cris,M}^\nabla(-r)$ in degree $r$ with transition
maps induced by $\iota_{r-1,r}$. By construction it is a sub-object
of $\bB_{\rm cris,M}^\nabla$. The Frobenius morphisms on
$\varphi\colon \bA_{\rm cris,M}^{'\nabla}(r) \to \bA_{\rm
cris,M}^{'\nabla}(pr)$ are compatible for varying $r\in\Z$ with
$\iota_{r-1,r}$. Using the more general notion of a morphism of
inductive systems given in \ref{remark:enlargemorphism} it
induces a morphism $\varphi\colon \bB_{\rm cris,M}^\nabla \to
\bB_{\rm cris,M}^\nabla $ in ${\rm Ind}\left(\Sh(\fX_M)^\N
\right)$ sending $\Fil^r\bB_{\rm cris,M}^\nabla \lra
\Fil^{r+p}\bB_{\rm cris,M}^\nabla$.

Similarly,  we define the continuous sheaves $\bA_{\rm
cris,M}(r)$. As before we get the inductive systems $\bB_{\rm
cris,M}$ and $\Fil^n \bB_{\rm cris,M}$, for $n\in\Z$, in ${\rm
ind}\left(\Sh(\fX_M)^\N \right)$ as the inductive system of
continuous  sheaves having $\bA_{\rm cris,M}(-r)$ (resp.~$\Fil^n
\bA_{\rm cris,M}(-r)$) in degree $r$. Assume that~$\cU$ is a small
affine. Then  Frobenius $\varphi_\cU\colon \bA_{\rm
cris,M}'(r)\vert_\cU \to \bA_{\rm cris,M}'(pr)\vert_\cU$ induces,
in the more general framework of \ref{remark:enlargemorphism}, a
morphism $\varphi_\cU\colon \bB_{\rm cris,M}\vert_\cU \to \bB_{\rm
cris,M}\vert_\cU $ in ${\rm Ind}\left(\Sh(\fUM)^\N \right)$.

We remark that the morphisms $j_{s+p,s}$ for varying $s\in \Z$
define a morphism $j$ on $\Fil^n \bB_{\rm cris,M}^\nabla$,
$\bB_{\rm cris,M}^\nabla$, $\Fil^n \bB_{\rm cris,M}$ and $\bB_{\rm
cris,M}$ such that $p! j$ is the identity in the category of
inductive systems. We deduce that multiplication by $p$ is an
isomorphism on all the objects above.

For notational convention put $\Fil^{-\infty} \bB_{\rm
cris,M}^\nabla=\bB_{\rm cris,M}^\nabla$ and similarly without
$\nabla$. As explained before, the localization functor on
$\Sh(\fX_M)^\N$, $\cF\mapsto \cF(\Rbar_\cU)$, extends to a functor
on ${\rm Ind}\left( \Sh(\fX_M)^\N\right)$. Put $\Fil^{-\infty}
B_{\rm cris}^\nabla(\Rbar_\cU)=B_{\rm cris}^\nabla(\Rbar_\cU)$ and
similarly without $\nabla$. Summarizing and using the results of
the previous section we get :

\begin{lemma}\label{lemma:propbBcris} (1) Multiplication by $p$ is an isomorphism
on $\Fil^n \bB_{\rm cris,M}^\nabla$, $\bB_{\rm cris,M}^\nabla$,
$\Fil^n \bB_{\rm cris,M}$ and $\bB_{\rm cris,M}$. \smallskip

(2) For every $r\in\Z\cup\{-\infty\}$ we have an exact sequence
of inductive systems

$$0\lra \Fil^r
\bB_{\rm cris,M}^\nabla \lra \Fil^r \bB_{\rm cris,M} \stackrel{\nabla^1}{\lra}  \Fil^{r-1} \bB_{\rm cris,M} \otimes_{\cO_X} \Omega^{1}_{X/\cO_K}
\stackrel{\nabla^2}{\lra} \Fil^{r-2} \bB_{\rm cris,M} \otimes_{\cO_X} \Omega^{2}_{X/\cO_K} \cdots .$$ Let~$\cU$ be a small affine, choose parameters
$T_1,\ldots,T_d\in R_\cU^\times$ and let $F_\cU$ be the induced lift of absolute Frobenius to~$R_\cU$. Then,\smallskip

(3)  Frobenius~$\varphi_\cU$ on~$\bB_{\rm cris,M}\vert_\cU$ is
horizontal with respect to~$\nabla\vert_\cU$ and induces Frobenius
on $\bB_{\rm cris,M}^\nabla\vert_\cU$.\smallskip

(4) $\bB_{\rm cris,M}^{\nabla}(\Rbar_\cU)\cong B_{\rm
cris}^\nabla(\Rbar_\cU)$ and $\bB_{\rm cris,M}(\Rbar_\cU)\cong
B_{\rm cris}(\Rbar_\cU)$. Furthermore, $\Fil^i\left(\bB_{\rm
cris,M}^{\nabla}\right)(\Rbar_\cU)\cong \Fil^i B_{\rm
cris}^\nabla(\Rbar_\cU)$ and $\Fil^i\left(\bB_{\rm
cris,M}\right)(\Rbar_\cU)\cong \Fil^i B_{\rm cris}(\Rbar_\cU)$ for
every $i\in\Z$.

\end{lemma}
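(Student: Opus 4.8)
The plan is to read all four assertions off the results of \S\ref{sec:propAcris} by passing to inductive limits and then to localizations, exploiting that every sheaf occurring in the statement is, by construction, an inductive system whose terms in degree $r$ are Tate twists $\bA_{\rm cris,M}^\nabla(-r)$, $\bA_{\rm cris,M}(-r)$ (or their filtration steps) with transition maps induced by multiplication by divided powers of $t$. Claim (1) has in fact already been proved in the paragraph preceding the statement: the morphisms $j_{s+p,s}$ assemble into a morphism $j$ on each of the four ind-objects with $p!\,j$ equal to the identity in the category of inductive systems, so multiplication by $p$ is invertible; nothing further is needed.

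For claim (2) I would first note that $\nabla$ is a degree-zero endomorphism of the relevant ind-objects: in degree $r$ it is the connection $\nabla(-r)$ of \S\ref{sec:propAcris}, and it commutes with the transition maps $\iota_{\bullet}$ since those are induced by multiplication by (divided powers of) $t$ and $t$ is a horizontal section of $\bA_{\rm cris,M}^\nabla$ (by \ref{prop:deRhamcomplex}(ii) and \ref{lemma:FilAcris(r)}, $t\in\Ker(\nabla)$). Because kernels and cokernels in the category of inductive systems are computed term by term up to the shift built into $\Coker$, and because tensoring by the locally free sheaves $\Omega^i_{X/\cO_K}$ is exact and commutes with filtered colimits, exactness of the displayed sequence of ind-objects reduces, for each finite $r$ and in each degree $s$, to exactness of
$$0\lra \Fil^r\bA_{\rm cris,M}^\nabla(-s)\lra \Fil^r\bA_{\rm cris,M}(-s)\stackrel{\nabla}{\lra}\Fil^{r-1}\bA_{\rm cris,M}(-s)\otimes_{\cO_X}\Omega^1_{X/\cO_K}\stackrel{\nabla}{\lra}\cdots,$$
which is exactly the twisted analogue of \ref{prop:deRhamcomplex}(iv) recorded in \ref{lemma:FilAcris(r)}; for $r=-\infty$ one uses instead \ref{prop:deRhamcomplex}(i), and the left end uses \ref{prop:deRhamcomplex}(ii). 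Claim (3) is handled in the same spirit: horizontality of $\varphi_\cU$ on $\bB_{\rm cris,M}\vert_\cU$ with respect to $\nabla\vert_\cU$, and the fact that it induces Frobenius on $\bB_{\rm cris,M}^\nabla\vert_\cU$, follow by passing to the colimit (using the notion of morphism of inductive systems of \ref{remark:enlargemorphism}) from the corresponding statements for $\bA_{\rm cris,M}'(r)\vert_\cU$ and $\nabla(r)_\cU$ in \ref{lemma:FilAcris(r)}.

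For claim (4) I would apply the localization functor $\cF\mapsto\cF(\Rbar_\cU)$, which extends to the category of inductive systems and commutes with the colimit. In degree $r$ it sends $\bA_{\rm cris,M}^\nabla(-r)$ to $A_{\rm cris}^\nabla(\Rbar_\cU)\cdot t^{-r}\subset B_{\rm cris}^\nabla(\Rbar_\cU)$ by \ref{prop:acrisnabla} together with compatibility of localization with Tate twists, the transition maps becoming the inclusions $t^{-r}A_{\rm cris}^\nabla(\Rbar_\cU)\subset t^{-r-1}A_{\rm cris}^\nabla(\Rbar_\cU)$ (here $t^n=n!\,t^{[n]}$ absorbs the factorials in $\iota_{\bullet}=(r-s)!\,j_{\bullet}$); hence the colimit is $A_{\rm cris}^\nabla(\Rbar_\cU)[1/t]=B_{\rm cris}^\nabla(\Rbar_\cU)$. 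The same argument with \ref{prop:crislocalization} gives $\bB_{\rm cris,M}(\Rbar_\cU)\cong B_{\rm cris}(\Rbar_\cU)$. Finally $\Fil^i\bB_{\rm cris,M}^\nabla(\Rbar_\cU)=\lim_{\to_r}\Fil^i\bA_{\rm cris,M}^\nabla(-r)(\Rbar_\cU)=\lim_{\to_r}\Fil^{i+r}A_{\rm cris}^\nabla(\Rbar_\cU)\cdot t^{-r}$, which by \ref{prop:filtAcrisnabla} is $\sum_{a+b\geq i}t^b\Fil^aA_{\rm cris}^\nabla(\Rbar_\cU)[p^{-1}]=\Fil^iB_{\rm cris}^\nabla(\Rbar_\cU)$, and similarly without $\nabla$ using \ref{lemma:FilAcris(r)} and \ref{prop:filtAcrisnabla}.

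The only genuine work beyond citation is the verification that $\nabla$ and $\varphi$ are well-defined morphisms of the ind-objects compatible with the transition maps — mostly already set up, with $\varphi$ requiring the enlarged class of morphisms of \ref{remark:enlargemorphism} — and the bookkeeping of Tate twists and divided powers of $t$ needed to identify the colimits with $B_{\rm cris}^\nabla(\Rbar_\cU)$, $B_{\rm cris}(\Rbar_\cU)$ and their filtrations; I expect this twist/filtration-index matching to be the main (though routine) obstacle, everything else being a term-by-term application of \ref{prop:deRhamcomplex}, \ref{lemma:FilAcris(r)}, \ref{prop:acrisnabla}, \ref{prop:crislocalization} and \ref{prop:filtAcrisnabla}.
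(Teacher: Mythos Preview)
Your proposal is correct and matches the paper's approach exactly: the paper presents this lemma as a summary (``Summarizing and using the results of the previous section we get:'') and gives no separate proof, relying on precisely the ingredients you cite---the paragraph before the statement for (1), and \ref{prop:deRhamcomplex}, \ref{lemma:FilAcris(r)}, \ref{prop:acrisnabla}, \ref{prop:crislocalization}, \ref{prop:filtAcrisnabla} for (2)--(4). Your write-up is in fact more detailed than what the paper provides.
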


\subsection{The fundamental exact sequence}

Following \cite[\S 5.3.6]{Fontaineperiodes} put  $\Fil^r_p A_{\rm
cris}=\{x\in \Fil^r A_{\rm cris}\vert \varphi(x)\in p^r A_{\rm
cris}\}$ for every $r\in\N$. Let $\frac{\varphi}{p^r}\colon
\Fil^r_p A_{\rm cris} \to A_{\rm cris}$ be the induced map. Note
that $p^r \Fil^r A_{\rm cris} \subset \Fil^r A_{\rm cris} \subset
\Fil^r A_{\rm cris}$. For every $n$ and $r\in\N$ define the sheaf
$$\Fil^r_p\bA_{\rm cris,n,\Kbar}^{'\nabla}:=\bigl(\Fil^r_p
A_{\rm cris}/ p^n \Fil^r_p A_{\rm cris}\bigr) \otimes_{W_n}
\WW_{n,\Kbar}.$$For $r=0$ it coincides with $\bA_{\rm
cris,n,\Kbar}^{'\nabla}$ thanks to
lemma \ref{lemma:exactsequenceAcrisnnabla'}(c). Since $\WW_{n,\Kbar}$ is
flat as a sheaf of $W_n$-modules by corollary \ref{cor:WnXisflatoverWnVbar},
$\Fil^r_p\bA_{\rm cris,n,\Kbar}^{'\nabla}$ defines a subsheaf
of $\Fil^0_p\bA_{\rm cris,n,\Kbar}^{'\nabla}=\bA_{\rm
cris,n,\Kbar}^{'\nabla}$. Let
$$\frac{\varphi}{p^r}\colon \Fil^r_p\bA_{\rm
cris,n,\Kbar}^{'\nabla} \lra \bA_{\rm cris,n,\Kbar}^{'\nabla}$$be
the morphism defined by $\frac{\varphi}{p^r}$ on $\Fil^r_p A_{\rm
cris}/ p^n \Fil^r_p A_{\rm cris}$ and by $\varphi$ on
$\WW_{n,\Kbar}$. Let $\Fil^r_p\bA_{\rm cris,n+1,\Kbar}^{'\nabla}
\to \Fil^r_p\bA_{\rm cris,n,\Kbar}^{'\nabla}$ be the morphism
defined by reduction modulo $p^n$ on $\Fil^r_p A_{\rm cris}/
p^{n+1} \Fil^r_p A_{\rm cris}$ and $r_{n+1,\Kbar}\colon
\WW_{n+1,\Kbar}\to \WW_{n,\Kbar}$. Put $\Fil^r_p\bA_{\rm
cris,\Kbar}^{'\nabla}$ to be the associated inverse system of
sheaves. Write $\frac{\varphi}{p^r}$ for the induced morphism
$\Fil^r_p\bA_{\rm cris,\Kbar}^{'\nabla} \to \bA_{\rm
cris,\Kbar}^{'\nabla}$.

\begin{proposition}\label{prop:phioverp^ronFulrcris}
Assume we are in the formal case. Then:

(1) The  sequence $$ 0 \lra \Z/p^n \Z t^{\{r\}} \lra
\Fil^r_p\bA_{\rm
cris,n,\Kbar}^{'\nabla}\stackrel{1-\frac{\varphi}{p^r}}{\lra}
\bA_{\rm cris,n,\Kbar}^{'\nabla}\lra 0$$is exact.

(2) The morphism of continuous sheaves $\Fil^r_p\bA_{\rm
cris,\Kbar}^{'\nabla}\to \Fil^r\bA_{\rm cris,\Kbar}^{'\nabla}$ is
an isomorphism in $\Sh(\fX_M)_{\Q_p}$.

\end{proposition}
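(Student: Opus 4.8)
The plan is to derive both statements from Fontaine's absolute fundamental exact sequence together with the structural facts about the sheaves $\bA_{\rm cris,n,\Kbar}^{'\nabla}$ established above. Since $\fX$ has enough geometric points, I would check the exactness in~(1) on stalks. At a geometric point $x$ of $X$, writing $\bar R:=\cO_{\fXKbar,x}/p\cO_{\fXKbar,x}$, the stalk of $\bA_{\rm cris,n,\Kbar}^{'\nabla}$ is $\bigl(A_{\rm cris}/p^n A_{\rm cris}\bigr)\otimes_{W_n}\WW_n(\bar R)$ by Lemma~\ref{lemma:exactsequenceAcrisnnabla'}(c), and that of $\Fil^r_p\bA_{\rm cris,n,\Kbar}^{'\nabla}$ is $\bigl(\Fil^r_p A_{\rm cris}/p^n\Fil^r_p A_{\rm cris}\bigr)\otimes_{W_n}\WW_n(\bar R)$, using the flatness of $\WW_{n,\Kbar}$ over $W_n$ (Corollary~\ref{cor:WnXisflatoverWnVbar}). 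On $\bar R$ Frobenius is surjective and the sequence $0\to\F_p\to\bar R\stackrel{\varphi-1}{\lra}\bar R\to0$ is exact, again by Corollary~\ref{cor:WnXisflatoverWnVbar} (case $n=1$). Thus~(1) on stalks is the relative version, over the Frobenius-surjective base $\WW_n(\bar R)$, of the sequence $0\to\Z/p^n\Z\,t^{\{r\}}\to\Fil^r_p A_{\rm cris}/p^n A_{\rm cris}\stackrel{1-\varphi/p^r}{\lra}A_{\rm cris}/p^n A_{\rm cris}\to0$ obtained by reducing modulo~$p^n$ the fundamental exact sequence of \cite[\S5.3.6]{Fontaineperiodes} (the reduction is exact because $A_{\rm cris}$ is $p$-torsion free).

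\noindent\textbf{Proof of~(1).} For the injectivity of $\Z/p^n\Z\,t^{\{r\}}\hookrightarrow\Fil^r_p\bA_{\rm cris,n,\Kbar}^{'\nabla}$ I would use that $t^{\{r\}}\notin pA_{\rm cris}$ together with the flatness of $\WW_{n,\Kbar}$ over $W_n$. The surjectivity of $1-\varphi/p^r$, and simultaneously the fact that its kernel is no bigger than $\Z/p^n\Z\,t^{\{r\}}$, form the heart of the matter. I would prove these by dévissage in~$n$ along the short exact sequences $0\to\bA_{\rm cris,n,\Kbar}^{'\nabla}\to\bA_{\rm cris,n+1,\Kbar}^{'\nabla}\to\bA_{\rm cris,1,\Kbar}^{'\nabla}\to0$ of Lemma~\ref{lemma:exactsequenceAcrisnnabla'}(a),(c), exactly as Corollary~\ref{cor:WnXisflatoverWnVbar} is proved, reducing to $n=1$. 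For $n=1$ one has $\bA_{\rm cris,1,\Kbar}^{'\nabla}=\cO_\fXKbar/p\cO_\fXKbar\,[\delta_0,\delta_1,\ldots]/(\delta_m^p)_{m\ge0}$ by Lemma~\ref{lemma:exactsequenceAcrisnnabla'}(b), and I would solve the equation $x-\varphi(x)=y$ by successive approximation on the divided-power variables $\delta_m$, the base input being the surjectivity of $\varphi-1$ on $\cO_\fXKbar/p\cO_\fXKbar$ (with kernel $\F_p$) recalled above, combined with Fontaine's absolute statement for the constant coefficients. The $\Fil^r_p$-condition is kept stable along the approximation by means of the isomorphisms $\varphi\colon\WW_{n,\Kbar}/\bigl([\varepsilon]^{1/p^{r+1}}-1\bigr)\WW_{n,\Kbar}\stackrel{\sim}{\lra}\WW_{n,\Kbar}/\bigl([\varepsilon]^{1/p^{r}}-1\bigr)\WW_{n,\Kbar}$ of Corollary~\ref{cor:WnXisflatoverWnVbar}. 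Finally I would verify that the approximations can be chosen compatibly with the transition maps $r'_{n+1,\Kbar}$, so that the sequence is exact for the continuous sheaf $\bA_{\rm cris,\Kbar}^{'\nabla}$ and not merely level by level.

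\noindent\textbf{The main obstacle.} The delicate point is that $1-\varphi/p^r$ is \emph{not} $\WW_{n,\Kbar}$-linear: on $\bigl(\Fil^r_p A_{\rm cris}/p^n\bigr)\otimes_{W_n}\WW_{n,\Kbar}$ it equals $\mbox{id}-(\varphi/p^r)\otimes\varphi$, so the desired exact sequence is not a base change of Fontaine's along $W_n\to\WW_{n,\Kbar}$. One must genuinely interleave the Frobenius of $A_{\rm cris}$ with that of the Witt-vector sheaf, keeping the constraint ``$\varphi(x)\in p^rA_{\rm cris}$'' under control throughout the successive approximation, while respecting both the divided-power presentation and the transition maps. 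I expect essentially all the work to be in this bookkeeping; the surrounding reductions are formal.

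\noindent\textbf{Proof of~(2).} By construction $\Fil^r_p A_{\rm cris}\subseteq\Fil^r A_{\rm cris}$, and conversely $p^r\Fil^r A_{\rm cris}\subseteq\Fil^r_p A_{\rm cris}$ since $\varphi(p^rz)=p^r\varphi(z)\in p^rA_{\rm cris}$ for every $z\in\Fil^r A_{\rm cris}$. Tensoring these inclusions with the flat $W_n$-module $\WW_{n,\Kbar}$, the comparison morphism $\Fil^r_p\bA_{\rm cris,n,\Kbar}^{'\nabla}\to\Fil^r\bA_{\rm cris,n,\Kbar}^{'\nabla}$ has kernel and cokernel annihilated by $p^r$, uniformly in~$n$. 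Passing to continuous sheaves and to the category $\Sh(\fX_M)_{\Q_p}$, the induced morphism $\Fil^r_p\bA_{\rm cris,\Kbar}^{'\nabla}\to\Fil^r\bA_{\rm cris,\Kbar}^{'\nabla}$ becomes an isomorphism, which is assertion~(2).
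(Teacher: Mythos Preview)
Your argument for~(2) is correct and matches the paper's: the chain of inclusions $p^r\Fil^r A_{\rm cris}\subset\Fil^r_p A_{\rm cris}\subset\Fil^r A_{\rm cris}$, tensored with the flat $W_n$-module $\WW_{n,\Kbar}$, yields a morphism of continuous sheaves with kernel and cokernel killed by $p^r$, hence an isomorphism in $\Sh(\fX_M)_{\Q_p}$.

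For~(1), however, there is a genuine gap. Your d\'evissage in~$n$ reduces the problem to $n=1$, but it does not simplify the essential difficulty you yourself flag: at $n=1$ you still need to understand $\Fil^r_p A_{\rm cris}/p\,\Fil^r_p A_{\rm cris}$ and the operator $\varphi/p^r$ modulo~$p$, neither of which is accessible from the presentation $\cO_\fXKbar/p\cO_\fXKbar[\delta_0,\delta_1,\ldots]/(\delta_m^p)$. The ``successive approximation on the $\delta_m$'' you propose is suited to the operator $1-\varphi$ on the full ring (the case $r=0$), but the constraint ``$\varphi(x)\in p^rA_{\rm cris}$'' is not expressed in terms of the $\delta_m$'s and is not obviously stable under your approximation steps. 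You correctly diagnose that ``essentially all the work'' is in this bookkeeping, but no mechanism is given to carry it out.

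The paper's proof supplies exactly the missing idea: rather than filtering by~$n$, it filters by Fontaine's ideals $I^{[s]}A_{\rm cris}=\{x:\varphi^m(x)\in\Fil^s\ \forall m\}$. Fontaine showed that $I^{[s]}\cap\Fil^r_p A_{\rm cris}$ has $p$-torsion-free graded pieces, each free of rank~$1$ over $A_{\rm inf}^+$ with explicit generator $t^{\{s\}}$ (for $s\ge r$) or $(q')^{r-s}t^{\{s\}}$ (for $0\le s<r$). After tensoring with $\WW_{n,\Kbar}$ one gets a filtration on $\Fil^r_p\bA_{\rm cris,n,\Kbar}^{'\nabla}$ whose graded pieces are copies of $\WW_{n,\Kbar}$ modulo $[\varepsilon]-1$ or $[\varepsilon]^{1/p}-1$. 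On these graded pieces the operator $1-\varphi/p^r$ becomes, respectively, (i)~unipotent (for $s>r$, since an extra factor of~$p$ appears), (ii)~a sum of a nilpotent multiplication and the isomorphism~$\varphi$ (for $0\le s<r$), or (iii)~the map $1-\varphi$ with kernel $\Z/p^n\Z$ (for $s=r$, using Corollary~\ref{cor:WnXisflatoverWnVbar}). Assembling these yields the exact sequence with kernel $\Z/p^n\Z\cdot t^{\{r\}}$. The filtration by $I^{[s]}$ is the device that tames the semilinearity you identified as the obstacle.
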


\begin{proof} We start with the proof of (2).
It follows from \ref{cor:WnXisflatoverWnVbar} and
\ref{lemma:exactsequenceAcrisnnabla'}  that  $\Fil^r \bigl(A_{\rm
cris}/p^n A_{\rm cris}\bigr)\otimes_{W_n} \bA_{\rm
cris,n,\Kbar}^{'\nabla}\to \Fil^r\bA_{\rm cris,n,\Kbar}^{'\nabla}$
is an isomorphism. We then get natural maps $p^r \Fil^r\bA_{\rm
cris,n-r,\Kbar}^{'\nabla} \to \Fil^r_p\bA_{\rm
cris,n,\Kbar}^{'\nabla} \to \Fil^r\bA_{\rm
cris,n-r,\Kbar}^{'\nabla}$ inducing morphisms of continuous
sheaves
$$p^r \Fil^r\bA_{\rm cris,\Kbar}^{'\nabla} \to \Fil^r_p\bA_{\rm
cris,\Kbar}^{'\nabla} \to \Fil^r\bA_{\rm
cris,\Kbar}^{'\nabla}.$$This proves (2).

For the proof of (1) we proceed as in the proof of \cite[Thm. A3.26]{tsuji}. Following \cite[\S 5.3.1]{Fontaineperiodes} define $I^{[s]}A_{\rm cris} :=\{x\in A_{\rm
cris}\vert \varphi^n(x)\in \Fil^s A_{\rm cris} \,\forall n\in\N\}$. For every $m\in\N$ write $m=q(m) (p-1)+ r(m)$ with $0\leq r(m) < p-1$. Let
$t^{\{m\}}:=t^{r(m)}\cdot \left(\frac{t^{p-1}}{p} \right)^{[q(m)]}$. It is proven in loc.~cit.~that $I^{[s]}A_{\rm cris} $ is the closure for the $p$-adic topology
of the $A_{\rm inf}^+$-module generated by the elements $t^{\{s\}}$ for $s\geq r$. Furthermore $A_{\rm cris}/I^{[s]}A_{\rm cris} $ is $p$-torsion free by
\cite[Prop. 5.3.5]{Fontaineperiodes}. In particular the decreasing filtration $I^{[s]}A_{\rm cris} \cap \Fil^r_p A_{\rm cris}$ on $\Fil^r_p A_{\rm cris}$ for
$s\in\N$, has torsion free graded quotients. Its reduction modulo $p^n$ injects into $\Fil^r_p A_{\rm cris}/ p^n \Fil^r_p A_{\rm cris} $ and defines a decreasing
filtration on the latter. Since $\WW_{n,\Kbar}$ is flat as a sheaf of $W_n$-modules by \ref{cor:WnXisflatoverWnVbar} taking $ \otimes_{W_n} \WW_{n,\Kbar}$ we get a
decreasing filtration on $\Fil^r_p\bA_{\rm cris,n,\Kbar}^{'\nabla}$ which we denote by $\Fil^{r,[s]}_p\bA_{\rm cris,n,\Kbar}^{'\nabla}$. We write $I^{[s]} \bA_{\rm
cris,n,\Kbar}^{'\nabla} $ if $r=0$.

Write
$q^\prime:=\frac{[\varepsilon]-1}{[\varepsilon]^{\frac{1}{p}}-1}=1+[\varepsilon]^{\frac{1}{p}}+\cdots
+ [\varepsilon]^{\frac{p-1}{p}}$.   It follows from \cite[\S
5.3.6]{Fontaineperiodes} that $\frac{I^{[s]}A_{\rm cris} \cap
\Fil^r_p A_{\rm cris}}{I^{[s+1]}A_{\rm cris} \cap \Fil^r_p A_{\rm
cris}}$ is $p$-torsion free and it is generated  as $A_{\rm
inf}^+$-module by the element $\bigl(q^{\prime}\bigr)^{r-s}
t^{\{s\}}$ for $0\leq s <r$ and by $t^{\{s\}}$ for $s\geq r$.
Since $\varphi(q')\in p A_{\rm cris}$, by \cite[\S
5.2.9]{Fontaineperiodes}, the map $\frac{\varphi}{p^r} $ sends
$I^{[s]}A_{\rm cris} \cap \Fil^r_p A_{\rm cris}$ to $I^{[s]}A_{\rm
cris}$ so that $1-\frac{\varphi}{p^r} $ sends $I^{[s]}A_{\rm cris}
\cap \Fil^r_p A_{\rm cris}$ to $I^{[s]}A_{\rm cris}$. We deduce
that the morphism $1-\frac{\varphi}{p^r}$ sends
$\Fil^{r,[s]}_p\bA_{\rm cris,n,\Kbar}^{'\nabla}$ to $I^{[s]}
bA_{\rm cris,n,\Kbar}^{'\nabla}$. The conclusion follows from
\ref{phiprongr}.
\end{proof}

\begin{lemma}\label{phiprongr} The morphism $1-\frac{\varphi}{p^r}$ induces isomorphisms
$$\Fil^{r,[r+1]}_p\bA_{\rm cris,n,\Kbar}^{'\nabla}\lra I^{[r+1]} bA_{\rm
cris,n,\Kbar}^{'\nabla}$$and $$\frac{\Fil^{r,[s]}_p\bA_{\rm
cris,n,\Kbar}^{'\nabla}} {\Fil^{r,[s+1]}_p\bA_{\rm
cris,n,\Kbar}^{'\nabla}}\lra \frac{I^{[s]} bA_{\rm
cris,n,\Kbar}^{'\nabla}}{I^{[s+1]} bA_{\rm
cris,n,\Kbar}^{'\nabla}}$$for $0\leq s<r$ and an exact sequence
$$0 \lra \Z/p^n\Z \cdot t^{\{r\}} \lra \frac{\Fil^{r,[r]}_p\bA_{\rm cris,n,\Kbar}^{'\nabla}}
{\Fil^{r,[r+1]}_p\bA_{\rm
cris,n,\Kbar}^{'\nabla}}\stackrel{1-\frac{\varphi}{p^r}}{\lra}
\frac{I^{[r]} bA_{\rm cris,n,\Kbar}^{'\nabla}}{I^{[r+1]} bA_{\rm
cris,n,\Kbar}^{'\nabla}}\lra 0.$$
\end{lemma}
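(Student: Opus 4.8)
The plan is to reduce everything to a computation of $1-\frac{\varphi}{p^r}$ on the associated graded pieces for the filtration $\{I^{[s]}A_{\rm cris}\}_{s\ge 0}$, in the spirit of \cite[\S 5.3]{Fontaineperiodes} and of the proof of \cite[Thm.~A3.26]{tsuji}. Since $\WW_{n,\Kbar}$ is flat over $W_n$ by Corollary \ref{cor:WnXisflatoverWnVbar} and all the $A_{\rm cris}$--submodules occurring are $p$--torsion free (this is exactly Fontaine's input used in the proof of Proposition \ref{prop:phioverp^ronFulrcris}), both reduction modulo $p^n$ and $-\otimes_{W_n}\WW_{n,\Kbar}$ are exact on them; hence it suffices to prove the three statements for the $W_n$--modules $\Fil^r_p A_{\rm cris}/p^n$, $I^{[s]}A_{\rm cris}/p^n$, etc., equipped with the $I^{[s]}$--filtrations induced from $A_{\rm cris}$, and then tensor up. First I would note that, because $\varphi(t^{\{m\}})=p^m t^{\{m\}}$ and $I^{[r]}A_{\rm cris}$ is the $p$--adic closure of the $A_{\rm inf}^+$--module spanned by the $t^{\{s\}}$ with $s\ge r$, one has $\varphi(I^{[r]}A_{\rm cris})\subseteq p^r A_{\rm cris}$ and thus $I^{[s]}A_{\rm cris}\subseteq\Fil^r_p A_{\rm cris}$ for all $s\ge r$; in particular $\Fil^{r,[r]}_p\bA_{\rm cris,n,\Kbar}^{'\nabla}=I^{[r]}\bA_{\rm cris,n,\Kbar}^{'\nabla}$ and $\Fil^{r,[r+1]}_p\bA_{\rm cris,n,\Kbar}^{'\nabla}=I^{[r+1]}\bA_{\rm cris,n,\Kbar}^{'\nabla}$, so the first assertion really says that $1-\frac{\varphi}{p^r}$ is an automorphism of $I^{[r+1]}\bA_{\rm cris,n,\Kbar}^{'\nabla}$, and the source and target of the third sequence coincide.

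The inputs to the graded computation are: (i) the structure of the graded pieces recalled from \cite[\S 5.3.6]{Fontaineperiodes} (already invoked in the proof of Proposition \ref{prop:phioverp^ronFulrcris}), namely that $I^{[s]}A_{\rm cris}/I^{[s+1]}A_{\rm cris}$ is free of rank one over the degree zero piece on the class of $t^{\{s\}}$, and that $(I^{[s]}A_{\rm cris}\cap\Fil^r_p A_{\rm cris})/(I^{[s+1]}A_{\rm cris}\cap\Fil^r_p A_{\rm cris})$ is free of rank one over the same ring on $(q')^{r-s}t^{\{s\}}$ for $0\le s<r$ and on $t^{\{s\}}$ for $s\ge r$; (ii) the identity $\varphi(t^{\{m\}})=p^m t^{\{m\}}$, a formal consequence of $\varphi(t)=pt$, of the definition $t^{\{m\}}=t^{r(m)}\left(\tfrac{t^{p-1}}{p}\right)^{[q(m)]}$ and of $\gamma_q(\lambda z)=\lambda^q\gamma_q(z)$; and (iii) Fontaine's facts $\varphi(q')\in pA_{\rm cris}$, $\theta(\varphi(q')/p)=1$, that is $\varphi(q')=p\,\widetilde q$ with $\widetilde q\equiv 1\pmod{\Fil^1 A_{\rm cris}}$, and $\theta(q')=0$, the last being what forces $(q')^{r-s}t^{\{s\}}$ into $\Fil^r_p A_{\rm cris}$ when $s<r$.

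With these in hand the three cases fall out by inspection on the generator of each graded piece. For $s>r$ one has $\frac{\varphi}{p^r}(t^{\{s\}})=p^{s-r}t^{\{s\}}$, so $\frac{\varphi}{p^r}$ is divisible by $p^{s-r}$ on the $s$--th graded piece; hence $\frac{\varphi}{p^r}$ is topologically nilpotent modulo $p^n$ on $I^{[r+1]}\bA_{\rm cris,n,\Kbar}^{'\nabla}$ and $1-\frac{\varphi}{p^r}$ is bijective there by the Neumann series, which is the first isomorphism. For $0\le s<r$ one computes $\frac{\varphi}{p^r}((q')^{r-s}t^{\{s\}})=\widetilde q^{\,r-s}t^{\{s\}}$, whence $(1-\frac{\varphi}{p^r})((q')^{r-s}t^{\{s\}})=((q')^{r-s}-\widetilde q^{\,r-s})t^{\{s\}}$; since $\theta((q')^{r-s}-\widetilde q^{\,r-s})=0-1=-1$, this coefficient is a unit in the degree zero ring (also after reducing modulo $p^n$), so $1-\frac{\varphi}{p^r}$ carries the generator of the source onto a generator of $I^{[s]}A_{\rm cris}/I^{[s+1]}A_{\rm cris}$, giving the second isomorphism. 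For $s=r$, $\frac{\varphi}{p^r}(t^{\{r\}})=t^{\{r\}}$, so on $I^{[r]}\bA_{\rm cris,n,\Kbar}^{'\nabla}/I^{[r+1]}\bA_{\rm cris,n,\Kbar}^{'\nabla}$, identified with the degree zero piece via $t^{\{r\}}$, the operator $1-\frac{\varphi}{p^r}$ becomes $1-\varphi_0$ for the induced Frobenius $\varphi_0$; the sequence to be proven is then the Artin--Schreier sequence $0\to\Z/p^n\Z\to\WW_{n,\Kbar}\xrightarrow{\varphi-1}\WW_{n,\Kbar}\to 0$ of Corollary \ref{cor:WnXisflatoverWnVbar} (equivalently Lemma \ref{lemma:uglylemma}(3)), the copy of $\Z/p^n\Z$ being generated by $t^{\{r\}}$, once the degree zero sheaf is identified with $\WW_{n,\Kbar}$ as in Fontaine.

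The main obstacle is exactly this last bookkeeping: keeping the interplay of the two filtrations $\Fil^\bullet_p$ and $I^{[\bullet]}$ under control, checking $p$--torsion freeness of all graded quotients so that everything commutes with reduction modulo $p^n$ and with $-\otimes_{W_n}\WW_{n,\Kbar}$, and, most delicately, pinning down the degree zero coefficient sheaf together with the Frobenius it carries, so that the $s=r$ case genuinely reduces to the already established Artin--Schreier statement and not to a ring on which $\varphi-1$ fails to be surjective. All of this is contained in \cite[\S 5.3.6]{Fontaineperiodes} and in Corollary \ref{cor:WnXisflatoverWnVbar}; the remaining work is to quote those results accurately and then to run the d\'evissage along the $I^{[\bullet]}$--filtration that upgrades the three graded statements to the lemma, and from there to Proposition \ref{prop:phioverp^ronFulrcris}(1).
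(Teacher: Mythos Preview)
Your overall strategy---d\'evissage along the $I^{[\bullet]}$--filtration and a case split $s>r$, $0\le s<r$, $s=r$---is the paper's, and your treatment of the first case is essentially the paper's observation that $1-\tfrac{\varphi}{p^r}=1-p\cdot\tfrac{\varphi}{p^{r+1}}$ is unipotent on $I^{[r+1]}$.

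The argument for $0\le s<r$ has a genuine gap. You compute $(1-\tfrac{\varphi}{p^r})$ on the \emph{generator} $(q')^{r-s}t^{\{s\}}$, observe that the resulting coefficient has $\theta$--value $-1$, and conclude that the map is an isomorphism because ``generator goes to a unit times generator''. But the map on the graded piece is not linear over the coefficient ring: on $a\cdot(q')^{r-s}t^{\{s\}}$ it is
\[
a\longmapsto \bigl(a\,(q')^{r-s}-\varphi(a)\,\widetilde q^{\,r-s}\bigr)\,t^{\{s\}},
\]
the sum of a linear map and a Frobenius--semilinear one, so the image of $a=1$ alone tells you nothing about bijectivity. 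The paper proceeds differently. It first identifies the source and target coefficient sheaves precisely as $\WW_{n,\Kbar}/\bigl([\varepsilon]^{1/p}-1\bigr)$ and $\WW_{n,\Kbar}/\bigl([\varepsilon]-1\bigr)$ (not $\WW_{n,\Kbar}$ itself), notes that $\widetilde q\equiv 1$ modulo $([\varepsilon]-1)$ so the coefficient map is $a\mapsto (q')^{r-s}a-\varphi(a)$, and then argues that this is the sum of multiplication by $(q')^{r-s}$ (nilpotent) and $-\varphi$, the latter being an isomorphism by the part of Corollary~\ref{cor:WnXisflatoverWnVbar} asserting that $\varphi$ induces isomorphisms $\WW_{n,\Kbar}/\bigl([\varepsilon]^{1/p^{r+1}}-1\bigr)\stackrel{\sim}{\to}\WW_{n,\Kbar}/\bigl([\varepsilon]^{1/p^{r}}-1\bigr)$.

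The same misidentification of the coefficient sheaf affects your $s=r$ case: the Artin--Schreier sequence you need is for $1-\varphi$ on $\WW_{n,\Kbar}/\bigl([\varepsilon]-1\bigr)$, not on $\WW_{n,\Kbar}$. Corollary~\ref{cor:WnXisflatoverWnVbar} is stated for $\WW_{n,\Kbar}$, and one still has to pass to the quotient; this is routine but should be said.
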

\begin{proof}
By construction we have $\Fil^{r,[r+1]}_p\bA_{\rm
cris,\Kbar}^{'\nabla}=I^{[r+1]} bA_{\rm cris,\Kbar}^{'\nabla}$ so
that the operator $1-\frac{\varphi}{p^r}$ is $1- p \cdot
\frac{\varphi}{p^{r+1}}$ which is unipotent and hence an
isomorphism. This proves the first assertion.

It follows from \cite[Prop. 5.1.3 \& Rmk. 5.3.2]{Fontaineperiodes}
that $I^{[s]}A_{\rm cris} \cap A_{\rm
inf}^+=\bigl([\varepsilon]-1\bigr)^s A_{\rm inf}^+ $ and that
$$A_{\rm inf}^+/ \bigl([\varepsilon]-1\bigr)
A_{\rm inf}^+ \lra  I^{[s]}A_{\rm cris}/ I^{[s+1]}A_{\rm cris}, \quad x \mapsto x \cdot t^{\{s\}}$$is an isomorphism. In particular this isomorphism induces the
isomorphisms

$$A_{\rm inf}^+/ \bigl([\varepsilon]^{\frac{1}{p}}-1\bigr)  A_{\rm inf}^+
\lra \frac{I^{[s]}A_{\rm cris} \cap \Fil^r_p A_{\rm cris}}{
I^{[s+1]}A_{\rm cris} \cap  \Fil^r_p A_{\rm cris}}, \quad x
\mapsto x \bigl(q^{\prime}\bigr)^{r-s} t^{\{s\}}$$for $0\leq s< r$
and the isomorphism
$$A_{\rm inf}^+/ \bigl([\varepsilon]-1\bigr)  A_{\rm inf}^+
\lra \frac{I^{[s]}A_{\rm cris} \cap \Fil^r_p A_{\rm cris}}{
I^{[s+1]}A_{\rm cris} \cap  \Fil^r_p A_{\rm cris}}= I^{[s]}A_{\rm
cris}/ I^{[s+1]}A_{\rm cris}, \quad x \mapsto x  t^{\{s\}}$$for
$s\geq r$. It follows from \cite[\S 5.2.9]{Fontaineperiodes} that
$\bigl(1-\frac{\varphi}{p^r}\bigr)\left(\bigl(q^{\prime}\bigr)^{r-s}
t^{\{s\}}\right)\equiv t^{\{s\}}$ mod $I^{[s+1]}A_{\rm cris}$ for
every $0\leq s\leq r$. We then  deduce by base changing via the
flat extension $W_n \to \WW_{n,\Kbar}$ that for $0\leq s\leq r$
the following diagram

$$\begin{array}{ccc}
\WW_{n,\Kbar}/ \bigl([\varepsilon]^{\frac{1}{p}}-1\bigr)
\WW_{n,\Kbar} & \stackrel{q^{\prime, r-s}-\varphi}{\lra} &
\WW_{n,\Kbar}/ \bigl([\varepsilon]-1\bigr) \WW_{n,\Kbar} \cr
\big\downarrow & & \big\downarrow \cr \frac{\Fil^{r,[s]}_p\bA_{\rm
cris,n,\Kbar}^{'\nabla}} {\Fil^{r,[s+1]}_p\bA_{\rm
cris,n,\Kbar}^{'\nabla}} & \stackrel{1-\frac{\varphi}{p^r}}{\lra}
& \frac{I^{[s]} bA_{\rm cris,n,\Kbar}^{'\nabla}}{I^{[s+1]} bA_{\rm
cris,n,\Kbar}^{'\nabla}} \cr
\end{array}$$is commutative and that the vertical arrows are isomorphisms.
The second claim of the lemma follows remarking that the top
horizontal morphism is the sum of a nilpotent map, given by
multiplication by $q^{\prime, r-s}$ and the map $-\varphi$ which
is an isomorphism due to \ref{cor:WnXisflatoverWnVbar}. Similarly
for $s=r$ the following diagram is commutative with  vertical
arrows isomorphisms

$$\begin{array}{ccc}
\WW_{n,\Kbar}/ \bigl([\varepsilon]-1\bigr) \WW_{n,\Kbar} &
\stackrel{1-\varphi}{\lra} & \WW_{n,\Kbar}/
\bigl([\varepsilon]-1\bigr) \WW_{n,\Kbar} \cr \big\downarrow & &
\big\downarrow \cr \frac{\Fil^{r,[r]}_p\bA_{\rm
cris,n,\Kbar}^{'\nabla}} {\Fil^{r,[r+1]}_p\bA_{\rm
cris,n,\Kbar}^{'\nabla}} & \stackrel{1-\frac{\varphi}{p^r}}{\lra}
& \frac{I^{[r]} bA_{\rm cris,n,\Kbar}^{'\nabla}}{I^{[r+1]} bA_{\rm
cris,n,\Kbar}^{'\nabla}} .\cr
\end{array}$$
The last assertion follows remarking that the top horizontal arrow
is surjective with kernel $\Z/p^n\Z$ due to
corollary \ref{cor:WnXisflatoverWnVbar}.
\end{proof}

Define $\Fil^r_p \bA_{\rm cris,\Kbar}^{\nabla}(m)$ as the system
$\left\{\bigl(\Z_p/p^n \Z_p(r)\bigr)\tensor_{\Z_p}
\Fil^{i-r}_p\bA_{\rm cris,n, M}^{'\nabla} \right\}_n$. Let
$\Fil^r_p \bB_{\rm cris,\Kbar}^\nabla$ be the inductive system of
continuous sheaves $\Fil^r_p \bA_{\rm cris,\Kbar}^{\nabla}(m)$.

Since multiplication by $p$ is an isomorphism on $\Fil^r \bB_{\rm
cris,\Kbar}^\nabla$ by \ref{lemma:propbBcris} it follows from
proposition \ref{prop:phioverp^ronFulrcris} that it coincides with $\Fil^r
\bB_{\rm cris,\Kbar}^\nabla$.  The morphism $1-\varphi\colon
\bA_{\rm cris,\Kbar}^{'\nabla}(-r) \to \bA_{\rm
cris,\Kbar}^{'\nabla}(-pr)$ induces a morphism of inductive
systems $1-\varphi\colon \bB_{\rm cris,\Kbar}^\nabla \lra \bB_{\rm
cris,\Kbar}^\nabla$. Since multplication by $t$ is an isomorphism
on $\bB_{\rm cris,\Kbar}^\nabla$ by \ref{lemma:isoinBcris} we
deduce from proposition \ref{prop:phioverp^ronFulrcris} the exact sequence
$$ 0 \lra \Q_p \lra
\Fil^0\bB_{\rm cris,\Kbar}^{\nabla}\stackrel{1-\varphi}{\lra}
\bB_{\rm cris,\Kbar}^{\nabla}\lra 0.$$We then get the following
commutative diagram with exact rows, called the {\it fundamental
diagram of sheaves}:

\begin{equation}\label{display:fundamentalexactdiagram}
\begin{array}{ccccccccccc} 0& \lra & \Q_p & \lra &\Fil^0
\bB_{\rm cris,\Kbar}^{\nabla} &\stackrel{1-\varphi}{\lra}&
\bB_{\rm
cris,\Kbar}^{\nabla}&\lra& 0\\
&&\cap&&\cap&&\big\downarrow\\
0&\lra&(\bB_{\rm cris,\Kbar}^{\nabla})^{\varphi=1}&\lra&\bB_{\rm
cris,\Kbar}^{\nabla}&\stackrel{1-\varphi}{\lra}&\bB_{\rm
cris,\Kbar}^{\nabla}& \lra & 0.\cr
\end{array}\end{equation}

\section{The crystalline comparison isomorphism.}
\label{sec:compcrys}

\subsection{Crystalline \'etale sheaves.} \label{sec:cryssheaves}

In this section we assume that $X$ is defined over $\cO_K=\WW(k)$
so that the sheaf $\bA_{\rm cris,M}$ is defined. Recall that
we have natural morphisms of sites $u_{M} \colon \fX_M \lra
X_M^{\rm et}$, given by~$(\cU,\cW)\mapsto \cW$, and $v_{M} \colon
X^{\rm et} \lra \fX_M$ given by~$\cU \mapsto
(\cU,\cU^{\rm rig})$.  If~$\cL$ is a sheaf on~$X_M^{\rm et}$, to
ease the notation we simply write~$\cL$ for~$u_{M,\ast}(\cL)$.
The aim of this section is to introduce the so called
``crystalline $\Q_p$-adic sheaves"  on~$X_M^{\rm et}$. As
explained in proposition \ref{prop:equivcris} the definition amounts to a
sheaf theoretic generalization of the usual notion of crystalline
representation, due to Fontaine, in the relative setting. We show
in~\ref{prop:equivcris} that this notion coincides with the notion
of ``locally crystalline representations" introduced by
\cite{brinon} in the relative setting. We will prove in
lemma \ref{lemma:associated} that it is also equivalent to Faltings'
notion of associated sheaves. Contrary to these alternative
definitions which are checked on small enough open affines the
present definition has the advantage of being purely sheaf
theoretic.

\bigskip

{\it $\Q_p$--adic sheaves.}\enspace By a $p$--adic sheaf~$\cL$
on~$X_M^{\rm et}$ we mean a system $\{\cL_n\}\in \Sh(X_M^{\rm
et})^\N$ such that~$\cL_n$ is a locally constant and locally free
of finite rank \'etale sheaf of $\Z/p^n\Z$--modules
and~$\cL_n=\cL_{n+1}/p^n \cL_{n+1}$ for every~$n\in\N$. Given two
$p$-adic sheaves~$\cL:=\{\cL_n\}$ and~$\cM:=\{\cM_n\}$ define
$\cL\tensor_{\Z_p}\cM:=\{\cL_n\tensor_{\Z/p^n \Z}\cM_n \}_n$ and
$\uHom(\cL,\cM):=\{\uHom(\cL_n,\cM_n)\}_n$. Put~${\bf 1}=\Z_p$ to
be the sheaf~$\{\Z/p^n\Z\}_n$ with~$\Z/p^n\Z$ the constant sheaf.
This defines a structure of abelian  tensor category on $p$-adic
sheaves on  $X_M^{\rm et}$. Define~$\Z_p(1)$ to be the
sheaf~$\{\mu_{p^n}\}_n$ of $p$--power roots of unity. For
every~$m\in\N$ define~$\Z_p(m)$ to be the $m$--fold tensor product
of~$\Z_p(1)$. For~$m\leq 0$
put~$\Z_p(m):=\uHom\bigl(\Z_p(-m),\Z_p\bigr)$. For~$m\in\Z$
and~$\cL$ a $p$--adic sheaf
denote~$\cL(m):=\cL\tensor_{\Z_p}\Z_p(m)$.

Define $\Sh(X_M^{\rm et})_{\Q_p}$ to be the full subcategory of
${\rm Ind}\left(\Sh(X_M^{\rm et})^\N\right)$ (see
\S\ref{def:bBcris}) consisting of inductive systems of the form
$(\cL)_{i\in\Z}$ where $\cL$ is a $p$-adic \'etale sheaf and the
transition maps $\cL\to \cL$ are given by multiplication by $p$.
It inherits from the category of $p$-adic sheaves on ~$X_M^{\rm
et}$ the structure of an abelian tensor category.

\bigskip

Let $\cL=\{\cL_n\}$ be a $p$-adic \'etale sheaf. By definition for every $(\cU,\cW)\in \fX_M$ we have $u_{X,M,\ast}\bigl(\cL_n\bigr)(\cU,\cW)=\cL_n(\cW)$. Since
$\cL_n$ is a locally constant sheaf of finite abelian groups there exists $\cW \in \cU_{\rm M,fet}$ such that for every morphism $\bigl(\cU',\cW'\bigr)\to
\bigl(\cU,\cW\bigr)$ in $\fX_M$ the map $\cL_n\bigl(\cW'\bigr)\to \cL_n(\cW)$ is a bijection. In particular  $u_{X,M,\ast}\bigl(\cL_n\bigr)$ is locally constant on
$\fX_M$ and $u_{X,M,\ast}$ is fully faithful. Similarly if we extend $u_{X,M,\ast}$ to inductive systems of inverse systems of sheaves we get a fully faithful
morphism $u_{X,M,\ast}\colon \Sh(X_M^{\rm et})_{\Q_p} \lra {\rm Ind}\left(\Sh(\fX_M^{\rm et})^\N\right) $. We simply write $\cL_n$ for $u_{M,\ast}\bigl(\cL_n\bigr)$
and $\cL$ for the  inverse system of sheaves $\bigl\{u_{M,\ast}\bigl(\cL_n\bigr)\bigr\}_n$

If $\cU=\Spf(R_\cU)$ is affine connected then the localization
$\cL_n(\Rbar_\cU) $ as defined in \ref{sec:formal_Groth} is
given by a free $\Z_p/p^n\Z$-module with continuous action of
$\cG_{\cU_M}$ which we denote by $V_\cU(\cL_n)$. Write
$V_\cU(\cL)=\ds \lim_{\infty \leftarrow n} V_\cU(\cL_n)$.

\bigskip

{\it The categories $\Mod(\fX_M)_{\bB_{\rm cris}^\nabla}$ and
$\Mod(\fX_M)_{\bB_{\rm cris}}$.} \enspace Denote by
$\Mod(\fX_M)_{\bA_{\rm cris}^\nabla}$
(resp.~$\Mod(\fX_M)_{\bA_{\rm cris}}$) the following category. The
objects are systems $\{\cM_n\}_n\in \Sh(\fX_M)^\N$ with~$\cM_n$ a
sheaf of $\bA_{\rm cris,n,M}^{'\nabla}$--modules (resp.~$\bA_{\rm
cris,M,n}'$--modules). Given objects~$\cM$ and~$\cM'$  the
morphisms are $\Hom_{\bA_{\rm
cris,M}^{\nabla}}\bigl(\cM,\cM'\bigr)$ (resp.~$\Hom_{\bA_{\rm
cris,M}}\bigl(\cM,\cM'\bigr)$) i.~e. the subset
of~$\Hom_{\Sh(\fX_M)^\N}(\cM,\cM')$ which are  by definition
compatible systems of homomorphisms $\{f_n\colon \cM_n\to
\cM_n'\}_{n\in\N}$ commuting with the underlying structure of
$\bA_{\rm cris,M}^{\nabla}$--modules (resp.~$\bA_{\rm
cris,M}$--modules) i.~e. such that~$f_n$ is a homomorphism of
$\bA_{\rm cris,n,M}^{'\nabla}$--modules (resp.~$\bA_{\rm
cris,n,M}'$--modules) for every~$n\in\N$.

Define the sheaf $\uHom_{\bA_{\rm
cris,M}^\nabla}\bigl(\cM,\cM'\bigr)$ (resp.~$\uHom_{\bA_{\rm
cris,M}}\bigl(\cM,\cM'\bigr)$) in~$\Mod(\fX_M)_{\bA_{\rm
cris}^\nabla}$ (respectivey~in $\Mod(\fX_M)_{A_{\rm cris}}$) associated
to the pre-sheaf whose sections at~$(\cU,\cW)\in \fX_M$ consist of
the group $$\left\{\Hom_{\bA_{\rm
cris,n,M}^{'\nabla}(\cU,\cW)}\bigl(\cM_n(\cU,\cW),\cM_n'(\cU,\cW)\bigr)\right\}_n$$
respectively $$\left\{\Hom_{\bA_{\rm
cris,n,M}'(\cU,\cW)}\bigl(\cM_n(\cU,\cW),\cM_n'(\cU,\cW)\bigr)\right\}_n.$$

Define~$\cM\tensor_{\bA_{\rm cris,M}^\nabla}\cM'$ (resp.~$\cM\tensor_{\bA_{\rm cris,M}}\cM'$) to be the sheaf in $\Mod(\fX_M)_{A_{\rm cris}^\nabla}$ (respectively
in $\Mod(\fX_M)_{\bA_{\rm cris}}$) associated to the pre-sheaf valued $\left\{\cM_n(\cU,\cW)\tensor_{\bA_{\rm cris,M,n}^{'\nabla}(\cU,\cW)} \cM_n'(\cU,\cW)
\right\}_n$ on~$(\cU,\cW)\in \fX$ (respectively $\left\{\cM_n(\cU,\cW)\tensor_{\bA_{\rm cris,n,M}'(\cU,\cW)} \cM_n'(\cU,\cW) \right\}_n$). Define~$\bf 1$ to be the
element $\bA_{\rm cris}^\nabla$ (respectively $\bA_{\rm cris}$). With these structures both categories $\Mod(\fX_M)_{\bA_{\rm cris}^\nabla}$
and~$\Mod(\fX_M)_{\bA_{\rm cris}}$ are abelian tensor categories. Given any object $\cN$ we write $\cN(r)$ to be $\cN\tensor_{\bA_{\rm cris,M}^\nabla} \bA_{\rm
cris,M}^\nabla(r)$ (respectively $\cN\tensor_{\bA_{\rm cris,M}} \bA_{\rm cris,M}(r)$.

\smallskip

Define $\Mod(\fX_M)_{\bB_{\rm cris}^\nabla}$
(resp.~$\Mod(\fX_M)_{\bB_{\rm cris}}$) to be the full subcategory
of ${\rm Ind}\bigl(\Mod(\fX_M)_{\bA_{\rm cris}^\nabla} \bigr)$
(respectively ${\rm Ind}\bigl(\Mod(\fX_M)_{\bA_{\rm cris}} \bigr)$)
consisting of objects of the form $\bigl(\cM(-r)\bigr)_{r\in\Z}$
with $\cM$ a fixed object of $\Mod(\fX_M)_{\bA_{\rm cris}^\nabla}$
(resp.~$\Mod(\fX_M)_{\bA_{\rm cris}}$) and the transition
morphisms $\iota_{\cM,r,s}\colon \cM(s) \to \cM(r)$ are induced by
the morphisms $\iota_{r,s}\colon \bA_{\rm cris,M}^\nabla(s)\to
\bA_{\rm cris,M}^\nabla(r)$ (and similarly for $\bA_{\rm cris,M}$)
defined in \S \ref{def:bBcris}. Remark that this object is simply
the tensor product $\cM\tensor_{\bA_{\rm cris}^\nabla} \bB_{\rm
cris}^\nabla $ (resp.~$\cM\tensor_{\bA_{\rm cris}} \bB_{\rm cris}
$) defined in \S\ref{def:bBcris}. Given objects $\cM$ and $\cN$ we
denote by $\Hom_{\bB_{\rm cris}^\nabla}\bigl(\cM,\cN\bigr)$
(resp.~$\Hom_{\bB_{\rm cris}^\nabla}\bigl(\cM,\cN\bigr)$) the
group of homomorphisms in this category. These categories inherit
from $\Mod(\fX_M)_{\bA_{\rm cris}^\nabla}$
and~$\Mod(\fX_M)_{\bA_{\rm cris}}$ the structures of tensor
categories.

\bigskip

Consider objects $\cM=\{\cM_a\}_{a\in\N}$ and $\cN=\{\cN_a\}_{a\in\N}$ in $\Mod(\fX_M)_{\bA_{\rm cris}^\nabla}$ or in $\Mod(\fX_M)_{\bA_{\rm cris}}$. Given integers
$m$ and $n$ and a morphism $f\colon \cM(m)\lra \cN(m+n)$ in $\Mod(\fX_M)_{\bA_{\rm cris}^\nabla}$ (respectively $\Mod(\fX_M)_{\bA_{\rm cris}}$) we define a morphism
of inductive systems $\bigl(f_i\colon  \cM(i) \lra \cN(i+n)\bigr)_{i\in\Z}$ identifying $\cM(i)\cong \cM(m)\tensor_{\bA_{\rm cris,M}^\nabla} \bA_{\rm
cris,M}^\nabla(i-m)$ and $\cN(i+n)\cong \cN(m+n)\tensor_{\bA_{\rm cris,M}^\nabla} \bA_{\rm cris,M}^\nabla(i-m)$ and setting $f_i:= f \otimes {\rm Id}$ (and
similarly if we have objects in $\Mod(\fX_M)_{\bA_{\rm cris}}$).

\begin{lemma}\label{lemma:isoinBcris} The maps above define
group isomorphisms $$\lim_{ s,r\in \Z}\Hom_{\bA_{\rm
cris,M}^{\nabla}}\bigl(\cM(s),\cN(r)\bigr) \lra \Hom_{\bB_{\rm
cris}^\nabla}\bigl(\cM,\cN\bigr).$$Here, the direct limits on the
left hand side is taken via the maps $\Hom_{\bA_{\rm
cris,M}^{\nabla}}\bigl(\cM(s),\cN(r)\bigr) \to \Hom_{\bA_{\rm
cris,M}^{\nabla}}\bigl(\cM(s'),\cN(r')\bigr)$ given by $f\mapsto
\iota_{\cN,r,r'} \circ f \circ \iota_{\cM,s',s}$ for integers
$s'\geq s$ and $r\geq r'$. Similarly we get an isomorphisms
$$\lim_{s,r\in\Z} \Hom_{\bA_{\rm
cris,M}}\bigl(\cM(s),\cN(r)\bigr)\lra \Hom_{\bB_{\rm
cris}}\bigl(\cM,\cN\bigr).$$

Let\/ $f\in \Hom_{\bB_{\rm cris}^\nabla}\bigl(\cM,\cN\bigr) $
(resp.~in $\Hom_{\bB_{\rm cris}}\bigl(\cM,\cN\bigr) $) induced by
a morphism $f_{m,n}\colon \cM(m) \to \cN(n)$ in
$\Mod(\fX_M)_{\bA_{\rm cris}^\nabla}$
(resp.~$\Mod(\fX_M)_{\bA_{\rm cris}}$) for some $m$ and $n\in\Z$.
The following are equivalent:\smallskip

1) $f$ is an isomorphism;\smallskip

2) there are~$r$ and $s\in \N$ and a map $h_{r,s}\colon \cN(n+r)
\lra \cM(m-s)$ such that $f_{m,n}(s) \circ h_{r,s}$ is
$\iota_{\cN,n+r,n-s}\colon \cN(n+r) \to \cN(n-s)$ and
$h_{r,s}\circ f_{m,n}(+r)$ is  $\iota_{\cM,m+r,m-s}\cM(m+r) \to
\cM(m-s)$;\smallskip

3) there exists $N\in\N$ such that for every small affine $\cU\in
X^{\rm et}$ and every $a\in\N$ the map
$\cM_a(m)(\Rbar_\cU) \to \cN_a(n)(\Rbar_\cU)$ induced by
$f_{m,n}$ has kernel and cokernel annihilated by $t^N$.\bigskip

\noindent Furthermore multiplication by $t$ is an isomorphism in
$\Mod(\fX_M)_{\bB_{\rm cris}^\nabla}$
(resp.~$\Mod(\fX_M)_{\bB_{\rm cris}}$).

\end{lemma}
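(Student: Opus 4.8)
The plan is to prove Lemma~\ref{lemma:isoinBcris} in three stages: first the $\Hom$-identification, then the equivalence of (1)--(3), and finally the assertion that multiplication by $t$ is an isomorphism.

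\textbf{Step 1: the $\Hom$-formula.} I would use that $\bB_{\rm cris}^\nabla$ (resp.~$\bB_{\rm cris}$) is, by its very definition in \S\ref{def:bBcris}, the inductive system $\bigl(\bA_{\rm cris,M}^\nabla(-r)\bigr)_{r\in\Z}$ with transition maps $\iota_{r-1,r}$, and that for a fixed object $\cM$ of $\Mod(\fX_M)_{\bA_{\rm cris}^\nabla}$ the corresponding object of $\Mod(\fX_M)_{\bB_{\rm cris}^\nabla}$ is $\bigl(\cM(-r)\bigr)_{r\in\Z}$. Then by the definition of morphisms in $\mathrm{Ind}(\mathcal{A})$ given right after Remark~\ref{remark:enlargemorphism}, namely $\Hom_{\mathrm{Ind}}(\underline{A},\underline{B}) = \lim_{N\in\Z}\Hom^N(\underline{A},\underline{B})$, a morphism $\cM\to\cN$ in $\Mod(\fX_M)_{\bB_{\rm cris}^\nabla}$ is exactly a compatible degree-$N$ system of maps $\cM(-i)\to\cN(-i+N)$ in $\Mod(\fX_M)_{\bA_{\rm cris}^\nabla}$. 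The recipe before the lemma (identifying $\cM(i)\cong\cM(m)\tensor \bA_{\rm cris,M}^\nabla(i-m)$) shows that such a system is determined by any one of its components $f_{m,n}\colon\cM(m)\to\cN(m+n)$, and two components with the same $n$ but different $m$ agree after composing with the appropriate $\iota$'s; this gives the surjectivity onto the direct limit. For injectivity one checks that if $f_{m,n}$ becomes $0$ after composing with some $\iota_{\cN,m+n,m+n-s}$ then the induced morphism of inductive systems is the zero morphism, which is immediate from the equivalence relation on $\mathrm{Ind}$-morphisms. The case without $\nabla$ is identical.

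\textbf{Step 2: equivalence of (1), (2), (3).} The implication (2)$\Rightarrow$(1) is formal: a two-sided inverse up to the transition maps $\iota$ in $\mathrm{Ind}$ is precisely an inverse in the $\mathrm{Ind}$-category, since by the last paragraph of \S\ref{def:bBcris} some factorial multiple of $j$ (with $p!\,j=\mathrm{id}$) is invertible, so the $\iota_{r,s}=(r-s)!\,j_{r,s}$ become invertible in the localized category. For (1)$\Rightarrow$(2), unwind the definition of an isomorphism in $\mathrm{Ind}(\Mod(\fX_M)_{\bA_{\rm cris}^\nabla})$: an inverse is again represented by some $h_{r,s}\colon\cN(n+r)\to\cM(m-s)$, and the identities $f_{m,n}(s)\circ h_{r,s}=\iota_{\cN}$, $h_{r,s}\circ f_{m,n}(+r)=\iota_{\cM}$ are exactly what "being mutually inverse in the localized category" unwinds to. For (2)$\Leftrightarrow$(3), I would pass to localizations at a small affine $\cU$: by \ref{lemma:propbBcris}(4) (and \ref{prop:acrisnabla}, \ref{prop:crislocalization}, \ref{lemma:FilAcris(r)}) the localization $\cM_a(r)(\Rbar_\cU)$ is $\cM_a(\Rbar_\cU)$ with Galois action twisted and, crucially, $\iota_{r,s}$ localizes to multiplication by $(r-s)!\,t^{[r-s]} = t^{r-s}$ up to units invertible on the relevant modules. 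Hence the system of maps $h_{r,s}$ exists sheaf-theoretically iff, locally, $f_{m,n}$ has kernel and cokernel killed by a fixed power $t^N$ (take $N=r+s$); the passage from the local statement to the existence of a global $h_{r,s}$ uses that the localization functor is faithful and that the relevant sheaves are determined by their localizations on small affines together with their $\cG_{\cU_M}$-action, as recalled in \S\ref{sec:localization}.

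\textbf{Step 3: multiplication by $t$.} Finally, multiplication by $t$ on $\cM$ is, up to the unit-scalar discrepancy between $t$ and $\iota_{0,1}=j_{0,1}$, one of the transition maps defining the $\mathrm{Ind}$-object $\bigl(\cM(-r)\bigr)_{r\in\Z}$; more precisely $p!\,j$ is the identity in the $\mathrm{Ind}$-category by the remark at the end of \S\ref{def:bBcris}, and $t$ agrees with $p!\,j$ up to the automorphism $p!\,t^{[p]}/t^p=p^{-1}$, which is invertible on $\bB_{\rm cris}^\nabla$ and $\bB_{\rm cris}$ by \ref{lemma:propbBcris}(1). Therefore multiplication by $t$ is invertible.

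\textbf{Main obstacle.} The delicate point is Step~2, specifically the precise bookkeeping showing that $\iota_{r,s}$ localizes to multiplication by $t^{r-s}$ (rather than $t^{[r-s]}$, which differs by $(r-s)!$ and hence by a non-unit integer) and that this does not derail the equivalence (2)$\Leftrightarrow$(3); one must keep track of exactly which factorials are absorbed into the definition $\iota_{r,s}=(r-s)!\,j_{r,s}$ and use that everything is happening in the $p$-localized (indeed $\Q_p$-linear after inverting $p$ and $t$) setting. Once the identification of $\iota$ with a power of $t$ is pinned down, the rest is a routine diagram chase using the faithfulness of localization.
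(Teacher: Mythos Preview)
Your approach is essentially the paper's: Step~1 matches (the paper leaves it to the reader), Step~3 matches (the paper derives it from \ref{lemma:propbBcris}), and the skeleton of Step~2 is the same. Your ``Main obstacle'' is misplaced: since $\iota_{r,s}=(r-s)!\,j_{r,s}$ by definition and $(r-s)!\,t^{[r-s]}=t^{r-s}$ in $A_{\rm cris}$, the transition $\iota$ localizes exactly to multiplication by a power of $t$, with no factorial discrepancy; the paper uses this without comment.

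The genuine gap is in (3)$\Rightarrow$(2). Invoking that ``the localization functor is faithful and sheaves are determined by their localizations'' does not construct a global $h_{r,s}$: faithfulness tells you a global map is determined by its localizations, not that a system of local maps glues to a global one. You need a reason why the locally-defined inverse is \emph{canonical}, hence $\cG_{\cU_M}$-equivariant and compatible with restriction $\cU'\to\cU$. The paper supplies this via a unique-factoring argument: on localizations, the transition $\iota_{\cN,n+2N,n}$ factors \emph{uniquely} through $f_{m,n}$, because the first $t^N$ kills the cokernel of $f$ (so $\iota_{n+2N,n+N}$ lands in the image of $f(N)$) and the second $t^N$ kills the kernel (so the further passage to $\cN(n)$ lifts uniquely through $\cM(m)$). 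Uniqueness then forces Galois-equivariance and compatibility for varying $\cU$ and varying $a\in\N$, producing the global sheaf map $h_{2N,0}\colon\cN(n+2N)\to\cM(m)$. Replace your faithfulness appeal with this uniqueness step and the argument is complete.
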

\begin{proof} The first claim follows from the definitions and is
left to the reader. The last claim follows  from
\ref{lemma:propbBcris} remarking that any object is of the form
$\cF\tensor_{\bA_{\rm cris}^\nabla} \bB_{\rm cris}^\nabla $
(resp.~$\cF\tensor_{\bA_{\rm cris}} \bB_{\rm cris} $).

The equivalence of (1) and (2) is clear. Note that the morphism
$\cM(m)(\Rbar_\cU) \to \cN(n)(\Rbar_\cU)$ is a morphism of $A_{\rm
cris}$--modules so that (3) makes sense.

(2) $\Longrightarrow$ (3) Note that
$\cM(h)(\Rbar_\cU)=\cM(\Rbar_\cU)$ and
$\cN(h)(\Rbar_\cU)=\cN(\Rbar_\cU)$ as $A_{\rm
cris}(\Rbar_\cU)$--modules for every $h\in\Z$ (only the Galois
action of $\cG_{\cU,M}$ is different). Via these identifications
the maps $\cN(n+r)(\Rbar_\cU) \to \cN(n-s)(\Rbar_\cU)$ and
$\cM(m+r)(\Rbar_\cU) \to \cM(m-s)(\Rbar_\cU)$ are multiplication
by $t^{s+r}$.

(3) $\Longrightarrow$ (2) Write $\cM=\{\cM_a\}_{a\in\N}$ and $\cN:=\{\cN_b\}_{b\in\N}$. Consider the map $\iota_{\cN,n+2N,n+N}\colon\cN(n+2N) \to \cN(n+N)$. By
assumption for every small affine $\cU\in \fX_M$ and every $a\in\N$ the image of the induced map on localizations $\cN_a(n+2N)(\Rbar_\cU) \to \cN_a(n+N)(\Rbar_\cU)$
is zero in the cokernel of $f_{a,\cU}(N)\colon \cM_a(m+N)(\Rbar_\cU) \to \cN_a(n+N)(\Rbar_\cU)$ induced by $f_{m,n}$. Hence it factors via the image ${\rm
Im}\bigl(f_{a,\cU}(N)\bigr)$ of $f_{a,\cU}(N)$. The map ${\rm Im}\bigl(f_{a,\cU}(N)\bigr) \to {\rm Im}\bigl(f_{a,\cU}\bigr) $ is multiplication by $t^N$ and hence
factors uniquely via $\cM_a(m)(\Rbar_\cU)$ by assumption. Thus the map $\cN_a(n+2N)(\Rbar_\cU) \to \cN_a(n)(\Rbar_\cU)$ obtained from $\iota_{\cN,n+2N,n}$ by
localization factors uniquely via $f_{a,\cU} \colon \cM_a(m)(\Rbar_\cU)\to \cN_a(n)(\Rbar_\cU)$. By uniqueness this factorization is $\cG_{\cU,M}$--equivariant and
compatible for varying~$\cU$'s and $a$'s. In particular it provides a map $\{h_{a,2N,0}\colon \cN_a(n+2N) \lra \cM_a(m)\}_{a\in\N}$ with the required properties.
\end{proof}

Let  $\cU\in X^{\rm et}$ be a small affine with Galois group
$\cG_{\cU,M}$. As explained in \S\ref{def:bBcris} the localization
functor $\Sh(\fX_M)^\N \to \Rep_{\cG_{\cU,M}}$ $\cF\mapsto
\cF(\Rbar_\cU)$ extend to a localization functor ${\rm
Ind}\left(\Sh(\fX_M)^\N\right) \lra \Rep_{\cG_{\cU,M}} $ which we
denote by $\cF\mapsto \cF(\Rbar_\cU)$. Restricting it to the
categories $\Mod(\fX_M)_{\bB_{\rm cris}^\nabla}$ (respectively
$\Mod(\fX_M)_{\bB_{\rm cris}}$ and using \ref{lemma:propbBcris}
they define functors

$$\Mod(\fX_M)_{\bB_{\rm cris}^\nabla} \lra \Mod-{B_{\rm cris}^\nabla(\Rbar_\cU)
\left[\cG_{\cU,M}\right]},\qquad \Mod(\fX_M)_{\bB_{\rm cris}} \lra \Mod-{B_{\rm cris}(\Rbar_\cU)\left[\cG_{\cU,M}\right]} $$to the categories of $B_{\rm
cris}^\nabla(\Rbar_\cU)$-modules (resp.~$B_{\rm cris}(\Rbar_\cU)$-modules) endowed with continuous action of $\cG_{\cU,M}$.

For later purposes we prove the following property of
localizations of tensor products. In the next lemma we suppose
that $M$ is a finite extension of $K$. Let $\cM$ be a coherent
sheaf of $\cO_{X_{\OMun}}$--modules on~$X^{\rm et}$. Write
$\cM\tensor_{(\cO_{X\otimes_{\cO_K} \OMun})} \Fil^r\bB_{\rm cris,M}$
for $v_{X,M}^\ast(\cM)\tensor_{(\cO_{\fX_M}^{\rm un}\otimes_{\cO_K}
\OMun)} \Fil^r\bB_{\rm cris,M}$.

\begin{lemma}\label{lemma:loctensorAcris}
Let $\cL$ be a $p$-adic sheaf on $X_M^{\rm et}$. Fix $r\in\Z\cup
\{-\infty\}$. Let $\cU\in X^{\rm et}$ be a small affine. If
$\cM[p^{-1}](\cU)$ is a projective $R_\cU\otimes_{\cO_K}
\Mun$-module then
$$\left(\cM\tensor_{(\cO_{X\otimes_{\cO_K} \OMun})} \Fil^r\bB_{\rm
cris,M}\right)(\Rbar_\cU)=\cM(\cU)\tensor_{R_\cU} \Fil^rB_{\rm
cris}(\Rbar_\cU) $$in $\Mod-{B_{\rm
cris}(\Rbar_\cU)\left[\cG_{\cU,M}\right]}$.

Similarly let $V_\cU(\cL)$ be the $\cG_{\cU,M}$-representation
associated to $\cL$. Then $$\left(\cL\tensor_{\Z_p}
\Fil^r\bB_{\rm cris,M}\right)(\Rbar_\cU)=V_\cU(\cL) \tensor_{\Z_p}
\Fil^r B_{\rm cris}(\Rbar_\cU) $$in $\Mod-{B_{\rm
cris}(\Rbar_\cU)\left[\cG_{\cU,M}\right]}$.

\end{lemma}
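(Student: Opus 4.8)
The statement has two parts, both asserting that the localization functor $\cF \mapsto \cF(\Rbar_\cU)$ commutes with tensoring with $\Fil^r\bB_{\rm cris,M}$. The strategy in both cases is to reduce to the level of the individual inverse systems $\Fil^{i-r}\bA_{\rm cris,n,M}'$ and to use that the localization of a tensor product of sheaves in $\fX_M$ over the structure sheaf is computed by the tensor product of localizations, provided one has enough flatness/projectivity to control the sheafification and the passage to the inverse limit over $n$. The key input from earlier in the paper is Lemma~\ref{lemma:propbBcris}(4), which identifies $\Fil^i(\bB_{\rm cris,M})(\Rbar_\cU)$ with $\Fil^i B_{\rm cris}(\Rbar_\cU)$ (and likewise $\Fil^i B_{\rm cris}^\nabla$), together with Proposition~\ref{prop:crislocalization} identifying $\bA_{\rm cris,M}'(\Rbar_\cU)$ with $A_{\rm cris}(\Rbar_\cU)$; I would also use that $\bB_{\rm cris,M}$ is an inductive system whose terms are, up to Tate twist, the $\bA_{\rm cris,M}(-r)$, and that localization commutes with the inductive limit defining $\Mod(\fX_M)_{\bB_{\rm cris}}$ by construction (as recalled just before the lemma).

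First I would treat the case of $\cL$ a $p$-adic sheaf, since this is the cleaner one. Because $\cL_n$ is locally constant and locally free of finite rank over $\Z/p^n\Z$, on a suitable covering of $(\cU,\cU^{\rm rig})$ in $\fX_M$ it becomes isomorphic to a finite free constant sheaf; the tensor product $\cL_n \tensor_{\Z_p} \Fil^{i-r}\bA_{\rm cris,n,M}'$ is then, locally, a finite direct sum of copies of $\Fil^{i-r}\bA_{\rm cris,n,M}'$, so no sheafification issue arises and the presheaf tensor product is already a sheaf. Passing to localizations, $\cL_n(\Rbar_\cU) = V_\cU(\cL_n)$ is a finite free $\Z/p^n\Z$-module with $\cG_{\cU,M}$-action, and $\bigl(\cL_n \tensor \Fil^{i-r}\bA_{\rm cris,n,M}'\bigr)(\Rbar_\cU) = V_\cU(\cL_n) \tensor_{\Z/p^n\Z} \Fil^{i-r}\bA_{\rm cris,n,M}'(\Rbar_\cU)$, the tensor product being finite and hence commuting with the direct limit over $S$ defining the localization. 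Taking the inverse limit over $n$ and then the inductive limit defining $\bB_{\rm cris,M}$ and $B_{\rm cris}(\Rbar_\cU)$, and invoking Lemma~\ref{lemma:propbBcris}(4), yields $\bigl(\cL \tensor_{\Z_p} \Fil^r\bB_{\rm cris,M}\bigr)(\Rbar_\cU) = V_\cU(\cL) \tensor_{\Z_p} \Fil^r B_{\rm cris}(\Rbar_\cU)$, compatibly with the $\cG_{\cU,M}$-action since all the identifications are equivariant.

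For the coherent sheaf $\cM$, the point is that $v_{X,M}^\ast(\cM)$ is a sheaf of $\cO_{\fX_M}^{\rm un}$-modules whose restriction to a small affine $\cU$ is, after inverting $p$, projective over $R_\cU \otimes_{\cO_K}\Mun$ by hypothesis; choosing a finite free resolution locally (or writing $\cM[p^{-1}](\cU)$ as a direct summand of a finite free module) reduces the computation of $\bigl(\cM \tensor_{(\cO_{X\otimes \OMun})} \Fil^r\bB_{\rm cris,M}\bigr)(\Rbar_\cU)$ to the finite free case, in which it is visibly $\cM(\cU) \tensor_{R_\cU} \Fil^r\bB_{\rm cris,M}(\Rbar_\cU) = \cM(\cU)\tensor_{R_\cU} \Fil^r B_{\rm cris}(\Rbar_\cU)$ using Lemma~\ref{lemma:vastOX} to identify $v_{X,M}^\ast(\cO_X)(\Rbar_\cU)$ with $\hR_\cU$ and Lemma~\ref{lemma:propbBcris}(4) again. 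Here I would use Proposition~\ref{prop:localization} to know that the localization of $\cO_{\fX_M}^{\rm un}$ at $\cU$ is $\Rbar_\cU$ (completed appropriately), so that the base-change ring is the expected one.

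The main obstacle I expect is the interaction of sheafification with the various limits: a priori $\cM \tensor \Fil^r\bB_{\rm cris,M}$ is the \emph{sheafification} of a presheaf tensor product, and localization (which involves a direct limit over finite étale $S$, an inverse limit over $n$, and an inductive limit over the Tate twist degree) need not commute with sheafification in general. The projectivity hypothesis on $\cM[p^{-1}](\cU)$ (respectively local freeness of $\cL_n$) is exactly what is needed to make the presheaf tensor product already a sheaf on small affines, after which localization is computed termwise; controlling the inverse limit over $n$ requires the flatness/Mittag--Leffler-type properties of the systems $\bA_{\rm cris,n,M}'$ established in Proposition~\ref{prop:crislocalization} and Lemma~\ref{lemma:acrisnabla}. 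I would also need to check carefully that inverting $p$ (passing from $\bA$ to $\bB$, i.e. the inductive limit over Tate twists) is harmless, which follows from Lemma~\ref{lemma:propbBcris}(1) since multiplication by $p$ is already an isomorphism on all the $\bB$-objects, so the localization of the $\bB$-object is literally the inductive limit of the localizations of the $\bA(-r)$'s.
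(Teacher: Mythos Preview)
Your proposal is correct and follows essentially the same route as the paper: reduce to the free case via the projectivity (resp.\ local freeness) hypothesis, and then invoke Lemma~\ref{lemma:propbBcris}(4) to identify the localization of $\Fil^r\bB_{\rm cris,M}$ with $\Fil^r B_{\rm cris}(\Rbar_\cU)$. The paper's own proof is a three-line version of exactly this argument; your added discussion of why sheafification, the inverse limit over $n$, and the inductive limit over Tate twists cause no trouble (because in the free case the presheaf tensor product is already a sheaf and the limits are over finite direct sums) makes explicit what the paper leaves implicit.
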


\begin{proof} We prove the first statement. We assume that $X=\cU$. Since $\cM[p^{-1}]$ is
projective and coherent it is a direct summand in a free
$\cO_\cU\otimes_{\cO_K} \Mun$-module. We may then assume that
$\cM[p^{-1}]$ is free.  The claim follows
from~\ref{lemma:propbBcris}.  The second statement follows also
from loc.~cit.
\end{proof}

Let~$v_{M,\ast}^\cont\colon \Sh(\fX_M)^\N \lra \Sh\bigl(X^{\rm
et}\bigr)$ be the functor $\ds \{\cF_n\}_n \mapsto
\lim_{\infty\leftarrow n} v_{X,M,\ast}(\cF_n)$. As explained in
\S \ref{def:bBcris} it induces a functor on the category ${\rm
Ind}\left(\Sh(\fX_M)^\N\right)$ and hence a functor
$$v_{M,\ast}\colon \Mod(\fX_M)_{\bB_{\rm cris}} \lra \Sh\bigl(X^{\rm
et}\bigr).$$Given a $p$--adic sheaf $\cL$ on~$X_M^{\rm
et}$ define
$$\bDcrisM(\cL):=
v_{M,\ast}\Bigl(\cL\tensor_{\Z_p} \bB_{\rm cris,M} \Bigr).$$Recall
that by abuse of notation we denoted $\cL$ the continuous sheaf
on $\fX_M$ given by $w_{X,M,\ast}(\cL)$. Then $\bDcrisM(\cL)$ is
a sheaf of $\cO_{X_{\Mun}}$-modules in~$\Sh\bigl(X^{\rm
et}\bigr)$. Put
$$\bDcrisgeo(\cL):=\bD_{\rm cris,\Kbar}(\cL), \qquad
\bDcrisar(\cL):=\bD_{\rm cris,M}(\cL)$$whenever $M$ is a fixed
finite extension of~$K$.

\begin{lemma}\label{lemma:DcrisarDcrisgeoGK} The sheaf~$\bDcrisgeo(\cL)$
is endowed with an action of\/ $G_M$ and
$\bDcrisM(\cL)=\bigl(\bDcrisgeo(\cL)\bigr)^{G_M}$.
\end{lemma}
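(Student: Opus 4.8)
The plan is to descend along the field extension $M\subset \Kbar$, using the morphism of sites $\beta_{M,\Kbar}\colon \fX_M\to \fX_{\Kbar}$ and Lemma~\ref{lemma:betaastG}. First I would record two compatibilities of the structural functors. From the very definitions one has $v_{X,\Kbar}=\beta_{M,\Kbar}\circ v_{X,M}$ as continuous functors $X^{\rm et}\to \fX_{\Kbar}$: indeed $v_{X,M}(\cU)=(\cU,\cU_K)$ and $\beta_{M,\Kbar}$ carries this pair, viewed in $\fX_M$, to the same pair viewed in $\fX_{\Kbar}$, which is $v_{X,\Kbar}(\cU)$ (and likewise in the algebraic setting, where $\cU_M\tensor_M\Kbar=\cU_{\Kbar}$). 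Passing to sheaves gives $v_{X,\Kbar}^\ast=\beta_{M,\Kbar}^\ast\circ v_{X,M}^\ast$ and hence, on right adjoints, $v_{X,\Kbar,\ast}=v_{X,M,\ast}\circ\beta_{M,\Kbar,\ast}$; applying this term by term to inverse systems and then to inductive systems yields $v_{\Kbar,\ast}^\cont=v_{M,\ast}^\cont\circ\beta_{M,\Kbar,\ast}$. In the same way $u_{X,\Kbar}\circ\beta_{M,\Kbar}$ coincides with $u_{X,M}$ composed with the base change $X_{M,\rm et}\to X_{\Kbar,\rm et}$, so $\beta_{M,\Kbar}^\ast$ carries $u_{X,M,\ast}(\cL)$ to $u_{X,\Kbar,\ast}$ of the base change of $\cL$, which (as in the text) we still denote $\cL$.

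Next I would prove that $\beta_{M,\Kbar}^\ast\bigl(\cL\tensor_{\Z_p}\bB_{\rm cris,M}\bigr)\cong\cL\tensor_{\Z_p}\bB_{\rm cris,\Kbar}$ in ${\rm Ind}\bigl(\Sh(\fX_{\Kbar})^\N\bigr)$, compatibly with the filtrations and Frobenius. The functor $\beta_{M,\Kbar}^\ast$ is exact by Lemma~\ref{lemma:betaastG}(i) and is a morphism of tensor categories, so it commutes with $\tensor_{\Z_p}$; by the first paragraph $\beta_{M,\Kbar}^\ast(\cL)=\cL$ on $\fX_{\Kbar}$, and $\beta_{M,\Kbar}^\ast(\bA_{\rm cris,n,M})\cong\bA_{\rm cris,n,\Kbar}$, $\beta_{M,\Kbar}^\ast(\bA_{\rm cris,n,M}')\cong\bA_{\rm cris,n,\Kbar}'$ as $\WW(k)$--DP sheaves compatibly with Frobenius by Propositions~\ref{lemma:Acrisnnabla}(5),(6), \ref{lemma:Acrisnnabla'}(5),(6), Theorem~\ref{thm:gluing}(3) and Corollary~\ref{cor:extensionofFrobenius}(3). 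Since the Tate twists are obtained by tensoring with $u_{X,M}^\ast\bigl(\Z_p/p^n\Z_p(r)\bigr)$, and $\beta_{M,\Kbar}^\ast$ commutes with $u^\ast$ and with these twists and sends the transition maps $\iota_{r-1,r}$ to the corresponding ones, passing to inductive limits gives the asserted isomorphism of ind-objects with all their structures; the compatibility with the filtrations follows from Lemma~\ref{lemma:propbBcris} and Lemma~\ref{lemma:FilAcris(r)}.

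Combining the two steps, $\bDcrisgeo(\cL)=v_{\Kbar,\ast}^\cont\bigl(\cL\tensor\bB_{\rm cris,\Kbar}\bigr)=v_{M,\ast}^\cont\Bigl(\beta_{M,\Kbar,\ast}\beta_{M,\Kbar}^\ast\bigl(\cL\tensor\bB_{\rm cris,M}\bigr)\Bigr)$. Applying Lemma~\ref{lemma:betaastG}(ii) term by term to the inverse system of sheaves underlying $\cL\tensor\bB_{\rm cris,M}$, the object $\beta_{M,\Kbar,\ast}\beta_{M,\Kbar}^\ast\bigl(\cL\tensor\bB_{\rm cris,M}\bigr)$ carries a natural $G_M$-action with $\bigl(v_{X,M,\ast}(\beta_{M,\Kbar,\ast}\beta_{M,\Kbar}^\ast\cG_n)(\cU)\bigr)^{G_M}=v_{X,M,\ast}(\cG_n)(\cU)=\cG_n(\cU,\cU_K)$ for every $n$ and every small affine $\cU$; this endows $\bDcrisgeo(\cL)$ with a $G_M$-action. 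It remains to interchange $v_{M,\ast}^\cont$ with $(-)^{G_M}$. Checking on sections over a small affine $\cU$: $v_{M,\ast}^\cont$ of an inverse system is the inverse limit over $n$ of evaluation of the $n$-th term at $(\cU,\cU_K)$, and $(-)^{G_M}$ commutes with that inverse limit; the inductive limit over Tate twists defining the ind-object causes no trouble because its transition maps become isomorphisms after inverting $t$ and $p$ (Lemma~\ref{lemma:propbBcris}(1) and the remark in \S\ref{def:bBcris}) and inverting the $G_M$-fixed elements $t,p$ commutes with $(-)^{G_M}$ — alternatively, the $G_M$-actions in play are smooth, so $(-)^{G_M}$ is a filtered colimit of invariants under finite quotients and commutes with filtered colimits. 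Assembling these identifications gives $\bigl(\bDcrisgeo(\cL)\bigr)^{G_M}=v_{M,\ast}^\cont\bigl(\cL\tensor\bB_{\rm cris,M}\bigr)=\bDcrisM(\cL)$.

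The main obstacle will be this last point: making precise the interchange of $v_{M,\ast}^\cont$ with $G_M$-invariants through both the inverse limit over $n$ and the inductive limit over Tate twists. As indicated, it reduces cleanly to the sheaf-level statement of Lemma~\ref{lemma:betaastG}(ii) together with the formal behaviour of (co)limits and the $G_M$-invariance of $t$ and $p$; everything else is a matter of chasing the cited structural results for $\beta_{M,\Kbar}^\ast$ on the Fontaine sheaves.
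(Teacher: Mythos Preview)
Your approach is essentially the paper's: factor $v_{\Kbar,\ast}=v_{M,\ast}\circ\beta_{M,\Kbar,\ast}$, use $\beta_{M,\Kbar}^\ast\bigl(\cL\tensor_{\Z_p}\bA_{\rm cris,M}(r)\bigr)\cong\cL\tensor_{\Z_p}\bA_{\rm cris,\Kbar}(r)$ (via Corollary~\ref{cor:extensionofFrobenius}), and apply Lemma~\ref{lemma:betaastG}(ii) at each fixed $r$ before passing to the direct limit. The paper simply says ``passing to direct limits on $r\in\Z$ the lemma follows''; you are right to flag the commutation of $(-)^{G_M}$ with that direct limit as the one nontrivial point.

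However, both of your proposed justifications for that commutation are wrong. The element $t$ is \emph{not} $G_M$-fixed: one has $\sigma(t)=\chi(\sigma)t$ for $\sigma\in G_K$ (see \S\ref{sec:Notation}), so ``inverting the $G_M$-fixed element $t$'' does not make sense. Likewise the $G_M$-action on $B_{\rm cris}$ is not smooth (for instance, $t$ is fixed by no open subgroup), so the argument via smoothness fails. The correct reason is simpler: the transition maps $\iota_{r-1,r}$ in the inductive system are injective (they are given, on localizations, by multiplication by $t$, and $A_{\rm cris}(\Rbar_\cU)$ is $t$-torsion-free since it embeds in $B_{\rm dR}^+(\Rbar_\cU)$), and a filtered colimit along injective $G_M$-equivariant maps commutes with $(-)^{G_M}$: any invariant element of the union already lies in some term and is invariant there. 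Equivalently, on a small affine $\cU$ both sides are the $\cG_{\cU,M}$-invariants of $V_\cU(\cL)\tensor_{\Z_p}B_{\rm cris}(\Rbar_\cU)$, since $\bigl[(V_\cU(\cL)\tensor B_{\rm cris}(\Rbar_\cU))^{\cG_{\cU,\Kbar}}\bigr]^{G_M}=(V_\cU(\cL)\tensor B_{\rm cris}(\Rbar_\cU))^{\cG_{\cU,M}}$.
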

\begin{proof}
One has $v_{\Kbar,\ast}^\cont\Bigl(\cL\tensor_{\Z_p} \bA_{\rm
cris,\Kbar}(r) \Bigr)=
v_{M,\ast}^\cont\left(\beta_{M,\Kbar,\ast}^\N
\Bigl(\cL\tensor_{\Z_p} \bA_{\rm cris,\Kbar}(r) \Bigr)\right)$
since $v_{\Kbar,\ast}=v_{M,\ast}\circ \beta_{M,\Kbar,\ast}$. Due
to corollary \ref{cor:extensionofFrobenius} we have
$\beta_{M,\Kbar}^\ast(\bA_{\rm cris,M})\cong\bA_{\rm cris,\Kbar}$
so that $v_{M,\ast}^\cont\left(\beta_{M,\Kbar,\ast}^\N
\Bigl(\cL\tensor_{\Z_p} \bA_{\rm cris,\Kbar}(r) \Bigr)\right)$
coincides with $v_{M,\ast}^\cont\left(\beta_{M,\Kbar,\ast}^\N\circ
\beta_{M,\Kbar}^{\N,\ast} \Bigl(\cL\tensor_{\Z_p} \bA_{\rm
cris,M}(r) \Bigr)\right)$. It then follows from
lemma \ref{lemma:betaastG}(ii) that the
module~$v_{\Kbar,\ast}^\cont\Bigl(\cL\tensor_{\Z_p} \bA_{\rm
cris,\Kbar}(r) \Bigr)$ is endowed with an action of $G_M$ and that
$$v_{M,\ast}^\cont\Bigl(\cL\tensor_{\Z_p} \bA_{\rm cris,M}(r)
\Bigr)=v_{\Kbar,\ast}^\cont\Bigl(\cL\tensor_{\Z_p} \bA_{\rm
cris,\Kbar}(r) \Bigr)^{G_M}.$$ Passing to direct limits on $r\in\Z$
the lemma follows.
\end{proof}

Let $\cU$ be a small affine of $X^{\rm et}$. Then
$\cL_n(\Rbar_\cU)$ is a free $\Z/p^n\Z$--module by assumption and
thus  the natural map~$\cL_n(\Rbar_\cU) \tensor_{\Z_p} \bA_{\rm
cris,n,M}' (\Rbar_\cU) \lra \Bigl(\cL_n \tensor_{\Z_p} \bA_{\rm
cris,n,M}'\Bigr)(\Rbar_\cU)$ is an isomorphism. Then $
v_{M,\ast}\Bigl(\cL_n\tensor_{\Z_p} \bA_{\rm cris,n,M}'
\Bigr)(\cU)= \Bigl(\cL_n \tensor_{\Z_p} \bA_{\rm
cris,n}\Bigr)(\Rbar_\cU)^{\cG_{\cU,M}}$. Then the map $\ds
\cL(\Rbar_\cU)\tensor_{\Z_p} A_{\rm cris}(\Rbar_\cU)t^r \lra
\lim_{\infty\leftarrow n} \cL_n(\Rbar_\cU)\tensor_{\Z_p} \bA_{\rm
cris,n}'(\Rbar_\cU) t^r$ is an isomorphism for every~$r\in\Z$
since~$A_{\rm cris}(\Rbar_\cU)$ is $p$--adically complete and
separated and thanks to proposition \ref{prop:crislocalization}.
Following~\cite{brinon} define
$$ D_{\rm cris,M}
\bigl(V_\cU(\cL)\bigr):=\Bigl(V_\cU(\cL)\tensor_{\Z_p} B_{\rm
cris}(\Rbar_\cU)\Bigr)^{\cG_{\cU,M}}.$$It then follows that
$$\bDcrisM(\cL)(\cU) \stackrel{\sim}{\lra} D_{\rm cris,M}
\bigl(V_\cU(\cL)\bigr)$$as $R_\cU\otimes_{\cO_K} \Mun$-modules
and since $\cG_{\cU,\Kbar}$ acts trivially on~$t$  we get
$$\Bigl(V_\cU(\cL)\tensor_{\Z_p} B_{\rm
cris}(\Rbar_\cU)\Bigr)^{\cG_{\cU,\Kbar}}\stackrel{\sim}{\lra}
\bDcrisgeo(\cL)(\cU)$$as $R_\cU\widehat{\otimes}_{\cO_K} B_{\rm
cris}$-modules. Let $M$ be a finite extension of~$K$. It follows
that $\bDcrisM$ and $\bDcrisgeo$ define functors
$$\bDcrisar\colon \Sh(X_M^{\rm et})_{\Q_p} \lra \Mod_{\cO_X\otimes_{\cO_K}\Mun}$$and
$$\bDcrisgeo\colon  \Sh(X_M^{\rm et})_{\Q_p} \lra  \Mod(\cO_X\otimes_{\cO_K}B_{\rm cris});$$here
$\cO_{X}\tensor_{\cO_K} B_{\rm cris}$ stands for
$\cO_{X}\widehat{\tensor}_{\cO_K} A_{\rm cris}\bigl[t^{-1}\bigr]$
where $\cO_{X}\widehat{\tensor}_{\cO_K} A_{\rm cris}$ is the sheaf
on $X^{\rm et}$ defined by $\ds \lim_{\infty\leftarrow n}\left(
\bigl(\cO_{X}/p^n \cO_{X}\bigr) \tensor_{\cO_K} A_{\rm
cris}\right)$. Furthermore we have.

\begin{lemma}\label{lemma:Dcriscoherent} Let $M$ be a finite extension of $K$.
The $R_\cU\tensor_{\cO_K} \Mun$--module
$\bDcrisar\bigl(\cL\bigr)(\cU)$ is projective, of finite type and
of rank less or equal to the rank of~$\cL$.
\end{lemma}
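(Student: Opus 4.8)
The statement is a local, commutative-algebra assertion about the $R_\cU\tensor_{\cO_K}\Mun$-module $\bDcrisar(\cL)(\cU)$, so the plan is to reduce to the relative $p$-adic Hodge theory of Brinon, which we have already been quoting. First I would use the identification established just before the statement: for a small affine $\cU=\Spf(R_\cU)$ we have $\bDcrisar(\cL)(\cU)\stackrel{\sim}{\lra} D_{\rm cris,M}\bigl(V_\cU(\cL)\bigr)=\bigl(V_\cU(\cL)\tensor_{\Z_p} B_{\rm cris}(\Rbar_\cU)\bigr)^{\cG_{\cU,M}}$, where $V_\cU(\cL)=\lim_{\infty\leftarrow n}V_\cU(\cL_n)$ is the continuous $\cG_{\cU_M}$-representation attached to $\cL$. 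Thus the whole question becomes: show that $D_{\rm cris,M}(V)$ is a projective $R_\cU\tensor_{\cO_K}\Mun$-module of finite type and of rank $\le \dim_{\Q_p}V$ for every continuous $\Q_p$-representation $V$ of $\cG_{\cU_M}$ (after inverting $p$).

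Second, I would recall the general mechanism from \cite{brinon}: for any $B_{\rm cris}(\Rbar_\cU)$-linear, $\cG_{\cU_M}$-equivariant setting, the module $D_{\rm cris,M}(V)$ carries a natural map $\alpha_V\colon D_{\rm cris,M}(V)\tensor_{R_\cU\tensor\Mun} B_{\rm cris}(\Rbar_\cU)\lra V\tensor_{\Q_p}B_{\rm cris}(\Rbar_\cU)$, and the key algebraic input is that this map is always injective; this is the relative analogue of the classical fact that $D_{\rm cris}$ of a $\Q_p$-representation has dimension at most $\dim V$, and it follows because $B_{\rm cris}(\Rbar_\cU)$ has the right ``$(B_{\rm cris},\cG_{\cU_M})$-regularity'' properties in Fontaine's formalism (one uses that $B_{\rm cris}(\Rbar_\cU)^{\cG_{\cU_M}}=R_\cU\tensor_{\cO_K}\Mun$ after inverting $p$, up to the usual subtleties, together with the fact that $\Fil^0(B_{\rm cris}^\nabla(\Rbar_\cU))^{\varphi=1}=\Q_p$ coming from the fundamental exact sequence of Proposition \ref{prop:phioverp^ronFulrcris}). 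Counting $B_{\rm cris}(\Rbar_\cU)$-ranks on both sides of an injective $\alpha_V$, extended to a maximal linearly independent family, gives $\mathrm{rank}_{R_\cU\tensor\Mun}D_{\rm cris,M}(V)\le\dim_{\Q_p}V$, which is the rank bound; here one should be slightly careful that $R_\cU\tensor_{\cO_K}\Mun$ is a domain (it is a connected regular ring, since $\cU$ is small hence $R_\cU$ is a domain after base change to the unramified $\Mun$), so ``rank'' makes sense.

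Third, for projectivity and finite generation I would argue that $D_{\rm cris,M}(V)$ is a finitely generated module over the noetherian ring $R_\cU\tensor_{\cO_K}\Mun$ because it embeds, via $\alpha_V$, into a free $B_{\rm cris}(\Rbar_\cU)$-module of finite rank and is cut out by the Galois invariance, and Brinon shows such invariants are finitely generated (this uses the noetherianity of $R_\cU[1/p]$ and a descent/approximation argument for $\cG_{\cU_M}$-invariants in $V\tensor B_{\rm cris}(\Rbar_\cU)$). For projectivity, the cleanest route is to show $D_{\rm cris,M}(V)$ is reflexive and then invoke that over the regular ring $R_\cU\tensor\Mun$ (which has the relevant depth/Serre conditions since $X$ is smooth over $\cO_K=\WW(k)$ and we invert $p$, landing in a regular affinoid-type ring of the rigid generic fibre) a finitely generated reflexive module is projective; alternatively, one localizes at height-one primes and uses the DVR case directly, exactly as in Remark \ref{remark:normal}. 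In the DVR case $D_{\rm cris,M}(V)_{\mathfrak p}$ is torsion-free of finite rank over a DVR, hence free, and gluing gives projectivity.

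\textbf{Main obstacle.} The genuinely hard point is the injectivity of $\alpha_V$ (equivalently, the rank inequality) in the relative setting: over a point this is Fontaine's classical argument, but over $R_\cU$ one must know that $B_{\rm cris}(\Rbar_\cU)$ is ``regular'' for the $\cG_{\cU_M}$-action, i.e. that a nonzero $\cG_{\cU_M}$-equivariant vector in $B_{\rm cris}(\Rbar_\cU)$ that is an eigenvector for Frobenius with slope zero generates an $R_\cU\tensor\Mun$-line, and that invariants behave well under tensor products. I expect to get this by citing \cite[\S8]{brinon} directly rather than reproving it, so in practice the proof of this lemma will be short: reduce to $D_{\rm cris,M}(V_\cU(\cL))$ via the isomorphism already in the text, quote Brinon's finiteness and the bound $\dim_{B_{\rm cris}(\Rbar_\cU)}(D_{\rm cris,M}(V)\tensor B_{\rm cris}(\Rbar_\cU))\le\dim V$, and then upgrade finite generation to projectivity by the height-one-prime/DVR argument together with reflexivity. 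The subtlety to watch is that $\cL$ need not be crystalline for this lemma — the bound and finite generation hold unconditionally — whereas projectivity of the exact rank (and the comparison isomorphism) is where crystallinity enters later; so I must make sure the argument here only uses general properties of $B_{\rm cris}(\Rbar_\cU)$ and the noetherian/regular structure of $R_\cU\tensor_{\cO_K}\Mun$.
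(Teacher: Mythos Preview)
Your approach is correct and is essentially the same as the paper's: reduce via the identification $\bDcrisar(\cL)(\cU)\cong D_{\rm cris,M}\bigl(V_\cU(\cL)\bigr)$ already established in the text, and then invoke Brinon's relative theory. The paper's proof is a one-line citation of \cite[Prop.~8.3.1]{brinon} for the projectivity, finite type, and rank bound, whereas you have sketched the internal mechanism of that proposition (injectivity of $\alpha_V$, localization at height-one primes, reflexivity); this is fine as exposition but unnecessary for the present lemma, and you should be careful not to overclaim on the projectivity argument via reflexivity, since Brinon's actual route in \cite[\S8.3]{brinon} is somewhat more delicate.
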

\begin{proof} It follows from~\cite[Prop.~8.3.1]{brinon}
and the identification $D_{\rm cris} \bigl(V_\cU(\cL)\bigr)=
\bDcrisar(\cL)(\cU)$.
\end{proof}

{\it Crystalline \'etale sheaves.}\enspace Let $K\subseteq M$ ($\subset \Kbar$) be a finite extension. Following \cite[Def.~1.1]{ogus} we denote by ${\rm
Coh}\bigl(\cO_X\tensor_{\cO_K} \Mun\bigr)$ to be the full subcategory of sheaves of $\cO_X\tensor_{\cO_K} \Mun$-modules  isomorphic to $F\tensor_{\cO_K} K$ for some
coherent sheaf $F$ of $\cO_X\tensor_{\cO_K} \OMun$-modules on $X$.  A $\Q_p$-adic sheaf~$\cL=\{\cL_n\}_n$ on~$X_M^{\rm et}$ is called {\it crystalline} if

\begin{enumerate}

\item[i.] $\bDcrisar\bigl(\cL\bigr)$ is in  ${\rm Coh}\bigl(\cO_X\tensor_{\cO_K} \Mun\bigr)$;

\item[ii.] the natural map $\alpha_{\rm cris,\cL}\colon
\bDcrisar\bigl(\cL\bigr)\tensor_{(\cO_X\tensor_{\cO_K} \OMun)}
\bB_{\rm cris,M}\lra \cL\tensor_{\Z_p} \bB_{\rm cris,M}$ is an
isomorphism in~$\Mod(\fX_M)_{\bB_{\rm cris}}$.

\end{enumerate}

\noindent Denote by $\Sh(X_M^{\rm et})_{\Q_p}^{\rm cris}$ the full
subcategory of $\Sh(X_M^{\rm et})_{\Q_p}$ consisting of
crystalline sheaves.

\smallskip {\it Convention:} For any
coherent $\cO_X\otimes_{\cO_K}\OMun$-module $D$ we write
$$D\otimes_{(\cO_X\otimes_{\cO_K} \OMun)}\bB_{\rm cris,M}:=
v_{X,M}^\ast(D)\tensor_{(\cO_{\fX_M}^{\rm un}\otimes_{\cO_K}\OMun)}
\bB_{\rm cris,M}.$$

\begin{remark}\label{remark:DcrisotimesBcris} To make sense
of~(ii)
note that  by
adjunction we have a morphism
$$f_m(\cL)\colon v_{X,M}^\ast\left(v_{M,\ast}\Bigl(\cL\tensor_{\Z_p}
\bA_{\rm cris,M}(m) \Bigr)\right)\tensor_{(\cO_{\fX_M}^{\rm
un}\otimes_{\cO_K}\OMun)} \bA_{\rm cris,M}\lra \cL\tensor_{\Z_p}
\bA_{\rm cris,M}(m).$$Recall that $\cO_{\fX_M}^{\rm un}\subset
\cO_{\fX_M}$ is identified with $v_{X,M}^\ast(\cO_X)$ by
lemma \ref{lemma:vastOX}. Using proposition \ref{prop:Dacrisislocallyfree} we know
that for $m\leq N$ large the $\cO_X\otimes_{\cO_K} \OMun$-module
$D(m):= v_{M,\ast}\Bigl(\cL\tensor_{\Z_p} \bA_{\rm
cris,M}(m)\Bigr)$ is coherent and its image in ${\rm
Coh}\bigl(\cO_X\tensor_{\cO_K} \Mun\bigr)$ is $
\bDcrisar\bigl(\cL\bigr)$.  Then $\alpha_{\rm cris,\cL}$ is the
map in~$\Mod(\fX_M)_{B_{\rm cris}}$ induced by the $f_m(\cL)$ for
$m \leq N$. Due to proposition \ref{prop:Dacrisislocallyfree} and since $X$ is
noetherian $$v_{X,M}^\ast\left(v_{M,\ast}\Bigl(\cL\tensor_{\Z_p}
\bA_{\rm cris,M}(m) \Bigr)\right)\tensor_{(\cO_{\fX_M}^{\rm
un}\otimes_{\cO_K}\OMun)} \bA_{\rm cris,M}$$  is also isomorphic as
inductive system to $D\otimes_{\cO_X\otimes_{\cO_K}
\OMun}\bB_{\rm cris,M}$ for any coherent
$\cO_X\otimes_{\cO_K}\OMun$-module $D$ such that $D\tensor_{\OMun}
\Mun\cong \bDcrisar\bigl(\cL\bigr)$ as
$\cO_X\otimes_{\cO_K}\Mun$-modules.
\end{remark}

\begin{proposition}\label{prop:Dacrisislocallyfree}
Let $\cL$ be a $p$--adic sheaf. If $\bDcrisar\bigl(\cL\bigr)$ is in
${\rm Coh}\bigl(\cO_X\tensor_{\cO_K} \Mun\bigr)$ there exists a
negative integer $N$ such that for every~$m\leq N$ the natural
morphism
$$\mu_m\colon v_{M,\ast}\Bigl(\cL\tensor_{\Z_p}
\bA_{\rm cris,M}(m) \Bigr)\lra \bDcrisar\bigl(\cL\bigr)$$is
injective and is an isomorphism after inverting $p$ as
$\cO_X\tensor_{\cO_K} \Mun$--modules. For any such $m\leq N$ and every
small affine $\cU\in X^{\rm et}$ the
$R_\cU\tensor_{\cO_K}\OMun$-module
$v_{M,\ast}^\cont\Bigl(\cL\tensor_{\Z_p} \bA_{\rm cris,M}(m)
\Bigr)(\cU)$ is finitely generated and $p$-torsion free.
\end{proposition}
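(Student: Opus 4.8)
The statement to be proved is Proposition~\ref{prop:Dacrisislocallyfree}: assuming $\bDcrisar(\cL)$ lies in ${\rm Coh}\bigl(\cO_X\tensor_{\cO_K}\Mun\bigr)$, there is $N<0$ such that for $m\leq N$ the natural map $\mu_m\colon v_{M,\ast}\bigl(\cL\tensor_{\Z_p}\bA_{\rm cris,M}(m)\bigr)\to \bDcrisar(\cL)$ is injective and an isomorphism after inverting $p$, and moreover its sections over a small affine $\cU$ are finitely generated $p$-torsion free $R_\cU\tensor_{\cO_K}\OMun$-modules. My strategy is to reduce everything to the localization over a small affine $\cU=\Spf(R_\cU)$ and then invoke the already-established comparison with Brinon's rings together with his finiteness results. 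By definition, $v_{M,\ast}\bigl(\cL\tensor_{\Z_p}\bA_{\rm cris,M}(m)\bigr)(\cU) = \bigl(V_\cU(\cL)\tensor_{\Z_p} A_{\rm cris}(\Rbar_\cU)t^m\bigr)^{\cG_{\cU,M}}$, using the isomorphism $\cL_n(\Rbar_\cU)\tensor \bA_{\rm cris,n,M}'(\Rbar_\cU)\xrightarrow{\sim}(\cL_n\tensor\bA_{\rm cris,n,M}')(\Rbar_\cU)$ (valid since $\cL_n(\Rbar_\cU)$ is free over $\Z/p^n\Z$), proposition~\ref{prop:crislocalization}, and $p$-adic completeness of $A_{\rm cris}(\Rbar_\cU)$. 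Since these $t$-twisted modules sit inside $\bigl(V_\cU(\cL)\tensor_{\Z_p}B_{\rm cris}(\Rbar_\cU)\bigr)^{\cG_{\cU,M}} = D_{\rm cris,M}\bigl(V_\cU(\cL)\bigr)$ and their colimit over $m\to-\infty$ (i.e. multiplying by higher powers of $t$) is precisely $D_{\rm cris,M}\bigl(V_\cU(\cL)\bigr)$, the map $\mu_m$ on localizations is the inclusion of $\bigl(V_\cU(\cL)\tensor A_{\rm cris}(\Rbar_\cU)t^m\bigr)^{\cG}$ into $D_{\rm cris,M}(V_\cU(\cL))$, which is automatically injective; this handles injectivity of $\mu_m$ for every $m$ once we know the target on sections is $\bDcrisar(\cL)(\cU)=D_{\rm cris,M}(V_\cU(\cL))$, which is the identification recorded just before lemma~\ref{lemma:Dcriscoherent}.

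Next I would establish that $\mu_m$ becomes an isomorphism after inverting $p$, for $m$ sufficiently negative. The point is that $D_{\rm cris,M}(V_\cU(\cL))$ is a finitely generated projective $R_\cU\tensor_{\cO_K}\Mun$-module by lemma~\ref{lemma:Dcriscoherent} (which quotes \cite[Prop.~8.3.1]{brinon}); choose finitely many generators. Each generator, being $\cG_{\cU,M}$-invariant and lying in $V_\cU(\cL)\tensor_{\Z_p}B_{\rm cris}(\Rbar_\cU)$, lies in $V_\cU(\cL)\tensor_{\Z_p}t^{m_0}A_{\rm cris}(\Rbar_\cU)\bigl[p^{-1}\bigr]$ for some $m_0$ depending on the generator, hence — clearing denominators — in $V_\cU(\cL)\tensor_{\Z_p}t^{m_1}A_{\rm cris}(\Rbar_\cU)$ after multiplying by a power of $p$; taking $N$ to be the minimum of the finitely many $m_1$'s shows that for $m\leq N$ the inclusion $\mu_m[p^{-1}]$ is surjective onto $\bDcrisar(\cL)(\cU)$, hence an isomorphism of $R_\cU\tensor_{\cO_K}\Mun$-modules. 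Here I must take care that $N$ can be chosen \emph{uniformly in $\cU$}: because $\bDcrisar(\cL)$ is coherent (condition (i)), $X$ is noetherian and quasi-compact, so it suffices to cover $X$ by finitely many small affines and take the minimum of the finitely many resulting thresholds. This globalizes the local statements to the asserted statements about $\mu_m$ as a morphism of sheaves on $X^{\rm et}$.

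Finally, for the last assertion — that $v_{M,\ast}^\cont\bigl(\cL\tensor_{\Z_p}\bA_{\rm cris,M}(m)\bigr)(\cU)$ is finitely generated and $p$-torsion free over $R_\cU\tensor_{\cO_K}\OMun$ — I would argue as follows. The $p$-torsion freeness is immediate: this module is a $\cG_{\cU,M}$-submodule of $V_\cU(\cL)\tensor_{\Z_p}A_{\rm cris}(\Rbar_\cU)t^m$, and $A_{\rm cris}(\Rbar_\cU)$ has no $p$-torsion by \cite[Prop.~6.1.4]{brinon} while $V_\cU(\cL)$ is $\Z_p$-free, so the tensor product is $p$-torsion free and therefore so is any submodule. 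For finite generation, note that for $m\leq N$ the module $M_m:=v_{M,\ast}^\cont\bigl(\cL\tensor_{\Z_p}\bA_{\rm cris,M}(m)\bigr)(\cU)$ is a $p$-torsion free lattice in $\bDcrisar(\cL)(\cU)$ in the sense that $M_m[p^{-1}]=\bDcrisar(\cL)(\cU)$ is finitely generated (projective) over $R_\cU\tensor_{\cO_K}\Mun$; since $R_\cU\tensor_{\cO_K}\OMun$ is noetherian and $M_m$ spans a finitely generated module over the generic fiber, it suffices to show $M_m$ is contained in a finitely generated $R_\cU\tensor_{\cO_K}\OMun$-submodule of $\bDcrisar(\cL)(\cU)$. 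For this I would use that $\bDcrisar(\cL)$ is coherent, hence isomorphic to $F\tensor_{\OMun}\Mun$ for a coherent $\cO_X\tensor_{\cO_K}\OMun$-module $F$, and that the chain $\mu_{m}(\cU)\subseteq\mu_{m-1}(\cU)\subseteq\cdots$ of submodules stabilizes after inverting $p$ — combined with the fact, visible from the explicit description of $A_{\rm cris}(\Rbar_\cU)/A_{\rm cris}^\nabla(\Rbar_\cU)$ via the $\II$-annihilated cokernel in lemma~\ref{lemma:localizeAcris} and the structure of $A_{\rm cris}(\Rbar_\cU)$ as $p$-adic completion of a DP-polynomial ring — to bound the denominators that appear. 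The main obstacle is precisely this last finiteness-of-generation step: controlling, uniformly, that the $\cG_{\cU,M}$-invariants of $V_\cU(\cL)\tensor A_{\rm cris}(\Rbar_\cU)t^m$ form a finitely generated $R_\cU\tensor_{\cO_K}\OMun$-module rather than merely a $p$-torsion free one, which I expect requires invoking Brinon's finiteness theorem for $D_{\rm cris}$-type functors and an almost-étale descent / Tate-twist argument to bound how far below $\bDcrisar(\cL)(\cU)$ the integral submodule $M_m$ can drop.
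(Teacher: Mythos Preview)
Your treatment of injectivity and of the isomorphism after inverting $p$ is essentially the paper's argument: reduce to a small affine, identify $v_{M,\ast}^\cont\bigl(\cL\tensor\bA_{\rm cris,M}(m)\bigr)(\cU)$ with $\bigl(V_\cU(\cL)\tensor A_{\rm cris}(\Rbar_\cU)t^m\bigr)^{\cG_{\cU,M}}$, observe this sits inside $D_{\rm cris,M}(V_\cU(\cL))$, and use coherence plus quasi-compactness of $X$ to find a uniform threshold $N$. The $p$-torsion freeness is likewise immediate from the $p$-torsion freeness of $A_{\rm cris}(\Rbar_\cU)$.

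The genuine gap is the finite-generation step, which you correctly flag as the obstacle but do not resolve. Your sketch invokes stabilization of chains, the $\II$-annihilation in lemma~\ref{lemma:localizeAcris}, and unspecified almost-\'etale/Tate-twist arguments; none of these yields the needed bound. The paper's argument is more direct and worth knowing. Since $\bDcrisar(\cL)(\cU)$ is finitely generated projective over $R_\cU\tensor_{\cO_K}\Mun$, embed it as a direct summand of a free module $T_{\Mun}$, choose a free $R_\cU\tensor_{\cO_K}\OMun$-lattice $T\subset T_{\Mun}$, and pick $n$ so that the image of $V_\cU(\cL)$ in $T_{\Mun}\tensor B_{\rm cris}(\Rbar_\cU)$ lies in $p^{-n}T\tensor A_{\rm cris}(\Rbar_\cU)$. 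This reduces the problem to bounding the \emph{trivial-coefficient} invariants $R':=\bigl(A_{\rm cris}(\Rbar_\cU)t^m\bigr)^{\cG_{\cU,M}}$. Now one uses the key input \cite[Prop.~6.2.9]{brinon}: after inverting $p$ one has $\bigl(B_{\rm cris}(\Rbar_\cU)\bigr)^{\cG_{\cU,M}}=R_\cU\tensor_{\cO_K}\Mun$, so $R'\subset R_\cU\tensor_{\cO_K}\Mun$. Since $R'$ is $p$-adically complete and separated, a localization argument at the finitely many height-one primes of $R_\cU$ over $p$ (each localization being a DVR) shows that $p^{n'}R'\subset R_\cU\tensor_{\cO_K}\OMun$ for some $n'$; normality of $R_\cU\tensor_{\cO_K}\OMun$ lets one pass from the localizations back to the ring. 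Hence $M_m\subset p^{-(n+n')}T$, which is finitely generated over the noetherian ring $R_\cU\tensor_{\cO_K}\OMun$. The step you were missing is thus twofold: the reduction to the trivial-coefficient case via a free envelope, and the identification of $B_{\rm cris}(\Rbar_\cU)^{\cG_{\cU,M}}$ combined with the height-one-prime argument to control integral denominators.
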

\begin{proof} Since~$X$ is noetherian, $K\subset M$ is a
finite extension and $\bDcrisar\bigl(\cL\bigr)$ is in
${\rm Coh}\bigl(\cO_X\tensor_{\cO_K} \Mun\bigr)$ there
exists $N\in \N$ such that $\mu_m$ is surjective after inverting
$p$ for every $m\geq N$. Since the natural maps $\cL\tensor_{\Z_p}
\bA_{\rm cris,M}(r) \to \cL\tensor_{\Z_p} \bA_{\rm cris,M}(s)$ are
injective and $v_{M,\ast}^\cont$ is left exact, $\mu_m$ is also
injective. This proves the first statement.

Since $\bDcrisar\bigl(\cL\bigr)(\cU) $ is a projective and finitely generated $R_\cU\tensor_{\cO_K} \Mun$-module it is a direct summand in a finite and free
$R_\cU\tensor_{\cO_K} \Mun$-module $T_{\Mun}$. Let $T$ be a free $R_\cU\tensor_{\cO_K} \OMun$-submodule of $T_{\Mun}$ such that $T[p^{-1}]=T_{\Mun}$. Let $n\in\N$
be large enough so that the image of $V_\cU(\cL)$ in $\bDcrisar\bigl(\cL\bigr)(\cU)\tensor_{\Mun} B_{\rm cris}(\Rbar_\cU)\subset T_{\Mun} \tensor_{\Mun} B_{\rm
cris}(\Rbar_\cU)$ is contained in  $T \cdot \frac{1}{p^n} \tensor_{R_\cU\tensor_{\cO_K}\OMun} A_{\rm cris}(\Rbar_\cU)$. Then $v_{M,\ast}\big(\cL\tensor_{\Z_p}
\bA_{\rm cris,M}(m)\big)(\cU)$ is $\left(V_\cU(\cL)\tensor_{\Z_p} A_{\rm cris}(\Rbar_\cU) t^m\right)^{\cG_{\cU,M}}$ and this is contained in the submodule $\left(T
\cdot \frac{1}{p^n} \tensor_{R_\cU\tensor_{\cO_K}} A_{\rm cris}(\Rbar_\cU) t^m\right)^{\cG_{\cU,M}}$.

Put $R':=\left(A_{\rm cris}(\Rbar_\cU) t^m\right)^{\cG_{\cU,M}}$. It is $p$-adically complete and separated, it contains $R_\cU\tensor_{\cO_K} \OMun$ and it is
contained in $ \left(B_{\rm cris}(\Rbar_\cU)t^m\right)^{\cG_{\cU,M}}=R_\cU\tensor_{\cO_K} \OMun[p^{-1}]$ by \cite[Prop.~6.2.9]{brinon}. We claim that this implies
that there exists $n\in \N$ such that $p^n R'$ is contained in $R_\cU\tensor_{\cO_K} \OMun$ . If $R_\cU$ were a complete dvr, the above conditions would imply the
claim. In the general case replacing $R$ with the localization at a prime ideal $\mathcal{P}$ over $p$ and $\Rbar_\cU$ with $\Rbar_{\cU,\mathcal{\cP}}$ we deduce
that there exists $n_{\mathcal{P}}\in\N$ such that $p^{n_{\mathcal{P}}} R'\subset \widehat{R}_{\cU,\mathcal{P}} \tensor_{\cO_K} \OMun$. Taking $n$ to be the maximum
of all  the $n_{\mathcal{P}}$'s we deduce that $R'\subset   R_\cU\tensor_{\cO_K} \OMun[p^{-1}]$ and also $p^n R'\subset \widehat{R}_{\cU,\mathcal{P}}
\tensor_{\cO_K}\OMun$. Since $R_\cU\tensor_{\cO_K} \OMun$ is normal we deduce the claim. Since $T$ is free $R_\cU\tensor_{\cO_K} \OMun$-module, we conclude that
$v_{M,\ast}\big(\cL\tensor_{\Z_p} \bA_{\rm cris,M}(m)\big)(\cU)$ is contained in $T\tensor_{\cO_K}\OMun \cdot \frac{1}{p^{n}}$. In particular it is a finitely
generated $R_\cU\tensor_{\cO_K} \OMun$-module as desired.
\end{proof}

Let~$\cL$ be a $p$--adic sheaf on $X_M^{\rm et}$.
Following~\cite{brinon} for every small affine~$\cU$ of $ X^{\rm
et}$ we say that~$V_\cU(\cL)$ is {\it crystalline} if the
map $D_{\rm cris} \bigl(V_\cU(\cL)\bigr)\tensor_{R_\cU} B_{\rm
cris}(\Rbar_\cU) \lra V_\cU(\cL)\tensor_{\Z_p} B_{\rm
cris}(\Rbar_\cU)$ is an isomorphism. Then we have.

\begin{proposition}\label{prop:equivcris}
The following are equivalent:\smallskip

1) $\cL$ is crystalline;\smallskip

2) for every small affine object~$\cU$ of~$X^{\rm
et}$ the representation~$V_{\cU}(\cL)$ is
crystalline;\smallskip

3) there is a covering $\{\cU_i\}_i$ of~$X^{\rm et}$ by small affine
objects  such that~$V_{\cU_i}(\cL)$ is crystalline for
every~$i$;\smallskip

\end{proposition}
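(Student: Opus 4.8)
The plan is to prove the three implications $(1)\Rightarrow(2)\Rightarrow(3)\Rightarrow(1)$, the key being to translate between the global sheaf-theoretic condition and the local condition in terms of $\cG_{\cU,M}$-representations via the localization functor. The bridge between the two worlds is Lemma~\ref{lemma:isoinBcris}, which characterizes when a morphism in $\Mod(\fX_M)_{\bB_{\rm cris}}$ is an isomorphism (equivalence of (1) and (3) there), together with the computation $\bDcrisM(\cL)(\cU)\cong D_{\rm cris,M}\bigl(V_\cU(\cL)\bigr)$ and $\bigl(\cL\tensor_{\Z_p}\Fil^r\bB_{\rm cris,M}\bigr)(\Rbar_\cU)=V_\cU(\cL)\tensor_{\Z_p}\Fil^rB_{\rm cris}(\Rbar_\cU)$ from Lemma~\ref{lemma:loctensorAcris}.

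First I would prove $(1)\Rightarrow(2)$. Assume $\cL$ is crystalline, so $\bDcrisar(\cL)$ is in ${\rm Coh}\bigl(\cO_X\tensor_{\cO_K}\Mun\bigr)$ and $\alpha_{\rm cris,\cL}$ is an isomorphism in $\Mod(\fX_M)_{\bB_{\rm cris}}$. Fix a small affine $\cU$. By Lemma~\ref{lemma:Dcriscoherent} the module $\bDcrisar(\cL)(\cU)$ is projective of finite type, hence Lemma~\ref{lemma:loctensorAcris} applies and gives $\bigl(\bDcrisar(\cL)\tensor_{(\cO_X\tensor_{\cO_K}\OMun)}\bB_{\rm cris,M}\bigr)(\Rbar_\cU)=\bDcrisar(\cL)(\cU)\tensor_{R_\cU}B_{\rm cris}(\Rbar_\cU)$, while $\bigl(\cL\tensor_{\Z_p}\bB_{\rm cris,M}\bigr)(\Rbar_\cU)=V_\cU(\cL)\tensor_{\Z_p}B_{\rm cris}(\Rbar_\cU)$. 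Localizing the isomorphism $\alpha_{\rm cris,\cL}$ at $\cU$ (localization being a functor on $\Mod(\fX_M)_{\bB_{\rm cris}}$) and using that $\bDcrisar(\cL)(\cU)=D_{\rm cris,M}\bigl(V_\cU(\cL)\bigr)$ by the discussion preceding Lemma~\ref{lemma:Dcriscoherent}, I obtain exactly the assertion that $V_\cU(\cL)$ is crystalline. One should check that the localized map of $\alpha_{\rm cris,\cL}$ really is the comparison map $D_{\rm cris}\tensor B_{\rm cris}\to V\tensor B_{\rm cris}$; this is a matter of unraveling adjunctions as in Remark~\ref{remark:DcrisotimesBcris}.

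The implication $(2)\Rightarrow(3)$ is trivial: $X^{\rm et}$ is covered by small affines (they form a basis, by definition of smooth formal/algebraic schemes over $\cO_K$ in \S\ref{sec:formal_Groth}), so one simply takes all of them. The substantive remaining implication is $(3)\Rightarrow(1)$. Given a cover $\{\cU_i\}_i$ of $X^{\rm et}$ by small affines with each $V_{\cU_i}(\cL)$ crystalline, I first need that $\bDcrisar(\cL)$ lies in ${\rm Coh}\bigl(\cO_X\tensor_{\cO_K}\Mun\bigr)$: over each $\cU_i$, crystallinity of $V_{\cU_i}(\cL)$ together with \cite[Prop.~8.3.1]{brinon} (or Lemma~\ref{lemma:Dcriscoherent}'s input) shows $D_{\rm cris}\bigl(V_{\cU_i}(\cL)\bigr)=\bDcrisar(\cL)(\cU_i)$ is projective of finite type over $R_{\cU_i}\tensor_{\cO_K}\Mun$, and these glue to a coherent sheaf because $\bDcrisar$ is already a sheaf (being $v_{M,\ast}$ of something) and Proposition~\ref{prop:Dacrisislocallyfree} provides, for $m\leq N$, a coherent $\cO_X\tensor_{\cO_K}\OMun$-model $D(m)$; one checks coherence is local on $X^{\rm et}$. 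Then to show $\alpha_{\rm cris,\cL}$ is an isomorphism in $\Mod(\fX_M)_{\bB_{\rm cris}}$ I apply the criterion (3)$\Rightarrow$(1) of Lemma~\ref{lemma:isoinBcris}: it suffices to find a uniform $N'\in\N$ such that for every small affine $\cU$ and every $a$, the localized map on $\cM_a$'s has kernel and cokernel killed by $t^{N'}$. For $\cU$ among the $\cU_i$'s (or refining to small affines inside them, using that crystallinity of representations is inherited by such refinements — this needs a small argument, e.g. that $D_{\rm cris}$ commutes with the relevant base change, or Proposition~\ref{prop:equivcris} applied étale-locally), the localization of $\alpha_{\rm cris,\cL}$ is the comparison isomorphism $D_{\rm cris}\bigl(V_\cU(\cL)\bigr)\tensor B_{\rm cris}(\Rbar_\cU)\to V_\cU(\cL)\tensor B_{\rm cris}(\Rbar_\cU)$, which by hypothesis is an isomorphism of $B_{\rm cris}(\Rbar_\cU)$-modules; passing to the integral $\bA_{\rm cris}$-level, the kernel and cokernel are $t$-power-torsion, and finite generation over the noetherian-type data bounds the $t$-exponent uniformly. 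Since isomorphy in $\Mod(\fX_M)_{\bB_{\rm cris}}$ can be tested on stalks, hence étale-locally, and $\{\cU_i\}$ covers $X$, this gives global isomorphy.

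The main obstacle I anticipate is the uniformity of the $t$-exponent $N'$ in the criterion Lemma~\ref{lemma:isoinBcris}(3): a priori the bound on $t$-torsion in kernel and cokernel could depend on $\cU$ and $a$. The resolution should come from Proposition~\ref{prop:Dacrisislocallyfree}, which gives a single $N$ (independent of $\cU$) such that $\mu_m$ is an isomorphism after inverting $p$ for $m\leq N$, combined with the coherence of $\bDcrisar(\cL)$ — a coherent sheaf on the noetherian $X$ has its local comparison isomorphisms controlled by finitely many $t$-denominators, which one can bound globally by a standard quasi-compactness argument. Establishing this cleanly, and being careful that ``crystalline'' for representations passes to small-affine refinements (so that the cover $\{\cU_i\}$ can be shrunk to honest small affines of $X^{\rm et}$), are the two technical points requiring attention; everything else is formal manipulation of the localization functor and the already-established Lemmas \ref{lemma:propbBcris}, \ref{lemma:isoinBcris} and \ref{lemma:loctensorAcris}.
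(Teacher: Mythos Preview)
Your overall architecture matches the paper's: the implications $(1)\Rightarrow(2)\Rightarrow(3)\Rightarrow(1)$, the use of Lemma~\ref{lemma:loctensorAcris} and Lemma~\ref{lemma:Dcriscoherent} for $(1)\Rightarrow(2)$, and the criterion Lemma~\ref{lemma:isoinBcris}(3) for $(3)\Rightarrow(1)$ are exactly what the paper does. Your discussion of uniformity of the $t$-exponent is also on target; the paper resolves it as you suggest, via noetherianity of $X$ and the fact that $V_\cU(\cL)\cong V_\cV(\cL)$ for $\cV\to\cU$, so a single $N$ works over all small affines factoring through a given $\cU_i$, with one extra power of $t$ coming from the $\II$-torsion in Lemma~\ref{lemma:localizeAcris} when passing from $A_{\rm cris}(\Rbar_\cU)/p^n$ to the sheaf $\bA_{\rm cris,n,M}'$.

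There is, however, a genuine gap in your coherence argument for $(3)\Rightarrow(1)$. You write that the local projective modules ``glue to a coherent sheaf because $\bDcrisar$ is already a sheaf'' and then invoke Proposition~\ref{prop:Dacrisislocallyfree}. But that proposition has coherence as a \emph{hypothesis}, so this is circular; and being a sheaf only gives you restriction maps, not that $D_{\rm cris}\bigl(V_\cV(\cL)\bigr)\cong D_{\rm cris}\bigl(V_\cU(\cL)\bigr)\tensor_{R_\cU}R_\cV$ for $\cV\to\cU$. The paper supplies the missing step with a faithful-flatness argument: letting $D'\subset D_{\rm cris}\bigl(V_\cV(\cL)\bigr)$ be the image of $D_{\rm cris}\bigl(V_\cU(\cL)\bigr)$, one knows from \cite[Thm.~6.3.8]{brinon} that $R_\cV\tensor_{\cO_K}\Mun\to B_{\rm cris}(\Rbar_\cV)$ is faithfully flat, and from \cite[Prop.~8.2.6]{brinon} that the comparison map $D_{\rm cris}\bigl(V_\cV(\cL)\bigr)\tensor B_{\rm cris}(\Rbar_\cV)\to V_\cV(\cL)\tensor B_{\rm cris}(\Rbar_\cV)$ is injective; since already $D'\tensor B_{\rm cris}(\Rbar_\cV)$ surjects onto the target (because $V_\cU(\cL)=V_\cV(\cL)$ lies in its image), faithful flatness forces $D'\tensor_{R_\cU}R_\cV=D_{\rm cris}\bigl(V_\cV(\cL)\bigr)$. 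This is precisely the ``$D_{\rm cris}$ commutes with base change'' statement you flagged as needing attention, and it is the one substantive idea your sketch does not supply.
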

\begin{proof} (1) $\Longrightarrow $ (2)\enspace  Due to
\ref{lemma:loctensorAcris} we have $\bigl(\cL\tensor_{\Z_p}
\bB_{\rm cris,M}\bigr)(\Rbar_\cU)=V_\cU(\cL) \tensor_{\Z_p} B_{\rm
cris}(\Rbar_\cU)$. Note that $\bDcrisar\bigl(\cL\bigr)(\cU)$ is a
projective $R_\cU\tensor_{\cO_K} \Mun$-module by
\ref{lemma:Dcriscoherent} i.e. it is a direct summand in a free
module. As a consequence of remark \ref{remark:DcrisotimesBcris} and
lemma \ref{lemma:loctensorAcris} it follows that the localization of
$\bDcrisar\bigl(\cL\bigr)\tensor_{(\cO_X\otimes_{\cO_K}\OMun)}
\bB_{\rm cris,M}$ is isomorphic to
$\bDcrisar\bigl(\cL\bigr)(\cU)\tensor_{(R_\cU\otimes_{\cO_K}\OMun)}
B_{\rm cris}(\Rbar_\cU)$. The implication follows applying the
localization functor to the isomorphism
$\bDcrisar\bigl(\cL\bigr)\tensor_{(\cO_X\tensor_{\cO_K} \OMun)}
\bB_{\rm cris,M}\lra \cL\tensor_{\Z_p} \bB_{\rm cris,M}$.
\smallskip

(2) $\Longrightarrow $ (3) is clear.\smallskip

(3) $\Longrightarrow $ (1)\enspace  For a small affine open~$\cU$
of $X^{\rm et}$ and
for a negative integer $r$ let~$g_{\cU,r}$ be the natural map
$$g_{\cU,r}\colon \Bigl(V_\cU(\cL)\tensor_{\Z_p} A_{\rm
cris}(\Rbar_\cU)t^r\Bigr)^{\cG_{\cU,M}}\tensor_{R_\cU} A_{\rm cris}(\Rbar_\cU) \lra V_\cU(\cL)\tensor_{\Z_p} A_{\rm cris}(\Rbar_\cU) t^r.$$It is injective
by~\cite[Prop.~8.2.6]{brinon}. In particular $V_\cU(\cL)$ is crystalline if and only if~$V_\cU(\cL)$ is in the image of~$g_{\cU,r}$ for some $r<0$. We deduce that
if~$V_\cU(\cL)$ is crystalline then~$V_{\cV}(\cL)$ is crystalline for every open affine~$\cV\to \cU$.

Fix a small affine $\cU$ which factors through one of the $\cU_i$'s. In particular $V_\cU(\cL)$ is crystalline. Assume that~$V_\cU(\cL)$ is in the image of
$g_{\cU,r}$. Let~$\cV\to \cU$ be an \'etale morphism with~$\cV$ affine. Let~$D'\subset D_{\rm cris} \bigl(V_\cV(\cL)\bigr)$ be the image of~$D_{\rm cris}
\bigl(V_\cU(\cL)\bigr)\to D_{\rm cris} \bigl(V_\cV(\cL)\bigr)$. Since~$V_\cU(\cL)=V_\cV(\cL)$, then~$V_\cV(\cL)$ is in the image of~$g_{\cV,r}$ and
also~$V_\cV(\cL)$ is crystalline. The extension $R_\cV\tensor_{\cO_K} \Mun \to B_{\rm cris}(\Rbar_\cV)$ is faithfully flat by~\cite[Thm.~6.3.8]{brinon} so that the
maps
$$D'\tensor_{R_\cU} R_{\cV}\tensor_{(R_\cV\tensor_{\cO_K}
\Mun)} B_{\rm cris}(\Rbar_\cV) \to D_{\rm cris}
\bigl(V_\cV(\cL)\bigr)\tensor_{R_\cV} B_{\rm cris}(\Rbar_\cV) \to
V_\cV(\cL)\tensor_{\Z_p} B_{\rm cris}(\Rbar_\cV)$$are all
injective and  the composite is surjective. Thus
$D'\tensor_{R_\cU} R_{\cV}= D_{\rm cris} \bigl(V_\cV(\cL)\bigr)$.
This proves that~$\cV \mapsto D_{\rm cris} \bigl(V_\cV(\cL)\bigr)$
is a coherent $\cO_{X_{\Mun}}$--module. Since $D_{\rm cris}
\bigl(V_\cV(\cL)\bigr) \cong \bDcrisM\bigl(\cL\bigr)(\cV)$ it follows that
$\bDcrisM\bigl(\cL\bigr)\vert_\cU$ is a coherent
$\cO_{\cU_{\Mun}}$--module as well. We deduce  from \cite[Prop.~1.2]{ogus}
that $\bDcrisM\bigl(\cL\bigr)$ lies in ${\rm Coh}\bigl(\cO_X\tensor_{\cO_K} \Mun\bigr)$ i.~e.,
condition (i) in the definition of a crystalline \'etale sheaf holds.

We can be more explicit. Take $N<0$ as in \ref{prop:Dacrisislocallyfree}. Put $D:=\Bigl(V_\cU(\cL)\tensor_{\Z_p} A_{\rm cris}(\Rbar_\cU)t^N\Bigr)^{\cG_{\cU,M}}$ and
$V:=V_\cU(\cL)$. From the proof of \ref{prop:Dacrisislocallyfree} it follows that $D$ is a finitely generated $R_\cU\otimes_{\cO_K}\OMun$-module and by construction
$D'\otimes_{\cO_K} K= \bDcrisM\bigl(\cL\bigr)(\cU)$. Since $X$ is a noetherian topological space, this implies that $\bDcrisM\bigl(\cL\bigr)$ lies in ${\rm
Coh}\bigl(\cO_X\tensor_{\cO_K} \Mun\bigr)$. Consider the commutative diagram
$$\begin{array}{ccccccccc}
 & &  D \tensor A_{\rm cris}(\Rbar_\cU) & \stackrel{p^n}{\lra}  &
D\tensor A_{\rm cris}(\Rbar_\cU) & \lra & \left(D/p^n D\right)
\tensor A_{\rm cris}(\Rbar_\cU) & \lra & 0 \cr & &
g_{\cU,N}\big\downarrow & & g_{\cU,N}\big\downarrow & &
g_{\cU,N,n}\big\downarrow \cr 0 & \lra & V\tensor_{\Z_p} A_{\rm
cris}(\Rbar_\cU)t^N & \stackrel{p^n}{\lra}  & V\tensor_{\Z_p}
A_{\rm cris}(\Rbar_\cU)t^N & \lra & \bigl(V/p^n V\bigr)
\tensor_{\Z_p} A_{\rm cris}(\Rbar_\cU)t^N & \lra & 0,\cr
\end{array}$$where in the first row $\tensor$ stands for $\tensor_{(R_\cU\tensor_{\cO_K}
\OMun)}$. Since $A_{\rm cris}(\Rbar_\cU)$ is $p$--torsion free
by~\cite[Prop.~6.1.10]{brinon} and $V$ is a free $\Z_p$--module,
the bottom row is exact. Recall that~$g_{\cU,N}$ is injective.
Since (3) holds there exists $N$ such that $V$ is in the image
of~$g_{\cU,N}$. Then the cokernel of $g_{\cU,N}$ is annihilated
by $t^{-N}$. Since for every open affine $\cV\in \fU_K$ we
have~$V_\cU(\cL)\cong V_\cV(\cL)$ we deduce that also the
cokernel of~$g_{\cV,N}$ is annihilated by~$t^{-N}$. Thus the kernel
and cokernel of~$g_{\cV,N,n}$ are annihilated by~$t^{-N}$.

Write $f_N$ for the system of morphisms $$f_{N,n}\colon
v_{M,\ast}\Bigl(\cL\tensor_{\Z_p} \bA_{\rm cris,n,M}'(m)
\Bigr)\tensor_{(\cO_X\otimes_{\cO_K}\OMun)} \bA_{\rm cris,M}\lra
\cL\tensor_{\Z_p} \bA_{\rm cris,n,M}(m)$$given by adjunction. For
every~$x\in \cU$ the stalk~$\bA_{\rm cris,n,M,x}'$ at~$x$
contains~$\ds \lim_{x\in \cV} A_{\rm cris}(\Rbar_\cV)/p^n A_{\rm
cris}(\Rbar_\cV)$ where the limit is taken over all affine
opens~$\cV$ of~$x$. The cokernel of $\ds \lim_{x\in \cV} A_{\rm
cris}(\Rbar_\cV)/p^n A_{\rm cris}(\Rbar_\cU)\subset \bA_{\rm
cris,n,M,x}'$ is annihilated by any element of~$\II$
by~\ref{lemma:localizeAcris} and hence also by~$t$.
Since~$\cL_x=V_\cU(\cL)$  and $f_{N,n,x}$ is~$\ds \lim_{x\in \cV}
g_{\cV,N,n}$ on $\ds \lim_{x\in \cV} D\otimes A_{\rm
cris}(\Rbar_\cV)/p^n A_{\rm cris}(\Rbar_\cU)$, we conclude that
kernel and cokernel of~$f_{N,n,x}$ is annihilated by~$t^{-N+1}$.
Since $X$ is a noetherian space and taking a smaller~$N$ if
necessary, we may assume that kernel and cokernel of~$f_{N,n,x}$
is annihilated by~$t^{-N+1}$ for every $x\in X$. Thus the same
applies to $f_{N,n}$ and (1) follows from
\ref{lemma:isoinBcris}(3).

\end{proof}

\subsection{The functors $\bDcrisar$ and $\bVcrisarQ$ on crystalline
sheaves.}\label{sec:propcrissheaves}

Assume as before that  $\cO_K=\WW(k)$ and let $K\subseteq M$ be a
field extension. The goal of this section is to prove in
\ref{thm:crisistannakian} that $\bDcrisar$ defines an exact, fully
faithful functor, commuting with tensor products, duals and Tate
twists, from the category of $\Q_p$--adic crystalline shaves on
$X_M^{\rm et}$ to the category of admissible filtered convergent
$F$--isocrystals on the special fiber of~$X$ relatively to~$\Mun$.
We also construct an inverse $\bVcrisarQ$ on the essential image.

\smallskip

Given a $p$--adic sheaf $\cL$  on $X_M^{\rm et}$ and $r\in\Z$ we
get a well defined subsheaf
$$\Fil^r\bDcrisM(\cL):= v_{M,\ast}\Bigl(\cL\tensor_{\Z_p} \Fil^r\bB_{\rm
cris,M} \Bigr)\subset \bDcrisM(\cL).$$Put $
\Fil^r\bDcrisgeo(\cL):=\Fil^r\bD_{\rm cris,\Kbar}(\cL) $ and if
$K\subset M$ is a fixed finite extension,  $
\Fil^r\bDcrisar(\cL):= \Fil^r\bDcrisM(\cL)$.  It follows from
\ref{lemma:DcrisarDcrisgeoGK} that
$\Fil^r\bDcrisM(\cL)=\bigl(\Fil^r\bDcrisgeo(\cL)\bigr)^{G_M}$.

Since the connections~$\nabla(r)\colon \bA_{\rm cris,M}(r) \lra
\bA_{\rm cris,M}(r) \tensor_{\cO_X} \Omega^1_{X/\cO_K}$ are
compatible for varying $r$ they induce a connection
$$\nabla_\cL\colon \bDcrisM(\cL) \lra \bDcrisM(\cL) \tensor_{\cO_X}
\Omega^1_{X/\cO_K}.$$Given a small affine $\cU$ of $X^{\rm et}$,
write~$V_\cU(\cL)$ for $p$--adic representation of~$\cG_{\cU_M}$
defined by~$\cL(\Rbar_\cU)$ and put
$$\Fil^r D_{\rm cris}
\bigl(V_\cU(\cL)\bigr):=\Bigl(V_\cU(\cL)\tensor_{\Z_p} \Fil^r
B_{\rm cris}(\Rbar_\cU)\Bigr)^{\cG_{\cU,M}}.$$Due
to~\ref{prop:filtAcrisnabla} we deduce that via the identification
$D_{\rm cris} \bigl(V_\cU(\cL)\bigr)\cong
\bDcrisM(\cL)(\Rbar_\cU)$ we have $\Fil^r D_{\rm cris}
\bigl(V_\cU(\cL)\bigr)\cong \Fil^r \bDcrisM(\cL)(\Rbar_\cU)$ for
every~$r\in \Z$. For every $r\in\Z$ define the filtrations
$$\Fil^r\left(\bDcrisar(\cL)\otimes_{(\cO_X\otimes_{\cO_K} \cO_{\Mun})}
\bB_{\rm cris,M}\right):=\sum_{a+b=r} \Fil^a \bDcrisar(\cL)\otimes_{(\cO_X\otimes_{\cO_K} \cO_{\Mun})} \Fil^b \bB_{\rm cris,M}$$and $\Fil^r\left(\cL\tensor_{\Z_p}
\bB_{\rm cris,M}\right):= \cL\tensor_{\Z_p} \Fil^r \bB_{\rm cris,M} $ by sub-objects in the category ${\rm Ind}\bigl(\Sh(\fX_M)^\N\bigr)$ of inductive systems of
continuous sheaves. We denote by $${\rm Gr}^r\left(\bDcrisar(\cL)\otimes_{(\cO_X\otimes_{\cO_K} \cO_{\Mun})} \bB_{\rm cris,M}\right)$$and ${\rm Gr
}^r\left(\cL\tensor_{\Z_p} \bB_{\rm cris,M}\right)$ the $r$-th graded quotients of the two filtrations (which are objects in ${\rm Ind}\bigl(\Sh(\fX_M)^\N\bigr)$).

We will assume now until the end of this section that $\bL$ is a
crystalline sheaf on $X^{\rm et}_M$. By construction the
isomorphism
$$\alpha_{\rm cris,\cL}\colon
\bDcrisar(\cL)\otimes_{(\cO_X\otimes_{\cO_K} \cO_{\Mun})} \bB_{\rm
cris,M}\cong \cL\tensor_{\Z_p} \bB_{\rm cris,M}$$has the property
that $\alpha_{\rm
cris,\cL}\left(\Fil^r\left(\bDcrisar(\cL)\otimes_{(\cO_X\otimes_{\cO_K}
\cO_{\Mun})} \bB_{\rm cris,M}\right)\right) \subset
\Fil^r\left(\cL\tensor_{\Z_p} \bB_{\rm cris,M}\right)$ and thus
it induces a morphism ${\rm Gr}^r \alpha_{\rm cris,\cL}$ on ${\rm
Gr}^r$.

\begin{lemma}\label{lemma:gradediso} For every $r\in\Z$ the
natural morphism $$f\colon \bigoplus_{a+b=r} {\rm Gr}^a \bDcrisar(\cL)\otimes_{(\cO_X\otimes_{\cO_K} \cO_{\Mun})} {\rm Gr}^b \bB_{\rm cris,M}  \lra {\rm Gr
}^r\left(\cL\tensor_{\Z_p} \bB_{\rm cris,M}\right) $$is an isomorphism. In particular ${\rm Gr}^r \alpha_{\rm cris,\cL}$ is an isomorphism.

\end{lemma}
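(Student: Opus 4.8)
The plan is to check that $f$ is an isomorphism after applying, for every small affine $\cU\in X^{\rm et}$, the localization functor $\cF\mapsto \cF(\Rbar_\cU)$ of \S\ref{def:bBcris} (extended to inductive systems of continuous sheaves), and then to recognize the resulting map as the graded map attached to the crystalline comparison isomorphism of the $\cG_{\cU,M}$-representation $V_\cU(\cL)$. First I would recall that, as is done systematically in this paper (cf.~\ref{prop:acrisnabla}, \ref{prop:crislocalization}, \ref{lemma:isoinBcris}), it suffices to verify an isomorphism of such sheaves after localizing at all small affines, since these form a basis of $X^{\rm et}$ and the localization functors are exact and jointly conservative. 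Because they are exact and, by \ref{lemma:Dcriscoherent}, \ref{prop:Dacrisislocallyfree} together with the structure theory of crystalline objects in \cite{brinon}, $\bDcrisar(\cL)$ is after inverting $p$ a locally free $\cO_X\tensor_{\cO_K}\Mun$-module whose filtration is by sub-bundles, the formation of ${\rm Gr}^\bullet$ and of the convolution filtration commutes with localization. Combining \ref{prop:filtAcrisnabla}, \ref{lemma:FilAcris(r)} and \ref{lemma:propbBcris}(4) one has $\Fil^b\bB_{\rm cris,M}(\Rbar_\cU)\cong \Fil^b B_{\rm cris}(\Rbar_\cU)$, and then by \ref{lemma:loctensorAcris} the localization of $\Fil^r(\cL\tensor_{\Z_p}\bB_{\rm cris,M})$ is $V_\cU(\cL)\tensor_{\Z_p}\Fil^r B_{\rm cris}(\Rbar_\cU)$ while that of $\Fil^r\bigl(\bDcrisar(\cL)\tensor\bB_{\rm cris,M}\bigr)$ is $\sum_{a+b=r}\Fil^a D_{\rm cris}(V_\cU(\cL))\tensor_{R_\cU\tensor_{\cO_K}\Mun}\Fil^b B_{\rm cris}(\Rbar_\cU)$.

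Using \ref{prop:equivcris} (so that $V:=V_\cU(\cL)$ is crystalline) and \ref{lemma:DcrisarDcrisgeoGK}, the claim then reduces to the assertion that the comparison isomorphism
$$\alpha_{\rm cris,V}\colon D_{\rm cris}(V)\tensor_{R_\cU\tensor_{\cO_K}\Mun}B_{\rm cris}(\Rbar_\cU)\lra V\tensor_{\Z_p}B_{\rm cris}(\Rbar_\cU)$$
is \emph{strictly} compatible with the filtrations, i.e.~induces an isomorphism on ${\rm Gr}^\bullet$. This is the one genuinely analytic input, and I would import it from the relative $p$-adic Hodge theory of \cite[\S8]{brinon}: for a crystalline representation $\alpha_{\rm cris,V}$ is an isomorphism of filtered modules, the key points being that crystalline implies de Rham, that the de Rham comparison isomorphism is automatically strict, and that the filtration on $B_{\rm cris}(\Rbar_\cU)$ is induced from that of $B_{\rm dR}(\Rbar_\cU)$ compatibly with gradeds. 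Granting this, the localization of $f$ at every $\cU$ is an isomorphism, hence so is $f$; and since $f$ factors as ${\rm Gr}^r\alpha_{\rm cris,\cL}$ precomposed with the assembly map $\bigoplus_{a+b=r}{\rm Gr}^a\bDcrisar(\cL)\tensor{\rm Gr}^b\bB_{\rm cris,M}\to{\rm Gr}^r\bigl(\bDcrisar(\cL)\tensor\bB_{\rm cris,M}\bigr)$, which the same local computation shows to be an isomorphism (here the sub-bundle property of the filtration on $\bDcrisar(\cL)$ is used), the last assertion follows at once.

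The hard part will be the strict compatibility with filtrations at the local level: that $\alpha_{\rm cris,\cL}$ is a filtered morphism and an isomorphism is already available (the first by construction, the second by the crystalline hypothesis), but the reverse containment $\alpha_{\rm cris,\cL}^{-1}\bigl(\Fil^r(\text{target})\bigr)\subseteq\Fil^r(\text{source})$ is delicate and is precisely where \cite{brinon} is needed; there is no way to avoid this input. A secondary, purely bookkeeping difficulty is to confirm that ${\rm Gr}^\bullet$, the convolution filtration, and the inductive limit over Tate twists defining $\bB_{\rm cris,M}$ all commute with the localization functor; this is taken care of by exactness of localization, by \ref{lemma:propbBcris}(1) (which makes multiplication by $p$, and by $t$, invertible throughout, so those limits are harmless), and by the local freeness of $\bDcrisar(\cL)$ and of its graded pieces established in \ref{lemma:Dcriscoherent}, \ref{prop:Dacrisislocallyfree} and \cite{brinon}.
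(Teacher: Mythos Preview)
Your approach and the paper's share the same analytic core: both reduce to Brinon's result that a crystalline representation is Hodge--Tate with $D_{\rm HT}(V_\cU(\cL))={\rm Gr}\,D_{\rm cris}(V_\cU(\cL))$, so that on localizations the graded comparison map is an isomorphism. Where you diverge is in the passage from local to global, and this is where your argument has a genuine gap.

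You assert that it suffices to check the isomorphism after localizing at every small affine, citing \ref{lemma:isoinBcris} among other things. But that lemma does \emph{not} say localization is jointly conservative on ${\rm Ind}\bigl(\Sh(\fX_M)^\N\bigr)$. Its criterion~(3) requires a \emph{uniform} $N$ such that, for every small affine $\cU$ and every level $a$ of the inverse system, the map $\cM_a(m)(\Rbar_\cU)\to\cN_a(n)(\Rbar_\cU)$ has kernel and cokernel killed by $t^N$. Knowing only that the direct-limit map $\cM(\Rbar_\cU)\to\cN(\Rbar_\cU)$ is an isomorphism of $B_{\rm cris}(\Rbar_\cU)$-modules does not supply this uniformity: in an ind-category, an object $(\cK_i)$ is zero iff for each $i$ there is a single $j$ with $\cK_i\to\cK_j$ the zero map \emph{of sheaves}, and pointwise vanishing of the limit does not give a $j$ independent of $\cU$. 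The references \ref{prop:acrisnabla} and \ref{prop:crislocalization} you invoke compute specific localizations; they do not establish any conservativity statement.

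The paper closes this gap by working at the integral level and constructing an explicit inverse. From the Hodge--Tate isomorphism it extracts a $\cG_{\cU,M}$-equivariant section $\zeta_\cU\colon V_\cU(\cL)\to{\rm Gr}\,D(N)(\cU)\otimes{\rm Gr}\,\bA_{\rm cris,M}(Q)(\Rbar_\cU)$, uses a free-module sandwich ${\rm Gr}\,D(N)(\cU)\stackrel{a}{\to}T\stackrel{b}{\to}{\rm Gr}\,D(N)(\cU)$ with $b\circ a=p^h$ to bound the defect, and then---crucially---shows $p^h\zeta_\cU$ is \emph{unique} (its composite to $V\otimes B_{\rm HT}$ is $v\mapsto v\otimes 1$), hence Galois-equivariant and compatible across affines. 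This uniqueness is what allows gluing to a genuine morphism $g_m$ of continuous sheaves on $\fX_M$; the compositions $f\circ g$ and $g\circ f$ are then $p^h$ times a shift, verifying criterion~(2) of \ref{lemma:isoinBcris}. Quasi-compactness of $X$ makes the constants $N$, $Q$, $h$ uniform. Your sketch contains the right input but omits precisely this construction, which is the bulk of the work.
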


\begin{proof}  The surjective morphisms $$\bigoplus_{a+b=s}
\Fil^a \bDcrisar(\cL)\otimes_{(\cO_X\otimes_{\cO_K} \cO_{\Mun})} \Fil^b \bB_{\rm cris,M}  \lra \Fil^s \left(\bDcrisar(\cL)\otimes_{(\cO_X\otimes_{\cO_K}
\cO_{\Mun})} \bB_{\rm cris,M}\right)$$ for $s=r$ and $r+1$ induce a surjective morphism $$\bigoplus_{a+b=r} {\rm Gr}^a \bDcrisar(\cL)\otimes_{(\cO_X\otimes_{\cO_K}
\cO_{\Mun})} {\rm Gr}^b \bB_{\rm cris,M}  \lra {\rm Gr}^r\left(\bDcrisar(\cL)\otimes_{(\cO_X\otimes_{\cO_K} \cO_{\Mun})} \bB_{\rm cris,M}\right).$$The map $f$ is
the composite of this surjection and ${\rm Gr}^r \alpha_{\rm cris,\cL}$. In particular to deduce that ${\rm Gr}^r \alpha_{\rm cris,\cL}$ is an isomorphism we are
left to prove that $f$ is an isomorphism.

For every integer $N$ define $D(N):=
v_{M,\ast}\Bigl(\cL\tensor_{\Z_p} \bA_{\rm cris,M} (N)\Bigr)$ with
the induced filtration. It follows from
\ref{prop:Dacrisislocallyfree} that $\bigoplus_{a+b=r} {\rm Gr}^a
\bDcrisar(\cL)\otimes_{(\cO_X\otimes_{\cO_K} \cO_{\Mun})} {\rm
Gr}^b \bB_{\rm cris,M}$ is, for $N$ sufficiently small the
inductive system of continuous sheaves $\bigoplus_{a+b=r} {\rm
Gr}^a D(N)\otimes_{(\cO_X\otimes_{\cO_K} \cO_{\Mun})} {\rm Gr}^b
\bA_{\rm cris,M}(m)$ and ${\rm Gr }^r\left(\cL\tensor_{\Z_p}
\bB_{\rm cris,M}\right)$ is the inductive system of continuous
sheaves $\cL\tensor_{\Z_p} {\rm Gr}^r\left(\bA_{\rm
cris,M}(m)\right)$. Furthermore $f$ is induced by the natural
morphisms $$f_m\colon \bigoplus_{a+b=r} {\rm Gr}^a
D(N)\otimes_{(\cO_X\otimes_{\cO_K} \cO_{\Mun})} {\rm Gr}^b
\bA_{\rm cris,M}(m)\lra \cL\tensor_{\Z_p} {\rm Gr}^r\left(\bA_{\rm
cris,M}(N+m)\right).$$To conclude it would be enough to show that
there exists a negative integer $N$ and morphisms
$$g_m\colon \cL\tensor_{\Z_p} {\rm Gr}^r\left(\bA_{\rm
cris,M}(m)\right)\lra \bigoplus_{a+b=r}  {\rm Gr}^a D(N)
\otimes_{(\cO_X\otimes_{\cO_K} \cO_{\Mun})} {\rm Gr}^b \bA_{\rm
cris,M}(m+N)$$such that $g_{N+m} \circ f_m$ and $f_{m+N}\circ g_m$
induce automorphisms on the two inductive systems.

Consider a small affine $\cU$ of $X^{\rm et}$. Let
$V_\cU(\cL)=\cL\bigl(\Rbar_\cU\bigr)$ be the associated
representation of $\cG_\cU$. It is crystalline in the sense of
\cite{brinon} thanks to \ref{prop:equivcris} with $D_{\rm
cris}\bigl(V_\cU(\cL)\bigr)=\bDcrisar(\cL)(\cU)$. It follows from
\cite[Prop. 8.2.12]{brinon} that it is de Rham with $D_{\rm
dR}\bigl(V_\cU(\cL)\bigr)=D_{\rm cris}\bigl(V_\cU(\cL)\bigr)$ and
from \cite[Prop. 8.3.2]{brinon} that it is Hodge-Tate with $D_{\rm
HT}\bigl(V_\cU(\cL)\bigr)={\rm Gr} D_{\rm
dR}\bigl(V_\cU(\cL)\bigr)={\rm Gr} D_{\rm
cris}\bigl(V_\cU(\cL)\bigr)$ i.e. we have an isomorphism $D_{\rm
HT}\bigl(V_\cU(\cL)\bigr)\tensor_{(R_\cU\otimes_{\cO_K}
\cO_{\Mun})} B_{\rm HT}(\Rbar_\cU)\cong V_\cU(\cL)\tensor_{\Z_p}
B_{\rm HT}(\Rbar_\cU)$ as graded modules. Using the
identifications $B_{\rm HT}(\Rbar_\cU)={\rm Gr}\bigl(B_{\rm
cris}(\Rbar_\cU)\bigr)$ (see \cite[Cor. 5.2.7]{brinon}) and ${\rm
Gr}\bigl(B_{\rm cris}(\Rbar_\cU)\bigr)=\displaystyle{\lim_{\to,m
}} {\rm Gr} \bA_{\rm cris,M}(m)(\Rbar_\cU)$ (see
\ref{lemma:propbBcris}) we obtain that in the following diagram
the vertical arrows are isomorphisms: $$\begin{array}{ccc}
\displaystyle{\lim_{\to,m }} \left( {\rm Gr}
D(N)(\cU)\tensor_{(R_\cU\otimes_{\cO_K} \cO_{\Mun})} {\rm Gr}
\bA_{\rm cris,M}(m)(\Rbar_\cU)\right) &
\stackrel{f_m(\Rbar_\cU)}{\lra} & V_\cU(\cL)\tensor_{\Z_p}
\displaystyle{\lim_{\to,m }} {\rm Gr} \bA_{\rm
cris,M}(m)(\Rbar_\cU) \cr \big\downarrow & & \big\downarrow\cr
D_{\rm HT}\bigl(V_\cU(\cL)\bigr)\tensor_{(R_\cU\otimes_{\cO_K}
\cO_{\Mun})} B_{\rm HT}(\Rbar_\cU) & \stackrel{\sim}{\lra} &
V_\cU(\cL)\tensor_{\Z_p} B_{\rm HT}(\Rbar_\cU). \cr
\end{array}$$In particular since $V_\cU(\cL)$ is a free $\Z_p$-module of finite rank
there exists a negative integer $Q$  such that $V_\cU(\cL)$ is
contained in the image of $f_{Q}(\Rbar_\cU)$. Since ${\rm Gr}
D(N)(\cU)[p^{-1}]={\rm Gr} \bDcrisar(\cU)=D_{\rm
HT}\bigl(V_\cU(\cL)\bigr)$ and the latter is a projective
$R_\cU\tensor_{\cO_K}\Mun$-module by \cite[Prop.~8.3.2]{brinon},
there exists $h\in\N$ and a free $R_\cU\tensor_{\cO_K}
\OMun$-module $T$ and maps $a\colon {\rm Gr} D(N)(\cU) \to T$ and
$b\colon T\to {\rm Gr} D(N)(\cU)$ such that $b\circ a$ is
multiplication by $p^h$.

\noindent
We claim that $p^h$ annihilates the
kernel of the natural map
$$f\colon {\rm Gr}
D(N)(\cU)\otimes_{(R_\cU\otimes_{\cO_K} \cO_{\Mun})} {\rm Gr}
\bA_{\rm cris,M}(m)(\Rbar_\cU)\lra D_{\rm
HT}\bigl(V_\cU(\cL)\bigr)\tensor_{(R_\cU\otimes_{\cO_K}
\cO_{\Mun})} B_{\rm HT}(\Rbar_\cU.)$$
To see this let us first remark that
as $T$ is a free
$R_\cU\otimes_{\cO_K}\cO_{\Mun}$-module,
the natural map $$T\otimes_{(R_\cU\otimes_{\cO_K}\cO_{\Mun})}
{\rm Gr}\bA_{\rm cris,M}(m)(\Rbar_\cU)
\lra T\otimes_{(R_\cU\otimes_{\cO_K}
\cO_{\Mun})} B_{\rm HT}(\Rbar_\cU)$$ is injective.
We have the following commutative diagram
in which all the tensor products are over
$R_\cU\otimes_{\cO_K}\cO_{\Mun}$ and we have denoted
by $A_{M,m,\cU}$ the ring ${\rm Gr}
\bA_{\rm cris,M}(m)(\Rbar_\cU)$.
$$
\begin{array}{ccccccccccc}
{\rm Gr}
D(N)(\cU)\otimes A_{M,m,\cU}&\stackrel{a\otimes 1}{\lra}&
T\otimes A_{M,m,\cU}&\stackrel{b\otimes 1}{\lra}&
{\rm Gr} D(N)(\cU)\otimes A_{M,m,\cU}\\
f\downarrow&&\cap&&\downarrow\\
 D_{\rm HT}(V_\cU(\bL))\otimes B_{\rm HT}(\Rbar_\cU)&
\stackrel{a\otimes 1}{\lra}& T\otimes
 B_{\rm HT}(\Rbar_\cU)&\stackrel{b\otimes 1}{\lra}&D_{\rm HT}
(V_\cU(\bL))  \otimes B_{\rm HT}(\Rbar_\cU)
\end{array}
$$
Let $x\in \Ker(f)$. Then $p^hx=(b\otimes 1)\bigl(a\otimes 1(x)\bigr)=0$
which proves the claim.

\noindent Now we choose the  pre-image of basis elements of $V_\cU(\cL)$ in $$\left( {\rm Gr} D(N)(\cU)\tensor_{(R_\cU\otimes_{\cO_K} \cO_{\Mun})} {\rm Gr} \bA_{\rm
cris,M}(Q)(\Rbar_\cU)\right).$$ This determines a map of $\Z_p$-modules $\zeta_\cU\colon V_\cU(\cL) \lra {\rm Gr} D(N)(\cU)\tensor_{(R_\cU\otimes_{\cO_K}
\cO_{\Mun})} {\rm Gr} \bA_{\rm cris,M}(Q)(\Rbar_\cU)$. The composite with the projection onto $V_\cU(\cL)\tensor_{\Z_p} B_{\rm HT}(\Rbar_\cU)$ is the natural map $a
\mapsto a\tensor 1$. For every negative integer $m$ extend $\zeta_\cU$ as a ${\rm Gr} \bA_{\rm cris,M}(m)(\Rbar_\cU) $ linear  map $$t_m(\Rbar_\cU)\colon
V_\cU(\cL)\tensor_{\Z_p}{\rm Gr} \bA_{\rm cris,M}(m)(\Rbar_\cU) \lra  {\rm Gr} D(N)(\cU)\tensor_{(R_\cU\otimes_{\cO_K} \cO_{\Mun})} {\rm Gr} \bA_{\rm
cris,M}(Q+m)(\Rbar_\cU).$$Then $f_{Q+m}(\Rbar_\cU)\circ t_m(\Rbar_\cU)$ is multiplication by $p^h$ times the shift by $N+Q$. Similarly, replacing $\zeta_\cU$ and
$t_m(\Rbar_\cU)$ with $p^h \zeta_\cU$ and $p^h t_m(\Rbar_\cU)$, the composite $t_{m+N}(\Rbar_\cU)\circ f_m(\Rbar_\cU)$ induces the identity on ${\rm Gr} D(N)(\cU)$
so that $t_{N+m}(\Rbar_\cU)\circ f_m(\Rbar_\cU)$ is also $p^h$ times the shift by $Q+N$. In particular $t_m(\Rbar_\cU)$ and $f_m(\Rbar_\cU)$ define inverses one of
the other for the two inverse systems defined by varying $m$.

Recall that  the composite of $\zeta_\cU$ with the projection onto $V_\cU(\cL)\tensor_{\Z_p} B_{\rm HT}(\Rbar_\cU)$ is the natural map $a \mapsto a\tensor 1$. In
particular it is unique with this property and it is $\cG_\cU$-equivariant. This implies that multiplying it by $p^h$ gives a $\cG_\cU$-equivariant map $\zeta_\cU$.
Note that $\zeta_U$ determines $\zeta_{U'}$ for any small affine $\cU'\to \cU$ and since $X$ can be covered by finitely many small affine opens by taking  $N$ and
$Q$ sufficiently small and $h$ sufficiently large and reducing modulo $p^n$ the morphisms $\zeta_\cU$ glue and define a morphism
$$\zeta_n\colon \cL_n \to {\rm Gr} D(N)
\otimes_{(\cO_X\otimes_{\cO_K} \cO_{\Mun})} {\rm Gr}\left(
\bA_{\rm cris,n,M}' (Q)\right).$$For every negative integer $m$
we extend it ${\rm Gr} \bA_{\rm cris,n,M}'(m)$-linearly  to get
morphisms
$$g_{m,n}\colon \cL_n\tensor_{\Z_p} {\rm Gr}\bA_{\rm
cris,n,M}' (m) \lra  {\rm Gr} D(N) \otimes_{(\cO_X \otimes_{\cO_K}
\cO_{\Mun})} {\rm Gr}\left( \bA_{\rm cris,n,M}'
(m+Q)\right),$$which are compatible for varying $m$ and $n$. Let
$\cU$ be a small affine. The composite $f_{m+Q,n}(\Rbar_\cU)\circ
g_{m,n}(\Rbar_\cU)$  is multiplication by $p^h$ on $V_\cU(\cL)/p^n
V_\cU(\cL)=\cL_n(\Rbar_\cU)$ and hence  it is multiplication by
$p^h$ times the shift by $Q+N$ by linearity. Similarly
$g_{m+N,n}(\Rbar_\cU) \circ f_{m,n}(\Rbar_\cU)$ is multiplication
by $p^h$ on ${\rm Gr} D(N)(\cU)$ and hence it is multiplication
by $p^h$ times the shift by $N+Q$ by linearity. This implies that
$f_{m+Q,n} \circ g_{m,n}$ and  $g_{m+N,n}(\Rbar_\cU) \circ c$ are
multiplication by $p^h$ times the shift by $N+Q$. Thus since
multiplication by $p$ and shifts define automorphisms of inductive
systems, we conclude that  $\{f_{m,n}\}_n$ and $\{g_{m,n}\}_n$
define automorphisms of inductive systems as claimed.

\end{proof}

\begin{proposition}\label{prop:connFilDcriar} Fix a finite extension $K\subset M$.
Assume that $\cL$ is a crystalline \'etale sheaf on $X_M^{\rm et}$
and take $N\in\Z$ as in \ref{prop:Dacrisislocallyfree}.
Then we have\smallskip

1) for varying~$r\in\N$ the $\cO_{X_{\Mun}}$--modules
$\Fil^r\bDcrisar(\cL)$ define a decreasing, exhaustive and
separated filtration of~$\Fil^r\bDcrisar(\cL)$ by locally free
$\cO_{X_{\Mun}}$--modules having the property that the
quotient~$\Fil^r\bDcrisar(\cL)/ \Fil^{r+1}\bDcrisar(\cL)$ is
locally free for every~$r\in\N$;\smallskip

2) the connection $\nabla_\cL$ on~$\bDcrisar\bigl(\cL\bigr)$ is
integrable, quasi--nilpotent and satisfies Griffith's
transversality relatively to the given filtration;\smallskip

3) the isomorphism $\alpha_{\rm cris,\cL}\colon
\bDcrisar(\cL)\otimes_{(\cO_X\otimes_{\cO_K} \cO_{\Mun})} \bB_{\rm
cris,M}\cong \cL\tensor_{\Z_p} \bB_{\rm cris,M}$ preserves the
connection where on the left we consider the composite of the
connection $\nabla_\cL$ and the connection on $\bB_{\rm cris,M}$
while on the right we consider the connection induced from the one
on $\bB_{\rm cris,M}$ which is trivial on $\cL$. Furthermore it
induces an isomorphism, in the category ${\rm
Ind}\bigl(\Sh(\fX_M)^\N\bigr)$ of inductive systems of continuous
sheaves, on filtrations;
\smallskip

4) the map $\bDcrisar(\cL)\widehat{\tensor}_{\OMun} A_{\rm
cris}\lra \bDcrisgeo(\cL)$ is injective and induces an isomorphism
after inverting~$t$. Furthermore $\sum_{a+b=r}\Fil^a
\bDcrisar(\cL)\widehat{\tensor}_{\OMun} \Fil^b A_{\rm cris} =
\left(\bDcrisar(\cL)\widehat{\tensor}_{\OMun} A_{\rm
cris}\right)\cap \Fil^r \bDcrisgeo(\cL)$ for every~$r\in\Z$.
\smallskip

\end{proposition}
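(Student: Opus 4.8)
The plan is to reduce all four assertions to local statements on small affine opens $\cU=\Spf(R_\cU)$ of $X^{\rm et}$ and then combine Brinon's relative $p$-adic Hodge theory with the sheaf-theoretic facts established earlier. On such a $\cU$ the preceding discussion identifies $\bDcrisar(\cL)(\cU)$ (and its localization at $\Rbar_\cU$) with $D_{\rm cris}\bigl(V_\cU(\cL)\bigr)$ compatibly with filtrations, via~\ref{prop:filtAcrisnabla} and~\ref{lemma:propbBcris}, where $V_\cU(\cL)=\cL(\Rbar_\cU)$ is the $\cG_{\cU,M}$-representation attached to $\cL$, crystalline in the sense of~\cite{brinon} by~\ref{prop:equivcris}; similarly $\bDcrisgeo(\cL)(\cU)\cong\bigl(V_\cU(\cL)\tensor_{\Z_p}B_{\rm cris}(\Rbar_\cU)\bigr)^{\cG_{\cU,\Kbar}}$. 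For~(1) I would use that a crystalline $V_\cU(\cL)$ is de Rham with $D_{\rm dR}\bigl(V_\cU(\cL)\bigr)=D_{\rm cris}\bigl(V_\cU(\cL)\bigr)$ (\cite[Prop.~8.2.12]{brinon}) and Hodge--Tate with ${\rm Gr}\,D_{\rm dR}\bigl(V_\cU(\cL)\bigr)$ projective of finite type over $R_\cU\tensor_{\cO_K}\Mun$ (\cite[Prop.~8.3.2]{brinon}); since the graded pieces are projective the sequences $0\to\Fil^{r+1}D_{\rm cris}\bigl(V_\cU(\cL)\bigr)\to\Fil^r D_{\rm cris}\bigl(V_\cU(\cL)\bigr)\to{\rm Gr}^r D_{\rm cris}\bigl(V_\cU(\cL)\bigr)\to 0$ split, and a descending induction starting from $\Fil^r=D_{\rm cris}\bigl(V_\cU(\cL)\bigr)$ for $r\ll0$ shows each $\Fil^r$ is projective of finite type with projective quotients; exhaustiveness and separatedness come from the same properties of $\Fil^\bullet B_{\rm cris}(\Rbar_\cU)$ (\ref{prop:filtAcrisnabla}). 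As $\bDcrisar(\cL)$ lies in ${\rm Coh}(\cO_X\tensor_{\cO_K}\Mun)$ by crystallinity and the $\Fil^r\bDcrisar(\cL)$ are sheaf-theoretically defined, these local descriptions glue --- invoking~\cite[Prop.~1.2]{ogus} as in the proof of~\ref{prop:equivcris} --- which gives~(1).

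For~(2) I would note that $\nabla_\cL=v_{M,\ast}(1_\cL\tensor\nabla)$ with $\nabla$ the connection on $\bB_{\rm cris,M}$; integrability and quasi--nilpotence are inherited from $\nabla$ (quasi--nilpotent by~\ref{prop:deRhamcomplex}(v) and~\ref{lemma:FilAcris(r)}, integrable by construction), because $v_{M,\ast}$ is left exact and commutes with $-\tensor_{\cO_X}\Omega^i_{X/\cO_K}$ --- the latter being locally free of finite rank --- by the projection formula; Griffith's transversality is then~\ref{prop:deRhamcomplex}(iii) in its filtered form (\ref{lemma:propbBcris}(2)) after applying $v_{M,\ast}$ to $1_\cL\tensor\nabla\colon\cL\tensor\Fil^r\bB_{\rm cris,M}\to\cL\tensor\Fil^{r-1}\bB_{\rm cris,M}\tensor_{\cO_X}\Omega^1_{X/\cO_K}$. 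For~(3): the horizontality of $\alpha_{\rm cris,\cL}$ I would check by restricting to the subsites $\fU_{n,M}$ over small affines, where $\bB_{\rm cris,M}$ is the explicit PD polynomial algebra of~\ref{thm:gluing} and $\nabla$ is explicit, or equivalently on localizations, where it reduces to the standard horizontality (in~\cite{brinon}) of $D_{\rm cris}\bigl(V_\cU(\cL)\bigr)\tensor_{R_\cU}B_{\rm cris}(\Rbar_\cU)\to V_\cU(\cL)\tensor_{\Z_p}B_{\rm cris}(\Rbar_\cU)$; that $\alpha_{\rm cris,\cL}$ restricts to an isomorphism on each $\Fil^r$ follows from the inclusion $\alpha_{\rm cris,\cL}(\Fil^r)\subset\Fil^r$ noted before~\ref{lemma:gradediso}, the fact that ${\rm Gr}^r\alpha_{\rm cris,\cL}$ is an isomorphism for all $r$ (\ref{lemma:gradediso}), and a d\'evissage using exhaustiveness and separatedness of both filtrations.

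For~(4) I would deduce the statement from part~(3): applying $\beta_{M,\Kbar}^\ast$ to $\alpha_{\rm cris,\cL}$ and using $\beta_{M,\Kbar}^\ast(\bB_{\rm cris,M})\cong\bB_{\rm cris,\Kbar}$ (\ref{cor:extensionofFrobenius}) gives an isomorphism $\bDcrisar(\cL)\tensor_{(\cO_X\tensor_{\cO_K}\OMun)}\bB_{\rm cris,\Kbar}\cong\cL\tensor_{\Z_p}\bB_{\rm cris,\Kbar}$; applying $v_{\Kbar,\ast}$ and the projection formula --- legitimate since $\bDcrisar(\cL)$ is locally free over $\cO_{X_{\Mun}}$ by~(1) --- identifies $\bDcrisgeo(\cL)$ with $\bDcrisar(\cL)\tensor_{(\cO_X\tensor_{\cO_K}\OMun)}v_{\Kbar,\ast}(\bB_{\rm cris,\Kbar})$, so one is reduced to computing $v_{\Kbar,\ast}(\bA_{\rm cris,\Kbar})$, which on each small affine is $\bigl(A_{\rm cris}(\Rbar_\cU)\bigr)^{\cG_{\cU,\Kbar}}\cong R_\cU\widehat{\tensor}_{\cO_K}A_{\rm cris}$ (an almost-\'etale-descent computation in the style of~\cite[\S6]{brinon}, parallel to the $\cG_{\cU,M}$-version used in~\ref{prop:Dacrisislocallyfree}). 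Injectivity then follows from the flatness of $\cO_X$ over $\cO_K$ and $A_{\rm cris}\hookrightarrow B_{\rm cris}$; the claim that the map is an isomorphism after inverting $t$ is immediate from this identification; and the $\Fil^r$ identity follows from the formula $\Fil^r B_{\rm cris}(\Rbar_\cU)=\sum_{a+b\ge r}t^b\Fil^a A_{\rm cris}(\Rbar_\cU)[p^{-1}]$ of~\ref{prop:filtAcrisnabla} together with the local freeness of the $\Fil^a\bDcrisar(\cL)$ from~(1). I expect part~(4) to be the main obstacle: the delicate point is to control the passage from the ``arithmetic'' Galois group $\cG_{\cU,M}$ to the ``geometric'' group $\cG_{\cU,\Kbar}$ --- equivalently from $\bDcrisar$ to $\bDcrisgeo$ --- while working with the $p$-integral ring $A_{\rm cris}$ rather than $B_{\rm cris}$, and to see that the filtration intersection identity is preserved; parts~(1)--(3) are, given the local dictionary with Brinon's relative Hodge theory and the sheaf-theoretic inputs~\ref{prop:deRhamcomplex}, \ref{lemma:gradediso} and~\ref{lemma:isoinBcris}, essentially formal once organized as above.
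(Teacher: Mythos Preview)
Your approach to parts~(1)--(3) matches the paper's: (1) and (2) reduce to Brinon's local results (the paper simply cites \cite[Prop.~8.3.2]{brinon} and \cite[Prop.~8.3.4]{brinon}, while you spell out a bit more), and for (3) the paper likewise combines the inclusion $\alpha_{\rm cris,\cL}(\Fil^r)\subset\Fil^r$ with the graded isomorphism of~\ref{lemma:gradediso}; the paper adds an explicit ``bounded shift'' argument (using~\ref{lemma:isoinBcris}) to show $\cL\tensor\Fil^r\bB_{\rm cris,M}\subset\alpha_{\rm cris,\cL}\bigl(\Fil^{r+c}\bigr)$ for a fixed $c$, which is what makes the d\'evissage go through cleanly in the ind-category, but this is the same idea you sketch.

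For~(4) your structure is right and agrees with the paper, but two points need correction. First, the identification $\bigl(A_{\rm cris}(\Rbar_\cU)\bigr)^{\cG_{\cU,\Kbar}}\cong R_\cU\widehat{\tensor}_{\cO_K}A_{\rm cris}$ is \emph{not} an isomorphism on the nose: the map is injective with cokernel annihilated by a fixed power of~$t$, and this is precisely \cite[Cor.~31]{andreatta_brinonacyclicity}, not a statement in \cite{brinon}. This suffices for the injectivity and the isomorphism-after-inverting-$t$, but your phrasing overclaims. Second, your justification of the filtration identity $\sum_{a+b=r}\Fil^a\bDcrisar(\cL)\widehat{\tensor}\Fil^b A_{\rm cris}=\bigl(\bDcrisar(\cL)\widehat{\tensor}A_{\rm cris}\bigr)\cap\Fil^r\bDcrisgeo(\cL)$ via the formula for $\Fil^r B_{\rm cris}(\Rbar_\cU)$ is too loose: that formula describes the filtration on $B_{\rm cris}$, not the intersection with the $A_{\rm cris}$-lattice inside $\bDcrisgeo(\cL)$. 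The paper instead reduces to showing that the map on graded pieces $\sum_{a+b=r}{\rm Gr}^a\bDcrisar(\cL)\widehat{\tensor}_{\OMun}{\rm Gr}^b A_{\rm cris}\to{\rm Gr}^r\bDcrisgeo(\cL)$ is injective, which follows from~\ref{lemma:gradediso} (giving $v_{\Kbar,\ast}\bigl({\rm Gr}^r(\cL\tensor\bB_{\rm cris,M})\bigr)\cong\bigoplus_{a+b=r}{\rm Gr}^a\bDcrisar(\cL)\tensor v_{\Kbar,\ast}({\rm Gr}^b\bB_{\rm cris,M})$) together with the injection ${\rm Gr}^b A_{\rm cris}\hookrightarrow v_{\Kbar,\ast}({\rm Gr}^b\bB_{\rm cris,M})$. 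You should replace your last sentence on~(4) with this graded-pieces argument.
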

\begin{proof} 1)  follows from \cite[Prop.~8.3.2]{brinon}.

2) follows from  \cite[Prop.~8.3.4]{brinon} using
\ref{prop:Dacrisislocallyfree} (which is implicitly assumed in
loc.~cit.).

3) The assertion regarding the connection is clear.  By construction the given morphism preserve the filtrations. Since $X$ is noetherian there exists $H\in\Z$ such
that $\Fil^H \bDcrisar(\cL) = \bDcrisar(\cL)$. Due to Lemma \ref{lemma:isoinBcris} the fact that $\alpha_{\rm cris,\cL}$ is an isomorphism implies that there exists
$N\in\N$ and morphisms of $\bA_{\rm cris,M}$-modules $\gamma_m\colon \cL\tensor \bA_{\rm cris,M}(m) \to  \bDcrisar(\cL) \tensor \bA_{\rm cris,M}(N+m)$, compatible
for varying $m$, defining the inverse of $\alpha_{\rm cris,\cL}$. Since $\gamma_m$ is $\bA_{\rm cris,M}$-linear and $\Fil^r \bA_{\rm cris,M}(m)=\left(\Fil^{r-m}
\bA_{\rm cris,M}\right)\cdot \bA_{\rm cris,M}(m)$ the image via $\gamma_m$ of $\Fil^r$ on the left hand side is contained in $\left(\Fil^{r-m} \bA_{\rm
cris,M}\right)\cdot \bDcrisar(\cL) \tensor \bA_{\rm cris,M}(N+m)$ which is $\bDcrisar(\cL) \tensor \Fil^{r+N}\bA_{\rm cris,M}(N+m)$ and is contained in
$\Fil^{r+N+H} \left(\bDcrisar(\cL) \tensor \bA_{\rm cris,M}(N+m)\right)$. In particular $\cL\tensor \Fil^r \bB_{\rm cris,M}$ is contained in the image of
$\Fil^{r+N+H} \left(\bDcrisar(\cL) \tensor \bB_{\rm cris,M}\right)$ via $\alpha_{\rm cris,\cL} $.

We are left to prove that the map induced by $\alpha_{\rm
cris,\cL}$ on the quotient inductive systems $\Fil^r/\Fil^s$ for
$r\leq s$ is injective. Proceeding inductively it suffices to
consider the case that $r=s+1$ and this follows from
\ref{lemma:gradediso}.
\smallskip

4) It follows from (3) that $\bDcrisgeo(\cL)$ is $v_{\Kbar,\ast}\left(\bDcrisar(\cL)\otimes_{(\cO_X\otimes_{\cO_K} \cO_{\Mun})} \bB_{\rm cris,M}\right)$ as filtered
module. Since $ \bDcrisar(\cL)$ is projective as $\cO_X\tensor_{\cO_K}\OMun$-module it is locally a direct summand in a free module. To prove the first claim it
then suffices to show that for every small affine $\cU$, the map  $R_\cU \widehat{\tensor}_{\cO_K} A_{\rm cris}(\OKbar) \lra \left(A_{\rm
cris}(\Rbar_\cU\right)^{\cG_{\cU,\Kbar}} $ is injective and it has kernel annihilated by a fixed power of~$t$. This is proven
in~\cite[Cor.~31]{andreatta_brinonacyclicity}.

To prove the second statement it suffices to show that the map
induced on graded pieces $\sum_{a+b=r}{\rm Gr}^a
\bDcrisar(\cL)\widehat{\tensor}_{\OMun} {\rm Gr}^b A_{\rm cris}
\to {\rm Gr}^r \bDcrisgeo(\cL)$ is injective for every $r$. It
follows from \ref{lemma:gradediso} and the fact ${\rm Gr}^a
\bDcrisar(\cL)$ is a projective $\cO_X\otimes_{\cO_K}
\cO_{\Mun}$-module that $v_{\Kbar,\ast}\left({\rm Gr
}^r\left(\cL\tensor_{\Z_p} \bB_{\rm cris,M}\right)\right)$ is $
\bigoplus_{a+b=r} {\rm Gr}^a
\bDcrisar(\cL)\otimes_{(\cO_X\otimes_{\cO_K} \cO_{\Mun})}
v_{\Kbar,\ast}\left({\rm Gr}^b \bB_{\rm cris,M}\right) $. The
claim follows remarking that ${\rm Gr}^b A_{\rm cris}$ injects in
$v_{\Kbar,\ast}\left({\rm Gr}^b \bB_{\rm cris,M}\right)$.

\end{proof}

Let~$\cU$  be a small affine and choose parameters
$T_1,\ldots,T_d\in R_\cU^\times$. Write~$F_\cU$
(resp.~$\varphi_\cU$) for the associated Frobenius on~$\cU$
(resp.~on~$\bA_{\rm cris,M}\vert_{\fUM}$). Define a Frobenius
$\varphi_r$ on $\bA_{\rm cris,M}^\nabla(r)$ (resp.~$\bA_{\rm
cris,M}(r)\vert_{\fUM}$) to be the map $p^r \tensor \varphi$ on
$\Z_p(r) \tensor_{\Z_p} \bA_{\rm cris,M}^\nabla$ (resp.~$\Z_p(r)
\tensor_{\Z_p} \bA_{\rm cris,M}\vert_{\fUM}$).  We then get
$F_\cU$--linear maps $$\varphi_\cU\colon \bDcrisar(\cL) \lra
\bDcrisar(\cL),\qquad \varphi_\cU\colon
v_{M,\ast}\bigl(\cL\tensor \bA_{\rm cris,M}(N)\bigr) \lra
v_{M,\ast}\bigl(\cL\tensor \bA_{\rm cris,M}(N)\bigr).$$

\begin{proposition}\label{prop:FrobDcriar}
Assume that $\cL$ is a crystalline \'etale sheaf. Then we have.\smallskip

1) $\varphi_\cU$ is horizontal with respect to the
connection~$\nabla_\cL\vert_\cU$ i.e., $\nabla_\cL\vert_\cU \circ
\varphi\vert_\cU=\bigl(\varphi\vert_\cU\tensor d F_\cU\bigr)\circ
\nabla_\cL\vert_\cU$,\smallskip

2) $\bDcrisar(\cL)\vert_{\cU} $ is an \'etale $F_\cU$--module
i.~e., $\varphi_\cU\tensor 1\colon
\bDcrisar(\cL)\vert_{\cU}\tensor_{\cO_\cU}^{F_\cU} \cO_\cU \to
\bDcrisar(\cL)\vert_{\cU}$ is an isomorphism.
\end{proposition}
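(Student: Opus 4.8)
I would treat claim~(1) as essentially formal. By construction the $F_\cU$--linear operator $\varphi_\cU$ on $\bDcrisar(\cL)$ and the connection $\nabla_\cL$ are obtained by applying $v_{M,\ast}^\cont$, and then passing to the direct limit over Tate twists as in \S\ref{def:bBcris}, to the Frobenius $\varphi_\cU$ and the connection $\nabla(N)$ on $\cL\tensor_{\Z_p}\bA_{\rm cris,M}(N)\vert_\cU$ (for $N$ sufficiently negative), the sheaf $\cL$ carrying the trivial connection and trivial Frobenius. The key input is the Tate--twisted version of Proposition \ref{prop:deRhamcomplex}(vi) recorded in Lemma \ref{lemma:FilAcris(r)}, which says that $\varphi_\cU$ on $\bA_{\rm cris,M}(N)\vert_\cU$ is horizontal with respect to $\nabla(N)_\cU$; since $v_{M,\ast}^\cont$ is a functor and horizontality is an equality of morphisms of sheaves, it is preserved under $v_{M,\ast}^\cont$ and under the direct limit, which gives $\nabla_\cL\vert_\cU\circ\varphi_\cU=(\varphi_\cU\tensor dF_\cU)\circ\nabla_\cL\vert_\cU$.

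For claim~(2) I would first reduce to a local statement: it suffices to check that for every small affine $\cV$ \'etale over $\cU$ the map $\varphi_\cU\tensor 1$ on the sections $\bDcrisar(\cL)(\cV)$ is an isomorphism, using that $F_\cU$ lifts uniquely along $\cV\to\cU$ and that, by the integrable connection of Proposition \ref{prop:connFilDcriar}(2), the resulting property does not depend on the chosen Frobenius lift. Then I would pass to the localization. By the identification established just before Lemma \ref{lemma:Dcriscoherent} one has $\bDcrisar(\cL)(\cV)\cong D_{\rm cris}\bigl(V_\cV(\cL)\bigr)$ as $R_\cV\tensor_{\cO_K}\Mun$--modules, compatibly with Frobenius, and by Proposition \ref{prop:equivcris} the $\cG_{\cV,M}$--representation $V_\cV(\cL)$ is crystalline in the sense of \cite{brinon}. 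By Lemma \ref{lemma:Dcriscoherent} the module $D_{\rm cris}\bigl(V_\cV(\cL)\bigr)$ is finite projective over $R_\cV\tensor_{\cO_K}\Mun$, of rank equal to that of $V_\cV(\cL)$ (the inequality of Lemma \ref{lemma:Dcriscoherent} being an equality because $\alpha_{\rm cris,\cL}$ localizes to an isomorphism over $B_{\rm cris}(\Rbar_\cV)$). Thus the linearization $\varphi_\cU^{\,\rm lin}\colon F_\cU^\ast D_{\rm cris}\bigl(V_\cV(\cL)\bigr)\to D_{\rm cris}\bigl(V_\cV(\cL)\bigr)$ is a morphism between finite projective modules of the same rank, and it is injective because $\varphi$ is injective on $B_{\rm cris}(\Rbar_\cV)$ and $D_{\rm cris}\bigl(V_\cV(\cL)\bigr)$ embeds into $V_\cV(\cL)\tensor_{\Z_p}B_{\rm cris}(\Rbar_\cV)$.

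The remaining point --- surjectivity of $\varphi_\cU^{\,\rm lin}$ --- is what I expect to be the main obstacle, since over the non--field ring $R_\cV\tensor_{\cO_K}\Mun$ one cannot simply argue that an injective semilinear endomorphism of a finite module is bijective. The cleanest route is to quote Brinon: the filtered $\varphi$--module attached to a crystalline representation in the relative setting is a convergent $F$--isocrystal, i.e.\ its Frobenius linearizes to an isomorphism; see \cite[\S8.3]{brinon}. If one prefers a self--contained argument, one can exploit claim~(1): $\varphi_\cU^{\,\rm lin}$ is a horizontal morphism between modules with integrable connection on the smooth formal scheme $\cV$, so by the rigidity of horizontal morphisms --- an iso as soon as it is one along a closed subscheme meeting every component, together with the convergence built into $\bDcrisar(\cL)$ --- it suffices to check it after restricting to a closed point $x\in\cV$, where the fibre is $D_{\rm cris}$ of the restriction of $V_\cV(\cL)$ to a local Galois group, a finite--dimensional vector space over a field on which an injective $\sigma$--semilinear endomorphism is automatically bijective.
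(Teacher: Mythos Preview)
Your approach matches the paper's: claim~(1) follows directly from the horizontality of $\varphi_\cU$ on $\bA_{\rm cris,M}\vert_\cU$ established in Proposition~\ref{prop:deRhamcomplex}(vi) (and its Tate--twisted form in Lemma~\ref{lemma:FilAcris(r)}), and for claim~(2) the paper does exactly what you call the ``cleanest route'', namely identify $\bDcrisar(\cL)(\cU)=D_{\rm cris}\bigl(V_\cU(\cL)\bigr)$ via coherence and then quote \cite[Prop.~8.3.3]{brinon} for the \'etaleness of the Frobenius. Your alternative self--contained argument via rigidity of horizontal morphisms is not used by the paper and remains only a sketch; the citation to Brinon is the actual proof.
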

\begin{proof} (1) follows since $\varphi_\cU$
on~$\bA_{\rm cris,M}\vert_\cU$ is horizontal
by~\ref{prop:deRhamcomplex}.

(2) follows since~$\bDcrisar(\cL)$ is a coherent module
$\bDcrisar(\cL)(\cU)=D_{\rm cris}\bigl(V_\cU(\cL)\bigr)$ and the
latter is \'etale thanks to~\cite[Prop.~8.3.3]{brinon}.
\end{proof}

Let $\F$ be the residue field of~$\OMun$ and write $\cU_\F$ for
$\cU\tensor_{\cO_K}\F$. Assume that $\cL$ is a crystalline \'etale
sheaf on $X_M^{\rm et}$. It follows from
\ref{prop:connFilDcriar}(2) and  from~\ref{prop:FrobDcriar} that $\bigl(
\bDcrisar\bigl(\cL\bigr)\vert_\cU,\varphi_\cU\bigr)$  is a
convergent $F$-isocrystal $\cU_\F$ relatively to $\OMun$ in the
sense of \cite[Def. 2.3.7]{berthelot}.

\begin{lemma}\label{lemma:changeofFrobenius}
Suppose we have two choices of parameters~$T_1,\ldots,T_d$
and~$T_1',\ldots,T_d'$ of~$R_\cU^\times$. Denote by~$\varphi_\cU$
and~$\varphi_\cU'$ the corresponding Frobenius morphisms on
$\bDcrisar\bigl(\cL\bigr)\vert_\cU$. Then
$\left(\bDcrisar\bigl(\cL\bigr)\vert_\cU,\varphi_\cU\right)$ and
$\left(\bDcrisar\bigl(\cL\bigr)\vert_\cU,\varphi_\cU'\right)$
define the same convergent $F$-crystals on~$\cU_\F$ relatively
to~$\OMun$.

\end{lemma}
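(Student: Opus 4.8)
The plan is to reduce the statement to the standard fact that the category of convergent $F$--isocrystals on $\cU_\F$ relatively to $\OMun$ does not depend on the choice of a lift of the absolute Frobenius, and to exhibit the two $F$--structures $\varphi_\cU$ and $\varphi_\cU'$ as corresponding to one another under the canonical comparison between the two Frobenius pull--backs. First I would note that, by Proposition \ref{prop:connFilDcriar}(1)--(2) and Proposition \ref{prop:FrobDcriar}, both $\bigl(\bDcrisar(\cL)\vert_\cU,\nabla_\cL\vert_\cU,\varphi_\cU\bigr)$ and $\bigl(\bDcrisar(\cL)\vert_\cU,\nabla_\cL\vert_\cU,\varphi_\cU'\bigr)$ are convergent $F$--isocrystals in the sense of \cite[Def.~2.3.7]{berthelot}, with \emph{the same} underlying module with connection: the sheaf $\bDcrisar(\cL)\vert_\cU=v_{M,\ast}\bigl(\cL\tensor_{\Z_p}\bB_{\rm cris,M}\bigr)\vert_\cU$ and the connection $\nabla_\cL$ are constructed globally and do not involve any choice of parameters, whereas only $\varphi_\cU$ depends on the chosen parameters, through the lift $F_\cU$ of absolute Frobenius. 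Hence it is enough to show that the identity of $\bDcrisar(\cL)\vert_\cU$ underlies an isomorphism of $F$--isocrystals $\bigl(\bDcrisar(\cL)\vert_\cU,\varphi_\cU\bigr)\cong\bigl(\bDcrisar(\cL)\vert_\cU,\varphi_\cU'\bigr)$.

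Next I would recall the change--of--Frobenius isomorphism. Since $F_\cU$ and $F_\cU'$ both reduce modulo $p$ to the absolute Frobenius of $\cU_\F$ over the fixed lift of Frobenius on $\OMun$, we have $F_\cU(T_i)-F_\cU'(T_i)\in pR_\cU$ for $i=1,\ldots,d$, and these elements admit divided powers. Because $\nabla_\cL\vert_\cU$ is integrable and quasi--nilpotent by Proposition \ref{prop:connFilDcriar}(2), the Taylor series
$$
\chi:=\sum_{\underline{k}\in\N^d}\ \Bigl(\textstyle\prod_{i=1}^d\bigl(F_\cU(T_i)-F_\cU'(T_i)\bigr)^{[k_i]}\Bigr)\ \nabla_{\partial/\partial T_1}^{k_1}\cdots\nabla_{\partial/\partial T_d}^{k_d}
$$
converges and defines a horizontal isomorphism $\chi\colon F_\cU'^{\,\ast}\bigl(\bDcrisar(\cL)\vert_\cU\bigr)\xrightarrow{\ \sim\ }F_\cU^{\,\ast}\bigl(\bDcrisar(\cL)\vert_\cU\bigr)$; this is the canonical comparison of \cite{berthelot_ogus}, \cite{berthelot} which makes the notion of convergent $F$--isocrystal independent of the Frobenius lift. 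Granting this, the lemma is reduced to the single identity
$$
\varphi_\cU\circ\chi=\varphi_\cU'\qquad\text{as maps }F_\cU'^{\,\ast}\bigl(\bDcrisar(\cL)\vert_\cU\bigr)\lra\bDcrisar(\cL)\vert_\cU
$$
(up to an obvious sign convention in the definition of $\chi$).

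To check this identity I would work on the subcategory $\fU_{n,M}$ attached to a small affine $\cU$, where by Theorem \ref{thm:gluing} we have $\bA_{\rm cris,n,M}\vert_{\fU_{n,M}}\cong\bA_{\rm cris,n,M}^\nabla\vert_{\fU_{n,M}}\langle X_1,\ldots,X_d\rangle$ with $X_i=1\tensor T_i-\tT_i\tensor 1$ and $\nabla\bigl(X_i^{[m]}\bigr)=X_i^{[m-1]}\tensor dT_i$. Both $\varphi_\cU$ and $\varphi_\cU'$ restrict to the same, parameter--independent Frobenius on $\bA_{\rm cris,n,M}^\nabla\vert_{\fU_{n,M}}$ by Proposition \ref{lemma:Acrisnnabla}(3), while on the polynomial generators Corollary \ref{cor:extensionofFrobenius}(2) gives $\varphi_\cU(X_i)=1\tensor T_i^p-\tT_i^p\tensor 1$ and, by the same computation, $\varphi_\cU'(X_i)=1\tensor F_\cU'(T_i)-\tT_i^p\tensor 1$, so that $\varphi_\cU(X_i)-\varphi_\cU'(X_i)=1\tensor\bigl(T_i^p-F_\cU'(T_i)\bigr)\in p\bA_{\rm cris,n,M}$ admits divided powers. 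A direct divided--power computation then verifies $\varphi_\cU\circ\chi=\varphi_\cU'$ on the generators $X_i^{[m]}$, hence on all of $\bA_{\rm cris,n,M}\vert_{\fU_{n,M}}$, and one transports this to $\bDcrisar(\cL)\vert_\cU$ exactly as horizontality of $\varphi_\cU$ was deduced in Proposition \ref{prop:deRhamcomplex}(vi) and Proposition \ref{prop:FrobDcriar}(1), using that $\bDcrisar(\cL)\vert_\cU$ is obtained from $\cL\tensor_{\Z_p}\bB_{\rm cris,M}\vert_\cU$ by applying $v_{M,\ast}$, i.e.\ by taking $\cG_{\cU,M}$--invariants of localizations. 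The hard part will be precisely this last bookkeeping: matching the ``algebraic'' Frobenius, built from Witt vectors and divided--power envelopes, against the Taylor--series comparison of crystalline theory, while keeping track of the $\cG_{\cU,M}$--equivariance and of the passage to $v_{M,\ast}$; everything else is a formal consequence of the results already assembled above.
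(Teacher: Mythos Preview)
Your approach is correct and coincides with the paper's in its core structure: both reduce the claim to the identity $\varphi_\cU'=\varphi_\cU\circ\chi$, where $\chi$ is the standard Taylor--series comparison $F_\cU'^{\,\ast}(\bD)\xrightarrow{\sim}F_\cU^{\ast}(\bD)$ arising from the crystal structure on $\bD:=\bDcrisar(\cL)\vert_\cU$.

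The execution differs slightly. The paper works directly on the level of $D=\bD(\cU_K)$: it writes out $F^\ast(\epsilon)$ explicitly as $m\mapsto\sum_{\underline{n}}\bigl(\prod_i N_i^{n_i}\bigr)(m)\otimes\bigl(F_\cU'(T_i)-F_\cU(T_i)\bigr)^{[n_i]}$ and then observes that $\varphi_\cU\otimes 1\circ F^\ast(\epsilon)$ matches the formula for $\varphi_\cU'$ already computed in \cite[Prop.~7.2.3]{brinon}. You instead propose to lift the verification to $\bA_{\rm cris,n,M}\vert_{\fU_{n,M}}$ via the explicit presentation of Theorem~\ref{thm:gluing}, check the identity on the generators $X_i^{[m]}$, and then push down through $v_{M,\ast}$. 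This is a valid route---it is essentially what underlies Brinon's computation---but it is more laborious than simply invoking his result, and the ``bookkeeping'' you flag (tracking $\cG_{\cU,M}$--equivariance through $v_{M,\ast}$) is avoided entirely in the paper's argument since the identity is checked after taking invariants, directly on the coherent module $D$.
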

\begin{proof} Let $F_\cU$ and~$F_\cU'$ be the Frobenii on~$R_\cU$
defined by the two choices of parameters. Let $p_1$, $p_2\colon
\cU\times_{\cO_K} \cU \to \cU$ be the two projections.   The
convergent connection~$\nabla_\cL$
on~$\bD:=\bDcrisar\bigl(\cL\bigr)\vert_\cU$ defines an isomorphism
of $\cO_{\cU_K}$--modules $\epsilon\colon p_2^\ast\bigl(\bD\bigr)
\lra p_1^\ast\bigl(\bD\bigr)$ on the tube of the
diagonal $\cU \to \cU\times \cU$ defining the structure of
isocrystal on~$\bD$. Consider $F=(F_\cU,F_\cU')\colon \cU_K \to
\cU_K\times \cU_K$. Then $F^\ast(\epsilon)$ induces an
isomorphism $F_\cU^{'\ast}\bigl(\bD\bigr) \lra
F_\cU^\ast\bigl(\bD\bigr)$. The claim amounts to prove that
$\varphi_\cU'\tensor 1 =\varphi_\cU\tensor 1 \circ
F^\ast(\varepsilon) $. Since~$\bD$ is coherent it suffices to
verify this on $\cU_K$--sections. Write $D:=\bD(\cU_K)$. The
map~$\epsilon$ on~$\cU$ is the map $D\ni m \mapsto
\sum_{\underline{n}\in\N^d} \left( \prod_{i=1}^d N_i^{n_i}
\right)(m) \tensor \bigl(1\tensor T_i-T_i\tensor 1\bigr)^{[n_i]} $
where~$N_i$ is the endomorphism of~$D$ given by~$\nabla_\cL \circ
1\tensor \frac{\partial}{\partial T_i}$ see
\cite[Pf.~Thm.~4.12]{berthelot_ogus}. Thus $F^\ast(\epsilon)$ is
the map sending $m\tensor 1 \in D\tensor_{R_\cU}^{F_\cU'} R_\cU$
to $ \sum_{\underline{n}\in\N^d} \left( \prod_{i=1}^d N_i^{n_i}
\right)(m) \tensor \bigl(F_\cU'(T_i)-F_\cU(T_i)\bigr)^{[n_i]}$.
Eventually $\varphi_\cU \tensor 1 \circ F^\ast(\epsilon)$ is the
map $m \mapsto \sum_{\underline{n}\in\N^d}\varphi_\cU\left(
\prod_{i=1}^d N_i^{n_i} \right)(m) \tensor
\bigl(F_\cU'(T_i)-F_\cU(T_i)\bigr)^{[n_i]}$. This is the
expression for~$\varphi_\cU'$ computed
in~\cite[Prop.~7.2.3]{brinon}.
\end{proof}

In particular the $F$--isocrystals defined by~$\bigl(\bDcrisar\bigl(\cL\bigr)\vert_\cU,\varphi_\cU\bigr)$ glue for different choices of~$\cU$'s and parameters  and
define a convergent $F$--isocrystal $\bigl(\bDcrisar\bigl(\cL\bigr),\nabla_\cL,\varphi_{\cL,M}\bigr)$. Denote by $\Isoc(X_\F/\Mun)$ the category of filtered
convergent $F$--isocrystals. It is a tensor category and it is abelian if we consider only convergent $F$--isocrystals (forgetting the filtrations); see
\cite[Rmk.~2.3.3(iii)\&\S2.3.7]{berthelot}. For every~$n\in\Z$ define ${\bf 1}(n)$ to be the isocrystal $\cO_{X_{\Mun}}$ with the connection defined by the usual
derivation, Frobenius given by $p^{-n}$ times the Frobenius on $\cO_{X_{\Mun}}$ and filtration which is $0$ for $r>n$ and is $\cO_{X_{\Mun}}$ for $r\leq n$. Given a
convergent filtered $F$--isocrystal $\cE$ we put $\cE(n):=\cE\tensor {\bf 1}(n)$ and we call it the $n$--th Tate twist of $\cE$. We get a functor
$$\bDcrisar\colon \Sh(X_M^{\rm et})_{\Q_p}^{\rm cris}\lra
\Isoc(X_\F/\Mun)$$ given by $$ \cL\mapsto
\bigl(\bDcrisar\bigl(\cL\bigr),\nabla_\cL,\{\Fil^r
\bDcrisar\bigl(\cL\bigr)\},\varphi_{\cL,M}\bigr).
$$Define $\Isoc(X_\F/\Mun)^{\rm adm}$, the
category of {\it admissible filtered convergent $F$--isocrystals},
to be the essential image of~$\bDcrisar$.

Let~$\underline{\cE}:=\bigl((\cE,\nabla),\{\Fil^r \cE\}_{r\in
\Z},\Phi\bigr)$ be a filtered convergent $F$--isocrystal on~$X_\F$
relative to~$\Mun$. Due to~\cite[Thm.~2.4.2]{berthelot} there
exists an $\cO_X\tensor_{\cO_K}\OMun$--module
$\bigl(\cM,\nabla,\Phi\bigr)$ with integrable and nilpotent
connection and non--degenerate Frobenius such that~$(\cM^{\rm
rig},\nabla^{\rm rig},\Phi^{\rm rig})$ is the Tate
twist~$(\cE(n),\nabla,\Phi(n)\bigr)$ for some~$n\in\N$. By
loc.~cit. such crystal is unique up to isogeny and up to Tate twist. Define
$$\bV_{\rm cris}^{\rm ar}(\underline{\cE}):=\Fil^0\left(v_{M}^\ast(\cM)(-n)
\tensor_{\cO_{\fX_M}^{\rm un}\tensor_{\cO_K}\OMun} \bA_{\rm cris,K}\right)^{\nabla=0,\Phi=1}\in {\rm Ind}\left(\Sh(\fX_M)^\N\right).$$ Recall that we have a fully
faithful functor $u_{X,M,\ast}\colon \Sh(X_M^{\rm et})_{\Q_p}\lra {\rm Ind}\left(\Sh(\fX_M)^\N\right)$ and we identify $\Sh(X_M^{\rm et})_{\Q_p}$ with its essential
image.

\begin{theorem}\label{thm:crisistannakian} The following
hold:\smallskip

1) the sub-category $\Sh(X_M^{\rm et})^{\rm cris}_{\Q_p}$ of
$\Sh(X_M^{\rm et})^\N$ is an abelian tensor sub-category, closed
under Tate twists, duals and tensor products;\smallskip

2) the sub-category ${\rm Isoc}(X_\F/\Mun)^{\rm adm}$ of admissible
filtered convergent $F$--isocrystals of  ${\rm Isoc}(X_\F/\Mun)$ is an
abelian tensor sub-category, closed under Tate twists, duals and
tensor products.\smallskip

3) the functor $\bDcrisar$ is an exact functor of abelian tensor
categories, it is fully faithful and commutes with duals and Tate
twists;\smallskip

4) the functor $\bVcrisarQ$ factors via $\Sh(X_M^{\rm et})^{\rm cris}_{\Q_p}$ and $\bVcrisarQ\circ \bDcrisar$ is equivalent to the identity. In particular
$\bDcrisar$ defines an equivalence of categories $$\Sh(X_M^{\rm et})_{\Q_p}^{\rm cris} \cong {\rm Isoc}(X_\F/\Mun)^{\rm adm}.$$
\end{theorem}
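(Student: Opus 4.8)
The plan is to prove Theorem \ref{thm:crisistannakian} by reducing each assertion to statements about crystalline $p$--adic representations of the fundamental groups $\cG_{\cU_M}$ of small affines, where the relative $p$--adic Hodge theory of \cite{brinon} and the $F$--isocrystal formalism of \cite{berthelot} are available, and then gluing. Throughout I will use freely: Proposition \ref{prop:equivcris} (crystallinity is local and detected on the $V_\cU(\cL)$), Proposition \ref{prop:Dacrisislocallyfree} and Lemma \ref{lemma:Dcriscoherent} (coherence and projectivity of $\bDcrisar(\cL)$), Lemma \ref{lemma:gradediso} and Proposition \ref{prop:connFilDcriar} (the comparison $\alpha_{\rm cris,\cL}$ respects connections and filtrations, with locally free graded pieces), Proposition \ref{prop:FrobDcriar} and Lemma \ref{lemma:changeofFrobenius} (the Frobenius structure glues to a convergent $F$--isocrystal), and Lemma \ref{lemma:isoinBcris} (criterion for isomorphisms in $\Mod(\fX_M)_{\bB_{\rm cris}}$).

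\textbf{Proof of (1) and (2).} First I would observe that $\Sh(X_M^{\rm et})_{\Q_p}$ is already an abelian tensor category closed under Tate twists, duals and tensor products, so it suffices to check that these operations preserve crystallinity. By Proposition \ref{prop:equivcris} this can be tested on each $V_{\cU}(\cL)$ for $\cU$ a small affine, and the category of crystalline representations of $\cG_{\cU_M}$ in the sense of \cite{brinon} is closed under these operations and under sub/quotients (an elementary consequence of the faithful flatness $R_\cU\tensor_{\cO_K}\Mun\to B_{\rm cris}(\Rbar_\cU)$, \cite[Thm.~6.3.8]{brinon}, together with the fact that $D_{\rm cris}$ commutes with $\tensor$, duals and twists and that a sub-representation of a crystalline representation whose $D_{\rm cris}$ has the right rank is again crystalline); the compatibilities $\bDcrisar(\cL\tensor\cM)\cong\bDcrisar(\cL)\tensor\bDcrisar(\cM)$, $\bDcrisar(\cL^\vee)\cong\bDcrisar(\cL)^\vee$, $\bDcrisar(\cL(n))\cong\bDcrisar(\cL)(n)$ follow by localizing and invoking the corresponding statements for $D_{\rm cris}$, noting that the filtrations and Frobenii match because of Propositions \ref{prop:filtAcrisnabla} and \ref{prop:FrobDcriar}. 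For (2): exactness (in the abelian category of convergent $F$--isocrystals, forgetting filtrations, \cite[\S2.3.7]{berthelot}) of $\bDcrisar$ will be proved as part of (3), and then $\Isoc(X_\F/\Mun)^{\rm adm}$ — being the essential image of an exact, fully faithful tensor functor that commutes with duals and twists — is automatically an abelian tensor subcategory closed under those operations.

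\textbf{Proof of (3).} Exactness: given a short exact sequence $0\to\cL'\to\cL\to\cL''\to0$ of crystalline sheaves, left-exactness of $v_{M,\ast}^{\rm cont}$ gives left-exactness of $\bDcrisar$; for right-exactness I would localize at a small affine $\cU$, where $\bDcrisar(\cL)(\cU)=D_{\rm cris}(V_\cU(\cL))$ and exactness of $D_{\rm cris}$ on crystalline representations of $\cG_{\cU_M}$ is \cite[Prop.~8.2.2 or 8.3.x]{brinon}; the connection and Frobenius are compatible with these identifications by Propositions \ref{prop:FrobDcriar} and \ref{prop:connFilDcriar}, and the filtration is strictly compatible by Lemma \ref{lemma:gradediso}. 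Compatibility with $\tensor$, duals and Tate twists was noted above. Full faithfulness is the crux: for crystalline $\cL,\cM$ one has $\Hom(\cL,\cM)=H^0(X_M^{\rm et},\uHom(\cL,\cM))$ and $\uHom(\cL,\cM)$ is again crystalline with $\bDcrisar(\uHom(\cL,\cM))=\uHom_{\rm isoc}(\bDcrisar(\cL),\bDcrisar(\cM))$; so it suffices to show that for any crystalline $\cL$ the natural map $H^0(X_M^{\rm et},\cL)\to\Hom_{\Isoc}({\bf 1},\bDcrisar(\cL))=H^0(X_\F,\bDcrisar(\cL))$ (global horizontal Frobenius-fixed sections in $\Fil^0$) is a bijection. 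This I would deduce from the fundamental exact sequence \eqref{display:fundamentalexactdiagram}: applying $v_{M,\ast}$ (equivalently, taking global sections on $\fX_M$) to $0\to\Q_p\to\Fil^0\bB^\nabla_{\rm cris}\to\bB^\nabla_{\rm cris}\to0$ tensored with $\cL$, and using $\bDcrisar(\cL)\tensor_{(\cO_X\tensor\OMun)}\bB_{\rm cris,M}\cong\cL\tensor\bB_{\rm cris,M}$ together with Proposition \ref{prop:connFilDcriar}(3)(4) to rewrite the $v_{M,\ast}$ of the middle and right terms as the de Rham/Frobenius data of the isocrystal $\bDcrisar(\cL)$, identifies $H^0(X_M^{\rm et},\cL)\tensor\Q_p$ with $H^0(X_\F,\bDcrisar(\cL))$.

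\textbf{Proof of (4) and the obstacle.} For the converse functor, given an admissible $\underline{\cE}=\bDcrisar(\cL)$ I would compute $\bVcrisarQ(\underline{\cE})$ by localizing: at a small affine $\cU$, $\bVcrisarQ(\underline{\cE})(\Rbar_\cU)=\Fil^0\big(D_{\rm cris}(V_\cU(\cL))\tensor_{R_\cU\tensor\Mun}B_{\rm cris}(\Rbar_\cU)\big)^{\nabla=0,\varphi=1}$, and since $\cL$ is crystalline the comparison isomorphism identifies this with $\Fil^0\big(V_\cU(\cL)\tensor B_{\rm cris}(\Rbar_\cU)\big)^{\nabla=0,\varphi=1}=V_\cU(\cL)$ (the weak admissibility / $B_{\rm cris}$-admissibility statement, \cite[\S8]{brinon}, noting $\nabla$ kills $V_\cU(\cL)$ and $\varphi=1$, $\Fil^0$ cut out $V_\cU(\cL)$ from $V_\cU(\cL)\tensor B_{\rm cris}$). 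These identifications are $\cG_{\cU_M}$-equivariant and glue, giving $\bVcrisarQ(\bDcrisar(\cL))\cong\cL$ in $\Sh(X_M^{\rm et})_{\Q_p}$; in particular $\bVcrisarQ$ lands in the crystalline subcategory and is a quasi-inverse to $\bDcrisar$ on essential images, yielding the asserted equivalence $\Sh(X_M^{\rm et})^{\rm cris}_{\Q_p}\cong\Isoc(X_\F/\Mun)^{\rm adm}$. \emph{The hard part} is full faithfulness in (3) — specifically, rerunning the fundamental diagram argument with coefficients in $\cL\tensor\bDcrisar(\cL)^{\vee}$-type sheaves and keeping precise track of the passage from the exact sequence of ind-sheaves on $\fX_M$ to an honest exact sequence of cohomology, since $v_{M,\ast}$ is only left exact and one must control the failure of exactness using the local computations of Lemma \ref{lemma:isoinBcris}(3) (kernels and cokernels killed by a power of $t$, hence isomorphisms after inverting $t$, i.e.\ in the $\bB_{\rm cris}$-world); I expect this bookkeeping, rather than any new idea, to be the main technical burden.
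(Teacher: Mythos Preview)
Your treatment of (1), (2), and (4) is essentially the paper's approach: reduce to small affines via Proposition~\ref{prop:equivcris}, identify $\bDcrisar(\cL)(\cU)$ with $D_{\rm cris}(V_\cU(\cL))$, and invoke Brinon's results (the paper cites \cite[Thm.~8.4.2]{brinon}) for the tensor/dual/twist compatibilities and for the recovery formula
\[
V_\cU(\cL)\bigl[p^{-1}\bigr]=\Fil^0\Bigl(D_{\rm cris}(V_\cU(\cL))\tensor_{R_\cU\otimes\Mun} B_{\rm cris}(\Rbar_\cU)\Bigr)^{\nabla=0,\,\varphi=1}=\bVcrisarQ\bigl(\bDcrisar(\cL)\bigr)(\Rbar_\cU)\bigl[p^{-1}\bigr].
\]

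The difference is in full faithfulness. You propose to prove it separately in (3) via the fundamental exact sequence~(\ref{display:fundamentalexactdiagram}), and you flag the resulting bookkeeping (failure of exactness of $v_{M,\ast}$, powers of $t$) as the ``hard part''. The paper avoids this entirely: it derives full faithfulness \emph{from} (4). Once you know $\bVcrisarQ(\bDcrisar(\cL))(\Rbar_\cU)[p^{-1}]\cong V_\cU(\cL)[p^{-1}]$ for every small affine $\cU$, functorially and $\cG_{\cU_M}$-equivariantly, you have recovered $\cL$ up to isogeny from $\bDcrisar(\cL)$; hence $\bDcrisar$ is fully faithful, and since it is essentially surjective onto $\Isoc(X_\F/\Mun)^{\rm adm}$ by definition, it is an equivalence. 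So the step you call hardest simply disappears: your own argument for (4) already contains the proof of full faithfulness, and the detour through the fundamental diagram is unnecessary.

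Your fundamental-exact-sequence route is not obviously wrong for the $H^0$ statement, but note that (\ref{display:fundamentalexactdiagram}) is established in the paper only over $\fXKbar$ and in the formal setting, and the passage from $v_{\Kbar,\ast}(\cL\tensor\Fil^0\bB^\nabla_{\rm cris,\Kbar})$ to the filtered horizontal data of $\bDcrisar(\cL)$ would itself require the de Rham resolution of Proposition~\ref{prop:deRhamcomplex} together with the acyclicity results of \S\ref{sec:cohocryssheaves}, which come \emph{after} this theorem in the paper's logic. The paper's ordering---local recovery formula first, cohomological machinery later---is cleaner and avoids any circularity.
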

\begin{proof} Claim~(2) follows from~(1)\&(3). By construction $\bDcrisar$
is essentially surjective. To verify~(1) and the rest of~(3) one reduces
to the case that~$X=\cU$ is
small affine. To verify that $\Sh(X_M^{\rm et})^{\rm cris}_{\Q_p}$ is closed
under tensor product, internal Hom and Tate twists and
that~$\bDcrisar$ commutes with tensor products, internal~$\Hom$ and
Tate twists one reduces to the case that~$X=\cU$ is a small
affine. Using that~$\bDcrisar(\cL)(\cU)=D_{\rm cris}(V_\cU(\cL))$
and that~$\bDcrisar(\cL)$ is coherent, these claims follow from
analogous statements for~$D_{\rm cris}(V_\cU(\cL))$ proven
in~\cite[Thm.~8.4.2]{brinon}.

It follows from loc.~cit.~that if~$\cL$ is crystalline and~$\cU$
is a small affine as in~\ref{prop:equivcris} one can recover the
$\Q_p$-representation~$V_\cU(\cL)$ from~$D_{\rm cris}
\bigl(V_\cU(\cL)\bigr)$ by the formula
$$V_\cU(\cL)=\Fil^0\left(D_{\rm cris}
\bigl(V_\cU(\cL)\bigr)\tensor_{R_\cU\otimes_{\cO_K} \cO_{\Mun}} B_{\rm cris}(\Rbar_\cU) \right)^{\nabla=0,\varphi=1}.$$This is equal to
$\bVcrisarQ\bigl(\bDcrisar(\cL)\bigr)(\Rbar_\cU)[p^{-1}]$ thanks to~\ref{lemma:loctensorAcris}. This allows to recover $\cL$ up to isogeny. In particular $\bDcrisar
$  is fully faithful. Being essentially surjective it defines an equivalence of categories.
\end{proof}

It is a difficult question to characterize ${\rm Isoc}(X_\F/\Mun)^{\rm adm}$ in  ${\rm Isoc}(X_\F/\Mun)$. If $X=\Spf(\cO_K)$, a satisfactory answer is provided in
\cite{colmez_fontaine} in terms of the so called weakly admissible modules. In more generality a complete answer exists for admissible filtered convergent
$F$--isocrystals of rank~$1$ thanks to \cite{brinon}. Let $\underline{\cE}:=\bigl((\cE,\nabla),\{\Fil^r \cE\}_{r\in \Z},\Phi\bigr)$ be a filtered convergent
$F$--isocrystal on~$X_\F$ relative to~$\Mun$ of rank~$1$ i.~e., $\cE$ is a locally free $\cO_{X_K^\rig}$--module of rank~$1$. Define $t_H(\cE)$ to be the locally
constant function on $X\otimes_{\cO_K} \cO_{\Mun}$ locally defined as the largest integer such that $\Fil^r\cE=\cE$.

Let~$\cU=\Spf(R_\cU)$ be a small \'etale open  affine of $X$ such that $\cE\vert_\cU=\cO_\cU\otimes_{\cO_K} \Mun e$. Since~$\cE$ is an isocrystal we have
$\Phi(e)=a_\cU e$ with~$a_\cU\in R_\cU\otimes_{\cO_K} \OMun\bigl[p^{-1}\bigr]^\times$. For every connected component $\cU_i=\Spec(R_{\cU_i})$ of
$\Spec(R_\cU\otimes_{\cO_K} \OMun )$, the element~$p$ generates a prime ideal of~$R_{\cU_i}$ and we must have~$a_\cU=p^{n_i} \alpha$ with~$\alpha_{\cU_i}$ a unit
in~$R_{\cU_i}$. Then the integer $t_N(\cE)(\cU_i):=n_i$ does not depend on the choice of~$e$ and is locally constant on $X\otimes_{\cO_K} \cO_{\Mun}$. Then we have.

\begin{proposition}\label{prop:admissibleforrank1}
A rank~$1$, filtered convergent $F$--isocrystal $\underline{\cE}$
is admissible if and only if is locally free as
$\cO_X\otimes_{\cO_K} \Mun$ for the \'etale topology on $X$ and we
have $t_H(\cE)=t_N(\cE)$.
\end{proposition}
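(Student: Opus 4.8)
The plan is to reduce the statement to a purely local computation over a small affine and then invoke the rank~$1$ comparison theory already available in \cite{brinon}. First I would observe that admissibility of $\underline{\cE}$ is equivalent, by \ref{thm:crisistannakian}(2)\&(4), to the existence of a crystalline $\Q_p$-adic sheaf $\cL$ of rank~$1$ on $X_M^{\rm et}$ with $\bDcrisar(\cL)\cong \underline{\cE}$, and that by \ref{prop:equivcris} this can be tested locally: it suffices to work on a covering by small affine opens $\cU=\Spf(R_\cU)$. On each such $\cU$ the data $\underline{\cE}\vert_\cU$ is, after choosing parameters $T_1,\ldots,T_d$ and the associated Frobenius $F_\cU$, a filtered $\varphi$-module of rank~$1$ over $R_\cU\otimes_{\cO_K}\Mun$ with integrable connection; the content of the claim is that such an object lies in the essential image of $D_{\rm cris}$ (equivalently is ``weakly admissible'' in the relative rank~$1$ sense) precisely when $t_H(\cE)=t_N(\cE)$ on each connected component of $\Spec(R_\cU\otimes_{\cO_K}\cO_{\Mun})$.

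Next I would make the two sides of the numerical condition explicit and match them with the invariants used in \cite{brinon}. The Hodge number $t_H(\cE)(\cU_i)$ is by definition the jump of the filtration $\Fil^\bullet\cE$, and the Newton-type number $t_N(\cE)(\cU_i)=n_i$ is read off from $\Phi(e)=p^{n_i}\alpha_{\cU_i}e$ with $\alpha_{\cU_i}$ a unit in $R_{\cU_i}$; since the connection on a rank~$1$ isocrystal over a small affine is trivializable (the line bundle is $\cO_\cU\otimes_{\cO_K}\Mun e$), the horizontality constraint \ref{prop:FrobDcriar}(1) forces $\alpha_{\cU_i}$ to be, up to a unit, a power of the Frobenius-trivializing unit, so the local classification reduces to comparing a filtered line with a Frobenius slope. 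Then I would invoke the rank~$1$ case of Brinon's relative $p$-adic Hodge theory (cited as \cite{brinon}, notably the results in \S8 on weak admissibility in the relative setting) to conclude that $V_\cU(\cE):=\Fil^0\bigl(\cE(\cU)\otimes_{R_\cU\otimes_{\cO_K}\Mun} B_{\rm cris}(\Rbar_\cU)\bigr)^{\nabla=0,\varphi=1}$ is a rank~$1$ crystalline representation of $\cG_{\cU_M}$ with $D_{\rm cris}$ giving back $\underline{\cE}\vert_\cU$ if and only if $t_H=t_N$; conversely, if $\underline{\cE}$ is admissible then on each $\cU$ the associated $V_\cU$ is crystalline of rank~$1$ and the equality $t_H=t_N$ follows from the same local classification, together with the fact that being locally free as $\cO_X\otimes_{\cO_K}\Mun$-module is automatic from \ref{lemma:Dcriscoherent} applied to rank~$1$.

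Finally I would glue: using \ref{prop:equivcris}(3), the locally defined crystalline sheaves $V_{\cU}$ associated to the $\underline{\cE}\vert_\cU$ agree on overlaps (one checks compatibility with the localization functors $\cF\mapsto\cF(\Rbar_\cU)$ and uses that $D_{\rm cris}$ on overlaps is determined by restriction, as in the proof of \ref{prop:equivcris}) and hence glue to a crystalline $\Q_p$-adic sheaf $\cL$ on $X_M^{\rm et}$ with $\bDcrisar(\cL)\cong\underline{\cE}$, i.e. $\underline{\cE}$ is admissible. The main obstacle I anticipate is the \emph{gluing of the local $G$-representations}: one must verify that the rank~$1$ crystalline representations produced locally from $\underline{\cE}$ are genuinely compatible for varying $\cU$ (so that they descend to an \'etale sheaf on $X_M^{\rm et}$), and that this compatibility is visible through the comparison with $D_{\rm cris}$; the local $t_H=t_N$ input is essentially formal once this is set up, but making the gluing precise requires care with the functoriality of $\bDcrisar$ and $\bVcrisarQ$ already established in \ref{thm:crisistannakian}, and with the fact that the condition $t_H(\cE)=t_N(\cE)$ is a \emph{locally constant} statement whose local pieces must be shown to be consistent across connected components meeting in $X\otimes_{\cO_K}\cO_{\Mun}$.
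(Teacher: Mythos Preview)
Your approach is essentially the paper's: the paper's proof is the single line ``It follows from \ref{prop:equivcris} and \cite[Prop.~8.6.2]{brinon}'', i.e.\ reduce admissibility to a local condition on small affines via \ref{prop:equivcris} and then invoke Brinon's rank~$1$ criterion. Your elaboration is correct, though your main anticipated obstacle (gluing the local representations) is not really one: the functor $\bVcrisarQ$ is already defined globally as an object of ${\rm Ind}\bigl(\Sh(\fX_M)^\N\bigr)$, so no gluing is needed---once each localization $\bVcrisarQ(\underline{\cE})(\Rbar_\cU)$ is a rank~$1$ crystalline representation with $D_{\rm cris}$ returning $\cE(\cU)$, the global sheaf is locally constant and \ref{prop:equivcris}(3) applies directly.
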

\begin{proof} It follows from \ref{prop:equivcris} and
\cite[Prop.~8.6.2]{brinon}.
\end{proof}

Finally we compare our notion of a crystalline \'etale sheaf with the notion of ``associated sheaves" given in \cite[p.~67]{faltingscrystalline}.
Let~$\underline{\cE}$ be a filtered convergent $F$--isocrystal on~$X_\F$ relative to~$\Mun$. As explained above we may assume that up to Tate twist it is the
generic fiber of a $\cO_X\tensor_{\cO_K}\OMun$--module $\cM$ endowed with an integrable and nilpotent connection and a non--degenerate Frobenius. We identify $\cM$
with the associated crystal on~$X_\F/ \cO_{\Mun}$. Given a small affine $\cU:=\Spf(R_\cU)$ of~$X$ we write $\cE\bigl(B_{\rm cris}^\nabla(\Rbar_\cU)\bigr)$ for
$\cM\bigl(A_{\rm cris}^\nabla(\Rbar_\cU)\bigr)\tensor_{A_{\rm cris}} B_{\rm cris}$ where $\cM\bigl(A_{\rm cris}^\nabla(\Rbar_\cU)\bigr)$ is the value of the
crystal~$\cM$ on the PD--thickening $\theta\colon A_{\rm cris}^\nabla(\Rbar_\cU) \to \widehat{\Rbar}_\cU$. Note that given a morphism $\sigma\colon R_\cU
\otimes_{\cO_K} \cO_{\Mun}\to A_{\rm cris}^\nabla(\Rbar_\cU)$ as $\cO_{\Mun}$--algebras inducing the identity on~$R_\cU\otimes_{\cO_K} \cO_{\Mun}$ via the
projection $A_{\rm cris}^\nabla(\Rbar_\cU) \to \widehat{\Rbar}_\cU$ (for example the one sending $T_i\mapsto \tT_i$) we get that $\cE\bigl(B_{\rm
cris}^\nabla(\Rbar_\cU)\bigr)\cong \cM(R_\cU)\tensor_{R_\cU\otimes_{\cO_K} \cO_{\Mun}} B_{\rm cris}^\nabla(\Rbar_\cU)\cong \cE(R_\cU)\tensor_{R_\cU\otimes_{\cO_K}
\cO_{\Mun}} B_{\rm cris}^\nabla(\Rbar_\cU)$. Since~$\cM$ is a crystal the first identification does not depend on the choice of~$\sigma$. More precisely, given two
sections $\sigma$ and $\sigma'$ there is a canonical isomorphism between $\cM(R_\cU)\tensor_{R_\cU\otimes_{\cO_K} \cO_{\Mun}}^\sigma B_{\rm cris}^\nabla(\Rbar_\cU)$
and $\cM(R_\cU)\tensor_{R_\cU\otimes_{\cO_K} \cO_{\Mun}}^{\sigma'} B_{\rm cris}^\nabla(\Rbar_\cU)$ whose Taylor expansion is defined using the connection on~$\cM$.
In particular $\cE\bigl(B_{\rm cris}^\nabla(\Rbar_\cU)\bigr)$ is endowed with an action of $\cG_{\cU,M}$ and Frobenius and since the connection on~$\cE$ satisfies
Griffith's transversality the filtration on $\cE\bigl(B_{\rm cris}^\nabla(\Rbar_\cU)\bigr)$ induced from the filtration on~$\cE(R_\cU)$ does not depend on~$\sigma$.
Let $\cL$ be a $\Q_p$--adic \'etale sheaf on~$X_M$. Following Faltings one says that~$\cL$ and~$\cE$ are {\it associated} if there is an isomorphism
$$\rho_\cU\colon \cE\bigl(B_{\rm cris}^\nabla(\Rbar_\cU)\bigr)\cong
V_\cU(\cL)\tensor_{\Z_p} B_{\rm cris}^\nabla(\Rbar_\cU)$$of
$B_{\rm cris}^\nabla(\Rbar_\cU)$--modules commuting with
filtrations action of $\cG_{\cU,M}$ and Frobenius for every small
affine $\cU$ which is functorial in~$\cU$. Recall that
$V_\cU(\cL)$ is the $\cG_{\cU,M}$--representation
$\cL(\Rbar_\cU)$. Then we have.

\begin{lemma}\label{lemma:associated} The sheaves $\underline{\cE}$ and $\cL$ are
associated in Faltings' sense if and only if\/ $\cL$ is
crystalline and $\bDcrisar(\cL)=\underline{\cE}$.
\end{lemma}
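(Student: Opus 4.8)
The plan is to prove the equivalence by unwinding both sides over small affines and matching them through the localization results already established, in particular Proposition~\ref{prop:crislocalization}, Proposition~\ref{prop:equivcris} and Proposition~\ref{prop:connFilDcriar}. First I would fix a small affine $\cU=\Spf(R_\cU)$ with Galois group $\cG_{\cU,M}$ and recall that, by Proposition~\ref{prop:crislocalization} and Lemma~\ref{lemma:propbBcris}(4), the localization of $\bB_{\rm cris}^\nabla$ at $\cU$ is $B_{\rm cris}^\nabla(\Rbar_\cU)$, compatibly with filtration and Frobenius. The key observation is that for a filtered convergent $F$-isocrystal $\underline{\cE}$, the object $\cE\bigl(B_{\rm cris}^\nabla(\Rbar_\cU)\bigr)$ defined just before the lemma is precisely the localization at $\cU$ of $v_{X,M}^\ast(\cM)\tensor_{\cO_{\fX_M}^{\rm un}\tensor_{\cO_K}\OMun} \bB_{\rm cris}^\nabla$ (up to the Tate twist relating $\cM$ and $\cE$): indeed the crystal property of $\cM$ says exactly that its value on the PD-thickening $\theta\colon A_{\rm cris}^\nabla(\Rbar_\cU)\to \widehat{\Rbar}_\cU$ is $\cM(R_\cU)\tensor_{R_\cU\otimes_{\cO_K}\OMun}^\sigma A_{\rm cris}^\nabla(\Rbar_\cU)$ with the Taylor-expansion identification for varying sections $\sigma$, and this is the content of the localization of the $v_{X,M}^\ast$-construction together with the connection on $\bB_{\rm cris}^\nabla$. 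One must only note that tensoring with $\bB_{\rm cris}^\nabla$ rather than $\bB_{\rm cris}$ is harmless here because a convergent $F$-isocrystal is horizontal, so the $\bB_{\rm cris}$-version with its connection descends to the horizontal $\bB_{\rm cris}^\nabla$-version; this is exactly the mechanism used in Proposition~\ref{prop:connFilDcriar}(3)--(4).

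Next I would prove the ``only if'' direction. Suppose $\underline{\cE}$ and $\cL$ are associated, i.e.\ for every small affine $\cU$ there is a functorial isomorphism $\rho_\cU\colon \cE\bigl(B_{\rm cris}^\nabla(\Rbar_\cU)\bigr)\cong V_\cU(\cL)\tensor_{\Z_p} B_{\rm cris}^\nabla(\Rbar_\cU)$ compatible with filtration, $\cG_{\cU,M}$-action and Frobenius. Taking $\nabla=0$, $\varphi=1$, $\Fil^0$ invariants and $\cG_{\cU,M}$-invariants on both sides and using the explicit comparison $\cE(R_\cU)\tensor_{R_\cU\otimes_{\cO_K}\OMun} B_{\rm cris}^\nabla(\Rbar_\cU)\cong \cE\bigl(B_{\rm cris}^\nabla(\Rbar_\cU)\bigr)$, Brinon's theory (as already invoked in \ref{prop:equivcris} and \ref{thm:crisistannakian}, via \cite[Thm.~8.4.2]{brinon}) shows $V_\cU(\cL)$ is crystalline with $D_{\rm cris}\bigl(V_\cU(\cL)\bigr)\cong \cE(R_\cU)$ as filtered $F$-modules. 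By Proposition~\ref{prop:equivcris} (the implication (3)$\Rightarrow$(1)) this forces $\cL$ to be crystalline, and comparing with $\bDcrisar(\cL)(\cU)=D_{\rm cris}\bigl(V_\cU(\cL)\bigr)$ and with the connection and Frobenius (Propositions~\ref{prop:connFilDcriar} and~\ref{prop:FrobDcriar}) gives $\bDcrisar(\cL)\cong\underline{\cE}$ as filtered convergent $F$-isocrystals, once one checks the local isomorphisms glue — which they do by functoriality of $\rho_\cU$ in $\cU$.

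Conversely, if $\cL$ is crystalline and $\bDcrisar(\cL)=\underline{\cE}$, then by Theorem~\ref{thm:crisistannakian} and its proof we have $\underline{\cE}(R_\cU)=D_{\rm cris}\bigl(V_\cU(\cL)\bigr)$ as filtered $F$-modules for every small affine $\cU$, and Brinon's comparison isomorphism $D_{\rm cris}\bigl(V_\cU(\cL)\bigr)\tensor_{R_\cU\otimes_{\cO_K}\OMun} B_{\rm cris}^\nabla(\Rbar_\cU)\cong V_\cU(\cL)\tensor_{\Z_p} B_{\rm cris}^\nabla(\Rbar_\cU)$ (valid since crystalline representations in the sense of \cite{brinon} are $B_{\rm cris}^\nabla$-admissible) provides the required $\rho_\cU$, compatibly with filtration, Galois action and Frobenius. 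Functoriality in $\cU$ follows from the functoriality of all the constructions involved (the crystal property of $\cM$ and the uniqueness of the base-change identifications). The main obstacle I anticipate is the careful bookkeeping of the filtration on $\cE\bigl(B_{\rm cris}^\nabla(\Rbar_\cU)\bigr)$: one has to be sure that the filtration coming from the crystal side (via Griffith transversality on $\cE$ and the Taylor expansion) matches the filtration $\Fil^\bullet \bB_{\rm cris}^\nabla$ after localization, which is exactly the statement of Proposition~\ref{prop:connFilDcriar}(3) but needs to be transported through the identification with Faltings' $\cE\bigl(B_{\rm cris}^\nabla(\Rbar_\cU)\bigr)$; the Tate-twist discrepancy between $\cM$ and $\cE$ must also be tracked so that it cancels on both sides.
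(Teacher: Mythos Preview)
Your overall strategy coincides with the paper's: reduce both notions to the local picture over small affines via Proposition~\ref{prop:equivcris}, and match them through Brinon's theory. However, the one genuinely technical step in this lemma is the passage between a comparison isomorphism over $B_{\rm cris}^\nabla(\Rbar_\cU)$ (Faltings' formulation) and one over $B_{\rm cris}(\Rbar_\cU)$ (Brinon's and the paper's formulation of crystallinity), and you treat this as a black box in both directions.

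In the ``if'' direction you assert that ``crystalline representations in the sense of \cite{brinon} are $B_{\rm cris}^\nabla$-admissible'', but this is not a statement you can simply cite: the isomorphism $g_\cU$ from crystallinity lives over $B_{\rm cris}(\Rbar_\cU)$, and the horizontal sections of $\cE(R_\cU)\tensor B_{\rm cris}(\Rbar_\cU)$ for the product connection are \emph{not} $\cE(R_\cU)\tensor B_{\rm cris}^\nabla(\Rbar_\cU)$ unless $\cE$ has trivial connection. The paper resolves this concretely by choosing the section $\sigma_i\colon R_{\cU_i}\otimes_{\cO_K}\OMun\to A_{\rm cris}^\nabla(\Rbar_{\cU_i})$, $T_j\mapsto \tT_j$, which induces a retraction $\widetilde{\sigma}_i$ of $B_{\rm cris}^\nabla(\Rbar_{\cU_i})\hookrightarrow B_{\rm cris}(\Rbar_{\cU_i})$ compatible with filtrations, Frobenius and (twisted) Galois action, and sets $\rho_{\cU_i}:=\widetilde{\sigma}_i^\ast(g_{\cU_i})$. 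In the ``only if'' direction the paper goes the other way: it base-changes $\rho_{\cU_i}$ along $B_{\rm cris}^\nabla(\Rbar_{\cU_i})\subset B_{\rm cris}(\Rbar_{\cU_i})$, using the crystal property of $\cM$ to identify $\cE\bigl(B_{\rm cris}(\Rbar_{\cU_i})\bigr)\cong\cE\bigl(B_{\rm cris}^\nabla(\Rbar_{\cU_i})\bigr)\tensor_{B_{\rm cris}^\nabla}B_{\rm cris}$, and only then takes $\cG_{\cU_i,M}$-invariants to read off $\bDcrisar(\cL)(\cU_i)=\cE(\cU_i)$. Your phrase ``taking $\nabla=0$, $\varphi=1$, $\Fil^0$ invariants and $\cG_{\cU,M}$-invariants'' is muddled here: those cut out $V$ from $D\tensor B_{\rm cris}$, not $D$ from $V\tensor B_{\rm cris}^\nabla$; what is actually needed is the base-change to $B_{\rm cris}$ followed by Galois invariants alone.
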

\begin{proof} First of all we remark that to be associated in
Faltings' sense it suffices  that there is a covering of $X$ by small affines $\{\cU_i\}_i$ such that we have an isomorphism $\rho_{\cU_i}$ for every~$i$ and
$\rho_{\cU_i}$ and $\rho_{\cU_j}$ are compatible on $\cU_i\cap \cU_j$. By \ref{prop:equivcris} we have that $\cL$ is crystalline if and only if there is a covering
of $X$ by small affines such that the natural map $g_{\cU_i}\colon \bDcrisar(\cL)(\cU_i)\tensor_{R_{\cU_i}\otimes_{\cO_K} \cO_{\Mun}} B_{\rm
cris}(\Rbar_{\cU_i})\cong V_{\cU_i}(\cL)\tensor_{\Z_p} B_{\rm cris}(\Rbar_{\cU_i})$ is an isomorphism. By construction $g_{\cU_i}$ is an isomorphism of $B_{\rm
cris}(\Rbar_\cU)$--modules and it commutes with filtrations, action of $\cG_{\cU,M}$, Frobenius and connections. For every~$i$ take the section $\sigma_i \colon
R_{\cU_i}\otimes_{\cO_K} \cO_{\Mun} \to A_{\rm cris}^\nabla(\Rbar_{\cU_i})$ sending $T_j \to \tT_j$. It induces the section of the inclusion $A_{\rm
cris}^\nabla(\Rbar_{\cU_i}) \to A_{\rm cris}(\Rbar_{\cU_i})\cong A_{\rm cris}^\nabla(\Rbar_{\cU_i})\langle 1\tensor T_j - \tT_j\tensor 1 \rangle_{j=1,\ldots,d}$
sending $1\tensor T_j - \tT_j\tensor 1\mapsto 0$. Therefore it defines a section $\widetilde{\sigma}_i$ of $B_{\rm cris}^\nabla(\Rbar_{\cU_i}) \to B_{\rm
cris}(\Rbar_{\cU_i})$ compatible with filtrations, Frobenius and $\cG_{\cU,M}$--action (considering on $B_{\rm cris}(\Rbar_{\cU_i})$ the Galois action twisted via
the connection).

If~$\cL$ is crystalline then we get an isomorphism $\rho_{\cU_i}:=\widetilde{\sigma}_i^\ast(g_{\cU_i})$, compatible with all the supplementary structures and $\cL$
and $\bDcrisar(\cL)$ are associated. In the other direction, assume that~$\cL$ and~$\cE$ are associated. Write $\cE\bigl(B_{\rm cris}(\Rbar_{\cU_i})\bigr)$ for
$\cM\bigl(A_{\rm cris}(\Rbar_\cU)\bigr)\tensor_{A_{\rm cris}} B_{\rm cris}$ where $\cM\bigl(A_{\rm cris}(\Rbar_\cU)\bigr)$ is the value of the crystal~$\cM$ on the
PD--thickening $A_{\rm cris}(\Rbar_\cU) \to \widehat{\Rbar}_\cU$. Since~$\cM$ is a crystal by definition \cite[Def.~6.1]{berthelot_ogus} we have canonical
isomorphisms $\cE\bigl(B_{\rm cris}(\Rbar_{\cU_i})\bigr)\cong \cE(\cU)\tensor_{R_\cU\otimes_{\cO_K} \cO_{\Mun}} B_{\rm cris}(\Rbar_{\cU_i})$ and $\cE\bigl(B_{\rm
cris}(\Rbar_{\cU_i})\bigr)\cong \cE\bigl(B_{\rm cris}^\nabla(\Rbar_{\cU_i})\bigr)\tensor_{B_{\rm cris}^\nabla(\Rbar_{\cU_i})} B_{\rm cris}(\Rbar_{\cU_i})$ of
$B_{\rm cris}(\Rbar_{\cU_i})$--modules. Being canonical they commute with the $\cG_{\cU,M}$--action and Frobenius. Since the connection on~$\cE$ satisfies
Griffith's transversality the isomorphisms preserve also the filtrations. Define $ g_{\cU_i}$ to be the extension of scalars of $\rho_{\cU_i}$ using the second
isomorphism. It follows from the first isomorphism that $\bDcrisar(\cL)=\cE$ as filtered convergent Frobenius isocrystal. In particular $\cL$ is crystalline. The
claim follows.
\end{proof}

\subsection{The cohomology of crystalline sheaves.}
\label{sec:cohocryssheaves}

As before we assume that $\cO_K=\WW(k)$ and we fix a finite
extension~$K\subset M$ contained in $\Kbar$.   Let $X$ be a
smooth $p$-adic formal scheme over~$\cO_K$.

\begin{theorem}
\label{thm:formalcomparison} We have  canonical isomorphisms of
$\delta$-functors  from $\Sh(X_M^{\rm et})^{\rm cris}_{\Q_p}$ to
the category of filtered $B_{\rm cris}$-modules endowed with the
action of $G_M$ and Frobenius
$$
{\rm H}^i\bigl(\fXKbar, \bL\otimes\bB_{\rm
cris,\Kbar}^\nabla\bigr) \cong {\rm H}^i_{\rm dR}\bigl(X^{\rm et},
\bD_{\rm cris}^{\rm geo}(\bL)\bigr),
$$for $\cL$ a crystalline $\Q_p$-adic \'etale sheaf.  In fact
for every $r\in \Z$ we have isomorphisms of
$B_{\rm cris}$-modules which are $G_M$-equivariant and compatible for varying $r$'s
and compatible with the previous isomorphism
$$
{\rm H}^i\bigl(\fXKbar, \bL\otimes\Fil^r \bB_{\rm
cris,\Kbar}^\nabla\bigr) \cong  \bH^i\bigl(X^{\rm et},\Fil^{\rm
r-\bullet} \bDcrisgeo(\bL)\otimes_{\cO_X}
\Omega^\bullet_{X/\cO_K}\bigr).
$$
\end{theorem}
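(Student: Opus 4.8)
The plan is to combine the local-global principle for $\fXKbar$-cohomology (Theorem~\ref{thm:coh_loc}) with the two fundamental acyclicity/exactness facts about $\bB_{\rm cris}$ proven earlier: the exact de~Rham resolution of Proposition~\ref{prop:deRhamcomplex}(i) (resp.~its filtered refinement \ref{prop:deRhamcomplex}(iv), extended to $\bB_{\rm cris}$ by \ref{lemma:propbBcris}(2)) and the vanishing $R^iv_{X,M,\ast}\bB_{\rm cris}=0$ for $i\geq 1$ (property d) of the introduction, a consequence of \cite{andreatta_brinonacyclicity}). First I would fix a crystalline sheaf $\cL$ and tensor the exact sequence of \ref{lemma:propbBcris}(2) with $\cL$; since $\cL$ is locally constant and locally free this stays exact, giving an exact complex of sheaves on $\fXKbar$
$$0\lra \cL\tensor \Fil^r\bB_{\rm cris,\Kbar}^\nabla \lra \cL\tensor\Fil^r\bB_{\rm cris,\Kbar} \lra \cL\tensor \Fil^{r-1}\bB_{\rm cris,\Kbar}\tensor_{\cO_X}\Omega^1_{X/\cO_K}\lra \cdots$$
which exhibits $\cL\tensor\Fil^r\bB_{\rm cris,\Kbar}^\nabla$ as the associated total object of the complex $\cL\tensor\Fil^{r-\bullet}\bB_{\rm cris,\Kbar}\tensor_{\cO_X}\Omega^\bullet_{X/\cO_K}$, so ${\rm H}^i(\fXKbar,\cL\tensor\Fil^r\bB_{\rm cris,\Kbar}^\nabla)$ is the hypercohomology of that complex on $\fXKbar$.

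Next I would push this down along $v_{X,\Kbar}$. The key point is that each term $\cL\tensor\Fil^j\bB_{\rm cris,\Kbar}\tensor_{\cO_X}\Omega^i_{X/\cO_K}$ is $v_{X,\Kbar,\ast}$-acyclic: the $\Omega^i$-factor is the pullback of a locally free $\cO_X$-module via $v_{X,\Kbar}^\ast$ so it is harmless, and acyclicity of $\cL\tensor\Fil^j\bB_{\rm cris,\Kbar}$ follows by combining Theorem~\ref{thm:coh_loc} with the computation of the localization $(\cL\tensor\Fil^j\bB_{\rm cris,\Kbar})(\Rbar_\cU)=V_\cU(\cL)\tensor_{\Z_p}\Fil^j B_{\rm cris}(\Rbar_\cU)$ (Lemma~\ref{lemma:loctensorAcris}) together with the vanishing of the higher continuous $\cG_{\cU,\Kbar}$-cohomology of $V_\cU(\cL)\tensor B_{\rm cris}(\Rbar_\cU)$, which is exactly property~d); concretely one uses that $\cL$ is crystalline so, after localizing, $V_\cU(\cL)\tensor B_{\rm cris}(\Rbar_\cU)\cong D_{\rm cris}(V_\cU(\cL))\tensor_{R_\cU} B_{\rm cris}(\Rbar_\cU)$ and the acyclicity of $B_{\rm cris}(\Rbar_\cU)$ (plus the filtration argument of \ref{prop:connFilDcriar}(4)) reduces to the absolute statement. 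Hence the Leray spectral sequence for $v_{X,\Kbar}$ degenerates onto the rows and identifies ${\rm H}^i(\fXKbar,\cL\tensor\Fil^{r-\bullet}\bB_{\rm cris,\Kbar}\tensor\Omega^\bullet)$ with the hypercohomology over $X^{\rm et}$ of $v_{X,\Kbar,\ast}^{\rm cont}(\cL\tensor\Fil^{r-\bullet}\bB_{\rm cris,\Kbar})\tensor_{\cO_X}\Omega^\bullet_{X/\cO_K}$. Finally I would identify this bottom complex with $\Fil^{r-\bullet}\bDcrisgeo(\cL)\tensor_{\cO_X}\Omega^\bullet_{X/\cO_K}$: by definition $\bDcrisgeo(\cL)=v_{\Kbar,\ast}(\cL\tensor\bB_{\rm cris,\Kbar})$ and $\Fil^j\bDcrisgeo(\cL)=v_{\Kbar,\ast}(\cL\tensor\Fil^j\bB_{\rm cris,\Kbar})$, while the connection and filtration match by \ref{prop:connFilDcriar}(2),(3); crystallinity of $\cL$ is what guarantees $\bDcrisgeo(\cL)$ is the (coherent) filtered convergent $F$-isocrystal whose de~Rham complex is the right-hand side, and since $X$ is a formal scheme its de~Rham cohomology computes crystalline cohomology of $\Xbar$, giving the first displayed isomorphism as the special case $r=-\infty$ (using the convention $\Fil^{-\infty}\bB_{\rm cris}^\nabla=\bB_{\rm cris}^\nabla$ of \S\ref{def:bBcris}).

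The functoriality and $\delta$-functor structure come for free from naturality of the Leray spectral sequence and the five lemma on short exact sequences of crystalline sheaves; the $G_M$-action and its compatibility follow from Lemma~\ref{lemma:DcrisarDcrisgeoGK} together with the $G_M$-equivariance of the whole construction on $\fXKbar$ (everything descends from $\fX_M$ via $\beta_{M,\Kbar}$), and the Frobenius compatibility is obtained by running the same argument with the Frobenius-horizontal structures provided by \ref{prop:deRhamcomplex}(vi) and \ref{lemma:propbBcris}(3), gluing over small affines as in Lemma~\ref{lemma:changeofFrobenius}. The main obstacle I expect is the acyclicity input: verifying carefully that $R^iv_{X,\Kbar,\ast}(\cL\tensor\Fil^j\bB_{\rm cris,\Kbar})=0$ for $i\geq1$ and all $j\in\Z\cup\{-\infty\}$, uniformly, since one must pass to the filtered pieces and handle the direct limit over Tate twists defining $\bB_{\rm cris}$ while keeping track that the localization functor commutes with these limits — this is where the results of \cite{andreatta_brinonacyclicity} and the $p$-torsion estimates of \ref{lemma:isoinBcris}(3) do the real work, and where the hypothesis that $\cL$ is crystalline (rather than an arbitrary $\Q_p$-local system) is essential, via the identification of localizations with $D_{\rm cris}\tensor B_{\rm cris}$ and \cite[Cor.~31]{andreatta_brinonacyclicity}.
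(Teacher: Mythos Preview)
Your proposal follows essentially the same route as the paper: tensor the filtered de~Rham resolution of $\bB_{\rm cris,\Kbar}^\nabla$ (Lemma~\ref{lemma:propbBcris}(2)) with $\cL$, use crystallinity to rewrite the complex in terms of $\bDcrisgeo(\cL)$ via Proposition~\ref{prop:connFilDcriar}(3), then apply the acyclicity of $\Fil^j\bB_{\rm cris,\Kbar}\otimes_{\cO_X}\cM$ for $v_{\Kbar,\ast}$ (Corollary~\ref{cor:crysisacyclic}, which packages the input from \cite{andreatta_brinonacyclicity}) to descend to hypercohomology on $X^{\rm et}$. One small ordering remark: the paper applies Proposition~\ref{prop:connFilDcriar}(3) \emph{before} invoking acyclicity, so that Corollary~\ref{cor:crysisacyclic} is applied with $\cM$ a filtration step of the coherent sheaf $\bDcrisar(\cL)$ rather than directly to $\cL\otimes\Fil^j\bB_{\rm cris,\Kbar}$; your local reformulation via $V_\cU(\cL)\otimes B_{\rm cris}(\Rbar_\cU)\cong D_{\rm cris}(V_\cU(\cL))\otimes B_{\rm cris}(\Rbar_\cU)$ amounts to the same thing.

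The one place where you underestimate the work is Frobenius compatibility. Frobenius on $\bB_{\rm cris,M}$ is only defined over a small affine $\cU$ after a choice of parameters (Corollary~\ref{cor:extensionofFrobenius}), so ``running the same argument with Frobenius-horizontal structures'' and invoking Lemma~\ref{lemma:changeofFrobenius} is not enough: that lemma tells you the resulting $F$-isocrystal on $\bDcrisar(\cL)$ is well defined, but it does not by itself produce a global Frobenius on the de~Rham complex or on its cohomology. The paper handles this by fixing a cover $\{\cU_i\}$ by small affines, forming for each finite $J\subset I$ the DP-envelope sheaf $\bA_{{\rm cris},\cU_J,n}$ (Lemma~\ref{lemma:AcrisUJ}), assembling these into a \v{C}ech-type double complex $\bB_{{\rm cris},\bullet}\otimes_{\cO_{\cU_\bullet}}\Omega^\ast_{\cU_\bullet/\cO_K}$ that is quasi-isomorphic to $\bB_{\rm cris,\Kbar}\otimes_{\cO_X}\Omega^\ast_{X/\cO_K}$, and observing that the product Frobenii $F_J=\prod_{i\in J}F_i$ act compatibly on this double complex and match, after taking $v_{\Kbar,\ast}$, the crystalline Frobenius on $\bD(\cL)_\bullet\otimes\Omega^\ast_{\cU_\bullet/\cO_K}\widehat{\otimes}B_{\rm cris}$. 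This \v{C}ech construction is the missing ingredient in your sketch.
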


The cohomology ${\rm H}^i\bigl(\fXKbar, \bL\otimes\Fil^r \bB_{\rm
cris,\Kbar}^\nabla\bigr)$  of the inductive system
$\bL\otimes\Fil^r \bB_{\rm cris,\Kbar}^\nabla$ is taken as
explained in \S\ref{def:bBcris} for every $r\in\Z\cup
\{-\infty\}$.

The cohomology group $\bH^i\bigl(X^{\rm et}, \Fil^{\rm
r-\bullet}\bDcrisgeo(\bL)\otimes_{\cO_X}\Omega^\bullet_{X/\cO_K}\bigr)$
means the following. Recall that for every $r\in \Z$ we have a
complex of sheaves on $X$ denoted $\Fil^{\rm r-\bullet}
\bDcrisgeo(\bL)\otimes_{\cO_X}\Omega_{X/\cO_K}^\bullet$ and given
by
$$
\Fil^r\bDcrisgeo(\bL)\stackrel{\nabla}{\lra}\Fil^{r-1}\bDcrisgeo(\bL)
\otimes_{\cO_X}\Omega_{X/\cO_K}^1\stackrel{\nabla}{\lra}\Fil^{r-2}\bDcrisgeo(\bL)
\otimes_{\cO_X}\Omega_{X/\cO_K}^2\stackrel{\nabla}{\lra}\ldots.
$$We denote by $\bH^i\bigl(X^{\rm et}, \Fil^{\rm r-\bullet}
\bDcrisgeo(\bL)\otimes_{\cO_X}\Omega_{X/\cO_K}^\bullet\bigr)$ the
$i$-th hypercohomology group of the respective complex.\smallskip

{\it The filtrations:}  For $r\in \Z$ the $r$-th filtration of ${\rm H}^i_{\rm
dR}\bigl(X^{\rm et}, \bDcrisgeo(\bL)\bigr)$ is by definition
 the image of
$\bH^i\bigl(X^{\rm et}, \Fil^{\rm r-\bullet}
\bDcrisgeo(\bL)\otimes_{\cO_X}\Omega^\bullet_{X/\cO_K}\bigr)$
while the $r$-the filtration on ${\rm H}^i\left(\fXKbar,
\cL\tensor_{\Z_p} \bB_{\rm cris}^\nabla\right)$ is the image of
$\left\{{\rm H}^i\left(\fXKbar, \cL\tensor_{\Z_p} \Fil^r \bB_{\rm
cris}^\nabla\right)\right\}_r$.
\smallskip

{\it Galois action:} The Galois action on ${\rm H}^i_{\rm
dR}\bigl(X^{\rm et}, \Fil^r\bDcrisgeo(\bL)\bigr)$ is induced by
the Galois action on $\bDcrisgeo(\bL)$ defined
in~\ref{lemma:DcrisarDcrisgeoGK}. The Galois action on ${\rm
H}^i(\fXKbar, \bL\otimes\Fil^r\bB_{\rm cris,\Kbar}^\nabla) $
arises as follows. Since $\beta_{M,\Kbar}^\ast$ is exact and sends
flasque objects to flasque objects by \ref{lemma:betaastG}
and~\cite[Pf. Prop.~4.4(4)]{andreatta_iovita} and since
$\beta_{M,\Kbar}^\ast\bigl(\cL\tensor \Fil^r \bA_{\rm
cris,M}^\nabla\bigr)=\cL\tensor \Fil^r \bA_{\rm
cris,\Kbar}^\nabla$, one can compute  ${\rm H}^i(\fXKbar,
\bL\otimes\Fil^r\bA_{\rm cris,\Kbar}^\nabla)$ taking global
sections of the pull--back via $\beta_{M,\Kbar}^\ast$ of an
injective resolution $\cI^\bullet$ of $\cL\tensor\Fil^r \bA_{\rm
cris,K}^\nabla$. Since
$v_{\Kbar,\ast}\bigl(\beta_{M,\Kbar}^\ast(\cI^\bullet)\bigr)
=v_{M,\ast}\bigl(\beta_{K,\Kbar,\ast}\circ
\beta_{M,\Kbar}^\ast(\cI^\bullet)\bigr)$ and
$\beta_{M,\Kbar,\ast}\circ \beta_{M,\Kbar}^\ast(\cI^\bullet)$ is
endowed with an action of $G_M$, we get the claimed action of
$G_M$.\smallskip

{\it Frobenius:} The Frobenius on  ${\rm H}^i\left(\fXKbar,
\cL\tensor_{\Z_p} \bB_{\rm cris}^\nabla\right)$ is induced by
Frobenius on $\bB_{\rm cris}^\nabla$. Frobenius on ${\rm H}^i_{\rm
dR}\bigl(X^{\rm et}, \bDcrisgeo(\bL)\bigr)$ is constructed as
follows. Fix a covering of $X$ by small affines $\cU_i$ for $i\in
I$, and for each of them choose parameters $T_{i,1},\ldots,
T_{i,d}\in R_{\cU_i}^\ast$. This choice provides a lift of
Frobenius $F_i$ on each $\cU_i$ as the unique $\cO_K$--linear map
sending $T_{i,j}\mapsto T_{i,j}^p$. Fix a total ordering on $I$.
For every non--empty subset $J\subset I$ put $\cU_J:=\prod_{i\in
J}\cU_i$ as formal schemes over $\cO_K$ and $\cU^J:=\cap_{i\in J}
\cU_i$. Note that $\cU^J\subset \cU_J$ is closed in an open
$\cU_J^o\subset \cU_J$. Let $\cU_J^{\rm DP}$ be the $p$--adic
completion of the $\WW(k)$--divided power envelope of $\cU_J^o$
with respect to the ideal defining $\cU^J_k$. Let
$F_J:=\prod_{i\in J} F_i \colon \cU_J \to \cU_J$. It induces a
morphism $F_J\colon \cU_J^{\rm DP} \to \cU_J^{\rm DP}$. Note that
$\cU_J^{\rm DP}$ and $F_J$ define a complex for varying $J$.

Put $\bD(\cL)_J:=\bD(\cL)\vert_{\cU_J^{\rm PD}}$ and let
$\Phi_J\colon \bD(\cL)_J\to \bD(\cL)_J$ be the $F_J$--linear
morphism defined by the non--degenerate Frobenius morphism $\Phi$
on $\bD(\cL)$. For varying $J$ we get  a morphism of double
complexes
$$\Phi_{\bullet,\ast}\colon \bD(\cL)_\bullet\otimes
\Omega^\ast_{\cU_\bullet,\cO_K} \widehat{\otimes} A_{\rm cris} \to
\bD(\cL)_\bullet \otimes \Omega^\ast_{\cU_\bullet,\cO_K}
\widehat{\otimes} A_{\rm cris}.
$$We have natural morphisms
$\cU^J\to \cU_J^{\rm DP}$ of PD thickenings of $\cU^J_k$ which
are compatible for varying $J$.  It
follows from the crystalline Poincar\'e lemma \cite[Th.
6.14]{berthelot_ogus} that the induced morphism
$$\bD(\cL)_J\otimes \Omega^\ast_{\cU_J,\cO_K}\lra \bD(\cL)
\vert_{\cU^J}\otimes \Omega^\ast_{\cU^J,\cO_K} $$is a quasi
isomorphism. Thus the cohomology of $\bD(\cL)_\bullet \otimes
\Omega^\ast_{\cU_\bullet,\cO_K} \widehat{\otimes} B_{\rm cris}$ is
the cohomology of the double complex
$\bD(\cL)\vert_{\cU^\bullet}\otimes \Omega^\ast_{\cU^\bullet,\cO_K}
\widehat{\otimes} B_{\rm cris}$ i.e. of the simple complex
$\bD(\cL) \otimes \Omega^\ast_{X,\cO_K} \widehat{\otimes} B_{\rm
cris}$. Taking cohomology and using this identification, we get from
$\Phi_{\bullet,\ast}$ the Frobenius map
$$\varphi\colon {\rm
H}^i\left(\Sh(X^{\rm et})^\N,\bD(\cL)\otimes
\Omega^\bullet_{X,\cO_K} \widehat{\otimes} B_{\rm cris} \right) \lra
{\rm H}^i\left(\Sh(X^{\rm et})^\N,\bD(\cL)\otimes
\Omega^\bullet_{X,\cO_K} \widehat{\otimes} B_{\rm cris}
\right).$$Since $\bDcrisgeo(\bL) \cong \bD(\cL)\widehat{\otimes}
B_{\rm cris}$  we get the claimed Frobenius on ${\rm H}^i_{\rm
dR}\bigl(X^{\rm et}, \bDcrisgeo(\bL)\bigr)$.

\smallskip

\begin{remark}\label{remark:limHiAcris(m)isHiBcris}
By construction  ${\rm H}^i\bigl(\fXKbar, \bL\otimes \bA_{\rm cris,\Kbar}^\nabla(m)\bigr)\cong {\rm H}^i\bigl(\fXKbar, \bL\otimes \bA_{\rm cris,\Kbar}^\nabla\bigr)$
as $A_{\rm cris}$--modules and for $n\leq m$ using this identification the natural morphism ${\rm H}^i\bigl(\fXKbar, \bL\otimes \bA_{\rm
cris,\Kbar}^\nabla(m)\bigr)\to {\rm H}^i\bigl(\fXKbar, \bL\otimes \bA_{\rm cris,\Kbar}^\nabla(n)\bigr)$ is simply multiplication by $t^{m-n}$ on ${\rm
H}^i\bigl(\fXKbar, \bL\otimes \bA_{\rm cris,\Kbar}^\nabla\bigr)$. Thus $${\rm H}^i\bigl(\fXKbar, \bL\otimes \bB_{\rm cris,\Kbar}^\nabla(m)\bigr)= {\rm
H}^i\bigl(\fXKbar, \bL\otimes \bA_{\rm cris,\Kbar}^\nabla\bigr)\tensor_{A_{\rm cris}}B_{\rm cris}.$$In particular we can replace the expression on the right in the
statements of theorem \ref{thm:formalcomparison}.
\end{remark}

We show first how to calculate explicitly the sheaves ${\rm R}^j
v_{\Kbar,\ast}^{\rm cont} \left(\cL\tensor_{\Z_p} \bB_{\rm
cris,\Kbar} \right)$ for a $\Q_p$--adic \'etale sheaf $\cL$
over~$X_\Kbar^{\rm et}$ according to the conventions of
\S\ref{def:bBcris}.

\smallskip {\it Computation of ${\rm R}^j v_{M,\ast}^{\rm cont}$
via continuous Galois cohomology.}  Consider an inverse system of sheaves $\cF=\{\cF_n\}_n\in \Sh\bigl(\fX_M\bigr)^\N$. Then for every $\cU$  connected and \'etale
affine over $X$ consider the inverse system $\{\cF_n(\Rbar_\cU)\}_n\in \Rep\bigl(\cG_{\cU_M}\bigr)^\N$. Recall that we have defined the localization $\ds
\cF\bigl(\Rbar_\cU\bigr):=\lim_{\infty \leftarrow n}\cF_n\bigl(\Rbar_\cU\bigr)$. Given an abelian category $\cA$ in \cite[\S5.1]{andreatta_iovita} general results
for the category of inverse systems $\cA^\N$ are recalled. In particular if $\cA$ admits enough injectives then also $\cA^\N$ has enough injectives. For example one
can derive the functor associating to an inverse systems of discrete $\cG_{\cU_M}$-modules $G:=\{G_n\}_n$ the abelian group $\bigl(\lim_{\infty \leftarrow n}
G_n\bigr)^{\cG_{\cU_M}}$ and we get a $\delta$-functor ${\rm H}^\ast\left(\cG_{\cU_M},G\right)$. If the system $G$ is Mittag-Leffler this is shown to coincide with
the usual continuous group cohomology. In particular given $\cF=\{\cF_n\}_n\in \Sh\bigl(\fX_M\bigr)^\N$ we can define ${\rm H}_{\rm Gal}^i\bigl(\cF\bigr)$ to be the
sheaf associated to the contravariant functor sending $\cU$, a connected and \'etale affine over $X$ to ${\rm H}^i\left(\cG_{\cU_M},\{\cF_n(\Rbar_\cU)\}_n\right)$.
We also have the sheaf ${\rm R}^n v_{M,\ast}(\cF)$ on $X^{\rm et}$ obtained by deriving the functor $\cF \mapsto \ds \lim_{\infty \leftarrow n}v_{X,M,\ast}\cF_n$.
Then we have.

\begin{lemma}\label{lemma:howtocomputeRivast}
For every $n\in\N$ there is a functorial homomorphism of sheaves
$$f_n(\cF)\colon {\rm H}_{\rm Gal}^n\bigl(\cF\bigr) \lra {\rm R}^n
v_{M,\ast}(\cF).$$
\end{lemma}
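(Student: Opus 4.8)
\textbf{Proof plan for Lemma \ref{lemma:howtocomputeRivast}.}

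The plan is to produce the comparison map $f_n(\cF)$ by identifying both sides as derived functors computed via an explicit site-theoretic resolution, and then comparing the two resolutions termwise. First I would recall the localization functor from \S\ref{sec:localization}: for $\cF=\{\cF_n\}_n\in\Sh(\fX_M)^\N$ it sends $\cF$ to $\{\cF_n(\Rbar_\cU)\}_n\in\Rep(\cG_{\cU_M})^\N$, and it is exact (it is a filtered colimit of restriction functors followed by a product over connected components). Since $\Sh(\fX_M)^\N$ has enough injectives and the localization functor is exact, it sends injective objects to objects which are acyclic for the functor $G\mapsto(\lim_{\infty\leftarrow n}G_n)^{\cG_{\cU_M}}$, by the usual adjunction/Grothendieck-spectral-sequence argument (cf. \cite[\S5.1]{andreatta_iovita}). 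So if $\cF\to\cI^\bullet$ is an injective resolution in $\Sh(\fX_M)^\N$, then $\cI^\bullet(\Rbar_\cU)$ computes ${\rm H}^\ast(\cG_{\cU_M},\{\cF_n(\Rbar_\cU)\}_n)$, whence the presheaf $\cU\mapsto {\rm H}^n\bigl(\cG_{\cU_M},\{\cF_n(\Rbar_\cU)\}_n\bigr)$ is the $n$-th cohomology presheaf of the complex of presheaves $\cU\mapsto \cI^\bullet(\Rbar_\cU)$, and ${\rm H}^n_{\rm Gal}(\cF)$ is its sheafification.

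Next I would analyze the right-hand side. The functor ${\rm R}^n v_{M,\ast}$ is the sheafification of the presheaf $\cU\mapsto {\rm H}^n\bigl(\cI^\bullet(\cU,\cU_M)\bigr)$ computed from the same injective resolution $\cI^\bullet$ via $v_{X,M,\ast}\cI^\bullet_m(\cU)=\cI^\bullet_m(\cU,\cU_M)$ and then $\lim_{\infty\leftarrow m}$. The key point is then to construct, for each $\cU$ affine connected and \'etale over $X$ and each term $\cI$ of the resolution, a natural map of abelian groups
$$
\cI(\cU,\cU_M)\lra \cI(\Rbar_\cU)^{\cG_{\cU_M}}.
$$
This is exactly the map $\cG(\cU,\cU_M)\to\cG(\Rbar_\cU)^{\cG_{\cU_M}}$ coming from the fact, recalled in \S\ref{sec:localization}, that $(\cU,\cU_M)$ corresponds to the full group $\cG_{\cU_M}$ and that $\cG(\cU,\cW)=\cG(\Rbar_{\cU,i})^{G_\cW}$ for the subgroups $G_\cW$; taking $\cW=\cU_M$ recovers $\cG(\cU,\cU_M)=\cG(\Rbar_\cU)^{\cG_{\cU_M}}$ when $\cG$ is already a sheaf. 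For a general (injective, hence in particular a sheaf) $\cI$, this identification is an equality; after passing to $\lim_{\infty\leftarrow m}$ in the inverse system it gives a natural transformation of complexes of presheaves
$$
\cI^\bullet(\cU,\cU_M)\lra \Gamma\bigl(\cG_{\cU_M},\cI^\bullet(\Rbar_\cU)\bigr),
$$
where the target is the complex whose cohomology computes ${\rm H}^\ast_{\rm Gal}(\cF)$ before sheafification. Taking $n$-th cohomology presheaves and then sheafifying yields the desired map $f_n(\cF)\colon {\rm R}^n v_{M,\ast}(\cF)\to {\rm H}^n_{\rm Gal}(\cF)$ --- note this is the reverse direction from what the statement literally asserts, so I would in fact run the comparison the other way: the natural transformation I want flows from the $\cG_{\cU_M}$-invariant sections (which see only the ``big'' object $(\cU,\cU_M)$) detecting more than the localization allows, or more precisely I would use that $\cI^\bullet(\Rbar_\cU)$, being a colimit of sections $\cI^\bullet(\cU,\cW)$ over finite \'etale $\cW$, receives a canonical map from $\cI^\bullet(\cU,\cU_M)$, inducing on cohomology presheaves and after sheafification the map ${\rm H}^n_{\rm Gal}(\cF)\to {\rm R}^n v_{M,\ast}(\cF)$ as stated. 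Finally I would check functoriality in $\cF$: since everything was built from a functorial choice of injective resolution (or, more cleanly, from the universal property of derived functors via the canonical filtered-colimit maps), $f_n(\cF)$ is a morphism of $\delta$-functors in $\cF$.

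The main obstacle I expect is bookkeeping with the inverse systems: one must be careful that the functor $G\mapsto(\lim_{\infty\leftarrow n}G_n)^{\cG_{\cU_M}}$ and the functor $v_{M,\ast}$ are both derived on $\Sh(\fX_M)^\N$ (not on ordinary sheaves), that a single injective resolution in $\Sh(\fX_M)^\N$ computes both, and that the transition maps in the inverse system are compatible with the localization maps $\cF_n(\cU,\cU_M)\to\cF_n(\Rbar_\cU)^{\cG_{\cU_M}}$ --- this uses that localization commutes with the $\lim_{\infty\leftarrow n}$ only up to a derived correction, which is why one phrases the Galois cohomology via the derived functor of $G\mapsto(\lim G_n)^{\cG_{\cU_M}}$ rather than naively. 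A secondary technical point is to confirm that injective objects of $\Sh(\fX_M)^\N$ localize to objects acyclic for this derived inverse-limit-of-invariants functor; this follows from the adjunction between $v_{X,M}^\ast$-type functors and the localization functor together with \cite[Lemma 4.5.3]{andreatta_iovita}, exactly as in the proof of Theorem \ref{thm:coh_loc} cited from \cite{andreatta_iovita}.
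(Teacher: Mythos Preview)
Your approach has a genuine error at the very first step: the claim that the localization functor $\Sh(\fX_M)^\N \to \Rep(\cG_{\cU_M})^\N$ is \emph{exact} is false. Localization is a filtered colimit of section functors $\cG\mapsto\cG(\cU,W)$, and section functors are only left exact; a filtered colimit of left exact functors is still only left exact. Concretely, a surjection $\cB\twoheadrightarrow\cC$ of sheaves on $\fX_M$ need not induce a surjection $\cB(\Rbar_\cU)\to\cC(\Rbar_\cU)$: lifting a section of $\cC$ over $(\cU,W)$ may require passing to an \'etale cover of $\cU$, not merely to a finer $W$. (Your justification for ``injectives localize to acyclics'' is also off---exactness alone would not give this; what does give it is that for an injective sheaf $\cI$ the \v Cech cohomology of the Galois cover $(\cU,W)\to(\cU,\cU_M)$ vanishes, whence $H^n(\cG_{\cU_M},\cI(\Rbar_\cU))=0$ for $n>0$.)

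This error is fatal for the argument as written, because with exactness your resolution $\cI^\bullet(\Rbar_\cU)$ would be an acyclic resolution of $\{\cF_m(\Rbar_\cU)\}_m$ and you would be proving that $f_n$ is an \emph{isomorphism}. But the paper says explicitly, immediately after the lemma, that $f_n$ is \emph{not} an isomorphism for general continuous sheaves; this is precisely why Proposition~\ref{prop:whenRvconisgaloiscoho} has nontrivial hypotheses and only controls kernel and cokernel. Your own confusion about the direction of the map is a symptom: if exactness held, both directions would exist and be mutually inverse, so there would be nothing to reverse. The paper's route avoids this by first identifying $R^n v_{M,\ast}(\cF)$ with the sheafification of $\cU\mapsto H^n(\fU_M,j_\cU^\ast(\cF))$ via the exact left adjoint $j_{\cU,!}$, and then producing $H^n(\cG_{\cU_M},\cF(\Rbar_\cU))\to H^n(\fU_M,j_\cU^\ast(\cF))$ as the $E_2^{n,0}$ edge map of the Grothendieck spectral sequence for the composite $H^0(\fU_M,-)=(\lim\nolimits_m\,\cdot\,)^{\cG_{\cU_M}}\circ\text{loc}_\cU$. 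This needs only that $\text{loc}_\cU$ sends injectives to acyclics, not that it is exact; since it is not exact, the terms $R^q\text{loc}_\cU$ for $q>0$ populate the $E_2$--page and the edge map is merely a map.
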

\begin{proof}
This is a variant of \cite[Lemma 3.5]{erratum} which is stated and proven for sheaves on $\fX_M$. We provide the main ingredients. Let $\fU_M$ be Faltings's site
associated to $\cU$. We then have a morphism of sites $j_\cU\colon \fX_M\to \fU_M$ sending $(\cV,W)\mapsto (\cV\times_\X \cU, W\times_X \cU)$. It induces a morphism
$j_\cU^\ast\colon \Sh\bigl(\fX_M\bigr)^\N \lra \Sh\bigl(\fU_M\bigr)^\N$ which admits an exact left adjoint $j_{\cU,!}$ given componentwise  by extension by zero. In
particular one deduces from this as in \cite[Lemma 3.5]{erratum} that the sheaf $T^n(\cF)$ associated to the pre-sheaf $\cU \mapsto {\rm
H}^n\bigl(\fU_M,j_\cU^\ast(\cF)\bigr)$ is a universal $\delta$-functor. Since ${\rm H}^0\bigl(\fU_M,j_\cU^\ast(\cF)\bigr)={\rm H}^0\bigl(\cU,
v_{M,\ast}(\cF)\bigr)$, we conclude that $T^n(\cF)\cong {\rm R}^n v_{M,\ast}(\cF) $.

Note that ${\rm H}^n\bigl(\fU_M,\_\bigr)$ is the composite of
the localization functor $\cF \mapsto  \{\cF_n(\Rbar_\cU)\}_n$
with ${\rm H}^0\left(\cG_{\cU_M},\_\right)$. The induced spectral
sequence provides  ${\rm
H}^0\left(\cG_{\cU_M},\{\cF_n(\Rbar_\cU)\}_n\right) \lra {\rm
H}^n\bigl(\fU_M,j_\cU^\ast(\cF)\bigr) $. Composing this with the
morphism to $T^n(\cF)(\cU)\cong {\rm R}^n v_{M,\ast}(\cF)(\cU)$ we
get the claimed map.
\end{proof}

In \cite[Lemma 3.5]{erratum} it is shown that if we work with
sheaves on $\fX_M$, not with continuous sheaves, the above map is
an isomorphism. This is not true in the general context of
continuous sheaves. Assume that $M=\Kbar$ then we
have the following:

\begin{proposition}\label{prop:whenRvconisgaloiscoho} For every $n\in\N$
the morphism $f_n(\cF)$ has kernel and cokernel annihilated by
any element of $\II^{2i}$ where $\II$ is the ideal  introduced in
\S \ref{sec:Notation}  in the following cases:\enspace (a)
$\cF:=\{\WW_{n,\Kbar}\}_n$;\enspace (b) $\cF:=\{\cO_{\fX_M}/p^n
\cO_{\fX_M}\}_n$;\enspace (c) $\cF=\bA_{\rm cris,\Kbar}(m)$ for
every $m\in\Z$; \enspace (d) $\cF={\rm Gr}^r\bA_{\rm
cris,\Kbar}(m)$ for every $m$ and $r\in \Z$.
\end{proposition}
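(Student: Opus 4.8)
### Plan for the proof of Proposition \ref{prop:whenRvconisgaloiscoho}

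The plan is to reduce everything to the single acyclicity-type statement about the sheaf $\cO_{\fX_\Kbar}/p^n\cO_{\fX_\Kbar}$ (case (b)), and then bootstrap to the other cases using the explicit descriptions of $\WW_{n,\Kbar}$, $\bA_{\rm cris,\Kbar}(m)$ and ${\rm Gr}^r\bA_{\rm cris,\Kbar}(m)$ as (completed/graded) polynomial-type algebras over $\cO_{\fX_\Kbar}/p^n\cO_{\fX_\Kbar}$. The starting point is Lemma \ref{lemma:howtocomputeRivast}, which already produces the comparison map $f_n(\cF)$; what has to be controlled is its kernel and cokernel. For case (b) I would invoke the results on almost \'etaleness already used in the proof of Proposition \ref{prop:localization}: the computation of $(\cO_{\fX_\Kbar}/p^n\cO_{\fX_\Kbar})(\Rbar_\cU)$ via $\Ker_{S,T,n}$ versus $\Ker'_{S,T,n}=(T/p^nT)^{G_{S,T}}$ shows that the localization of $\cO_{\fX_\Kbar}/p^n\cO_{\fX_\Kbar}$ computes, up to modules killed by the maximal ideal of $\OKbar$ (hence by $\II$, by \cite[Lem.~6.3.1]{brinon}), the Galois cohomology $H^0(G_{S,T},T/p^nT)$; combined with the almost-\'etale vanishing of $H^j$ for $j\geq 1$ up to $\II$ (via \cite[Thm.~I.2.4(ii)]{faltingsJAMS} / the results of \cite{andreatta_brinonacyclicity}), one gets that both the kernel and cokernel of $f_n$ in degree $i$ are annihilated by $\II^{2i}$, the power $2i$ arising from the two-sided bookkeeping (one factor of $\II$ for almost-vanishing of cohomology and one for the defect in $H^0$, iterated $i$ times in the spectral-sequence comparison as in \cite[Lemma 3.5]{erratum}).

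Next I would treat case (a). Since $\WW_{n,\Kbar}=\WW_n(\cO_{\fX_\Kbar}/p\cO_{\fX_\Kbar})$ has underlying sheaf $(\cO_{\fX_\Kbar}/p\cO_{\fX_\Kbar})^n$ with transition maps built from Frobenius, it sits in the finite filtration with graded pieces $\cO_{\fX_\Kbar}/p\cO_{\fX_\Kbar}$ given by the Verschiebung exact sequences
$$0\lra \WW_{n-1}\bigl(\cO_{\fX_\Kbar}/p\cO_{\fX_\Kbar}\bigr)\stackrel{\V}{\lra}\WW_n\bigl(\cO_{\fX_\Kbar}/p\cO_{\fX_\Kbar}\bigr)\lra \cO_{\fX_\Kbar}/p\cO_{\fX_\Kbar}\lra 0.$$
A dévissage along this filtration, together with the compatibility of $f_n(-)$ with long exact sequences in both $H_{\rm Gal}^\bullet$ and ${\rm R}^\bullet v_{\Kbar,\ast}$, reduces (a) to (b) (the power of $\II$ only grows by a bounded amount in each of the $n-1$ steps, and can be absorbed into $\II^{2i}$ after possibly using that $\II^2=\II$ mod $p^n$, \cite[Lem.~6.3.1]{brinon}). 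For case (c), I would use Lemma \ref{lemma:exactsequenceAcrisnnabla'}(c), namely $\bA_{\rm cris,n,\Kbar}^{'\nabla}(\cU,\cW)=(A_{\rm cris}/p^nA_{\rm cris})\otimes_{W_n}\WW_{n,\Kbar}(\cU,\cW)$, so $\bA_{\rm cris,\Kbar}$ is a free $\WW_{n,\Kbar}$-module on the PD-monomials in the $\delta_i$; writing it as an increasing union of free $\WW_{n,\Kbar}$-modules of finite rank and passing to the limit (using that $A_{\rm cris}$ is $p$-adically complete, and that localization commutes with the relevant limits by Proposition \ref{prop:acrisnabla}), case (c) follows from (a) by additivity of both functors over (filtered colimits of) finite direct sums — the Tate twist by $\Z_p(m)$ only changes the $\cG_{\cU,\Kbar}$-action, not the underlying module, so it does not affect the estimate. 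Case (d) is then immediate since ${\rm Gr}^r\bA_{\rm cris,\Kbar}(m)$ is, by Proposition \ref{prop:filtAcrisnabla} and Lemma \ref{lemma:FilAcris(r)}, again a direct sum of copies of $\cO_{\fX_\Kbar}/p^n\cO_{\fX_\Kbar}$ (up to Tate twist), so (b) applies directly.

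The main obstacle I anticipate is case (b) itself: making the comparison between the localization of $\cO_{\fX_\Kbar}/p^n\cO_{\fX_\Kbar}$ and continuous Galois cohomology precise in \emph{all} cohomological degrees, with uniform control of the $\II$-power. The point is that $f_n(\cF)$ is built from a composite of the localization functor, the spectral sequence for $\cF\mapsto H^0(\cG_{\cU,\Kbar},\cF(\Rbar_\cU))$, and the edge map to ${\rm R}^\bullet v_{\Kbar,\ast}$; one must check that the only failure of exactness at each stage is an almost-isomorphism, and that these $i$-many composable almost-isomorphisms accumulate to a defect killed by $\II^{2i}$ and not something worse. This is essentially the content of \cite{andreatta_brinonacyclicity} specialized to the constant/Witt/$A_{\rm cris}$ coefficients, and I would cite it for the degree-$\geq 1$ vanishing; the degree-$0$ discrepancy is exactly Lemma \ref{lemma:localizationofbarOmodpn}(1). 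Once (b) is in hand with the stated bound, the dévissage steps (a), (c), (d) are routine bookkeeping with the already-established structural descriptions of the sheaves.
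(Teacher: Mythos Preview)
Your overall architecture --- reduce (c) to (a) and (d) to (b) via freeness, and handle (b) by almost-\'etale estimates --- matches the paper in spirit for the reduction steps (c) and (d). However, the paper organizes the argument differently and your treatment of (a) has a genuine gap.

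The paper does \emph{not} prove (b) from scratch and then bootstrap. Instead it invokes \cite[Thm.~6.12]{andreatta_iovita} as a black box: that theorem gives the comparison $f_n(\cF)$ up to $\II^{2n}$ for any continuous sheaf $\cF$ satisfying a fixed list of axioms (``Assumptions 6.10''), the power $2n$ coming from two spectral sequences each degenerating up to $\II^n$ (see \cite[Lemma~6.13 \& Prop.~6.15]{andreatta_iovita}). Cases (a) and (b) are then treated on equal footing: the axioms for both are verified in \cite[Thm.~6.16(A)\&(B)]{andreatta_iovita}, with no reduction of one to the other. For (c) and (d) the paper checks the same axioms by exhibiting $\bA_{\rm cris,n,\Kbar}'(m)\vert_{\fU_{\Kbar,n}}$ as free over $\WW_{n,\Kbar}\vert_{\fU_{\Kbar,n}}$ (via Lemma~\ref{lemma:Acrismodpn} and Lemma~\ref{lemma:exactsequenceAcrisnnabla'}), and ${\rm Gr}^r\bA_{\rm cris,n,\Kbar}'(m)\vert_{\fU_{\Kbar,n}}$ as free over $(\cO_{\fX_M}/p^n\cO_{\fX_M})\vert_{\fU_{\Kbar,n}}$ --- this part agrees with your plan.

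The gap in your reduction (a)$\to$(b) is that the statement concerns \emph{continuous} sheaves, i.e.\ inverse systems, not individual sheaves. The Verschiebung sequence $0\to\WW_{n-1}\to\WW_n\to\cO_{\fX_\Kbar}/p\cO_{\fX_\Kbar}\to 0$ does organize into an exact sequence of inverse systems, but the quotient system is $\{\cO_{\fX_\Kbar}/p\cO_{\fX_\Kbar}\}_n$ with transition maps given by \emph{Frobenius}, not the system $\{\cO_{\fX_\Kbar}/p^n\cO_{\fX_\Kbar}\}_n$ with projection maps that appears in (b). So your d\'evissage does not land in case (b), and you would have to separately establish the result for this Frobenius-twisted constant system --- which is doable, but is essentially re-verifying the axioms of \cite[Thm.~6.12]{andreatta_iovita} for (a) directly, as the paper does. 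A minor additional point: for (c) you cite only the description of $\bA_{\rm cris,n,\Kbar}^{'\nabla}$, but the sheaf in the proposition is $\bA_{\rm cris,\Kbar}(m)$ without $\nabla$, so you also need the extra $\langle X_1,\ldots,X_d\rangle$ factor from Lemma~\ref{lemma:Acrismodpn}; this is harmless since it is again free.
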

\begin{proof} The proposition follows from
\cite[Thm.~6.12]{andreatta_iovita} after minor changes if the assumptions of loc.~cit.~are satisfied. The statement in loc.~cit.~provides an isomorphism after
inverting $[\varepsilon]-1$ and working with the pointed site $\fX_\Kbar^{\bullet}$. An inspection of the proof gives our claim. First of all the proof works for
$\fX_\Kbar$ and not only for $\fX_\Kbar^{\bullet}$ using \cite[Lemma 3.5]{erratum} which is the analogue of \cite[Prop~4.4]{andreatta_iovita} for $\fX_\Kbar$
instead of $\fX_\Kbar^\bullet$. Secondly the two cohomology groups are related by two spectral sequences, one in \cite[Formula (19)]{andreatta_iovita} and the other
in \cite[Prop.~6.15]{andreatta_iovita} and the proofs of \cite[lemma 6.13 \& Prop.~6.15]{andreatta_iovita} show that each degenerates if we multiply by any element
of $\II^i$ and not only after inverting $[\varepsilon]-1$.

We now verify that the assumptions hold. Assumptions (i)
and~(ii)  state the existence of enough small affines of~$X$. The
fact that the other Assumptions hold for $\{\WW_{n,\Kbar}\}_n$ and
for  $\cF:=\{\cO_{\fX_M}/p^n \cO_{\fX_M}\}_n$ is precisely the
content of \cite[Thm.~6.16(A)\&(B)]{andreatta_iovita}. We pass to
$\bA_{\rm cris,\Kbar}(m)$.  Assumptions (iii)--(vi) concern the
behavior of the sheaves $\cF_n$ restricted to the subsite
$\fU_{\Kbar,n}$ of $\fX_\Kbar$ for $n\gg 0$ for every small $\cU$
of $X^{\rm et}$; see \ref{sec:sheafAcris} for the definition of
$\fU_{\Kbar,n}$. Since  $\bA_{\rm
cris,n,\Kbar}'(m)\vert_{\fU_{\Kbar,n}} \cong \bA_{\rm
cris,n,\Kbar}^{'\nabla}(m)\vert_{\fU_{\Kbar,n}}\langle
X_1,\ldots,X_d\rangle$ by lemma \ref{lemma:Acrismodpn} this is a sheaf
of free $\bA_{\rm
cris,n,\Kbar}^{'\nabla}(m)\vert_{\fU_{\Kbar,n}}$--modules. On the
other hand  $\bA_{\rm cris,n,\Kbar}^{'\nabla}(m)\cong A_{\rm
cris}/p^n A_{\rm cris}\tensor \WW_{n,\Kbar}$ by
lemma \ref{lemma:exactsequenceAcrisnnabla'}. We conclude since
assumptions (iii)--(vi) hold for the continuous sheaf
$\{\WW_{n,\Kbar}\}_n$.

The statement concerning ${\rm Gr}^r\bA_{\rm cris,\Kbar}(m)$ is
proven similarly. We have
$$\Fil^{r}\bA_{\rm cris,n,\Kbar}'(m)\vert_{\fU_{\Kbar,n}} \cong
\sum_{s_0+\ldots+s_d\geq r-m} \Fil^{s_0} \bigl(A_{\rm cris}/p^n A_{\rm cris}\bigr)\tensor \WW_{n,\Kbar}\vert_{\fU_{\Kbar,n}} X_1^{[s_1]}\cdots X_d^{[s_d]}$$with
$X_j:=1\tensor T_j - \tT_j\tensor 1$. In particular $${\rm Gr}^{r} \bA_{\rm cris,n,\Kbar}'\vert_{\fU_{\Kbar,n}} \cong \oplus_{s_0+\ldots+s_d=r-m}
\bigl(\cO_{\fX_M}/p^n \cO_{\fX_M}\bigr)\vert_{\fU_{\Kbar,n}} \xi^{[s_0]} \cdot X_1^{[s_1]}\cdots X_d^{[s_d]}$$is a free  $\cO_{\fX_M}/p^n
\cO_{\fX_M}\vert_{\fU_{\Kbar,n}}$-module. Since the Assumptions of loc~cit.~for the inverse system   $\{\cO_{\fX_M}/p^n \cO_{\fX_M}\}_n$ are satisfied we are done.
\end{proof}

Denote by $\cH^i(m)$ (resp.~$\cH^i(\Fil^r,m)$, resp.~$\cH^i({\rm Gr}^r,m)$) the sheaf associated to the contravariant functor which associates to a small affine
open $\cU={\rm Spf}(R_\cU)$ of $X^{\rm et}$ the value
$${\rm H}^i\bigl(\cG_{\cU,\Kbar}, A_{\rm
cris,\Kbar}(m)(\Rbar_\cU)\bigr),\mbox{ resp. } {\rm
H}^i\bigl(\cG_{\cU,\Kbar}, {\rm Fil}^r A_{\rm
cris,\Kbar}(m)(\Rbar_\cU)\bigr), \mbox{ resp. }{\rm
H}^i\bigl(\cG_{\cU,\Kbar}, {\rm Gr}^r A_{\rm
cris,\Kbar}(m)(\Rbar_\cU)\bigr).
$$
The cohomology
considered is the continuous Galois cohomology. Then we have.

\begin{lemma}\label{lemma:howtocompute}
For every $r\in\N$ and $m\in \Z$ we have morphisms $\cH^i(m)  \lra
{\rm H}_{\rm Gal}^i\left(\bA_{\rm cris,\Kbar}(m)\right)$,
$\cH^i({\rm Gr}^r,m) \lra {\rm H}_{\rm Gal}^i\left({\rm
Gr}^r\bA_{\rm cris,\Kbar}(m)\right)$  and $\cH^i({\rm Fil}^r,m)
\lra {\rm H}_{\rm Gal}^i\left(\Fil^r\bA_{\rm
cris,\Kbar}(m)\right)$ which are compatible with the maps induced
by the projection $\Fil^r \to {\rm Gr}^r$  and have  kernel and
cokernel annihilated by  $\II$.
\end{lemma}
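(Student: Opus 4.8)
The plan is to reduce the statement to a purely local assertion over small affines and then invoke the previously established structural descriptions of $\bA_{\rm cris,\Kbar}$ and its filtration. First I would recall that by construction (see \ref{sec:sheafAcris}, \ref{thm:gluing}, \ref{lemma:Acrismodpn} and the discussion preceding \ref{prop:crislocalization}) the sheaves $\cH^i(m)$, $\cH^i(\Fil^r,m)$, $\cH^i({\rm Gr}^r,m)$ are the sheafifications of presheaves built from \emph{continuous} Galois cohomology of the $\cG_{\cU,\Kbar}$-modules $A_{\rm cris}(\Rbar_\cU)(m)$, $\Fil^r A_{\rm cris}(\Rbar_\cU)(m)$, ${\rm Gr}^r A_{\rm cris}(\Rbar_\cU)(m)$, whereas ${\rm H}^i_{\rm Gal}(\bA_{\rm cris,\Kbar}(m))$ and its filtered/graded analogues are, by the definition in \S\ref{def:bBcris} and \S\ref{sec:cohocryssheaves}, the sheafifications of the presheaves $\cU\mapsto {\rm H}^i(\cG_{\cU,\Kbar}, \{\bA_{\rm cris,n,\Kbar}(m)(\Rbar_\cU)\}_n)$ and similarly. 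So the content of the lemma is comparing, for a fixed small affine $\cU$, the continuous cohomology of the $p$-adically complete module $A_{\rm cris}(\Rbar_\cU)(m)$ with the cohomology of the inverse system $\{\bA_{\rm cris,n,\Kbar}'(m)(\Rbar_\cU)\}_n$, and similarly for $\Fil^r$ and ${\rm Gr}^r$.

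Next I would use the localization comparison already proven: by \ref{prop:crislocalization} (together with \ref{lemma:localizeAcris}) the natural map $g_{\cU,n}\colon A_{\rm cris}(\Rbar_\cU)/p^n A_{\rm cris}(\Rbar_\cU)\to \bA_{\rm cris,n,\Kbar}'(\Rbar_\cU)$ is injective with cokernel annihilated by any element of $\II$, and compatibly for varying $n$; twisting by $(m)$ changes nothing module-theoretically (only the $\cG_{\cU,\Kbar}$-action), and by \ref{prop:filtAcrisnabla}/\ref{lemma:FilAcris(r)} the same holds at the level of each $\Fil^r$ and hence (by the snake lemma, since $A_{\rm cris}(\Rbar_\cU)$ and its filtration steps are $p$-torsion free by \cite[Prop.~6.1.4 \& 6.1.10]{brinon}) at the level of each ${\rm Gr}^r$. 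Thus for every $n$ we have short exact sequences $0\to A_{\rm cris}(\Rbar_\cU)/p^n(m)\to \bA_{\rm cris,n,\Kbar}'(m)(\Rbar_\cU)\to Q_n\to 0$ with $Q_n$ killed by $\II$, and likewise in the filtered and graded cases. Taking continuous $\cG_{\cU,\Kbar}$-cohomology of the inverse system and using that $\II$ is killed, the long exact sequence shows that the map on the $i$-th cohomology has kernel and cokernel each annihilated by $\II$. Finally I would observe that, since $A_{\rm cris}(\Rbar_\cU)$ is $p$-adically complete and separated, the continuous cohomology ${\rm H}^i(\cG_{\cU,\Kbar}, A_{\rm cris}(\Rbar_\cU)(m))$ agrees with ${\rm H}^i(\cG_{\cU,\Kbar}, \{A_{\rm cris}(\Rbar_\cU)/p^n(m)\}_n)$ computed as the derived functor on the inverse system (the system being Mittag--Leffler, indeed surjective transition maps), so the presheaf $\cU\mapsto {\rm H}^i(\cG_{\cU,\Kbar}, A_{\rm cris}(\Rbar_\cU)(m))$ maps to $\cU\mapsto {\rm H}^i(\cG_{\cU,\Kbar},\{\bA_{\rm cris,n,\Kbar}'(m)(\Rbar_\cU)\}_n)$ with kernel and cokernel presheaves killed by $\II$; sheafifying gives the desired maps $\cH^i(m)\to {\rm H}^i_{\rm Gal}(\bA_{\rm cris,\Kbar}(m))$ etc., with kernel and cokernel killed by $\II$. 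Compatibility with the projection $\Fil^r\to {\rm Gr}^r$ is automatic because all the comparison maps are induced by the same underlying morphisms of (filtered) modules.

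The main obstacle I anticipate is the bookkeeping around the inverse-system (``continuous'') cohomology: one must be careful that the relevant $\lim^1$ terms vanish or are controlled, so that ${\rm H}^i$ of the inverse system of the $A_{\rm cris}(\Rbar_\cU)/p^n$ really recovers the continuous cohomology of $A_{\rm cris}(\Rbar_\cU)$, and that the exact sequences with $\II$-torsion cokernels $Q_n$ behave well in the limit. The torsion-freeness statements from \cite{brinon} (Prop.~6.1.4, 6.1.10) and the Mittag--Leffler property of the systems $\{g_{\cU,n}\}$ provided by \ref{lemma:localizeAcris} are exactly what is needed to push this through, so no genuinely new input beyond \S\ref{sec:sheafAcris}--\S\ref{sec:propAcris} is required; the proof in the text presumably just says ``it follows from \ref{lemma:Acrismodpn} and \ref{lemma:acrisnabla}'' (resp.\ \ref{lemma:localizeAcris}) together with these completeness remarks.
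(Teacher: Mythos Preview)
Your overall strategy is correct and matches the paper's: reduce to comparing, for each small affine $\cU$ and each level $n$, the module $A_{\rm cris}(\Rbar_\cU)/p^n$ (and its $\Fil^r$, ${\rm Gr}^r$ pieces) with the localization $\bA_{\rm cris,n,\Kbar}'(\Rbar_\cU)$ (and its filtered/graded pieces), using \ref{lemma:acrisnabla} and \ref{lemma:localizeAcris} for the unfiltered case. Your remarks on the inverse-system cohomology and the Mittag--Leffler bookkeeping are more explicit than the paper's, which simply works with the systems.

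There is one small slip in your handling of $\Fil^r$ and ${\rm Gr}^r$. You invoke \ref{prop:filtAcrisnabla}/\ref{lemma:FilAcris(r)} to say ``the same holds at the level of each $\Fil^r$'', but those results identify the filtrations only after passing to the inverse limit $\bA_{\rm cris,M}(\Rbar_\cU)\cong A_{\rm cris}(\Rbar_\cU)$; they do not directly give you a level-$n$ statement that $\Fil^r A_{\rm cris}(\Rbar_\cU)/p^n \hookrightarrow \Fil^r\bA_{\rm cris,n,\Kbar}'(\Rbar_\cU)$ with $\II$-torsion cokernel. The paper proceeds in the opposite order: it treats ${\rm Gr}^r$ first, using the explicit description (from the proof of \ref{prop:whenRvconisgaloiscoho}) of ${\rm Gr}^r\bA_{\rm cris,n,\Kbar}'\vert_{\fU_{\Kbar,n}}$ as a free $\cO_{\fX}/p^n$-module, which reduces the ${\rm Gr}^r$ comparison to the already-known case of $\cO_{\fX}/p^n$; then it obtains the $\Fil^r$ statement by induction on $r$ from $\Fil^0$ and the exact sequences $0\to\Fil^{r+1}\to\Fil^r\to{\rm Gr}^r\to 0$ via the snake lemma (injectivity on $\Fil^r$ and ${\rm Gr}^r$ forces injectivity on $\Fil^{r+1}$, and the cokernel of $\Fil^{r+1}$ embeds in that of $\Fil^r$, hence is $\II$-torsion). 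Once you swap the order and use the explicit ${\rm Gr}^r$ description, your argument goes through without further change.
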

\begin{proof} It follows from lemma \ref{lemma:acrisnabla} and
lemma \ref{lemma:localizeAcris}
that the inverse system of  $\cG_{\cU,M}$-modules  $ A_{\rm
cris,n,\Kbar}'(m)(\Rbar_\cU)$ is contained in  the localization
$\bA_{\rm cris,n,\Kbar}'(m)\bigr)(\Rbar_\cU)$  for every small
affine~$\cU$ of~$X$ with cokernel annihilated by any element of
$\II$. This provides the first morphism and the claim regarding
its kernel and cokernel. Similarly using the description of ${\rm
Gr}^r\bA_{\rm cris,\Kbar}(m)$ given in the proof of
\ref{prop:whenRvconisgaloiscoho}, we deduce that ${\rm Gr}^{r}
A_{\rm cris,n,\Kbar}(\Rbar_\cU)$ is contained in ${\rm Gr}^{r}
\bA_{\rm cris,n,\Kbar}(\Rbar_\cU)$ with cokernel annihilated by
$\II$. This provides the second map and the subsequent statement.
Using induction on $r$ and the fact that ${\rm Gr}^{r}={\rm
Fil}^{r}/{\rm Fil}^{r+1}$ we deduce that also ${\rm Fil}^{r}
A_{\rm cris,n,\Kbar}(\Rbar_\cU)$ is contained in $\Fil^{r}
\bA_{\rm cris,n,\Kbar}(\Rbar_\cU)$ with cokernel annihilated by
$\II$.  This gives the last morphism and proves the assertion
concerning its kernel and cokernel.
\end{proof}

\begin{corollary}\label{cor:crysisacyclic}
Let $\cM$ be a coherent  $\cO_{X_K}$--module such that for every
small affine $\cU$ the $R_\cU\tensor_{\cO_K} K$-module
$\cM(\cU)\tensor_{\cO_K} K$ is projective. We view $\cM$ as a
sheaf on $\fXKbar$ via $v_{\Kbar}^{\ast}$. For every $r\in
\Z\cup\{-\infty\}$ we have:

$$
{\rm R}^j v_{\Kbar,\ast}^{\rm cont} \left(\Fil^r \bB_{\rm
cris,\Kbar}\tensor_{\cO_X} \cM \right)=\begin{cases} 0 &
\hbox{{\rm if }} j\geq 1 \cr  \Fil^r B_{\rm cris}
\widehat{\tensor} \cM & \hbox{{\rm if }} j=0 \cr
\end{cases}$$
\end{corollary}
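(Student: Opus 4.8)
The plan is to reduce the statement to a purely local computation of continuous Galois cohomology of the filtered ring $\Fil^\bullet A_{\rm cris}(\Rbar_\cU)$ over small affines, and then to feed in the acyclicity theorem of \cite{andreatta_brinonacyclicity} (the same circle of results behind the vanishing $\mathrm{R}^iv_\ast\bB_{\rm cris}=0$ announced in the introduction and behind \cite[Cor.~31]{andreatta_brinonacyclicity} used in \ref{prop:connFilDcriar}(4)). First I would observe that both assertions may be checked after restriction to the small affines $\cU=\Spf(R_\cU)$, since these form a basis of $X^{\rm et}$ and each $\mathrm{R}^jv_{\Kbar,\ast}^{\cont}$ of a continuous (ind-)sheaf is the sheafification of a presheaf built from cohomology on the localised sites $\fU_{\Kbar}$. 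Next, because $\cM(\cU)\tensor_{\cO_K}K$ is projective over $R_\cU\tensor_{\cO_K}K$ it is a direct summand of a finite free module; as the functors $\mathrm{R}^jv_{\Kbar,\ast}^{\cont}$ and $-\tensor_{\cO_X}\bB_{\rm cris,\Kbar}$ commute with finite direct sums and with passage to direct summands, I would reduce to the case $\cM=\cO_X$, carrying along only the (trivial) filtration so as not to disturb $\Fil^r$.

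Then I would unwind the inductive structure: by construction $\Fil^r\bB_{\rm cris,\Kbar}$ is the inductive system of continuous sheaves having $\Fil^r\bA_{\rm cris,\Kbar}(-m)$ in degree $m$, with transition maps $\iota_{\,\cdot\,}$, i.e.\ multiplication by $t$ up to a unit. Since on ${\rm Ind}\bigl(\Sh(\fXKbar)^\N\bigr)$ the functor $\mathrm{R}^jv_{\Kbar,\ast}^{\cont}$ is the termwise derived functor followed by the exact functor $\lim_{\to}$, it suffices to analyse each $\mathrm{R}^jv_{\Kbar,\ast}^{\cont}\bigl(\Fil^r\bA_{\rm cris,\Kbar}(-m)\bigr)$, keeping in mind that the transition morphisms are multiplication by $t$. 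Here Lemma \ref{lemma:howtocomputeRivast}, the argument of Proposition \ref{prop:whenRvconisgaloiscoho}(c)--(d) (which applies verbatim to $\cF=\Fil^r\bA_{\rm cris,\Kbar}(-m)$, by dévissage of $\Fil^r$ along its graded pieces, since on each $\fU_{\Kbar,n}$ these are built from free $\cO_{\fX_M}/p^n\cO_{\fX_M}$-modules as in \ref{prop:whenRvconisgaloiscoho}), and Lemma \ref{lemma:howtocompute} together identify $\mathrm{R}^jv_{\Kbar,\ast}^{\cont}\bigl(\Fil^r\bA_{\rm cris,\Kbar}(-m)\bigr)$, up to kernel and cokernel annihilated by a fixed power of $\II$, with the sheafification of $\cU\mapsto {\rm H}^j\bigl(\cG_{\cU,\Kbar},\Fil^r A_{\rm cris}(\Rbar_\cU)\bigr)\cdot t^{-m}$; the Tate twist by $(-m)$ only shifts the module structure because $\cG_{\cU,\Kbar}$ acts trivially on $t$.

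The external input I would then apply is the acyclicity statement of \cite{andreatta_brinonacyclicity}: for every $r\in\Z\cup\{-\infty\}$ and every small affine $\cU$, the continuous cohomology ${\rm H}^j\bigl(\cG_{\cU,\Kbar},\Fil^r A_{\rm cris}(\Rbar_\cU)\bigr)$ is annihilated by a power of $\II$, hence of $t$, for $j\ge 1$, while for $j=0$ it contains $R_\cU\widehat{\tensor}_{\cO_K}\Fil^r A_{\rm cris}(\OKbar)$ with cokernel killed by a power of $t$. Granting this, for $j\ge1$ every term $\mathrm{R}^jv_{\Kbar,\ast}^{\cont}\bigl(\Fil^r\bA_{\rm cris,\Kbar}(-m)\bigr)$ is killed by a fixed power of $t$, so the inductive limit, whose transition maps are multiplication by $t$, vanishes; thus $\mathrm{R}^jv_{\Kbar,\ast}^{\cont}(\Fil^r\bB_{\rm cris,\Kbar})=0$. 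For $j=0$, left-exactness of $v_{\Kbar,\ast}^{\cont}$ and of $\lim_{\to}$ identifies the answer with the sheafification of $\cU\mapsto \lim_{\to,m}\Fil^r A_{\rm cris}(\Rbar_\cU)^{\cG_{\cU,\Kbar}}\!\cdot t^{-m}=\Fil^r B_{\rm cris}(\Rbar_\cU)^{\cG_{\cU,\Kbar}}$, which by the displayed estimate is $\Fil^r B_{\rm cris}\widehat{\tensor}\cO_X$; re-introducing $\cM$ gives $\Fil^r B_{\rm cris}\widehat{\tensor}\cM$.

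I expect the main obstacle to be bookkeeping rather than conceptual: keeping the filtrations and Tate twists straight through the passage $\bA_{\rm cris}\rightsquigarrow\bB_{\rm cris}$, and, most delicately, verifying that the $\II$-power-torsion error terms produced by Proposition \ref{prop:whenRvconisgaloiscoho} and Lemma \ref{lemma:howtocompute} genuinely disappear in the colimit — which comes down to the relation between $\II$ and $t$, namely that every such torsion module is killed by a power of $t$ so that the multiplication-by-$t$ transition maps force the limit to be zero, exactly the mechanism codified in \ref{lemma:isoinBcris}(3). One should also check that the reduction to $\cM=\cO_X$ is compatible with the completed tensor product in the $j=0$ answer, which is where the coherence hypothesis on $\cM$ is used.
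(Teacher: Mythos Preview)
Your outline—localize to small affines, reduce to $\cM=\cO_X$ by projectivity, compute $R^jv_{\Kbar,\ast}^{\cont}$ term by term on the inductive system, then pass to the colimit—matches the paper's approach exactly, as does your use of \ref{prop:whenRvconisgaloiscoho}, \ref{lemma:howtocompute} and the acyclicity results of \cite{andreatta_brinonacyclicity}. The gap is in the sentence ``every term $R^jv_{\Kbar,\ast}^{\cont}\bigl(\Fil^r\bA_{\rm cris,\Kbar}(-m)\bigr)$ is killed by a fixed power of $t$, so the inductive limit, whose transition maps are multiplication by $t$, vanishes.'' Two problems arise. First, the power of $t$ is \emph{not} fixed: locally $\Fil^r\bA_{\rm cris,\Kbar}(-m)\cong\Fil^{r+m}A_{\rm cris}(\Rbar_\cU)$, and the paper's own proof of the $t$-torsion statement (labelled $(\alpha)$ there) reduces from filtration level $s=r+m$ to level $0$ via $t^s\in\Fil^s$, producing an annihilating exponent that grows with $m$. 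Second, and more seriously, the transition maps are not endomorphisms: they are $\Fil^{s}A_{\rm cris}\xrightarrow{\cdot t}\Fil^{s+1}A_{\rm cris}$ between \emph{different} groups. Knowing $t^ax=0$ in $H^j(\cG_{\cU,\Kbar},\Fil^s)$ only says $t^ax=d\eta$ with $\eta$ valued in $\Fil^s$, not in $\Fil^{s+a}$, so the image of $x$ under the $a$-fold transition into $H^j(\cG_{\cU,\Kbar},\Fil^{s+a})$ need not vanish. Your appeal to \ref{lemma:isoinBcris}(3) does not resolve this: that criterion concerns a single map between two fixed objects, not the vanishing of a colimit along a tower of changing filtration levels.

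The paper supplies the missing ingredient as a separate estimate, labelled $(\beta)$: the kernel of the inclusion-induced map $R^jv_{\Kbar,\ast}^{\cont}\bigl(\Fil^r\bA_{\rm cris,\Kbar}(m-a)\bigr)\to R^jv_{\Kbar,\ast}^{\cont}\bigl(\Fil^{r-a}\bA_{\rm cris,\Kbar}(m-a)\bigr)$ is annihilated by $p^c$ for some $c$ depending only on $a$. This is proved by d\'evissage on the graded pieces, using that $\cH^{j-1}(\Fil^r,m)\to\cH^{j-1}({\rm Gr}^r,m)$ has $p$-power-torsion cokernel (from \cite[Pf.~Lemme~36]{andreatta_brinonacyclicity}) together with \ref{prop:whenRvconisgaloiscoho}(d) and \ref{lemma:howtocompute}. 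The argument then concludes: the composite $\Fil^r\bA(m)\to\Fil^r\bA(m-a)\hookrightarrow\Fil^{r-a}\bA(m-a)$ is genuinely multiplication by $t^a$ at the \emph{same} underlying filtration level $r-m$, hence zero on $R^j$ by $(\alpha)$; thus the image of the actual transition $R^j(\Fil^r\bA(m))\to R^j(\Fil^r\bA(m-a))$ lands in the kernel controlled by $(\beta)$, so is $p^c$-torsion; since multiplication by $p$ is an isomorphism on $\Fil^r\bB_{\rm cris,\Kbar}$ (\ref{lemma:propbBcris}), the colimit vanishes. It is this switch from $t$-torsion to $p$-torsion, mediated by the graded pieces, that your sketch is missing.
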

\begin{proof} Note the we have a natural map $\Fil^r B_{\rm cris}
\widehat{\tensor} \cM \lra v_{\Kbar,\ast}^{\rm cont} \left(\Fil^r
\bB_{\rm cris,\Kbar}\tensor_{\cO_{\fX_M}^{\rm un}} \cM \right)$. Thus both
statements are local on $X$. We may then assume that $X=\cU$ is a
small affine so that $\cM\tensor_{\cO_K} K$ is a direct summand in
a free $\cO_{X_K}$-module. Since ${\rm R}^j v_{\Kbar,\ast}^{\rm
cont}$ commutes with direct sums  we may assume that
$\cM\tensor_{\cO_K} K$ is a free module and we are reduced to
prove the corollary in the case that $\cM=\cO_X$. Fix integers $m$
and $r\in\Z$ with $m\leq N$. We start by considering the following
statements:

\begin{enumerate}
\item[$\alpha$)] there exists $a\in\N$ depending on~$r-m$ such that ${\rm
R}^j v_{\Kbar,\ast}^{\rm cont} \left(\Fil^r \bA_{\rm
cris,\Kbar}(m)\right)$ is annihilated by $t^a$  if~$j\geq 1$.

\item[$\beta$)] there exists $c\in\N$, depending on $a$,   such that the
kernel of ${\rm R}^j v_{\Kbar,\ast}^{\rm cont} \left(\Fil^r \bA_{\rm
cris,\Kbar}(m-a)\right) \lra {\rm R}^j v_{\Kbar,\ast}^{\rm cont}
\left(\Fil^{r-a} \bA_{\rm cris,\Kbar}(m-a) \right)$ is annihilated
by $p^c$ for every $j\geq 1$.

\item[$\gamma$)] the map  $\cM \widehat{\tensor} \Fil^r A_{\rm
cris}\to v_{\Kbar,\ast}^{\rm cont} \left(\Fil^r \bA_{\rm
cris,\Kbar} \tensor_{\cO_{\fX_M}^{\rm un}} \cM\right)$ is an isomorphism for all $r$.

\end{enumerate}

First of all we remark that these statements imply the corollary in
the case that $\cM=\cO_X$. Indeed together with
\ref{lemma:isoinBcris} these claims imply all the statements of the
corollary except for the vanishing of ${\rm R}^j
v_{\Kbar,\ast}^{\rm cont} \left(\cL\tensor_{\Z_p} \Fil^r \bB_{\rm
cris,\Kbar}\tensor_{\cO_X} \cM \right)$ for $r\in \Z$ and $j\geq 1$.
Note that the image of the continuous sheaf $\Fil^r \bA_{\rm
cris,\Kbar}(m)$ in $\Fil^{r-a} \bA_{\rm cris,\Kbar}(m-a)$ is $t^a
\cdot \Fil^{r-a} \bA_{\rm cris,\Kbar}(m-a)$. It follows from
($\alpha$) that the map
$${\rm R}^j v_{\Kbar,\ast} \left(
\Fil^r \bA_{\rm cris,\Kbar}(m)\right)\to {\rm R}^j
v_{\Kbar,\ast} \left(\Fil^{r} \bA_{\rm
cris,\Kbar}(m-a)\right)$$factors via the kernel of the map $${\rm
R}^j v_{\Kbar,\ast} \left(\Fil^{r} \bA_{\rm
cris,\Kbar}(m)\right) \lra {\rm R}^j v_{\Kbar,\ast}
\left(\Fil^{r-a} \bA_{\rm cris,\Kbar}(m-a)\right)$$which is
annihilated by $p^c$ by ($\beta$). Since multiplication by $p$ is an
isomorphism on $\Fil^r \bB_{\rm cris,M}$ by \ref{lemma:propbBcris},
also the vanishing of ${\rm R}^j v_{\Kbar,\ast}^{\rm cont}
\left(\Fil^r \bB_{\rm cris,\Kbar}\cM \right)$ for $r\in \Z$ and
$j\geq 1$ follows.\smallskip

Now we start proving the statements $\alpha), \beta), \gamma).$ In view of \ref{prop:filtAcrisnabla} to prove statement~($\gamma$) we need to prove that for every
small affine the map $R_\cU \widehat{\tensor} \Fil^r A_{\rm cris}\to \Fil^r A_{\rm cris,\Kbar}(\Rbar_\cU)^{\cG_{\cU,\Kbar}}$ is an isomorphism. This follows
from~\cite[Prop.~41]{andreatta_brinonacyclicity}.
\smallskip

Recall that $ \Fil^r \bA_{\rm cris,\Kbar}(m)\cong \Fil^{r-m} \bA_{\rm cris,\Kbar}$ as continuous sheaves on~$\fXKbar$. Thus, we may also assume that~$m=0$.
Given~$r\in\N$, since $t^r \in \Fil^r A_{\rm cris}$, the cokernel of the inclusion $\Fil^r \bA_{\rm cris,\Kbar} \subset \bA_{\rm cris,\Kbar}$ is annihilated by
$t^r$. Hence it suffices to prove~($\alpha$) for~$r=0$. Recall from \ref{sec:Notation} that $t\in\II$ since  $t=(1-[\varepsilon])u$ with $u$ a unit in $A_{\rm
cris}$ by \cite[\S5.2.4\&\S5.2.8(ii)]{Fontaineperiodes}. Then claim ($\alpha$) follows from proposition \ref{prop:whenRvconisgaloiscoho}, lemma
\ref{lemma:howtocompute} and the fact that ${\rm H}^i\bigl(\cG_{\cU,\Kbar}, A_{\rm cris,\Kbar}(\Rbar_\cU)(m)\bigr)$ is annihilated
by~$(1-[\varepsilon])^{2(d+1)}\II^2$ proven in corollary \cite[Cor.~24]{andreatta_brinonacyclicity}. Here $d$ is the relative dimension of $X$ over $\cO_K$.

We are left to show ($\beta$). Proceeding inductively on $a$ it
suffices to show that for every $r\in\Z$ and every $n\in\N$ the
cokernel of the map ${\rm R}^{j-1} v_{\Kbar,\ast}^{\rm cont}
\left(\Fil^{r} \bA_{\rm cris,\Kbar}'\right) \to {\rm R}^{j-1}
v_{\Kbar,\ast}^{\rm cont} \left({\rm Gr}^{r} \bA_{\rm
cris,\Kbar}'\right)$ is annihilated by a power of $p$
(independent of $n$).  Consider the commutative diagram

$$\begin{array}{ccc} \cH^{j-1}({\rm Fil}^r,m) & \lra & \cH^{j-1}({\rm Gr}^r,m)
\cr\big\downarrow & & \big\downarrow\\
{\rm R}^{j-1} v_{\Kbar,\ast}^{\rm cont}
\left(\Fil^{r} \bA_{\rm cris,\Kbar}'\right) & \lra &{\rm R}^{j-1}
v_{\Kbar,\ast}^{\rm cont} \left({\rm Gr}^{r} \bA_{\rm
cris,\Kbar}'\right);
\end{array}
$$obtained from \ref{prop:whenRvconisgaloiscoho} and
lemma \ref{lemma:howtocompute}. For every small affine $\cU$, the map
$${\rm H}^{j-1}\bigl(\cG_{\cU,\Kbar}, \Fil^{r}A_{\rm cris,\Kbar}(m)(\Rbar_\cU)\bigr)\lra {\rm H}^{j-1}\bigl(\cG_{\cU,\Kbar},
{\rm Gr}^{r}A_{\rm cris,\Kbar}(m)(\Rbar_\cU)\bigr)$$ has cokernel annihilated  by a power of $p$ by \cite[Pf. Lemme 36]{andreatta_brinonacyclicity}. This also
applies to the associated sheaves i.e., to the map $\cH^{j-1}({\rm Fil}^r,m) \lra \cH^{j-1}({\rm Gr}^r,m) $. The right vertical morphism in the diagram  has kernel
and cokernel annihilated by $\II^{2(j-1)+1}$ by proposition \ref{prop:whenRvconisgaloiscoho} and lemma \ref{lemma:howtocompute}. We conclude that the same applies
to the cokernel of the lower horizontal arrow. The conclusion follows.
\end{proof}

\begin{proof} ({\it of theorem \ref{thm:formalcomparison}})
Thanks to lemma \ref{lemma:propbBcris}, if $\cL$ is a $p$-adic sheaf
sheaf, the sequence
\begin{align*}{(\ast)}\ 0 \lra \cL\tensor_{\Z_p} \Fil^r\bB_{\rm
cris,\Kbar}^\nabla  \lra \cL\tensor_{\Z_p} \Fil^r  \bB_{\rm
cris,\Kbar} &  \stackrel{\nabla}{\lra} \cL\tensor_{\Z_p}
\Fil^{r-1}\bB_{\rm cris,\Kbar}\otimes_{\cO_X} \Omega^1_{X/\cO_K}
\stackrel{\nabla}{\lra}  \cdots \cr & \cdots
\stackrel{\nabla}{\lra} \cL \tensor_{\Z_p} \Fil^{r-d}\bB_{\rm
cris,\Kbar} \otimes_{\cO_X} \Omega^d_{X/\cO_K}\lra 0\cr
\end{align*} is exact for every $r\in\Z$. Due to \ref{prop:connFilDcriar}
the complex $\cL\tensor_{\Z_p} \Fil^{r-\bullet}\bB_{\rm cris,\Kbar}\otimes_{\cO_X} \Omega^\bullet_{X/\cO_K}$ is isomorphic to the complex
$\Fil^{r-\bullet}\bigl(\bDcrisgeo(\cL)\otimes_{\cO_X\otimes_{\cO_K} A_{\rm cris}} \bB_{\rm cris,\Kbar} \otimes_{\cO_X} \Omega^\bullet_{X/\cO_K}\bigr)$. This
provides an isomorphism

$$
{\rm H}^i\bigl(\fXKbar, \bL\otimes{\rm Fil}^r \bB_{\rm cris,\Kbar}^\nabla\bigr)  \stackrel{\sim}{\lra}  {\mathbb H}^i\left(\fXKbar,
\Fil^{r-\bullet}\bigl(\bDcrisgeo(\cL)\otimes_{\cO_X\otimes_{\cO_K} A_{\rm cris}} \bB_{\rm cris,\Kbar}\otimes_{\cO_X} \Omega^\bullet_{X/\cO_K}\bigr)\right).$$Here we
view $\Fil^{r-\bullet}\bigl(\bDcrisgeo(\cL)\otimes_{\cO_X\otimes_{\cO_K} A_{\rm cris}} \bB_{\rm cris,\Kbar}\otimes_{\cO_X} \Omega^\bullet_{X/\cO_K}\bigr)$ as the
inductive system of complexes of continuous sheaves $\Fil^{r-\bullet}\bigl(\bDcrisgeo(\cL)\otimes_{\cO_X\otimes_{\cO_K} A_{\rm cris}} \bA_{\rm cris,\Kbar}(m)
\otimes_{\cO_X} \Omega^\bullet_{X/\cO_K}\bigr)$ and we apply the construction of definition \ref{def:bBcris} to the continuous hyper-cohomology ${\mathbb
H}^\ast\bigl(\fXKbar, \_\bigr)$ of these complexes. It follows from corollary \ref{cor:crysisacyclic} that
$\Fil^{r-\bullet}\bigl(\bDcrisgeo(\cL)\otimes_{\cO_X\otimes_{\cO_K} A_{\rm cris}} \bB_{\rm cris,\Kbar}\otimes_{\cO_X} \Omega^\bullet_{X/\cO_K}\bigr)$ is acyclic for
$v_{\Kbar,\ast}$ and that its image via $v_{\Kbar,\ast}$  is $\Fil^{r-\bullet}\bigl(\bDcrisgeo(\cL)\otimes_{\cO_X} \Omega^\bullet_{X/\cO_K}\bigr)$.  Thus, we have
an isomorphism $$ \bH^i\left(X^{\rm et}, \Fil^{\rm r-\bullet}\bDcrisgeo(\cL)\otimes_{\cO_X} \Omega^\bullet_{X/\cO_K}\right)\stackrel{\sim}{\lra}{\mathbb
H}^i\left(\fXKbar, \Fil^{r-\bullet}\bigl(\bDcrisgeo(\cL)\otimes_{\cO_X\otimes_{\cO_K} A_{\rm cris}} \bB_{\rm cris,\Kbar}\otimes_{\cO_X}
\Omega^\bullet_{X/\cO_K}\bigr)\right).$$

\smallskip

{\it Compatibility with $G_M$--action:} These are isomorphisms of $G_M$--modules with $G_M$--structure given as explained after the theorem resolving
$\cL\tensor\Fil^r \bA_{\rm cris,\Kbar}^\nabla(m)$ (resp.~$\cL\tensor_{\Z_p} \Fil^{r-\bullet}\bA_{\rm cris,\Kbar}(m) \otimes_{\cO_X} \Omega^\bullet_{X/\cO_K}$) with
the pull--back via $\beta_{M,\Kbar}^\ast$ of an injective resolution of $\cL\tensor\Fil^r \bA_{\rm cris,M}^\nabla(m)$ (resp.~of the complex $\cL\tensor_{\Z_p}
\Fil^{r-\bullet}\bA_{\rm cris,M}(m) \otimes_{\cO_X} \Omega^\bullet_{X/\cO_K}$).\smallskip

{\it Compatibility with Frobenius:}  Fix a covering of $X$ by small
affines $\cU_i$, for $i\in I$ and for each of them choose parameters
$T_{i,1},\ldots, T_{i,d}\in R_{\cU_i}^\ast$. For every subset
$J\subset I$ let $\cU^J\subset \cU_J^o \subset \cU_J$ be as in the
notation introduced after theorem \ref{thm:formalcomparison}. Let
$\fU_{\Kbar}^J$ be Faltings' site associated to $\cU^J$ and consider
the continuous morphism $j_J\colon \fX_{\Kbar}\to \fU_{\Kbar}^J$
sending $(\cV,W)$ to $(\cV,W)\times_{(X,X_K)} (\cU^J,\cU^J_K)$. The
inverse image  $j_J^\ast$ of a sheaf on $\fX_\Kbar$ is the restriction of
$\cF$ to $\fU_\Kbar^J$ viewed as a subcategory of $\fX_\Kbar$.

Define~$\bA_{{\rm cris},\cU_J,n}^\nabla$ to be the $\WW(k)$--DP
sheaf of algebras in $\Sh(\fU^J_{\Kbar})$ of $\bA_{{\rm
inf},\cU_J,n}$ with respect to the kernel of
$$\WW_n\left(\cO_{\fU^J_{\Kbar}}/p\cO_{\fU^J_{\Kbar}}\right) \lra
\cO_{\fU^J_{\Kbar}}/p^n\cO_{\fU^J_{\Kbar}} $$defined by
$\theta_M$. Its existence  is proven as  in
\S \ref{sec:sheafAcrisnabla}. Since
$\cO_{\fU^J_{\Kbar}}=\cO_{\fX_{\Kbar}}\vert_{\fU^J_{\Kbar}} $
we have a natural morphism  $$\bA_{{\rm cris},\cU_J,n}^\nabla\lra
j_J^\ast\bigl(\bA_{{\rm cris},X,n}^\nabla\bigr)$$and it follows
from loc.~cit.~that such a morphism is an isomorphism.
Define~$\bA_{{\rm cris},\cU_J,n}$ as the $\WW(k)$--DP sheaf of
$\cO_{\cU_J^o}$--algebras of
$\WW_n\left(\cO_{\fU^J_{\Kbar}}/p\cO_{\fU^J_{\Kbar}}\right)
\otimes_{\WW_n(\kbar)}
v_{\cU^J_{\Kbar}}^\ast\bigl(\cO_{\cU_J^o}\bigr)$ with respect to
the kernel of the morphism of $ \cO_{\cU_J^o}$--algebras
$$\bA_{{\rm inf},J,n}^+\otimes_{\WW_n(\kbar)} v_{\cU^J_{\Kbar}}^\ast\bigl(\cO_{\cU_J^o}\bigr)\lra
\cO_{\fU^J_{\Kbar}}/p^n\cO_{\fU^J_{\Kbar}}$$defined by the
$\cO_{\cU_J^o}$-linear extension of $\theta_M$.

\begin{lemma}\label{lemma:AcrisUJ} The sheaf $\bA_{{\rm cris},\cU_J,n}$ exists and
$$\bA_{{\rm cris},\cU_J,n}\vert_{\fU^J_n} \cong  \bA_{{\rm cris},\cU^J,n}^\nabla\vert_{{\fU^J_n}} \left\langle
X_{i,1},\ldots, X_{i,d}\right\rangle_{i\in J},$$where $X_{i,j}:=1\otimes T_{i,j}- \widetilde{T}_{i,j}\otimes 1$ for every $i\in I$ and every $1\leq j\leq d$. For
every $h\in J$ one also has an isomorphism
$$ \bA_{{\rm cris},\cU_J,n}\cong j_J^\ast\bigl(\bA_{{\rm
cris},X,n}\bigr) \left\langle Y_{i,1}^{(h)},\ldots, Y_{i,d}^{(h)}\right\rangle_{i\in J,i\neq h} $$where $Y_{i,j}^{(h)}$, for $i\in J$ with $i\neq h$ and for $1\leq
j\leq d$ are regular elements generating the ideal defining the closed immersion $\cU^J\subset \bigl(\cU_h \times_{\cO_K} \cU_i\bigr)^o$. In particular $\bA_{{\rm
cris},\cU_J,n}$ is a free $j_J^\ast\bigl(\bA_{{\rm cris},X,n}\bigr)$-module.
\end{lemma}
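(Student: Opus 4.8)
The statement is a localized, relative version of the description of $A_{\rm cris}(\Rbar_\cU)$ as a divided power polynomial algebra over $A_{\rm cris}^\nabla(\Rbar_\cU)$; the proof should run exactly parallel to the proof of Theorem \ref{thm:gluing} but on the site $\fU^J_\Kbar$ rather than on $\fX_M$, using $\cU^J$ in place of a single small affine and keeping track of the several copies of coordinates indexed by $i\in J$. First I would establish existence of $\bA_{{\rm cris},\cU_J,n}$: since $\cU^J\subset \cU_J^o$ is a closed immersion cut out by regular elements inside a smooth $\cO_K$-formal scheme, $\cO_{\cU_J^o}$ is smooth over $\cO_K$ and $\WW_n(\cO_{\fU^J_\Kbar}/p)\otimes_{\WW_n(k)} v^\ast(\cO_{\cU_J^o})$ is (locally) a polynomial-type extension of $\WW_{X,n,\Kbar}\vert_{\fU^J_\Kbar}$, so one may invoke \cite[Thm. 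I.2.4.1]{berthelot1} verbatim as in the proof of \ref{thm:gluing}(1). Then I would pin down the explicit local shape: restricting to $\fU^J_n$ (the analogue of $\fU_{n,M}$ attached to $\cU^J$, i.e.\ where the $p^n$-th roots of the $T_{i,j}$ and of unity have been adjoined) the elements $\widetilde T_{i,j}$ become Teichm\"uller lifts and the $X_{i,j}:=1\otimes T_{i,j}-\widetilde T_{i,j}\otimes 1$ are defined; by the relative version of \ref{lemma:kerthetaonfUn} the kernel of $\theta$ is generated by $\xi_n$ together with all the $X_{i,j}$, so the $\WW(k)$-DP envelope is obtained from $\bA_{{\rm cris},\cU^J,n}^\nabla\vert_{\fU^J_n}$ by freely adjoining divided powers of the $X_{i,j}$. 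This is precisely the argument of \ref{thm:gluing}(2), applied with $d\cdot\#J$ variables instead of $d$, and it gives the first displayed isomorphism.

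For the second isomorphism the point of view must shift: instead of describing $\bA_{{\rm cris},\cU_J,n}$ relative to the ``$\nabla$''-sheaf on $\cU^J$, I would describe it relative to the restriction of the global sheaf $\bA_{{\rm cris},X,n}$ via $j_J$. Fix $h\in J$. The closed immersion $\cU^J\hookrightarrow(\cU_h\times_{\cO_K}\cU_i)^o$ for each $i\in J\setminus\{h\}$ is cut out (after possibly shrinking, using that $\cU_h,\cU_i$ are small with coordinates $T_{h,\bullet},T_{i,\bullet}$) by the regular sequence $Y^{(h)}_{i,j}$, which one can take to be lifts of $1\otimes T_{i,j}-1\otimes T_{h,j}$ type elements; more precisely on $\fU^J_n$ one has $Y^{(h)}_{i,j}\equiv X_{i,j}-X_{h,j}$ modulo the ideal already generated by $\xi_n$ and the $X_{h,\bullet}$. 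Since $j_J^\ast(\bA_{{\rm cris},X,n})\vert_{\fU^J_n}\cong \bA_{{\rm cris},\cU^J,n}^\nabla\vert_{\fU^J_n}\langle X_{h,1},\ldots,X_{h,d}\rangle$ by \ref{thm:gluing}(2) applied to the small affine $\cU_h$, a change of divided-power-polynomial generators (replacing the family $\{X_{i,j}\}_{i\in J}$ by $\{X_{h,j}\}_j\cup\{Y^{(h)}_{i,j}\}_{i\neq h}$, which is an invertible affine change preserving divided powers because the new generators differ from the old by elements whose divided powers are already available) converts the first description into the claimed one. Freeness of $\bA_{{\rm cris},\cU_J,n}$ over $j_J^\ast(\bA_{{\rm cris},X,n})$ then follows because a DP polynomial algebra is free on the standard monomials in the $Y^{(h)}_{i,j}$ with each variable appearing to degree $<p$, exactly as recalled just before \ref{thm:gluing}.

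The uniqueness/gluing step — that these local descriptions on the various $\fU^J_n$ are independent of the auxiliary choices and glue to the sheaf $\bA_{{\rm cris},\cU_J,n}$ on all of $\fU^J_\Kbar$ — is handled as in \ref{thm:gluing}(2) via the universal property and \ref{lemma:restrictenvelope}: the map $\bA_{{\rm cris},\cU^J,n}^\nabla\vert_{\fU^J_n}\langle\cdots\rangle\to\cG$ extending a given $f$ on $\WW_{X,n,\Kbar}\vert_{\fU^J_n}$ exists and is unique because the $X_{i,j}$ (equivalently the $Y^{(h)}_{i,j}$) land in the DP ideal, so the construction is canonical and descends. \textbf{The main obstacle} I expect is the second isomorphism: one must verify that the $Y^{(h)}_{i,j}$ really do form a regular sequence defining $\cU^J$ inside the open $\cU^J_J:=\prod_{i\in J}\cU_i$ localized appropriately, and that, after passing to $\fU^J_n$, the change of generators from $\{X_{i,j}\}$ to $\{X_{h,j}\}\cup\{Y^{(h)}_{i,j}\}$ is compatible with the DP structures on both sides — i.e.\ that $Y^{(h)}_{i,j}$ and $X_{i,j}-X_{h,j}$ differ by something admitting divided powers in $\bA_{{\rm cris},\cU^J,n}^\nabla\vert_{\fU^J_n}\langle X_{h,\bullet}\rangle$. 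This is where the smoothness of the $\cU_i$ and the formal \'etaleness over the torus $R_0$ (used throughout \S\ref{sec:sheafAcris}) must be exploited carefully; once that bookkeeping is in place, everything else is a transcription of \ref{thm:gluing}.
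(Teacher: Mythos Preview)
Your proposal is correct and follows essentially the same approach as the paper: existence and the first isomorphism are obtained by transcribing the proof of Theorem~\ref{thm:gluing} with $d\cdot\#J$ variables, and the second isomorphism is obtained over $\fU^J_n$ by the same DP-envelope computation (the paper simply says ``a similar argument''), then extended to $\fU^J$ by descent via the universal property as in \ref{lemma:restrictenvelope}. Your derivation of the second formula from the first via the change of generators $\{X_{i,j}\}_{i\in J}\leadsto\{X_{h,j}\}_j\cup\{Y^{(h)}_{i,j}\}_{i\neq h}$ is a correct and slightly more explicit packaging of the paper's ``similar argument''; the freeness statement is exactly as you say, via the standard monomial basis of a DP polynomial algebra.
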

\begin{proof} The existence of the sheaf and the formula for its restriction to $\fU^J_n$ is
proven as in \S \ref{sec:sheafAcris}. A similar argument implies the last formula over $\fU^J_n$. A descent argument allows to conclude that the formula holds also
over $\fU^J$. The last statement follows from the properties of divided powers: a basis is given by monomials in the elements $\gamma_j\bigl(Y_{i,\ell}^{(h)}\bigr)$
for $j\in \N$, $1\leq \ell\leq d$ and $i\in J$ but $i\neq h$ taking $\gamma_j$ to be as in \S\ref{sec:Notation}; see \S \ref{sec:sheafAcris} for details.
\end{proof}

Let $\bA_{{\rm cris},J,n}^\nabla$ (resp.~$\bA_{{\rm cris},J,n}$) be $j_{J,\ast} \left(\bA_{{\rm cris},\cU^J,n}^\nabla\right)$ (resp.~$j_{J,\ast} \left(\bA_{{\rm
cris},\cU^J,n}\right)$). Define $\bA_{{\rm cris},J}^\nabla $ (resp.~$\bA_{{\rm cris},J}$) as the system $\left\{\bA_{{\rm cris,J},n}^\nabla\right\}_n$
(resp.~$\left\{\bA_{{\rm cris,J},n}\right\}_n$) and $\bB_{{\rm cris},J}^\nabla $ (resp.~$\bB_{{\rm cris},J}$) for the inductive systems given by multiplication by
$t$. We also write $\bB_{{\rm cris},J}\otimes_{\cO_{\cU_J}} \Omega^\ast_{\cU_J/\cO_K}$ for the inductive system, with respect to multiplication by $t$, associated
to the push-forward via $j_{J,\ast}$ of $\bA_{{\rm cris},\cU_J}\otimes_{\cO_{\fU^J}^{\rm un}} v_{\cU^J,\Kbar}^\ast\bigl(\Omega^\ast_{\cU_J/\cO_K}\bigr)$. We then
get  a long exact sequence of continuous sheaves $$0 \lra \bB_{{\rm cris},J}^\nabla \lra  \bB_{{\rm cris},J}\otimes_{\cO_{\cU_J}} \Omega^\ast_{\cU_J/\cO_K} \lra
0.$$The exactness is proven as in proposition \ref{prop:deRhamcomplex} using the first description given in lemma  \ref{lemma:AcrisUJ}. These complexes are
compatible if we vary $J$. In particular  we get a double complex $ \bB_{{\rm cris},\bullet}\otimes_{\cO_{\cU_\bullet}} \Omega^\ast_{\cU_\bullet/\cO_K}$ which is
equivalent to the simple complex $\bB_{{\rm cris},\bullet}^\nabla$. Since the $\cU_i$'s cover $X$ the sequence
$$0 \lra \cO_{\fXKbar}/p\cO_{\fXKbar} \lra
j_{\bullet,\ast}\left(\cO_{\fU_\Kbar^\bullet}/p\cO_{\fU_\Kbar^\bullet}\right) \lra 0$$is exact. Using \ref{lemma:exactsequenceAcrisnnabla'} we deduce that the
sequence
$$0\lra  \bB_{{\rm cris,\Kbar}}^\nabla \lra \bB_{{\rm
cris},\bullet}^\nabla\lra 0$$is also exact. Consider the commutative diagram
$$\begin{array}{ccccc}\label{diagram:chechAcris}
0 & \lra & \cL\tensor_{\Z_p}\bB_{{\rm cris,\Kbar}}^\nabla & \lra & \cL\tensor_{\Z_p}\bB_{{\rm cris,\Kbar}}\otimes_{\cO_{X}} \Omega^\ast_{X/\cO_K}\cr & &
\big\downarrow  & & \big\downarrow \cr 0 & \lra &  \cL\tensor_{\Z_p}\bB_{{\rm cris},\bullet}^\nabla & \lra & \cL\tensor_{\Z_p}\bB_{{\rm
cris},\bullet}\otimes_{\cO_{\cU_\bullet}} \Omega^\ast_{\cU_\bullet/\cO_K} \cr
\end{array}.$$Since the rows are exact and the first column is exact
it follows that the complex $\cL\tensor_{\Z_p}\bB_{{\rm cris,\Kbar}}\otimes_{\cO_{X}} \Omega^\ast_{X/\cO_K}$ is quasi--isomorphic to the double complex $
\cL\tensor_{\Z_p}\bB_{{\rm cris},\bullet}\otimes_{\cO_{\cU_\bullet}} \Omega^\ast_{\cU_\bullet/\cO_K}$. On the latter the Frobenius maps $F_J$ on $\cU_J$ and
Frobenius on $\bA_{{\rm inf},J}^+$ define a morphism of complexes $$\Phi_{\bullet,\ast}\colon \cL\tensor_{\Z_p}\bB_{{\rm cris},\bullet}\otimes_{\cO_{\cU_\bullet}}
\Omega^\ast_{\cU_\bullet/\cO_K}\lra \cL\tensor_{\Z_p}\bB_{{\rm cris},\bullet}\otimes_{\cO_{\cU_\bullet}} \Omega^\ast_{\cU_\bullet/\cO_K}$$compatible with Frobenius
on $\cL\tensor_{\Z_p}\bB_{{\rm cris,\Kbar}}^\nabla$. Let $\bD(\cL)$ be the Frobenius crystal associated to $\bDcrisar(\cL)$ (up to isogeny); see the notation
following theorem \ref{thm:formalcomparison}. By definition of crystal the $\cO_{\cU_J^{\rm PD}}$-module $\bD(\cL)_J:=\bD(\cL)\vert_{\cU_J^{\rm PD}}$ together with
Frobenius and connection, coincides with the pull--back of $\bDcrisar(\cL) $ via the $h$-th projection $\pi_h\colon \cU^J\to \cU_h$. We have

\begin{lemma}  The $\cO_{\cU_J^{\rm PD}}\widehat{\otimes} B_{\rm cris}$-module
$\bD(\cL)_J\widehat{\otimes} B_{\rm cris}$ together with Frobenius and connection, coincides with $v_{X,\Kbar,\ast}^{\rm cont}\left(\cL\tensor_{\Z_p}\bB_{{\rm
cris},J}\right)$ with the Frobenius and connection induced by those on $\bB_{{\rm cris},J}$.
\end{lemma}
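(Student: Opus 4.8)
The plan is to reduce the statement to a purely local assertion over each $\cU_J^{\rm PD}$ and then invoke the localization results for $\bA_{\rm cris,M}$ already established. First I would unwind the definition of $\bB_{\rm cris,J}$. Recall that $\bB_{\rm cris,J}=\{\bA_{\rm cris,J,n}\}_n[t^{-1}]$ with $\bA_{\rm cris,J,n}=j_{J,\ast}\bigl(\bA_{\rm cris,\cU^J,n}\bigr)$, and by lemma \ref{lemma:AcrisUJ} we have, after restriction to $\fU^J_n$, an identification of $\bA_{\rm cris,\cU_J,n}$ with a free module over $j_J^\ast\bigl(\bA_{\rm cris,X,n}\bigr)$ in the variables $Y^{(h)}_{i,\ell}$, which are regular parameters cutting out $\cU^J$ inside $(\cU_h\times_{\cO_K}\cU_i)^o$. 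Since $\cO_{\fU^J_{\Kbar}}=\cO_{\fX_{\Kbar}}\vert_{\fU^J_{\Kbar}}$, the sheaf $\bA_{\rm cris,\cU_J,n}$ is precisely the value of the crystalline sheaf $\bA_{\rm cris,X,n}$ (regarded on the divided power thickening $\cU^J\hookrightarrow\cU_J^{\rm PD}$) tensored, along the PD structure, with the $\cO_{\cU_J^{\rm PD}}$-structure; this is exactly the crystal axiom for $\bA_{\rm cris}$ applied to the PD-immersion $\cU^J\hookrightarrow \cU_J^{\rm PD}$.

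The second step is to compute $v_{X,\Kbar,\ast}^{\rm cont}\bigl(\cL\tensor_{\Z_p}\bB_{\rm cris,J}\bigr)$. I would use corollary \ref{cor:crysisacyclic}, applied not to $X$ but to the smooth formal scheme $\cU_J^{\rm PD}$ (or rather its rigid generic fiber), together with the coherent $\cO_{(\cU_J^{\rm PD})_K}$-module $\cM=\bD(\cL)_J$ which by proposition \ref{prop:connFilDcriar} is locally a direct summand of a free module after inverting $p$. Since $j_{J,\ast}$ is the push-forward along the continuous morphism $j_J\colon\fX_\Kbar\to\fU^J_\Kbar$ and $v_{X,\Kbar}$ factors through $v_{\cU^J,\Kbar}$ composed with $j_{J,\ast}$, the corollary gives $R^q v_{X,\Kbar,\ast}^{\rm cont}\bigl(\cL\tensor_{\Z_p}\bB_{\rm cris,J}\bigr)=0$ for $q\geq 1$ and, in degree $0$, $v_{X,\Kbar,\ast}^{\rm cont}\bigl(\cL\tensor_{\Z_p}\bB_{\rm cris,J}\bigr)\cong \bD(\cL)_J\widehat{\otimes}B_{\rm cris}$ as a sheaf of $\cO_{\cU_J^{\rm PD}}\widehat{\otimes}B_{\rm cris}$-modules. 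One checks on small affines that this identification is the one induced by the crystal structure, using that on a small affine $\cU$ factoring through $\cU^J$ the localization $\bA_{\rm cris,\cU_J,n}(\Rbar_\cU)$ agrees with $A_{\rm cris}(\Rbar_\cU)$ by proposition \ref{prop:crislocalization}.

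The third and final step is to track the supplementary structures. The Frobenius on $\bB_{\rm cris,J}$ is, by construction, induced by the $F_J$-linear Frobenius on $\bA_{\rm inf,J}^+$ and the lift $F_J=\prod_{i\in J}F_i$; under the identification with $\bD(\cL)_J\widehat{\otimes}B_{\rm cris}$ this matches the Frobenius on $\bD(\cL)_J$ induced from the non-degenerate Frobenius $\Phi$ on $\bDcrisar(\cL)$ via the $h$-th projection, exactly as in the already-proven local compatibility statements in \ref{prop:FrobDcriar} and \ref{lemma:changeofFrobenius}. The connection is handled the same way: the connection on $\bB_{\rm cris,J}$ relative to $\cO_{\cU_J}$ is $\nabla$ on the variables $Y^{(h)}_{i,\ell}$ (sending $Y^{[m]}$ to $Y^{[m-1]}\otimes dY$, as in proposition \ref{prop:deRhamcomplex}), and since $\bD(\cL)_J$ is the pull-back of $\bDcrisar(\cL)$ equipped with the crystal connection, Taylor expansion along $\nabla$ furnishes the required comparison, with $v_{X,\Kbar,\ast}^{\rm cont}$ commuting with the connection because it is $B_{\rm cris}$-linear and the connection is computed via the PD-polynomial generators. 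The main obstacle I expect is the verification in step two that the degree-zero push-forward really is $\bD(\cL)_J\widehat{\otimes}B_{\rm cris}$ with the \emph{crystal}-theoretic identification (not just an abstract isomorphism of modules): this requires carefully matching the section $\sigma_J\colon\cO_{\cU_J^{\rm PD}}\to\bA_{\rm cris,\cU_J}$ sending $T_{i,\ell}\mapsto\widetilde{T}_{i,\ell}$ with the PD-thickening used to evaluate the crystal $\bD(\cL)$, and checking that the resulting trivialization is independent of $\sigma_J$ up to the Taylor isomorphism — essentially the same bookkeeping carried out in the proof of lemma \ref{lemma:associated}, now performed fibrewise over the $\cU_J^{\rm PD}$'s and checked to be compatible as $J$ varies.
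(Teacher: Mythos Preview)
Your step~2 has a genuine gap. Corollary~\ref{cor:crysisacyclic} is stated for a smooth formal scheme and for $\bB_{\rm cris,\Kbar}\otimes_{\cO_X}\cM$ with $\cM$ a coherent $\cO_{X_K}$-module; neither hypothesis is available in your proposed application. The PD-envelope $\cU_J^{\rm PD}$ is not smooth over $\cO_K$ (its structure sheaf has divided powers of the diagonal ideal), so Faltings' site and the sheaf $\bB_{\rm cris}$ are not defined for it, and the corollary has no version over $\cU_J^{\rm PD}$ to invoke. Moreover, $\bD(\cL)_J$ is an $\cO_{\cU_J^{\rm PD}}$-module, hence of infinite rank over $\cO_{\cU^J}$, so it is not a coherent $\cO_{\cU^J}$-module either. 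Finally, to make the corollary output $\bD(\cL)_J\widehat{\otimes}B_{\rm cris}$ you would first need to know $\cL\otimes\bB_{{\rm cris},J}\cong \bD(\cL)_J\otimes_{\cO_{\cU_J^{\rm PD}}}\bB_{{\rm cris},J}$, which is essentially the statement being proved.

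The correct route, which you in fact set up in step~1 but then abandon, is to use the \emph{freeness} of $\bB_{{\rm cris},J}$ over $j_J^\ast(\bB_{{\rm cris},X})$ from the second isomorphism of lemma~\ref{lemma:AcrisUJ}. Write $\cL\otimes_{\Z_p}\bB_{{\rm cris},J}\cong \bigl(\cL\otimes_{\Z_p}j_J^\ast(\bB_{{\rm cris},X})\bigr)\otimes_{j_J^\ast(\bB_{{\rm cris},X})}\bB_{{\rm cris},J}$, a direct sum indexed by PD-monomials in the $Y_{i,\ell}^{(h)}$. Now apply $v_{X,\Kbar,\ast}^{\rm cont}$ and use directly that $\cL$ is crystalline: on $\cU^J$ one has $v_{\Kbar,\ast}^{\rm cont}\bigl(\cL\otimes_{\Z_p}\bB_{{\rm cris},X}\bigr)=\bDcrisgeo(\cL)$. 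Tensoring back with the free PD-polynomial module gives $\bDcrisgeo(\cL)\vert_{\cU^J}\otimes_{\pi_h^\ast(\cO_{\cU_h})}\cO_{\cU_J^{\rm PD}}$, and the crystal property of $\bD(\cL)$ identifies this with $\bD(\cL)_J\widehat{\otimes}B_{\rm cris}$. This is the paper's argument; once this is in place your step~3 on Frobenius and connection goes through as written.
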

\begin{proof} By definition of crystal $\bD(\cL)_J$ coincides with the
pull--back of $\bDcrisar(\cL) $ via the $h$-th projection $\pi_h\colon \cU^J\to \cU_h$. In particular $\cO_{\cU_J^{\rm PD}}\cong
\pi_h^\ast\bigl(\cO_{\cU_h}\bigr)\left\langle Y_{i,1}^{(h)},\ldots, Y_{i,d}^{(h)}\right\rangle_{i\in J,i\neq h}$ compatibly with Frobenius and connection; see lemma
\ref{lemma:AcrisUJ} for the notation. Due to the second description in \ref{lemma:AcrisUJ} we get that $v_{X,\Kbar,\ast}^{\rm cont}\left(\bB_{{\rm
cris},J}\right)\cong \cO_{\cU_J^{\rm PD}}\widehat{\otimes} B_{\rm cris}$ compatibly with Frobenius and connection. Since $\cL\tensor_{\Z_p}\bB_{{\rm cris},J}\cong
\cL\tensor_{\Z_p} j_J^\ast\bigl(\bB_{{\rm cris},X}\bigr) \tensor_{j_J^\ast\bigl(\bB_{{\rm cris},X}\bigr)}\otimes \bB_{{\rm cris},J}$ by lemma \ref{lemma:AcrisUJ}
and $\cL$ is crystalline, we conclude that $v_{X,\Kbar,\ast}^{\rm cont}$ of the former coincides with
$\bDcrisgeo\bigl(\cL\bigr)\widehat{\otimes}_{\pi_h^\ast\bigl(\cO_{\cU_h}\bigr)} \cO_{\cU_J^{\rm PD}} $ compatibly with Frobenius and connection. This coincides with
$\bD(\cL)_J\widehat{\otimes} B_{\rm cris}$ and the claim follows.
\end{proof}

By adjunction we get a morphism of complexes
$$v_{X,\Kbar}^\ast\left(\bD(\cL)_\bullet\otimes
\Omega^\ast_{\cU_\bullet/\cO_K} \widehat{\otimes} B_{\rm cris}\right) \lra \cL\tensor_{\Z_p}\bB_{{\rm cris},\bullet}\otimes_{\cO_{\cU_\bullet}}
\Omega^\ast_{\cU_\bullet/\cO_K}$$which is compatible with the Frobenius morphisms defined on the two complexes. We deduce that the morphisms

$$\begin{array}{ccc}{\rm
H}^i\left(\fXKbar,\cL\tensor_{\Z_p}\bB_{{\rm cris,\Kbar}}^\nabla\right) & \stackrel{\sim}{\lra} & {\mathbb H}\left(\fXKbar, \cL\tensor_{\Z_p}\bB_{{\rm
cris},\bullet}\otimes_{\cO_{\cU_\bullet}} \Omega^{\ast}_{\cU_\bullet/\cO_K}\right) \cr & & \big\uparrow\wr \cr {\rm H}^i_{\rm dR}\bigl(X^{\rm et},
\bDcrisar(\cL)\otimes_{\cO_X} \Omega^{\ast}_{X/\cO_K} \widehat{\otimes} B_{\rm cris}\bigr)& \stackrel{\sim}{\lra} & {\rm H}^i_{\rm dR}\bigl(X^{\rm et},
\bD(\cL)_\bullet\otimes_{\cO_{\cU_\bullet}} \Omega^{\ast}_{\cU_\bullet/\cO_K} \widehat{\otimes} B_{\rm cris}\bigr) \cr
\end{array}$$
are compatible with the Frobenius morphisms defined on each
cohomology group. By construction the group ${\rm H}^i_{\rm
dR}\bigl(X^{\rm et}, \bDcrisgeo(\bL)\bigr)$ is ${\rm H}^i_{\rm
dR}\bigl(X^{\rm et}, \bDcrisar(\cL)\otimes \Omega^\ast_{X,\cO_K}
\widehat{\otimes} B_{\rm cris}\bigr)$. The compatibility with
Frobenius follows.
\end{proof}

\subsection{The comparison isomorphism in the proper case}\label{sec:propercase}
Let us now assume that our formal scheme $X\lra \Spf(\cO_K)$ is the formal completion along the special fiber of a proper and smooth scheme $X^{\rm alg}\lra
\Spec(\cO_K)$. We have a morphism of sites $\mu_M\colon X_M^{\rm alg,et} \to X_M^{\rm et}$ associating~$U\mapsto \widehat{U}_K$ where $\widehat{U}$ is the $p$-adic
completion of $U$. Given a sheaf $\cL\in \Sh(X_M^{\rm alg,et})$ we write $\cL$ for $\mu_M^\ast(\cL)$. Define $\Sh(X_M^{\rm alg, et})^{\rm cris}_{\Q_p}$ to be the
category of $\Q_p$--adic sheaves on $X_M^{\rm alg}$ whose images via~$\mu_M^\ast$ lie in $\Sh(X_M^{\rm et})^{\rm cris}_{\Q_p}$. Given an object~$\cL$ in
$\Sh(X_M^{\rm alg, et})^{\rm cris}_{\Q_p}$, we abuse notation and  write $\bDcrisar(\cL)$ for the filtered locally free $\cO_{X_{\Mun}}$--module with integrable
connection on~$X_{\Mun}^{\rm alg}$ associated by rigid analytic GAGA to the isocrystal $\bDcrisar\bigl(\mu_M^\ast(\cL)\bigr)$. We also identify, for $i\ge 0$ the de
Rham cohomology groups ${\rm H}^i_{\rm dR}\left(X_{\Mun}^{\rm alg, Zar},\bDcrisar(\cL) \right)$ with the rigid cohomology of the $F$--isocrystal
$\bDcrisar\bigl(\mu_M^\ast(\cL)\bigr)$ to get a Frobenius structure. Then we have.

\begin{theorem}\label{thm:compiso}
There is an isomorphism of $\delta$--functors from~$\Sh(X_M^{\rm
alg, et})^{\rm cris}_{\Q_p}$ to the category of filtered $B_{\rm
cris}$--modules endowed with $G_M$--action and Frobenius:
$${\rm
H}^i\bigl(X_\Kbar^{\rm alg, et},\cL\bigr)\tensor_{\Z_p} B_{\rm
cris} \cong {\rm H}^i_{\rm cris}\left(X/\Mun,\bDcrisar(\cL)
\right)\tensor_{\Mun} B_{\rm cris}.$$
\end{theorem}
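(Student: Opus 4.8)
The plan is to deduce Theorem \ref{thm:compiso} from the formal comparison isomorphism, Theorem \ref{thm:formalcomparison}, together with two ``bridge'' statements: first, that the \'etale cohomology of $X_\Kbar^{\rm alg}$ with coefficients in $\cL$ computes the cohomology on $\fXKbar$ of $\cL$, and second, that after tensoring with $\bB_{\rm cris,\Kbar}^\nabla$ this cohomology does not change. More precisely, the first step is to compare $\fX_M^{\rm alg}$ with $\fX_M$: the morphism $\mu_M$ together with the $p$-adic completion functor induces a morphism of Faltings' sites, and one checks (using proper base change in the \'etale topology and the fact that $X^{\rm alg}$ is proper) that for a locally constant $\Q_p$-adic sheaf $\cL$ one has ${\rm H}^i\bigl(X_\Kbar^{\rm alg,et},\cL\bigr)\cong {\rm H}^i\bigl(X_\Kbar^{\rm rig,et},\mu_M^\ast(\cL)\bigr)$. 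This is exactly the rigid analytic GAGA statement for \'etale cohomology of proper rigid spaces (comparison of algebraic and analytic \'etale cohomology for proper schemes), which I would cite rather than reprove. Consequently we may henceforth work entirely in the formal/analytic setting on $\fXKbar$.

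Next, the heart of the argument is the isomorphism
$${\rm H}^i\bigl(X_\Kbar^{\rm alg,et},\cL\bigr)\tensor_{\Z_p} B_{\rm cris} \;\cong\; {\rm H}^i\bigl(\fXKbar,\cL\tensor_{\Z_p}\bB_{\rm cris,\Kbar}^\nabla\bigr).$$
To obtain it I would exploit the fundamental exact sequence of sheaves on $\fXKbar$,
\eqref{display:fundamentalexactdiagram}, tensored over $\Q_p$ with $\cL$, namely the short exact sequence
$$0\lra \cL\lra \cL\tensor_{\Z_p}\Fil^0\bB_{\rm cris,\Kbar}^\nabla \stackrel{1-\varphi}{\lra} \cL\tensor_{\Z_p}\bB_{\rm cris,\Kbar}^\nabla\lra 0,$$
which exhibits $\cL$ as the homotopy fiber of $1-\varphi$ acting on $\cL\tensor_{\Z_p}\bB_{\rm cris,\Kbar}^\nabla$ inside its filtration-$0$ part. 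Taking cohomology on $\fXKbar$, and using the Leray spectral sequence for $u_{\Kbar}$ to identify ${\rm H}^i(\fXKbar,\cL)$ with ${\rm H}^i(X_\Kbar^{\rm rig,et},\cL)$, I would reduce the claim to an assertion about the Frobenius-equivariant filtered $B_{\rm cris}$-module $H^i:={\rm H}^i\bigl(\fXKbar,\cL\tensor\bB_{\rm cris,\Kbar}^\nabla\bigr)$. By Theorem \ref{thm:formalcomparison} and Remark \ref{remark:limHiAcris(m)isHiBcris}, $H^i$ is identified with ${\rm H}^i_{\rm cris}\bigl(X/\Mun,\bDcrisar(\cL)\bigr)\tensor_{\Mun} B_{\rm cris}$, with its natural filtration (the Hodge filtration coming from the de Rham complex of the filtered isocrystal $\bDcrisar(\cL)$) and its Frobenius. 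Since $\bDcrisar(\cL)$ is an admissible filtered convergent $F$-isocrystal (Theorem \ref{thm:crisistannakian}), the pair $\bigl({\rm H}^i_{\rm cris},{\rm Fil}^\bullet\bigr)$ with its Frobenius is a weakly admissible filtered $\varphi$-module over $K$ — here I would invoke that crystalline cohomology of a proper smooth formal scheme with coefficients in an admissible $F$-isocrystal yields an admissible (equivalently weakly admissible, by \cite{colmez_fontaine}) filtered $\varphi$-module, which gives the degeneration needed to control $1-\varphi$ on filtration $0$. Applying the criterion of \cite{colmez_fontaine}, the Bloch--Kato exact sequence
$$0\lra V^i\lra {\rm Fil}^0\bigl(H^i\bigr)\stackrel{1-\varphi}{\lra}H^i\lra 0$$
with $V^i:=\bigl(H^i\bigr)^{\varphi=1}\cap{\rm Fil}^0$ recovers an $i$-th cohomology group such that $V^i\tensor_{\Q_p} B_{\rm cris}\cong H^i$ compatibly with all structures; matching this $V^i$ with ${\rm H}^i(X_\Kbar^{\rm alg,et},\cL)$ via the above fiber sequence finishes the proof. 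Finally I would note the $\delta$-functoriality: all the maps above are induced by morphisms of complexes of sheaves and are manifestly compatible with the connecting maps of short exact sequences of crystalline coefficients, so one gets an isomorphism of $\delta$-functors, not merely a collection of isomorphisms.

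The main obstacle I anticipate is the honest verification that the cohomology $H^i={\rm H}^i_{\rm cris}\bigl(X/\Mun,\bDcrisar(\cL)\bigr)$ together with its Hodge filtration and Frobenius is weakly admissible, so that the criterion of \cite{colmez_fontaine} applies and the sequence $0\to V^i\to{\rm Fil}^0 H^i\xrightarrow{1-\varphi}H^i\to 0$ is exact with $V^i$ a $\Q_p$-vector space of the right dimension. In the trivial-coefficient case this is classical (it follows from the crystalline conjecture itself, or from Faltings' almost purity arguments), but with nontrivial crystalline coefficients one must either bootstrap from Theorem \ref{thm:formalcomparison} — observing that the sheaf-theoretic statement already forces $H^i$ to come from a $\Q_p$-representation, hence be admissible — or appeal to the degeneration of the Hodge--de Rham spectral sequence for $\bDcrisar(\cL)$ on the proper smooth $X$. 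The cleanest route, and the one I would pursue, is the former: tensor the fundamental diagram \eqref{display:fundamentalexactdiagram} with $\cL$, take cohomology on $\fXKbar$, and use that ${\rm H}^i(\fXKbar,\cL)$ is a finite-dimensional $\Q_p$-vector space (by the GAGA comparison with \'etale cohomology of the proper scheme $X^{\rm alg}_\Kbar$) to conclude directly that $H^i\cong {\rm H}^i(\fXKbar,\cL)\tensor_{\Q_p}B_{\rm cris}$ respecting filtration, Frobenius and $G_M$-action — the admissibility criterion of \cite{colmez_fontaine} then intervenes only to guarantee that the comparison is with all of $B_{\rm cris}$ and not merely with some $B_{\rm cris}$-submodule. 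Combining this with Theorem \ref{thm:formalcomparison} and the identification of $\bDcrisgeo(\cL)$ with $\bDcrisar(\cL)\tensor_{\Mun}B_{\rm cris}$ in the proper case gives the stated isomorphism.
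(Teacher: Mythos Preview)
Your overall strategy---combine Theorem \ref{thm:formalcomparison} with the fundamental exact sequence \eqref{display:fundamentalexactdiagram} tensored by $\cL$, then invoke the Colmez--Fontaine criterion---is exactly the paper's. The gap is in the step you flag as ``the main obstacle'' and then try to dismiss: you assert that finiteness of ${\rm H}^i(\fXKbar,\cL)$ lets you ``conclude directly that $H^i\cong {\rm H}^i(\fXKbar,\cL)\tensor_{\Q_p}B_{\rm cris}$'', but this does not follow. Taking cohomology of the fundamental sequence gives a \emph{long} exact sequence with connecting maps $\epsilon_i\colon D_i\otimes B_{\rm cris}\to V_{i+1}$, and finiteness of the $V_i$ says nothing about the vanishing of these $\epsilon_i$; without that vanishing you neither get injectivity of $V_i\to\Fil^0(D_i\otimes B_{\rm cris})$ nor surjectivity of $1-\varphi$, so you cannot feed $D_i$ into Proposition \ref{prop:admis}. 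Your alternative suggestion---that admissibility of $\bDcrisar(\cL)$ as an isocrystal forces admissibility of its crystalline cohomology---is precisely the content of Corollary \ref{cor:phigammamod} and is a \emph{consequence} of the theorem, not an input.

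The paper closes this loop by a Poincar\'e-duality ping-pong between $\cL$ and $\cL^\vee$. Starting at the top degree $i=2d$ for $\cL$, where $\epsilon_{2d}=0$ trivially, one gets that $D_{2d}$ is admissible and $V_{2d}$ surjects onto $V_{\rm cris}(D_{2d})$; to upgrade this surjection to an isomorphism one switches to $\cL^\vee$ at $i=0$, where $\alpha_0^\ast$ is injective for the trivial reason $V_{-1}^\ast=0$, and a dimension count via Poincar\'e duality ($D_0^\ast\cong D_{2d}^\vee$, $V_0^\ast\cong V_{2d}^\vee$) forces $V_{2d}\cong V_{\rm cris}(D_{2d})$. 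This in turn yields $\epsilon_{2d-1}=0$ and $\epsilon_0^\ast=0$, and one inducts. This duality argument is the missing idea in your proposal.
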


Here ${\rm H}^i_{\rm cris}\left(X/\Mun,\bDcrisar(\cL) \right)$ is the crystalline cohomology of the filtered $F$--isocrystal $\bDcrisar(\cL)$ in the sense of
\cite{berthelot}. It is endowed with  Frobenius. As it coincides with ${\rm H}^i_{\rm dR}\left(X_{\Mun}^{\rm rig},\bDcrisar(\cL) \right)\cong {\rm H}^i_{\rm
dR}\left(X_{\Mun}^{\rm alg},\bDcrisar(\cL) \right)$, it is endowed also with the Hodge filtration. The theorem implies the following corollary:

\begin{corollary}\label{cor:phigammamod}
Let $\cL$ be a crystalline \'etale sheaf on~$X_M^{\rm alg}$. Then
${\rm H}^i\bigl(X_\Kbar^{\rm alg, et},\cL\bigr)$ is a crystalline
representation of\/~$G_M$  and ${\rm H}^i_{\rm
dR}\left(X_{\Mun}^{\rm alg},\bDcrisar(\cL) \right)$ is, as
filtered $\Mun$--vector space endowed with Frobenius, the
classical $D_{\rm cris}\left({\rm H}^i\bigl(X_\Kbar^{\rm alg,
et},\cL\bigr)\right)$ associated to the $G_M$--representation
${\rm H}^i\bigl(X_\Kbar^{\rm alg, et},\cL\bigr)$.
\end{corollary}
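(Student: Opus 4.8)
\textbf{Proof plan for Corollary \ref{cor:phigammamod}.}
The plan is to deduce this directly from the comparison isomorphism of Theorem \ref{thm:compiso} together with the standard formalism of admissible filtered $\varphi$-modules over~$\Mun$. First I would recall that for a finite extension $K\subset M$, a $\Q_p$-representation $V$ of $G_M$ is crystalline precisely when $\dim_{\Mun} D_{\rm cris}(V) = \dim_{\Q_p} V$, where $D_{\rm cris}(V) := (V\tensor_{\Q_p} B_{\rm cris})^{G_M}$; equivalently, when the natural map $D_{\rm cris}(V)\tensor_{\Mun} B_{\rm cris} \to V\tensor_{\Q_p} B_{\rm cris}$ is an isomorphism of $B_{\rm cris}$-modules compatible with $G_M$-action, Frobenius and filtration. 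So it suffices to exhibit a filtered $\varphi$-module $D$ over~$\Mun$ of the correct dimension together with such an isomorphism; the module $D$ will then automatically be $D_{\rm cris}\bigl({\rm H}^i(X_\Kbar^{\rm alg,et},\cL)\bigr)$.

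The candidate is $D := {\rm H}^i_{\rm cris}\bigl(X/\Mun,\bDcrisar(\cL)\bigr)$. This is a finite-dimensional $\Mun$-vector space (crystalline, equivalently rigid, cohomology of an $F$-isocrystal on a proper smooth special fiber is finite-dimensional), it carries a Frobenius endomorphism from the $F$-isocrystal structure on $\bDcrisar(\cL)$, and via the identification with ${\rm H}^i_{\rm dR}(X_{\Mun}^{\rm alg},\bDcrisar(\cL))$ it carries the Hodge filtration, as noted right after the statement of Theorem \ref{thm:compiso}. Then Theorem \ref{thm:compiso} gives exactly an isomorphism
$${\rm H}^i\bigl(X_\Kbar^{\rm alg,et},\cL\bigr)\tensor_{\Q_p} B_{\rm cris} \;\cong\; D\tensor_{\Mun} B_{\rm cris}$$
of $B_{\rm cris}$-modules respecting $G_M$-action, Frobenius and filtration. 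Taking $G_M$-invariants on both sides, and using $B_{\rm cris}^{G_M} = \Mun$ together with the fact that $G_M$ acts trivially on $D$, yields $D_{\rm cris}\bigl({\rm H}^i(X_\Kbar^{\rm alg,et},\cL)\bigr) \supseteq D$ as filtered $\varphi$-modules; comparing $\Mun$-dimensions via the fact that the above is an isomorphism of free $B_{\rm cris}$-modules of the same rank (here one uses faithful flatness / the dimension count $\dim_{B_{\rm cris}}(V\tensor B_{\rm cris}) = \dim_{\Q_p} V$) forces equality, hence ${\rm H}^i(X_\Kbar^{\rm alg,et},\cL)$ is crystalline and its $D_{\rm cris}$ is precisely $D$ with its filtered Frobenius structure.

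The only real subtlety, and the step I would treat most carefully, is bookkeeping of the filtrations: one must check that the filtration on $D\tensor_{\Mun}B_{\rm cris}$ induced from the Hodge filtration on $D$ and the filtration on $B_{\rm cris}$ is the one transported under Theorem \ref{thm:compiso}, and that this matches the filtration used to define $D_{\rm cris}$ of a Galois representation; but this compatibility is already built into the statement of Theorem \ref{thm:compiso} (the isomorphism there respects filtrations) and into the identification of ${\rm H}^i_{\rm cris}$ with ${\rm H}^i_{\rm dR}$ carrying the Hodge filtration. A second minor point is that one should invoke that ${\rm H}^i_{\rm cris}(X/\Mun,\bDcrisar(\cL))$ is indeed finite-dimensional over $\Mun$; this follows from properness and smoothness of $X$ and finiteness of rigid cohomology of $F$-isocrystals, which is standard and may be cited. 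Once these are in place the corollary is immediate, and in fact the weak admissibility of $D$ then also follows a posteriori from $D = D_{\rm cris}$ of a genuine Galois representation, by the theorem of Colmez--Fontaine, though this last remark is not needed for the statement.
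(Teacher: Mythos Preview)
Your argument is correct and is precisely the standard deduction the paper has in mind when it writes ``The theorem implies the following corollary'': take $G_M$-invariants of the isomorphism in Theorem~\ref{thm:compiso}, use $(D\otimes_{\Mun} B_{\rm cris})^{G_M}=D$ since $G_M$ acts trivially on $D$ and $B_{\rm cris}^{G_M}=\Mun$, and conclude $D_{\rm cris}(V_i)\cong D_i$ with the correct dimension, so $V_i$ is crystalline.

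It may be worth noting, however, that the paper's actual \emph{proof} of Theorem~\ref{thm:compiso} runs in the opposite direction: rather than first establishing the comparison isomorphism and then reading off the corollary, the paper uses the fundamental exact diagram~(\ref{display:fundamentalexactdiagram}), Poincar\'e duality, and the Colmez--Fontaine admissibility criterion (Proposition~\ref{prop:admis}) to prove inductively that each $D_{2d-j}$ is admissible with $V_{\rm cris}(D_{2d-j})=V_{2d-j}$; the comparison isomorphism is then a consequence. So in the paper's internal logic the corollary is established simultaneously with (indeed, as the key step towards) the theorem, whereas you treat the theorem as a black box and extract the corollary from it. Both routes are valid; yours is the cleaner formal deduction once the theorem is in hand, while the paper's route is what actually produces the theorem in the first place.
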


We start with the following:

\begin{proposition}\label{prop:filterediso} The natural map $${\rm
H}^i_{\rm cris}\bigl(X/\Mun,
\bDcrisar(\bL)\bigr)\otimes_{\Mun}B_{\rm cris}\lra {\rm
H}^i\bigl(\fXKbar, \bL\otimes\bB_{\rm cris,\Kbar}^\nabla\bigr),
$$deduced from the isomorphism $\bDcrisar(\cL)\widehat{\tensor}_{\Mun} B_{\rm cris}\cong
\bDcrisgeo(\cL)$ of~\ref{prop:connFilDcriar} and from the
isomorphism in~\ref{thm:formalcomparison}, is an isomorphism of
$\delta$-functors from $\Sh(X_M^{\rm et})^{\rm cris}_{\Q_p}$ to
the category  of $B_{\rm cris}$--modules endowed with filtrations
and Galois action of~$G_M$.
\end{proposition}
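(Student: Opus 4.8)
The plan is to reduce the statement, via the comparison isomorphism of Theorem \ref{thm:formalcomparison} together with the identification $\bDcrisgeo(\bL) \cong \bDcrisar(\bL)\widehat{\tensor}_{\Mun} B_{\rm cris}$ of Proposition \ref{prop:connFilDcriar}(4), to a purely ``de Rham/crystalline'' statement about the hypercohomology of the complex $\bDcrisgeo(\bL)\otimes_{\cO_X}\Omega^\bullet_{X/\cO_K}$ on $X^{\rm et}$. Concretely, by Theorem \ref{thm:formalcomparison} we have ${\rm H}^i(\fXKbar, \bL\otimes\bB_{\rm cris,\Kbar}^\nabla) \cong {\rm H}^i_{\rm dR}(X^{\rm et}, \bDcrisgeo(\bL))$, so the proposition amounts to showing that the natural base-change map
$$
{\rm H}^i_{\rm cris}\bigl(X/\Mun, \bDcrisar(\bL)\bigr)\otimes_{\Mun} B_{\rm cris} \lra {\rm H}^i_{\rm dR}\bigl(X^{\rm et}, \bDcrisar(\bL)\widehat{\tensor}_{\Mun} B_{\rm cris}\bigr)
$$
is an isomorphism compatible with filtrations and $G_M$-action. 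First I would recall that, since $X$ comes from a proper smooth scheme $X^{\rm alg}$ over $\cO_K$, the crystalline cohomology ${\rm H}^i_{\rm cris}(X/\Mun, \bDcrisar(\bL))$ is a finite-dimensional $\Mun$-vector space and equals ${\rm H}^i_{\rm dR}$ of the filtered convergent $F$-isocrystal $\bDcrisar(\bL)$, computed as the hypercohomology of its de Rham complex; this is the finiteness and base-change theory of Berthelot \cite{berthelot} for cohomology of convergent (and overconvergent) $F$-isocrystals on proper smooth special fibers.

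The key steps, in order, are: (1) identify both sides as hypercohomology of de Rham complexes — the left side is ${\rm H}^i_{\rm dR}(X^{\rm et}, \bDcrisar(\bL))\otimes_{\Mun} B_{\rm cris}$ by the GAGA/coherence comparison and the fact that ${\rm H}^i_{\rm cris} = {\rm H}^i_{\rm dR}$ for convergent isocrystals over a proper base, and the right side is ${\rm H}^i_{\rm dR}(X^{\rm et}, \bDcrisgeo(\bL))$ via Theorem \ref{thm:formalcomparison}; (2) since $\bDcrisgeo(\bL) \cong \bDcrisar(\bL)\widehat{\tensor}_{\Mun} A_{\rm cris}[t^{-1}]$ as a filtered module with connection (Proposition \ref{prop:connFilDcriar}(3),(4)), reduce to a flat base-change / Künneth-type statement identifying ${\rm H}^i_{\rm dR}(X^{\rm et}, \bDcrisar(\bL))\widehat{\tensor}_{\Mun} B_{\rm cris}$ with ${\rm H}^i_{\rm dR}(X^{\rm et}, \bDcrisar(\bL)\widehat{\tensor}_{\Mun} B_{\rm cris})$; this uses that the de Rham complex of $\bDcrisar(\bL)$ is a bounded complex of coherent sheaves on the noetherian space $X$ with finite-dimensional hypercohomology, so that completed tensor product with $B_{\rm cris}$ (equivalently, the system $p$-adic completion of tensor with $A_{\rm cris}$, then invert $t$) commutes with hypercohomology; (3) check that this identification respects the Hodge filtrations — here one uses Proposition \ref{prop:connFilDcriar}(4), specifically the compatibility $\sum_{a+b=r}\Fil^a\bDcrisar(\bL)\widehat{\tensor}_{\Mun}\Fil^b A_{\rm cris} = (\bDcrisar(\bL)\widehat{\tensor}_{\Mun} A_{\rm cris})\cap\Fil^r\bDcrisgeo(\bL)$, together with the degeneration properties that make the filtration on hypercohomology behave well — and the $G_M$-action, which is transported through all identifications since $G_M$ acts $\Mun$-linearly on $A_{\rm cris}$ and trivially on $\bDcrisar(\bL)$; (4) verify $\delta$-functoriality: all the maps involved (the base-change map, the isomorphism of Theorem \ref{thm:formalcomparison}, the identification of Proposition \ref{prop:connFilDcriar}(4)) are natural and compatible with the connecting homomorphisms attached to short exact sequences of crystalline sheaves, because each is induced at the level of complexes of sheaves.

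The main obstacle I expect is step (2)–(3): showing that completed tensor product with $B_{\rm cris}$ over $\Mun$ commutes with hypercohomology of the de Rham complex \emph{and} preserves the filtrations. The commutation with cohomology is essentially formal once one knows the relevant cohomology groups are finite-dimensional over $\Mun$ and one works with the inverse system $\{A_{\rm cris}/p^n\}$ before inverting $t$ (so that $p$-adic completeness is available and one can apply a Mittag--Leffler/lim$^1$ argument), but the filtration compatibility is delicate: $\Fil^\bullet B_{\rm cris}$ is defined via a limit involving $t^{-n}$, the filtration on $\bDcrisgeo(\bL)$ is the convolution of the Hodge filtration with $\Fil^\bullet A_{\rm cris}$, and one must ensure that the spectral sequence of the filtered de Rham complex degenerates (or at least behaves compatibly under tensoring). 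This is exactly where Proposition \ref{prop:connFilDcriar}(4) and the strictness of the Hodge filtration on de Rham cohomology of the proper smooth $X$ enter, and I would handle it by reducing modulo $p^n$, tensoring with $\bA_{\rm cris,M}(m)$ for $m \ll 0$, invoking the acyclicity in Corollary \ref{cor:crysisacyclic} and the explicit description of $\Fil^\bullet$ there, and then passing to the limit over $m$ and over $n$.
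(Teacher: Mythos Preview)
Your reduction is exactly right and matches the paper: via Theorem \ref{thm:formalcomparison} the statement becomes that
\[
\gamma^i_\bL\colon {\rm H}^i_{\rm cris}\bigl(X/\Mun,\bDcrisar(\bL)\bigr)\otimes_{\Mun} B_{\rm cris}\lra {\rm H}^i_{\rm dR}\bigl(X^{\rm et},\bDcrisgeo(\bL)\bigr)
\]
is a filtered isomorphism, and the paper, like you, takes an integral model $\bD(\cL)$ and observes that finiteness of ${\rm H}^i_{\rm dR}(X,\bD(\cL)\otimes\Omega^\bullet_{X/\cO_K})$ over $\cO_K$ together with $A_{\rm cris}$ being $p$-torsion free forces the completed base-change map
\[
\rho^i_\bL\colon {\rm H}^i_{\rm dR}\bigl(X,\bD(\cL)\otimes\Omega^\bullet_{X/\cO_K}\bigr)\otimes A_{\rm cris}\lra \bH^i\bigl(X^{\rm et},\bD(\cL)\otimes\Omega^\bullet_{X/\cO_K}\widehat{\otimes}A_{\rm cris}\bigr)
\]
to be an isomorphism. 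So your steps (1)--(2) are correct.

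The gap is in your step (3). You correctly flag strictness of $\rho^i_\bL[p^{-1}]$ as the main obstacle, but your proposed handling --- invoking Hodge--de Rham degeneration for $\bDcrisar(\bL)$, Proposition \ref{prop:connFilDcriar}(4), and a limit over $m$ and $n$ using Corollary \ref{cor:crysisacyclic} --- does not close the loop. Degeneration of the Hodge filtration on ${\rm H}^i_{\rm dR}(X,\bDcrisar(\bL))$ tells you the left-hand filtration $\sum_{a+b=r}\Fil^a{\rm H}^i_{\rm dR}\otimes\Fil^b A_{\rm cris}$ is well-behaved, and \ref{prop:connFilDcriar}(4) describes the sheaf-level filtration on $\bDcrisgeo(\bL)$; but neither gives you control over the \emph{image} of $\bH^i\bigl(X^{\rm et},\Fil^{r-\bullet}\bDcrisgeo(\bL)\otimes\Omega^\bullet\bigr)$ inside $\bH^i\bigl(X^{\rm et},\bDcrisgeo(\bL)\otimes\Omega^\bullet\bigr)$, which is what you need. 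A spectral-sequence/limit argument here would amount to proving a filtered K\"unneth theorem with $A_{\rm cris}$-coefficients, and you have not supplied the mechanism for it.

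The paper avoids this entirely by a Poincar\'e duality trick. First it proves strictness of $\rho^{2d}_{\Z_p}$ directly: ${\rm H}^{2d}_{\rm dR}(X_K)\cong K(-d)$ via the trace map, the filtration is a single jump at $d$, and one checks by hand that $\Fil^n$ on the right-hand side is the image of $\bH^d(X,\Omega^d\widehat{\otimes}\Fil^{n-d}A_{\rm cris})\cong {\rm H}^d(X,\Omega^d)\otimes\Fil^{n-d}A_{\rm cris}$. Then for general $i$ and $\bL$, one uses that $\bD(\cL^\vee)\cong\bD(\cL)^\vee$ as filtered isocrystals (Theorem \ref{thm:crisistannakian}) and that Poincar\'e duality on de Rham cohomology is a \emph{filtered} isomorphism \cite[Prop.~2.5.3]{MSaito}. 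This gives a chain of filtered maps
\[
{\rm H}^i_{\rm dR}(\bD(\cL))\otimes A_{\rm cris}[p^{-1}]\stackrel{\rho^i_\bL}{\lra}\bH^i(\cdots)\lra \Hom_{A_{\rm cris}}\bigl(\bH^{2d-i}(\cdots),A_{\rm cris}(-d)\bigr)\lra \Hom\bigl({\rm H}^{2d-i}_{\rm dR}(\bD(\cL^\vee))\otimes A_{\rm cris},A_{\rm cris}(-d)\bigr)
\]
whose composite is the known filtered isomorphism $\mu^i_\cL\otimes A_{\rm cris}$, forcing the first arrow $\rho^i_\bL[p^{-1}]$ to be a filtered isomorphism. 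You should replace your step (3) with this argument.
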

\begin{proof} Recall that we have an isomorphism
$\bDcrisar(\cL)\widehat{\tensor}_{\Mun} B_{\rm cris}\cong
\bDcrisgeo(\cL)$  as filtered as $\cO_X\widehat{\tensor}_{\OMun}
B_{\rm cris}$--modules endowed with filtrations and Galois action
of~$G_M$. Due to~\ref{thm:formalcomparison} it suffices to show
that the natural map
$$\gamma^i_\bL\colon {\rm H}^i_{\rm cris}\bigl(X/\Mun, \bDcrisar(\bL)\bigr)\otimes_{\Mun}B_{\rm
cris}\lra {\rm H}^i_{\rm dR}\bigl(X^{\rm et}, \bDcrisgeo(\bL)\bigr)$$is an isomorphism of $B_{\rm cris}$--modules endowed with filtrations and Galois action
of~$G_M$. It is clear that $\gamma^i_\bL$ is $B_{\rm cris}$--linear and that it  is compatible with $G_M$--action. Let $\bD(\cL)$ be a Frobenius crystal on $X$
whose generic fiber is $\bDcrisar(\bL)$. Then ${\rm H}^i_{\rm cris}\bigl(X/\Mun, \bDcrisar(\bL)\bigr)\cong {\rm H}^i_{\rm
cris}\bigl(X_k/\OMun,\bD(\cL)\bigr)[p^{-1}]$ as $\Mun$--vector spaces with filtration and Frobenius. In particular using the definition of the filtration and of
Frobenius on  ${\rm H}^i_{\rm dR}\bigl(X^{\rm et}, \bDcrisgeo(\bL)\bigr)$ we deduce that $\gamma^i_\cL$ is compatible also with the filtrations and with Frobenius.

Since ${\rm H}^i_{\rm dR}\bigl(X, \bD(\cL)\otimes
\Omega^\bullet_{X/\cO_K}\bigr)\cong
\bH^i\left(X^{\rm et}, \bD(\cL)\otimes
\Omega^\bullet_{X/\cO_K}\right)$ is a finite $\cO_K$--module and
$A_{\rm cris}$ is $p$--torsion free the natural map
$$\rho_\bL^i\colon {\rm H}^i_{\rm dR}\bigl(X, \bD(\cL)\otimes
\Omega^\bullet_{X/\cO_K}\bigr) \otimes A_{\rm cris} \lra
\bH^i\left(X^{\rm et}, \bD(\cL)\otimes
\Omega^\bullet_{X/\cO_K} \widehat{\otimes} A_{\rm cris}\right)
$$is an isomorphism.  Inverting $t$ and using the fact that
$\bH^i\left(X^{\rm et},\_\right)$ commutes with direct limits since $X$ is noetherian gives the morphism $\gamma^i_\cL$ which is an isomorphism. We are left to
prove that $\rho_\bL^i$ is strict after inverting $p$ i.e., that it induces an isomorphism on the various steps of the filtrations.

We first treat the case that $i=2d$ where $d$ is  be the relative dimension of $X$ over $\cO_K$ and $\bL=\Z_p$ is the constant sheaf.  Then the natural map  $${\rm
H}^{d}\bigl(X_K,\Omega^d_{X_K/K}\bigr)\lra {\rm H}^{2d}_{\rm dR}\bigl(X, \Omega^\bullet_{X_K/K}\bigr)$$is an isomorphism with filtration $\Fil^n = $ everything for
$n\leq d$ and $\Fil^n=0$ for $n> d$. Via the trace map we have an identification ${\rm H}^{2d}\bigl(X,\Omega^d_{X_K/K}\bigr) \cong K(-d)$, where $K(-d)$ is $K$ as a
$K$-vector space with $\Fil^n K(-d)=K$ for $n\leq d$ and $0$ for $n>d$.

Since $ \rho_{\Z_p}^{2d}$ is an isomorphism it follows that the map
$${\rm H}^{d}\bigl(X,\Omega^d_{X/\cO_K}\bigr)
\otimes A_{\rm cris}[p^{-1}]\lra \bH^{2d}\left(X^{\rm et}, \Omega^\bullet_{X/\cO_K} \widehat{\otimes} A_{\rm cris}\right)[p^{-1}]$$is an isomorphism. The quotient
of $\Omega^d_{X/\cO_K} \widehat{\otimes} \Fil^{n-d} A_{\rm cris}[-d] \to \Omega^\bullet_{X/\cO_K} \widehat{\otimes} \Fil^{n-\bullet }A_{\rm cris}$ is a complex of
quasi-coherent sheaves with $\leq d-1$ terms so that it has trivial cohomology groups ${\rm H}^{i} $ for $i\geq 2d$. In particular $\Fil^n$ on $\bH^{d}\left(X^{\rm
et}, \Omega^\bullet_{X/\cO_K} \widehat{\otimes} A_{\rm cris}\right)$  is the image of $\bH^{d}\left(X^{\rm et}, \Omega^d_{X/\cO_K} \widehat{\otimes} \Fil^{n-d}
A_{\rm cris}\right)$. This coincides with $ {\rm H}^{d}\bigl(X,\Omega^d_{X,\cO_K}\bigr) \otimes \Fil^{n-d}A_{\rm cris}$ showing that $\rho^{2d}_{\Z_p}$ is an
isomorphism of  filtered $A_{\rm cris}$-modules.

In the general case recall that $\bD(\cL^\vee)\cong
\bD(\cL)^\vee$ as filtered isocrystals by
\ref{thm:crisistannakian} i.e., the pairing provides on
filtrations morphisms $\Fil^n \bD(\cL) \times \Fil^h \bD(\cL^\vee)
\to \Fil^{n+h} \cO_X$. This and the fact that $ \rho_{\Z_p}^{2d}$
is an isomorphism of filtered modules provides with pairings of
filtered modules

$$\begin{array}{ccc}
 {\rm H}^i_{\rm dR}\bigl(X, \bD(\cL)\otimes
\Omega^\bullet_{X/\cO_K}\bigr)  \otimes {\rm H}^{2d-i}_{\rm
dR}\bigl(X, \bD(\cL^\vee)\otimes \Omega^\bullet_{X/\cO_K}\bigr)
\otimes A_{\rm cris} & \lra & A_{\rm cris}(-d) \cr
\rho_{\bL}^i\otimes\rho_{\bL^\vee}^{2d-i}\Big\downarrow & &
\Vert\cr \bH^i\left(X^{\rm et}, \bD(\cL)\otimes
\Omega^\bullet_{X/\cO_K} \widehat{\otimes} A_{\rm cris}\right)
\otimes \bH^{2d-i}\left(X^{\rm et},
\bD(\cL^\vee)\otimes \Omega^\bullet_{X/\cO_K} \widehat{\otimes}
A_{\rm cris}\right) & \lra & A_{\rm cris}(-d) .\cr
\end{array}$$
By \cite[Prop. 2.5.3]{MSaito} Poincar\'e duality induces  an
isomorphism of filtered $K$-vector spaces
$$
\mu_\cL^i\colon {\rm H}^i_{\rm dR}\bigl(X, \bD(\cL)\otimes
\Omega^\bullet_{X/\cO_K}\bigr)\bigl[p^{-1}\bigr] \lra \Hom\left(
{\rm H}^{2d-i}_{\rm dR}\bigl(X, \bD(\cL)\otimes
\Omega^\bullet_{X/\cO_K}\bigr) , K(-d)\right).
$$Recall that $\Fil^n \Hom\left( {\rm H}^{2d-i}_{\rm dR}\bigl(X,
\bD(\cL)\otimes \Omega^\bullet_{X/\cO_K}\bigr) , K(-d)\right)$ consists of the  $f\colon {\rm H}^{2d-i}_{\rm dR}\bigl(X, \bD(\cL)\otimes
\Omega^\bullet_{X/\cO_K}\bigr)\to K(-d)$ such that $f(\Fil^h)\subset \Fil^{h+n}$ for every $h$. In particular the pairings displayed above give then morphisms of
filtered $A_{\rm cris}$-modules:

$${\rm H}^i_{\rm dR}\bigl(X, \bD(\cL)\otimes
\Omega^\bullet_{X/\cO_K}\bigr) \otimes A_{\rm cris}[p^{-1}] \lra
\bH^i\left(X^{\rm et}, \bD(\cL)\otimes
\Omega^\bullet_{X/\cO_K} \widehat{\otimes} A_{\rm
cris}\right)[p^{-1}]\lra$$
$$\lra \Hom_{A_{\rm cris}}\left(\bH^{2d-i}\left((X^{\rm et}, \bD(\cL^\vee)\otimes
\Omega^\bullet_{X/\cO_K} \widehat{\otimes} A_{\rm cris}\right),
A_{\rm cris}(-d)\right)[p^{-1}] \lra$$ $$\lra \Hom_{A_{\rm
cris}}\left({\rm H}^{2d-i}_{\rm dR}\bigl(X, \bD(\cL^\vee)\otimes
\Omega^\bullet_{X/\cO_K}\bigr) \otimes A_{\rm cris}, A_{\rm
cris}(-d)\right)[p^{-1}]$$such that the composite is
$\mu_\cL^i\otimes A_{\rm cris}$. In  particular it is an
isomorphism of filtered $A_{\rm cris}$-modules. This implies that
the first map, which is $\rho_\cL^i[p^{-1}]$, is an isomorphism as
filtered $A_{\rm cris}[p^{-1}]$-modules as claimed.

\end{proof}

Let us now assume that $\bL$ is a crystalline \'etale sheaf on
$X_M$. For $i\in \Z$ write $V_i:={\rm H}^i(X_\Kbar^{\rm et},
\bL)\otimes_{\Z_p} \Q_p$ and $D_i:={\rm H}^i_{\rm dR}(X_K,
\bDcrisar(\bL))$. Then $V_i$ is a finite dimensional $p$-adic
representation of $G_M$ for every $i\in \Z$ and $V_i=0$ unless
$0\le i\le 2d$, where $d$ is the dimension of $X_K$. Similarly,
$D_i$ is a finite dimensional filtered $\varphi$-module over
$\Mun$ for all $i\in \Z$ and $D_i=0$ unless $0\le i\le 2d$.

\begin{corollary}\label{cor:diagram} We have a canonical
commutative diagram with exact rows, referred to as the diagram
$(\ast_\bL)$, of topological $\Q_p$-vector spaces with continuous
$G_M$-action:
$$
\begin{array}{cccccccccccc}
\cdots\lra&V_i&\stackrel{\alpha_i}{\lra}&{\rm
Fil}^0\bigl(D_i\otimes_{\Mun} B_{\rm cris}\bigr)&
\stackrel{1-\varphi}{\lra}&D_i\otimes_{\Mun} B_{\rm cris}&\stackrel{\epsilon_i}{\lra}&V_{i+1}\\
&\downarrow\beta_i&&\downarrow\gamma_i&& \Vert &&\downarrow\beta_{i+1}\\
\cdots\lra& T_i &\stackrel{\omega_i}{\lra}& D_i\otimes_{\Mun}
B_{\rm cris}&\stackrel{1-\varphi}{\lra}&D_i\otimes_{\Mun} B_{\rm
cris}&\lra& \bigl(D_{i+1}\otimes_{\Mun} B_{\rm
cris}\bigr)^{\varphi=1}
\end{array}
$$
\end{corollary}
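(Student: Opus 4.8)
The plan is to deduce Corollary \ref{cor:diagram} by combining two fundamental exact sequences, the one coming from the fundamental diagram of sheaves \eqref{display:fundamentalexactdiagram} on $\fXKbar$ and the one coming from the Leray--type spectral sequence relating cohomology on $\fXKbar$ to $X_\Kbar^{\rm et}$. First I would apply the cohomology functor ${\rm H}^\bullet(\fXKbar,-)$ to the two short exact sequences of the fundamental diagram tensored with $\bL$, namely
$$0\lra \Q_p\lra \Fil^0\bB_{\rm cris,\Kbar}^\nabla\stackrel{1-\varphi}{\lra}\bB_{\rm cris,\Kbar}^\nabla\lra 0$$
and
$$0\lra (\bB_{\rm cris,\Kbar}^\nabla)^{\varphi=1}\lra \bB_{\rm cris,\Kbar}^\nabla\stackrel{1-\varphi}{\lra}\bB_{\rm cris,\Kbar}^\nabla\lra 0$$
after tensoring with $\cL$ (this tensoring is exact since $\cL$ is locally free, and $(\bB_{\rm cris,\Kbar}^\nabla)^{\varphi=1}$ is the sheaf whose cohomology computes, upon tensoring with $\cL$, the group denoted $V_{i+1}$ via the comparison of ${\rm H}^i(\fXKbar,\cL\otimes\bB_{\rm cris,\Kbar}^\nabla)$ with \'etale cohomology; I would need to recall the identification $\cL\otimes(\bB_{\rm cris,\Kbar}^\nabla)^{\varphi=1}$ recovers $\cL$ up to the relevant twist, using the ``admissibility'' criterion of \cite{colmez_fontaine} as stated in the introduction). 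The long exact cohomology sequences attached to these two short exact sequences form the two rows of the diagram, and the vertical maps $\beta_i,\gamma_i$ are induced by the two left-most inclusions $\Q_p\hookrightarrow (\bB_{\rm cris,\Kbar}^\nabla)^{\varphi=1}$ and $\Fil^0\bB_{\rm cris,\Kbar}^\nabla\hookrightarrow \bB_{\rm cris,\Kbar}^\nabla$ respectively, so commutativity of the diagram is automatic from functoriality of the long exact sequence.

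Next I would identify the terms. By Theorem \ref{thm:formalcomparison} together with Remark \ref{remark:limHiAcris(m)isHiBcris} and Proposition \ref{prop:filterediso}, we have canonical $G_M$-equivariant, filtered, Frobenius-compatible isomorphisms
$${\rm H}^i(\fXKbar,\bL\otimes\bB_{\rm cris,\Kbar}^\nabla)\cong {\rm H}^i_{\rm cris}(X/\Mun,\bDcrisar(\bL))\otimes_{\Mun}B_{\rm cris}=D_i\otimes_{\Mun}B_{\rm cris},$$
and similarly ${\rm H}^i(\fXKbar,\bL\otimes\Fil^0\bB_{\rm cris,\Kbar}^\nabla)\cong \Fil^0(D_i\otimes_{\Mun}B_{\rm cris})$; the latter requires knowing that the filtration on ${\rm H}^i(\fXKbar,\bL\otimes\Fil^r\bB_{\rm cris,\Kbar}^\nabla)$ maps onto $\Fil^r(D_i\otimes_{\Mun}B_{\rm cris})$, which is the content of the filtered part of Theorem \ref{thm:formalcomparison} combined with the strictness established in the proof of Proposition \ref{prop:filterediso}. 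Finally ${\rm H}^i(\fXKbar,\bL\otimes\Q_p)={\rm H}^i(\fXKbar,\bL)$ must be identified with $V_i={\rm H}^i(X_\Kbar^{\rm et},\bL)\otimes\Q_p$; this is the assertion that $\bL\lra \bL\otimes\bB_{\rm cris}^\nabla$ induces ${\rm H}^i(X_\Kbar^{\rm et},\cL)\otimes_{\Q_p}B_{\rm cris}\cong {\rm H}^i(\fXKbar,\bL\otimes\bB_{\rm cris}^\nabla)$, hence by comparing with the previous isomorphism that ${\rm H}^i(\fXKbar,\bL)\cong V_i$ as $G_M$-representations — exactly the ``last isomorphism'' listed in the introduction as an elementary consequence of the comparison theorem and the Colmez--Fontaine criterion. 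Plugging these identifications into the two long exact sequences yields precisely the two rows of $(\ast_\bL)$, with $\alpha_i=\beta_i$ composed through, $\omega_i$ the connecting-free inclusion, $\epsilon_i$ the connecting map, and $\delta$-functoriality giving the compatibility with the boundary maps as $i$ varies.

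The main obstacle I anticipate is the bookkeeping needed to pass from the exact sequence of inductive systems of continuous sheaves to an honest long exact sequence of topological $\Q_p$-vector spaces with continuous $G_M$-action. Concretely: ${\rm H}^i(\fXKbar,-)$ of an ind-system is defined via a colimit (per \S\ref{def:bBcris}), and one must check this colimit commutes with the long exact sequence — that is, that the functors $\lim_\to T^n$ form a $\delta$-functor, which is the content of the lemma in \S\ref{def:bBcris} provided $\lim_\to$ is exact, true here because the relevant transition maps are multiplication by $p$ (resp.\ by $t$), isomorphisms after inverting $p$. One also needs the topology on each cohomology group and continuity of $1-\varphi$; this follows because each ${\rm H}^i(\fXKbar,\bL\otimes\bA_{\rm cris,n,\Kbar}^\nabla(m))$ is a finitely generated module over a noetherian ring (using that $X$ is proper, so the cohomology groups are finite over $A_{\rm cris}$, cf.\ the proof of Proposition \ref{prop:filterediso}) and one takes the $p$-adic and then the $t$-adic topology. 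A secondary, more delicate point is ensuring the two rows genuinely splice into the displayed periodic diagram with the correct labelling of maps ($V_{i+1}$ appearing as a cokernel/connecting term on the top row and $(D_{i+1}\otimes B_{\rm cris})^{\varphi=1}$ on the bottom), which amounts to unwinding the long exact sequences degree by degree; this is routine but must be done carefully to match the statement. Once these points are in place, the corollary follows formally, since it is stated as an immediate consequence (``The theorem implies the following corollary'') of Theorem \ref{thm:compiso} and Proposition \ref{prop:filterediso}.
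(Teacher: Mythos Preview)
Your overall strategy matches the paper's: tensor the fundamental diagram of sheaves with $\cL$, take long exact cohomology sequences on $\fXKbar$, and identify the terms. However, there are two genuine gaps.

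First, and most seriously, your identification ${\rm H}^i(\fXKbar,\bL)\cong V_i$ is circular. You justify it by invoking the isomorphism ${\rm H}^i(X_\Kbar^{\rm et},\cL)\otimes_{\Q_p}B_{\rm cris}\cong {\rm H}^i(\fXKbar,\bL\otimes\bB_{\rm cris}^\nabla)$ from the introduction, which you describe as ``an elementary consequence of the comparison theorem and the Colmez--Fontaine criterion''. But Corollary~\ref{cor:diagram} is precisely the input used to \emph{prove} Theorem~\ref{thm:compiso}: the inductive argument immediately following the corollary feeds the diagram $(\ast_\bL)$ into Proposition~\ref{prop:admis} to establish admissibility of each $D_i$. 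You cannot assume the comparison isomorphism here. The paper instead invokes \cite[Prop.~4.9]{andreatta_iovita}, which gives ${\rm H}^i(\fX_\Kbar,\bL)[1/p]\cong V_i$ directly as a comparison between cohomology on Faltings' site and \'etale cohomology, independent of any period isomorphism.

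Second, your identification ${\rm H}^i(\fXKbar,\bL\otimes\Fil^0\bB_{\rm cris,\Kbar}^\nabla)\cong \Fil^0(D_i\otimes_{\Mun}B_{\rm cris})$ is incomplete. Theorem~\ref{thm:formalcomparison} and the strictness in Proposition~\ref{prop:filterediso} identify the \emph{image} of the natural map $g_i\colon{\rm H}^i(\fXKbar,\bL\otimes\Fil^0\bB^\nabla)\to{\rm H}^i(\fXKbar,\bL\otimes\bB^\nabla)$ with $\Fil^0(D_i\otimes B_{\rm cris})$, but say nothing about injectivity of $g_i$. The paper supplies a separate argument: from commutativity of the diagram of long exact sequences one sees that $\Ker(g_i)$ lies in the image of $V_i$, hence is a finite-dimensional $\Q_p$-vector space; but $\Ker(g_i)$ is also a $B_{\rm cris}^+$-module and therefore a vector space over the maximal unramified extension $K^{\rm un}$, which is infinite-dimensional over $\Q_p$; hence $\Ker(g_i)=0$. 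Without this step the top row of $(\ast_\bL)$ is not known to be exact at $\Fil^0(D_i\otimes B_{\rm cris})$.
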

\begin{proof}
Consider the diagram attached to $\bL$ tensoring
(\ref{display:fundamentalexactdiagram}) with $\cL$ and taking the
long exact sequence in cohomology. We get a commutative diagram of
$\Q_p$-modules endowed with continuous action of $G_M$ whose rows
are exact:
$$
\begin{array}{cccccccccc}
\cdots\lra  {\rm H}^i(\fX_\Kbar^\bullet, \bL)\otimes_{\Z_p} \Q_p
\lra&{\rm H^i}(\fX_\Kbar^\bullet, \bL\otimes {\rm Fil}^0 \bB_{\rm
cris,\Kbar}^\nabla)\bigr)&\stackrel{1-\varphi}{\lra} & {\rm
H}^i\bigl(\fX_\Kbar^\bullet,
\bL\otimes\bB_{\rm cris}^\nabla\bigr) \lra  \cdots\\
\big\downarrow &\big\downarrow &&\big\downarrow \\
\cdots\lra {\rm H}^i\left(\fX_\Kbar^\bullet,
\bigl(\bL\otimes\bB_{\rm cris}^\nabla\bigr)^{\varphi=1}\right)
\lra& {\rm H}^i\bigl(\fX_\Kbar^\bullet, \bL\otimes\bB_{\rm
cris}^\nabla\bigr)& \stackrel{1-\varphi}{\lra} &  {\rm
H}^i\bigl(\fX_\Kbar^\bullet, \bL\otimes\bB_{\rm cris}^\nabla\bigr)
\lra \cdots
\end{array}
$$For every $i\in \Z$ we have canonical isomorphisms ${\rm H}^i(\fX_\Kbar,
\bL)[1/p]\cong V_i$ as $G_M$-modules by \cite[Prop. 4.9]{andreatta_iovita}. By theorem \ref{thm:formalcomparison} and proposition \ref{prop:filterediso} we have
canonical isomorphisms as filtered $\varphi$-modules, compatible with the $G_M$-action, $ {\rm H}^i(\fX_\Kbar^\bullet, \bL\otimes \bB_{\rm cris}^\nabla)\cong
D_i\otimes_{\Mun} B_{\rm cris}$. Put $T_i:={\rm H}^i\left(\fX_\Kbar^\bullet, \bigl(\bL\otimes\bB_{\rm cris}^\nabla\bigr)^{\varphi=1}\right)$. Furthermore the image
of the map $g_i$ from ${\rm H^i}(\fX_\Kbar^\bullet, \bL\otimes {\rm Fil}^0 \bB_{\rm cris,\Kbar}^\nabla)\bigr)$ to ${\rm H^i}(\fX_\Kbar^\bullet, \bL\otimes \bB_{\rm
cris,\Kbar}^\nabla)\bigr)$ is $\Fil^0\left({\rm H^i}(\fX_\Kbar^\bullet, \bL\otimes {\rm Fil}^0 \bB_{\rm cris,\Kbar}^\nabla)\bigr)\right)$. To prove the claim it
suffices to show that $g_i$ is injective. It follows from the above diagram of long exact sequences that the kernel of $g_i$ is in the image of $V_i$. In particular
it is  a finite dimensional $\Q_p$-vector space. Since $\Ker(g_i)$ is a $B_{\rm cris}^+$-module and $B_{\rm cris}^+$ is an algebra over the maximal unramified
extension $K^{\rm un}$ of $K$, then $\Ker(g_i)$ is a $K^{\rm un}$-vector space. Since $\Ker(g_i)$ is a finite dimensional $\Q_p$-vector space we conclude that it
must be $0$ which proves the claim.
\end{proof}

As in corollary \ref{cor:diagram} take $\bL$ to be a crystalline sheaf. Then its $\Z_p$-dual $\bL^\vee$ is also a crystalline sheaf on $X_K$ and $\bD_{\rm
cris}^{\rm ar}(\bL^\vee)\cong \bD_{\rm cris}^{\rm ar}(\bL)^\vee:=\Hom_{\rm Isoc(X)}(\bD_{\rm cris}^{\rm ar}(\bL), \cO_{X_K})$ where the isomorphism is as filtered
$F$--isocrystals on $X$ (or $X_\F$). Let us denote for every $i\in \Z$, $V_i^\ast:={\rm H}^i(X_K^{\rm et}, \bL^\vee)[1/p]$ and by $D_i^\ast:={\rm H}^i_{\rm dR}(X_K,
\bD_{\rm cris}^{\rm ar}(\bL^\vee))$ the $G_M$-representations, respectively the filtered $\varphi$-modules attached to $\bL^\vee$. By Poincar\'e duality for \'etale
cohomology and de Rham cohomology respectively we have canonical isomorphisms as $G_M$-representations (respectively as filtered $\varphi$-modules) $V_i^\ast\cong
V_{2d-i}^\vee$ (respectively $D_i^\ast\cong D_{2d-i}^\vee$.) Let us remark that we have a canonical diagram with exact rows attached to $\bL^\vee$, denoted
$(\ast_{\bL^\vee})$, which involves $V_i^\ast,D_i^\ast$ and in which the maps are denoted $\alpha_i^\ast,\beta_i^\ast,\gamma_i^\ast,$....

\bigskip
\noindent Suppose $\bL$ is a crystalline sheaf on $X_M^{\rm et}$
and assume all the notations above. The idea of the proof of
Theorem \ref{thm:compiso} is very simple. We prove by induction on
$j\ge 0$ that $D_{2d-j}$ is an admissible filtered
$\varphi$-module and that $V_{\rm cris}(D_{2d-j})=V_{2d-j}$.
Granting this it follows that $V_{2d-j}$ is a crystalline
representation of $G_M$ and that $D_{\rm cris}(V_{2d-j})\cong
D_{2d-j}$ and so we are done.

Let us first recall a criterion of admissibility from \cite{colmez_fontaine}. In our setting $\Mun$ is a finite unramified extension of $\Q_p$ and  $D$ is a finite
dimensional filtered $\varphi$-module over $\Mun$. Let
$$
\delta(D)\colon (D\otimes_{\Mun} B_{\rm cris})^{\varphi=1}\lra
\frac{D\otimes_{\Mun} B_{\rm cris}}{{\rm
Fil}^0\bigl(D\otimes_{\Mun} B_{\rm cris}\bigr)}
$$
be the natural map. Put $V_{\rm cris}(D):=\Ker(\delta_D)$

\begin{proposition}[\cite{colmez_fontaine}]
\label{prop:admis} The filtered $\varphi$-module $D$ over $\Mun$
is admissible  if and only if \enspace {\rm (a)} $V_{\rm cris}(D)$
is a finite dimensional $\Q_p$-vector space and \enspace {\rm (b)}
$\delta(D)$ is surjective.

\smallskip Moreover, if $V=V_{\rm cris}(D)$ is finite dimensional
then it is a crystalline representation of $G_M$ and $D_{\rm cris}(V)\subseteq D$. This inclusion is an equality if and only if $D$ is admissible.
\end{proposition}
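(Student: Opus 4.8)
The plan is to read the proposition as a repackaging of the theorem of Colmez--Fontaine that a weakly admissible filtered $\varphi$-module is admissible, together with its elementary converse and the fact that $D_{\rm cris}$ of an arbitrary $p$-adic representation is weakly admissible. First I would unwind the terms: a filtered $\varphi$-module $D$ over $\Mun$ is \emph{admissible} when $D\cong D_{\rm cris}(V):=(V\otimes_{\Q_p}B_{\rm cris})^{G_M}$ for some crystalline $G_M$-representation $V$; the map $\delta(D)$ has kernel $V_{\rm cris}(D)=\Fil^0(D\otimes_{\Mun}B_{\rm cris})\cap(D\otimes_{\Mun}B_{\rm cris})^{\varphi=1}$, and this is a $G_M$-stable $\Q_p$-subspace of $D\otimes_{\Mun}B_{\rm cris}$ since $G_M$ acts trivially on $D$ and preserves the filtration and Frobenius of $B_{\rm cris}$. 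I would also record the elementary equivalence: $\delta(D)$ is surjective if and only if $1-\varphi\colon\Fil^0(D\otimes_{\Mun}B_{\rm cris})\to D\otimes_{\Mun}B_{\rm cris}$ is surjective (if the latter holds, for $x\in D\otimes_{\Mun}B_{\rm cris}$ pick $z\in\Fil^0$ with $(1-\varphi)(z)=(1-\varphi)(x)$; then $x-z$ is $\varphi$-fixed and $x=(x-z)+z$).

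The core input is that for every filtered $\varphi$-module $D$ the natural $B_{\rm cris}$-linearization $\ell_D\colon V_{\rm cris}(D)\otimes_{\Q_p}B_{\rm cris}\to D\otimes_{\Mun}B_{\rm cris}$ of the inclusion $V_{\rm cris}(D)\subseteq D\otimes_{\Mun}B_{\rm cris}$ is injective. This is the ``linear independence of periods'' fact of Colmez--Fontaine; I would prove it by d\'evissage on $D$, reducing to rank-one objects, where it comes down to $(\Fil^j B_{\rm cris})^{\varphi=p^j}=\Q_p\,t^j$ for $j\in\Z$ and the exactness of the fundamental sequence $0\to\Q_p\to\Fil^0 B_{\rm cris}\stackrel{1-\varphi}{\lra}B_{\rm cris}\to0$, both from \cite[\S5.3]{Fontaineperiodes}. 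Since $B_{\rm cris}$ is a domain, injectivity of a map of free $B_{\rm cris}$-modules gives $\dim_{\Q_p}V_{\rm cris}(D)\le\dim_{\Mun}D$. In particular, once $V:=V_{\rm cris}(D)$ is finite-dimensional, applying $(-)^{G_M}$ to $\ell_D$ and using $B_{\rm cris}^{G_M}=\Mun$ identifies $D_{\rm cris}(V)$ with an $\Mun$-subspace of $(D\otimes_{\Mun}B_{\rm cris})^{G_M}=D$; this is the inclusion $D_{\rm cris}(V)\subseteq D$ in the ``moreover'' part.

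For the equivalence: if $D$ is admissible, say $D=D_{\rm cris}(V')$ with $V'$ crystalline, tensoring the fundamental sequence by $V'$ and transporting along the filtration-preserving comparison isomorphism $V'\otimes_{\Q_p}B_{\rm cris}\cong D\otimes_{\Mun}B_{\rm cris}$ yields an exact sequence $0\to V'\to\Fil^0(D\otimes_{\Mun}B_{\rm cris})\stackrel{1-\varphi}{\lra}D\otimes_{\Mun}B_{\rm cris}\to0$; hence $V_{\rm cris}(D)=V'$ is finite-dimensional (so (a) holds), $\delta(D)$ is surjective (so (b) holds), and $D_{\rm cris}(V_{\rm cris}(D))=D$. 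Conversely, assume (a) and (b); then $0\to V\to(D\otimes_{\Mun}B_{\rm cris})^{\varphi=1}\stackrel{\delta(D)}{\lra}(D\otimes_{\Mun}B_{\rm cris})/\Fil^0(D\otimes_{\Mun}B_{\rm cris})\to0$ is exact, and combining this with the injectivity of $\ell_D$ one shows --- and this is where the Colmez--Fontaine theorem enters --- that $\dim_{\Q_p}V=\dim_{\Mun}D$ and that $\ell_D$ is an isomorphism. Applying $(-)^{G_M}$ gives $D=D_{\rm cris}(V)$, so $D$ is admissible and $V=V_{\rm cris}(D)$ is crystalline; and since $D_{\rm cris}(V_{\rm cris}(D))=D$ exactly when $\ell_D$ is an isomorphism, the final assertion follows.

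The main obstacle is precisely the step ``(a) and (b) $\Rightarrow$ $\ell_D$ is an isomorphism'', which amounts to Fontaine's conjecture that weakly admissible implies admissible. I would handle it as in \cite{colmez_fontaine}: first deduce from (a) and (b) that $D$ is weakly admissible --- using left-exactness of $V_{\rm cris}$ and the injectivity of $\ell_{D'}$ for every $\varphi$-submodule $D'\subseteq D$ to exclude subobjects with $t_H(D')>t_N(D')$, and the surjectivity in (b) to force $t_H(D)=t_N(D)$ --- and then construct a crystalline $V$ with $D_{\rm cris}(V)=D$ by the method of \emph{loc.~cit.}, i.e.\ via the fundamental exact sequences and a d\'evissage on the Newton slopes of $D$. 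For the present paper it is legitimate to take this last implication as a black box from \cite{colmez_fontaine}; the remaining assertions are formal consequences of the structure of $B_{\rm cris}$ recalled in \cite{Fontaineperiodes}.
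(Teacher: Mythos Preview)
Your approach is the paper's: both reduce the proposition to the results of Colmez--Fontaine, with the easy direction coming from the fundamental exact sequence and the hard direction (``(a)$+$(b) $\Rightarrow$ admissible'') taken as a black box. The paper's proof is just a clean citation of \cite[Prop.~4.5]{colmez_fontaine} (which characterises finiteness of $V_{\rm cris}(D)$ by $t_H(D')\le t_N(D')$ for all subobjects, and gives $D_{\rm cris}(V_{\rm cris}(D))\subseteq D$ with equality iff dimensions match) together with \cite[Prop.~5.7]{colmez_fontaine} (which, assuming (a), identifies the dimension equality with surjectivity of $\delta(D)$).

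Two points to tighten. First, the paper makes explicit a bridge you omit: in \cite{colmez_fontaine} the target of $\delta(D)$ is $(D\otimes_{\Mun}B_{\rm dR})/\Fil^0$, not $(D\otimes_{\Mun}B_{\rm cris})/\Fil^0$, so to invoke their results you must observe that $B_{\rm cris}\hookrightarrow B_{\rm dR}$ induces isomorphisms on graded pieces and hence on these quotients. Second, your proposed proof of the injectivity of $\ell_D$ by d\'evissage to rank-one subobjects does not work as stated --- an arbitrary filtered $\varphi$-module has no reason to admit such a filtration --- and the rank-one computation $(\Fil^jB_{\rm cris})^{\varphi=p^j}=\Q_p t^j$ is only correct for $j\ge 0$; this injectivity is rather part of the $(\Q_p,G_M)$-regularity of $B_{\rm cris}$ and is already contained in \cite[Prop.~4.5]{colmez_fontaine}, so it is cleaner to cite it than to sketch it.
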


\begin{proof} Let us first remark that we are in
the situation of \cite{colmez_fontaine}, i.e. the natural map
$$
\frac{D\otimes_{\Mun} B_{\rm cris}}{{\rm
Fil}^0\bigl(D\otimes_{\Mun} B_{\rm cris}\bigr)}\lra
\frac{D\otimes_{\Mun} B_{\rm dR}}{{\rm Fil}^0\bigl(D\otimes_{\Mun}
B_{\rm dR}\bigr)}
$$
is an isomorphism for every finite dimensional filtered module $D$
over $\Mun$ due to the fact that the natural inclusion $B_{\rm
cris} \lra B_{\rm dR}$ of filtered rings induces isomorphisms on
the graded quotients.

It is proven in \cite[Prop. 4.5]{colmez_fontaine} that the $\Q_p$-vector space $V_{\rm cris}(D)$ is finite dimensional if and only if for every sub-object
$D'\subset D$ we have $t_H(D')\leq t_N(D')$. Moreover, it also shown in loc.~cit.~that in this case $V_{\rm cris}(D)$ is a crystalline representation of $G_M$ whose
associated filtered $\varphi$-module is contained in $D$. It coincides with $D$ if and only if $\dim_{\Q_p} V_{\rm cris}(D)=\dim_{\Mun} D$.

It follows from the proof of of \cite[Prop. 5.7]{colmez_fontaine}
that, if $V_{\rm cris}(D)$ is finite dimensional, then
$\dim_{\Q_p} V_{\rm cris}(D)=\dim_{\Mun} D$ if and only if
$\delta(D)$ is surjective. The claim follows.

\end{proof}

\bigskip
\noindent {\it The proof of Theorem \ref{thm:compiso}}. Let $\bL$
be a crystalline sheaf on $X_K$ and let us consider the part of
the diagram $(\ast_\bL)$ relevant for $j=0$:
$$
\begin{array}{ccccccccccc}
\cdots\stackrel{\epsilon_{2d-1}}{\lra}&V_{2d}&\stackrel{\alpha_{2d}}{\lra}&{\rm
Fil}^0(D_{2d}\otimes_{\Mun} B_{\rm
cris})&\stackrel{1-\varphi}{\lra}&
D_{2d}\otimes_{\Mun} B_{\rm cris}&\lra&0\\
&\downarrow\beta_{2d}&&\downarrow\gamma_{2d}&&||\\
0 \lra&\bigl(D_{2d}\otimes_{\Mun} B_{\rm
cris}\bigr)^{\varphi=1}&\stackrel{\omega_{2d}}{\lra}&
D_{2d}\otimes_{\Mun} B_{\rm
cris}&\stackrel{1-\varphi}{\lra}&D_{2d}\otimes_{\Mun} B_{\rm
cris}&\lra&0
\end{array}
$$
Let us remark that $\delta(D_{2d})$ can be seen as the composition 
$$
\bigl(D_{2d}\otimes_{\Mun} B_{\rm
cris}\bigr)^{\varphi=1}\stackrel{\omega_{2d}}{\lra}D_{2d}\otimes_{\Mun}
B_{\rm cris} \lra \Coker(\gamma_{2d})
$$
and also that $\Ker(\delta(D_{2d}))=\Ker\bigl((1-\varphi)\colon
{\rm Fil}^0(D_{2d}\otimes_{\Mun} B_{\rm cris})\lra
D_{2d}\otimes_{\Mun} B_{\rm cris}\bigr)$. It follows that
$\alpha_{2d}$ induces a surjective $\Q_p$-linear map $V_{2d}\lra
\Ker(\delta(D_{2d}))$ and that $\delta(D_{2d})$ is surjective. We
deduce from proposition \ref{prop:admis} that $D_{2d}$ is
admissible and that we have a $\Q_p$-linear, surjective
homomorphism $V_{2d}\lra V_{\rm cris}(D_{2d})$ which is
$G_M$-equivariant.

Now we look at the part near $i=0$ of the diagram
$(\ast_{\bL^\vee})$, remarking that $D_0^\ast\cong D_{2d}^\vee$ by
\cite[Prop. 2.5.3]{MSaito} and therefore it is admissible.
$$
\begin{array}{ccccccccccc}
0&\lra&V_{0}^\ast&\stackrel{\alpha_{0}^\ast}{\lra}&{\rm
Fil}^0(D_0^\ast\otimes_{\Mun} B_{\rm
cris})&\stackrel{1-\varphi}{\lra}&
D_0^\ast\otimes_{\Mun} B_{\rm cris}&\stackrel{\epsilon_0^\ast}{\lra}\cdots\\
&&\downarrow\beta_0^\ast&&\downarrow\gamma_0^\ast&&||\\
0&\lra&\bigl(D_0^\ast\otimes_{\Mun} B_{\rm
cris}\bigr)^{\varphi=1}&\stackrel{\omega_0^\ast}{\lra}&
D_0^\ast\otimes_{\Mun} B_{\rm
cris}&\stackrel{1-\varphi}{\lra}&D_0^\ast\otimes_{\Mun} B_{\rm
cris}&\lra\cdots
\end{array}
$$
It follows that $V_0^\ast\cong \Ker(\delta(D_0^\ast))=V_{\rm cris}(D_0^\ast)$. Therefore we deduce that ${\rm dim}_{\Q_p}(V_{2d})={\rm dim}_{\Q_p}(V_0^\ast)={\rm
dim}_{K}(D_0^\ast)={\rm dim}_{K}(D_{2d}) ={\rm dim}_{\Q_p}(V_{\rm cris}(D_{2d}))$ and hence $V_{2d}\cong V_{\rm cris}(D_{2d})$ and $V_0^\ast \cong V_{\rm
cris}(D_{0}^\ast)$. This proves our statement for $j=0$ for $\bL$ and $j=2d$ for $\bL^\vee$.

Let us remark at the same time that as $\alpha_{2d}$ is injective,
$\epsilon_{2d-1}=0$. Since $V_0^\ast \cong V_{\rm
cris}(D_{0}^\ast)  $ an easy diagram chase shows that
$\epsilon^\ast_0=0$ and therefore the map $\alpha_1^\ast$ is
injective. Since $\gamma_1^\ast$ is injective, we can continue
with $j=1$ along exactly the same lines as for $j=0$. By induction
Theorem \ref{thm:compiso} follows.

\leftline{Fabrizio Andreatta} \leftline{DIPARTIMENTO DI MATEMATICA ``FEDERIGO ENRIQUES",}  \leftline{UNIVERSIT\`A STATALE DI MILANO, VIA C. SALDINI 50, 20133,
ITALIA}

\bigskip

\leftline{Adrian Iovita}
\leftline{DIPARTIMENTO DI MATEMATICA PURA ED APPLICATA, UNIVERSIT\`A DEGLI}
\leftline{STUDI DI PADOVA, VIA TRIESTE 63, PADOVA 35121, ITALIA}

and

\leftline{DEPARTMENT OF MATHEMATICS AND STATISTICS, CONCORDIA UNIVERSITY}
\leftline{1455 DE MAISONNEUVE BLVD., MONTREAL, H3G 1MB CANADA}

\end{document}